\numberwithin{equation}{section}
\DeclareRobustCommand{\cev}[1]{%
  {\mathpalette\do@cev{#1}}%
}
\newcommand{\do@cev}[2]{%
  \vbox{\offinterlineskip
    \sbox\z@{$\m@th#1 x$}%
    \ialign{##\cr
      \hidewidth\reflectbox{$\m@th#1\vec{}\mkern4mu$}\hidewidth\cr
      \noalign{\kern-\ht\z@}
      $\m@th#1#2$\cr
    }%
  }%
}
\newcommand{\fdom}{\mbox{\rm\textborn}}
\newcommand{\fran}{\mbox{\rm\textdied}}
\newcommand{\IR}{\mathbb R}
\newcommand{\IQ}{\mathbb Q}
\newcommand{\IC}{\mathbb C}
\newcommand{\IN}{\mathbb N}
\newcommand{\IZ}{\mathbb Z}
\newcommand{\UU}{\mathbf{U}}
\newcommand{\LL}{\mathbf L}
\newcommand{\Fun}{\mathbf{Fun}}
\newcommand{\pr}{\mathrm{pr}}
\newcommand{\cf}{\mathrm{cf}}
\newcommand{\op}{\mathrm{op}}
\newcommand{\true}{\mathsf{true}}
\newcommand{\false}{\mathsf{false}}
\newcommand{\ev}{\mathsf{ev}}
\newcommand{\Cut}{\mathsf{Cut}}
\newcommand{\Gap}{\mathsf{Gap}}
\newcommand{\birth}{\mathsf{birth}}
\newcommand{\minus}{\mbox{\tt-}}
\newcommand{\ostar}{*}
\newcommand{\Mor}{\mathsf{Mor}}
\newcommand{\Ob}{\mathsf{Ob}}
\newcommand{\C}{\mathcal C}
\newcommand{\Set}{\mathbf{Set}}
\newcommand{\Iso}{\mathsf{Iso}}
\newcommand{\Mono}{\mathsf{Mono}}
\newcommand{\Epi}{\mathsf{Epi}}
\newcommand{\GPS}{\mathsf{GPS}}
\newcommand{\Bet}{\mathsf B}
\newcommand{\Mid}{\mathsf M}
\newcommand{\w}{\omega}
\newcommand{\e}{\varepsilon}
\newcommand{\dom}{\mathsf{dom}}
\newcommand{\rng}{\mathsf{rng}}
\newcommand{\A}{\mathcal A}
\newcommand{\Free}{\mathsf{Free}}
\newcommand{\Proj}{\mathsf{Pr}}
\newcommand{\Id}{\mathbf{Id}}
\newcommand{\Succ}{\mathsf{Succ}}
\newcommand{\NBG}{\mathsf{NBG}}
\newcommand{\CST}{\mathsf{CST}}
\newcommand{\Crd}{\mathbf{Card}}
\newcommand{\AC}{\mathsf{AC}}
\newcommand{\Ord}{\mathbf{On}}
\newcommand{\On}{\mathbf{On}}
\newcommand{\Tr}{\mathbf{Tr}}
\newcommand{\MM}{\mathbf{M}}
\newcommand{\E}{\mathbf E}
\newcommand{\IV}{\mathbf V}
\newcommand{\VV}{\mathbf V}
\newcommand{\UTC}{\mathsf{TC}}
\newcommand{\dG}{\ddot{\mathsf G}}
\newcommand{\No}{\mathbf{No}}
\newcommand{\Ra}{\Rightarrow}
\newcommand{\rank}{\mathsf{rank}}
\newcommand{\Go}{\mathsf{G}}
\newcommand{\sqcuplus}{\uplus}
\newcommand{\defeq}{\coloneqq}
\newtheorem{theorem}{Theorem}[section]
\newtheorem{proposition}[theorem]{Proposition}
\newtheorem{lemma}[theorem]{Lemma}
\newtheorem{corollary}[theorem]{Corollary}
\theoremstyle{definition}
\newtheorem{definition}[theorem]{Definition}
\newtheorem{example}[theorem]{Example}
\newtheorem{exercise}[theorem]{Exercise}
\newtheorem{Exercise}[theorem]{Exercise$^\star$}
\newtheorem{remark}[theorem]{Remark}
\newtheorem{claim}[theorem]{Claim}
\begin{document}

\title[Classical Set Theory: Theory of Sets and Classes]{Classical Set Theory:\\ Theory of Sets and Classes}
\author{Taras Banakh}
\maketitle

\setcounter{tocdepth}{1}
\tableofcontents
\newpage

\part*{Introduction}

\rightline{\em Make things as simple as possible, but not simpler.}

\rightline{Albert Einstein}
\vskip25pt

This is a short introductory course to Set Theory and Category Theory, based on axioms of von Neumann--Bernays--G\"odel (briefly NBG). The text can be used as a base for a  lecture course in Foundations of Mathematics, and contains a reasonable minimum which a good (post-graduate) student in Mathematics should know about foundations of this science. 

My aim is to give strict definitions of all set-theoretic notions and concepts that are widely used in mathematics. In particular, we shall introduce the sets $\IN,\IZ,\IQ,\IR,\IC$ of numbers (natural, integer, rational, real, complex) and will prove their basic order and algebraic properties. Since the system of NBG axioms is finite and does not involve advanced logics, it is more friendly for beginners than other axiomatic set theories (like ZFC). 

The legal use of classes in NBG will allow us to discuss freely Conway's surreal numbers that form an ordered field $\mathbf{No}$, which is a proper class and hence is not ``visible'' in ZFC. Also  the language of NBG allows to give natural definitions of some basic notions of Category Theory:  category, functor, natural transformation, which is done in the last part of this book.
\vskip100pt

I would like to express my thanks for the help in writing this text to:
\begin{itemize}
\item Pace Nielsen who motivated my interests in NBG;
\item Uliana Banakh (my daughter) who was the first reader of this text;
\item the participants of Zoom-seminar in Classical Set Theory (Serhii Bardyla, Oleksandr Maslyuchenko, Misha Popov, Alex Ravsky and others) for many valuable comments;
\item Asaf Karagila for his valuable remarks concerning the Axiom of Choice;
\item Alexandru-Andrei Bosinta, Emil  Je\v r\'abek, Joel David Hamkins and Ali Enayat for their help in understanding the phenomenon of constructibility; 
\item Yaroslav Knyazev and Artem Gak for careful reading the manuscript and many helpful suggestions concerning the presentation;
\item To be added.
\end{itemize}

\vskip90pt

\rightline{\em Lviv (at the time of COVID-19 quarantine)}

\rightline{\em March--May, 2020.} 
\newpage  

\part{Naive Set Theory}

\rightline{\em A set is a Many that allows itself}

\rightline{\em to be thought of as a One.}
\smallskip

\rightline{Georg Cantor}

\vskip30pt

\section{Origins of Set Theory} The origins of (naive) Set Theory were created at the end of XIX century by Georg Cantor\footnote{{\bf Exercise:} Read about Georg Cantor in Wikipedia.} (1845--1918) in his papers published in  1874--1897. 

Cantors ideas made the notion of a set the principal (undefined) notion of Mathematics, which can be used to give precise definitions of all other mathematical concepts such as numbers or functions.

According to Cantor, a set is an arbitrary collection of objects, called {\em elements} of the set. In particular, sets can be elements of other sets. The fact that a set $x$ is an element of a set $y$ is denoted by the symbol $x\in y$. If $x$ is not an element of $y$, then we write $x\notin y$.

A set consisting of finitely many elements $x_1,\dots,x_n$ is written as $\{x_1,\dots,x_n\}$. Two sets $x,y$ are {\em equal} (denoted by $x=y$) if they consist of the same elements. For example, the sets $\{x,y\}$ and $\{y,x\}$ both have elements $x,y$ and hence are equal. 

A set containing no elements at all is called {\em the empty set} and is denoted by $\emptyset$. Since sets with the same elements are equal, the empty set is unique.

The theory developed so far, allows us to give a precise meaning to natural numbers (which are abstractions created by humans to facilitate counting):
$$
\begin{aligned}
& 0=\emptyset,\\
& 1=\{ 0\},\\
& 2=\{ 0, 1\},\\
& 3=\{ 0,1,2\},\\
&4=\{0,1,2,3\},\\
&5=\{0,1,2,3,4\},\\
&\dots
\end{aligned}
$$
The set $\{0,1,2,3,4,5,\dots\}$ of all natural numbers\footnote{{\bf Remark:} There are two meanings (Eastern and Western) of what to understand by a natural number. The western approach includes zero to natural numbers whereas the eastern tradition does not. This difference can be noticed in numbering  floors in buildings in western or eastern countries. } is denoted by $\omega$. The set $\{1,2,3,4,5,\dots\}$ of non-zero natural numbers is denoted by $\IN$.

Very often we need to create a set of objects possessing some property (for example, the set of odd numbers). In this case we use the \index{constructor}{\em constructor} $\{x:\varphi(x)\}$, which yields exactly what we need: the set $\{x:\varphi(x)\}$ of all objects $x$ that have certain property $\varphi(x)$. 

Using the constructor we can define some basic ``algebraic'' operations over sets $X,Y$:
\begin{itemize}
\item the \index{intersection}{\em intersection} $X\cap Y=\{x:x\in X\;\wedge\;x\in Y\}$ whose elements are objects that belong to $X$ and  $Y$;
\item the \index{union}{\em union} $X\cup Y=\{x:x\in X\;\vee\;x\in Y\}$ consisting of the objects that belong to $X$ or $Y$ or to both or them;
\item the \index{difference}{\em difference} $X\setminus Y=\{x:x\in X\;\wedge\;x\notin Y\}$ consisting of elements that belong to $X$ but not to $Y$;
\item the \index{symmetric difference}{\em symmetric difference} $X\triangle Y=(X\cup Y)\setminus(X\cap Y)$ consisting of elements that belong to the union $X\cup Y$ but not to the intersection $X\cap Y$.
\end{itemize}

In the formulas for the union and intersection we used the logical  connectives $\wedge$ and $\vee$ denoting the logical operations {\tt and} and {\tt or}. Below we present the truth table for these logical operations and also for three other logical operations: the {\tt negation} $\neg$, the {\tt implication} $\Ra$,  the {\tt  equivalence} $\Leftrightarrow$, and the {\tt Sheffer stroke} $|$, called also {\tt nand}.

\begin{center}
\begin{tabular}{|c|c||c|c|c|c|c|c|}
\hline
$x$&$y$&$x\wedge y$&$x\vee y$&$x\Rightarrow y$&$x\Leftrightarrow y$&$x|y$&$\neg x$\\
\hline
0&0&0&0&1&1&1&1\\
0&1&0&1&1&0&1&1\\
1&0&0&1&0&0&1&0\\
1&1&1&1&1&1&0&0\\
\hline
\end{tabular}
\end{center}

Therefore, we have four basic operations over sets $X,Y$:
$$\begin{aligned}
&X\cap Y=\{x:x\in X\;\wedge\;x\in Y\},\quad X\cup Y=\{x:x\in X\;\vee\;x\in Y\},\\
&X\setminus Y=\{x:x\in X\;\wedge\;x\notin Y\},\quad X\triangle Y=(X\cup Y)\setminus (X\cap Y).
\end{aligned}
$$
\begin{exercise} For the sets $X=\{0,1,2,4,5\}$ and $Y=\{1,2,3,4\}$, find $X\cap Y$, $X\cup Y$, $X\setminus Y$, $X\triangle Y$.
\end{exercise}

\begin{exercise} Using truth tables, check that the logical functions $\neg,\vee,\wedge,\rightarrow,\leftrightarrow$ are expressible via the Sheffer stroke:
\begin{enumerate}
\item $\neg x$ is equal to $x|x$;
\item $x\wedge y$ is equal to $(x|y)|(x|y)$;
\item $x\vee y$ is equal to $(x|x)|(y|y)$;
\item $x\rightarrow y$ is equal to $x|(y|y)$; 
\item $x\leftrightarrow y$ is equal to $((x|x)|(y|y))|(x|y)$.
\end{enumerate}  
\end{exercise}

\section{Berry's Paradox}

\rightline{\em ``The essence of mathematics is its freedom"}

\rightline{Georg Cantor}
\vskip30pt

\index{Paradox!of Berry} In 1906 Bertrand Russell, a famous British philosopher, published a paradox, which he attributed to G.~Berry (1867--1928), a junior librarian at Oxford's Bodleian library. 

To formulate this paradox, observe that each natural number can be described by some property. For example, zero is the smallest natural number, one is the smallest nonzero natural number, two is the smallest prime number, three is the smallest odd prime number, four is the smallest square, five is the smallest odd prime number which is larger than the smallest square and so on. 

Since there are only finitely many sentences of a given length, such sentences (of given length) can describe only finitely many numbers, \footnote{The list of short descriptions of the first 10000 numbers can be found here:\newline {\tt https://erich-friedman.github.io/numbers.html}}. Consequently, infinitely many numbers cannot be described by short sentences, consisting of less than 100 symbols. Among such numbers take the smallest one and denote it by $s$. Now consider the characteristic property of this number: {\em $s$ is the smallest number that cannot be described by a sentence consisting less than 100 symbol}. But the latter sentence consists of 96 symbols, which is less that 100, and uniquely defines the number $s$. 

Now we have a paradox\footnote{A similar argument can be used to prove that every natural number have some interesting property. Assuming that the exist natural numbers without interesting properties, we can consider the smallest element of the set of ``non-interesting'' numbers and this number has an interesting property: it is the smallest non-interesting number.} on one hand, the numbers $s$ belongs to the set of numbers that cannot be described by short sentences, and on the other hand, it has a short description. Where is the problem?

The problem is that the description of $s$ contains a quantifier that runs over all sentences including itself. Using such self-referencing properties can lead to paradoxes, in particular, to Berry's Paradox. This means that not all properties $\varphi(x)$ can be used for defining mathematical objects, in particular, for constructing sets of form $\{x:\varphi(x)\}$.

In order to avoid the Berry Paradox at constructing sets $\{x:\varphi(x)\}$,  mathematicians decided to use only precisely defined properties $\varphi(x)$, which do not include the property appearing in  Berry's paradox. Correct properties are described by formulas with one free variable in the language of Set Theory.

\section{The language and formulas of Set Theory} \label{s:language}

For describing the language of Set Theory we use our natural language, which will be called the {\em metalanguage} (with respect to the language of Set Theory). 

We start describing the language of Set Theory with describing its {\em alphabet}, which is an infinite list 
of symbols that necessarily includes the following {\em special symbols}:
\begin{itemize}
\item the symbols of binary relations: the equality ``$=$'' and memberships ``$\in$'';
\item logical connectives:  $|,\neg,\wedge,\vee,\rightarrow,\leftrightarrow$;
\item quantifiers: $\forall$ and $\exists$;
\item parentheses: ``('' and ``)''.
\end{itemize}All remaining (that is, non-special) symbols of the alphabet are called the {\em symbols of variables}. The list of those symbols is denoted by $\mathsf{Var}$. For symbols of variables we shall use small and large symbols of Latin alphabet: $a,A,b,B,c,C,u,U,v,V,w,W,x,X,y,Y,z,Z$ etc.

Sequences of symbols of the alphabet are called {\em words}. Well-defined words are called \index{Formula}{\em formulas}.

\begin{definition} \index{formula}Formulas are defined inductively by the following rules:
\begin{enumerate}
\item if $x,y$ are symbols of variables, then the word $x\in y$ is a formula (called an \index{atomic formula}{\em atomic formula});
\item if $\varphi,\psi$ are formulas, then $\neg(\varphi)$,  $(\varphi)\wedge(\psi)$, $(\varphi)\vee(\psi)$, $(\varphi)\to(\psi)$,  $(\varphi)\leftrightarrow(\psi)$ and $(\varphi)|(\psi)$ are formulas;
\item if $x$ is a symbol of a variable and $\varphi$ is a formula, then $\exists x\;(\varphi)$ and $\forall x\;(\varphi)$ are formulas.
\end{enumerate}
\end{definition}


For two symbols of variables $x,y$ the formula $x=y$ is a shorthand version of the formula $\forall z\;((z\in x)\leftrightarrow(z\in y))$ where $z$ is a symbol of variable, not equal to $x$ or $y$.

Writing formulas we shall often omit parentheses when there will be no ambiguity.
Putting back parentheses, we use the following preference order for logical operations:
$$\neg\;\; \wedge\;\;\vee\;\;\rightarrow\;\;\leftrightarrow.$$

\begin{example} The formula  
$\exists u\;\exists v\;(((\neg(u=v))\wedge (u\in x))\wedge(v\in x))$ can be written shortly as 
$\exists u\;\exists v\;(u\ne v\;\wedge\;u\in x\;\wedge\;v\in x)$. This formula describes the property of $x$ to have at least two elements.
\end{example}

For a formula $\varphi$ and symbols of variables $x,y$ the formulas $$\forall x\in y\;(\varphi)\mbox{  and }\exists x\in y\;(\varphi)$$ are shortened versions of the formulas $$\forall x\;((x\in y)\;\to\;(\varphi))\mbox{ and }\exists x\;((x\in y)\;\wedge\;(\varphi)),$$ respectively. In this case we say that the quantifiers $\forall x\in y$ and $\exists x\in y$ are {\em bounded} (more precisely, {\em $y$-bounded}).

Properties $\varphi(x)$ of sets that can be used in the constructors $\{x:\varphi(x)\}$ corresponds to formulas with a unique free variable $x$. 

\begin{definition} For any formula $\varphi$ its \index{free variable}{\em set of free variables} $\Free(\varphi)$ is defined by induction on the complexity of the formula according to the following rules:
\begin{enumerate}
\item If $\varphi$ is the atomic formula $x\in y$, then $\Free(\varphi)=\{x,y\}$;
\item for any formulas $\varphi,\psi$ we have $\Free(\varphi|\psi)=\Free(\varphi)\cup\Free(\psi)$.
\item for any formula $\varphi$ we have $\Free(\exists x\;\varphi)=\Free(\varphi)\setminus\{x\}$.
\end{enumerate}
\end{definition}


\begin{example} The formula $\exists u\;(u\in x)$ has $x$ as a unique free variable and describes the property of a set $x$ to be non-empty. 
\end{example}

\begin{example} The formula $\exists u\;((u\in x)\;\wedge\;\forall v\;((v\in x)\to(v=u)))$ has $x$ as a unique free variable $x$ and describes the property of a set $x$ to be a singleton. 
\end{example}

\begin{exercise} Write down a formula representing the property of a set $x$ to contain
\begin{itemize}
\item at least two elements;
\item exactly two elements;
\item exactly three elements.
\end{itemize}
\end{exercise}

\begin{exercise} Write down  formulas representing the property of a set $x$ to be equal to $0$, $1$, $2$, etc.
\end{exercise}

\begin{Exercise} Suggest a formula $\varphi(x)$ such that each set $x$ satisfying this formula is infinite. 
\end{Exercise}

\begin{exercise} Explain why the property appearing in Berry's Paradox cannot be described by a formula of Set Theory.
\end{exercise}

\section{Russell's Paradox}

As we already know, Berry's Paradox can be avoided by formalizing the notion of a property. A much more serious problem for foundations of Set Theory   was discovered in 1901 by Bertrand Russell (1872--1970) who suggested the following paradox.
\medskip

\noindent{\bf Russell's Paradox.}\index{Paradox!of Russell} {\em Consider the property $\varphi$ of a set $x$ to not contain itself as an element. This property is represented by the well-defined formula $x\notin x$. Many sets, for example, all natural numbers, have the property $\varphi$. Next, consider the set $A=\{x:x\notin x\}$ of all sets $x$ that have the property $\varphi$. For this set $A$ two cases are possible:
\begin{enumerate}
\item[\textup{(1)}] $A$ has the property $\varphi$ and hence belongs to the set $A=\{x:\varphi(x)\}$, which contradicts the property $\varphi$ (saying that $A\notin A$);
\item[\textup{(2)}]  $A$ fails to have the property $\varphi$  and then $A\notin \{x:\varphi(x)\}=A$, which means that $A\notin A$ and $A$ has the property $\varphi$.
\end{enumerate}
In both cases we have a contradiction.}
\medskip

 There are at least three ways to avoid Russell's paradox. The most radical one is to exclude the Law of the Excluded Middle from laws of Logic. The mathematicians following this idea formed the schools of intuicionists and constructivists\footnote{{\bf Exercise:} Read about intuicionists and constructivists in Wikipedia.}.

Less radical ways of avoiding Russell's paradox were suggested by the school of formalists leaded by David Hilbert (1862--1943). One branch of this school (Zermelo, Fraenkel, etc) suggested to forbid the constructor $\{x:\varphi(x)\}$ replacing it by its restricted version $$\{x\in y:\varphi(x)\}.$$ The class $\{x\in y:\varphi(x)\}$ consists of all elements of the class $y$ that have property $\varphi(x)$. This approach resulted in appearance of the Axiomatic Set Theory of Zermelo--Fraenkel, used by many modern mathematicians. 

The other branch (von Neuman, Robinson, Bernays, G\"odel) suggested to resolve Russell's Paradox by introducing a notion of \index{class}{\em class} for describing families of sets that are too big to be elements of other classes. An example of a class is the family $\{x:x\notin x\}$ appearing in the Russell's Paradox.  Then sets are defined as ``small'' classes. They are elements of other classes. This approach resulted in appearance of the Axiomatic Set Theory of von Neumann--Bernays--G\"odel, abbreviated by NBG by Mendelson \cite{Mendelson}. Exactly this axiomatic system NBG will be taken as a base for presentation of Set Theory in this textbook. Since NBG allows us to speak about sets and classes and NBG was created by classics (von Neumann, Bernays, G\"odel), we will call it the Classical Set Theory.  
\newpage

\part{Axiomatic Theories of Sets and Classes}

\rightline{\em Aus dem Paradies, das Cantor uns geschaffen,}

\rightline{\em  soll uns niemand vertreiben k\"onnen}
\smallskip

\rightline{David Hilbert}
\vskip15pt

In this part we present axioms of von Neumann--Bernays--G\"odel and discuss the relation of these axioms to the Zermelo--Fraenkel axioms of Set Theory.

\section{Axioms of von Neumann--Bernays--G\"odel}\label{s:axioms}

In this section we shall list 14 axioms of the von Neumann--Bernays--G\"odel and also introduce some new notions and notations on the base of these axioms.

In fact, the language of the Classical Set Theory has been described in Section~\ref{s:language}. We recommend the reader (if he or she is not fluent in Logics) to return back to this section and read it once more. 

The unique undefined notions of the theory NBG are the notions of a \index{class}{\em class} and \index{element}{\em element}. 

Classes can be elements of other class. The fact that a class $X$ is an element of a class $Y$ is written as $X\in Y$.
The negation of $X\in Y$ is written as $X\notin Y$. So, $X\notin Y$ is a short version of $\neg(X\in Y)$.

\begin{definition}\label{d:equality} Two classes $X,Y$ are {\em equal} (denoted by $X=Y$) if they have the same elements, i.e., $\forall Z\;(Z\in X\;\leftrightarrow\; Z\in Y)$.
\end{definition}

\begin{exercise}\label{ex:equality} Prove that the equality of classes has the following three properties:
\begin{enumerate}
\item $\forall X\;(X=X)$;
\item $\forall X\;\forall Y\;((X=Y)\;\rightarrow\;(Y=X))$;
\item $\forall X\;\forall Y\;\forall Z\;\;((X=Y)\;\wedge\;(Y=Z))\;\rightarrow\;(X=Z)$.
\end{enumerate}
\end{exercise}
 
The negation of the equality $X=Y$ is denoted by $X\ne Y$, i.e., $X\ne Y$ is a shorthand of the formula $\neg(X=Y)$.

\begin{definition} Given two classes $X,Y$, we write $X\subseteq Y$ and say that the class $X$ is a \index{subclass}{\em subclass} of a class $Y$ if $\forall z\in X\;(z\in Y)$, i.e., each element of the class $X$ is an element of the class $Y$. If $X$ is not a subclass of $Y$, then we write $X\not\subseteq Y$.

For two classes $X,Y$ we write $X\subset Y$ iff $X\subseteq Y$ and $X\ne Y$.
\end{definition}

Observe that two classes $X,Y$ are equal if and only if $X\subseteq Y$ and $Y\subseteq X$, i.e., $$\forall X\;\forall Y\;(X=Y\:\Leftrightarrow\;(X\subseteq Y\;\wedge\;Y\subseteq X)).$$

Now we start listing the axioms of NBG. 
\medskip

\index{Axiom of Equality}\noindent$\boxed{\mbox{\bf Axiom of Equality:\quad} \forall X\;\forall Y\; (X=Y\;\rightarrow\; \forall Z\;
(X\in Z\;\leftrightarrow\;Y\in Z))\;}$
\smallskip

The axiom of equality says that equal classes behave equally with respect to the membership relation $\in$. 
\medskip

\begin{definition} A class $X$ is defined to be a \index{set}{\em set} if $X$ is an element of some other class. More formally, a class $X$ is a set if $\exists Y\;(X\in Y)$. 
\end{definition}

\begin{definition} A class which is not a set is called a \index{proper class}{\em proper class}.
\end{definition}

To distinguish sets from proper classes, we shall use small characters (like $x,y,z,u,v,w,a,b,c$) for denoting sets, capital letters (like $X,Y,Z,U,V,W,A,B,C$) for denoting classes and boldface characters (like $\mathbf X,\mathbf Y,\UU,\mathbf E$) for denoting proper classes. Applying this convention, we write down our next axiom.
\medskip

\index{Axiom of Pair}\noindent$\boxed{\mbox{\bf Axiom of Pair:\quad}\\
\forall x\;\forall y\;\exists z\; \forall u\;(u\in z\leftrightarrow (u=x)\vee (u=y))\;\;}$
\medskip

The Axiom of Pair says that for any sets $x,y$ there exists a set $z$ whose unique elements are $x$ and $y$. By the definition of equality of sets, such set $z$ is unique. It is called the \index{unordered pair}\index{pair!unordered}{\em unordered pair} of the sets $x,y$ and is denoted by $\{x,y\}$. 

In the Axiom of Pair we implicitely assume that $x,y,z$ are sets. The expanded (true) version of this axiom reads as follows.
$$\boxed{\forall x\;\forall y\;\big((\exists X\;\exists Y\;(x\in X\;\wedge\;y\in Y))\to \exists z\;\exists Z\;(z\in Z\;\wedge\;\forall u\;(u\in z\;\leftrightarrow\; (u=x)\vee (u=y))\big)}$$



\begin{proposition}\label{p:equal-pairs} Let $x,y,u,v$ be sets. If $x=u$ and $y=v$, then $\{x,y\}=\{v,u\}$.
\end{proposition}

\begin{proof} Assume that $x=u$ and $y=v$. By Definition~\ref{d:equality}, the equality $\{x,y\}=\{v,u\}$ will follow as soon as for any set $z$ we prove that $(z\in\{x,y\})\;\leftrightarrow\;(z\in\{v,u\})$. If $z\in \{x,y\}$, then $((z=x)\vee (z=y))$ by Axiom of Pair and definition of the unordered pair $\{x,y\}$. By Exercise~\ref{ex:equality}(3), 
$((z=x)\vee (z=y))\wedge(x=u)\wedge(y=v)$ implies $(z=u)\vee(z=v)$. Then $z\in\{v,u\}$ by the definition of $\{v,u\}$. Therefore, $(z\in\{x,y\})\;\rightarrow\;(z\in\{v,u\})$. By analogy we can prove that  $(z\in\{v,u\})\;\rightarrow\;(z\in\{x,y\})$.
\end{proof}

\begin{corollary} For any sets $x,y$, we have $\{x,y\}=\{y,x\}$.
\end{corollary}

For any set $x$, the unordered pair $\{x,x\}$ is denoted by $\{x\}$ and is  called a \index{singleton}{\em singleton}. 

\begin{proposition}\label{p:equal}  Two sets $x,y$ are equal if and only if $\{x\}=\{y\}$.
\end{proposition}

\begin{proof} The ``only if'' part follows from Proposition~\ref{p:equal-pairs}. To prove the ``if'' part, assume that $\{x\}=\{y\}$. By definition of the singleton, $x\in\{x\}$. By the definition of equality, $x\in \{x\}=\{y\}$ implies $x\in\{y\}$. Then $x=y$ by the definition of the singleton $\{y\}$.
\end{proof}

\begin{exercise} Prove that two sets $x,y$ are equal if and only if $\forall Z\;(x\in Z\;\leftrightarrow\;y\in Z)$.
\smallskip

\noindent{\em Hint:} Apply Axiom of Equality and Proposition~\ref{p:equal}.
\end{exercise} 

\begin{definition}[Kuratowski, 1921] The \index{ordered pair}\index{pair!ordered}{\em ordered pair} $\langle x,y\rangle$ of sets $x,y$ is the set $\{\{x\},\{x,y\}\}$.
\end{definition} 

\begin{proposition}\label{p:pair} For sets $x,y,u,v$ the ordered pairs $\langle x,y\rangle$ and $\langle u,v\rangle$ are equal if and only if $x=u$ and $y=v$. More formally,
$$\forall x\;\forall y\;\forall u\;\forall v\;\big((\langle x,y\rangle=\langle u,v\rangle)\;\leftrightarrow\;(x=u\;\wedge\;y=v)\big).$$
\end{proposition}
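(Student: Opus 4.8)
The plan is to prove the backward implication by trivial substitution and to concentrate all the work on the forward implication, where the sole tool available is the Axiom of Extensionality (everything else, such as the existence of $\{x\}$, $\{x,y\}$ and $\langle x,y\rangle=\{\{x\},\{x,y\}\}$, has already been set up). First I would record the auxiliary fact that governs equality of unordered pairs: for sets $a,b,c,d$ one has $\{a,b\}=\{c,d\}$ if and only if $(a=c\wedge b=d)\vee(a=d\wedge b=c)$. This follows directly from Extensionality, since $\{a,b\}=\{c,d\}$ means these classes have the same elements, and the only elements in sight are $a,b$ on the left and $c,d$ on the right; matching them up gives exactly the two listed cases. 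I would also note the degenerate corollaries that a singleton equals a singleton precisely when their entries agree, and that $\{a,b\}=\{c\}$ forces $a=b=c$.

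Next I would assume $\langle x,y\rangle=\langle u,v\rangle$, that is $\{\{x\},\{x,y\}\}=\{\{u\},\{u,v\}\}$, and apply the auxiliary fact to these two outer pairs, whose entries are themselves the sets $\{x\}$, $\{x,y\}$, $\{u\}$, $\{u,v\}$. This yields two cases:
$$
\big(\{x\}=\{u\}\;\wedge\;\{x,y\}=\{u,v\}\big)\quad\vee\quad\big(\{x\}=\{u,v\}\;\wedge\;\{x,y\}=\{u\}\big).
$$
In the first case, $\{x\}=\{u\}$ gives $x=u$ immediately, and then $\{x,y\}=\{u,v\}=\{x,v\}$; applying the auxiliary fact once more, either $y=v$ directly, or else $x=v$ and $y=x$, in which case $v=x=y$ and again $y=v$. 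So the first case delivers $x=u\wedge y=v$.

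In the second case, $\{x,y\}=\{u\}$ forces $x=y=u$ by the singleton corollary, while $\{x\}=\{u,v\}$ forces $u=v=x$; combining these gives $x=y=u=v$, so once more $x=u$ and $y=v$. This closes both cases and hence the forward direction. I do not expect any genuine obstacle here; the only thing requiring care is the bookkeeping of the degenerate subcases where entries coincide (for instance when $x=y$ or $u=v$), since a naive reading of the auxiliary fact can appear to lose an alternative — but in every such subcase the collapse actually reinforces the desired conclusion rather than threatening it, so the argument goes through cleanly.
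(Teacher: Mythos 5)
Your proof is correct and rests on the same core idea as the paper's: apply Extensionality to the outer Kuratowski pairs and carefully track the degenerate subcases (the paper argues by contradiction starting from $x\ne u$, while you argue directly via an explicit auxiliary lemma on equality of unordered pairs, but the case analysis is the same in substance). The auxiliary fact you isolate — $\{a,b\}=\{c,d\}$ iff $(a=c\wedge b=d)\vee(a=d\wedge b=c)$ — is exactly what the paper uses implicitly, and your handling of the collapsed cases $x=y$ and $u=v$ is sound.
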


\begin{proof} The ``if'' part follows from Proposition~\ref{p:equal-pairs}. To prove the ``only if'' part, assume that 
\begin{equation}\label{eq:pair}
\{\{x\},\{x,y\}\}=\langle x,y\rangle=\langle u,v\rangle=\{\{u\},\{u,v\}\}
\end{equation}
but $x\ne u$ or $y\ne v$.

First assume that $x\ne u$. By Proposition~\ref{p:equal}, $\{x\}\ne\{u\}$. 
By definition of the pair $\langle x,y\rangle$, we have $\{x\}\in\{\{x\},\{x,y\}\}=\langle x,y\rangle=\langle u,v\rangle$. By the definition of equality, $\{x\}\in \langle u,v\rangle=\{\{u\},\{u,v\}\}$. The definition of the pair $\{\{u\},\{u,v\}\}$ and non-equality $\{x\}\ne\{u\}$ imply $\{x\}=\{u,v\}$ and hence $u=x$, which contradicts our assumption. 

This contradiction shows that $x=u$. If $x=y$, then $\{\{u\},\{u,v\}\}=\{\{x\},\{x,y\}\}=\{\{x\}\}$ and Proposition~\ref{p:equal} imply that $u=v$ and hence $y=x=u=v$, according to Exercise~\ref{ex:equality}.
If $x\ne y$, then $\{u\}\ne\{x,y\}\in\{\{u\},\{u,v\}\}$ implies $\{x,y\}=\{u,v\}$ and hence $y=v$.
\end{proof}

Using the notion of an ordered pair, we can introduce ordered triples, quadruples etc.

Namely, for any sets $x,y,z$ the \index{ordered triple}{\em ordered triple} $\langle x,y,z\rangle$ is the set $\langle \langle x,y\rangle,z\rangle$.
\medskip

\begin{definition} A class $R$ is called a \index{relation}{\em relation} if its elements are ordered pairs. More formally,
$$\mbox{$R$ is a relation}\;\Leftrightarrow\;\forall z\in R\;\exists x\;\exists y\;(z=\langle x,y\rangle).$$
\end{definition}

The following axiom postulates the existences of the relation \index{class!$\mathbf E$}\index{membership relation}\index{relation!of membership} 
$$\mathbf E=\{\langle x,y\rangle:x\in y\},$$
which encodes the undefined relation $\in$.
\medskip
 
\index{Axiom of Membership}\noindent$\boxed{\mbox{\bf  Axiom of Membership:\quad} \exists\mathbf E\;\forall z\;(z\in \mathbf E\leftrightarrow \exists x\;\exists y\;(x\in y\;\wedge\;z=\langle x,y\rangle))\quad}$
\medskip

Next, we define 5 axioms describing some basic operations over classes.
\medskip

\index{Axiom of Domain}\noindent$\boxed{\mbox{\bf  Axiom of Domain:\quad} \forall X\;\exists D\;\forall x\;(x\in D\;\leftrightarrow\;\exists y\;(\langle x,y\rangle\in X))\quad}$
\medskip

\noindent By the Axioms of Domain, for each class $X$ its
 \index{domain}\index{class!domain of}{\em domain} $$\mathsf{dom}[X]=\{x:\exists y\;\langle x,y\rangle\in X\}$$ exists.

In particular, the class $\dom[\mathbf E]$ exists. 
Observe that for every set $x$ the ordered pair $\langle x,\{x\}\rangle$ belongs to the class $\mathbf E$ and hence $x\in \dom[\mathbf E]$, which implies that $\dom[\mathbf E]$ is the class of all sets. This important class will be denoted by $\mathbf U$ and called the {\em universe of sets}. The universe of sets $\mathbf U$ is unique by the definition of equality. The definition of the universe $\UU$ guarantees that $\forall X\;(X\subseteq\UU)$. 

\begin{exercise} Prove that $\dom[\UU]=\UU$.
\end{exercise}
\medskip

\index{Axiom of Difference}\noindent$\boxed{\mbox{\bf Axiom of Difference:\quad}\forall X\;\forall Y\;\exists Z\;\forall u\;(u\in Z\;\leftrightarrow\;(u\in X\;\wedge\;u\notin Y))\quad}$
\medskip

Axiom of Difference postulates that for any classes $X,Y$ the class 
\begin{itemize}
\item $X\setminus Y=\{x:x\in X\;\wedge\;x\notin Y\}$ exists.  
\end{itemize} Using the Axiom of difference, for any classes $X,Y$ we can define their
\begin{itemize}
\item \index{intersection}{\em intersection} $X\cap Y=X\setminus (X\setminus Y)$,
\item \index{union}{\em union}  $X\cup Y=\UU\setminus((\UU\setminus X)\cap(\UU\setminus Y))$, and
\item \index{symmetric difference}{\em symmetric difference} $X\triangle Y=(X\cup Y)\setminus (X\cap Y)$.
\end{itemize}

Applying the Axiom of Difference to the universal class $\UU$, we conclude that the \index{empty class}\index{class!$\emptyset$}{\em empty class} $$\emptyset=\UU\setminus\UU$$
 exists. The empty class contains no elements and is unique by the definition of equality.
 
 \begin{exercise} Prove that $\emptyset=\mathbf E\setminus\mathbf E$.
 \end{exercise}
\medskip

\index{Axiom of Product}\noindent$\boxed{\mbox{\bf  Axiom of Product:\quad} \forall X\;\forall Y\;\exists Z\;\forall z\; (z\in Z\;\leftrightarrow\;\exists x\in X\;\exists y\in Y\;(z=\langle x,y\rangle))\quad}$
\medskip

The Axiom of Product guarantees that for any classes $X,Y$, their \index{Cartesian product}\index{product}{\em Cartesian product}
$$X\times Y=\{\langle x,y\rangle:x\in X,\;y\in Y\}$$exists.

The product $\UU\times\UU$ will be denoted by $\ddot{\UU}$, and the product $\ddot{\UU}\times \UU$ by $\dddot{\UU}$.   The definition of a relation implies that a class $R$ is a relation if and only if $R\subseteq \ddot{\mathbf U}$.
\medskip


\medskip

\index{Axiom of Inversion}\noindent$\boxed{\mbox{\bf  Axiom of Inversion:\quad} \forall X\;\exists Y\;\forall p\;\big(p\in Y\;\leftrightarrow\;\exists x\;\exists y\;(\langle x,y\rangle\in  X\;\wedge\;p=\langle y,x\rangle)\big)\quad}$
\medskip

The Axiom of Inversion implies that for any class $X$ the relation $$X^{-1}=\{\langle y,x\rangle:\langle x,y\rangle\in X\}$$ exists. Observe that a class $X$ is a relation if and only if $X=(X^{-1})^{-1}$. 
\medskip


\noindent By the Axioms of Domain and Inversion, for each class $X$ its \index{range}\index{class!range of} {\em range} $$\mathsf{rng}[X]=\{y:\exists x\;\langle x,y\rangle\in X\}=\mathsf{dom}[X^{-1}]$$ 
exists.

\begin{exercise} Find $\mathsf{rng}[\mathbf E]$. 
\end{exercise}

\index{Axiom of Cycle}\noindent$\boxed{\mbox{\bf  Axiom of Cycle:\quad} \forall X\;\exists Y\;\forall t\;\big(t\in Y\;\leftrightarrow\;\exists x\;\exists y\;\exists z\; (\langle x,y,z\rangle \in X\;\wedge\;t=\langle z,x,y\rangle)\big)\quad}$
\medskip

The Axiom of Cycle implies that for every class $X$ the classes
$$X^\circlearrowright=\{\langle z,x,y\rangle:\langle x,y,z\rangle\in X\}\mbox{ \ and \ }
X^\circlearrowleft=\{\langle y,z,x\rangle:\langle x,y,z\rangle\in X\}=(X^\circlearrowright)^\circlearrowright$$
 exist.
\medskip




We can also define the union and intersection of sets that belong to a given class of sets. Namely, for a class $X$ consider its
\begin{itemize}
\item  \index{union}{\em union} $\bigcup X=\{z:\exists y\in X\;(z\in y)\}$,
\item  \index{intersection}{\em intersection} $\bigcap X=\{z:\forall y\in X\;(z\in y)\}$, and
\item \index{power-class}{\em power-class} $\mathcal P(X)=\{y:y\subseteq X\}$.
\end{itemize}

\begin{exercise} To show that the classes $\bigcup X$, $\bigcap X$ and $\mathcal P(X)$ exist, check that
$$
\begin{aligned}
&\textstyle{\bigcup}X=\mathsf{dom}[\mathbf E\cap(\UU\times X)]\\
&\textstyle{\bigcap}X=\UU\setminus \mathsf{dom}[(\UU\times X)\setminus \mathbf E], \mbox{ \ and \ }\\
&\mathcal P(X)=\UU\setminus \mathsf{dom}[\mathbf E^{-1}\setminus(\UU\times X)].
\end{aligned}
$$
\end{exercise}

\begin{exercise}  Prove that $\mathcal P(\UU)=\UU$.
\end{exercise}

For every relation $R$ and class $X$, the Axioms of Product, Inversion and Domain allow us to define the class
$$R[X]=\{y:\exists x\in X\;(\langle x,y\rangle\in R)\}=\mathsf{rng}[R\cap(X\times \UU)]$$ called the \index{image}{\em image} of the class $X$ under the relation $R$. The class $R^{-1}[X]$ is called the \index{preimage}{\em preimage} of $X$ under the relation $R$.

For two relations $F,G$ their\index{composition}\index{function!composition} {\em composition} $G\circ F$ is the relation
$$G\circ F=\{\langle x,z\rangle:\exists y\;(\langle x,y\rangle\in F\;\wedge\;\langle y,z\rangle\in G)\}.$$
The class $G\circ F$ exists since $G\circ F=\dom[T]$ where 
$$T=\{\langle x,z,y\rangle:\langle x,y\rangle\in F\;\wedge\;\langle y,z\rangle\in G\}=[F^{-1}\times\UU]^\circlearrowleft\cap [G^{-1}\times\UU]^\circlearrowright.$$



\smallskip

\begin{definition} A relation $F$ is called a \index{function}{\em function} if for any ordered pairs $\langle x,y\rangle,\langle x',y'\rangle\in F$ the equality $x=x'$ implies $y=y'$.
\end{definition}

Therefore, for any function $F$ and any $x\in\mathsf{dom}[F]$ there exists a unique set $y$ such that $\langle x,y\rangle \in F$. This unique set $y$ is called the \index{image}{\em image} of $x$ under the function $F$ and is denoted by $F(x)$. The round parentheses are used to distinguish the set $F(x)$ from the image $F[x]=\{y:\exists z\in x\;\langle z,y\rangle\in F\}$ of the set $x$ under the function $F$.



\smallskip

\begin{exercise} Prove that for any functions $F,G$ the relation $G\circ F$ is a function.
\end{exercise}

A function $F$ is called {\em injective} if the relation $F^{-1}$ is a function.

The next three axioms are called the axioms of existence of sets.
\medskip

\index{Axiom of Replacement}\noindent$\boxed{\mbox{{\bf  Axiom of Replacement:\quad} For every function $F$ and set $x$, the class $F[x]$ is a set}\quad}$
\medskip

\index{Axiom of Union}\noindent$\boxed{\mbox{{\bf  Axiom of Union:\quad} For every set $x$, the class $\bigcup x=\{z:\exists y\in x\;(z\in y)\}$ is a set}\quad}$
\medskip

\index{Axiom of Power-set}\noindent$\boxed{\mbox{{\bf  Axiom of Power-set:\quad} For every set $x$, the class $\mathcal P(x)=\{y:y\subseteq x\}$ is a set}\quad}$
\medskip

\begin{exercise} Write the Axioms of Replacement, Union and Power-set as formulas.
\end{exercise}

\begin{exercise} Show that for any sets $x,y$ the class $x\cup y$ is a set.
\smallskip

\noindent{\em Hint:} Use the Axioms of Pair and Union.
\end{exercise}


For a set $x$ the set $x\cup\{x\}$ is called the \index{successor}\index{set!successor}{\em successor} of $x$. The set $x\cup\{x\}$ is equal to $\cup\{x,\{x\}\}$ and hence exists by the  Axiom of Union.



At the moment no axiom guarantees that at least one set exists. This is done by

\medskip

\index{Axiom of Infinity}\noindent$\boxed{\mbox{{\bf  Axiom of Infinity:\quad} $\exists x\in\UU\;((\emptyset\in x)\wedge \forall n\;(n\in x\;\to\; n\cup\{n\}\in x))$}\quad}$
\medskip

A set $x$ is called \index{inductive class}\index{class!inductive}{\em inductive} if $(\emptyset\in x)\wedge \forall n\;(n\in x\;\to\; n\cup\{n\}\in x)$. The Axiom of Infinity guarantees the existence of an inductive set.
This axiom also implies that the empty class $\emptyset$ is a set. So, it is legal to form sets corresponding to natural numbers: $$ 0=\emptyset,\; 1=\{ 0\},\; 2=\{ 0, 1\},\;  3=\{ 0, 1, 2\},\; 4=\{ 0, 1, 2, 3\},\; 5=\{ 0, 1, 2, 3, 4\},\mbox{ \ and so on}.$$

 Let $\mathbf{Ind}$ be the class of all inductive sets (the existence of the class $\mathbf{Ind}$ is established in Exercise~\ref{ex:Ind}). The intersection $\bigcap\mathbf{Ind}$ of all inductive sets is the smallest inductive set, which is denoted by $\omega$. Elements of the $\omega$ are called  \index{natural number}{\em natural numbers} or else {\em finite ordinals}. The set\index{set!$\omega$}\index{$\omega$}\index{$\mathbb N$}\index{set!$\mathbb N$} $\IN=\w\setminus\{\emptyset\}$ is the set of non-zero natural numbers. 
 
 The definition of the set $\w$ as the smallest inductive set implies the well-known
 \smallskip
 
\noindent{\bf Principle of Mathematical Induction:}\index{Mathematical Induction}\index{Principle of Mathematical Induction} {\em If a set $X$ contains the empty set and for every $n\in X$ its successor $n\cup\{n\}$ belongs to $X$, then $X$ contains all natural numbers.}
\smallskip

A set $x$ is called \index{set!finite}\index{finite set}{\em finite} (resp. \index{set!countable}\index{countable set}{\em countable}) if there exists an  injective function $f$ such that $\dom[f]=x$ and $\rng[f]\in\w$ (resp. $\rng[f]\in \omega\cup\{\omega\}$). 



\smallskip

\index{Axiom of Foundation}\noindent$\boxed{\mbox{{\bf  Axiom of Foundation:\quad} $\forall x\in\UU\;(x\ne\emptyset\;\Rightarrow\;\exists y\in x\;\forall z\in y\;(z\notin x))$}\quad}$
\smallskip

The Axiom of Foundation says that each nonempty set $x$ contains an element $y\in x$ such that $y\cap x=\emptyset$. This axiom forbids the existence of a set $x$ such that $x\in x$. More generally, it forbids the existence of infinite sequences sets $x_1\ni x_2\ni x_3\ni\dots$.

The final axiom is  the
\smallskip

\index{Axiom of Global Choice}\index{({{\sf AGC}})}\noindent$\boxed{\mbox{\bf  Axiom of Global Choice:\;} \exists F\,((\mbox{$F$ is a function})\,\wedge\,\forall x\in\UU\,(x\ne\emptyset\,\Rightarrow\,\exists y\in x\,(\langle x,y\rangle{\in}F)))\,}$
\smallskip

The Axiom of Global Choice postulates the existence of a function $F:\UU\setminus\{\emptyset\}\to\UU$ assigning to each nonempty set $x$ some element $F(x)$ of $x$.

The Axiom of Global Choice implies its weaker version, called the
\smallskip

\index{Axiom of Choice}\noindent$\boxed{\mbox{\bf  Axiom of Choice:\;\;} \forall x\in\UU\,\exists f\,((\mbox{$f$ is a function})\,\wedge\,\forall y\in x\,(y\ne\emptyset\,\Rightarrow\,\exists z\in y\,(\langle y,z\rangle\in f)))\;}$
\medskip

The Axiom of Choice says that for any set $x$ there exists a function $f:x\setminus\{\emptyset\}\to\bigcup x$ assigning to every nonempty set $y\in x$ some element $f(y)$ of $y$.
\medskip

Therefore, the Classical  Set Theory is based on 14 axioms.
\medskip

\noindent\fbox{
\begin{minipage}{430pt}
\centerline{\bf \large Axioms of NBG}
\smallskip

\index{Axiom of Equality}\noindent{\bf Equality:} Two equal classes behave equally with respect to taking elements.
\smallskip


\index{Axiom of Pair}\noindent{\bf Pair:} For any sets $x,y$ the set $\{x,y\}$ exists.
\smallskip

\index{Axiom of Membership}\noindent{\bf Membership:} The class $\mathbf E=\{\langle x,y\rangle:\exists Y\;(x\in y\in Y)\}$ exists.
\smallskip

\index{Axiom of Domain}\noindent{\bf Domain:} For every class $X$ the class $\mathsf{dom}[X]=\{x:\exists y\;(\langle x,y\rangle\in X)\}$ exists.

\index{Axiom of Difference}\noindent {\bf Difference:} For any classes $X,Y$ the class $X\setminus Y=\{x:x\in X\;\wedge\;x\notin Y\}$ exists.

\index{Axiom of Product}\noindent{\bf Product:} For every classes $X,Y$ the class $X\times Y=\{\langle x,y\rangle:x\in X,\;y\in Y\}$ exists.

\index{Axiom of Inversion}\noindent{\bf Inversion:} For every class $X$ the class $X^{-1}=\{\langle y,x\rangle:\langle x,y\rangle\in X\}$ exists.

\index{Axiom of Cycle}\noindent{\bf Cycle:} For every class $X$ the class $X^\circlearrowright=\{\langle z,x,y\rangle:\langle x,y,z\rangle\in X\}$ exists.

\smallskip

\index{Axiom of Replacement}\noindent{\bf Replacement:} For every function $F$ and set $x$ the class $F[x]=\{F(y):y\in x\}$ is a set.

\index{Axiom of Union}\noindent{\bf Union:} For every set $x$ the class $\cup x=\{z:\exists y\in x\;(z\in y)\}$ is a set.

\index{Axiom of Power-set}\noindent{\bf Power-set:} For every set $x$ the class $\mathcal P(x)=\{y:y\subseteq x\}$ is a set.

\index{Axiom of Infinity}\noindent{\bf Infinity:} There exists an inductive set.
\medskip

\index{Axiom of Foundation}\noindent{\bf Foundation:} Every nonempty set $x$ contains an element $y\in x$ such that $y\cap x=\emptyset$.

\index{Axiom of Global Choice}\noindent{\bf Global Choice:} There is a function assigning to each nonempty set $x$ some element of $x$.
\end{minipage}
}
\medskip


\section{Basic Classes}

In this section we consider some basic classes which will often appear in the remaining part of the textbook. A class is called {\em basic} if it can be constructed from the membership relation $\E$ by application of finitely many operations of taking the difference, product, inversion, cycle and domain. For example, the universal class $\UU$ is basic because $\UU=\dom[\E]$. 

\begin{exercise} Prove that for basic classes $X,Y$, the classes $X\setminus Y$, $X\cap Y$, $X\cup Y$, $X\Delta Y$, $X\times Y$, $\dom[X]$, $X^{-1}$, $X^\circlearrowright$, $X^\circlearrowleft$, $\bigcup X$, $\bigcap X$, $\mathcal P(X)$ are basic.
\end{exercise}

The following results on the existence of basic classes  are written as exercises with solutions (called hints). Nonetheless we strongly recommend the reader to try to do all exercises without looking at hints (and without use the G\"odel's Theorem~\ref{t:class}  on existence of classes).

\begin{exercise}\label{ex:S} Prove that the class\index{class!$\mathbf{S}$}\index{{{\bf S}}} $\mathbf S=\{\langle x,y\rangle\in\ddot{\UU}:x\subseteq y\}$ exists and is basic.
\smallskip

\noindent {\it Hint:} Observe that $\ddot{\UU}\setminus \mathbf S=\dom[T]$ where\\
$T=\{\langle x,y,z\rangle\in\dddot{\UU}:z\in x\;\wedge\;z\notin y\}=[\mathbf E\times\UU]^\circlearrowleft\cap ((\ddot{\UU}\setminus \E^{-1})\times\UU)^\circlearrowright$.
\end{exercise}

\begin{exercise}\label{ex:Id} Prove that the identity function\index{class!{{\sf Id}}}\index{function!{{\sf Id}}}\index{{{\sf Id}}} $\mathsf{Id}=\{\langle x,y\rangle\in\ddot{\UU}:x=y\}$ exists and is a basic class.
\smallskip

\noindent{\it Hint:} Observe that $\mathsf{Id}=\mathbf S\cap\mathbf S^{-1}$ where $\mathbf S$ is the class from Exercise~\ref{ex:S}.
\end{exercise}

\begin{exercise}\label{ex:subclass} Prove that every subclass  $Y\subseteq x$ of a set $x$ is a set.
\smallskip

\noindent{\it Hint:} Observe that the identity function $F=\mathsf{Id}\cap(Y\times \UU)$ of $Y$ exists and apply the Axiom of Repacement to conclude that the class $F[x]=Y\cap x=Y$ is a set.
\end{exercise}




\begin{exercise}\label{ex:U-proper} Prove that the universe $\UU$ is a proper class.
\smallskip

\noindent{\em Hint:} Repeat the argument of Russell.
\end{exercise}

\begin{exercise}\label{ex:dom} Prove that the function\index{function!{\sf dom}} $\dom:\ddot{\UU}\to\UU,\;\dom:\langle x,y\rangle\mapsto x$,  exists and is a basic class.
\smallskip

\noindent{\it Hint:}  Observe that $\dom=\{\langle x,y,z\rangle\in\dddot{\UU}:z=x\}=[\mathsf{Id}\times\UU]^\circlearrowleft$
\end{exercise}

\begin{exercise}\label{ex:rng} Prove that the function\index{function!{\sf rng}} $\rng:\ddot{\UU}\to\UU,\;\rng:\langle x,y\rangle\mapsto y$,  exists and is a basic class.
\smallskip

\noindent{\it Hint:} Observe that $\rng=\{\langle x,y,z\rangle\in\dddot{\UU}:z=y\}=[\mathsf{Id}\times\UU]^\circlearrowright$.
\end{exercise}

\begin{exercise} Prove that the class $\mathbf E=\{\langle x,y\rangle:x\in y\in\mathbf U\}$ is proper.
\smallskip

\noindent{\em Hint:} Apply Exercises~\ref{ex:U-proper}, \ref{ex:dom}, and the Axiom of Replacement.
\end{exercise}

\begin{exercise} Prove that the function \index{function!{\sf pair}} $\mathsf{pair}:\ddot{\UU}\to\UU$, $\mathsf{pair}:\langle x,y\rangle\mapsto \langle x,y\rangle$ exists.
\smallskip

\noindent{\em Hint:} Observe that $\mathsf{pair}=\dddot{\UU}\cap\mathsf{Id}$.
\end{exercise}

\begin{exercise} Prove that the composition $G\circ F\defeq\{\langle x,z\rangle:\exists y\;\langle x,y\rangle\in F\;\wedge\;\langle y,z\rangle\in G\}$ of two basic relations is a basic relation.
\smallskip

\noindent{\em Hint:} Observe that $G\circ F=\dom[[F^{-1}\times\UU]^\circlearrowleft\cap[G^{-1}\times\UU]^\circlearrowright]$.
\end{exercise}

\begin{exercise}\label{ex:FRG} Let $R$ be a relation and $F,G$ be functions. Prove that the class\\ $F_RG=\{x\in\dom[F]\cap\dom[G]:\langle F(x),G(x)\rangle\in R\}$ exists. Prove that the class $F_RG$ is basic if so are the classes $F,R,G$.
\smallskip

\noindent{\it Hint:} Observe that 
$F_RG=
\{x:\exists y\;\exists z\;(\langle x,y\rangle\in F\;\wedge\;\langle x,z\rangle\in G\;\wedge\;\langle y,z\rangle\in R)\}=\mathsf{dom}[\mathsf{dom}[T]],$
where\\ $T=\{\langle x,y,z\rangle\in\dddot{\UU}:\langle x,y\rangle\in F\;\wedge\;\langle x,z\rangle\in G\;\wedge\;\langle y,z\rangle\in R\}=(F\times\UU)\cap [G^{-1}\times\UU]^\circlearrowleft\cap[R\times\UU]^\circlearrowright.$ 
\end{exercise}

\begin{exercise}\label{ex:Inv} Prove that the function \index{function!$\mathsf{Inv}$} $\mathsf{Inv}=\{\langle\langle x,y\rangle,\langle u,v\rangle\rangle\in \ddot\UU\times\ddot\UU:x=v\;\wedge\;y=u\}$ exists and is basic.
\smallskip

\noindent{\em Hint:} Observe that\\ $\mathsf{Inv}=\{z\in\ddot\UU\times\ddot\UU:\dom\circ \dom(z)=\rng\circ\rng(z)\;\wedge\;\rng\circ\dom(z)=\dom\circ\rng(z)\}$ and apply Exercises~\ref{ex:FRG} and \ref{ex:Id}.
\end{exercise}

\begin{exercise}\label{ex:succ} Prove that the function $\Succ=\{\langle x,y\rangle\in\ddot{\UU}:y=x\cup\{x\}\}$ exists and is basic.
\smallskip

\noindent{\em Hint:} Observe that 
$\ddot{\UU}\setminus\Succ=\{\langle x,y\rangle\in\ddot{\UU}:\exists z\;\neg(z\in y\;\Leftrightarrow\;(z\in x\;\vee\;z=x))\}=\mathsf{dom}[T],$
where $T=\{\langle x,y,z\rangle\in\dddot{\UU}:\neg (z\in y\;\Leftrightarrow\;(z\in x\;\vee\;z=x))\}=T_1\cup T_2\cup T_3$, and\\
$T_1=\{\langle x,y,z\rangle\in\dddot{\UU}:z\notin y\;\wedge\;z\in x\}=[(\ddot{\UU}\setminus \mathbf E^{-1})\times\UU]^\circlearrowright\cap[\mathbf E\times\UU]^\circlearrowleft;$\\
$T_2=\{\langle x,y,z\rangle\in\dddot{\UU}:z\notin y\;\wedge\;z=x\}=[(\ddot{\UU}\setminus \mathbf E^{-1})\times\UU]^\circlearrowright\cap [\mathsf{Id}\times\UU]^\circlearrowleft;$\\ 
$T_3=\{\langle x,y,z\rangle\in\dddot{\UU}:z\in y\;\wedge\;z\notin x\;\wedge\;z\ne x\}=[\mathbf E^{-1}\times\UU]^\circlearrowright\cap [(\ddot{\UU}\setminus(\mathbf E\cup \mathsf{Id}))\times\UU]^\circlearrowleft$.
\end{exercise}

\begin{exercise}\label{ex:Ind} The class $\mathbf{Ind}$ of all inductive sets exists and is basic.
\smallskip

\noindent{\em Hint:} Observe that $
\UU\setminus\mathbf{Ind}=\{x\in\UU:\emptyset\notin x\;\vee\;(\exists y\in x\;(y\cup\{y\}\notin x))\}=$\\
$\mathsf{rng}[(\{\emptyset\}{\times}\UU){\setminus}\mathbf E]\cup \mathsf{dom}[P]$, where $P=\{\langle x,y\rangle\in\ddot{\UU}:(y\in x)\;\wedge\;(y\cup\{y\}\notin x)\}=\mathbf E^{-1}\cap P'$ and
$P'=F_RG$ and $R=\ddot{\UU}\setminus \mathbf E$, $F=\Succ\circ \rng$, $G=\dom$.
\end{exercise}

\begin{exercise} Prove that the set $\w$ of natural numbers is a basic class.
\smallskip

\noindent{\em Hint:} Observe that $\w=\bigcap\mathbf{Ind}$ and apply Exercise~\ref{ex:Ind}.
\end{exercise}

\begin{exercise} Prove that the functions
$$\circlearrowright=\{\langle \langle x,y,z\rangle,\langle z,x,y\rangle\rangle:x,y,z\in \UU\}\quad\mbox{and}\quad \circlearrowleft=\{\langle \langle x,y,z\rangle,\langle y,z,x\rangle\rangle:x,y,z\in \UU\}$$exist and that $\circlearrowright=\circlearrowleft\circ\circlearrowleft$ are basic classes. Also check that $\circlearrowleft=\circlearrowright\circ\circlearrowright$.
\smallskip

\noindent{\em Hint:} Apply Exercises~\ref{ex:dom}, \ref{ex:rng}, \ref{ex:FRG}, \ref{ex:Id}.
\end{exercise} 

\begin{exercise} Prove that the function $\bigcup:\UU\to\UU$, $\bigcup:x\mapsto\bigcup x$, exists and is basic.
\smallskip

\noindent{\em Hint:} Observe that ${\bigcup}=\{\langle x,y\rangle:\forall z\;(z\in y\leftrightarrow \exists u\;(z\in u\;\wedge\;u\in x))\}=\ddot\UU\setminus\dom[(T_1\setminus T_2)\cup (T_2\setminus T_1)]$, where $T_1\defeq\{\langle\langle x,y\rangle,z\rangle: z\in y\}=\{\langle\langle y,z\rangle,x\rangle: z\in y\}^\circlearrowright=[(\ddot \UU\setminus\mathbf E^{-1})\times \UU]^\circlearrowright$ and $T_2\defeq \dom[\{\langle\langle\langle x,y\rangle,z\rangle, u\rangle: z\in u\;\wedge\;u\in x\}]=\dom[(\rng\circ\rng_{\mathbf E}\rng)\cap(\rng_{\mathbf E}\dom\circ\dom\circ\dom)]$.
\end{exercise}

\begin{exercise} Prove that the function $\mathsf{upair}:\UU\times\UU\to\UU$, $\mathsf{upair}:\langle x,y\rangle\mapsto \{x,y\}$, is basic.
\smallskip

\noindent{\em Hint:} Observe that $\mathsf{upair}=\bigcup\circ \mathsf{pair}$.
\end{exercise}








\begin{exercise}\label{ex:basic-singleton} Prove that for any basic set $x$, the set $\{x\}$ is basic.
\smallskip

\noindent{\em Hint:} Since $\{x\}=\{z:z=x\}=\{z:\forall u\;(u\in z\leftrightarrow u\in x\}$, it suffices to check that the class $\UU\setminus\{x\}=\{z:\exists u\;(u\in z\not\leftrightarrow u\in x)\}=\dom[\{\langle z,u\rangle:(u\in z\not\leftrightarrow u\in x\}]$ is basic, which follows from the basic property of the classes $\{\langle z,u\rangle:u\in z\}=\mathbf E^{-1}$ and $\{\langle z,u\rangle:u\in x\}=\UU\times x$.
\end{exercise} 

\begin{exercise}\label{ex:basic-doubleton} Prove that for any basic sets $x,y$, the set $\{x,y\}$ is basic.
\smallskip

\noindent{\em Hint:} By Exercise~\ref{ex:basic-singleton}, the singletons $\{x\}$ and $\{y\}$ are basic classes, Then the set $\{x,y\}=\UU\setminus((\UU\setminus\{x\})\setminus\{y\})$ is basic.
\end{exercise}

\begin{exercise} Prove that for every basic function $F$ and basic set $x\in\dom[F]$, the set $y\defeq F(x)$ is basic.
\smallskip

\noindent{\em Hint:} Observe that $y=\bigcup\rng[F\cap(\{x\}\times\UU)]$.
\end{exercise}

\begin{exercise}\label{ex:numbers-basic} Prove that the numbers $0,1,2,3,4,...$ are basic sets.
\end{exercise}

\begin{exercise} Can we conclude from Exercise~\ref{ex:numbers-basic} that every element of the set $\w$ is a basic set?

\noindent{\em Hint:} No. Why? 
\end{exercise}

\section{The complexity of formulas}

In this section we discuss the L\'evy complexity of formulas of the Classical Set Theory and will evaluate the complexity of some important formulas. Basic definitions related to formulas can be found in Section~\ref{s:language}.

A formula $\varphi$ is said to have \index{formula!with bounded quantifiers}{\em bounded quantifiers} if all quantifiers in $\varphi$ are of the form $\forall X\in Y$ or $\exists X\in Y$, where $X,Y$ are symbols of variables.  Such formulas form the first level $\Sigma_0=\Pi_0=\Delta_0$ of the \index{L\'evy hierarchy}{\em L\'evy hierarchy} of complexity of formulas.  More precisely, the level $\Sigma_0=\Pi_0=\Delta_0$ consists of all formulas which are equivalent to the formulas with bounded quantifiers. The higher levels of the L\'evy hierarchy are defined by induction: a formula $\varphi$ is 
\begin{itemize}
\item \index{$\Sigma_i$-formula}a {\em $\Sigma_{i+1}$-formula} if it is equvalent to a formula of the form $\exists X_1\exists X_2\dots\exists X_n\;(\psi)$ where $\psi$ is a $\Pi_i$-formula;
\item  \index{$\Pi_i$-formula}a {\em $\Pi_{i+1}$-formula} if it is equvalent to a formula of the form $\forall X_1\forall X_2\dots\forall X_n\;(\psi)$ where $\psi$ is a $\Sigma_i$-formula;
\item \index{$\Delta_i$-formula}a {\em $\Delta_{i+1}$-formula} if it is both a $\Sigma_{i+1}$-formula and a $\Pi_{i+1}$-formula.
\end{itemize}
So, $\Pi_0$-formulas, $\Sigma_0$-formulas, and $\Delta_0$-formulas are formulas equivalents to formulas with bounded quantifiers. 

The atomic formula $X\in Y$ has no quantifiers and hence is a $\Delta_0$-formula.

\begin{example} The $\Sigma_1$-formula $\exists Y\;(X\in Y)$ expresses the fact that $X$ is a set.
\end{example}

\begin{exercise}\label{ex:X=Y-Delta} Show that the formulas $X\subseteq Y$ and $X=Y$ are $\Delta_0$-formulas.
\smallskip

\noindent{\em Hint:} Observe that $X\subseteq Y$ is equivalent to the $\Delta_0$-formula  $(\forall z\in X\; (z\in Y))$.
\end{exercise}

\begin{exercise} Show that the formulas $z=\{x,y\}$ and $z=\{x\}$ are $\Delta_0$-formulas.
\smallskip

\noindent{\em Hint:} Observe that $z=\{x,y\}$ is equivalent to the $\Delta_0$-formula\newline
\centerline{$((x\in z)\wedge (y\in z))\wedge (\forall u\in z\; (u=x\vee u=y)).$}
\end{exercise} 

\begin{exercise}\label{ex:DeltaOpair} Show that the formula $z=\langle x,y\rangle$ is a $\Delta_0$-formula.
\smallskip

\noindent{\em Hint:} This formula is equivalent to the $\Delta_0$-formula\newline
\centerline{$\exists s\in z\;\exists p\in z\;((z=\{s,p\})\wedge (s=\{x\})\wedge (p=\{x,y\}))$.}
\end{exercise}

\begin{exercise} Show that the formula $Z=X\setminus Y$ is a $\Delta_0$-formulas.
\smallskip

\noindent{\em Hint:} This formula is equivalent to the $\Delta_0$-formula\newline
\centerline{$\big(\forall z\in Z\;((z\in X)\wedge (z\notin Y))\big)\wedge\big(\forall x\in X\;((x\notin Y)\to (x\in Z))\big)$.}
\end{exercise}

\begin{exercise} Show that the formulas $x\in \dom[Y]$ and $X=\dom[Y]$ are $\Delta_0$-formulas.
\smallskip

\noindent{\em Hint:} Observe that $x\in \dom[Y]$ is equivalent to the $\Delta_0$-formula\newline
\centerline{$\exists p\in Y\;\exists s\in p\;\exists d\in p\;\exists y\in d\; (p=\langle x,y\rangle)$.}
\end{exercise} 

\begin{exercise} Show that the formulas $z\in X\times Y$ and $Z=X\times Y$ are $\Delta_0$-formulas.
\smallskip

\noindent{\em Hint:} Observe that $z\in X\times Y$ is equivalent to the $\Delta_0$-formula $\exists x\in X\;\exists y\in Y\; (z=\langle x,y\rangle)$.
\end{exercise} 

\begin{exercise} Show that the formulas $z\in X^{-1}$ and $Y=X^{-1}$ are $\Delta_0$-formulas.
\smallskip

\noindent{\em Hint:} Observe that $p\in X^{-1}$ is equivalent to the $\Delta_0$-formula\\
\centerline{$\exists q\in X\;\exists u\in q\;\exists x\in u\;\exists y\in u\;(q=\langle x,y\rangle\;\wedge\; p=\langle y,x\rangle)$.}
\end{exercise}

\begin{exercise} Show that the formula $t=\langle x,y,z\rangle$ is a $\Delta_0$-formula.
\smallskip

\noindent{\em Hint:} This formula is equivalent to the $\Delta_0$-formula\newline
\centerline{$\exists u\in t\;\exists p\in u\;\exists v\in p\;\big((x\in v)\wedge (y\in v)\;\wedge (z\in u)\wedge (p=\langle x,y\rangle)\wedge (t=\langle p,z\rangle)\big)$.}
\end{exercise} 

\begin{exercise} Show that the formulas $t\in X^\circlearrowright$ and $Y=X^\circlearrowright$ are $\Delta_0$-formulas.
\smallskip

\noindent{\em Hint:} Observe that $t\in X^\circlearrowright$ is equivalent to the $\Delta_0$-formula\\
\centerline{$\exists r\in X\;\exists u\in r\;\exists p\in u\;\exists v\in p\;\exists x\in v\;\exists y\in v\;\exists z\in u\;\big(r=\langle x,y,z\rangle\wedge t=\langle z,x,y\rangle)$.}
\end{exercise} 

\begin{exercise}\label{ex:X=0} Show that the formula $X=\emptyset$ is a $\Delta_0$-formula.
\smallskip

\noindent{\em Hint:} Observe that $X=\emptyset$ is equivalent to the $\Delta_0$-formula $\forall x\in X\;(x\ne x)$.
\end{exercise}


\section{G\"odel's Theorem on class existence}

This section is devoted to a fundamental result of G\"odel\footnote{{\bf Task:} Read about G\"odel in Wikipedia.} on the existence of the class  $\{x:\varphi(x)\}$ for any $\UU$-bounded formula $\varphi(x)$  with one free variable $x$.  It is formulated and proved in the metalanguage by induction on the complexity of a formula $\varphi(x)$. So it provides a scheme for proofs of concrete instances of the formula $\varphi(x)$, but some of them admit more simple and direct proofs, see (and solve) exercises throughout the book.

\begin{definition} A formula $\varphi$ of Set Theory is called \index{formula!{{\bf U}}-bounded}{\em $\UU$-bounded} if each quantifier appearing in this formula is of the form $\exists x\in\UU$ or $\forall x\in\UU$, where $x$ is a symbol of variable. 
\end{definition}

Restricting the domain of quantifiers to the class $\UU$ allows us to avoid Berry's paradox at forming classes by constructors.

For any natural number $n\ge 3$ define an \index{ordered $n$-tuple} {\em ordered $n$-tuple} $\langle x_1,\dots,x_n\rangle$ of sets $x_1,\dots,x_n$ by the recursive formula: $\langle\langle x_1,\dots,x_{n-1}\rangle,x_n\rangle$. 


\begin{theorem}[G\"odel, 1940]\label{t:class} Let $\varphi(x_1,\dots,x_n,Y_1,\dots,Y_m)$ be a $\UU$-bounded formula of Set Theory whose free variables belong to the list $x_1,\dots,x_n,Y_1,\dots,Y_m$. Then for any classes $Y_1,\dots,Y_m$ the class $\Phi=\{\langle x_1,\dots,x_{n}\rangle \in \UU^n:\varphi(x_1,\dots,x_n,Y_1,\dots,Y_m)\}$ exists. Moreover, if the classes $Y_1,\dots,Y_m$ are basic, then so is the class $\Phi$.
\end{theorem}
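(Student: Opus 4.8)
The plan is to argue in the metalanguage by induction on the complexity (the number of logical connectives and quantifiers) of the formula, after two preparatory reductions. First, to make the inductive steps uniform I would prove the slightly more flexible statement: for every $\UU$-bounded formula $\psi$ and every list $z_1,\dots,z_k$ (with $k\ge 1$) of distinct set-variables containing all free set-variables of $\psi$, the class $C(\psi;z_1,\dots,z_k)=\{\langle z_1,\dots,z_k\rangle\in\UU^k:\psi\}$ exists; the theorem is the special case $\psi=\varphi$, $k=n$. Second, using the Axiom of Extensionality I would preprocess $\psi$ into a logically equivalent $\UU$-bounded formula in which every class parameter $Y_l$ occurs only in atomic subformulas of the shape $u\in Y_l$ with $u$ a set-variable: indeed $Y_l\in u$ can be replaced by $\exists w\in\UU\,(w\in u\wedge\forall t\in\UU\,(t\in w\Leftrightarrow t\in Y_l))$, and $u=Y_l$, $Y_l=Y_{l'}$, $Y_l\in Y_{l'}$ are treated similarly. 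After this rewriting the only atomic formulas are $u\in v$, $u=v$ (both set-variables) and $u\in Y_l$.

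For the base of the induction I would build the coordinate projections. Composing the functions $\dom$ and $\rng$ from Exercises~\ref{ex:dom} and~\ref{ex:rng} gives, for each $1\le i\le k$, a function $\pr_i$ with $\pr_i(\langle z_1,\dots,z_k\rangle)=z_i$ (namely $\pr_k=\rng$, $\pr_i=\rng\circ\dom^{\,k-i}$ for $2\le i\le k-1$, and $\pr_1=\dom^{\,k-1}$); these exist because a composition of functions is a function. Now the three kinds of atomic formula are dispatched at once: $C(z_i\in z_j)=(\pr_i)_{\E}(\pr_j)\cap\UU^k$ and $C(z_i=z_j)=(\pr_i)_{\Id}(\pr_j)\cap\UU^k$ by Exercise~\ref{ex:FRG} (with $\E$ the membership relation and $\Id$ from Exercise~\ref{ex:Id}), while $C(z_i\in Y_l)=\pr_i^{-1}[Y_l]\cap\UU^k$ is a preimage, which exists by the Axioms of Product, Inversion and Domain. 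Here the iterated product $\UU^k=\UU^{k-1}\times\UU$ exists by the Axiom of Product.

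The inductive steps use only the Boolean axioms and the Axiom of Domain, with no coordinate juggling. For $\neg$ I would take $C(\neg\chi)=\UU^k\setminus C(\chi)$ by the Axiom of Difference; for $\wedge$ and $\vee$ both subformulas have their free set-variables inside the same list $z_1,\dots,z_k$, so $C(\chi_1\wedge\chi_2)=C(\chi_1)\cap C(\chi_2)$ and $C(\chi_1\vee\chi_2)=C(\chi_1)\cup C(\chi_2)$, and $\Ra,\Leftrightarrow$ reduce to these. For the existential step, after renaming I may assume the bound variable $w$ is not among $z_1,\dots,z_k$; applying the induction hypothesis to $\chi$ with the enlarged list $z_1,\dots,z_k,w$ (with $w$ placed last) yields $B=\{\langle z_1,\dots,z_k,w\rangle\in\UU^{k+1}:\chi\}\subseteq\UU^k\times\UU$, and then $C(\exists w\in\UU\,\chi)=\dom[B]$ by the Axiom of Domain, since $\dom$ strips off exactly the last coordinate $w$. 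The universal quantifier is handled through the equivalence of $\forall w\in\UU\,\chi$ with $\neg\exists w\in\UU\,\neg\chi$. Specialising to $\varphi$ and the list $x_1,\dots,x_n$ then finishes the proof.

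The main obstacle is not any single step but the discipline needed to keep the metalanguage induction honest. Two points deserve care: the class-parameter preprocessing must be justified as a genuine logical equivalence, which is exactly where Extensionality enters and where one checks that proper-class parameters behave correctly, since no set can have the same elements as a proper class; and the left-nested encoding $\langle z_1,\dots,z_k\rangle=\langle\langle z_1,\dots,z_{k-1}\rangle,z_k\rangle$ must be respected throughout, so that $\UU^{k+1}=\UU^k\times\UU$ and $\dom$ really removes the newly quantified variable. Routing all atomic cases through the projection functions $\pr_i$ is what lets me avoid an explicit and fiddly lemma on arbitrary permutations of the coordinates of a flat tuple, which would otherwise be the most delicate part of the argument.
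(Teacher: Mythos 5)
Your proof is correct and its skeleton --- metalanguage induction on formula complexity, coordinate projections built from $\dom$ and $\rng$, Exercise~\ref{ex:FRG} for the atomic cases, the Axiom of Difference for the Boolean connectives, and the Axiom of Domain for the existential quantifier (with $\forall$ reduced to $\neg\exists\neg$) --- coincides with the paper's proof. The one genuine divergence is in the treatment of atomic formulas involving the class parameters $Y_1,\dots,Y_m$: the paper handles all eight atomic shapes directly in Lemmas~\ref{l:xixj}--\ref{l:Y=Z} (for instance, $\{\langle x_1,\dots,x_n\rangle:Y\in x_i\}$ is shown to be empty when $Y$ is proper and is produced via the constant function $\UU\times\{Y\}$ when $Y$ is a set, while $Y\in Z$ and $Y=Z$ give $\UU^n$ or $\emptyset$), whereas you preprocess the formula with Extensionality so that only the shapes $u\in v$, $u=v$, $u\in Y_l$ survive. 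Your route buys fewer base cases at the price of a rewriting lemma whose correctness for proper-class parameters must be checked separately --- and you do flag and resolve exactly the right point, namely that no set can have the same elements as a proper class, so $\exists w\in\UU\,(w\in u\wedge\forall t\in\UU\,(t\in w\Leftrightarrow t\in Y_l))$ is genuinely equivalent to $Y_l\in u$ in all cases. The paper's route avoids any logical-equivalence argument but pays with three extra lemmas. Either is acceptable; your explicit special-casing of $\pr_1=\dom^{k-1}$ is in fact slightly more careful than the paper's uniform formula $\rng\circ\dom^{n-i}$.
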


In this theorem the $n$-th power $\UU^n$ of the universe $\UU$ is defined inductively: $\UU^1=\UU$ and $\UU^{n+1}=\UU^n\times\UU$ for a natural number $n$. 
Using the Axiom of Product, we can prove inductively that for every $n\in\IN$ the class $\UU^n$ exists.

Theorem~\ref{t:class} is proved by induction on the complexity of the formula $\varphi$. We lose no generality assuming that the formula $\varphi$ contains only  the existential quantifiers and the Sheffer stroke as the only logical connective. Such a restriction does not reduce the generality since every  logical connective can be expressed via the Sheffer stroke and for every formula $\psi$ and every symbol of variable $x$, the formula $\forall x(\psi)$ is equivalent to the formula $\neg(\exists x(\neg(\psi)))$, according to De Morgan's law.

If the formula $\varphi$ is atomic, then it is equal to one of the following atomic formulas:
$$x_i\in x_j,\;\;x_i\in Y_j,\;\;Y_i\in x_j,\;\;Y_i\in Y_j.$$
These cases are treated separately in the following lemmas. 

\begin{lemma}\label{l:xixj} For every natural number $n$ and positive numbers $i,j\le n$, the class
$$\{\langle x_1,\dots,x_{n}\rangle\in\UU^n:x_i\in x_j\}$$exists and is basic.
\end{lemma}

\begin{proof} Consider the basic functions $$\dom:\UU^2\to\UU,\;\dom:\langle x,y\rangle\mapsto x,\quad\mbox{and}\quad \rng:\UU^2\to\UU,\;\rng:\langle x,y\rangle\mapsto y,$$whose existence was established in Exercises~\ref{ex:dom} and \ref{ex:rng}.

Let $\dom^0=\Id$ and $\dom^{n+1}=\dom\circ\dom^n$ for every natural number $n$ (from the metalanguage). For every natural number $n$, the function $\dom^n$ assigns to any $(n+1)$-tuple $\langle x_1,\dots,x_{n+1}\rangle$ its first element $x_1$. 


We can prove inductively that for every natural number $n$ the function $\dom^n:\UU^{n+1}\to\UU$ exists and is basic.

Now observe that for any numbers $i\le n$ the function 
$$\Proj^n_i:\UU^n\to\UU,\;\;\Proj^n_i:\langle x_1,\dots,x_n\rangle\mapsto x_i,$$ exists and is basic, being  equal to the composition $\rng\circ\dom^{n-i}{\restriction}_{\UU^n}$.

By the Axiom of Memberships, the class $\E=\{\langle x,y\rangle\in\ddot\UU:x\in y\}$  exists.

Observing that for every non-zero natural numbers $i,j\le n$ 
$$
\{\langle x_1,\dots,x_n\rangle:x_i\in x_j\}=\{z\in\UU^n:\langle \Proj^n_i(z),\Proj^n_j(z)\rangle\in \E\}$$
we can apply Exercise~\ref{ex:FRG}, we conclude that this class exists and is basic.
\end{proof}

\begin{lemma}\label{l:FinY} For every non-zero natural numbers $i\le n$ and every (basic) class $Y$ the classes
$$\{\langle x_1,\dots,x_n\rangle:x_i\in Y\}\mbox{ \ and \ }\{\langle x_1,\dots,x_n\rangle:Y\in x_i\}$$exist (and are basic).
\end{lemma}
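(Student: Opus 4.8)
The plan is to reduce both classes to preimages under the projection functions $\Proj^n_i:\UU^n\to\UU$, $\Proj^n_i:\langle x_1,\dots,x_n\rangle\mapsto x_i$, whose existence was established inside the proof of Lemma~\ref{l:xixj}. For any class $Z$ the preimage $(\Proj^n_i)^{-1}[Z]=\{w\in\UU^n:\Proj^n_i(w)\in Z\}$ exists, since
\[
(\Proj^n_i)^{-1}[Z]=\dom[\Proj^n_i\cap(\UU^n\times Z)]
\]
by the Axioms of Product, Difference and Domain. Taking $Z=Y$ settles the first class immediately:
\[
\{\langle x_1,\dots,x_n\rangle\in\UU^n:x_i\in Y\}=\dom[\Proj^n_i\cap(\UU^n\times Y)].
\]

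For the second class I would again write $\{\langle x_1,\dots,x_n\rangle\in\UU^n:Y\in x_i\}=(\Proj^n_i)^{-1}[B]$, where $B=\{x\in\UU:Y\in x\}$ is the class of all sets having $Y$ as an element, so that everything reduces to proving that $B$ exists. The hard part is exactly this, and the obstacle is that $Y$ may be a \emph{proper} class: then the singleton $\{Y\}$ is not available (the Axiom of Pair applies only to sets), so one cannot simply set $B=\E[\{Y\}]$.

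To handle both cases uniformly I would first build $E_Y=\{w\in\UU:w=Y\}$, which equals $\{Y\}$ when $Y$ is a set and $\emptyset$ when $Y$ is proper. Rewriting $w=Y$ as $w\subseteq Y\wedge Y\subseteq w$ gives $E_Y=\mathcal P(Y)\cap\{w\in\UU:Y\subseteq w\}$. The power-class $\mathcal P(Y)$ exists, and the second factor exists as the complement $\UU\setminus\dom[T]$ of $\dom[T]=\{w:\exists z\,(z\in Y\wedge z\notin w)\}$, where
\[
T=(\UU\times Y)\cap(\ddot{\UU}\setminus\E^{-1})
\]
(using $\langle w,z\rangle\in\E^{-1}\Leftrightarrow z\in w$). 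Once $E_Y$ exists, I obtain $B=\E[E_Y]=\rng[\E\cap(E_Y\times\UU)]$: indeed $x\in\E[E_Y]$ iff some $w$ with $w=Y$ satisfies $w\in x$, which is precisely $Y\in x$, and this is vacuously false (matching $B=\emptyset$) when $Y$ is proper. Combining the pieces, $\{\langle x_1,\dots,x_n\rangle\in\UU^n:Y\in x_i\}=\dom[\Proj^n_i\cap(\UU^n\times B)]$ exists. If one is content with a non-uniform argument, the whole of the second paragraph collapses to a case split: when $Y$ is a set put $B=\E[\{Y\}]$, and when $Y$ is a proper class put $B=\emptyset=\UU\setminus\UU$; the uniform route above is only needed to avoid appealing to the set/proper-class dichotomy.
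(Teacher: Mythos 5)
Your proof is correct. For the first class you do exactly what the paper does: write it as $\dom[(\UU^n\times Y)\cap \Proj^n_i]$ and appeal to the class-existence axioms. For the second class your route genuinely differs. The paper splits into cases: if $Y$ is a proper class the class in question is empty (and exists by the Axioms of Universe and Difference), while if $Y$ is a set it considers the constant function $G=\UU\times\{Y\}$ and writes the class as $\{z\in\UU^n:\langle G(z),\Proj^n_i(z)\rangle\in\E\}$, invoking Exercise~\ref{ex:FRG}. You instead avoid the dichotomy in the main line of argument by building $B=\{x\in\UU:Y\in x\}$ uniformly: first $E_Y=\{w\in\UU:w=Y\}=\mathcal P(Y)\cap\{w\in\UU:Y\subseteq w\}$, whose two factors exist by the class-existence axioms (and which equals $\{Y\}$ or $\emptyset$ according as $Y$ is or is not a set), then $B=\E[E_Y]=\rng[\E\cap(E_Y\times\UU)]$, and finally the preimage under $\Proj^n_i$. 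All the identities you use check out, including $\E[E_Y]=\{x:Y\in x\}$ in both cases. The trade-off is clear: the paper's version is shorter because it is content to use the set/proper-class case split together with the already-proved Exercise~\ref{ex:FRG}, whereas yours is a more self-contained, uniform construction directly from the axioms of Product, Difference, Inversion and Domain --- and, as you note yourself, the case split recovers the paper's shorter argument as a special case.
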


\begin{proof} Observing that
$$\{\langle x_1,\dots,x_n\rangle:x_i\in Y\}=\{z\in\UU^n:\exists y\in Y\;(\langle z,y\rangle\in\Proj^n_i)\}=\dom[(\UU^n\times Y)\cap \Proj^n_i],$$
we see that the class  $\{\langle x_1,\dots,x_n\rangle:x_i\in Y\}$ exists by the Axioms of Membership, Domain, Product, Difference. Moreover this class is basic if $Y$ is basic.

If $Y$ is a proper class, then the class $\{\langle x_1,\dots,x_n\rangle:Y\in x_i\}$ is empty and hence exists and is basic by the Axioms of Membership and Difference.

If $Y$ is a set, then we can consider the function $G=\UU^n\times\{Y\}$ and conclude that the class $\{\langle x_1,\dots,x_n\rangle:Y\in x_i\}=\{z\in\UU^n:\langle G(z),\Proj^n_i(z)\rangle\in\E\}$ exists and is basic by Exercises~\ref{ex:FRG} and \ref{ex:basic-singleton}.
\end{proof}

\begin{lemma}\label{l:Y=Z} For every classes $Y,Z$ the class
$$\{\langle x_1,\dots,x_n\rangle:Y\in Z\}$$
exists.
\end{lemma}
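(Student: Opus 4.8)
The plan is to observe that neither of the two defining conditions $Y\in Z$ and $Y=Z$ mentions the tuple variables $x_1,\dots,x_n$ at all; each is simply a fixed assertion about the two given classes $Y$ and $Z$. Hence, by the Law of the Excluded Middle (applied in the metalanguage), each condition is either true or false outright, and the corresponding class degenerates to one of two trivial classes, both of which have already been shown to exist.

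Concretely, for the first class I would split into two cases. If $Y\in Z$ holds, then every tuple $\langle x_1,\dots,x_n\rangle\in\UU^n$ satisfies the condition, so
$$\{\langle x_1,\dots,x_n\rangle\in\UU^n:Y\in Z\}=\UU^n;$$
this class exists, since $\UU^n$ was shown to exist for every $n\in\IN$ (using the Axioms of Universe and Product, as noted after Theorem~\ref{t:class}). If instead $Y\notin Z$, then no tuple satisfies the condition, so the class equals the empty class $\emptyset=\UU\setminus\UU$, whose existence follows from the Axiom of Difference. In either case the class exists. The second class is handled identically, replacing $Y\in Z$ by $Y=Z$: if $Y=Z$ the class is $\UU^n$, and if $Y\ne Z$ the class is $\emptyset$.

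The main (and essentially only) subtle point is that the argument is genuinely non-constructive: it relies on deciding, in the metalanguage, which of $Y\in Z$ or $Y\notin Z$ (respectively $Y=Z$ or $Y\ne Z$) holds. This mirrors the treatment of the proper-class case in the proof of Lemma~\ref{l:FinY}, where $\{\langle x_1,\dots,x_n\rangle:Y\in x_i\}$ was declared empty whenever $Y$ is a proper class. There is no combinatorial construction to carry out here and no axiom beyond Universe, Product, and Difference is needed, so I do not expect any real obstacle beyond recording this case distinction cleanly.
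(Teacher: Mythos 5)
Your proof is correct and follows essentially the same route as the paper: since the conditions $Y\in Z$ and $Y=Z$ do not involve the tuple variables, each class is either $\UU^n$ or $\emptyset$, both of which exist by the Axioms of Universe, Product and Difference. Your additional remark on the non-constructive case distinction is a fair observation but does not change the argument.
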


\begin{proof} This class is equal to $\UU^n$ or $\emptyset$ and hence exists and is basic by the Axioms of Membership, Domain, Product and Difference.
\end{proof}
\medskip

By Lemmas~\ref{l:xixj}--\ref{l:Y=Z}, for any atomic formula $\varphi$ with free variables in the list $x_1,\dots,x_n$, $Y_1,\dots,Y_m$ and any (basic) classes $Y_1,\dots,Y_m$, the class $\{\langle x_1,\dots,x_n\rangle:\varphi(x_1,\dots,x_n)\}$ exists (and is basic). Observe that each atomic formula has exactly 3 symbols.
\smallskip
 
 Assume that for some natural number $k\ge 4$, Theorem~\ref{t:class} have been proved for all formulas $\varphi$ of containing $<k$ symbols. Let $\varphi$ be a formula consisting of exactly $k$ symbols. We also assume that the free variables of the formula $\varphi$ are contained in the list $x_1,\dots,x_{n}, Y_1,\dots,Y_m$. Since the formula $\varphi$ is not atomic, there exist formulas $\phi,\psi$ such that $\varphi$ is equal to $(\phi)|(\psi)$ or $\exists x(\phi)$ for some symbol of variable $x$.
 
First assume that $\varphi$ is equal to the formula $(\phi)|(\psi)$. In this case the formulas $\phi,\psi$ consist of $<k$ symbols and have $\Free(\phi)\cup\Free(\psi)=\Free(\varphi)\subseteq\{x_1,\dots,x_{n},Y_1,\dots,Y_m\}$.  Applying the inductive assumption, we conclude that for any (basic) classes $Y_1,\dots,Y_m$ the classes
$$\Phi=\{\langle x_1,\dots,x_{n}\rangle\in \UU^n:\phi(x_1,\dots,x_{n},Y_1,\dots,Y_m)\}$$and 
$$\Psi=\{\langle x_1,\dots,x_{n}\rangle\in \UU^n:\psi(x_1,\dots,x_{n},Y_1,\dots,Y_m)\}$$
exist (and are basic).
Then the class
$$\{\langle x_1,\dots,x_{n}\rangle\in \UU^n:\varphi(x_1,\dots,x_{n},Y_1,\dots,Y_m)\}=\UU^n\setminus(\Phi\cap \Psi)=\UU^n\setminus(\Phi\setminus(\Phi\setminus\Psi))$$exists by the Axioms of Membership, Domain and Difference (and is basic, being a result of applying the operations of domain, product and difference to basic classes).
\smallskip

Next, assume that $\varphi$ is equal to the formula $\exists x(\phi)$. If $x\in\{x_1,\dots,x_{n},Y_1,\dots,Y_m\}$, then we can replace all free occurences of the symbol $x$ in the formula $\phi$ by some other symbol and assume that $x\notin \{x_1,\dots,x_{n},Y_1,\dots,Y_m\}$. Then the formula $\phi$ has all its free variables in the list $x_1,\dots,x_{n},x,Y_1,\dots,Y_n$. By the inductive assumption, the class $$\Phi=\{\langle x_1,\dots,x_{n},x\rangle\in\UU^{n+1}:\phi(x_1,\dots,x_{n},x,Y_1,\dots,Y_m)\}$$exists (and is basic) and then the class $$\{\langle x_1,\dots,x_{n}\rangle\in\UU^n:\varphi(x_1,\dots,x_{n},Y_1,\dots,Y_m)\}$$ is equal to the class $\mathsf{dom}[\Phi],$ which exists by the Axiom of Domain (and is basic because of the definability of the class $\Phi$).
 This completes the inductive step and also completes the proof of the theorem.$\hfill\square$
\medskip

\begin{exercise} Show that every $\Delta_0$-formula is equivalent to a $\UU$-bounded formula.
\smallskip

\noindent{\em Hint:} Observe that $\forall X\in Y\;(\varphi)$ is equivalent to $\forall X\in\UU\;((X\in Y)\to(\varphi))$ for every formula $\varphi$. 
\end{exercise}

\begin{exercise} Show that every $\UU$-bounded formula $\varphi$ is a $\Sigma_2$-formula.
\smallskip

\noindent{\em Hint:} Observe that $\varphi$ is equivalent to the $\Sigma_2$-formula $\exists U\, ((\forall C\,(C\subseteq U))\wedge (\psi))$ where $\psi$ is the modification of the formula $\varphi$ in which all quantifiers $\forall X\in \UU$ and $\exists X\in\UU$ are replaced with the bounded quantifiers $\forall X\in U$ and $\exists X\in U$, respectively.
\end{exercise}

\begin{remark} Theorem~\ref{t:class} implies that a class $X$ is basic if and only if there exists an $\UU$-bounded formula $\varphi(x,E)$ with two free variables $x,E$ such that $X=\{x:\varphi(x,\E)\}$. This shows that the definition of a basic class involves quantifiers over formulas and hence is a property defined in the metalanguage (used for the analysis of models of the Classical Set Theory). The property of a class to be basic cannot be expressed in the language of the Classical Set Theory. Nonetheless every $\UU$-bounded formula $\varphi(x,E)$ with two free variables $x,E$ does determine a unique basic class $\{x:\varphi(x,\mathbf E)\}$, according to G\"odel's Theorem~\ref{t:class}. Moreover, assuming that CST is consistent, it is possible to construct countable models of CST in which all classes are basic, see \cite{HLR}.
\end{remark}

\begin{Exercise}
Find a formula $\varphi(X,E)$ with free variables $X,E$ such that every class $X$ satisfying the formula $\varphi(X,\E)$ is not basic.

\noindent{\em Hint:} Let $2^{<\omega}=\bigcup_{n\in\w}2^n$ and for every $k\in 2$, consider the function $\vec k:2^{<\omega}\to 2^{<\omega}$, $\vec k:t\mapsto\{\langle 0,k\rangle\}\cup\{\langle n\cup\{n\},y\rangle:\langle n,y\rangle\in t\}$. For every $n\in\w$ let $2^{<n}\defeq\bigcup_{k\in n}2^k$ and $T\defeq\bigcup_{n\in\w}6^{2^{<n}}$. For a class $X$ and a set $a$, let $X_a\defeq\{x\in\UU:\langle a,x\rangle\in X\}$. Consider the formula $\varphi(X,\E)$ describing the following property of a class $X$: $X_\emptyset=\E$ and for every $n\in\IN$, $\lambda\in 6^{2^{<n}}$ and the classes $X_\lambda$, $X_{\lambda\circ\vec 0}$, $X_{\lambda\circ\vec1}$ the following properties hold:
\begin{enumerate}
\item if $\lambda(\emptyset)=0$, then $X_\lambda=X_{\lambda\circ\vec 0}$;
\item if $\lambda(\emptyset)=1$, then $X_\lambda=X_{\lambda\circ\vec 0}\setminus X_{\lambda\circ\vec 1}$;
\item if $\lambda(\emptyset)=2$, then $X_\lambda=X_{\lambda\circ\vec 0}\times X_{\lambda\circ\vec 1}$;
\item if  $\lambda(\emptyset)=3$, then $X_\lambda=(X_{\lambda\circ\vec 0})^{-1}$;
\item if $\lambda(\emptyset)=4$, then $X_\lambda=(X_{\lambda\circ\vec 0})^\circlearrowright$;
\item if $\lambda(\emptyset)=5$, then $X_\lambda=\dom[X_{\lambda\circ\vec 0}]$.
\end{enumerate}
Assuming that for some class $X$ the formula $\varphi(X,\E)$ holds, consider the set $\Lambda=\{\lambda\in T:\lambda\notin X_\lambda\}$. Show that for every basic class $D$ there exists $\lambda\in \bigcup_{n\in\w}6^{2^{<n}}$ such that $D=X_\lambda$. Use this fact to prove that the classes $\Lambda$ and $X$ are not basic.
\end{Exercise}
\newpage

 \section{Axiomatic Set Theory of Zermelo--Fraenkel}
 
The Set Theory of Zermelo--Fraenkel (briefly ZF) is a part of the theory NBG, which speaks only about sets and identifies classes with formulas (which are used for defining those classes). The undefined notions of Zermelo--Fraenkel Set Theory are the notions of set and membership. The language of ZF theory the same as the language of NBG theory.

Since the classes formally do not exist in ZF, more axioms are necessary to ensure the existence of sufficiently many of sets. So, the list of ZF axioms is infinite. It includes two axiom schemas: of  separation and  replacement.  The axiom schema of separation substitutes  seven axioms of existence of classes and sets in NBG and the axiom schema of replacement is a substitute for the single axiom of replacement in the NBG axiom system.
\medskip

\begin{center}\fbox{
\begin{minipage}{395pt}
\centerline{\bf \large Axioms of Zermelo--Fraenkel:}\index{Axioms of Zermelo-Fraenkel}\index{Axioms of ZFC}\index{ZFC}
\vskip2pt

\parskip1pt\parindent=0pt

\index{Axiom of Equality}{\bf Axiom of Equality:} $\forall x\;\forall y\;\big((x=y)\;\rightarrow\;\forall z\;(x\in z\;\leftrightarrow\;y\in z)\big)$

\index{Axiom of Pair} {\bf Axiom of Pair:} $\forall x\;\forall y\;\exists z\;\forall u\;(u\in z\;\leftrightarrow\;(u=x\;\vee\;u=y))$

\index{Axiom of Union}{\bf Axiom of Union:} $\forall x\;\exists y\;\forall z\;(z\in y\;\leftrightarrow\;\exists u\;(z\in u\;\wedge\;u\in x))$

\index{Axiom of Power-set}{\bf Axiom of Power-set:} $\forall x\;\exists y\;\forall z\;(z\in y\;\leftrightarrow\;\forall u\;(u\in z\;\rightarrow\;u\in x))$


\index{Axiom of Infinity}{\bf Axiom of Infinity:} $\exists x\;\exists o\;\big((\forall y\;(y\notin o))\;\wedge\; o\in x\;\wedge\;(\forall n\;(n\in x\;\to\;n\cup\{n\}\in x))\big)$

\index{Axiom of Foundation} {\bf Axiom of Foundation:} $\forall x\;(\exists y\;(y\in x)\;\Rightarrow\;\exists z\;(z\in x\;\wedge\;\forall u\;(u\in x\;\rightarrow\;u\notin z)))$
 \vskip1pt
 
\index{Axiom Schema of Separation}{\bf Axiom Schema of Separation:} Let $\varphi$ be a formula whose free variables are in the list $x,z,c$ and $y$ is not free for $\varphi$. Then $\forall x\;\forall c\;\exists y\;\forall z\;(z\in y\;\leftrightarrow\;(z\in x\;\wedge\;\varphi(x,z,c)))$

\index{Axiom Schema of Replacement}{\bf Axiom Schema of Replacement:} Let $\varphi$ be a formula whose free variables are in the list $x,u,v,c$ and $y$ is not free for $\varphi$. Then\\
{$\forall x\,\forall c\,\big((\forall u\in x\,\exists! v\,\varphi(x,u,v,c))\,\rightarrow\,\exists y\,\forall v\,(v\in y\,\leftrightarrow\,\exists u\,(u\in x\,\wedge\,\varphi(x,u,v,c)))\big)$}
\vskip1pt
\end{minipage}
}
\end{center}
\smallskip

The axioms ZF with added Axiom of Choice form the axioms ZFC. 

Replacing the quantifiers $\forall x$ and $\exists x$ in the axioms ZF by bounded quantifiers $\forall x\in\UU$ and $\exists x\in\UU$, we can see that obtained statements are theorems of NBG. This means that $\UU$ is a model of ZFC within NBG. So, consistency of NBG implies the consistency of ZFC. The converse is also true: the consistency of ZFC implies the consistency of NBG. So these two theories are equiconsistent. Moreover NBG is a conservative extension of ZFC, which means that a $\UU$-bounded formula without free variable is a theorem of ZFC if and only if it is a theorem of NBG. This important fact was proved by Shoenfield, see  \cite[p.70]{Jech}. Therefore, if we are interested only in sets, there is no difference (except aesthetic) which theory to use. On the other hand, NBG has essential advantages: it has finite list of axioms and allows to work freely with classes. 
This is a reason why we have chosen NBG for presentation of Set Theory. Since NBG deals with sets and classes and it was created by the classics of Set Theory and Logic (von Neumann, Robinson, Bernays, G\"odel) we refer to this theory as  the Classical Set Theory (shortly, CST).\index{CST} 
From now on we accept the following list of 12 axioms, called the {\em Axioms of Classical Set Theory}.
\medskip

\noindent\fbox{
\begin{minipage}{425pt}
\parskip1pt
\centerline{\bf \large Axioms of Classical Set Theory}\index{Axioms of Classical Set Theory}
\vskip2pt

\noindent{\bf Equality:}\index{Axiom of Equality} Equal classes behave equally with respect to taking elements.
\vskip1pt

\noindent{\bf Pair:}\index{Axiom of Pair} For any sets $x,y$ the set $\{x,y\}$ exists.
\vskip1pt


\noindent{\bf Membership:}\index{Axiom of Membership} The class $\mathbf E=\{\langle x,y\rangle:x\in y\}$ exists.

\noindent{\bf Domain:}\index{Axiom of Domain} For every class $X$ the class $\mathsf{dom}[X]=\{x:\exists y\;(\langle x,y\rangle\in X\}$ exists.

\noindent {\bf Difference:}\index{Axiom of Difference} For any classes $X,Y$ the class $X\setminus Y=\{x:x\in X\;\wedge\;x\notin Y\}$ exists.

\noindent{\bf Product:}\index{Axiom of Product} For every classes $X,Y$ the class $X\times Y=\{\langle x,y\rangle:x\in X,\;y\in Y\}$ exists.

\noindent{\bf Inversion:}\index{Axiom of Inversion} For every class $X$ the class $X^{-1}=\{\langle y,x\rangle:\langle x,y\rangle\in X\}$ exists.

\noindent{\bf Cycle:}\index{Axiom of Cycle} For every class $X$ the class $X^\circlearrowright=\{\langle z,x,y\rangle:\langle x,y,z\rangle\in X\}$ exists.
\vskip1pt

\noindent{\bf Replacement:}\index{Axiom of Replacement} For every function $F$ and set $x$ the class $F[x]=\{F(y):y\in x\}$ is a set.

\noindent{\bf Union:}\index{Axiom of Union} For every set $x$ the class $\bigcup x=\{z:\exists y\in x\;(z\in y)\}$ is a set.

\noindent{\bf Power-set:}\index{Axiom of Power-set} For every set $x$ the class $\mathcal P(x)=\{y:y\subseteq x\}$ is a set.
\vskip1pt

\noindent{\bf Infinity:}\index{Axiom of Infinity} There exists an inductive set.
\smallskip
\end{minipage}
} 
\vskip5pt

\noindent Whenever necessary, we will add to this list the {\bf Axiom of Foundation} or\\  the {\bf Axiom of (Global) Choice}, which will be specially acknowledged. 
\newpage

\part{Fundamental Constructions}

In this section we survey some fundamental constructions of the Classical Set Theory, which often appear in other areas of Mathematics:  relations, functions, indexed families of classes,  Cartesian products, equivalence relations. Often we shall  formulate the corresponding existence theorems as exercises with solutions or hints. We recall that $\ddot\UU=\UU\times\UU$ and ${\dddot{\UU}}{}={}{\ddot\UU\times\UU}$.

\section{Cartesian products of sets}

By the Axiom of Product, for two classes  $X,Y$ the class $$X\times Y=\{\langle x,y\rangle:x\in X\;\wedge\;y\in Y\}$$exists. If the classes $X,Y$ are basic, then so is the class $X\times Y$.

\begin{exercise} Write down the products $2\times 3$ and $3\times 2$. Are they equal? Are they equal to the number $6$? Find all natural numbers $n,m$ whose product $n\times m$ is a natural number.
\end{exercise}

\begin{exercise}\label{t:prodsets} Prove that for any sets $X,Y$, the class $X\times Y$ is a set.
\smallskip

\noindent{\em Hint}: Observe that $X\times Y\subseteq\mathcal P(\mathcal P(X\cup Y))$.
\end{exercise}

\begin{exercise}\label{ex:prodsets} Prove that for any sets $X,Y$, the class $X\times Y$ is a set, not using the Axiom of Power-Set.
\smallskip

\noindent{\em Hint}: Apply the Axioms of Replacement and Union.
\end{exercise}


\section{Relations}  We recall that a \index{relation}{\em relation} is a subclass of the class $\UU\times\UU=\ddot\UU$. For a class $R$ let $$R^{-1}=\{\langle y,x\rangle:\langle x,y\rangle\in R\}$$ be the {\em inverse relation} to $R$. 
Observe that a class $R$ is a relation if and only if $R=(R^{-1})^{-1}$. 

For a relation $R$ let $R^\pm=R\cup R^{-1}$. 
The class $\dom[R^\pm]=\rng[R^\pm]$ is called the \index{underlying class}\index{relation!underlying class of}{\em underlying class} of the relation $R$.

\begin{exercise}\label{p:Rel} Prove that the class of relations \index{{{\bf Rel}}}\index{class!{{\bf Rel}}}$\mathbf{Rel}=\{r\in\UU:\mbox{$r$ is a relation}\}$ exists.
\smallskip

\noindent{\it Hint:} Observe that $\UU\setminus\mathbf{Rel}=\{x\in\UU:\exists y\in x\;\,(y\notin\ddot{\UU})\}=\dom[P]$ where\\ $
P=\{\langle x,y\rangle\in \ddot{\UU}:y\in x\;\wedge\;y\notin \ddot{\UU}\}=
\mathbf E^{-1}\cap(\UU\times(\UU\setminus\ddot{\UU})).$
\end{exercise}

\begin{exercise}\label{ex:domrelsubset} Prove that for any class $X$ the classes $\{r\in\mathbf{Rel}:\dom[r]\subseteq X\}$ and\\  $\{r\in\mathbf{Rel}:\rng[r]\subseteq X\}$ exist.
\smallskip

\noindent{\it Hint:} Observe that $\mathbf{Rel}\setminus\{r\in\mathbf{Rel}:\dom[r]\not\subseteq X\}=\{r\in\mathbf{Rel}:\exists \langle x,y\rangle\in r\cap ((\UU\setminus X)\times\UU)\}= \rng[((\UU\setminus X)\times\UU)\times\mathbf{Rel})\cap\mathbf E]$.
\end{exercise}

\begin{exercise} Prove that for any class $X$ the classes $\{r\in\mathbf{Rel}:X\subseteq \dom[r]\}$ and\\  $\{r\in\mathbf{Rel}:X\subseteq \rng[r]\}$ exist.
\smallskip

\noindent{\em Hint:} These classes are empty if $X$ is a proper class.
\end{exercise}

\begin{exercise}\label{ex:domrel=X} Prove that for any class $X$ the classes $\{r\in\mathbf{Rel}:\dom[r]=X\}$ and\\  $\{r\in\mathbf{Rel}:\rng[r]=X\}$ exist.
\end{exercise}

For a relation $R$ and a class $X$ denote by $R{\restriction}X$ the relation $R\cap(X\times X)$. The relation $R{\restriction}X$ is called the \index{relation!restriction}{\em restriction} of the relation $R$ to the class $X$. If $R$ is a function with $R[X]\subseteq X$, then $R{\restriction}X=R{\restriction}_X$, where $R{\restriction}_X=R\cap(X\times\UU)$.

\begin{exercise} Show that for a basic relation $R$ and a basic class $X$, the relation $R{\restriction}X$ is basic.
\end{exercise}

\begin{exercise} Show that the property of a class $R$ to be a relation can be  described by a $\Delta_0$-formula.
\smallskip

\noindent{\em Hint:} Observe that $R$ is a relation if and only if $\forall r\in R\;\exists u\in r\;\exists x\in u\;\exists y\in u\;(r=\langle x,y\rangle)$, and apply Exercise~\ref{ex:DeltaOpair}.
\end{exercise}

It is convenient to think of a relation $R$ as the directed graph with vertices in the class $\dom[R^\pm]$ and edges in the class $R$. 

\begin{example} The relation 
$$\mathbf E{\restriction}4=\{\langle 0,1\rangle, \langle 0,2\rangle,\langle 0,3\rangle,\langle 1,2\rangle,\langle 1,3\rangle, \langle 2,3\rangle\}$$ can be drawn as the directed graph
$\xymatrix{0\ar[r]\ar@/_10pt/[rr]\ar@/_15pt/[rrr]&1\ar[r]\ar@/^10pt/[rr]&2\ar[r]&3
}$.
\end{example}
\medskip

\section{Functions} We recall that a relation $F$ is a \index{function}{\em function} if $$\forall x\;\forall y\;\forall z\;((\langle x,y\rangle\in F\;\wedge\;\langle x,z\rangle\in F)\;\rightarrow\;(y=z)).$$
Therefore, for any function $F$ and any $x\in\dom[F]$ there exists a unique set $y$ such that $\langle x,y\rangle\in F$. This unique set $y$ is denoted by $F(x)$ and called \index{function!the value of} {\em the value} of the function $F$ at $x$. The round parentheses are used to distinguish the element $F(x)$ from the set $$F[x]=\{y:\exists z\in x\;(\langle z,y\rangle\in F)\}=\{F(y):y\in x\cap \dom[F]\}.$$

Given a function $F$ and two classes $X,Y$, we write $F:X\to Y$ and say that $F$ is a function from $X$ to $Y$ if $\dom[F]=X$ and $\rng[F]\subseteq Y$. Often we shall use the notation $$F:X\to Y,\;\;F:x\mapsto F(x),$$ indicating that $F$ assigns to each element $x\in X$ some element $F(x)$ of $Y$.

For any function $F:X\to Y$ and class $A$, the function $F\cap(A\times \UU)$ is denoted by $F{\restriction}_A$ and is called the \index{function!restriction}{\em restriction} of $F$ to the class $A$. If $F[A]\subseteq A$, then $F{\restriction}_A=F{\restriction}A$ where $F{\restriction}A=F\cap(A\times A)$ is the restriction of the relation $F$.

\begin{exercise} Write down the function $F:4\to\w$, $F:x\mapsto x^2$, and find $\dom[F]$ and $\rng[F]$. Write down the restrictions $F{\restriction}3$ and $F{\restriction}_3$. List the elements of the sets $F(3)$, $F[3]$, $F[6]$.
\smallskip

{\em Hint:} $F=\{\langle 0,0\rangle,\langle 1,1\rangle,\langle 2,4\rangle,\langle 3,9\rangle\}$; $\dom[F]=4=\{0,1,2,3\}$, $\rng[F]=\{0,1,4,9\}$; $F{\restriction}3=\{\langle 0,0\rangle,\langle 1,1\rangle\}$, $F{\restriction}_3=\{\langle 0,0\rangle,\langle 1,1\rangle,\langle 2,4\rangle\}$; $F(3)=9=\{0,1,2,3,4,5,6,7,8\}$; $F[3]=\{0,1,4\}$.
\end{exercise}

\begin{exercise} Prove that a function $F$ is a set if and only if its domain $\dom[F]$ is a set if and only if $\dom[F]$ and $\rng[F]$ are sets.
\end{exercise}

\begin{exercise} Prove that the property of a class $F$ to be a function can be described by a $\Delta_0$-formula.
\smallskip

\noindent{\em Hint:} Observe that a class $F$ is a function if and only if $F$ is a relation and\newline $\forall p\in F\;\forall q\in F\;\forall u\in p\;\forall v\in q\;\forall x\in u\;\forall y\in u\;\forall z\in v\;(p=\langle x,y\rangle \wedge q=\langle x,z\rangle\to y=z)$. 
\end{exercise}

A function $F:X\to Y$ is called
\begin{itemize}
\item \index{surjective function}\index{function!surjective}{\em surjective} if $\rng[F]=Y$;
\item \index{injective function}\index{function!injective} {\em injective} if $F^{-1}$ is a function;
\item \index{bijective function}\index{function!bijective} {\em bijective} if $F$ is surjective and injective.
\end{itemize}

\begin{exercise} Prove that the class \index{{{\bf Fun}}}\index{class!{{\bf Fun}}}$\mathbf{Fun}=\{f\in\mathbf{Rel}:\mbox{$f$ is a function}\}$ of all functions exists.
\end{exercise}

For two classes $A,X$ denote by $X^A$ the class of all functions $f$ such that $\dom[f]=A$ and $\rng[f]\subseteq X$. Therefore, $$X^A=\{f\in\mathbf{Fun}:\dom[f]=A,\;\;\rng[f]\subseteq X\}.$$

\begin{exercise} Prove that for every class $A$ the class $\UU^A$ exists.
\smallskip

\noindent{\it Hint:} Observe that $\UU^A=\mathbf{Fun}\cap\{f\in\mathbf{Rel}:\dom[f]=A\}$ and apply Exercise~\ref{ex:domrel=X}.
\end{exercise}

\begin{exercise}\label{ex:functionclass} Prove that for every (basic) classes $A, X$ the class $X^A$ exists (and is basic).
\smallskip

\noindent{\it Hint:} Observe that $X^A=\UU^A\cap\{f\in\mathbf{Rel}:\rng[f]\subseteq X\}$ and apply Exercise~\ref{ex:domrelsubset}.
\end{exercise}

\begin{exercise}Prove that for classes $A,X$ the class $X^A$ is empty if and only if one of the following holds:
\begin{enumerate}
\item $A$ is a proper class;
\item $X=\emptyset$ and $A\ne\emptyset$.
\end{enumerate}
\end{exercise}  

\begin{exercise}\label{ex:XA} Prove that for any sets $X,A$ the class $X^A$ is a set.
\smallskip

\noindent{\em Hint:} Observe that $X^A\subseteq \mathcal P(A\times X)$ and apply Exercises~\ref{t:prodsets}, \ref{ex:subclass} and the Axiom of Power-Set.
\end{exercise}

We recall that $0=\emptyset$, $1=\{0\}$, $2=\{0,1\}$ and $3=2\cup\{2\}$.

\begin{exercise}\label{ex:F1} Prove that the function $F_1:\UU\to\UU^1,\;F_1:x\mapsto \{\langle 0,x\rangle\}$ exists and is bijective.
\end{exercise}

\begin{exercise} Prove that the function $F_2:\ddot\UU\to\UU^2,\;F_2:\langle x,y\rangle\mapsto \{\langle 0,x\rangle,\langle 1,y\rangle\}$, exists and is bijective.
\end{exercise}

\begin{exercise}\label{ex:F3} Prove that the function $F_3:\dddot\UU\to\UU^3,\;F_3:\langle x,y,z\rangle\mapsto \{\langle 0,x\rangle,\langle 1,y\rangle,\langle 2,z\rangle\}$ exists and is bijective.
\end{exercise}

\begin{exercise} Prove that the functions $F_1,F_2,F_3$ from Exercises~\ref{ex:F1}--\ref{ex:F3} are basic.
\end{exercise}

\section{Indexed families of classes}

In spite of the fact that in the Classical Set Theory  proper classes cannot be elements of other classes, we can legally speak about indexed families of classes. Namely, for any class $A$, any subclass $X\subseteq A\times \UU$ can be identified with the indexed family $(X_\alpha)_{\alpha\in A}$ of the classes $X_\alpha\defeq X[\{\alpha\}]$, where $X[\{\alpha\}]=\rng[X\cap(\{\alpha\}\times\UU)]$ for any  index $\alpha\in A$.

In this case we can define the union $\bigcup_{\alpha\in A}X_\alpha$ as the class $\mathsf{rng}[X]$ and the intersection $\bigcap_{\alpha\in A}X_\alpha$ as the class $\UU\setminus \mathsf{rng}[(A\times\mathbf U)\setminus X]$. 

The indexed family $(X_\alpha)_{\alpha\in A}$ can be also thought as a \index{multifunction}{\em multifunction} $X:A\multimap \UU$ assigning to each element $\alpha\in A$ the class $X[\{\alpha\}]$, and to each subclass $B\subseteq A$ the class $X[B]=\rng[X\cap(B\times\UU)]$.

If for every $\alpha\in A$ the class $X_\alpha$ is a set, then $(X_\alpha)_{\alpha\in A}$ is an indexed family of sets and we can consider the function $X^\bullet_A:A\to \UU$, assigning to each $\alpha\in A$ the set $X_\alpha$. 

The following theorem shows that the function $X_A^\bullet$ exists.

\begin{theorem}\label{t:index} Let $A,X$ be two classes such that for every $\alpha\in A$ the class $X_\alpha=X[\{\alpha\}]$ is a set. Then the function $X^\bullet_A:A\to\UU$, $X^\bullet_A:\alpha\mapsto X_\alpha$, exists.
\end{theorem}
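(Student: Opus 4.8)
The plan is to exhibit the graph of $X^\bullet_A$ as a class definable by a $\UU$-bounded formula and then invoke G\"odel's Theorem~\ref{t:class}. Working with free set-variables $\alpha,s$ and the class $X$ as a parameter, I would consider the property
$$\psi(\alpha,s)\;\equiv\;\forall y\in\UU\;\big(y\in s\;\Leftrightarrow\;\langle\alpha,y\rangle\in X\big),$$
which asserts precisely that $s=\{y:\langle\alpha,y\rangle\in X\}=X[\{\alpha\}]$. My intention is to show that the class $G=\{\langle\alpha,s\rangle\in\UU^2:\psi(\alpha,s)\}$ exists, that $G$ is a function, and that its restriction to $A$ is the required $X^\bullet_A$.

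First I would check that $\psi$ falls under Theorem~\ref{t:class}, i.e. that it is (equivalent to) a $\UU$-bounded formula whose only free variables are $\alpha,s$ and the parameter $X$. The single non-atomic ingredient is the subformula $\langle\alpha,y\rangle\in X$, which I would unfold through the Kuratowski definition $\langle\alpha,y\rangle=\{\{\alpha\},\{\alpha,y\}\}$ as
$$\exists p\in\UU\;\exists a\in\UU\;\exists b\in\UU\;\big(p\in X\;\wedge\;\theta_1(a,\alpha)\;\wedge\;\theta_2(b,\alpha,y)\;\wedge\;\theta_3(p,a,b)\big),$$
where $\theta_1$ says $\forall t\in\UU\,(t\in a\Leftrightarrow t=\alpha)$, $\theta_2$ says $\forall t\in\UU\,(t\in b\Leftrightarrow(t=\alpha\vee t=y))$, and $\theta_3$ says $\forall t\in\UU\,(t\in p\Leftrightarrow(t=a\vee t=b))$. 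Every atomic piece is now of one of the forms $t\in u$, $t=u$, or $p\in X$ admitted in Theorem~\ref{t:class}, and every quantifier is $\UU$-bounded; hence the class $G$ exists.

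It then remains to do the easy bookkeeping. The class $G$ is a function: if $\langle\alpha,s\rangle,\langle\alpha,s'\rangle\in G$ then $\forall y\,(y\in s\Leftrightarrow y\in s')$, so $s=s'$ by the Axiom of Extensionality. Moreover $\alpha\in\dom[G]$ holds exactly when the class $X[\{\alpha\}]$ is a set, in which case $G(\alpha)=X[\{\alpha\}]=X_\alpha$; by the hypothesis of the theorem this happens for every $\alpha\in A$, so $A\subseteq\dom[G]$. Finally I would put $X^\bullet_A=G\cap(A\times\UU)$, which exists by the Axioms of Product and Difference. Being a subclass of the function $G$ it is again a function with $\rng[X^\bullet_A]\subseteq\UU$, and $\dom[X^\bullet_A]=\{\alpha\in A:\alpha\in\dom[G]\}=A$ with $X^\bullet_A(\alpha)=X_\alpha$, as required.

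I expect the only real friction to be the first step: justifying that the pair-membership $\langle\alpha,y\rangle\in X$ may be written as a genuinely $\UU$-bounded formula in $\alpha,y$ (with parameter $X$), so that Theorem~\ref{t:class} applies verbatim. If one prefers to avoid G\"odel's theorem, the same class $G$ can be produced by hand as $\UU^2\setminus\dom[T]$ for a suitable $T\subseteq\dddot\UU$ assembled from $\mathbf E$, $\mathsf{Id}$, $X$ and the cycle operations $(\cdot)^\circlearrowright,(\cdot)^\circlearrowleft$, but that route is considerably more error-prone and I would use it only as a fallback.
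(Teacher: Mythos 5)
Your proof is correct, but it takes a different route from the paper's. The paper avoids G\"odel's Theorem~\ref{t:class} entirely and constructs the class by hand: it writes the complement $(A\times\UU)\setminus X^\bullet_A=\{\langle\alpha,y\rangle\in A\times\UU:\exists z\;\neg(z\in y\Leftrightarrow\langle\alpha,z\rangle\in X)\}$ as $\dom[T\cup T']$, where $T=[(\ddot\UU\setminus\mathbf E^{-1})\times A]^\circlearrowright\cap[X^{-1}\times\UU]^\circlearrowleft$ and $T'=[\mathbf E^{-1}\times A]^\circlearrowright\cap[(\ddot\UU\setminus X^{-1})\times\UU]^\circlearrowleft$ are assembled directly from $\mathbf E$, $X^{-1}$, products, differences and the cycle operations --- i.e.\ exactly the ``fallback'' you mention at the end. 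Your main route instead passes through Theorem~\ref{t:class}, and the one point you flag as potential friction --- that $\langle\alpha,y\rangle\in X$ must be unfolded into a genuinely $\UU$-bounded formula via the Kuratowski definition --- you handle correctly; this is in fact already built into the machinery of Theorem~\ref{t:class} (its proof treats $n$-tuples and projections explicitly), so the application is legitimate, and Theorem~\ref{t:class} is available at this point in the text. What each approach buys: yours is shorter, less error-prone, and makes clear that the only substantive content of the theorem is that the defining condition is expressible by a bounded formula; the paper's is self-contained, produces an explicit witness from the axioms alone, and is in the spirit of the book's stated preference (in the ``Existence of Classes'' section) for direct constructions over invocations of the metatheorem. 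One small remark: your restriction step $X^\bullet_A=G\cap(A\times\UU)$ is slightly cleaner than strictly necessary, since the paper defines $X^\bullet_A$ directly as a subclass of $A\times\UU$; but the bookkeeping you give (functionality via Extensionality, $A\subseteq\dom[G]$ from the hypothesis that each $X_\alpha$ is a set) is exactly what is needed and is correct.
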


\begin{proof} Observe that
$$
\begin{aligned}
X^\bullet_A=\;&\{\langle \alpha,y\rangle\in A\times \UU:y=X_\alpha\}= 
\{\langle \alpha,y\rangle\in A\times \UU:\forall z\;(z\in y\:\leftrightarrow\;z\in X_\alpha)\}=\\
&\{\langle \alpha,y\rangle\in A\times \UU:\forall z\;(z\in y\:\leftrightarrow\;\langle \alpha,z\rangle\in X)\}
\end{aligned}
$$
and
$$(A\times\UU)\setminus X^\bullet_A=\{\langle \alpha,y\rangle\in A\times\UU:\exists z\;\neg(z\in y\;\leftrightarrow\;\langle\alpha,z\rangle\in X)\}=\mathsf{dom}[T\cup T'],$$
where 
$$
T=\{\langle \alpha,y,z\rangle\in (A\times \UU)\times\UU:z\notin y\;\wedge\;\langle\alpha,z\rangle\in X\}=
 [(\ddot\UU\setminus \mathbf E^{-1})\times A]^\circlearrowright\cap[X^{-1}\times\UU]^\circlearrowleft
$$
and
$$
T'=\{\langle \alpha,y,z\rangle\in (A\times\UU)\times\UU:z\in y\;\wedge\;\langle\alpha,z\rangle\notin X\}= [\mathbf E^{-1}\times A]^\circlearrowright\cap[(\ddot\UU\setminus X^{-1})\times\UU]^\circlearrowleft.
$$
Now we see that the axioms of the Classical Set Theory guarantee the existence of the considered classes including the function  $X_A^\bullet$.
\end{proof}

If $X=(X_\alpha)_{\alpha\in A}$ is an indexed family of sets, then the class $\{X_\alpha:\alpha\in A\}$ is equal to $X^\bullet_A[A]$ and hence  exists. This justifies the use of the constructor $\{X_\alpha:\alpha\in A\}$ in our theory.

 If $A$ is a set, then the class $\{X_\alpha:\alpha\in A\}=X^\bullet_A[A]$ is a set by the Axiom of Replacement. In particular, each set $x$ is equal to the set $\{y:y\in x\}$. 

\begin{exercise}\label{ex:Russel2} Let $A$ be a class and $X$ be a subclass of $A\times \UU$ thought as an indexed family $(X_\alpha)_{\alpha\in A}$ of the classes $X_\alpha=X[\{\alpha\}]$. Observe that the class $$B=\{\alpha\in A:\alpha\notin X_\alpha\}=\{\alpha\in A:\langle\alpha,\alpha\rangle\notin X\}=\dom[(\mathsf{Id}\cap (A\times A))\setminus X]$$ exists. Repeating the argument of Russell's Paradox, prove that $B\ne X_\alpha$ for every $\alpha\in A$. 
\end{exercise}

By a \index{sequence}{\em sequence of classes} we understand a subclass $X\subseteq \w\times\UU$ identified with the indexed family of classes $(X_n)_{n\in\w}$ where $X_n=\{x\in\UU:\langle n,x\rangle\in X\}$. If each class $X_n$ is a set, then the indexed family of sets  $(X_n)_{n\in\w}$ can be identified with the function $X_*:\w\to\UU$ assigning to each $n\in\w$ the set $X_n$. By Theorem~\ref{t:index} such function exists.

For a natural number $n\in\IN$ by an \index{tuple of classes}{\em $n$-tuple} of classes $(X_0,\dots,X_{n-1})$ we understand the indexed family of classes $(X_i)_{i\in n}$. A \index{pair!of classes}{\em pair} of classes $(X,Y)$ is identified with the $2$-tuple $(X_i)_{i\in 2}$ such that $X_0=X$ and $X_1=Y$. By analogy we can introduce a triple of classes, a quaduple of classes, and so on.

Therefore, for any sets $x,y$ we have three different notions related to pairs: 
\begin{itemize}
\item[(i)] the \index{unordered pair}\index{pair!unordered}{\em unordered pair of sets} $\{x,y\}$
\item[(ii)] the  \index{ordered pair}\index{pair!ordered}{\em ordered pair of sets} $\langle x,y\rangle=\{\{x\},\{x,y\}\}$, 
\item[(iii)] the \index{pair!of classes}{\em pair of classes} $(x,y)=(\{0\}\times x)\cup(\{1\}\times y)=\{\langle 0,u\rangle:u\in x\}\cup\{\langle 1,v\rangle:v\in y\}$.
\end{itemize}
The definition of a pair of classes uses ordered pairs of sets and the definition of an ordered pair of sets is based on the notion of an unordered pair of sets (which exists by the Axiom of Pair).

The following exercise shows that the notion of a pairs of classes has the characteristic property of an ordered pair.

\begin{exercise} Prove that for any classes $A,B,X,Y$ we have the equivalence
$$(A,B)=(X,Y)\;\leftrightarrow\;(A=X\;\wedge\;B=Y).$$
\end{exercise}  

\section{Cartesian products of classes}

In this section we define the Cartesian product\index{Cartesian product}\index{product!Cartesian} $\prod_{\alpha\in A}X_\alpha$ of an indexed family of classes $X=(X_\alpha)_{\alpha\in A}$. By definition, the class $\prod_{\alpha\in A}X_\alpha$ consists of all functions $f$ such that $\dom[f]=A$ and $f(\alpha)\in X_\alpha$ for every $\alpha\in A$. Equivalently, the Cartesian product can be defined as the class
$$\prod_{\alpha\in A}X_\alpha=\{f\in\mathbf{Fun}:(f\subseteq X)\;\wedge\;(\dom[f]=A)\}.$$

\begin{proposition} For any class $A$ and a subclass $X\subseteq A\times \UU$, the Cartesian product $\prod_{\alpha\in A}X_\alpha$ of the indexed family $X=(X_\alpha)_{\alpha\in A}$ exists. If $A$ is a proper class, then $\prod_{\alpha\in A}X_\alpha$ is the empty class.
\end{proposition}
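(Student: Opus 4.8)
The plan is to treat the two assertions separately: first to exhibit the Cartesian product as an explicit Boolean combination of classes already known to exist, and then to handle the degenerate case where $A$ is a proper class by showing that any hypothetical element would force $A$ to be a set.

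For existence, I would start from the definition
$$\prod_{\alpha\in A}X_\alpha=\{f\in\mathbf{Fun}:(f\subseteq X)\;\wedge\;(\dom[f]=A)\}$$
and decompose the three defining conditions. The class $\mathbf{Fun}$ of all functions already exists, and the class $\{r\in\mathbf{Rel}:\dom[r]=A\}$ exists by Exercise~\ref{ex:domrel=X}. The only condition requiring work is the inclusion $f\subseteq X$, which is delicate because $X$ may be a proper class. I would express its negation through the membership relation: a set $f$ fails to satisfy $f\subseteq X$ precisely when $\exists z\,(z\in f\;\wedge\;z\notin X)$, so that
$$\{f\in\UU:f\subseteq X\}=\UU\setminus\dom[P],\qquad P=\mathbf E^{-1}\cap(\UU\times(\UU\setminus X)).$$
Here $P$ exists by the Axioms of Membership, Inversion, Product and Difference, and its domain by the Axiom of Domain. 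Intersecting the three pieces then yields
$$\prod_{\alpha\in A}X_\alpha=\mathbf{Fun}\cap\{r\in\mathbf{Rel}:\dom[r]=A\}\cap\big(\UU\setminus\dom[P]\big),$$
which exists by the Axiom of Difference.

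For the second assertion, I would take an arbitrary $f\in\prod_{\alpha\in A}X_\alpha$ and derive a contradiction from the assumption that $A$ is proper. Since $f\in\mathbf{Fun}\subseteq\UU$, the object $f$ is a set, and by definition $\dom[f]=A$. But $f$, being a function, is a relation, so $f\subseteq\ddot\UU$, and the domain of $f$ coincides with the image $\mathsf{dom}[f]$ of the set $f$ under the function $\mathsf{dom}\colon\ddot\UU\to\UU$ of Exercise~\ref{ex:dom}; hence $\dom[f]$ is a set by the Axiom of Replacement. Thus $A=\dom[f]$ would be a set, contradicting the hypothesis. Therefore the product has no elements and equals $\emptyset$.

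The main obstacle is the inclusion condition $f\subseteq X$: because $X$ need not be a set, one cannot simply invoke the class $\mathbf S=\{\langle x,y\rangle:x\subseteq y\}$ of Exercise~\ref{ex:S}, and must instead encode ``$f\subseteq X$'' through the membership relation and the complement $\UU\setminus X$ as above. Everything else is a routine combination of the class-existence axioms; strictly, one could also quote G\"odel's Theorem~\ref{t:class} once it is observed that the defining property is a $\UU$-bounded formula with parameters $A,X$.
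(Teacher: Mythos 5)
Your proof is correct and follows essentially the same route as the paper's: both reduce the delicate condition $f\subseteq X$ to the existence of $\dom[\mathbf E^{-1}\cap(\,\cdot\,\times(\UU\setminus X))]$ and both derive the emptiness claim by applying the Axiom of Replacement to the function $\dom$ on a hypothetical element $f$ with $\dom[f]=A$. The only cosmetic difference is that you intersect three classes inside $\UU$ (via $\mathbf{Fun}$ and Exercise~\ref{ex:domrel=X}), whereas the paper first forms $(\rng[X])^A$ by Exercise~\ref{ex:functionclass} and carves the product out of that class.
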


\begin{proof} The class $\rng[X]=\bigcup_{\alpha\in A}X_\alpha$ exists by the Axioms of Domain and Inversion. By Exercise~\ref{ex:functionclass}, the class $(\rng[X])^A$ of functions from $A$ to $\rng[X]$ exists. Observe that 
$$\prod_{\alpha\in A}X_\alpha=\{f\in(\rng[X])^A:f\subseteq X\}$$and hence
$$(\rng[X])^A\setminus\prod_{\alpha\in A}X_\alpha=\{f\in(\rng[X])^A:\exists z\;( z\in f\;\wedge\;z\notin X\}=\dom[((\rng[X])^A\times (\UU\setminus X))\cap\mathbf E^{-1}].$$
Now the axioms of the Classical Set Theory ensure that the class $\prod_{\alpha\in A}X_\alpha$ exists. If this class is not empty, then it contains some function $f$ with $\dom[f]=A$. Applying the Axiom of Replacement to the function $\dom$, we conclude that the class $A=\dom[f]$ is a set.
\end{proof} 

\begin{exercise} Show that for a set $A$ and an indexed family of sets $X=(X_\alpha)_{\alpha\in A}$ the Cartesian product $\prod_{\alpha\in A}X_\alpha$ is a set.
\smallskip

\noindent{\em Hint:} Since $\{X_\alpha:\alpha\in A\}$ is a set, its union $\bigcup_{\alpha\in A}X_\alpha=\rng[X]$ is a set by the Axiom of Union and then $\prod_{\alpha\in A}X_\alpha$ is a set, being a subclass of the set $(\rng[X])^A$, see Exercise~\ref{ex:XA}.
\end{exercise}

\begin{exercise}\label{ex:prod} Let $A$ be a class and $X\subseteq A\times\UU$ be a class such that $\prod_{\alpha\in A}X_\alpha$ is not empty. Prove  that the class $A$ is a set and for every $\alpha\in A$ the class $X_\alpha=\{x:\langle\alpha,x\rangle\in X\}$ is not empty.
\end{exercise}

Exercise~\ref{ex:prod} motivates the following definition. For a class $X$ by $\prod X$ we denote the Cartesian product $\prod_{\alpha\in\dom[X]}X[\{\alpha\}]$ of the indexed family of nonempty classes $(X[\{\alpha\}])_{\alpha\in \dom[X]}$.

\begin{exercise} Prove that $\prod X=\{f\in\mathbf{Fun}:(\dom[f]=\dom[X])\;\wedge\;(f\subseteq X)\}$.
\end{exercise} 

\begin{exercise} Show that $X^A=\prod(A\times X)$ for any classes $A,X$.
\end{exercise} 

\begin{exercise} Show that $\prod\emptyset=\{\emptyset\}$ and hence $\prod\emptyset$ is not empty.
\end{exercise}

\begin{exercise} Observe that the Axiom of Choice holds if and only if for any set $X$ its Cartesian product $\prod X$ is not empty.
\end{exercise} 

\section{Reflexive and irreflexive relations}

We recall that a relation is a class whose elements are ordered pairs of sets. For a relation $R$ the class $\dom[R^\pm]$ is called the \index{relation!underlying class of}{\em underlying class} of the relation. Here $R^\pm=R\cup R^{-1}$.

\begin{definition} A relation $R$ is called
\begin{itemize}
\item \index{reflexive relation}\index{relation!reflexive}{\em reflexive} if $\Id{\restriction}\dom[R^\pm]\subseteq R$;
\item \index{irreflexive relation}\index{relation!irreflexive}{\em irreflexive} if $R\cap\Id=\emptyset$.
\end{itemize}
\end{definition}

\begin{example}
\begin{enumerate}
\item The relation $\Id$ is reflexive. 
\item The relation $\ddot\UU\setminus\Id$ is irreflexive.
\end{enumerate}
\end{example}

\begin{example} For any relation $R$ the relation $R\setminus\Id$ is irreflexive and $R\cup\Id{\restriction}\dom[R^\pm]$ is reflexive.
\end{example}

\begin{exercise} Using the  Axiom of Foundation, prove that the Membership relation $\E$ is irreflexive.
\end{exercise}

\begin{exercise} Prove that the class $\{r\in\mathbf{Rel}:\mbox{$r$ is a reflexive relation}\}$ exists.
\end{exercise}

\section{Equivalence relations} 

\begin{definition} A relation $R$ is called 
\begin{itemize}
\item \index{symmetric relation}\index{relation!symmetric}{\em symmetric} if $R=R^{-1}$;
\item \index{transitive relation}\index{relation!transitive}{\em transitive} if $\{\langle x,z\rangle \in\UU:\exists y\in\UU\;(\langle x,y\rangle\in R\;\wedge\;\langle y,z\rangle\in R)\}\subseteq R$;
\item \index{equivalence relation}\index{relation!of equivalence} {\em an equivalence relation} if $R$ is symmetric and transitive.
\end{itemize} 
\end{definition} 

Usually equivalence relations are denoted by symbols $=$, $\equiv$, $\cong$, $\sim$, $\approx$, etc.

\begin{example} The identity function $\Id=\{\langle x,x\rangle:x\in\UU\}$ is an equivalence relation.
\end{example}

\begin{exercise} Prove the existence of the classes of sets which are  symmetric relations, transitive relations, equivalence relations.
\end{exercise}

Let $R$ be an equivalence relation. The symmetry of $R$ guarantees that $\dom[R]=\rng[R]$. 

\begin{exercise}\label{ex:reflex} Prove that any equivalence relation $R$ is reflexive.
\smallskip

\noindent{\em Hint:} Given any $x\in\dom[R]$, find $y\in\UU$ with $\langle x,y\rangle\in R$. By the symmetry of $R$, $\langle y,x\rangle\in R$ and by the transitivity, $\langle x,x\rangle\in R$.
\end{exercise}

Let $R$ be an equivalence relation. For any set $x$, the class $R[\{x\}]=\{y:\langle x,y\rangle\in R\}$ is called the \index{equivalence class}{\em $R$-equivalence class} of $x$.
If $R[\{x\}]$ is not empty, then $x\in R[\{x\}]$ by Exercise~\ref{ex:reflex}.

\begin{exercise} Prove that for any equivalence relation $R$ and sets $x,y$, the $R$-equivalence classes $R[\{x\}]$ and $R[\{y\}]$ are either disjoint or coincide.
\end{exercise}


Let $R$ be an equivalence relation. If for any set $x$ its $R$-equivalence class $R[\{x\}]$ is a set, then by Theorem~\ref{t:index}, the class $R^\bullet=\{\langle x,R[\{x\}]\rangle :x\in\dom[R]\}$ is a well-defined function assigning to each set $x\in\dom[R]$ its equivalence class $R^\bullet(x)=R[\{x\}]$. The range $\{R^\bullet(x):x\in\dom[R]\}$ of this function is called the \index{quotient class}\index{class!quotient}{\em quotient class} of the relation $R$. The quotient class is usually denoted by $\dom[R]/R$. The function $R^\bullet:\dom[R]\to \dom[R]/R$ is called the \index{quotient function}\index{function!quotient}{\em quotient function}.

If the relation $R$ is a set, then by the Axiom of Replacement, the quotient class $\dom[R]/R=R^\bullet[\dom[R]]$ is a set, called the \index{quotient set}\index{set!quotient}{\em quotient set} of the relation $R$.

By an {\em equivalence relation on a set} $X$ we understand any equivalence relation $R$ with $\dom[R^\pm]=X$. In this case $R\subseteq X\times X$ is a set and so are all  $R$-equivalence classes $R^\bullet(x)$. Consequently, the quotient class $X/R$ is a set, called the \index{quotient set}\index{set!quotient}{\em quotient set} of $X$ by the relation $R$.

\begin{example} Consider the equivalence relation 
$$|\cdot\cdot|=\{\langle x,y\rangle\in \UU\times\UU:\exists f\in\mathbf{Fun} \;(f^{-1}\in \mathbf{Fun}\;\wedge\;\dom[f]=x\;\wedge\;\rng[f]=y)\}.$$
The equivalence class of a set $x$ by this equivalence relation is called the \index{cardinality}{\em cardinality} of the set $x$ and is denoted by $|x|$.
\end{example}

\section{Set-like relations}

A relation $R$ is called {\em set-like} if for any $x\in \mathbf U$ the class $\cev R(x)=R^{-1}[\{x\}]\setminus\{x\}$ is a set. The set $\cev R(x)$ appearing in this definition is called the \index{initial interval}\index{relation!initial interval of}{\em initial $R$-interval} of $x$. It is equal to $\cev R(x)=\{z:\langle z,x\rangle\in R\}\setminus\{x\}$. The set $\cev R(x)$ is empty if $x\notin \rng[R]$.

\begin{remark} For the membership relation $\mathbf E$ the initial $\E$-interval $\cev \E(x)$ of a set $x$ coincides with the set $x\setminus\{x\}$. If the Axiom of Foundation holds, then $\cev\E(x)=x$. The relation $\E$ is set-like.
\end{remark}

\begin{theorem}\label{t:set-like} If $R$ is a set-like relation, then for any set $a$ there exists a sequence of sets $(a_n)_{n\in\w}$ such that $a_0=a$ and $a_{n+1}=R^{-1}[a_n]$ for all $n\in\w$. The union $b=\bigcup_{n\in \w}a_n$ has the properties: $a\subseteq b$ and $R^{-1}[b]\subseteq b$.
\end{theorem}

\begin{proof} Consider the class $T$ of all functions $f$ such that
\begin{itemize}
\item[(i)] $\dom[f]\in\IN$;
\item[(ii)] $f(0)\in a$;
\item[(iii)] $\langle f(i),f(i+1)\rangle \in R^{-1}$ for any $i\in\dom[f]$ with $i+1\in\dom[f]$.
\end{itemize}
The existence of the class $T$ can be proved using G\"odel's Theorem~\ref{t:class}.  Consider the class 
$$F\defeq\{\langle n,y\rangle:n\in\w\;\wedge\;\exists f\in T\;(\langle n,y\rangle\in f)\}.$$
For every $n\in\w$ consider the class $a_n\defeq\{y:\langle n,y\rangle\in F\}=\{y:\exists f\in T\;(\langle n,y\rangle\in f)\}$.

The condition (ii) implies that $a_0=\{y:\langle 0,y\rangle\in F\}=a$.
By induction on $n$ we shall prove that the class $a_{n+1}$ is a set with $a_{n+1}=R^{-1}[a_n]$. Assume that for some $n\in\w$ we have proved that the class $a_n$ is a set. The condition (iii) implies that $a_{n+1}\subseteq R^{-1}[a_n]$. On the other hand, for any $x\in R^{-1}[a_n]$ we can find $y\in a_n$ with $\langle x,y\rangle\in R$. Since $y\in a_n$, there exists a function $f\in T$ such that $\langle n,y\rangle\in f$. Replacing $f$ by $f{\restriction}_{n+1}$, we can assume that $\dom[f]=n+1$. Then the function $g=f\cup\{\langle n+1,x\rangle\}$ belongs to $T$ and witnesses that $x\in a_{n+1}$. This shows that $a_{n+1}=R^{-1}[a_n]$.

Since the relation $R$ is set-like, the class $R^{-1\bullet}\defeq\{\langle x,y\rangle:y=R^{-1}[\{x\}]\}$ is a well-defined function, according to Theorem~\ref{t:index}. Now the Axioms of Replacement and Union ensure that the class $a_{n+1}=\bigcup (R^{-1\bullet}[a_n])$ is a set. 
By the Principle of Mathematical Induction, for every $n\in\w$, the class $a_n$ is a set. 

By Theorem~\ref{t:index}, $\Phi=\{\langle n,y\rangle:n\in\w\;\wedge\;y=a_n\}$ is a well-defined function. By the Axiom of Replacement, $\Phi[\w]$ is a set and by the Axiom of Union, the union $b=\bigcup\Phi[\w]=\bigcup_{n\in\w}a_n$ is a set. It follows that $a=a_0\subseteq b$. Also for any $x\in R^{-1}[b]$ we can find $n\in \w$ such that $x\in R^{-1}[a_n]$ and conclude that $x\in R^{-1}[a_n]=a_{n+1}\subseteq b$.
\end{proof}

\begin{corollary}\label{c:cup-sequence} For every set $a$ there exists a sequence of sets $(a_n)_{n\in\omega}$ such that $a_0=a$ and $a_{n+1}=\bigcup a_n$ for every $n\in\w$.
\end{corollary}

\begin{proof} Applying Theorem~\ref{t:set-like} to the set-like relation $\E$, we obtain  a sequences of sets  $(a_n)_{n\in\omega}$ such that $a_0=a$ and for every $n\in\w$,
$$a_{n+1}=\E^{-1}[a_n]=\{y:\exists x\in a_n\;(\langle x,y\rangle\in\E^{-1})\}=\{y:\exists x\in a_n\;(y\in x)\}=\textstyle \bigcup a_n.$$
\end{proof}

\section{Well-founded relations}

In this section we introduce and discuss well-founded relations, which play an extremely important role in Classical Set Theory.

\begin{definition}\label{d:well-founded} A relation $R$ is defined to be
 \index{well-founded relation}\index{relation!well-founded}{\em well-founded} if every nonempty class $X$ contains an element $x\in X$ such that $\cev R(x)\cap X=\emptyset$.
\end{definition} 

For transitive set-like relations the quantifier over classes in the definition of well-foundedness can be replaced by a $\UU$-bounded quantifier.




\begin{proposition}\label{p:well-founded} A transitive set-like relation $R$ is well-founded if and only if every nonempty set $a\subseteq\rng[R]$ contains an element $y\in a$ such that $\cev R(y)\cap a=\emptyset$.
\end{proposition}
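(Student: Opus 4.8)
The plan is to prove the two implications separately; the forward direction is immediate, and essentially all of the work (and the only use of transitivity and set-likeness) sits in the converse, which bootstraps the minimality condition from small sets up to arbitrary classes.

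For the ``only if'' part, suppose $R$ is well-founded and let $a\subseteq\rng[R]$ be a nonempty set. Since $a$ is in particular a nonempty \emph{class}, I would simply apply the definition of well-foundedness to $X:=a$, obtaining $y\in a$ with $\cev R(y)\cap a=\emptyset$. This uses neither transitivity nor the hypothesis $a\subseteq\rng[R]$, so the content lies entirely in the other direction.

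For the ``if'' part I would argue by contradiction: assume the set condition holds but $R$ is \emph{not} well-founded, so there is a nonempty class $X$ with $\cev R(x)\cap X\ne\emptyset$ for every $x\in X$. First I would record that $X\subseteq\rng[R]$, since each $x\in X$ has a predecessor in $\cev R(x)$ and hence lies in $\rng[R]$. Next, fixing some $x_0\in X$, I would pass to the auxiliary set
$$a=\{x_0\}\cup(\cev R(x_0)\cap X).$$
The key structural point is that $a$ is genuinely a \emph{set}: by set-likeness $\cev R(x_0)$ is a set, so $\cev R(x_0)\cap X$ is a subclass of a set and therefore a set by Exercise~\ref{ex:subclass}, and adjoining $x_0$ keeps it a set. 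As $a$ is nonempty with $a\subseteq X\subseteq\rng[R]$, the set condition applies and produces $y\in a$ with $\cev R(y)\cap a=\emptyset$.

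To reach a contradiction, I would use $y\in a\subseteq X$ together with the failure of minimality for $X$ to extract some $w\in\cev R(y)\cap X$, so that $\langle w,y\rangle\in R$, $w\ne y$ and $w\in X$. It then suffices to show $w\in a$, which contradicts $\cev R(y)\cap a=\emptyset$. This is exactly where transitivity is needed: if $y=x_0$ then $w\in\cev R(x_0)$ outright, while if $y\in\cev R(x_0)\cap X$ then $\langle w,y\rangle\in R$ and $\langle y,x_0\rangle\in R$ give $\langle w,x_0\rangle\in R$ by transitivity, so either $w=x_0\in a$ or $w\in\cev R(x_0)\cap X\subseteq a$. In all cases $w\in\cev R(y)\cap a$, the desired contradiction. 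I expect the main obstacle to be precisely the design of $a$ so that the witness $w$ handed back by non-minimality of $X$ is forced to re-enter $a$; this is what dictates taking the $R$-predecessors of a single point intersected with $X$, and it is transitivity (to trap $w$) and set-likeness (to keep $a$ a set) that together make the argument close.
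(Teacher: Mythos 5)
Your proof is correct and follows essentially the same route as the paper's: your auxiliary set $a=\{x_0\}\cup(\cev R(x_0)\cap X)$ is literally the set $(\cev R(x_0)\cup\{x_0\})\cap X$ used there, and transitivity and set-likeness play exactly the same roles in trapping the witness $w$ inside $a$ and in keeping $a$ a set. The only divergence is your proof-by-contradiction framing, which lets you conclude $X\subseteq\rng[R]$ up front, whereas the paper's direct argument must separately dispose of the case where $a$ contains an element outside $\rng[R]$ (such an element has empty initial interval and is then trivially $R$-minimal in $X$).
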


\begin{proof} The ``only if'' part is trivial. To prove the ``if'' part, fix a nonempty class $X$.  Take any element $x\in X$ and consider the  class $a=(\cev R(x)\cup\{x\})\cap X$, which is a set, being a subclass of the set $\cev R(x)\cup\{x\}$. 
If $a\not\subseteq\rng[R]$, then take any element $z\in a\setminus \rng[R]$ and observe that the class $\cev R(z)\subseteq R^{-1}[\{z\}]=\emptyset$ is empty and hence $\cev R(z)$ is a set with $\cev R(z)\cap X=\emptyset$.

So, we assume that $a\subseteq\rng[R]$. Since $x\in a$, the set $a$ is not empty and by the assumption, there exists an element $y\in a\subseteq X$ such that $\cev R(y)\cap a=\emptyset$. If $y=x$, then $$
X\cap \cev R(x)\subseteq X\cap(\cev R(x)\cup\{x\})\cap \cev R(x)=a\cap\cev R(y)=\emptyset$$and the point $x=y\in X$ has the required property:  $\cev R(x)\cap X=\emptyset$.

Now assume that $y\ne x$. In this case $$y\in a\setminus\{x\}\subseteq (\cev R(x)\cup\{x\})\setminus\{x\}\subseteq\cev R(x)\subseteq R^{-1}[\{x\}]$$ and then the transitivity of the relation $R$ ensures that $$\cev R(y)\subseteq R^{-1}[\{y\}]\subseteq R^{-1}[R^{-1}[\{x\}]]\subseteq R^{-1}[\{x\}]\subset R^{-1}[\{x\}]\cup\{x\}.$$ Then
$$X\cap \cev R(y)=X\cap(R^{-1}[\{x\}]\cup\{x\})\cap \cev R(y)=a\cap\cev R(y)=\emptyset$$and $y\in a\subset X$ is a required element such that $\cev R(y)\cap X=\emptyset$.  
\end{proof}

Well-founded relations allow us to generalize the Principle of Mathematical Induction to the Principle of Well-Founded Induction. In fact, the Principle of Mathematical Induction has two forms.
\smallskip

\noindent{\bf Principle of Mathematical Induction:}\index{Principle of Mathematical Induction}\index{Mathemtical Induction}\\ {\em Let $X$ be a set of natural numbers. If $\emptyset\in X$ and for every $n\in X$ the number $n+1$ belongs to $X$, then $X=\w$.
}
\smallskip

\begin{proof} It follows that $X$ is an inductive set and hence $\w\subseteq X$ by the definition of $\w$. Since $X\subseteq \w$ we have $X=\w$.
\end{proof}

\noindent{\bf Principle of Mathematical Induction (metaversion):} {\em Let $\varphi(x)$ be a $\mathbf U$-bounded formula with free variables $x,Y_1,\dots,Y_m$. Let $Y_1,\dots,Y_m$ be any classes. Assume that $\varphi(\emptyset,Y_1,\dots,Y_m)$ holds and for every $n\in\w$ if $\varphi(n,Y_1,\dots,Y_m)$ holds, then $\varphi(n+1,Y_1,\dots,Y_m)$ holds. Then $\varphi(n,Y_1,\dots,Y_m)$ holds for all $n\in\w$.
}
\smallskip

\begin{proof} Consider the class $X=\{n\in\w:\varphi(n,Y_1,\dots,Y_m)\}$ which exists by G\"odel's  Theorem~\ref{t:class}. Since $X\subseteq \w$, the class $X$ is a set, see Exercise~\ref{ex:subclass}. Our assumptions on $\varphi$ ensure that the set $X$ is inductive. Then $X=\w$ by the minimality of the inductive set $\w$.
\end{proof}

The Principle of Well-Founded induction also has two versions.
\smallskip

\noindent{\bf Principle of Well-Founded Induction:} {\em Let $Y$ be a subclass of a class $X$ and $R$  be a well-founded relation such that $\dom[R]\subseteq X$. If each element $x\in X$ with $\cev R(x)\subseteq Y$ belongs to $Y$, then $Y=X$.\index{Principle of Well-Founded Induction}\index{Well-Founded Induction}
}
\smallskip

\begin{proof} Assuming that $Y\ne X$, consider the nonempty class $X\setminus Y$ and by the well-foundedness of the relation $R$, find an element $x\in X\setminus Y$ such that $\cev R(x)\cap (X\setminus Y)=\emptyset$ and hence $$\cev R(x)\subseteq \dom[R]\setminus(X\setminus Y)\subseteq X\setminus(X\setminus Y)=Y$$ Now the assumption ensures that $x\in Y$, which contradicts the choice of $x$.
\end{proof}

\noindent{\bf Principle of Well-Founded Induction (metaversion):} {\em Let $\varphi(x,Y_1,\dots,Y_m)$ be a $\mathbf U$-bounded formula with a free variables $x,Y_1,\dots,Y_m$. Let $Y_1,\dots,Y_m$ be any classes. Let $R$ be a well-founded relation and $X$ be a class such that $\dom[R]\subseteq X$. Assume that for any $x\in X$ the following implication holds:
$$(\forall z\in\cev R(x)\;\varphi(z,Y_1,\dots,Y_m))\;\Rightarrow\;\varphi(x,Y_1,\dots,Y_m).$$ Then $\varphi(x,Y_1,\dots,Y_m)$ holds for every $x\in X$.
}

\begin{proof} Consider the class $Y=\{x\in X:\varphi(x,Y_1,\dots,Y_m)\}$ which exists by G\"odel's class existence Theorem~\ref{t:class}. Applying the Principle of Well-Founded Induction, we conclude that $Y=X$.
\end{proof}

\begin{remark} If the well-founded relation $R$ in the Principle of Well-Founded induction coincides with the membership relation $\mathbf E{\restriction}\mathbf{On}$ restricted to the class of ordinals (see Section~\ref{s:ordinals} for definition of ordinals), then the Principle of Well-Founded Induction is called the \index{Principle of Transfinite Induction}{\em Principle of Transfinite Induction}. 
\end{remark}

\begin{Exercise}\label{ex:AF<=>E-WF} Prove that the Axiom of Foundation holds if and only if the relation $\mathbf E$ is well-founded and irreflexive.

{\em Hint:} Look at Theorem~\ref{t:AF<=>E-WF}.
\end{Exercise}
\newpage 

\part{Order}

In this section we consider some notions related to order and introduce ordinals.

\section{Order relations} There exists a wide class of relations describing various types of order on classes and sets.  

\begin{definition}  A relation $R$ is called 
\begin{itemize}
\item \index{antisymmetric relation}\index{relation!antisymmetric}{\em antisymmetic} if $R\cap R^{-1}\subseteq \Id$;
\item \index{order}\index{relation!of order} an {\em order} if the relation $R$ is transitive and antisymmetric;
\item \index{linear order}\index{order!linear} a {\em linear order} if $R$ is an order such that $\dom[R^\pm]\times\dom[R^\pm]\subseteq R\cup\Id\cup R^{-1}$; 
\item a \index{well-order}\index{order!well-order}  {\em well-order} if $R$ is a well-founded linear order;
\item a \index{tree-order}\index{order!tree} {\em tree-order} if $R$ is an order such that for every $x\in \dom[R^\pm]$ the order $R{\restriction}\cev R(x)$ is a well-order.
\end{itemize} 
\end{definition}


\begin{exercise} Prove that any well-founded (transitive) relation is antisymmetric (and hence is an order relation).
\end{exercise}

For order relations we have the following implication.
$$
\xymatrix{
\mbox{well-founded}\atop\mbox{reflexive order}\ar@{=>}[r]&\mbox{well-founded}\atop\mbox{order}&\mbox{well-founded}\atop\mbox{irreflexive order}\ar@{=>}[l]\\
\mbox{reflexive tree-order}\ar@{=>}[u]\ar@{=>}[r]&\mbox{tree-order}\ar@{=>}[u]&\mbox{irreflexive tree-order}\ar@{=>}[u]\ar@{=>}[l]\\
\mbox{reflexive well-order}\ar@{=>}[r]\ar@{=>}[u]\ar@{=>}[d]&\mbox{well-order}\ar@{=>}[d]\ar@{=>}[u]&\mbox{irreflexive well-order}\ar@{=>}[l]\ar@{=>}[d]\ar@{=>}[u]\\
\mbox{reflexive linear order}\ar@{=>}[r]\ar@{=>}[d]&\mbox{linear order}\ar@{=>}[d]&\mbox{irreflexive linear order}\ar@{=>}[l]\ar@{=>}[d]\\
\mbox{reflexive order}\ar@{=>}[r]&\mbox{order}&\mbox{irreflexive order}\ar@{=>}[l]
}
$$
\smallskip

\begin{remark} Reflexive orders are called \index{partial order}\index{order!partial}{\em partial orders}, and irreflexive orders are called \index{strict order}\index{order!strict}{\em strict orders}.\\ Reflexive orders are usually denoted by $\leq$, $\preceq$, $\sqsubseteq$ and irreflexive orders  by $<$, $\prec$, $\sqsubset$ etc. 
\end{remark}

\begin{exercise} Show that for any order $R$, the relation $R\setminus\Id$ is an irreflexive order and the relation $R\cup\Id{\restriction}\dom[R^\pm]$ is a reflexive order.
\end{exercise}

\begin{definition} For a reflexive order $R$ (which is a set), the pair $(\dom[R],R)$ is called a \index{class!partially ordered}\index{partially ordered class}{\em partially ordered class} (resp. a {\em partially ordered set} or briefly, a {\em poset}).\index{partially ordered set}\index{set!partially ordered}\index{poset}
\end{definition}



\begin{exercise} Prove that the relation $ \mathbf S=\{\langle x,y\rangle\in\ddot\UU:x\subseteq y\}$ is a partial order.
\end{exercise}

\begin{exercise}\label{ex:char-WO} Prove that an order $R$ is a well-order if and only if every non-empty class $X\subseteq\dom[R^\pm]$ contains an element $x\in X$ such that $\langle x,y\rangle\in R$ for every $y\in X\setminus\{x\}$.
\end{exercise}

\begin{exercise}\label{ex:Lin} Prove that the class $\mathbf{Lin}$ of strict linear orders exists and is basic.

\end{exercise}

\begin{exercise}\label{ex:WF} Prove that the class $\mathbf{WF}$ of well-founded relations exists and is basic.
%
\end{exercise}

\begin{exercise} Prove that the classes of sets which are orders (linear orders, well-orders)  exist are basic.
\end{exercise}

Let $R$ be a relation and $X$ be a class. An element $x\in X\cap\dom[R^\pm]$ is called
\begin{itemize}
\item an\index{minimal element}\index{element!minimal} {\em $R$-minimal element of $X$} if $\forall y\in X\;(\langle y,x\rangle\in R\;\Rightarrow\;y=x)$;
\item an {\em $R$-maximal element of $X$}\index{maximal element}\index{element!maximal}  if $\forall y\in X\;(\langle x,y\rangle\in R\;\Rightarrow\;y=x)$;
\item an {\em $R$-least element of $X$}\index{least element}\index{element!least}  if  $\forall y\in X\;(\langle x,y\rangle\in R\cup\Id)$;
\item an {\em $R$-greatest element of $X$}\index{greatest element}\index{element!greatest}  if  $\forall y\in X\;(\langle y,x\rangle\in R\cup\Id)$.
\end{itemize}

\begin{exercise} Let $X$ be a class and $R$ be an antisymmetric relation.  Show that every $R$-least element of $X$ is $R$-minimal, and every $R$-greatest element of $X$ is $R$-maximal in $X$.
\end{exercise}

\begin{exercise} Let $R$ be a linear order and $X\subseteq \dom[R^\pm]$. Show that an element $x\in X$ is 
\begin{itemize}
\item $R$-minimal in $X$ if and only if $x$ is the $R$-least element of $X$;
\item $R$-maximal in $X$ if and only if $x$ is the $R$-greatest element of $X$.
\end{itemize}
\end{exercise}

\begin{example} Consider the partial order $\mathbf S=\{\langle x,y\rangle:x\subseteq y\}$. Observe that every element of the set $X=\{\{0\},\{1\}\}$ is $\mathbf S$-minimal and $\mathbf S$-maximal, but $X$ contains no $\mathbf S$-least and no $\mathbf S$-greatest elements.
\end{example}

\begin{exercise}\label{ex:fin-max} Prove that for every natural number $n\in\IN$ and any order $R$ with $\dom[R^\pm]\subseteq n$ there exist an $R$-minimal element $x\in n$ and an $R$-maximal element $y\in n$.
\smallskip

\noindent{\em Hint:} Apply the Principle of Mathematical Induction.
\end{exercise}

We recall that a set $x$ is called finite if there exists an injective function $f$ such that $\dom[f]=x$ and $\rng[f]\in\w$.

\begin{exercise}\label{ex:finite-order-mm}  Prove that for any order $R$ on a finite set $X=\dom[R^\pm]$ there exist an $R$-minimal element $x\in X$ and an $R$-maximal element $y\in X$.
\end{exercise}

\begin{definition} Let $L$ be an order on the class $X=\dom[L^\pm]$. A subclass $A\subseteq X$ is called \index{subset!upper bounded}\index{subset!lower bounded}{\em upper $L$-bounded} (resp. {\em lower $L$-bounded\/}) if the set $\overline{A}=\{b\in X:A\times\{b\}\subseteq L\cup\Id\}$ (resp. the set $\underline{A}=\{b\in X:\{b\}\times A\subseteq L\cup\Id\}$) is not empty.

If the set $\overline{A}$ (resp. $\underline{A}$) contains the $L$-least (resp. $L$-greatest) element, then this unique element is denoted by $\sup_L(A)$ (resp. $\inf_L(A)$) and called {\em the least upper $L$-bound} (resp. {\em the greatest lower $L$-bound}) of $A$. \index{least upper bound}\index{greatest lower bound}\index{$\sup$}\index{$\inf$}
\end{definition}

For a well-founded order $R$ and an element $x\in\dom[R^\pm]$, the class 
$$\Succ_R(x)={\min}_R (R[\{x\}]\setminus\{x\})$$
of $R$-minimal elements of the class $R[\{x\}]=\{y:\langle x,y\rangle\in R\}$ is called the {\em class of immediate $R$-successors} of $x$ in the well-founded order $R$.

\begin{definition} A well-founded order $R$ is called
\begin{itemize}
\item \index{well-founded order!locally finite}{\em locally finite}  if for every $x\in \dom[R^\pm]$ the class $\Succ_R(x)$  is a finite set;
\item \index{well-founded order!locally countable} {\em locally countable}   if for every $x\in \dom[R^\pm]$ the class $\Succ_R(x)$ a countable set; 
\item \index{well-founded order!locally set} {\em locally set}  if for every $x\in \dom[R^\pm]$ the class $\Succ_R(x)$ is a set.
\end{itemize} 
\end{definition}







\section{Transitivity}

\begin{definition} A class $X$ is called \index{transitive class}\index{class!transitive}{\em transitive} if $\forall y\in X\;\forall z\in y\;(z\in X)$.
\end{definition}

Observe that the transitivity is defined by a $\Delta_0$-formula.

\begin{exercise} Prove that a class $X$ is transitive if and only if $\forall x\in X\; (x\subseteq X)$ if and only if $\bigcup X\subseteq X$.
\end{exercise}

\begin{exercise}\label{ex:Tr} Prove that the class $\mathbf{Tr}$ of transitive sets exists and is basic.
\smallskip

\noindent{\em Hint:} Observe that $\UU\setminus\mathbf{Tr}=\{x:\exists y\;\exists z\;(y\in x\;\wedge\;z\in y\;\wedge\;z\notin x)\}=\rng[T_1\cap T_2\cap T_3]$ where 
$T_1=\{\langle z,y,x\rangle:y\in x\}=[\mathbf E\times\UU]^\circlearrowright$, 
$T_2=\{\langle z,y,x\rangle:z\in y\}=\mathbf E\times\UU$, and\\ $T_3=\{\langle z,y,x\rangle:z\notin x\}=[(\ddot\UU\setminus \E^{-1})\times\UU]^\circlearrowleft$.
\end{exercise} 

\begin{remark} The membership relation $\E{\restriction}\mathbf{Tr}$ on the class $\mathbf{Tr}$ is transitive.
\end{remark}

\begin{exercise} Let $(X_\alpha)_{\alpha\in A}$ be an indexed family of transitive classes. Prove that the union $\bigcup_{\alpha\in A}X_\alpha$ and intersection $\bigcap_{\alpha\in A}X_\alpha$ are transitive classes.
\end{exercise}



Let us recall that a class $X$ is called \index{class!inductive}\index{inductive class}{\em inductive} if $\emptyset\in X$ and for every $x\in X$ the set $x\cup\{x\}$ belongs to $X$. By the Axiom of Infinity there exists an inductive set. The intersection of all inductive sets is denoted by $\w$.

\begin{proposition}\label{p:trans=>ind} The class $\mathbf{Tr}$ of transitive sets is inductive. 
\end{proposition}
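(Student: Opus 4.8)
The plan is to unpack what it means for $\mathbf{Tr}$ to be inductive and verify the two defining clauses directly: that $\emptyset\in\mathbf{Tr}$, and that $\mathbf{Tr}$ is closed under taking successors, i.e.\ $x\in\mathbf{Tr}\Rightarrow x\cup\{x\}\in\mathbf{Tr}$. Both $\emptyset$ and $x\cup\{x\}$ are already known to be sets (the former by the Axiom of Difference applied to $\UU$, the latter by the Axiom of Union), so the only content is verifying transitivity in each case. Throughout I would use the convenient reformulation recorded in the earlier exercise: a class $X$ is transitive iff $\forall w\in X\;(w\subseteq X)$.

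First I would dispose of the base case. The set $\emptyset$ is transitive because the defining condition $\forall y\in\emptyset\;\forall z\in y\;(z\in\emptyset)$ holds vacuously, there being no $y\in\emptyset$. Hence $\emptyset\in\mathbf{Tr}$.

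For the inductive clause, I would fix a transitive set $x$ and show $x\cup\{x\}$ is transitive by a short case analysis. Let $y\in x\cup\{x\}$ and $z\in y$; the goal is $z\in x\cup\{x\}$. If $y\in x$, then transitivity of $x$ gives $y\subseteq x$, so $z\in x\subseteq x\cup\{x\}$. If instead $y\in\{x\}$, i.e.\ $y=x$, then $z\in x\subseteq x\cup\{x\}$ directly. In either case $z\in x\cup\{x\}$, so $x\cup\{x\}$ is transitive and therefore lies in $\mathbf{Tr}$.

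I do not expect any genuine obstacle here: the argument is a routine verification of the two inductive conditions, and the case split on $y\in x$ versus $y=x$ is the only place where anything happens. The one point worth stating explicitly, rather than glossing over, is that $\emptyset$ and $x\cup\{x\}$ really are \emph{sets} (not merely transitive classes), since $\mathbf{Tr}$ is defined as the class of transitive \emph{sets}; this is exactly what guarantees membership in $\mathbf{Tr}$ rather than just transitivity.
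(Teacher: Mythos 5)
Your proof is correct and follows essentially the same route as the paper's: verify that $\emptyset$ is vacuously transitive, then for transitive $x$ run the case split $y\in x$ versus $y=x$ to show $x\cup\{x\}$ is transitive. The extra remark that $\emptyset$ and $x\cup\{x\}$ are genuinely sets is a sensible bit of care the paper leaves implicit.
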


\begin{proof} It is clear that the empty set is transitive. Assume that a set $x$ is transitive and take any element $y\in x\cup\{x\}$. If $y\in x$, then for every $z\in y$ the element $z$ belongs to $x\cup\{x\}$ by the transitivity of $x$.  If $y=x$, then every element $z\in y=x$ belongs to $x\subseteq x\cup\{x\}$ as $y=x$.
\end{proof}

\begin{theorem}\label{t:wTr} $\w\subset\mathbf{Tr}$ and $\w\in\mathbf{Tr}$.
\end{theorem}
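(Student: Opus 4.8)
The plan is to treat the two assertions separately: first establishing the inclusion $\w\subseteq\mathbf{Tr}$ together with its strictness, and then verifying $\w\in\mathbf{Tr}$. Both containments reduce to applications of the Principle of Mathematical Induction, with Proposition~\ref{p:trans=>ind} (that $\mathbf{Tr}$ is inductive) supplying the inductive step for the first and a direct computation for the second.

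First I would prove $\w\subseteq\mathbf{Tr}$, that is, that every natural number is a transitive set. Consider the class $X=\w\cap\mathbf{Tr}$, which is a set by Exercise~\ref{ex:subclass} since $X\subseteq\w$. As $\emptyset$ is transitive and $\emptyset\in\w$, we have $\emptyset\in X$; and if $n\in X$, then $n\in\w$ gives $n\cup\{n\}\in\w$ by the inductivity of $\w$, while $n\in\mathbf{Tr}$ gives $n\cup\{n\}\in\mathbf{Tr}$ by Proposition~\ref{p:trans=>ind}. Hence $n\cup\{n\}\in X$, so $X$ is an inductive set, and the minimality of $\w$ yields $\w\subseteq X\subseteq\mathbf{Tr}$.

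Next I would show $\w\in\mathbf{Tr}$, that is, that $\w$ is itself transitive. Recalling that a class is transitive if and only if each of its elements is a subclass of it, it suffices to prove $n\subseteq\w$ for every $n\in\w$. Let $Y=\{n\in\w:n\subseteq\w\}$, again a set as a subclass of $\w$. Clearly $\emptyset\in Y$, and if $n\in Y$ then $n\subseteq\w$ together with $n\in\w$ gives $n\cup\{n\}\subseteq\w$, whence $n\cup\{n\}\in Y$. By the minimality of $\w$ we conclude $Y=\w$, so every element of $\w$ is a subset of $\w$ and $\w$ is transitive. Since $\w$ is a set (being a subclass of any inductive set, via Exercise~\ref{ex:subclass}), this means exactly $\w\in\mathbf{Tr}$.

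Finally, for the strict inclusion it remains to exhibit a transitive set lying outside $\w$, and the element $\w$ itself serves: we have just shown $\w\in\mathbf{Tr}$, whereas $\w\notin\w$ because, by the Axiom of Foundation, no set is a member of itself. Thus $\w\in\mathbf{Tr}\setminus\w$, so $\w\ne\mathbf{Tr}$ and $\w\subset\mathbf{Tr}$. The only genuinely nonroutine input is Proposition~\ref{p:trans=>ind}; everything else is a direct induction, and the single place requiring a fresh idea is the choice of witness for strictness.
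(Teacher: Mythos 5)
Your argument follows essentially the same route as the paper's: both halves are the same two inductions (the inductivity of $\w\cap\mathbf{Tr}$, supplied by Proposition~\ref{p:trans=>ind}, and the inductivity of $\{n\in\w:n\subseteq\w\}$), and both are carried out correctly. The one point to flag is your justification of strictness. You appeal to the Axiom of Foundation to get $\w\notin\w$, but in this text Foundation is \emph{not} part of the base axioms of the Classical Set Theory --- it is only added when explicitly acknowledged, and this theorem carries no such acknowledgement. The repair is cheap and stays within what you have already proved: since every natural number is transitive, the class $\{n\in\w:n\notin n\}$ is an inductive subset of $\w$ (if $n\cup\{n\}\in n\cup\{n\}$ then either $n\cup\{n\}=n$ or $n\cup\{n\}\in n$, and in both cases the transitivity of $n$ forces $n\in n$), hence equals $\w$; so if $\w\in\w$ held, $\w$ would be a natural number with $\w\in\w$, a contradiction. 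With that substitution your witness $\w\in\mathbf{Tr}\setminus\w$ for the strictness of the inclusion goes through without Foundation; this is in fact a point where you are more careful than the paper, whose proof leaves the strictness of $\w\subset\mathbf{Tr}$ implicit.
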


\begin{proof} Since the classes $\mathbf{Tr}$ and $\w$ are inductive, so is their intersection $\mathbf{Tr}\cap \w$. By Exercise~\ref{ex:subclass}, the class $\mathbf{Tr}\cap \w$ is a set and then $\w\subseteq\w\cap\mathbf{Tr}\subseteq\mathbf{Tr}$ by the definition of $\w$.

To prove that $\w\in\mathbf{Tr}$, we need to show that each element of $\w$ is a subset of $\w$. For this consider the class $T=\{n\in\w:n\subseteq\w\}$, which is equal to $(\w\times\UU)\cap\dom[\mathbf S\cap (\UU\times\{\w\})]$ and hence exists. The class $T$ is a set, being a subclass of the set $\w$, see Exercise~\ref{ex:subclass}. Let us show that the set $T$ is inductive. It is clear that $\emptyset\in T$. Assuming that $n\in T$, we conclude that $n\subseteq \w$ and also $n\in T\subseteq\w$. Then $n\cup\{n\}\subseteq\w$, which means that $n\cup\{n\}\in T$ and the set $T$ inductive. Since $\w$ is the smallest inductive set, $\w=T$. Consequently, $\w$ is transitive set and $\w\in\mathbf{Tr}$.
\end{proof}

Given any set $x$, consider the class $\Tr(x)=\{y\in\Tr:x\subseteq y\}$ of transitive sets that contain $x$. This class is equal to $\Tr\cap \mathbf S[\{x\}]$ and hence exists by Exercises~\ref{ex:Tr} and \ref{ex:S}. By Corollary~\ref{c:cup-sequence}, there exists a sequence of sets $(x_n)_{n\in\omega}$ such that $x_0=x$ and $x_{n+1}=\bigcup x_n$ for all $n\in\w$. It follows that the set $x_{<\w}\defeq \bigcup_{n\in\w}x_n$ is transitive and contains $x$ as a subset. Therefore, $x_{<\w}\in \Tr(x)$, witnessing that the class $\Tr(x)$ is not empty. The intersection $$\UTC(x)\defeq{\textstyle\bigcap}\Tr(x)$$of the class $\Tr(x)$ is the smallest transitive set that contains $x$. This set is called the \index{transitive closure}{\em transitive closure} of $X$. The following proposition establishes some properties of the transitive closure.


\begin{proposition}\label{p:TC} $\UTC(x)=x\cup\UTC(\bigcup x)=x\cup\bigcup\limits_{y\in x}\UTC(y)$ for every set $x$.
\end{proposition}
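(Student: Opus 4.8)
The plan is to treat both equalities via the universal property recorded at the definition of the transitive closure: for every set $a$, the class $\UTC(a)$ is transitive, satisfies $a\subseteq\UTC(a)$, and is contained in every transitive class $T$ with $a\subseteq T$ (it is the smallest such). All four inclusions needed for the two claimed identities reduce to a single repeated move: to see that $\UTC(a)\subseteq C$ for a transitive $C$, it suffices to check $a\subseteq C$ and then quote minimality of $\UTC(a)$; and to see $\UTC(a)\subseteq\UTC(b)$ it suffices to check $a\subseteq\UTC(b)$, using that $\UTC(b)$ is transitive.

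First I would prove $\UTC(x)=x\cup\UTC(\bigcup x)$. Write $B=x\cup\UTC(\bigcup x)$. For $\UTC(x)\subseteq B$ I check that $B$ is transitive and contains $x$, and invoke minimality of $\UTC(x)$. Transitivity of $B$ is a two-case check on $v\in w\in B$: if $w\in x$ then $v\in\bigcup x\subseteq\UTC(\bigcup x)\subseteq B$; if $w\in\UTC(\bigcup x)$ then transitivity of $\UTC(\bigcup x)$ gives $w\subseteq\UTC(\bigcup x)$, whence $v\in B$. For the reverse inclusion $B\subseteq\UTC(x)$ I use $x\subseteq\UTC(x)$ together with $\UTC(\bigcup x)\subseteq\UTC(x)$; the latter follows because $\bigcup x\subseteq\UTC(x)$ (any $z\in y\in x$ lies in $\UTC(x)$ since $y\in x\subseteq\UTC(x)$ and $\UTC(x)$ is transitive), so $\UTC(x)$ is a transitive class containing $\bigcup x$ and minimality of $\UTC(\bigcup x)$ applies.

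The second identity $\UTC(x)=x\cup\bigcup_{y\in x}\UTC(y)$ is proved by the same template with $A=x\cup\bigcup_{y\in x}\UTC(y)$. For $\UTC(x)\subseteq A$ I verify $A$ is transitive: if $v\in w\in x$ then, since $w\subseteq\UTC(w)$ and $\UTC(w)$ is one of the sets in the union, $v\in\UTC(w)\subseteq A$; if $w\in\UTC(y)$ for some $y\in x$ then transitivity of $\UTC(y)$ gives $v\in\UTC(y)\subseteq A$. Hence $A$ is transitive, contains $x$, and $\UTC(x)\subseteq A$ by minimality. For $A\subseteq\UTC(x)$ it suffices to show $\UTC(y)\subseteq\UTC(x)$ for each $y\in x$: as above $y\in x\subseteq\UTC(x)$ and transitivity give $y\subseteq\UTC(x)$, so minimality of $\UTC(y)$ yields $\UTC(y)\subseteq\UTC(x)$.

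The only genuinely delicate point is to use \emph{only} the minimality and transitivity properties of $\UTC$ stated at its definition, and not the explicit formula $\UTC(x)=\bigcup_{n\in\w}\bigcup^{\circ n}x$, which is Theorem~\ref{t:TC} proved afterwards; keeping the argument confined to the universal property avoids circularity. The remaining effort is purely the bookkeeping of the two transitivity case-analyses, which I expect to be routine.
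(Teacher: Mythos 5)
Your proof is correct and follows essentially the same route as the paper's: both arguments rest solely on the characterization of $\UTC$ as the smallest transitive class containing its argument, with the same two-case transitivity check for $x\cup\UTC(\bigcup x)$ and the same appeal to minimality of $\UTC(\bigcup x)$ (resp.\ $\UTC(y)$) inside a transitive class containing $x$. The only difference is that you spell out the second equality $\UTC(x)=x\cup\bigcup_{y\in x}\UTC(y)$ by the same template, whereas the paper's proof treats only the first equality explicitly and leaves the second implicit.
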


\begin{proof}  The equality $\UTC(x)=x\cup\UTC(\bigcup x)$ will be established as soon as we check that the set $x\cup \UTC(\bigcup x)$ is transitive and is a subset of every transitive set $t$ that contains $x$. 

Given any sets $u\in x\cup\UTC(\bigcup x)$ and $v\in u$, we shall prove that $v\in x\cup\UTC(\bigcup x)$. If $u\in x$, then $v\in\bigcup x\subseteq\UTC(\bigcup x)\subseteq x\cup\UTC(\bigcup x)$.  If $u\in\UTC(\bigcup x)$, then $v\in\UTC(\bigcup x)$ by the transitivity of the set $\UTC(\bigcup x)$. Therefore the set $x\cup\UTC(\bigcup x)$ is transitive. 

Now let $T$ be any transitive set that contains $x$. Then $\bigcup x\subseteq T$ by transitivity of $T$ and then $\UTC(\bigcup x)\subseteq T$ since $\UTC(\bigcup x)$ is the smallest transitive set that contains $\bigcup x$.  Then $x\cup\UTC(\bigcup x)\subseteq T$. 
\end{proof}


\begin{exercise}\label{ex:TC} Given any sets $x,y,z$, prove the following equalities:
\begin{enumerate}
\item $\UTC(x\cup y)=\UTC(x)\cup\UTC(y)$
\item $\UTC(\{x,y\})=\{x,y\}\cup\UTC(x\cup y)$.
\item $\UTC(\langle x,y\rangle)=\{\{x\},\{x,y\}\}\cup\{x,y\}\cup\UTC(x\cup y)$.
\item $\UTC(x\times y)=(x\times y)\cup\{\{u\}:u\in x\}\cup\{\{u,v\}:u\in x\;\wedge\;v\in y\}\cup\UTC(x\cup y)$.
\item $\UTC(\dom[x])\subseteq\UTC(x)$.
\item Calculate $\UTC(\langle x,y,z\rangle)$. 
\end{enumerate}
\end{exercise}

Now we can present a solution to Exercise~\ref{ex:AF<=>E-WF}.

\begin{theorem}\label{t:AF<=>E-WF} Axiom of Foundation holds if and only if the relation $\mathbf E$ is well-founded and irreflexive.
\end{theorem}

\begin{proof} Assume that the Axiom of Foundation holds. We have to prove that the relation $\mathbf E$ is well-founded and irreflexive. Assuming that $\mathbf E$ is not irreflexive, we can find a set $x$ such that $x\in x$. Then every element of the set $\{x\}$ intersects the set $x$, which contradicts the Axiom of Foundation. To prove that the relation $\mathbf E$ is well-founded, take any nonempty class $X$. We have to find an element $a\in X$ such that $\cev{\mathbf E}(a)\cap X=\emptyset$. Choose any element $x\in X$. If $x\cap X=\varnothing$, then the element $a\defeq x$ has the required property. So, assume that $x\cap X\ne\varnothing$. Consider the transitive closure $\UTC(x)$ of $x$, and observe that $\UTC(x)\cap X$ is a nonempty set. By the Axiom of Foundation, there exists an element $a\in \UTC(x)\cap X$ such that $a\cap\UTC(x)\cap X=\varnothing$. Assuming that $a\cap X\ne\emptyset$, we can find an element $b\in a\cap X$ and conclude that $b\in a\cap \UTC(x)\cap X$, which contradicts the choice of $a$. This contradiction shows that $a\cap X=\emptyset$ and hence $\cev{\mathbf E}(a)\cap X\subseteq a\cap X=\varnothing$.
\smallskip

Now assume that the relation $\mathbf E$ is well-founded and irreflexive. To prove that the Axiom of Foundation holds, take any nonempty set $x$. By the well-foundedness of the relation $\mathbf E$, there exists an element $y\in X$ such that $\cev{\mathbf E}(y)\cap x=\varnothing$. The irreflexivity of the relation $\mathbf E$ ensures that $y\notin y$ and hence $\cev{\mathbf E}(y)=y$ and $y\cap x=\cev{\mathbf E}(y)\cap x=\emptyset$, witnessing that the Axiom of Foundation holds. 
\end{proof}

\section{Ordinals}\label{s:ordinals}


\begin{definition}[von Neumann]\label{d:ordinals} A set $x$ is called an \index{ordinal}{\em ordinal} if $x$ is transitive and the relation $\mathbf E{\restriction}x=\E\cap(x\times x)$ is an irreflexive well-order on $x$.
\end{definition}

\begin{exercise} Show that under the Axiom of Foundation, a transitive set $x$ is an ordinal if and only if the relation $\mathbf E{\restriction}x$ is a linear order.
\end{exercise}

We recall that for two classes $X,Y$ the notation $X\subset Y$ means that $X\subseteq Y$ and $X\ne Y$.

\begin{theorem}\label{t:ord}
\begin{enumerate}
\item[\textup{1)}] Each element of an ordinal is a transitive set.
\item[\textup{2)}]  Each element of an ordinal is an ordinal.
\item[\textup{3)}]  The intersection of two ordinals is an ordinal.
\item[\textup{4)}]  For any two ordinals $\alpha,\beta$, we have the equivalence: $\alpha\in\beta\;\Leftrightarrow\;\alpha\subset\beta$.
\item[\textup{5)}]  For any ordinals $\alpha,\beta$ we have the dychotomy: $\alpha\subseteq\beta\;\vee\;\beta\subseteq\alpha$.
\item[\textup{6)}]  For any ordinals $\alpha,\beta$ we have the trichotomy: $\alpha\in\beta\;\vee\;\alpha=\beta\;\vee\;\beta\in\alpha$.
\end{enumerate}
\end{theorem}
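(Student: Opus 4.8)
The plan is to establish the six items essentially in the listed order, since each later one feeds on the earlier ones. The recurring mechanism is this: for an ordinal $\alpha$, transitivity gives $\beta\subseteq\alpha$ whenever $\beta\in\alpha$, and the restriction of the well-order $\E\restriction\alpha$ to any subset $S\subseteq\alpha$ is again an irreflexive well-order, since irreflexivity, linearity and well-foundedness are all inherited by passing to a subclass. I keep this observation in mind throughout.

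For (1), I fix an ordinal $\alpha$ and $\beta\in\alpha$; to see that $\beta$ is transitive I take $z\in y\in\beta$. Transitivity of $\alpha$ puts $\beta$, $y$, $z$ all in $\alpha$, and then transitivity of the order relation $\E\restriction\alpha$ (it is a well-order, hence transitive) turns $z\in y\in\beta$ into $z\in\beta$. For (2), by (1) every $\beta\in\alpha$ is transitive, and $\E\restriction\beta=(\E\restriction\alpha)\restriction\beta$ is a well-order on $\beta$ by the restriction remark, so $\beta$ is an ordinal. For (3), the set $\alpha\cap\beta$ is a subset of $\alpha$, is transitive as an intersection of transitive classes, and carries the restricted well-order $(\E\restriction\alpha)\restriction(\alpha\cap\beta)$; hence it too is an ordinal.

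The heart of the theorem is (4). The direction $\alpha\in\beta\Rightarrow\alpha\subset\beta$ is immediate: transitivity of $\beta$ gives $\alpha\subseteq\beta$, and $\alpha\ne\beta$ because $\beta\in\beta$ would violate irreflexivity of $\E\restriction\beta$. For the converse I assume $\alpha\subset\beta$ and look at the nonempty set $\beta\setminus\alpha\subseteq\beta$; by well-foundedness of $\E\restriction\beta$ (equivalently by Exercise~\ref{ex:char-WO}) it has an $\E$-minimal element $\gamma$, and I claim $\gamma=\alpha$, which yields $\alpha=\gamma\in\beta$. The inclusion $\gamma\subseteq\alpha$ follows since any $\delta\in\gamma$ lies in $\beta$ by transitivity and cannot lie in $\beta\setminus\alpha$ by minimality of $\gamma$, so $\delta\in\alpha$. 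For $\alpha\subseteq\gamma$ I take $\delta\in\alpha\subseteq\beta$ and use linearity of $\E\restriction\beta$ to compare $\delta$ with $\gamma$: the options $\delta=\gamma$ and $\gamma\in\delta$ both force $\gamma\in\alpha$ (directly, or via transitivity of $\alpha$), contradicting $\gamma\in\beta\setminus\alpha$, so $\delta\in\gamma$. This mutual inclusion is the main obstacle, as it is the only place where minimality and linearity must be combined carefully.

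Finally (5) and (6) are short consequences. For the dichotomy (5), I set $\gamma=\alpha\cap\beta$, an ordinal by (3) with $\gamma\subseteq\alpha$ and $\gamma\subseteq\beta$; if both inclusions were strict then (4) would give $\gamma\in\alpha$ and $\gamma\in\beta$, whence $\gamma\in\alpha\cap\beta=\gamma$, contradicting irreflexivity of $\E\restriction\gamma$. Thus $\gamma=\alpha$ or $\gamma=\beta$, i.e. $\alpha\subseteq\beta$ or $\beta\subseteq\alpha$. For the trichotomy (6), I use (5) to assume without loss of generality $\alpha\subseteq\beta$; then either $\alpha=\beta$, or $\alpha\subset\beta$ and (4) delivers $\alpha\in\beta$, with the symmetric case giving $\beta\in\alpha$.
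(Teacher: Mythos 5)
Your proof is correct and follows essentially the same route as the paper's: the same inheritance of transitivity and of the restricted well-order for (1)--(3), the same minimal element $\gamma$ of $\beta\setminus\alpha$ with the same linearity argument for (4) (you phrase it as two inclusions where the paper derives a contradiction from $\gamma\ne\alpha$, but the content is identical), and the same use of $\alpha\cap\beta$ and irreflexivity for (5) and (6).
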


\begin{proof} 1. Let $\beta$ be an ordinal and $\alpha\in\beta$. To show that the set $\alpha$ is transitive, take any sets $y\in \alpha$ and $z\in y$. The transitivity of $\beta$ guarantees that $y\in\beta$ and $z\in\beta$. 
Since $z\in y\in\alpha$, the transitivity of the relation $\mathbf E{\restriction}\beta$ implies $z\in\alpha$, which means that the set $\alpha$ is transitive.
\smallskip

2. Let $\beta$ be an ordinal and $\alpha\in\beta$. By the preceding statement, the set $\alpha$ is transitive. The transitivity of the set $\beta$ implies that $\alpha\subseteq\beta$. Since the relation $\mathbf E{\restriction}\beta$ is an irreflexive well-order, its restriction $\mathbf E{\restriction}\alpha$ to the subset $\alpha\subseteq\beta$ is an irreflexive well-order, too. This means that the transitive set $\alpha$ is an ordinal.
\smallskip

3. The definition of an ordinal implies that the intersection of two ordinals is an ordinal.
\smallskip

4. Let $\alpha,\beta$ be two ordinals. If $\alpha\in\beta$, then $\alpha\subseteq \beta$ by the transitivity of $\beta$, and $\alpha\ne\beta$ by the irreflexivity of the relation $\mathbf E{\restriction}\beta$.  Therefore, $\alpha\in\beta$ implies $\alpha\subset\beta$.

Now assume that $\alpha\subset\beta$. Since the set $\beta\setminus\alpha$ is not empty and $\mathbf E{\restriction}\beta$ is a strict well-order, there exists an element $\gamma\in\beta\setminus\alpha$ such that $\gamma\cap(\beta\setminus\alpha)=\emptyset$. By the transitivity of the set $\beta$, the element $\gamma$ is a subset of $\beta$. Taking into account that $\gamma\cap(\beta\setminus\alpha)=\emptyset$, we conclude that $\gamma\subseteq\alpha$. We claim that $\gamma=\alpha$. In the opposite case, there exists an element $\delta\in\alpha\setminus\gamma$. Since $\delta\in\alpha\subset\beta$ and $\gamma\in\beta$, we can apply the linearity of the order $\mathbf E{\restriction}\beta$ to conclude that $\delta\in\gamma\;\vee\;\delta=\gamma\;\vee\;\gamma\in\delta$. The assumption $\delta\in\gamma$, contradicts the choice of $\delta\in\alpha\setminus\gamma$. The equality $\delta=\gamma$ implies that $\gamma=\delta\in\alpha$, which contradicts the choice of $\gamma\in\beta\setminus\alpha$. The assumption $\gamma\in\delta$ implies $\gamma\in\delta\subseteq\alpha$ (by the transitivity of $\alpha$) and this contradicts the choice of $\gamma\in\beta\setminus\alpha$. These contradictions imply that $\alpha=\gamma\in\beta$. 
\smallskip

5. Let $\alpha,\beta$ be two ordinals. Assuming that neither $\alpha\subseteq \beta$ not $\beta\subseteq\alpha$, we conclude that the ordinal $\gamma=\alpha\cap\beta$ is a proper subset in $\alpha$ and $\beta$. Applying the preceding statement, we conclude that $\gamma\in\alpha$ and $\gamma\in\beta$. This implies that $\gamma\in\alpha\cap\beta=\gamma$. But this contradicts the irreflexivity of the well-order $\E{\restriction}\alpha$ on the ordinal $\alpha$. 
\smallskip

6. Let $\alpha,\beta$ be two ordinals. By the preceding statement, $\alpha\subseteq\beta$ or $\beta\subseteq\alpha$. This implies the trichotomy $\alpha\subset \beta\;\vee\;\alpha=\beta\;\vee\;\beta\subset\alpha$, which is equivalent to the trichotomy $\alpha\in \beta\;\vee\;\alpha=\beta\;\vee\;\beta\in\alpha$ according to the statement (4).
\end{proof}

\begin{exercise} Using the Axiom of Foundation prove that a set $x$ is an ordinal if and only if $x$ is transitive and each element $y$ of $x$ is a transitive set.
\smallskip

\noindent{\em Hint:} See Theorem~\ref{t:ordinal=ht}.
\end{exercise}

Now we establish some properties of the class of ordinals $\Ord$.\index{class!{{\bf On}}}\index{{{\bf On}}}

\begin{exercise} Prove that the class $\Ord$ exists and is basic.
\smallskip

\noindent{\em Hint:} Apply Exercises~\ref{ex:Tr}, \ref{ex:Lin}, \ref{ex:WF}.
\end{exercise}

\begin{theorem}\label{t:Ord}
\begin{enumerate}
\item[\textup{(1)}] The class $\Ord$ is transitive.
\item[\textup{(2)}] The relation $\E{\restriction}\Ord$ is an irreflexive well-order on the class $\Ord$.
\item[\textup{(3)}]  $\emptyset\in\Ord$.
\item[\textup{(4)}] $\forall \alpha\;(\alpha\in\Ord\;\Ra\;\alpha\cup\{\alpha\}\in\Ord)$.
\item[\textup{(5)}] $\forall x\in\UU\;(x\subseteq\Ord\;\Ra\;\bigcup x\in\Ord)$.
\item[\textup{(6)}] $\Ord$ is a proper class.
\item[\textup{(7)}] $\w\subset\Ord$.
\item[\textup{(8)}] $\w\in\Ord$.
\end{enumerate}
\end{theorem}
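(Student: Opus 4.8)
The plan is to reduce everything to Theorem~\ref{t:ord} and to Definition~\ref{d:ordinals}. Items (1) and (2) carry the weight, and the remaining items follow quickly once (2) is in hand. First, (1) is immediate: if $\beta\in\Ord$ and $\alpha\in\beta$, then $\alpha$ is an ordinal by Theorem~\ref{t:ord}(2), so $\alpha\in\Ord$ and $\Ord$ is transitive.

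For (2) I would check the four properties that make $\E{\restriction}\Ord$ an irreflexive well-order. Irreflexivity: for an ordinal $\alpha$ the relation $\E{\restriction}\alpha$ is irreflexive, so every $x\in\alpha$ satisfies $x\notin x$; applied to a hypothetical $\alpha\in\alpha$ this forces $\alpha\notin\alpha$, whence $(\E{\restriction}\Ord)\cap\Id=\emptyset$. Transitivity of the relation: if $\alpha\in\beta\in\gamma$ are ordinals, then $\beta\subseteq\gamma$ by transitivity of $\gamma$, so $\alpha\in\gamma$. Connexity is exactly the trichotomy of Theorem~\ref{t:ord}(6), and antisymmetry follows from irreflexivity together with trichotomy. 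The genuinely delicate point, which I expect to be the \textbf{main obstacle}, is well-foundedness. Since the initial $\E{\restriction}\Ord$-interval of any ordinal $\alpha$ equals $\alpha$ itself (its members are ordinals and $\alpha\notin\alpha$), I must show that every nonempty subclass $X\subseteq\Ord$ contains an element $\gamma$ with $\gamma\cap X=\emptyset$. The idea: pick any $\alpha\in X$; if $\alpha\cap X=\emptyset$ we are done, and otherwise $\alpha\cap X$ is a nonempty subset of $\alpha$, so the well-order $\E{\restriction}\alpha$ supplies an $\E$-minimal element $\gamma$ of $\alpha\cap X$ (Exercise~\ref{ex:char-WO}); since $\gamma\subseteq\alpha$ by transitivity, minimality upgrades $\gamma\cap(\alpha\cap X)=\emptyset$ to $\gamma\cap X=\emptyset$. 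Elements of $X$ lying outside $\Ord$ are trivial, their initial interval being empty.

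Items (3)--(5) then reduce to verifying Definition~\ref{d:ordinals}, exploiting that the restriction of the well-order $\E{\restriction}\Ord$ to any transitive subclass contained in $\Ord$ is again an irreflexive well-order (well-foundedness and linearity are inherited from (2) since such a subclass is a subclass of $\Ord$). For (3), $\emptyset$ is vacuously transitive and vacuously well-ordered. For (4), $\alpha\cup\{\alpha\}$ is transitive by Proposition~\ref{p:trans=>ind}, and $\alpha\cup\{\alpha\}\subseteq\Ord$ because its members are $\alpha$ and the elements of $\alpha$, all ordinals; hence $\E{\restriction}(\alpha\cup\{\alpha\})$ is an irreflexive well-order by restriction. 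For (5), $\bigcup x$ is a set by the Axiom of Union; each of its elements lies in some $\alpha\in x\subseteq\Ord$ and so is an ordinal by Theorem~\ref{t:ord}(2), giving $\bigcup x\subseteq\Ord$; it is transitive as a union of transitive sets; and $\E{\restriction}\bigcup x$ is a well-order by restriction, so $\bigcup x$ is an ordinal.

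Finally, (6) is the Burali--Forti argument: if $\Ord$ were a set, then by (1) and (2) it would be a transitive set well-ordered by $\E$, hence an ordinal, forcing $\Ord\in\Ord$ and contradicting irreflexivity; so $\Ord$ is a proper class. For (7), induction through (3) and (4) shows that the class $\w\cap\Ord$, which is a set by Exercise~\ref{ex:subclass}, is inductive, so $\w\subseteq\w\cap\Ord\subseteq\Ord$; and $\w\neq\Ord$ since $\w$ is a set while $\Ord$ is proper by (6), giving $\w\subset\Ord$. For (8), $\w$ is transitive by Theorem~\ref{t:wTr} and $\w\subseteq\Ord$ by (7), so $\E{\restriction}\w$ is an irreflexive well-order by restriction and $\w$ is an ordinal, i.e.\ $\w\in\Ord$.
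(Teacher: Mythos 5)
Your proof is correct and follows essentially the same route as the paper's: the same reduction to Theorem~\ref{t:ord}, the same two-step well-foundedness argument for (2) (pick $\beta\in X$, then minimize inside $\beta\cap X$ and use transitivity of $\beta$ to upgrade to $\gamma\cap X=\emptyset$), and the same Burali--Forti and inductivity arguments for (6)--(7). The only cosmetic differences are that in (4) you obtain the well-order on $\alpha\cup\{\alpha\}$ by restricting $\E{\restriction}\Ord$ using (2) rather than verifying it directly, and in (8) you check the definition of an ordinal for $\w$ directly instead of showing $\w=\bigcup\w\in\Ord$ via (5); both variants are fine.
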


\begin{proof} 
1. The transitivity of the class $\Ord$ follows from Theorem~\ref{t:ord}(2) saying that any element of an ordinal is an ordinal.
\smallskip

2. The transitivity of ordinals implies the transitivity of the relation $\E{\restriction}\Ord$. The irreflexivity of the relation $\E{\restriction}\Ord$ follows from the irreflexivity of the relation $\E{\restriction}\alpha$ for each ordinal. Aplying  Theorem~\ref{t:ord}(6), we see that the relation $\E{\restriction}\Ord$ is a linear order on the class $\Ord$. To see that $\E{\restriction}\Ord$ is well-founded, take any nonempty subclass $A\subseteq\Ord$. We should find an ordinal $\alpha\in A$ such that $\alpha\cap A=\emptyset$. Take any ordinal $\beta\in A$. If $\beta\cap A=\emptyset$, then we are done. In the opposite case, $\beta\cap A$ is a nonempty subset of the ordinal $\beta$. Since $\E{\restriction}\beta$ is an irreflexive well-order, there exists an ordinal $\alpha\in \beta\cap A$ such that $\alpha\cap(\beta\cap A)=\emptyset$. The transitivity of the set $\beta$ guarantees that $\alpha\subseteq\beta$ and then $\alpha\cap A=(\alpha\cap\beta)\cap A=\alpha\cap(\beta\cap A)=\emptyset$.
\smallskip

3. The inclusion $\emptyset\in\Ord$ is trivial.
\smallskip

4. Assume that $\alpha$ is an ordinal. Then $\alpha$ is a transitive set and by Proposition~\ref{p:trans=>ind}, its successor $\beta=\alpha\cup\{\alpha\}$ is transitive, too. It remains to prove that the relation $\E{\restriction}\beta$ is an irreflexive well-order. The transitivity of the relation $\E{\restriction}\beta$ follows from the transitivity of the set $\alpha$ and the transitivity of the relation $\E{\restriction}\alpha$.  The irreflexivity of this relation follows from the irreflexivity of the relation $\E{\restriction}\alpha$, which implies also that $\alpha\notin\alpha$. 

To see that $\E{\restriction}\beta$ is a  well-order, it suffices to show that any nonempty subset $X\subseteq \beta$ contains an element $x\in X$ such that $x\in y$ for every $y\in X\setminus\{x\}$. If $X=\{\alpha\}$, then $x=\alpha$ has the required property. If $X\ne\{\alpha\}$, then $X\cap \alpha$ is a non-empty set in $\alpha$. Since $\alpha$ is an ordinal, there exists $x\in X\cap\alpha$ such that $x\in y$ for every $y\in X\cap\alpha\setminus\{x\}$. For $y=\alpha$ we have $x\in X\cap\alpha\subseteq\alpha=y$, too.
\smallskip

5. Let $x$ be any subset of $\Ord$. By the Axiom of Union, the class $\cup x=\{z:\exists y\in x\;(z\in y)\}$ is a set. The transitivity of the class $\Ord$ guarantees that $\cup x\subseteq\Ord$. Since $\E{\restriction}\Ord$ is an irreflexive well-order, its restriction $\E{\restriction}{\cup}x$ is an irreflexive well-order, too. 
Since the elements of $x$ are transitive sets, the union $\cup x$ is a transitive set. Therefore, $\cup x$ is an ordinal and hence $\cup x\in\Ord$.
\smallskip

6. Assuming that the class $\Ord$ is a set, we can apply the preceding statement and conclude that $\bigcup\Ord=\Ord$ is an ordinal and hence $\Ord\in\Ord$ and $\Ord\in\Ord\in\Ord$, which contradicts the irreflexivity of the relation $\E{\restriction}\Ord$. This contradiction shows that $\Ord$ is a proper class.
\smallskip

7. The statements (3) and (4) imply that the class $\Ord$ is inductive. Then the intersection $\w\cap\Ord$ is an inductive class. By Exercise~\ref{ex:subclass}, the class $\w\cap \Ord$ is a set and hence $\w\subseteq\w\cap\Ord\subseteq\Ord$. Since $\Ord$ is a proper class, $\w\ne\Ord$ and hence $\w\subset\Ord$.
\smallskip

8. By the statements (7) and (5), $\w\subset\Ord$ and $\cup\w\in\Ord$. By Theorem~\ref{t:wTr}, the set $\w$ is transitive and hence $\cup\w\subseteq\w$. On the other hand, for any $x\in\w$ we have $x\in x\cup\{x\}\in\w$ and hence $x\in\bigcup\w$. Therefore, $\w=\cup\w\in\Ord$.
\end{proof}

\begin{definition} An ordinal $\alpha$ is called
\begin{itemize}
\item a \index{successor ordinal}\index{ordinal!successor} {\em successor ordinal} if $\alpha=\beta\cup\{\beta\}$ for some ordinal $\beta$;
\item a \index{limit ordinal}\index{ordinal!limit} {\em limit ordinal} if $\alpha$ is not empty and is not a successor ordinal;
\item a \index{finite ordinal}\index{ordinal!finite} {\em finite ordinal} if every ordinal $\beta\in(\alpha\cup\{\alpha\})\setminus\{\emptyset\}$ is a successor ordinal.
\end{itemize}
\end{definition}

\begin{exercise}\label{ex:FO} Show that the classes of successor ordinals, limit ordinals, finite ordinals exist.
\end{exercise}

\begin{exercise} Prove that for every ordinal $\alpha$ we have
$$\cup\alpha=\begin{cases}\alpha&\mbox{if $\alpha$ is a limit ordinal};\\
\beta&\mbox{if $\alpha=\beta\cup\{\beta\}$ is a successor ordinal.}
\end{cases}
$$
\end{exercise}

\begin{exercise} Prove that the class $\Ord$ coincides with the smallest class $X$ such that
\begin{enumerate}
\item $\forall x\in X\;(x\cup\{x\}\in X)$;
\item $\forall x\in\UU\;(x\subseteq X\;\Ra\;\bigcup x\in X)$.
\end{enumerate}
\end{exercise}  

Now we reveal the interplay between ordinals and natural numbers (i.e., the elements of the smallest inductive set $\w$).

\begin{theorem}
\begin{enumerate}
\item[\textup{1)}] The ordinal $\w$ is the smallest limit ordinal.
\item[\textup{2)}] The set of natural numbers $\w$ coincides with the set of finite ordinals.
\end{enumerate}
\end{theorem}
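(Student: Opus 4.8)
The plan is to reduce everything to a single induction, namely that every natural number is either $\emptyset$ or a successor ordinal, and then read off both statements from the trichotomy and transitivity of ordinals.

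First I would check that $\w$ is a limit ordinal. It is an ordinal by Theorem~\ref{t:Ord}(8) and nonempty since $\emptyset\in\w$, so the only thing to rule out is that it is a successor. If $\w=\beta\cup\{\beta\}$ for an ordinal $\beta$, then $\beta\in\w$, and inductiveness of $\w$ gives $\beta\cup\{\beta\}=\w\in\w$, contradicting the irreflexivity of $\E{\restriction}\Ord$ from Theorem~\ref{t:Ord}(2). Hence $\w$ is a limit ordinal.

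The central step is the induction. I would consider the set $X=\{n\in\w:n=\emptyset\mbox{ or $n$ is a successor ordinal}\}$, which is a subclass of $\w$ and hence a set by Exercise~\ref{ex:subclass}. Clearly $\emptyset\in X$; and if $n\in X$ then $n\in\w\subset\Ord$ is an ordinal, so by Theorem~\ref{t:Ord}(4) its successor $n\cup\{n\}$ is again an ordinal and is by definition a successor ordinal, so $n\cup\{n\}\in X$. Thus $X$ is inductive and $X=\w$, which says precisely that no element of $\w$ is a limit ordinal. Since $\E{\restriction}\Ord$ is a well-order, any limit ordinal $\lambda$ then satisfies $\lambda\notin\w$, so by the trichotomy of Theorem~\ref{t:ord}(6) either $\lambda=\w$ or $\w\in\lambda$, i.e. $\w\subseteq\lambda$; this makes $\w$ the smallest limit ordinal and settles (1).

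For (2), write $F$ for the class of finite ordinals. To get $\w\subseteq F$ I would fix $n\in\w$; then $n\cup\{n\}\in\w$, so by transitivity of $\w$ every element of $(n\cup\{n\})\setminus\{\emptyset\}$ is a nonzero natural number, hence a successor ordinal by the central step, and therefore $n$ is finite. For the reverse inclusion $F\subseteq\w$ I would argue by contradiction: if $\alpha\in F$ but $\alpha\notin\w$, then since $\alpha,\w\in\Ord$ the trichotomy of Theorem~\ref{t:ord}(6) forces $\alpha=\w$ or $\w\in\alpha$, and in either case $\w\in(\alpha\cup\{\alpha\})\setminus\{\emptyset\}$, so the definition of a finite ordinal would make $\w$ a successor ordinal, contradicting (1). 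Hence $F=\w$. The only real obstacle is the central induction; everything after it is a routine application of trichotomy and the transitivity of $\w$.
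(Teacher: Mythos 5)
Your proof is correct, but it is organized around a different key lemma than the paper's. Where the paper handles the minimality of $\w$ among limit ordinals by taking a hypothetical limit ordinal $\alpha\in\w$ and showing that $\alpha$ itself would be an inductive set (using $x\cup\{x\}\subset\alpha$ and Theorem~\ref{t:ord}(4)), forcing $\w\subseteq\alpha\subset\w$ and the contradiction $\w\in\w$, you instead prove once and for all, by a single application of the Principle of Mathematical Induction, that every element of $\w$ is either $\emptyset$ or a successor ordinal, and then let the trichotomy of Theorem~\ref{t:ord}(6) do the rest. That same induction is then recycled in part (2) to show $\w\subseteq\mathsf{FO}$ via the transitivity of $\w$, whereas the paper gets this inclusion by checking separately that the class $\mathsf{FO}$ of finite ordinals is inductive; both proofs handle the reverse inclusion $\mathsf{FO}\subseteq\w$ the same way, by observing that a finite ordinal $\alpha$ with $\w\le\alpha$ would force $\w$ to be a successor. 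The net effect is that your argument is somewhat more unified -- one induction drives both claims -- at the cost of invoking trichotomy twice, while the paper's version of (1) is more self-contained in that it never needs to know that nonzero natural numbers are successors. The only cosmetic point is that your appeal to ``$\E{\restriction}\Ord$ is a well-order'' when concluding $\lambda\notin\w$ is unnecessary: that step follows directly from $X=\w$ together with the definition of a limit ordinal as a nonempty non-successor.
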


\begin{proof} Assuming that $\w$ is a successor ordinal, we can find an ordinal $\alpha$ such that $\w=\alpha\cup\{\alpha\}$. Then $\alpha\in\w$ and by the inductivity of $\w$, we obtain $\w=\alpha\cup\{\alpha\}\in\w$, which is not possible as $\w$ is an ordinal (by Theorem~\ref{t:ord}(8). This contradiction shows that the ordinal $\w$ is limit. Assuming that $\w$ is not the smallest nonempty limit ordinal, we can find a nonempty limit ordinal $\alpha\in \w$. By Theorem~\ref{t:ord}(4), $\emptyset\in \alpha$ and $\alpha\subset \w$. Since $\alpha$ is a limit ordinal, for every $x\in\alpha$, $x\cup\{x\}\ne \alpha$. By the transitivity of $\alpha$, we obtain $x\subseteq \alpha$ and hence $x\cup\{x\}\subset\alpha$. Applying Theorem~\ref{t:ord}(4), we conclude that $x\cup\{x\}\in\alpha$, which means that the set $\alpha$ is inductive and hence $\w\subseteq\alpha$ as $\w$ is the smallest inductive set. Then $\w\subseteq\alpha\subset \w$ and Theorem~\ref{t:ord}(4), imply $\w\in\w$, which is not possible as $\w\in\Ord$.
\smallskip

2. Denote by $\mathsf{FO}$ the class of finite ordinals (it exists by Exercise~\ref{ex:FO}). It is easy to see that the class $\mathsf{FO}$ is inductive and hence $\w\subseteq\mathsf{FO}$. On the other hand, the limit property of $\w$ and  Theorem~\ref{t:ord}(6) ensure that $\mathsf{FO}\subseteq\w$. Therefore, $\w=\mathsf{FO}$ is the set of all finite ordinals.
\end{proof}
\smallskip

\noindent{\bf Some terminology and notation.} 
Since the Memberships relation $\E{\restriction}\Ord$ on the class $\Ord$ is a linear order, it is often denoted by the symbol $<$. Therefore, given two ordinals $\alpha,\beta$ we write $\alpha<\beta$ if $\alpha\in\beta$ (which is equivalent to $\alpha\subset \beta$). In this case we say that $\alpha$ is {\em smaller} than $\beta$. Also we write $\alpha\le \beta$ if $\alpha\in \beta$ or $\alpha=\beta$.  The successor $\alpha\cup\{\alpha\}$ is often denoted by $\alpha+1$. For a set $A$ of ordinals let $$\sup A=\min\{\beta\in\Ord:\forall \alpha\in A\;\;(\alpha\subseteq \beta)\}.$$

\begin{lemma}\label{l:sup-ord} For any set $A\subseteq\Ord$,
$$\sup A=\textstyle\bigcup A.$$
\end{lemma}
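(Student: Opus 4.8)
The plan is to show that $\bigcup A$ is an ordinal, that it belongs to the class $\{\beta\in\Ord:\forall\alpha\in A\;(\alpha\subseteq\beta)\}$ whose minimum defines $\sup A$, and finally that it is the least element of that class; the minimum is then $\bigcup A$ by uniqueness.

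First I would note that $\bigcup A$ is an ordinal. Since $A$ is a set with $A\subseteq\Ord$, this is exactly Theorem~\ref{t:Ord}(5). Next I would check that $\bigcup A$ is an upper bound for $A$, i.e.\ $\alpha\subseteq\bigcup A$ for every $\alpha\in A$: this is immediate from the definition $\bigcup A=\{z:\exists y\in A\;(z\in y)\}$, because any $z\in\alpha$ with $\alpha\in A$ witnesses $z\in\bigcup A$. Hence $\bigcup A$ lies in the class $\{\beta\in\Ord:\forall\alpha\in A\;(\alpha\subseteq\beta)\}$, so this class is nonempty.

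It remains to see that $\bigcup A$ is the $\le$-least element of that class, where for ordinals $\alpha\le\beta$ means $\alpha\subseteq\beta$ (this is the content of Theorem~\ref{t:ord}(4), which gives $\alpha\in\beta\Leftrightarrow\alpha\subset\beta$, hence $\alpha\le\beta\Leftrightarrow\alpha\subseteq\beta$). Let $\beta$ be any ordinal with $\alpha\subseteq\beta$ for all $\alpha\in A$. Then for any $z\in\bigcup A$ there is $\alpha\in A$ with $z\in\alpha\subseteq\beta$, so $z\in\beta$; thus $\bigcup A\subseteq\beta$, i.e.\ $\bigcup A\le\beta$. Therefore $\bigcup A$ is an element of the class that is $\le$ every element of the class, so it is its minimum, and $\sup A=\bigcup A$.

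There is essentially no obstacle here: the only inputs are that $\bigcup A$ is an ordinal (Theorem~\ref{t:Ord}(5)) and the translation between $\subseteq$ and $\le$ on ordinals (Theorem~\ref{t:ord}(4)). The two set-theoretic inclusions reduce directly to the defining formula for $\bigcup A$, with no transfinite induction or appeal to well-foundedness needed beyond confirming that the class of upper bounds is nonempty, which the computation already supplies.
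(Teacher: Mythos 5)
Your proof is correct and follows essentially the same route as the paper: both rely on Theorem~\ref{t:Ord}(5) to see that $\bigcup A$ is an ordinal and on the fact that $\alpha\subseteq\bigcup A$ for every $\alpha\in A$. The only cosmetic difference is in the minimality step, which you establish directly ($\bigcup A\subseteq\beta$ for every upper bound $\beta$) whereas the paper argues by contradiction from $\sup A\in\bigcup A$; both are valid and equally short.
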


\begin{proof} By Theorem~\ref{t:Ord}(5), the set $\bigcup A$ is an ordinal. For every $\alpha\in A$ the definition of the union $\bigcup A$ ensures that $\alpha\subseteq\bigcup A$, which implies $\sup A\le\bigcup A$. Assuming that $\sup A<\bigcup A$, we would conclude that $\sup A\in\bigcup A$ and hence $\sup A\in\alpha$ for some $\alpha\in A$.
By Theorem~\ref{t:ord}(4), $\sup A\in\alpha$ implies $\sup A\subset\alpha\subseteq\sup A$ and hence $\sup A\ne\sup A$, which is a contradiction showing that $\sup A=\bigcup A$.
\end{proof}

\section{Trees}\label{s:trees}

In this section we introduce some notions related to trees and tree-orders. We recall that a \index{tree-order}\index{order!tree-order} {\em tree-order} is an order $R$ such that for every $x\in\dom[R]$ the restriction $R{\restriction}\cev R(x)$ is a well-order.

\begin{example} Let $\UU^{<\Ord}$ be the class of functions $f$ with $\dom[f]\in\Ord$. Then for the reflexive order relation $\mathbf S=\{\langle x,y\rangle\in\ddot\UU:x\subseteq y\}$  the restriction $\mathbf S{\restriction}\UU^{<\Ord}$ is a tree-order.  
\end{example}

\begin{definition} A class $T$ is called an \index{ordinary tree}\index{tree!ordinary}{\em ordinary tree} if $T\subseteq \UU^{<\Ord}$ and for every $t\in T$ and any ordinal $\alpha$ the function $t{\restriction}_\alpha$ belongs to $T$. \\
The class $\dom[T]=\{\dom[t]:t\in T\}$ is called the \index{tree!height of}{\em height} of an ordinary tree $T$.
\end{definition}

\begin{example} For every class $A$ and every ordinal $\kappa$ the class $$A^{<\kappa}=\{t\in\UU^{<\Ord}:\dom[t]\in\kappa,\;\rng[t]\subseteq A\}$$ is an ordinary tree.
\end{example}

For every ordinal $\kappa$ the ordinary tree $2^{<\kappa}$ is called the \index{full binary tree}\index{tree!full binary}{\em full binary $\kappa$-tree}. Any ordinary tree $T\subseteq 2^{<\w}$ is called a \index{binary tree}\index{tree!binary}{\em binary tree}. 

The ordinary tree $\Ord^{<\Ord}$ of all functions $f$ with $\dom[f]\in\Ord$ and $\rng[f]\subseteq\Ord$ carries an irreflexive linear order $\mathsf{W}_{\!<}$ defined by the formula
$$
\begin{aligned}
\mathsf{W}_<=\;&\big\{\langle f,g\rangle \in\Ord^{<\Ord}\times\Ord^{<\Ord}:
\dom[f]\cup\textstyle\bigcup\rng[f]\subset\dom[g]\cup\bigcup\rng[g]\;\vee\\
&\hskip100pt (\dom[f]\cup\textstyle\bigcup\rng[f]=\dom[g]\cup\bigcup\rng[g]\;\wedge\;\dom[f]\subset\dom[g])\;\vee\;\\
&\hskip100pt  \big( \dom[f]\cup\textstyle\bigcup\rng[f]=\dom[g]\cup\bigcup\rng[g]\;\wedge\;\dom[f]=\dom[g]\;\wedge\;\\
&\hskip150pt\exists \alpha\in \dom[f]=\dom[g]\;\big(f{\restriction}_\alpha=g{\restriction}_\alpha\;\wedge\; f(\alpha)\in g(\alpha)\big)\big)\big\}
\end{aligned}
$$
and called the {\em canonical linear order} on $\Ord^{<\Ord}$.

\begin{exercise} Prove that the restriction $\mathsf{W}_{\!<}{\restriction}\Ord^{<\w}$ is a set-like well-order.
\end{exercise}

\begin{exercise} Prove that the order $\mathsf{W}_{\!<}{\restriction}2^{\w}$ is not well-founded.
\end{exercise}

For an ordinary tree $T$ and an element $x\in T$ the class 
$$\Succ_T(x)=\{t\in T:\dom[t]=\dom[x]+1\;\wedge\;t{\restriction}_{\dom[x]}=x\}$$ is called the {\em class of immediate successors} of $x$ in the tree $T$.

\begin{definition} An ordinary tree $T$ is called
\begin{itemize}
\item \index{locally finite tree}\index{tree!locally finite}{\em locally finite}  if for every $x\in \dom[R^\pm]$ the class $\Succ_R(x)$  is a finite set;
\item \index{locally countable tree}\index{tree!locally countable}{\em locally countable}   if for every $x\in \dom[R^\pm]$ the class $\Succ_R(x)$ a countable set; 
\item \index{locally set tree}\index{tree!locally set}{\em locally set}  if for every $x\in \dom[R^\pm]$ the class $\Succ_R(x)$ is a set.
\end{itemize} 
\end{definition}

\begin{Exercise} Show that an ordinary tree $T$ is a set if and only if it is set-like, locally set and its height $\dom[T]$ is a set.
\end{Exercise} 

\section{Recursion Theorem}

In this section we prove an important theorem guaranteeing the existence of functions defined by recursive procedures, which are often used in Mathematics and Computer Science. 

\begin{theorem}\label{t:Recursion} Let $X$ be a class, $F:X\times\UU\to\UU$ be a function and $R$ be a set-like well-founded relation such that $\dom[R]\subseteq X$. Then there exists a unique function $G:X\to\UU$ such that for every $x\in X$ $$G(x)=F(x,G{\restriction}_{\cev R(x)})\quad\mbox{where \ $\cev R(x)=R^{-1}[\{x\}]\setminus\{x\}$}.$$
If the classes $F$ and $R$ are basic, then so is the function $G$.
\end{theorem}

\begin{proof} Since the relation $R$ is set-like, for every $x\in X$ the class $$\cev R(x)=R^{-1}[\{x\}]\setminus\{x\}=\{z:z\ne x\;\wedge\;\langle z,x\rangle\in R\}$$is a set.

Let $\mathbb G$ be the class consisting of all functions $f\in\UU$ that satisfy the following conditions:
\begin{enumerate}
\item[(i)] $\forall x\in \dom[f]\;\;(\cev R(x)\subseteq\dom[f]\subseteq X)$;
\item[(ii)]  $f(x)=F(x,f{\restriction}_{\cev R(x)})$ for every $x\in\dom[f]$.
\end{enumerate}
We claim that any two functions $f,g\in  \mathbb G$ agree on the intersection of their domains. To derive a contradiction, assume that the set $A=\{x\in\dom[f]\cap\dom[g]:f(x)\ne g(x)\}$ is not empty. 
Since the relation $R$ is well-founded, the set $A$ contains an element $a$ such that $\cev R(a)\cap A=\emptyset$. The condition (i) ensures that $$\cev R(a)\subseteq (\dom[f]\cap\dom[g])\setminus A=\{x\in\dom[f]\cap \dom[g]:f(x)=g(x)\}$$ and then 
$$f(a)=F(a,f{\restriction}_{\cev R(a)})=F(a,g{\restriction}_{\cev R(a)})=g(a),$$
which contradicts $a\in A$. This contradiction shows that the functions $f$ and $g$ coincide on the intersection of their domains.

This property of the class $\mathbb G$ implies that the class
$$G=\textstyle{\bigcup}\mathbb G=\{\langle x,y\rangle:\exists f\in \mathbb G\;\;\langle x,y\rangle\in f\}$$
is a function. 

By definition of $G=\bigcup \mathbb G$, for every $x\in\dom[G]$ there exists $y\in \UU$ such that $\langle x,y\rangle\in G=\bigcup\mathbb G$ and hence $\langle x,y\rangle\in f$ for some $f\in \mathbb G$. Taking into account that $G\cap(\dom[f]\times\UU)=f$, we conclude that 
$$G(x)=y=f(x)=F( x,f{\restriction}_{\cev R(x)})=F( x,G{\restriction}_{\cev R(x)}).$$

Next, we prove that $\dom[G]=X$. The condition (i) guarantees that $\dom[G]\subseteq X$. Assuming that $\dom[G]\ne X$ and using the well-foundedness of the relation $R$, we can find an element $x\in X\setminus\dom[G]$ such that $\cev R(x)\cap (X\setminus \dom[G])=\emptyset$ and hence $\cev R(x)\subseteq \dom[G]$. By Theorem~\ref{t:set-like}, there exists a sequence of sets $(A_n)_{n\in\w}$ such that $A_0=\cev R(x)$ and $A_{n+1}=\cev R[A_n]$ for every $n\in\w$. By induction it can be shown that $A_n\subseteq\dom[G]$ for every $n\in\w$. Then the set $A=\bigcup_{n\in\w}A_n$ is a subset of $\dom[G]$.

Now consider the function 
$$f=G{\restriction}_{A}\cup \{\langle x,F(x,G{\restriction}_{\cev R(x)})\rangle\}.$$with domain $\dom[f]=A\cup\{x\}$ and observe that $f$ has properties (i),(ii) and hence $f\in \mathbb G$ and $x\in\dom[f]\subseteq\dom[G]$, which contradicts the choice of $x$. 
\smallskip

Finally, we show that the function $G$ is unique. Indeed, take any function $\Phi:X\to \UU$ such that $\Phi(x)=F(x,\Phi{\restriction}_{\cev R(x)})$ for all $x\in X$. Assuming that $\Phi\ne G$, we conclude that the class  $D=\{x\in X:\Phi(x)\ne G(x)\}$ is not empty and by the well-foundedness of the relation $R$ contains an element $a\in D$ such that $\cev R(a)\cap D=\emptyset$ and hence $\cev R(a)\subseteq\{x\in X:\Phi(x)=G(x)\}$. Then $$\Phi(a)=F(a,\Phi{\restriction}_{\vec R(a)})=F(a,G{\restriction}_{\vec R(a)})=G(a),$$which contradicts the choice of $a$.

If the classes $F$ and $R$ are basic, then so are the classes $X=\dom[\dom[F]]$, $\mathbb G$ and $G$, according to Theorem~\ref{t:class}.
\end{proof} 

Now we apply the Recursion Theorem to legalize recursive definitions of  (function) sequences.

\begin{theorem}\label{t:recursion2} For every class $X$ and functions $G_0:X\to\UU$ and $F:(\w\times X)\times \UU\to \UU$ there exists a unique function $G:\w\times X\to\UU$ such that $$G(0,x)=G_0(x)\mbox{ \ and \ }G(n+1,x)=F(n+1,x,G(n,x))$$ for every $\langle n,x\rangle\in \w\times X$.
\end{theorem}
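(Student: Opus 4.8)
The plan is to derive this from the Recursion Theorem (Theorem~\ref{t:recursion}) by choosing a suitable set-like well-founded order on $\w\times X$ and recursing not on the values $G(\langle n,x\rangle)$ themselves but on an auxiliary function that records each index together with its value. First I would put on $\w\times X$ the order
$$R=\{\langle\langle m,x\rangle,\langle n,x\rangle\rangle:x\in X\;\wedge\;m\in n\in\w\},$$
whose existence follows from Theorem~\ref{t:class}. For $\langle n,x\rangle\in\w\times X$ one computes $\cev R(\langle n,x\rangle)=\{\langle m,x\rangle:m\in n\}=n\times\{x\}$, which is a set, so $R$ is set-like. It is transitive and antisymmetric (if $m\in n$ and $n\in m$, then $\E{\restriction}\w$ would fail to be irreflexive), hence an order, and it is well-founded: given a nonempty $A\subseteq\w\times X$, pick any $\langle m,x\rangle\in A$, let $n$ be the $\E$-least element of the nonempty subset $\{k\in\w:\langle k,x\rangle\in A\}$ of $\w$, and note $\cev R(\langle n,x\rangle)\cap A=\emptyset$.

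The main obstacle is that Theorem~\ref{t:recursion} only feeds back the \emph{image} $G[\cev R(x)]$ rather than the restriction $G{\restriction}\cev R(x)$, so from an unordered set of previous values one cannot tell which one is the value $G(\langle n,x\rangle)$ at the immediate predecessor. I would circumvent this by recursing on a function $H:\w\times X\to\UU$ whose value at $\langle n,x\rangle$ is the ordered pair $\langle n,G(\langle n,x\rangle)\rangle$, because then the image $H[\cev R(\langle n,x\rangle)]=\{\langle m,G(\langle m,x\rangle)\rangle:m\in n\}$ is automatically a function with domain $n$, from which the predecessor's value is recoverable by evaluation at $\bigcup n=n-1$. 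Concretely, define $F':(\w\times X)\times\UU\to\UU$ by
$$F'(\langle n,x\rangle,q)=\begin{cases}\langle\emptyset,G_0(x)\rangle&\text{if }n=\emptyset,\\ \langle n,F(n,x,q(\textstyle\bigcup n))\rangle&\text{if }n\ne\emptyset,\ q\text{ is a function, }\bigcup n\in\dom[q],\\ \langle n,\emptyset\rangle&\text{otherwise,}\end{cases}$$
a function whose existence again follows from Theorem~\ref{t:class}.

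Applying Theorem~\ref{t:recursion} to $F'$ and $R$ yields a unique $H:\w\times X\to\UU$ with $H(\langle n,x\rangle)=F'(\langle n,x\rangle,H[\cev R(\langle n,x\rangle)])$. By the shape of $F'$, every value $H(\langle n,x\rangle)$ is an ordered pair with first coordinate $n$, so $H[\cev R(\langle n,x\rangle)]$ is indeed a function with domain $n$. I then set $G=\rng\circ H$, using the second-coordinate projection $\rng:\ddot\UU\to\UU$ of Exercise~\ref{ex:rng}, so that $G(\langle n,x\rangle)$ is the second coordinate of $H(\langle n,x\rangle)$. For $n=\emptyset$ the recursion gives $H(\langle\emptyset,x\rangle)=\langle\emptyset,G_0(x)\rangle$, hence $G(\langle\emptyset,x\rangle)=G_0(x)$. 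For $n=k+1$ the function $q=H[\cev R(\langle k+1,x\rangle)]$ has domain $k+1\ni k=\bigcup(k+1)$ with $q(k)=G(\langle k,x\rangle)$, so $H(\langle k+1,x\rangle)=\langle k+1,F(k+1,x,G(\langle k,x\rangle))\rangle$ and therefore $G(\langle k+1,x\rangle)=F(k+1,x,G(\langle k,x\rangle))$, as required.

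Finally, uniqueness of $G$ I would prove directly rather than through $H$: if $G_1,G_2$ both satisfy the two displayed equations, then for each fixed $x\in X$ the set $\{n\in\w:G_1(\langle n,x\rangle)=G_2(\langle n,x\rangle)\}$ contains $\emptyset$ and is closed under successors (by the recursion equation together with the inductive hypothesis), hence equals $\w$ by the Principle of Mathematical Induction; as $x\in X$ was arbitrary, $G_1=G_2$.
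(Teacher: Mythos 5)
Your proof is correct and follows essentially the same route as the paper's: both impose the order $R=\{\langle\langle k,x\rangle,\langle n,x\rangle\rangle:k\in n\in\w\}$ and get around the image-versus-restriction obstacle by recursing on pairs $\langle n,G(n,x)\rangle$ so that the predecessor's value is recoverable from the unordered image, then project via $\rng$. Your explicit inductive uniqueness argument for $G$ is a welcome addition, since the paper only invokes uniqueness of the auxiliary function $\Psi$.
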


\begin{proof}  Consider the function $\Phi:(\w\times X)\times\UU\to\UU$ defined by 
$$\Phi(n,x,y)=\begin{cases}
F(n,x,z)&\mbox{if $y$ is a function and $\exists k\in\w$ such that $n=k+1$ and $\langle \langle k,x\rangle,z\rangle\in y$};\\
G_0(x)&\mbox{otherwise}.
\end{cases}
$$
\smallskip

The function $\Phi$ exists by the G\"odel class existence theorem.
Consider the set-like well-founded order
$$R=\{\langle\langle k,x\rangle,\langle n,x\rangle\rangle:k\in n\in\w,\; x\in X\}$$on the class $\w\times X=\dom[R^\pm].$

By the Recursion Theorem~\ref{t:Recursion}, there exists a unique function $G:\w\times X\to\UU$ such that 
\begin{equation}\label{eq:recursion1}
G(n,x)=\Phi(n,x,\{\langle \langle k,x\rangle,G(k,x)\rangle:k\in n\})
\end{equation}
 for all $\langle n,x\rangle\in\w\times X$.
 
The equality (\ref{eq:recursion1}) and the definition of the function $\Phi$ imply that $G(0,x)=\Phi(0,x,\emptyset)=G_0(x)$ and 
$$
G(n+1,x)=\Phi(n+1,x,\{\langle \langle k,x\rangle,G(k,x)\rangle:k\in n+1\})=F(n+1,x, G(n,x)).$$\end{proof}

As a special case of Theorem~\ref{t:recursion2} for $X=\{0\}$, we obtain the following corollary justifying the definition of sequences by recursive formulas. 

\begin{corollary}\label{c:recursion3} For every function $F:\w\times \UU\to \UU$ and set $x$ there exists a unique sequence $(x_n)_{n\in\w}$ such that $$x_0=x\mbox{ \ and \ }x_{n+1}=F(n+1,x_n)$$ for every $\langle n,x\rangle\in \w\times X$.
\end{corollary}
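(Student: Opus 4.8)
The plan is to deduce this directly from Theorem~\ref{t:recursion2} by specializing the parameter class to the singleton $X=\{0\}$. A sequence $(x_n)_{n\in\w}$ is nothing but a function from $\w$ to $\UU$, so it suffices to produce a suitable function on $\w\times\{0\}$ and then strip off the redundant second coordinate.

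First I would set $X=\{0\}$, which is a set by the Axiom of Pair (since $\{0\}=\{0,0\}$ and $0=\emptyset$ is a set), and define the constant function $G_0:\{0\}\to\UU$, $G_0(0)=a$, where $a$ denotes the given starting set. Next I would build from the given $F:\w\times\UU\to\UU$ an auxiliary function $\tilde F:(\w\times\{0\})\times\UU\to\UU$ by the rule $\tilde F(\langle n,0\rangle,y)=F(\langle n,y\rangle)$; concretely this $\tilde F$ is obtained from $F$ by forgetting the dummy $\{0\}$-coordinate, and its existence follows from G\"odel's class existence Theorem~\ref{t:class} applied to the $\UU$-bounded formula $\langle\langle\langle n,0\rangle,y\rangle,w\rangle\in\tilde F\;\Leftrightarrow\;\langle\langle n,y\rangle,w\rangle\in F$ (alternatively by a direct manipulation with the Axioms of Product, Inversion, Domain and Cycle, exactly as in Exercise~\ref{ex:FRG}).

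Now I would apply Theorem~\ref{t:recursion2} to the data $X=\{0\}$, $G_0$, and $\tilde F$, obtaining a unique function $G:\w\times\{0\}\to\UU$ with $G(0,0)=G_0(0)=a$ and $G(n+1,0)=\tilde F(n+1,0,G(n,0))=F(n+1,G(n,0))$ for all $n\in\w$. Defining the sequence by $x_n=G(\langle n,0\rangle)$---equivalently, taking $x=\{\langle n,G(\langle n,0\rangle)\rangle:n\in\w\}$, which exists and is a function---I obtain $x_0=a$ and $x_{n+1}=F(n+1,x_n)$, as required.

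Finally, for uniqueness I would run the correspondence backwards: any sequence $(x_n)_{n\in\w}$ satisfying the recursion gives rise to the function $\w\times\{0\}\to\UU$, $\langle n,0\rangle\mapsto x_n$, which satisfies the defining equations of Theorem~\ref{t:recursion2} for the data above; the uniqueness clause of that theorem then forces this function to equal $G$, whence the sequence equals $(G(\langle n,0\rangle))_{n\in\w}$. There is no genuine obstacle here beyond the bookkeeping of the dummy coordinate $0$ and the explicit verification that the auxiliary function $\tilde F$ exists as a legitimate class; both are routine given the machinery already developed.
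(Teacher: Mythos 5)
Your proof is correct and follows exactly the route the paper intends: the text introduces Corollary~\ref{c:recursion3} simply as ``a special case of Theorem~\ref{t:recursion2} for $X=\{0\}$'' and gives no further details, while you have carefully supplied them (the constant function $G_0$, the auxiliary $\tilde F$ forgetting the dummy coordinate, and the back-and-forth for uniqueness). Nothing is missing; your write-up is just a more explicit version of the paper's one-line argument.
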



Now we apply Theorem~\ref{t:recursion2} to legalize the widely used procedure of  iterations of functions.
Let $X$ be a class and $\Phi:X\to X$ be a function. Consider the sequence of functions $(\Phi^{\circ n})_{n\in\w}$ defined by the recursive formula:
$$\Phi^{\circ 0}=\Id{\restriction}_X\quad\mbox{and}\quad \Phi^{\circ(n+1)}=\Phi\circ \Phi^{\circ n}\quad\mbox{for every $n\in\w$}.$$ 

 Let us recall that for two functions $G,H$ their compositions $G\circ H$ is defined as the function
$$G\circ H=\{\langle x,z\rangle:\exists y\;\langle x,y\rangle\in H\;\wedge\;\langle y,z\rangle\in G\}.$$

\begin{theorem}\label{t:iterate} For every class $X$ and (basic) function $\Phi:X\to X$ the function sequence $(\Phi^{\circ n})_{n\in\w}$ exists (and is basic). 
\end{theorem}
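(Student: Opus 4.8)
The plan is to reduce everything to the sequence-recursion Theorem~\ref{t:recursion2} by ``uncurrying''. One cannot hope to build a function $n\mapsto\Phi^{\circ n}$ directly, since for a proper class $X$ each $\Phi^{\circ n}$ is itself a proper class and hence cannot be a value of a function. Instead I would encode the entire sequence as a single two-variable function $G(n,x)=\Phi^{\circ n}(x)$, whose values \emph{are} genuine sets because $\Phi$ maps $X$ into $X$, and only at the end reinterpret $G$ as an indexed family of classes in the sense of the section on indexed families.

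First I would assemble the data required by Theorem~\ref{t:recursion2}. Take $G_0=\Id{\restriction}X:X\to\UU$, the function $x\mapsto x$, which exists by Exercise~\ref{ex:Id} and the Axioms of Product and Difference. Next take the function
$$F:(\w\times X)\times\UU\to\UU,\qquad F:\langle\langle n,x\rangle,y\rangle\mapsto\begin{cases}\Phi(y)&\text{if }y\in X,\\\emptyset&\text{otherwise,}\end{cases}$$
whose existence follows from G\"odel's class existence Theorem~\ref{t:class} (or from explicit use of composition and product). Since $F$ discards its first two coordinates, $F(\langle n,x\rangle,y)=\Phi(y)$ whenever $y\in X$. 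Applying Theorem~\ref{t:recursion2} to $G_0$ and $F$ yields a unique function $G:\w\times X\to\UU$ with $G(0,x)=x$ and $G(n+1,x)=F(\langle n+1,x\rangle,G(n,x))$ for all $\langle n,x\rangle\in\w\times X$. A routine induction on $n$ (using $G(0,x)=x\in X$ and $\Phi:X\to X$) shows $G(n,x)\in X$ for every $n$, whence the recursion clause simplifies to $G(n+1,x)=\Phi(G(n,x))$.

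Now I would reinterpret $G$ as the desired sequence of functions. Form the class
$$\mathbf P=\{\langle n,\langle x,y\rangle\rangle\in\w\times\ddot\UU:\langle\langle n,x\rangle,y\rangle\in G\},$$
which exists by G\"odel's Theorem~\ref{t:class} (it is merely a reassociation of the triples constituting $G$), and, following the formalism of indexed families of classes, regard it as the family $(\Phi^{\circ n})_{n\in\w}$ by setting $\Phi^{\circ n}:=\mathbf P[\{n\}]$. Since $G$ is a function, for each $n$ the class $\mathbf P[\{n\}]=\{\langle x,G(n,x)\rangle:x\in X\}$ is a function with domain $X$; in particular $\mathbf P[\{0\}]=\{\langle x,x\rangle:x\in X\}=\Id{\restriction}X$, which is the first defining clause.

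For the successor clause I would compute, for every $x\in X$,
$$\mathbf P[\{n+1\}](x)=G(n+1,x)=\Phi(G(n,x))=\Phi\big(\mathbf P[\{n\}](x)\big)=\big(\Phi\circ\mathbf P[\{n\}]\big)(x),$$
so that $\mathbf P[\{n+1\}]=\Phi\circ\Phi^{\circ n}=\Phi^{\circ(n+1)}$, exactly as required; uniqueness of the sequence is inherited from the uniqueness clause of Theorem~\ref{t:recursion2}. The only genuine subtlety—and the step worth isolating explicitly—is the conceptual passage from the ``curried'' object (a sequence whose terms are functions, possibly proper classes) to the single two-variable, set-valued function $G$; everything after that is a mechanical verification of the two recursion clauses.
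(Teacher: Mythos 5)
Your proposal is correct and follows essentially the same route as the paper: both apply Theorem~\ref{t:recursion2} to the initial function $\Id{\restriction}X$ and the auxiliary $F(\langle n,x\rangle,y)=\Phi(y)$ for $y\in X$ (and $\emptyset$ otherwise), then show by induction that $G(n,x)\in X$ so the recursion clause collapses to $G(n+1,x)=\Phi(G(n,x))$. Your explicit reassociation of $G$ into the indexed family $\mathbf P\subseteq\w\times\ddot\UU$ with $\Phi^{\circ n}:=\mathbf P[\{n\}]$ is a welcome clarification of a step the paper leaves implicit, but it is not a different argument.
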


\begin{proof} Consider the function $F:(\w\times X)\times\UU\to\UU$ defined by $$F(n,x,y)=\begin{cases}\Phi(y)&\mbox{if $y\in X$ and $n>0$};\\
x&\mbox{otherwise}.
\end{cases}
$$
If the function $\Phi$ is basic, then so is the class $X=\dom[\Phi]$ and the function $F$, see Theorem~\ref{t:class}.

By Theorem~\ref{t:recursion2}, there exists a (basic) function $G:\w\times X\to \UU$ such that $G(0,x)=x$ and $G(n+1,x)=F(n+1,x,G(n,x))$ for all $\langle n,x\rangle\in\w\times X$.

By induction we shall prove that for every $x\in X$ and $n\in\w$ we have
\begin{equation}\label{eq:iteration}
G(n,x)\in X\quad\mbox{and}\quad G(n,x)=\Phi^{\circ n}(x).
\end{equation}
Observe that $G(0,x)=x\in X$ and $G(0,x)=\Phi^{\circ 0}(x)$. Assume that for some $n\in\w$ the equality (\ref{eq:iteration}) holds.
Then $$G(n+1,x)=F(n+1,x,G(n,x))=\Phi(G(n,x))=\Phi(\Phi^{\circ n}(x))=\Phi^{\circ(n+1)}(x)\in X.$$ By the Principle of Matematical Induction, the equality (\ref{eq:iteration}) holds for all $n\in\w$.
\end{proof}

As an application of iterations let us prove the existence of transitive closures. Consider the function of taking union $$\textstyle\bigcup:\UU\to\UU,\quad \bigcup:x\mapsto\bigcup x,$$and its iterations $\bigcup^{\circ n}$ for $n\in\w$. Taking the union of those iterations, we obtain the function $${\textstyle\bigcup}^{\circ\w}:\UU\to\UU,\quad{\textstyle\bigcup}^{\circ\w}:x\mapsto\textstyle\bigcup\{\bigcup^{\circ n}x:n\in\w\}.$$

\begin{theorem}\label{t:TC} The function $\bigcup^{\circ\w}$ coincides with the function $\UTC$ of transitive closure. 
\end{theorem}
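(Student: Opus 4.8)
The plan is to show that the two functions agree pointwise, i.e. $\bigcup^{\circ\w}(x)=\UTC(x)$ for every set $x$, and then to conclude by the Axiom of Extensionality that the function classes $\bigcup^{\circ\w}$ and $\UTC$ coincide. Write $S(x)=\bigcup^{\circ\w}(x)=\bigcup\{\bigcup^{\circ n}x:n\in\w\}$. First I would check that $S(x)$ is genuinely a set: by Theorem~\ref{t:iterate} the function sequence $(\bigcup^{\circ n})_{n\in\w}$ is well defined, so $n\mapsto\bigcup^{\circ n}x$ is a function on $\w$ whose values are all sets (the Axiom of Union applied $n$ times). Hence $\{\bigcup^{\circ n}x:n\in\w\}$ is a set by the Axiom of Replacement, and $S(x)$ is a set by the Axiom of Union.

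The heart of the argument is a pair of inclusions. For $\UTC(x)\subseteq S(x)$ I would verify that $S(x)$ is a transitive set containing $x$. The inclusion $x=\bigcup^{\circ 0}x\subseteq S(x)$ is immediate from the definition of $S(x)$ as a union over all $n$. For transitivity, if $w\in z\in S(x)$, then $z\in\bigcup^{\circ n}x$ for some $n\in\w$, whence $w\in\bigcup(\bigcup^{\circ n}x)=\bigcup^{\circ(n+1)}x\subseteq S(x)$; thus $\bigcup S(x)\subseteq S(x)$ and $S(x)$ is transitive. Consequently $S(x)\in\Tr(x)$, so $\Tr(x)$ is nonempty, $\UTC(x)=\bigcap\Tr(x)$ is a genuine set, and $\UTC(x)\subseteq S(x)$ since the intersection of a nonempty family is contained in each of its members.

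For the reverse inclusion $S(x)\subseteq\UTC(x)$ I would argue by the Principle of Mathematical Induction that $\bigcup^{\circ n}x\subseteq\UTC(x)$ for every $n\in\w$. The base case $\bigcup^{\circ 0}x=x\subseteq\UTC(x)$ holds because every member of $\Tr(x)$ contains $x$, hence so does their intersection. For the inductive step, assuming $\bigcup^{\circ n}x\subseteq\UTC(x)$, monotonicity of $\bigcup$ gives $\bigcup^{\circ(n+1)}x=\bigcup(\bigcup^{\circ n}x)\subseteq\bigcup\UTC(x)\subseteq\UTC(x)$, where the last inclusion uses that $\UTC(x)$ is transitive (being an intersection of transitive sets). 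Taking the union over $n\in\w$ yields $S(x)\subseteq\UTC(x)$, and combining the two inclusions gives $S(x)=\UTC(x)$.

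The only delicate point is the bookkeeping about set-hood: one must establish that $S(x)$ is a set and that $\Tr(x)$ is nonempty before treating $\UTC(x)=\bigcap\Tr(x)$ as a set, and the transitivity computation for $S(x)$ supplies exactly what is needed. An alternative, slightly slicker route would be to unfold Proposition~\ref{p:TC} recursively via $\UTC(x)=x\cup\UTC(\bigcup x)$, but turning this into a clean proof still requires an induction on $n$ of essentially the same shape, so I would prefer the direct two-inclusion argument above.
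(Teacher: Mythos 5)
Your proof is correct and follows essentially the same route as the paper's: both establish that $\bigcup^{\circ\w}x$ is a transitive set containing $x$ (via the observation $y\in\bigcup^{\circ n}x\Rightarrow y\subseteq\bigcup^{\circ(n+1)}x$) and then prove by induction on $n$ that each $\bigcup^{\circ n}x$ is contained in any transitive class containing $x$. Your extra bookkeeping on set-hood of $S(x)$ and nonemptiness of $\Tr(x)$ is a harmless (and reasonable) addition that the paper leaves implicit.
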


\begin{proof} We need to show that for every set $x$ the set $\bigcup^{\circ \w}\!x$ is the smallest transitive set containing $x$ as a subset.

To see that the set $\bigcup^{\circ \w}\!x$ is transitive, take any element $y\in \bigcup^{\circ \w}\!x=\bigcup_{n\in\w}\bigcup^{\circ n}\!x$ and find $n\in\w$ such that $y\in \bigcup^{\circ n}\!x$. Then $y\subseteq \bigcup(\bigcup^{\circ n}\!x)=\bigcup^{\circ(n+1)}\!x\subseteq\bigcup^{\circ \w}\!x$. So, $\bigcup^{\circ \w}\!x$ is transitive.

 Next, we prove that $\bigcup^{\circ \w}\!x\subseteq Y$ for any transitive class $Y$ with $x\subseteq Y$.
Since $\bigcup^{\circ \w}\!x=\bigcup_{n\in\w}\bigcup^{\circ n}\!x$, it suffices to show that $\forall n\in\w\;\;\bigcup^{\circ n}\!x\subseteq Y$. For $n=0$ this follows from the equality $\bigcup^{\circ 0}\!x=x\subseteq Y$. Assume that for some $n\in\w$ we proved that $\bigcup^{\circ n}\!x\subseteq Y$. Then $\bigcup^{\circ(n+1)}\!x=\bigcup(\bigcup^{\circ n}\!x)\subseteq \bigcup Y\subseteq Y$ by the transitivity of $Y$. Applying the Principle of Mathematical Induction, we conclude that $\bigcup^{\circ n}\!x\subseteq Y$ for all $\langle n,x\rangle\in\w\times X$.
\end{proof}




\section{Transfinite Dynamics}\label{s:dynamics}

Let $X$ be a class and $P\subseteq X\times X$ be an order on $X$. A subset $C\subseteq X$ is called an {\em $P$-chain} if $C\times C\subseteq P^\pm\cup\Id$. The order $P$ is called {\em $\sup$-complete} if each $P$-chain $C\in\mathcal P(X)$ has the least upper bound $\sup_P C\in X$ with respect to the order $P$. The $\sup$-completeness of $P$ implies that $X$ contains the $P$-least element, equal to the least upper bound of the empty set $\sup_P \emptyset$.  

A function $f:X\to X$ is called {\em $P$-progressive} if for every $x\in X$ we have $\langle x,f(x)\rangle\in P\cup\Id$.

Given a class $X$ endowed with a $\sup$-complete order $P\subseteq X\times X$ and a $P$-progressive function, consider the transfinite sequence of functions $(f^{\circ\alpha}:X\to X)_{\alpha\in\Ord}$ defined by the recursive formula
\begin{equation}\label{eq:recursive-dymanics}
f^{\circ\alpha}(x)=\begin{cases}
x&\mbox{if $\alpha=0$};\\
f(f^{\circ\beta}(x))&\mbox{if $\alpha=\beta+1$ is a successor ordinal};\\
\sup_P\{f^{\circ\beta}(x):\beta<\alpha\}&\mbox{if $\alpha$ is a limit ordinal}.
\end{cases}
\end{equation}

\begin{theorem}\label{t:dynamics} The transfinite sequence of functions $(f^{\circ\alpha})_{\alpha\in\Ord}$ is well-defined and monotone in the sense that $\langle f^{\circ\alpha}(x),f^{\circ\beta}(x)\rangle\in P\cup\Id$ for any $x\in X$ and any ordinals $\alpha\le\beta$.
\end{theorem}

\begin{proof} Consider the function $F:(X\times \Ord)\times\UU\to X$ assigning to each triple $\langle x,\alpha,y\rangle\in(X\times\Ord)\times\UU$ the element
$$F(x,\alpha,y)=\begin{cases}\sup_P f[\rng[y]])&\mbox{if $\alpha>0$ and $f[\rng[y]]$ is an $P$-chain in $X$;}\\
x&\mbox{otherwise};
\end{cases}
$$
of the class $X$.

On the class $X\times\Ord$ consider the set-like well-founded order $$R=\{\langle\langle x,\alpha\rangle,\langle x,\beta\rangle\rangle:x\in X\;\wedge\;\alpha\in\beta\in\Ord\}$$and observe that $\dom[R^\pm]=X\times\Ord$ and $\cev R(x,\alpha)=\{x\}\times\alpha$ for any ordered pair $\langle x,\alpha\rangle\in X\times\Ord$.

By Recursion Theorem~\ref{t:Recursion}, there exists a unique function $G:X\times\Ord\to\UU$ such that $G(x,\alpha)=F(x,\alpha,G{\restriction}_{\{x\}\times\alpha})$ for every $\langle x,\alpha\rangle\in X\times\Ord$. In particular,
$$G(x,0)=F(x,0,G{\restriction}_{\{x\}\times 0})=F(x,0,\emptyset)=x$$
for every $x\in X$.

By transfinite induction we shall prove that for every $\langle x,\alpha\rangle\in X\times\Ord$, the following conditions are satisfied:
\begin{itemize}
\item[$(1_\alpha)$] $\forall\beta\in \alpha\;\;\langle G(x,\beta), G(x,\alpha)\rangle \in P\cup\Id$;
\item[$(2_\alpha)$] $G(x,\alpha+1)=f(G(x,\alpha))$;
\item[$(3_\alpha)$] if $\alpha$ is a nonzero limit ordinal, then $G(x,\alpha)=\textstyle{\sup_P}\{G(x,\beta):\beta\in\alpha\}$ for any $x\in X$.
\end{itemize}
\smallskip

Assume that for some ordinal $\alpha$ and all $\gamma\in\alpha$, the conditions $(1_\gamma)$--$(3_\gamma)$ hold. 

If $\alpha=0$, then the conditions $(1_0)$ and $(3_0)$ are vacuously true. The condition $(2_0)$ holds as
$$G(x,1)=F(x,1,G{\restriction}_{\{x\}\times 1})=F(x,1,\{\langle \langle x,0\rangle, G(x,0)\rangle\})=\sup_P \{f(G(x,0))\}=f(G(x,0)).$$

Next, assume that the ordinal $\alpha$ is limit and nonzero. In this case the conditions $(1_\gamma)$, $\gamma\in\alpha$, imply that for every $x\in X$ the set $\{G(x,\beta):\beta\in\alpha\}$ is an $P$-chain and hence has $\sup\{G(x,\beta):\beta\in\alpha\}$. The conditions $(2_\beta)$, $\beta\in\alpha$, imply that $\{f(G(x,\beta)):\beta\in\alpha\}=\{G(x,\beta+1):\beta\in\alpha\}$ is a subchain of the $R$-chain $\{G(x,\beta):\beta\in\alpha\}$ with $\sup_P\{G(x,\beta+1):\beta\in\alpha\}=\sup_P\{G(x,\beta):\beta\in\alpha\}$. Now we see that the condition $(3_\alpha)$ holds:
\begin{multline*}
G(x,\alpha)=F(x,\alpha,G{\restriction}_{\{x\}\times\alpha})=F(x,\alpha,\{\langle\langle x,\beta\rangle,G(x,\beta)\rangle:\beta\in\alpha\})=\\
=\textstyle{\sup_P} \{f(G(x,\beta)):\beta\in\alpha\}=\textstyle{\sup_P}\{G(x,\beta+1):\beta\in\alpha\}=\textstyle{\sup_P} \{G(x,\beta):\beta\in\alpha\}.
\end{multline*}
Therefore, $\{G(x,\beta):\beta\le \alpha\}$ is an $R$-chain and the condition $(1_\alpha)$ holds. The $R$-progressivity of $f$ and the conditions $(2_\beta)$, $\beta\in\alpha$, imply that the set 
$\{f(G(x,\beta)):\beta\le\alpha\}=\{G(x,\beta+1):\beta<\alpha\}\cup\{f(G(x,\alpha))\}$ is an $R$-chain whose largest element is $f(G(x,\alpha))$.
 Then
\begin{multline*}G(x,\alpha+1)=F(x,\alpha+1,G{\restriction}_{\{x\}\times(\alpha+1)})=F(x,\alpha+1,\{\langle \langle x,\beta\rangle,G(x,\beta)\rangle:\beta\in\alpha+1\})=\\
=\textstyle{\sup_P}\{f(G(x,\beta)):\beta\in\alpha+1\}=f(G(x,\alpha)),
\end{multline*}
which means that the condition $(2_\alpha)$ holds.

Finally, assume that $\alpha$ is a successor ordinal and hence $\alpha=\beta+1$ for some ordinal $\beta$. The condition $(1_\beta)$ guarantees that for every $x\in X$ the set $\{G(x,\gamma):\gamma\le\beta\}$ is an $R$-chain whose largest element is $G(x,\beta)$. The $R$-progressivity of $f$ and the conditions $(1_\gamma)$ and $(2_\gamma)$ for $\gamma\le\beta$   imply that 
$$\{f(G(x,\gamma)):\gamma\le\beta\}=\{G(x,\gamma+1):\gamma\le\beta\}$$ is an $R$-chain and hence
\begin{multline*}
G(x,\alpha+1)=F(x,\alpha+1,G{\restriction}_{\{x\}\times(\alpha+1)})=F(x,\alpha+1,\{\langle \langle x,\beta\rangle,G(x,\beta)\rangle:\beta<\alpha+1\})=\\
=\textstyle{\sup_P}\{f(G(x,\beta)):\beta\in\alpha+1\}=f(G(x,\alpha)),
\end{multline*}
which means that the condition $(2_\alpha)$ holds.

By the Principle of Transfinite Induction, the conditions $(1_\alpha)$--$(3_\alpha)$ hold for every ordinal $\alpha$.
\smallskip

Now we prove that $G(x,\alpha)=f^{\circ\alpha}(x)$ for any $x\in X$ and $\alpha\in \Ord$. For $\alpha=0$ this is true: $G(x,0)=x=f^{\circ0}(x)$. Assume that for some ordinal $\alpha$ and all its elements $\gamma\in\alpha$ we have proved that $G(x,\gamma)=f^{\circ\gamma}(x)$. If $\alpha$ is a successor ordinal, then $\alpha=\beta+1$ for some ordinal $\beta$ and by the inductive condition $(2_\beta)$,
$$G(x,\alpha)=G(x,\beta+1)=f(G(x,\beta))=f(f^{\circ\beta}(x))=f^{\circ\alpha}(x).$$
If $\alpha$ is a limit nonzero ordinal, then using the inductive assumption and the inductive condition $(3_\alpha)$, we obtain
$$
G(x,\alpha)=\textstyle{\sup_P}\{G(x,\gamma):\gamma\in\alpha\}=\textstyle{\sup_P}\{f^{\circ\gamma}(x):\gamma\in\alpha\}=f^{\circ\alpha}(x).
$$
\end{proof}

Theorem~\ref{t:dynamics} implies the following Fixed Point Theorem.

\begin{theorem}[Tarski]\label{t:Tarski-fix-point} Let $X$ be a nonempty set endowed with a $\sup$-complete order $P\subseteq X\times X$. Every $P$-progressive function $f:X\to X$ has a fixed point $x=f(x)\in X$.
\end{theorem}

\begin{proof} Given any element $x\in X$ consider the transfinite sequence $(f^{\circ\alpha}(x))_{\alpha\in\On}$ defined by the recursive formulas:
\begin{itemize}
\item $f^{\circ 0}(x)=x$;
\item $f^{\circ(\alpha+1)}(x)=f(f^{\circ \alpha}(x))$;
\item $f^{\circ\alpha}(x)=\sup_P\{f^{\circ\beta}(x):\beta\in\alpha\}$ if $\alpha$ is a nonzero limit ordinal.
\end{itemize}
Theorem~\ref{t:dynamics} guarantees that $\langle f^{\circ\alpha}(x),f^{\circ\beta}(x)\rangle\in R\cup\Id$ for any ordinals $\alpha\le \beta$. Assuming that the function $f$ has no fixed point, we would conclude that $f^{\circ(\alpha+1)}(x)=f(f^{\circ\alpha}(x))\ne f^{\circ\alpha}(x)$ for every $\alpha$ and hence the function $\Phi:\On\to X$, $\Phi:\alpha\mapsto f^{\circ \alpha}(x)$ is injective, which is not possible as $X$ is set and $\On$ is a proper class.
\end{proof}

\begin{remark} The function $\mathsf{succ}:\On\to \On$, $\mathsf{succ}:\alpha\mapsto \alpha+1$, is $\mathbf S{\restriction}\On$-progressive but has no fixed points, witnessing that the Fixed Point Theorem~\ref{t:Tarski-fix-point} is not true for progressive functions on proper classes.
\end{remark}

\begin{example} For the $\mathbf S$-progressive function $F:\UU\to\UU$, $F:x\mapsto x\cup\mathcal P(x)$, the transfinite iterations $F^{\circ\alpha}(\emptyset)$ are equal to the sets $V_\alpha$ of the von Neumann cumulative hierarchy $(V_\alpha)_{\alpha\in\Ord}$, see Section~\ref{s:vN}.
\end{example}

\section{Ranks}

In this section we discuss the rank functions induced by set-like well-founded relations. The intuition behind this notions is the following.

Given any well-founded relation $R$ on a set $X=\dom[R^\pm]$, we can consider the set $X_0$ of elements $x\in X$ of whose initial interval $\cev R(x)=\{z:\langle z,x\rangle\in R\}\setminus\{x\}$ is empty. The elements of the set $X_0$ are called $R$-minimal elements of $X$. The rank function $\rank_R$ assigns to elements of the set $X_0$ the ordinal $ 0=\emptyset$. Then we consider the set $X_1$ of $R$-minimal elements of the set $X\setminus X_0$ and assign to them the ordinal $1=\{0\}$. Next, consider the set $X_2$ of $R$-minimal elements of the set $X\setminus(X_0\cup X_1)$. Continuing by induction, we represent $X$ as the union $X=\bigcup_{\alpha\in\rank(R)}X_\alpha$ of sets $X_\alpha$ indexed by ordinals $\alpha$ that belong to some ordinal $\rank(R)$, called the rank of the well-founded order $R$. The function $\rank_R:X\to\rank(R)$ assigns to each $x\in X$ a unique ordinal $\alpha\in \rank(R)$ such that $x\in X_\alpha$. So, this is a rough idea.
\smallskip

 Now let us give the precise definition of the rank function $\rank_R$. In the definition for an ordinal $\alpha$ by $\alpha+1$ we denote the successor $\alpha\cup\{\alpha\}$ of $\alpha$, and for a set $A$ of ordinals, $\sup A=\min\{\beta\in\Ord:\forall \alpha\in A\;(\alpha\subseteq\beta)\}=\bigcup A$, see Lemma~\ref{l:sup-ord}.

\begin{definition} For a set-like well-founded relation $R$, the \index{rank}\index{well-founded relation!rank of}{\em $R$-rank} is the function $$\rank_R:\UU\to\Ord,\quad\rank_R:x\mapsto \sup\big\{\rank_R(y)+1:y\in\cev R(x)\big\}.$$
\end{definition}

The existence of the function $\rank_R$ follows from the Recursion Theorem~\ref{t:Recursion} applied to the function $F:\UU\times\UU\to\UU,\;\;F:\langle x,y\rangle\mapsto\textstyle{\bigcup}\{z\cup\{z\}:z\in\rng[y]\}$. 
The rank function $\rank_R$ can be characterized as the smallest $R$-increasing function $\UU\to\Ord$.

\begin{definition} Let $R,P$ be two relations and $X,Y$ be two classes. A function $F:X\to Y$ is called \index{function!$P$-$R$-increasing} {\em $R$-$P$-increasing} if for any distinct elements $x,x'\in X$ with $\langle x,x'\rangle\in R$ we have $\langle F(x),F(x')\rangle\in P$.
\end{definition}

\begin{theorem}\label{t:rank} Let $R$ be a set-like well-founded relation.
 \begin{enumerate} 
\item[\textup{1)}] The rank function $\rank_R:\UU\to\Ord$ is  $R$-$\E$-increasing.
\item[\textup{2)}] For every $R$-$\E$-increasing function $F:\UU\to\Ord$ we have $\rank_R(x)\le F(x)$ for all $x\in\UU$.
\item[\textup{3)}] If $\rank_R[\UU]\ne\Ord$, then $\rank_R[\UU]$ is an ordinal.
\end{enumerate}
\end{theorem}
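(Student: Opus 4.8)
The plan is to handle the three items in order, in each case unfolding the defining recursion $\rank_R(x)=\sup\{\rank_R(y)+1:y\in\cev R(x)\}$ and using Lemma~\ref{l:sup-ord}, which lets me read $\sup$ as $\bigcup$, together with the Principle of Transfinite Induction for the set-like well-founded relation $R$. For item (1), fix distinct $x,x'\in X$ with $\langle x,x'\rangle\in R$. Then $x\in R^{-1}[\{x'\}]\setminus\{x'\}=\cev R(x')$, so $\rank_R(x)+1$ is one of the ordinals whose supremum defines $\rank_R(x')$. By Lemma~\ref{l:sup-ord} that supremum is the union, so $\rank_R(x)+1\subseteq\rank_R(x')$, whence $\rank_R(x)\in\rank_R(x)+1\subseteq\rank_R(x')$, i.e. $\langle\rank_R(x),\rank_R(x')\rangle\in\E$. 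This is exactly $R$-to-$\E$-increasingness.

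For item (2), I would argue by transfinite induction on $R$, applying the Principle of Transfinite Induction to the class $Y=\{x\in X:\rank_R(x)\subseteq F(x)\}$ (recall that for ordinals $\le$ and $\subseteq$ agree, by Theorem~\ref{t:ord}(4)). Assume $\rank_R(y)\le F(y)$ for every $y\in\cev R(x)$. For such $y$ we have $y\ne x$ and $\langle y,x\rangle\in R$, so the $R$-to-$\E$-increasingness of $F$ gives $F(y)\in F(x)$, i.e. $F(y)<F(x)$; combined with $\rank_R(y)\le F(y)$ this yields $\rank_R(y)\in F(x)$, and since the ordinal $F(x)$ is transitive, $\rank_R(y)+1\subseteq F(x)$. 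Taking the union over $y\in\cev R(x)$ and using Lemma~\ref{l:sup-ord} gives $\rank_R(x)=\sup\{\rank_R(y)+1:y\in\cev R(x)\}\subseteq F(x)$, so $x\in Y$. By transfinite induction $Y=X$, which is the claim.

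For item (3), the key observation is that $\rank_R[X]$ (which exists as a class, being the image of $X$ under the function $\rank_R$) is a \emph{transitive} subclass of $\Ord$; the hypothesis $\rank_R[X]\ne\Ord$ then forces it to be an ordinal. First I would prove transitivity by another transfinite induction, establishing $\rank_R(x)\subseteq\rank_R[X]$ for every $x\in X$: given $\gamma\in\rank_R(x)=\bigcup\{\rank_R(y)+1:y\in\cev R(x)\}$, choose $y\in\cev R(x)$ with $\gamma\in\rank_R(y)+1$; then either $\gamma=\rank_R(y)\in\rank_R[X]$, or $\gamma\in\rank_R(y)\subseteq\rank_R[X]$ by the induction hypothesis. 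Since every element of $\rank_R[X]$ has the form $\rank_R(x)$, this shows $\rank_R[X]$ is transitive. Finally I would invoke the standard fact that a transitive proper subclass $C\subsetneq\Ord$ is an ordinal: using that $\E{\restriction}\Ord$ is an irreflexive well-order (Theorem~\ref{t:Ord}(2)), take the least $\alpha\in\Ord\setminus C$; its minimality gives $\alpha\subseteq C$, while transitivity of $C$ together with the trichotomy of Theorem~\ref{t:ord}(6) gives $C\subseteq\alpha$, so $C=\alpha\in\Ord$. Applying this with $C=\rank_R[X]$ finishes the proof.

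The main obstacle is item (3): the point is to recognize that it is transitivity of the image, rather than any ad hoc boundedness estimate, that makes $\rank_R[X]$ an ordinal, and then to supply the lemma that a transitive proper subclass of $\Ord$ is itself an ordinal. Items (1) and (2), by contrast, are essentially immediate once $\sup$ is read as $\bigcup$ and the recursion is unfolded.
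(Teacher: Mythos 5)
Your items (1) and (2) are correct and essentially coincide with the paper's argument: (1) is the same one-line computation, and your transfinite induction on the class $Y=\{x\in X:\rank_R(x)\subseteq F(x)\}$ is just the contrapositive repackaging of the paper's choice of an $R$-minimal element of $\{x:\rank_R(x)\not\le F(x)\}$; the inductive step (from $F(y)<F(x)$ and $\rank_R(y)\le F(y)$ deduce $\rank_R(y)+1\le F(x)$ and take suprema) is identical. Item (3) is where you genuinely diverge, and your route is correct and arguably cleaner. The paper proves $\rank_R[X]=\alpha$ for the least $\alpha\notin\rank_R[X]$ by a double reductio: for each putative ``hole'' it builds a collapsing map $L:\Ord\to\Ord$ that is constant on an interval $[\alpha,\beta]$, checks that $L\circ\rank_R$ is still $R$-to-$\E$-increasing, and contradicts the minimality statement (2). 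Your proof instead shows by transfinite induction that $\rank_R(x)\subseteq\rank_R[X]$ for all $x\in X$ (the small point that $y\in\cev R(x)$ implies $y\in\dom[R]\subseteq X$, so $\rank_R(y)\in\rank_R[X]$, is needed and is satisfied), concludes that the image is a transitive subclass of $\Ord$, and invokes the fact that a transitive subclass $C\subseteq\Ord$ with $C\ne\Ord$ equals the least ordinal not in $C$. What the paper's approach buys is that it reuses the universality statement (2) as the only tool, keeping the proof self-contained within the theorem; what yours buys is the structural insight that the range of a rank function is automatically an initial segment of $\Ord$, without any auxiliary collapsing functions. Both are valid; yours is shorter and closer to the standard treatment.
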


\begin{proof} 1. To see that the rank function $\rank_R$ is $R$-$\E$-increasing, take any $x\in \UU$ and $y\in\cev R(x)$. Then $\rank_R(y)<\rank_R(y)+1\le\sup\{\rank_R(z)+1:z\in \cev R(x)]\}=\rank_R(x)$, which means that $\rank_R$ is $R$-$\E$-increasing.
\smallskip

2. Let $F:\UU\to\Ord$ be any $R$-$\E$-increasing function. To show that $\rank_R\le F$, it suffices to show that the class $X=\{x\in \UU:\rank_R(x)\not\le F(x)\}$ is empty. To derive a contradiction, assume that the class $X$ is not empty.
By the well-foundedness of the relation $R$, we can find an element $a\in X$ such that $\cev R(a)\cap X=\emptyset$. Then $\rank_R(x)\le F(x)$ for all $x\in \cev R(a)$. Since the function $F$ is $R$-$\E$-increasing, for every $x\in \cev R(a)$, we have $F(x)<F(a)$ and hence $F(x)+1\le F(a)$. 
Observe that for every $x\in\cev R(a)$ the inequality $\rank_R(x)\le F(x)$ implies $\rank_R(x)+1\le F(x)+1\le F(a)$ and hence $\rank_R(a)=\sup\{\rank_R(x)+1:x\in\cev R(a)\}\le F(a)$, which contradicts the choice of $a\in X$.
\smallskip

3.  Assuming that $\rank_R[\UU]\ne\Ord$, consider the smallest ordinal $\alpha$ in the class $\Ord\setminus\rank_R[\UU]$. Then every element of $\alpha$ belongs to $\rank_R[\UU]$ and hence $\alpha\subseteq\rank_R[\UU]$. Assuming that $\alpha\ne\rank_R[\UU]$, take the smallest ordinal $\beta$ in the set $\rank_R[\UU]\setminus\alpha$, and observe that $\alpha<\beta$ and $[\alpha,\beta]\cap \rank_R[\UU]=\{\beta\}$ where  $[\alpha,\beta]\defeq\{x\in\Ord:\alpha\le x\le \beta]$. Consider the function $L:\Ord\to\Ord$ such that $L(\gamma)=\gamma$ for any $\gamma\in \Ord\setminus [\alpha,\beta]$ and $L(\gamma)=\alpha$ for every $\gamma\in [\alpha,\beta]$. Taking into account that $[\alpha,\beta]\cap\rank_R[\UU]=\{\beta\}$, we can show that the function $L\circ\rank_R:\UU\to\Ord$ is $R$-$\E$-increasing and $L\circ\rank_R(x)<\rank_R(x)$ for any $x\in\rank_R^{-1}[\{\beta\}]$. But this contradicts the preceding statement. This contradiction shows that $\rank_R[\UU]=\alpha\in\On$. 
\end{proof}

For every set-like well-founded relation $R$, let $\rank(R)=\rank_R[\UU]\subseteq\Ord$. By Theorem~\ref{t:rank}(3), the class $\rank(R)$ either coincides with the class $\Ord$ or is an ordinal. In the latter case this ordinal is called the \index{rank}\index{well-founded relation!rank of}{\em rank} of the well-founded relation $R$.

\section{Well-orders}

In this section we apply ranks to constructing isomorphisms between set-like well-orders.

\begin{definition} Let $R,P$ be two orders. A bijective function $F:\dom[R^\pm]\to\dom[P^\pm]$ is called an \index{order isomorphism}{\em order isomorphism} if the function $F$ is $R$-$P$-increasing and $F^{-1}$ is $P$-$R$-increasing. In this case the function $F^{-1}$ is also an order isomorphism. Two orders $R,P$ are called\index{isomorphic orders} {\em isomorphic} if there exists an order isomorphism $F:\dom[R^\pm]\to\dom[P^\pm]$.
\end{definition}

\begin{proposition}\label{p:iso-equivalent} Let $R,P$ be two linear orders. For an bijective function $F:\dom[R^\pm]\to\dom[P^\pm]$ the following conditions are equivalent:
\begin{enumerate}
\item[\textup{1)}] $F$ is an order isomorphism;
\item[\textup{2)}] $F$ is  $R$-$P$-increasing;
\item[\textup{3)}] $F^{-1}$ is $P$-$R$-increasing.
\end{enumerate}
\end{proposition}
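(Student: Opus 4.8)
The plan is to exploit the fact that, by the very definition of an order isomorphism, statement (1) is literally the conjunction of (2) and (3): an order isomorphism is a bijection $F$ that is $R$-to-$P$-increasing \emph{and} whose inverse $F^{-1}$ is $P$-to-$R$-increasing. Hence the implications (1)$\Rightarrow$(2) and (1)$\Rightarrow$(3) are immediate, and conversely it suffices to prove the two implications (2)$\Rightarrow$(3) and (3)$\Rightarrow$(2); combining these with the trivial arrows gives (2)$\Rightarrow$(2)$\wedge$(3)$=$(1) and likewise (3)$\Rightarrow$(1), closing all the required equivalences.

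First I would prove (2)$\Rightarrow$(3). Assume $F$ is $R$-to-$P$-increasing, and fix distinct elements $y,y'\in\dom[P^\pm]$ with $\langle y,y'\rangle\in P$. Put $x=F^{-1}(y)$ and $x'=F^{-1}(y')$; since $F$ is a bijection these are distinct elements of $\dom[R^\pm]$. Because $R$ is a linear order, the totality condition $\dom[R^\pm]\times\dom[R^\pm]\subseteq R\cup\Id\cup R^{-1}$ forces either $\langle x,x'\rangle\in R$ or $\langle x',x\rangle\in R$. The goal is to exclude the second alternative: if $\langle x',x\rangle\in R$ held, then $R$-to-$P$-increasingness of $F$ would give $\langle y',y\rangle=\langle F(x'),F(x)\rangle\in P$, so that $\langle y,y'\rangle\in P\cap P^{-1}\subseteq\Id$ by antisymmetry of the order $P$, contradicting $y\ne y'$. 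Hence $\langle x,x'\rangle\in R$, i.e. $\langle F^{-1}(y),F^{-1}(y')\rangle\in R$, which is exactly what is needed for $F^{-1}$ to be $P$-to-$R$-increasing. The implication (3)$\Rightarrow$(2) is then entirely symmetric, interchanging the roles of $R,P$ and of $F,F^{-1}$: I would use the totality of the linear order $P$ to obtain a dichotomy for the images $F(x),F(x')$ of distinct $R$-comparable points, and then antisymmetry of $R$ to rule out the wrong alternative.

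The proof is short, and its only delicate point (the mild ``main obstacle'') is the correct bookkeeping of which property of which order is invoked at each step: the proof of (2)$\Rightarrow$(3) uses totality (linearity) of $R$ together with antisymmetry of $P$, whereas (3)$\Rightarrow$(2) uses totality of $P$ together with antisymmetry of $R$. Both antisymmetry facts are available because every order, in particular each of $R$ and $P$, satisfies $R\cap R^{-1}\subseteq\Id$ by definition. I would also record explicitly that it is the injectivity of the bijection $F$ that keeps the auxiliary points distinct, which is precisely the hypothesis allowing the linearity dichotomy to be applied in each direction.
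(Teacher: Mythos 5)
Your proof is correct and follows essentially the same route as the paper's: both arguments rest on the linearity (totality) of one order combined with the antisymmetry of the other, applied once in each direction. The only differences are organizational — you prove (2)$\Leftrightarrow$(3) and observe that (1) is by definition the conjunction of (2) and (3), whereas the paper runs the cycle (1)$\Rightarrow$(2)$\Rightarrow$(3)$\Rightarrow$(1) and phrases each step as a proof by contradiction — but the substance is identical.
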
 

\begin{proof} The implication $(1)\Ra(2)$ is trivial.
\vskip3pt

$(2)\Ra(3)$. Assume that the condition (2) holds but (3) does not. Then there exist distinct points $x,x'\in X$ such that $\langle F(x),F(x')\rangle\in P$ but $\langle x,x'\rangle\notin R$. Then $\langle x',x\rangle\in R$ by the linearity of the order $R$. Applying the condition (2), we obtain $\langle F(x'),F(x)\rangle\in P$. Now the antisymmetry of the relation $P$ ensures that $F(x)=F(x')$ which contradicts our assumption.
\vskip3pt

$(3)\Ra(1)$  Assume that the condition (3) holds but (1) does not. Then there exist distinct points $x,x'\in \dom[R^\pm]$ such that $\langle x,x'\rangle\in R$ but $\langle F(x),F(x')\rangle\notin P$. Then $F(x)\ne F(x')$ by the injectivity of the function $F$ and  $\langle F(x'),F(x)\rangle\in P$ by the linearity of the order $P$. Applying the condition (3), we obtain $\langle x',x\rangle\in R$. Now the antisymmetry of the relation $R$ ensures that $x=x'$, which contradicts the choice of $x,x'$.
\end{proof}

\begin{proposition}\label{p:unique1} Let $R$ be a well-order. Every order isomorphism $F:\dom[R^\pm]\to\dom[R^\pm]$ is equal to the identity function $\mathsf{Id}{\restriction}\dom[R^\pm]$.
\end{proposition}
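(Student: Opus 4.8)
The plan is to assume $F \neq \Id{\restriction}X$, where $X=\dom[R^\pm]$, and derive a contradiction by examining the $R$-least point that $F$ moves. Consider the class $Z=\{x\in X:F(x)\ne x\}$, which exists by G\"odel's class existence Theorem~\ref{t:class}. If $F$ is not the identity on $X$, then $Z$ is nonempty, so well-foundedness of $R$ yields an element $a\in Z$ with $\cev R(a)\cap Z=\emptyset$; equivalently, $F$ fixes every $R$-predecessor of $a$, i.e.\ $F(z)=z$ for all $z\in\cev R(a)$. The goal is then to show that $F(a)=a$, contradicting $a\in Z$.

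Since $F(a)\ne a$ and $R$ is a linear order, at least one of $\langle F(a),a\rangle\in R$ or $\langle a,F(a)\rangle\in R$ holds. In the first case $F(a)\in\cev R(a)$, so $F$ fixes $F(a)$, giving $F(F(a))=F(a)$; injectivity of $F$ (which is a bijection, being an order isomorphism) then forces $F(a)=a$, a contradiction. In the second case, $\langle a,F(a)\rangle\in R$, I would use surjectivity of $F$ to choose $c$ with $F(c)=a$. Because $F(a)\ne a=F(c)$, injectivity gives $c\ne a$, and linearity leaves two sub-possibilities: if $\langle c,a\rangle\in R$ then $c\in\cev R(a)$ is fixed, so $a=F(c)=c$, contradicting $c\ne a$; if $\langle a,c\rangle\in R$ then the $R$-to-$R$-increasing property of $F$ gives $\langle F(a),F(c)\rangle=\langle F(a),a\rangle\in R$, which together with $\langle a,F(a)\rangle\in R$ and antisymmetry of the linear order ($R\cap R^{-1}\subseteq\Id$) forces $F(a)=a$, again a contradiction. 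Hence every branch closes, $Z=\emptyset$, and $F=\Id{\restriction}X$.

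I expect the only delicate point to be bookkeeping the linearity and antisymmetry dichotomies so that each branch genuinely terminates in a contradiction; conceptually there is no real obstacle, since the whole argument turns on the single observation that $F$ can only fail to be the identity by moving some point, yet it is forced to fix the $R$-least such point. As an alternative avoiding the case split, one could instead prove by transfinite induction (using the Principle of Transfinite Induction for the well-founded order $R$) that any $R$-to-$R$-increasing self-map $\Phi$ of a well-order satisfies $\langle x,\Phi(x)\rangle\in R\cup\Id$ for all $x$, and then apply this to both $F$ and $F^{-1}$ to squeeze $F(x)=x$; but the direct minimal-counterexample argument above is the shorter route.
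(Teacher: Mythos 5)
Your proof is correct and follows essentially the same route as the paper's: take an $R$-minimal element $a$ of the class of points moved by $F$, note that $F$ fixes all of $\cev R(a)$, and close both branches of the linearity dichotomy using injectivity. The only cosmetic differences are that you minimize over the whole class $Z$ rather than its restriction to $R^{-1}[\{x\}]$ (harmless, since well-foundedness is stated for arbitrary nonempty classes), and in the second case you reach the contradiction via a sub-split on $c=F^{-1}(a)$ together with antisymmetry, where the paper applies the increasing property of $F^{-1}$ directly.
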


\begin{proof} To derive a contradiction, assume that $F(x)\ne x$ for some $x\in \dom[R^\pm]$. Then the class $A=\{z\in R^{-1}[\{x\}]:F(z)\ne z\}$ is not empty. 
Since $R$ is a well-order, the class $A$ contains an element $a\in A$ such that $\cev R(a)\cap A=\emptyset$. The transitivity of the relation $R$ ensures that $\cev R(a)\subset R^{-1}[\{x\}]$. It follows from $\cev R(a)\cap A=\emptyset$ that $F(z)=z=F^{-1}(z)$ for all $z\in\cev R(a)$. 

It follows from $a\in A$ that $F(a)\ne a$. Since the order $R$ is linear, either $F(a)\in \cev R(a)$ or $a\in\cev R(F(a))$. In the first case we get the equality $F(F(a))=F(a)$, which contradicts the injectivity of $F$. 
In the second case, the inclusion $a\in\cev R(F(a))$ and the $R$-$R$-increasing property of the isomorphism $F$ imply that $F^{-1}(a)\in\cev R(a)$, and then $F^{-1}(F^{-1}(a))=F^{-1}(a)$, which contradicts the injectivity of the function $F^{-1}$.
\end{proof}

\begin{corollary}\label{c:unique2} For any well-orders $P,R$ there exists at most one order-isomorphism from $P$ to $R$.
\end{corollary}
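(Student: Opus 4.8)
The plan is to reduce the uniqueness statement to Proposition~\ref{p:unique1}, which already asserts that the only order isomorphism of a well-order to itself is the identity. Suppose $F$ and $G$ are two order isomorphisms from $P$ to $R$, so that $F,G:\dom[P^\pm]\to\dom[R^\pm]$. The key idea is to form the composition $H=G^{-1}\circ F$ and argue that it is an order automorphism of $P$, hence equal to the identity, from which $F=G$ will follow immediately.

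First I would verify that $H$ is an order isomorphism from $P$ to $P$. Since $G$ is an order isomorphism, the Definition preceding Proposition~\ref{p:iso-equivalent} tells us that $G^{-1}:\dom[R^\pm]\to\dom[P^\pm]$ is again an order isomorphism (now from $R$ to $P$). Thus $H=G^{-1}\circ F$ is a composition of two bijections, so it is a bijection $\dom[P^\pm]\to\dom[P^\pm]$; and since a composition of $P$-to-$R$-increasing and $R$-to-$P$-increasing functions is $P$-to-$P$-increasing (and likewise for $H^{-1}=F^{-1}\circ G$), the function $H$ is $P$-to-$P$-increasing with $P$-to-$P$-increasing inverse. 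Hence $H$ is an order isomorphism of the well-order $P$ onto itself.

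Applying Proposition~\ref{p:unique1} to the well-order $P$, I conclude that $H=\mathsf{Id}{\restriction}\dom[P^\pm]$, i.e. $G^{-1}\circ F=\mathsf{Id}{\restriction}\dom[P^\pm]$. Composing on the left with $G$ and using associativity of composition together with $G\circ G^{-1}=\mathsf{Id}{\restriction}\dom[R^\pm]$ and $\rng[F]=\dom[R^\pm]$, I obtain $F=G\circ\big(G^{-1}\circ F\big)=G\circ\mathsf{Id}{\restriction}\dom[P^\pm]=G$. This proves that any two order isomorphisms from $P$ to $R$ coincide.

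The argument is essentially a one-line reduction, so there is no serious obstacle; the only point requiring a little care is the bookkeeping of domains and ranges, namely checking that $H$ really maps $\dom[P^\pm]$ bijectively onto itself (so that Proposition~\ref{p:unique1} applies to $P$ rather than to $R$) and that composing with $G$ at the end correctly cancels. Both are routine once the composition convention $G\circ F$ ``apply $F$ first'' is kept in mind.
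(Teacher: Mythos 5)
Your proof is correct and follows essentially the same route as the paper: form the composition $G^{-1}\circ F$, observe it is an order automorphism of the well-order $P$, apply Proposition~\ref{p:unique1} to conclude it is the identity, and compose with $G$ to recover $F=G$. The only difference is notational (the paper writes $\Phi^{-1}\circ\Psi$ for your $G^{-1}\circ F$), so nothing further is needed.
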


\begin{proof} Let $\Phi,\Psi:\dom[P^\pm]\to\dom[R^\pm]$ be two order-isomorphisms. Then $\Phi^{-1}\circ\Psi:\dom[P^\pm]\to\dom[P^\pm]$ is an order isomorphism of the well-order $P$. By Proposition~\ref{p:unique1}, $\Phi^{-1}\circ \Psi=\mathbf {Id}{\restriction}\dom[P^\pm]$. Applying to this equality the bijective function $\Phi$, we obtain the desired equality $\Psi=\Phi\circ \Phi^{-1}\circ\Psi=\Phi\circ\mathsf{Id}{\restriction}\dom[P^\pm]=\Phi$.
\end{proof}

Let $R$ be an order. For an element $x\in\dom[R^\pm]$, the set $\cev R(x)=R^{-1}[\{x\}]\setminus\{x\}$ is called the {\em initial interval} of $\dom[R^\pm]$ and the partial order $R{\restriction}\cev R(x)$ is called an \index{initial interval}{\em initial interval} of the partial order $R$.

\begin{proposition}\label{p:initial} A well-order $R$ cannot be isomorphic to its own initial interval.
\end{proposition}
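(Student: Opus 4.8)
The plan is to argue by contradiction, reducing everything to a single ``no strict descent'' lemma for order isomorphisms of well-orders. Suppose $R$ is a well-order on $X=\dom[R^\pm]$ and that some order isomorphism $F$ carries $R$ onto an initial interval $R{\restriction}\cev R(a)$, where $a\in\dom[R^\pm]=X$. By the definition of an order isomorphism, $F$ is a bijection from $X$ onto $\dom[(R{\restriction}\cev R(a))^\pm]\subseteq\cev R(a)$ which is $R$-to-$R$-increasing (because $R{\restriction}\cev R(a)\subseteq R$); in particular $F$ is injective. Since $F(a)\in\cev R(a)=\{z:\langle z,a\rangle\in R\;\wedge\;z\ne a\}$, we get $\langle F(a),a\rangle\in R$ and $F(a)\ne a$, i.e. $F$ strictly decreases the point $a$.

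The heart of the argument is the claim that this cannot happen: for an injective $R$-to-$R$-increasing function $F$ on a well-order, the class $A=\{x\in X:\langle F(x),x\rangle\in R\;\wedge\;F(x)\ne x\}$ is empty. To prove the claim I would assume $A\ne\emptyset$ and invoke the least-element characterization of well-orders (Exercise~\ref{ex:char-WO}) to pick the $R$-least element $a_0\in A$, so that $\langle a_0,y\rangle\in R$ for all $y\in A\setminus\{a_0\}$. Writing $b=F(a_0)$, membership $a_0\in A$ gives $\langle b,a_0\rangle\in R$ with $b\ne a_0$. Applying the increasing property of $F$ to the distinct pair $b,a_0$ yields $\langle F(b),F(a_0)\rangle=\langle F(b),b\rangle\in R$, and injectivity (from $b\ne a_0$) gives $F(b)\ne b$; hence $b\in A$. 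But then $b\in A\setminus\{a_0\}$ forces $\langle a_0,b\rangle\in R$, and together with $\langle b,a_0\rangle\in R$ the antisymmetry of $R$ yields $a_0=b$, contradicting $b\ne a_0$.

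Combining the two steps finishes the proof: the first step exhibits $a\in A$, so $A\ne\emptyset$, contradicting the lemma. The only delicate passage is the step ``$b\in A$'': the $R$-to-$R$-increasing condition as stated only produces $\langle F(b),F(a_0)\rangle\in R$, so I must separately use injectivity of $F$ (automatic here, since $F$ is a bijection) to guarantee $F(b)\ne b$ and thus genuine membership in the strict-descent class $A$. This is the main obstacle, and it is precisely what rules out the degenerate possibility that $F$ fixes $b$ while still ``decreasing'' $a_0$. Everything else is routine bookkeeping with the trichotomy and antisymmetry of the linear order $R$, both furnished by the definition of a well-order, and the argument needs no separate treatment of the reflexive and irreflexive cases since $A$ is defined through the strict relation $\langle F(x),x\rangle\in R\wedge F(x)\ne x$.
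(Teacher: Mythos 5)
Your proof is correct, but it takes a different route from the paper's. The paper assumes an isomorphism $F:\dom[R^\pm]\to\cev R(x)$ exists, forms the class $A=\{z\in R^{-1}[\{x\}]:F(z)\ne z\}$ of \emph{moved} points below $x$, extracts an $R$-minimal element $a$ of $A$ by well-foundedness, deduces that $F$ and $F^{-1}$ fix everything in $\cev R(a)$, and then splits into two cases by linearity: $F(a)\in\cev R(a)$ contradicts the injectivity of $F$ (since $F(F(a))=F(a)$), while $a\in\cev R(F(a))$ contradicts the injectivity of $F^{-1}$. You instead isolate the classical ``no strict descent'' lemma: an injective $R$-to-$R$-increasing map on a well-order cannot satisfy $\langle F(z),z\rangle\in R$ with $F(z)\ne z$ at any point, because the $R$-least such point $a_0$ would propagate to $b=F(a_0)$ (using the increasing property on the pair $b,a_0$ and injectivity to get $F(b)\ne b$), and $b$ lies strictly below $a_0$, so antisymmetry forces $b=a_0$, a contradiction; since $F(a)\in\cev R(a)$ exhibits such a point, you are done. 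Your argument uses only the forward map and one application of antisymmetry, never $F^{-1}$ and never the case split, so it is somewhat more economical and in fact proves the slightly stronger fact that no injective increasing self-map of a well-order moves any point strictly downward; the paper's version, by contrast, runs parallel to its proof of Proposition~\ref{p:unique1} on the rigidity of well-orders, which is presumably why it is organized that way. The one delicate step you flag --- needing injectivity to conclude $F(b)\ne b$ rather than merely $\langle F(b),b\rangle\in R$ --- is handled correctly.
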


\begin{proof} Assume that for some $x\in\dom[R^\pm]$ there exists an order isomorphism $F:\dom[R^\pm]\to\cev R(x)$. Then $F(x)\in\cev R(x)$ and hence $F(x)\ne x$. Consider the class $A=\{z\in R^{-1}[\{x\}]:F(z)\ne z\}$ which contains $x$ and hence is not empty. Since the order $R$ is well-founded, the class $A$ contains an element $a$ such that $\cev R(a)\cap A=\emptyset$. The transitivity of $R$ ensures that $\cev R(a)\subset R^{-1}[\{x\}]\setminus A$ and hence $F(z)=z=F^{-1}(z)$ for all $z\in\cev R(a)$. Since the order $R$ is linear and $F(a)\ne a$, either $F(a)\in\cev R(a)$ or $a\in\cev R(F(a))$. In the first case we obtain that $F(a)=F(F(a))$, which contradicts the injectivity of $F$.
If $a\in\cev R(F(a))$, then $a\in \cev R(F(a))\subset\cev R(x)=F[\dom[R]]$ and hence $F^{-1}(a)\in \dom[R]$ exists. Since $F$ is an order-isomorphism, $a\in\cev R(F(a))$ implies $F^{-1}(a)\in\cev R(a)$ and then $F^{-1}(F^{-1}(a))=F^{-1}(a)$, which contradicts the injectivity of $F^{-1}$.
\end{proof}

\begin{theorem}\label{t:wOrd} For any set-like well-order $R$ on a class $X=\dom[R^\pm]$, the function $\rank_R{\restriction}_X:X\to\rank(R)$ is an order isomorphism. 
\end{theorem}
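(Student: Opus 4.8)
The plan is to deduce the theorem from the structural facts about $\rank_R$ already recorded in Theorem~\ref{t:rank}, together with the characterization of order isomorphisms between linear orders in Proposition~\ref{p:iso-equivalent}. Write $X=\dom[R^\pm]$ and $\alpha=\rank(R)=\rng[\rank_R]$. Since a set-like well-order is in particular a set-like well-founded relation, Theorem~\ref{t:rank}(3) tells us that $\alpha$ is either an ordinal or the whole class $\Ord$; in either case the restricted membership relation $\E{\restriction}\alpha$ is a set-like well-order, which will serve as the target order $P$.

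First I would observe that $\rank_R\colon X\to\alpha$ is surjective, which is immediate from the definition $\alpha=\rng[\rank_R]$, and that it is $R$-to-$\E$-increasing by Theorem~\ref{t:rank}(1). The one genuinely new thing to check is injectivity, and here the linearity of $R$ is exactly what is needed: given distinct $x,x'\in X$, linearity forces $\langle x,x'\rangle\in R$ or $\langle x',x\rangle\in R$, and in either case the $R$-to-$\E$-increasing property yields $\rank_R(x)\in\rank_R(x')$ or $\rank_R(x')\in\rank_R(x)$; since $\E{\restriction}\Ord$ is irreflexive, this forces $\rank_R(x)\ne\rank_R(x')$. Thus $\rank_R$ is a bijection of $X$ onto $\alpha$.

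Having a bijection, I would finish by invoking Proposition~\ref{p:iso-equivalent}. Both $R$ and $P=\E{\restriction}\alpha$ are linear orders, and because $\rng[\rank_R]\subseteq\alpha$, the $R$-to-$\E$-increasing property coincides with the $R$-to-$(\E{\restriction}\alpha)$-increasing property. Hence condition (2) of Proposition~\ref{p:iso-equivalent} holds, and the proposition upgrades it to condition (1): $\rank_R$ is an order isomorphism, with $\rank_R^{-1}$ automatically $\E$-to-$R$-increasing.

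The step requiring the most care is purely bookkeeping about underlying classes: to match the hypothesis of Proposition~\ref{p:iso-equivalent} literally one must know that the underlying class $\dom[P^\pm]$ of $P=\E{\restriction}\alpha$ is exactly $\alpha$, so that $\rank_R$ is a bijection between the two underlying classes and not merely a surjection onto $\alpha$. I would verify this directly from the definitions (treating the degenerate small-$\alpha$ cases separately), or equivalently bypass Proposition~\ref{p:iso-equivalent} and check that $\rank_R^{-1}$ is $\E$-to-$R$-increasing by hand: for $\beta,\gamma\in\alpha$ with $\beta\in\gamma$, the injectivity already established gives $\rank_R^{-1}(\beta)\ne\rank_R^{-1}(\gamma)$, and linearity of $R$ together with the increasing property of $\rank_R$ rules out $\langle\rank_R^{-1}(\gamma),\rank_R^{-1}(\beta)\rangle\in R$, leaving $\langle\rank_R^{-1}(\beta),\rank_R^{-1}(\gamma)\rangle\in R$ as required.
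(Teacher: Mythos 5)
Your proof is correct and follows essentially the same route as the paper's: surjectivity from the definition of $\rank(R)$, the $R$-to-$\E$-increasing property from Theorem~\ref{t:rank}, injectivity from linearity of $R$ plus irreflexivity of $\E{\restriction}\Ord$, and Proposition~\ref{p:iso-equivalent} to upgrade the increasing bijection to an order isomorphism. The extra bookkeeping you flag about $\dom[(\E{\restriction}\alpha)^\pm]$ is a reasonable precaution, but the paper simply glosses over it, so no substantive difference remains.
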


\begin{proof} By Theorem~\ref{t:rank}, the function $\rank_R{\restriction}_X$ is $R$-$\E$-increasing. Since the order $R$ is linear, for any distinct elements $x,x'\in X$ we have $\langle x,x'\rangle\in R$ or $\langle x',x\rangle\in R$. Taking into account that $\rank_R$ is $P$-$\E$ increasing, we conclude that  $\rank_R(x)<\rank_R(x')$ or $\rank_R(x')<\rank_R(x)$. In both cases we have $\rank_R(x)\ne\rank_R(x')$, which means that the function $\rank_R$ is injective. Since $\rank(R)=\rank_R[\UU]=\rank_R[\dom[R^\pm]]$, the function $\rank_R$ is surjective and hence bijective. The $R$-$\E$-increasing property and  Proposition~\ref{p:iso-equivalent} imply that $\rank_R{\restriction}_X:X\to\rank(R)$ is an order isomorphism.
\end{proof}

\begin{corollary}[Cantor]\label{t:Cantor-trichotomy} For set-like well-orders $R,P$ one of the following conditions holds:
\begin{enumerate}
\item[\textup{1)}]  $R$ and $P$ are isomorphic;
\item[\textup{2)}]  $R$ is  isomorphic to a unique initial interval of $P$;
\item[\textup{3)}]  $P$ is isomorphic to a unique initial interval of $R$.
\end{enumerate}
\end{corollary}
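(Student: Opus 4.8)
The plan is to reduce the whole comparison to a comparison of the two ordinals (or the proper class $\Ord$) produced by the rank functions. By Theorem~\ref{t:wOrd}, the maps $\rank_R:\dom[R^\pm]\to\rank(R)$ and $\rank_P:\dom[P^\pm]\to\rank(P)$ are order isomorphisms onto $\rank(R)$ and $\rank(P)$ equipped with the membership order $\E$, and by Theorem~\ref{t:rank}(3) each of $\rank(R),\rank(P)$ is either an ordinal or the whole class $\Ord$. Writing $\alpha=\rank(R)$ and $\beta=\rank(P)$, it therefore suffices to compare $\E{\restriction}\alpha$ with $\E{\restriction}\beta$, since any isomorphism (or isomorphism onto an initial interval) between these transports back, via $\rank_R$ and $\rank_P$, to the corresponding relationship between $R$ and $P$.

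First I would record the correspondence between initial intervals of $P$ and initial segments of $\beta$. For $x\in\dom[P^\pm]$ put $\gamma=\rank_P(x)$. Because $\rank_P$ is an order isomorphism, it carries the initial interval $\cev P(x)$ onto $\{\delta\in\beta:\delta\in\gamma\}$; since $\gamma\in\beta$ and $\beta$ is transitive (being an ordinal or $\Ord$), this image equals $\gamma$. Hence $P{\restriction}\cev P(x)\cong\E{\restriction}\gamma$, and conversely every $\gamma\in\beta$ arises from the unique point $x=\rank_P^{-1}(\gamma)$. The same statement holds for $R$ and $\alpha$.

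The three alternatives now fall out of the trichotomy of ordinals. By Theorem~\ref{t:ord}(6), exactly one of $\alpha=\beta$, $\alpha\in\beta$, $\beta\in\alpha$ holds. If $\alpha=\beta$ then $R\cong\E{\restriction}\alpha=\E{\restriction}\beta\cong P$, which is case~(1). If $\alpha\in\beta$, then $\alpha\in\beta=\rng[\rank_P]$, so there is $x\in\dom[P^\pm]$ with $\rank_P(x)=\alpha$, and by the previous paragraph $R\cong\E{\restriction}\alpha\cong P{\restriction}\cev P(x)$, which is case~(2); the case $\beta\in\alpha$ is symmetric and gives case~(3).

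It remains to establish the uniqueness asserted in cases~(2) and~(3). Suppose $R$ were isomorphic to two initial intervals $P{\restriction}\cev P(x)$ and $P{\restriction}\cev P(x')$. These are isomorphic to $\E{\restriction}\rank_P(x)$ and $\E{\restriction}\rank_P(x')$, so $\rank_P(x)$ and $\rank_P(x')$ are isomorphic ordinals; but distinct ordinals cannot be isomorphic, since if $\delta\in\delta'$ then $\E{\restriction}\delta$ is an initial interval of $\E{\restriction}\delta'$ and an isomorphism would contradict Proposition~\ref{p:initial}. Hence $\rank_P(x)=\rank_P(x')$, and $x=x'$ by injectivity of $\rank_P$. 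I expect the only genuinely delicate point to be the bookkeeping when $\rank(R)$ or $\rank(P)$ is the proper class $\Ord$ rather than a set: there Theorem~\ref{t:ord}(6) does not literally apply, but the comparison is immediate because every ordinal is an element of $\Ord$ and $\Ord=\Ord$, so the $\Ord$-versus-$\Ord$ case lands in~(1) and the ordinal-versus-$\Ord$ case lands in~(2) or~(3).
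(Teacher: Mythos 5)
Your proof is correct and follows essentially the same route as the paper's: pass to $\rank(R)$ and $\rank(P)$ via the order isomorphisms of Theorem~\ref{t:wOrd}, apply the trichotomy of ordinals, and get uniqueness from Proposition~\ref{p:initial}. You supply somewhat more detail than the paper does (the explicit correspondence between initial intervals of $P$ and initial segments of $\rank(P)$, and the bookkeeping when a rank equals the proper class $\Ord$), but the underlying argument is identical.
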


\begin{proof} The uniqueness of the initial intervals in the statements (2),(3) follows from Proposition~\ref{p:initial}. It remains to prove the existence of order-isomorphisms in one of  the statements (1)--(3).
\smallskip

By Theorem~\ref{t:wOrd}, the functions $\rank_R{\restriction}_{\dom[R^\pm]}:\dom[R^\pm]\to\rank(R)$ and $\rank_P{\restriction}_{\dom[P^\pm]}:\dom[P^\pm]\to\rank(P)$ are order isomorphisms. Each of the ranks $\rank(R)$, $\rank(P)$ is either $\Ord$ or some ordinal. Consequently, three cases are possible.

1) $\rank(R)=\rank(P)$. In this case the well-orders $R,P$ are isomorphic.

2) $\rank(R)\in \rank(P)$. In this case the ordinal $\rank(R)$ is an initial interval of $\rank(P)$ and the well-order $R$ is isomorphic to an initial interval of the well-order $P$.

3) $\rank(P)\in\rank(R)$.    In this case the ordinal $\rank(P)$ is an initial interval of $\rank(R)$ and the well-order $P$ is isomorphic to an initial interval of the well-order $R$.
\end{proof}

Let $\mathbf{WO}$ be the class of well-orders which are sets. The function $\rank:\mathbf{WO}\to\Ord$ assigns to each well-order $R\in\mathbf{WO}$ the ordinal $\rank(R)$, called \index{order type}{\em the order type} of $R$. For any ordinal $\alpha$ the preimage $\rank^{-1}[\{\alpha\}]$ is the equivalence class of all well-orders that are isomorphic  to $\alpha$. Initially ordinals were thought as such equivalence classes (till John von Neumann discovered the notion of an ordinal we use nowadays).

\begin{exercise} Prove that the class $\mathbf{WO}$ and the function $\rank:\mathbf{WO}\to\Ord$ exist.
\end{exercise}


The following theorem was proved by Friedrich Hartogs in 1915.

\begin{theorem}[Hartogs]\label{t:Hartogs} For any set $x$ there exists an ordinal $\alpha$ admitting no injective function $f:\alpha\to x$.
\end{theorem}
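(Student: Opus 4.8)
The plan is to realize the required ordinal $\alpha$ as the set of all \emph{order types} of well-orders that live on subsets of $x$, and then to observe that $\alpha$ cannot inject into $x$ on pain of $\alpha\in\alpha$. To construct it, I would consider the class
$$W=\{R\in\mathbf{WO}:\dom[R^\pm]\subseteq x\}$$
of set-sized well-orders whose underlying set is a subset of $x$. Since every such $R$ satisfies $R\subseteq \dom[R^\pm]\times\dom[R^\pm]\subseteq x\times x$, we have $W\subseteq\mathcal P(x\times x)$; as $x\times x$ is a set (Exercise~\ref{t:prodsets}) and hence so is $\mathcal P(x\times x)$ by the Axiom of Power-set, the subclass $W$ is a set by Exercise~\ref{ex:subclass}. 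Applying the Axiom of Replacement to the function $\rank:\mathbf{WO}\to\Ord$ restricted to $W$, the image
$$A=\rank[W]=\{\rank(R):R\in W\}$$
is then a set of ordinals, and I claim $\alpha:=A$ is an ordinal.

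Next I would verify that $A$ is transitive, which, together with the fact that $\E{\restriction}A$ is an irreflexive well-order inherited from $\E{\restriction}\Ord$ (Theorem~\ref{t:Ord}(2)), shows that $A$ is an ordinal. Take $\beta\in A$, say $\beta=\rank(R)$ with $R\in W$, and let $\gamma\in\beta$. By Theorem~\ref{t:wOrd}, $\rank_R:\dom[R^\pm]\to\beta$ is an order isomorphism; putting $c=\rank_R^{-1}(\gamma)$, the restriction $R{\restriction}\cev R(c)$ is a well-order on the subset $\cev R(c)\subseteq\dom[R^\pm]\subseteq x$, so it lies in $W$, and $\rank_R$ carries it isomorphically onto the initial interval of $\beta$ below $\gamma$, which for ordinals is exactly $\gamma$. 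Since isomorphic well-orders have equal rank (their ranks are isomorphic ordinals, and isomorphic ordinals coincide by Proposition~\ref{p:initial} together with the trichotomy of Theorem~\ref{t:ord}), we conclude $\gamma=\rank(R{\restriction}\cev R(c))\in A$. Hence $A$ is transitive and $\alpha=A$ is an ordinal.

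Finally I would derive the conclusion. Suppose toward a contradiction that some injective $f:\alpha\to x$ exists, and transport the canonical well-order of $\alpha$ along $f$, setting
$$R=\{\langle f(\gamma),f(\delta)\rangle:\gamma\in\delta\in\alpha\}.$$
Then $R$ is a well-order on $\rng[f]\subseteq x$ and $f$ is an order isomorphism from $(\alpha,\E{\restriction}\alpha)$ onto $(\rng[f],R)$, so $R\in W$ and $\rank(R)=\alpha$ by the same rank invariance. But then $\alpha=\rank(R)\in A=\alpha$, contradicting the irreflexivity of $\E{\restriction}\Ord$; therefore no injection $f:\alpha\to x$ can exist.

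The main obstacle I anticipate is the transitivity step: one must check that \emph{every} ordinal below an attained order type is itself attained, which forces a careful passage to initial intervals and relies on the invariance of $\rank$ under order isomorphism. Pinning down that invariance cleanly — isomorphic well-orders determine isomorphic, hence equal, ordinals — is the technical heart of the argument, while the remaining steps are bookkeeping with the Axioms of Power-set and Replacement.
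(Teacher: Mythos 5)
Your proof is correct and follows essentially the same route as the paper: realize $WO(x)\subseteq\mathcal P(x\times x)$ as a set via the Axioms of Power-set and Replacement, push it forward along $\rank$, and derive the contradiction from a hypothetical injection by transporting the canonical well-order of $\alpha$ onto a subset of $x$. The only difference is that you take $\alpha=\rank[WO(x)]$ itself and verify its transitivity through initial intervals and the isomorphism-invariance of $\rank$, whereas the paper takes the union $\bigcup\rank[WO(x)]$ and omits that verification; your variant is, if anything, the more careful one.
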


\begin{proof} Let $WO(x)$ be the set whose elements are well-orders $w$ with $\dom[w^\pm]\subseteq x$. Since $WO(x)\subseteq\mathcal P(x\times x)$, the class $WO(x)$ is a set by the Axiom of Power-set and Exercises~\ref{t:prodsets}, \ref{ex:subclass}.
 Let $\rank:WO(x)\to\Ord$ be the function assigning to each well-order $w\in WO(x)$ its rank $\rank(w)=\rank_w[\UU]$. By the Axiom of Replacement, the image $\rank[WO(x)]\subseteq \Ord$ is a set and so is its union $\alpha=\bigcup(\rank[WO(x)]+1)$, which is an ordinal by Theorem~\ref{t:Ord}(5).

We claim that the ordinal $\alpha$ admits no injective function $f:\alpha\to x$. In the opposite case, $w=\{\langle f(\gamma), f(\beta)\rangle:\gamma\in\beta\in\alpha\}$ would be a well-order in the set $WO(x)$ such that $\rank(w)=\alpha\in\alpha$, which is forbidden by the definition of an ordinal.
\end{proof} 

\newpage

\section{Foundation}\label{s:vN}

In this section we construct a proper class $\IV$ called the \index{von Neumann universe $\mathbf V$}  \index{universe!von Neumann $\mathbf V$}\index{class!{{\bf V}}}\index{{{\bf V}}}{\em von Neumann universe}. This class is the smallest class that contains all ordinals and 
is closed under the operations of taking power-set and union. The restriction $\E{\restriction}\IV$ of the membership relation to this class is well-founded and the Axiom of Foundation is equivalent to the equality $\UU=\IV$. The class $\IV$ is defined as the union of the von Neumann cumulative hierarchy.

\begin{definition}[Cumulative hierarchy of von Neumann] The \index{cumulative hyerarchy of von Neumann}\index{von Neumann cumulative hierarchy}{\em cumulative hierarchy of von Neumann}  is the transfinite sequence of sets $(V_\alpha)_{\alpha\in\On}$, defined by the recursive formula
$$V_\alpha\defeq\textstyle{\bigcup}\{\mathcal P(V_\gamma):\gamma\in\alpha\},\quad\alpha\in\Ord.$$
The class $\VV\defeq\bigcup_{\alpha\in\On}V_\alpha$ is called the {\em von Neumann universe}.
\end{definition}

\begin{theorem}\label{t:vN} The von Neumann cumulative hierarchy $(V_\alpha)_{\alpha\in\On}$ is basic  and has the following properties:
\begin{enumerate}
\item[\textup{1)}] $\{\alpha\}\cup V_\alpha\subseteq V_{\alpha+1}=\mathcal P(V_\alpha)$ for every ordinal $\alpha$.
\item[\textup{2)}] $V_\alpha=\bigcup\{V_\gamma:\gamma\in\alpha\}$ for any limit ordinal $\alpha$.
\item[\textup{3)}]  For every ordinal $\alpha$ the set $V_\alpha$ is transitive.
\item[\textup{4)}] The class $\IV=\bigcup\{V_\alpha:\alpha\in\On\}$ is transitive, contains all ordinals and hence is proper.
\item[\textup{5)}] The relation $\E{\restriction}\IV$ is set-like, well-founded, irreflexive, and $\rank_{\E{\restriction}\IV}[V_\alpha]=\alpha$.
\item[\textup{6)}] Each subset of $\IV$ is an element of $\IV$, which can be written as $\mathcal P(\IV)\subseteq\IV$.
\item[\textup{7)}] $\IV$ is a subclass of any class $\mathbf X$ such that  $\mathcal P(\mathbf X)\subseteq\mathbf X$.
\end{enumerate}
\end{theorem}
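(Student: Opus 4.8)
The plan is to first secure the existence of the sequence $(V_\alpha)_{\alpha\in\On}$ via the Recursion Theorem~\ref{t:Recursion}, applied to the relation $\E{\restriction}\Ord$ (set-like because the initial $\E{\restriction}\Ord$-interval of $\alpha$ equals $\alpha$, and well-founded by Theorem~\ref{t:Ord}(2)) together with the function $F\colon\Ord\times\UU\to\UU$, $F\colon\langle\alpha,s\rangle\mapsto\bigcup\{\mathcal P(y):y\in s\}$, whose existence follows from the Axioms of Power-set, Replacement and Union (or from Theorem~\ref{t:class}). Since the initial interval of $\alpha$ is $\alpha$ itself, the unique resulting $G$ satisfies $G(\alpha)=\bigcup\{\mathcal P(G(\gamma)):\gamma\in\alpha\}$, which is exactly the defining formula for $V_\alpha$. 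The whole theorem then rests on one monotonicity lemma: if $\gamma\in\delta$ then $V_\gamma\subseteq V_\delta$, immediate from the definition since $\gamma\subset\delta$ (Theorem~\ref{t:ord}(4)) makes the index set of the union defining $V_\gamma$ a subset of that defining $V_\delta$.

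Granting monotonicity, several statements are short. For (3), any $x\in V_\alpha$ lies in some $\mathcal P(V_\gamma)$ with $\gamma\in\alpha$, so $x\subseteq V_\gamma\subseteq V_\alpha$, giving transitivity with no induction. For (2), the inclusion $\bigcup_{\gamma\in\alpha}V_\gamma\subseteq V_\alpha$ is monotonicity, while conversely $x\in V_\alpha$ gives $x\subseteq V_\gamma$ for some $\gamma\in\alpha$, and limitness yields $\gamma+1\in\alpha$, so $x\in\mathcal P(V_\gamma)=V_{\gamma+1}$. For (1) I would first check $V_{\alpha+1}=\mathcal P(V_\alpha)$ (the inclusion $\supseteq$ is trivial and $\subseteq$ uses monotonicity to absorb each $\mathcal P(V_\gamma)$, $\gamma\in\alpha$, into $\mathcal P(V_\alpha)$); then $V_\alpha\subseteq V_{\alpha+1}$ follows from transitivity, and $\alpha\in V_{\alpha+1}$ reduces to $\alpha\subseteq V_\alpha$, proved by transfinite induction: for $\gamma\in\alpha$ the hypothesis $\gamma\subseteq V_\gamma$ gives $\gamma\in V_{\gamma+1}\subseteq V_\alpha$. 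Statement (4) is then a formality: transitivity of $\IV$ propagates from that of each $V_\alpha$, the inclusion $\Ord\subseteq\IV$ comes from $\alpha\in V_{\alpha+1}$, and properness follows since a subclass of a set is a set (Exercise~\ref{ex:subclass}) while $\Ord$ is proper (Theorem~\ref{t:Ord}(6)).

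The substantive step is (5). Set-likeness is easy: writing $R=\E{\restriction}\IV$, the initial interval $\cev R(x)\subseteq x$ is a subset of the set $x$. For well-foundedness I would argue \emph{without} yet invoking a rank function: given a nonempty class $X$, if some element lies outside $\IV$ its initial interval is empty and we are done, so assume $X\subseteq\IV$; for $x\in X$ let $\rho(x)$ be the least ordinal with $x\in V_{\rho(x)}$ (a successor, since $V_0=\emptyset$ and $V_\lambda=\bigcup_{\gamma<\lambda}V_\gamma$ for limit $\lambda$ by (2)), choose $x\in X$ minimizing $\rho$ using well-foundedness of $\Ord$, and observe that any $z\in x\cap\IV$ lies in some $\mathcal P(V_\gamma)$ with $\gamma<\rho(x)$, forcing $\rho(z)<\rho(x)$ and contradicting minimality. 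Once $R$ is known set-like and well-founded, $\rank_R$ is defined, and the bound $\rank_{\E{\restriction}\IV}[V_\alpha]\subseteq\alpha$ follows by transfinite induction on $\alpha$: for $x\in V_\alpha$ we have $x\subseteq V_\gamma$ with $\gamma<\alpha$, so every $y\in\cev R(x)$ lies in $V_\gamma$ with $\rank_R(y)<\gamma$ by hypothesis, whence $\rank_R(x)=\sup\{\rank_R(y)+1:y\in\cev R(x)\}\le\gamma<\alpha$.

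Finally, (6) and (7) are quick. For (6), given a set $x\subseteq\IV$, the map $z\mapsto\rho(z)$ sends $x$ to a set of ordinals by the Axiom of Replacement; letting $\beta=\bigcup\{\rho(z):z\in x\}$ (an ordinal by Theorem~\ref{t:Ord}(5)), monotonicity gives $x\subseteq V_\beta$, hence $x\in\mathcal P(V_\beta)=V_{\beta+1}\subseteq\IV$. For (7), a transfinite induction shows $V_\alpha\subseteq\mathbf X$ whenever $\mathcal P(\mathbf X)\subseteq\mathbf X$: if $V_\gamma\subseteq\mathbf X$ for all $\gamma\in\alpha$, then each $\mathcal P(V_\gamma)\subseteq\mathcal P(\mathbf X)\subseteq\mathbf X$, so $V_\alpha\subseteq\mathbf X$; taking the union over $\alpha$ yields $\IV\subseteq\mathbf X$. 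I expect the main obstacle to be (5): establishing well-foundedness of $\E{\restriction}\IV$ \emph{before} a rank function is available, which is why I would run the rank-free $\rho$-minimization argument there and only afterwards introduce $\rank_{\E{\restriction}\IV}$ for the level bound.
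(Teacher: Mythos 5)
Your proposal is correct and follows essentially the same route as the paper: the Recursion Theorem applied to $\E{\restriction}\Ord$ with the same $F$, the same monotonicity lemma, the same minimal-level function ($\rho$ playing the role of the paper's $\mathit\Lambda$) for well-foundedness and for statement (6), and the same transfinite inductions for $\alpha\in V_{\alpha+1}$, the rank bound, and (7). The only (harmless, in fact slightly tidier) deviations are that you explicitly dispose of elements of $X$ lying outside $\IV$ in the well-foundedness argument and merge the successor/limit cases in the rank-bound induction.
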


\begin{proof} 0. The existence and definability of the function $$V_*:\Ord\to \UU,\quad V_*:\alpha\mapsto V_\alpha,$$determining the von Neumann cumulative hierarchy follows from the Recursion Theorem~\ref{t:Recursion} applied to the set-like well-order $\E{\restriction}\Ord$ and the function $$F:\Ord\times\UU\to\UU,\;\;F:\langle\alpha,y\rangle\mapsto \textstyle{\bigcup}\{\mathcal P(z):z\in \rng[y]\}.$$
The definability of $\E{\restriction}\Ord$ and $F$ follows from Theorem~\ref{t:class}.

The existence and definability of the function $V_*$ also implies the existence and the definability of the ``inverse function" $$\mathit\Lambda:\VV\to\Ord,\;\mathit\Lambda:x\mapsto\min\{\alpha\in\Ord:x\in V_\alpha\},$$
where $\VV=\bigcup_{\alpha\in\Ord}V_\alpha=\bigcup V_*[\Ord]$. The function $\mathit\Lambda$ exists and is basic since
$$\mathit\Lambda=\{\langle x,\alpha\rangle\in\VV\times\Ord:x\in V_\alpha\;\wedge\;\forall\gamma\in\alpha\;(x\notin V_\gamma)\}.$$

1. For any ordinal $\alpha$, the definition of $V_\alpha=\bigcup\{\mathcal P(V_\gamma):\gamma\in\alpha\}$ implies that $V_\alpha\subseteq V_\beta$ and hence $\mathcal P(V_\alpha)\subseteq \mathcal P(V_\beta)$ for any ordinals $\alpha\le\beta$. Then 
$$V_{\alpha+1}=\textstyle{\bigcup}\{\mathcal P(V_\gamma):\gamma\le\alpha\}=\mathcal P(V_\alpha).$$

Next we show that $\forall\alpha\in\On\;(\alpha\in V_{\alpha+1})$. Assuming that this is not true, we can use the well-foundedness of the order $\E{\restriction}\Ord$ and find an ordinal $\alpha$ such that $\alpha\notin V_{\alpha+1}$ but $\forall \gamma\in \alpha\;(\gamma\in V_{\gamma+1}\subseteq V_\alpha)$. Then $\alpha=\{\gamma:\gamma\in\alpha\}\subseteq V_\alpha$ and hence $\alpha\in \mathcal P(V_\alpha)=V_{\alpha+1}$, which contradicts the choice of $\alpha$. 
\smallskip

2. If $\alpha$ is a limit ordinal, then
$$V_\alpha=\textstyle{\bigcup}\{\mathcal P(V_\gamma):\gamma\in\alpha\}=\textstyle{\bigcup}\{V_{\gamma+1}:\gamma\in\alpha\}=\textstyle{\bigcup}\{V_{\gamma}:\gamma\in\alpha\}.$$
The above equality implies that for every $x\in \IV$ the ordinal $\mathit\Lambda(x)$ is a successor ordinal.
\smallskip

3. For every ordinal $\alpha$ and every set $x\in V_\alpha$, we can find an ordinal $\beta\in\alpha$ such that $x\in V_{\beta+1}=\mathcal P(V_\beta)$. Then $x\subseteq V_\beta\subseteq V_\alpha$.

4. By the statement (1) that the class $\IV$ contains all ordinals and hence is a proper class according to Theorem~\ref{t:Ord}(6) and Exercise~\ref{ex:subclass}. The transitivity of the sets $V_\alpha$, $\alpha\in\Ord$, implies the transitivity of the union $\VV=\bigcup_{\alpha\in\Ord}V_\alpha$.
\smallskip

5. It is clear that the relation $\E{\restriction}\IV$ is set-like. To see that it is well-founded and irreflexive, take any nonempty subclass $X\subseteq\IV$. We should find an element $x\in X$ such that $x\cap X=\emptyset$. Since the relation $\E{\restriction}\Ord$ is well-founded, the nonempty subclass $\mathit \Lambda[X]$ of $\Ord$ contains the smallest ordinal, denoted by $\alpha$. For this ordinal $\alpha$ we have $X\cap V_\alpha\ne\emptyset$ but $X\cap V_\gamma=\emptyset$ for all $\gamma\in\alpha$. Take any set $x\in X\cap V_\alpha$. 
Since $V_0=\bigcup\{\mathcal P(V_\gamma):\gamma\in\emptyset\}=\bigcup\emptyset=\emptyset$, the ordinal $\alpha$ is not empty.

If $\alpha$ is a limit ordinal, then the statement (2) implies that $x\in V_\gamma$ for some $\gamma\in\alpha$, which contradicts the minimality of $\alpha$. Therefore, $\alpha=\beta+1$ and for some ordinal $\beta$.  Then $x\in V_\alpha=V_{\beta+1}=\mathcal P(V_\beta)$ and hence $x\subseteq V_\beta$. The choice of $\alpha$ guarantees that $x\cap X\subseteq V_\beta\cap X=\emptyset$.

Since the relation $\E{\restriction}\IV$ is set-like and well-founded it has a well-defined function $$\rank_{\E{\restriction}\IV}:\IV\to\Ord,\quad\rank_{\E{\restriction}\IV}:x\mapsto\sup\{\rank_{\E{\restriction}\IV}(y)+1:y\in x\}.$$ The embedding $\rank_{\E{\restriction}\IV}[V_\alpha]\subseteq\alpha$ will be proved by transfinite induction. For $\alpha=0$ we have $\rank_{\E{\restriction}\IV}[V_0]=\rank_{\E{\restriction}\IV}[\emptyset]=\emptyset=0$. 
Assume that for some ordinal $\alpha$ and all its elements $\beta\in\alpha$ the embedding  $\rank_{\E{\restriction}\IV}[V_\beta]\subseteq\beta$ has been proved.
If $\alpha$ is a limit ordinal, then 
$$\rank_{\E{\restriction}\IV}[V_\alpha]=\rank_{\E{\restriction}\IV}[\textstyle\bigcup_{\beta\in\alpha}V_\beta]=\bigcup_{\beta\in\alpha}\rank_{\E{\restriction}\IV}[V_\beta]\subseteq\bigcup_{\beta\in\alpha}\beta=\alpha.$$
If $\alpha$ is a successor ordinal, then $\alpha=\beta+1$ for some ordinal $\beta\in\alpha$ and then for every $x\in V_{\alpha}=\mathcal P(V_\beta)$ and $y\in x$ we have $y\in x\subseteq V_\beta$ and hence $\rank_{\E{\restriction}\IV}(y)\in \rank_{\E{\restriction}\IV}[V_\beta]\subseteq\beta$. So, $\rank_{\E{\restriction}\IV}(y)\in\beta$ and $\rank_{\E{\restriction}\IV}(y)+1\le\beta$. Then
$$\rank_{\E{\restriction}\IV}(x)=\sup\{\rank_{\E{\restriction}\IV}(y)+1:y\in x\}\le \beta\in\alpha$$and hence $\rank_{\E{\restriction}\IV}(x)\in\alpha$ and $\rank_{\E{\restriction}\IV}[V_\alpha]\subseteq \alpha$. On the other hand, the inclusion $\alpha\subseteq V_\alpha$ implies $\alpha\subseteq\rank_{\E{\restriction}\IV}[V_\alpha]\subseteq\alpha$.
\smallskip

6. Assume that $x$ is a subset of $\IV$. By the Axiom of Replacement, the image $\mathit \Lambda[x]\subset\Ord$ is a set and hence $\alpha=\bigcup\mathit\Lambda[x]=\bigcup\{\mathit\Lambda(y):y\in x\}$ is an ordinal according to  Theorem~\ref{t:ord}(5). Then for every $y\in x$ we have $\mathit\Lambda(y)\subseteq\bigcup\Lambda[x]=\alpha$ and thus $y\in V_{\mathit\Lambda(y)}\subseteq V_\alpha$ and $x\in V_{\alpha+1}\subseteq \VV$.
\smallskip

7. Assume that $\mathbf X$ is a class such that $\mathcal P(\mathbf X)\subseteq\mathbf X$. We claim that for every $\alpha\in\Ord$ the set $V_\alpha$ is a subset of $\mathbf X$. It is easy to show that the class $A=\{\alpha\in\Ord: V_\alpha\subseteq \mathbf X\}$ exists. If $A=\Ord$, then $\VV=\bigcup_{\alpha\in\On}V_\alpha\subseteq\mathbf X$ and we are done. So assume that $A\ne\Ord$ and take the smallest ordinal $\alpha$ in the subclass $\Ord\setminus A$. Such an ordinal exists since the relation $\E{\restriction}\Ord$ is well-founded. Then $\alpha\subseteq A$ and hence $V_\gamma\subseteq \mathbf X$ for all $\gamma\in\alpha$. If $\alpha$ is a limit ordinal, then $V_\alpha=\bigcup_{\gamma\in\alpha}V_\gamma\subseteq \mathbf X$ and hence $\alpha\in A$, which contradicts the choice of $A$. This contradiction shows that $\alpha$ is not limit and hence $\alpha=\gamma+1$ for some ordinal $\gamma$. The choice of $\alpha$ guarantees that $V_\gamma\subseteq \mathbf X$. Then $V_\alpha=\mathcal P(V_\gamma)\subseteq \mathcal P(\mathbf X)\subseteq \mathbf X$. But this contradicts the choice of $\alpha$. This contradiction shows that $A=\Ord$ and $\VV=\bigcup_{\alpha\in\Ord}V_\alpha\subseteq\mathbf X$.
\end{proof}


\begin{remark} Theorem~\ref{t:vN}(6,7) implies that the von Neumann class $\VV$ is the smallest class $\mathbf X$ such that $\mathcal P(\mathbf X)\subseteq \mathbf X$.
\end{remark} 

A set $x$ is called \index{set!hereditarily finite}{\em hereditarily finite} if its transitive closure is finite. 

\begin{exercise} Prove that the set $V_\w$ coincides with the class of all hereditarily finite sets in $\VV$.
\end{exercise}

\begin{theorem}\label{t:AF} The Axiom of Foundation is equivalent to the equality $\UU=\IV$.
\end{theorem}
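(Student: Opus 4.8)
The plan is to prove the two implications separately, exploiting that the inclusion $\IV\subseteq\UU$ holds unconditionally and that the real work has already been done in Theorem~\ref{t:vN}: part~(5) gives that $\E{\restriction}\IV$ is set-like and well-founded, while part~(6) gives the crucial closure property $\mathcal P(\IV)\subseteq\IV$. Throughout I will use that for the membership relation $\cev\E(x)=\E^{-1}[\{x\}]\setminus\{x\}=x\setminus\{x\}$.

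First I would handle the implication Foundation $\Rightarrow\UU=\IV$, which I expect to be the substantive direction. Since $\IV\subseteq\UU$ is automatic, it suffices to show that the class $X=\UU\setminus\IV$ is empty. The obstacle here is that $X$ may be a proper class, whereas the Axiom of Foundation is stated only for \emph{sets}; so I must first upgrade Foundation to the class-level statement that every nonempty class has an $\E$-minimal element. To do this I would fix $a\in X$, form the transitive set $t=\UTC(\{a\})$ (which exists and contains $a$ as an element, using the transitive-closure machinery of Theorem~\ref{t:TC}), and apply Foundation to the nonempty \emph{set} $t\cap X$ to obtain $y\in t\cap X$ with $y\cap(t\cap X)=\emptyset$. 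Transitivity of $t$ gives $y\subseteq t$, whence $y\cap X=(y\cap t)\cap X=y\cap(t\cap X)=\emptyset$, so $y\in X$ is $\E$-minimal in $X$. For this minimal $y$, the condition $y\cap(\UU\setminus\IV)=\emptyset$ means every element of $y$ lies in $\IV$, i.e.\ $y\subseteq\IV$; then $y\in\mathcal P(\IV)\subseteq\IV$ by Theorem~\ref{t:vN}(6), contradicting $y\in\UU\setminus\IV$. Hence $\UU\setminus\IV=\emptyset$ and $\UU=\IV$.

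For the converse $\UU=\IV\Rightarrow$ Foundation, I would use that $\E=\E{\restriction}\UU=\E{\restriction}\IV$ is well-founded by Theorem~\ref{t:vN}(5): every nonempty class contains an element $x$ with $\cev\E(x)\cap X=\emptyset$, that is $(x\setminus\{x\})\cap X=\emptyset$. Given a nonempty set $x$, applying this to $X=x$ yields $y\in x$ with $(y\setminus\{y\})\cap x=\emptyset$, so $y\cap x\subseteq\{y\}$. The delicate point is that, because $\cev\E$ deletes the diagonal, well-foundedness in the sense of this text does \emph{not} by itself forbid $y\in y$; I must rule this out separately to upgrade $y\cap x\subseteq\{y\}$ to $y\cap x=\emptyset$. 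I would prove $y\notin y$ for every $y\in\IV$ as follows: the value $\mathit\Lambda(y)=\min\{\alpha\in\Ord:y\in V_\alpha\}$ is neither $0$ (as $V_0=\emptyset$) nor a limit (by Theorem~\ref{t:vN}(2) together with minimality), so $\mathit\Lambda(y)=\beta+1$ and $y\in V_{\beta+1}=\mathcal P(V_\beta)$ by Theorem~\ref{t:vN}(1), giving $y\subseteq V_\beta$ while $y\notin V_\beta$ by minimality; if $y\in y$ then $y\in y\subseteq V_\beta$, a contradiction.

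With $y\notin y$ in hand, $y\cap x=(y\setminus\{y\})\cap x=\emptyset$, so the arbitrary nonempty set $x$ contains an element $y$ with $y\cap x=\emptyset$, which is exactly the Axiom of Foundation. The main obstacles are therefore the two mismatches noted above: bridging the set-level Foundation to class-level $\E$-minimality via a transitive closure in the forward direction, and separately establishing irreflexivity of $\E$ on $\IV$ (which the diagonal-deleting definition of well-foundedness leaves open) in the reverse direction; both are resolved cleanly by the structural properties of the cumulative hierarchy in Theorem~\ref{t:vN}.
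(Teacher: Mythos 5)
Your proof is correct and follows essentially the same route as the paper: the forward direction applies Foundation to a set carved out of a transitive closure and then invokes $\mathcal P(\IV)\subseteq\IV$ (Theorem~\ref{t:vN}(6)), exactly as in the text, and the reverse direction rests on the well-foundedness of $\E{\restriction}\IV$ from Theorem~\ref{t:vN}(5). Your additional irreflexivity argument in the reverse direction is a legitimate piece of care that the paper's one-line treatment omits, although the proof of Theorem~\ref{t:vN}(5) in fact already establishes the stronger minimality $x\cap X=\emptyset$ rather than merely $(x\setminus\{x\})\cap X=\emptyset$, so the gap you bridge is only apparent at the level of the stated definition of well-foundedness.
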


\begin{proof} If $\UU=\IV$ then the relation $\E=\E{\restriction}\IV$ is well-founded by Theorem~\ref{t:vN}(5) and hence the Axiom of Foundation holds.

Now assumming the Axiom of Foundation, we shall prove that $\UU=\IV$. To derive a contradiction, assume that $\UU\setminus \IV$ is not empty and fix any set $a\in\UU\setminus \IV$.  By Theorem~\ref{t:vN}(6), the set $a\setminus \IV$ is not empty. By Theorem~\ref{t:TC}, the set $a\setminus\IV$ is contained in some transitive set $t$. By the Axiom of Foundation, the set $t\setminus\IV$ contains an element $u\in t\setminus \IV$ such that $u\cap (t\setminus \IV)=\emptyset$. By the transitivity of the set $t$, we have $u\subseteq t\setminus(t\setminus\IV)=t\cap\IV\subset\IV$. 
Applying Theorem~\ref{t:vN}(6), we conclude that $u\in \IV$, which contradicts the choice of $u$.
\end{proof}

A set $x$ is called \index{set!hereditarily transitive}{\em hereditarily transitive} if $x$ is transitive and every element $y\in x$ is a transitive set.

\begin{theorem}\label{t:ordinal=ht} A set $x$ is an ordinal if and only if $x\in \mathbf V$ and $x$ is hereditarily transitive. Consequently, $\textstyle\On=\{x\in \mathbf V:\forall y\in x\cup\{x\}\;(\bigcup y\subseteq y)\}.$
\end{theorem}

\begin{proof} If $x$ is an ordinal, then $x\in \mathbf V$ by Theorem~\ref{t:vN} and $x$ is hereditarily transitive by Definition~\ref{d:ordinals} and Theorem~\ref{t:ord}.

Now assume that a set $x\in \mathbf V$ is hereditarily transitive. We claim that $x\subseteq\On$. In the opposite case, the set $x\setminus \On\in\mathbf V$ is not empty and by the well-foundedness of the relation $\E{\restriction}\mathbf V$, there exists an element $y\in x\setminus\On$ such that $y\cap (x\setminus\On)=\emptyset$. The transitivity of $x$ implies that $y\subseteq x$ and hence $y\subseteq x\setminus(x\setminus \On)=x\cap\On$. Being a transitive subset of $\On$, the set $y$ is an ordinal by Definition~\ref{d:ordinals} and Theorem~\ref{t:Ord}(2). But the inclusion $y\in\On$ contradicts the choice of $y\in x\setminus \On$. This contradiction shows that $x\subseteq\On$ and hence $x$ is an ordinal by 
Definition~\ref{d:ordinals} and Theorem~\ref{t:Ord}(2).
\end{proof}

\begin{exercise}\label{ex:vNG} Show that for every ordinal $\alpha$ and sets $x,y\in V_\alpha$ we have
\begin{enumerate}
\item $x\setminus y\in V_\alpha$;
\item $\bigcup x\in V_\alpha$;
\item $\dom[x]\in V_\alpha$;
\item $x^{-1}\in V_\alpha$;
\item $\{x,y\}\in V_{\alpha+1}$;
\item $\langle x,y\rangle\in V_{\alpha+2}$.
\item $x\times y\in V_{\alpha+2}$.
\item $\E\cap(x\times y)\in V_{\alpha+2}$;
\item $x^\circlearrowright\in V_{\alpha+2}$.
\end{enumerate}
\end{exercise}
\newpage


\part{Constructibility}\label{part:constructibility}

\rightline{\em But above all I wish to designate the following}

\rightline{\em as the most important among the numerous questions}

\rightline{\em which can be asked with regard to the axioms:}

\rightline{\em To prove that they are not contradictory, that is,}

\rightline{\em that a definite number of logical steps based upon them}

\rightline{\em can never lead to contradictory results.}
\smallskip

\rightline{David Hilbert}
\bigskip

In this chapter we study the class $\mathbf L$ of constructible sets that was introduced by G\"odel for establishing the consistency of the Axiom of Choice and Continuum Hypothesis. We shall apply the class $\mathbf L$ to prove the consistency of the Axiom of Foundation and the Global Axiom of Choice. The class $\mathbf L$ is defined with the help of G\"odel's operations, so first we establish some properties of G\"odel's operations and their compositions.

\section{G\"odel's operations over sets}

\begin{definition}  By the \index{G\"odel's operations}{\em G\"odel's operations} we understand the following nine functions:
\begin{itemize}
\item[\textup{(0)}] $\dG_0:\UU\times\UU\to\UU$, $\dG_0:\langle x,y\rangle \mapsto x$
\item[\textup{(1)}] $\mathsf{G}_1:\UU\times\UU\to\UU$, $\mathsf{G}_1:\langle x,y\rangle\mapsto x\setminus y=\{u\in x:u\notin y\}$;
\item[\textup{(2)}] $\dG_2:\UU\times\UU\to\UU$, $\dG_2:\langle x,y\rangle\mapsto x\times y=\{\langle u,v\rangle:u\in x\;\wedge\;v\in y\}$;
\item[\textup{(3)}] $\dG_3:\UU\times\UU\to\UU$, $\dG_3:\langle x,y\rangle\mapsto x^{-1}=\{\langle v,u\rangle:\langle u,v\rangle\in x\}$;
\item[\textup{(4)}] $\dG_4:\UU\times\UU\to\UU$, $\dG_4:\langle x,y\rangle\mapsto x^\circlearrowright=\{\langle w,u,v\rangle:\langle u,v,w\rangle\in x\};$
\item[\textup{(5)}] $\dG_5:\UU\times \UU\to\UU$, $\dG_5:\langle x,y\rangle\mapsto x\cap\E$;
\item[\textup{(6)}] $\dG_6:\UU\times\UU\to\UU$, $\dG_6:\langle x,y\rangle\mapsto \dom[x]=\{u:\exists v\;(\langle u,v\rangle\in x)\}$;
\item[\textup{(7)}] $\dG_7:\UU\times\UU\to\UU$, $\dG_7:\langle x,y\rangle\mapsto \bigcup x=\{z:\exists u\in x\;(z\in u)\}$;
\item[\textup{(8)}] $\dG_8:\UU\times\UU\to\UU$, $\dG_8:\langle x,y\rangle\mapsto \{x,y\}$.
\end{itemize}
\end{definition}

\begin{exercise} Prove that the functions $\dG_0$--$\dG_8$ exist and are basic  classes.
\end{exercise}

The G\"odel operations  $\dG_0$ and $\dG_3$--$\dG_7$ do not depend on the second variable. Nonetheless we have written them as functions of two variables for uniform treatment of all G\"odel's operations. To simplify expressions involving the G\"odel's operations, it will be convenient to consider the unary operations
$$\dot\Go_i:\UU\to\UU,\quad \dot\Go_i:x\mapsto \dG_i(x,x),\quad\mbox{where $i\in 9$}.$$

\begin{exercise} Show that for every sets $x,y$,$$\dot\Go_1(x)=\emptyset,\quad \dot\Go_2(x)=x\times x,\quad \dot\Go_8(x)=\{x\},\quad\mbox{and}\quad\dot\Go_i(x)=\dG_i(x,y)\mbox{ for $i\in\{0,3,4,5,6,7\}$}.$$
\end{exercise}




\begin{definition} The \index{G\"odel's extension}{\em G\"odel's extension} is the function $\Go:\UU\to\UU$ assigning to every set $x$ the set
$$
\Go(x)=\bigcup_{i=0}^8\dG_i[x\times x]=\{u, u\setminus v, u\times v, u^{-1}, u^\circlearrowright,\dom[u], {\textstyle\bigcup u},\{u,v\}:u,v\in x\}.
$$
\end{definition}
\noindent So, the set $\Go(x)$ consists of  the results of application of G\"odel's operations to {\em elements} of $x$.

\begin{exercise} Prove that the  function $\Go$ is a basic class.
\end{exercise}

Iterating the function $\Go$, we obtain the function sequence $(\Go^{\circ n})_{n\in\w}$ such that $\Go^{\circ 0}=\Id$ and $\Go^{\circ(n+1)}=\Go\circ \Go^{\circ n}$ for every $n\in\w$. The function sequence $(\Go^{\circ n})_{n\in\w}$ exists and is basic  by Theorem~\ref{t:iterate}. Finally, consider the function
$\Go^{\circ\w}:\UU\to\UU$ assigning to each set $x$, the set $\Go^{\circ\w}(x)\defeq\bigcup_{n\in\w}\Go^{\circ n}(x)$, called the \index{G\"odel's hull}{\em G\"odel's hull} of $x$. 

\begin{exercise} Prove that the G\"odel's extension is monotone in the sense that $$ \Go(x)\subseteq\Go(y)$$ for any sets $x\subseteq y$.
\end{exercise}

\begin{exercise} Prove that $$x\subseteq \Go^{\circ n}(x)\subseteq \Go^{\circ(n+1)}(x)\subseteq \Go^{\circ\w}(x)$$ for every set $x$ and natural number $n$.
\end{exercise} 

\begin{Exercise} Prove that for any transitive set $x$, its G\"odel's hull $\Go^{\circ\w}(x)$ is a transitive set.
\smallskip

\noindent{\em Hint:} This is not easy, see Theorem 27.9 in {\tt arxiv.org/pdf/2006.01613v2.pdf}.
\end{Exercise}

\begin{exercise} Find a set $x$ whose G\"odel's hull $\Go^{\circ\w}(x)$ is not transitive.
\end{exercise}

Many useful operations on sets can be expressed via G\"odel's operations $\dG_i$.

\begin{exercise}\label{ex:God-op} Prove that  for any sets $x,y$ we have
\begin{enumerate} 
\item $x\cap y=x\setminus(x\setminus y)=\dG_1(x,\dG_1(x,y))\in \Go^{\circ 2}(\{x,y\})$;
\item $x\cup y=\bigcup\{x,y\}=\dot\Go_7(\dG_8(x,y))\in \Go^{\circ2}(\{x,y\})$;
\item $x\setminus\E=x\setminus(x\cap \E)=\dG_1(x,\dot\Go_5(x))\in\Go^{\circ2}(\{x\})$;
\item $\{x,y\}=\bigcup\bigcup(\{x\}\times \{y\})=\dot \Go_7(\dot\Go_7(\ddot \Go_2(\dot\Go_8(x),\dot\Go_8(y))))$;
\end{enumerate}
\end{exercise}

\begin{remark} Exercise~\ref{ex:God-op}(4) shows that the G\"odel's operation $\ddot G_8$ is expressible via its one-variable version $\dot\Go_8$ and the G\"odel's operations $\ddot \Go_2$ and $\dot\Go_7$.
\end{remark}

\begin{proposition}\label{p:vOn} For any set $v$ the set 
$$\textstyle\gamma\defeq\{x\in v:\forall y\in x\cup\{x\}\;(\bigcup y\subseteq y)\}$$ is an element of the set $\Go^{\circ20}(\{v\})\subseteq\Go^{\circ\w}(\{v\})$. If $v\in\mathbf V$, then $\gamma=v\cap\On$.
\end{proposition}

\begin{proof} 
It follows that
\begin{multline*}
v\setminus\gamma=\textstyle\{x\in v:\bigcup x\not\subseteq x\}\cup\{x\in v:\exists y\;(y\in x\;\wedge\;\bigcup y\not\subseteq y)\}=\\
\{x\in v:\exists y\exists z\; (y\in x\wedge z\in y\wedge z\notin x)\}\cup\{x\in v:\exists y\exists z\exists s\; (y\in x\wedge z\in y\wedge s\in z\wedge s\notin y)\}.
\end{multline*}
Observe that for any sets $x\in v$, $y\in x$, $z\in x$ and $s\in z$, we have $$\textstyle y\in\bigcup v=\dot\Go_7(v),\quad z\in\bigcup\bigcup v=\bigcup^{\circ2}v=\dot\Go^{\circ 2}_7(v),\quad s\in\bigcup\bigcup\bigcup v=\bigcup^{\circ3}v=\dot\Go^{\circ 3}_7(v).$$ 
Then 
$v\setminus \gamma=\dom^{\circ2}[T_1\cap T_2\cap T_3]\cup\dom^{\circ 3}[Q_1\cap Q_2\cap Q_3\cap Q_4]$,
where
$$
\begin{aligned}
T_1&=\textstyle\{\langle\langle x,y\rangle, z\rangle\in (v\times \bigcup v)\times\bigcup^{\circ 2} v:y\in x\}=[(\bigcup v\times v)\cap\E]^{-1}\times\bigcup^{\circ 2}v
\in\Go^{\circ5}(\{v\});\\
T_2&=\textstyle\{\langle\langle x,y\rangle, z\rangle\in (v\times \bigcup v)\times\bigcup^{\circ2} v:z\in y\}=[[\mathbf E\cap(\bigcup^{\circ2} v\times \bigcup v)]^{-1}\times  v]^\circlearrowright
\in \Go^{\circ 7}(\{v\});\\
T_3&=\textstyle\{\langle\langle x,y\rangle, z\rangle\in (v\times \bigcup v)\times\bigcup^{\circ2} v:z\notin x\}=\textstyle[((\bigcup^{\circ2} v\times v)\setminus \mathbf E)\times  \bigcup v]^\circlearrowleft
\in\Go^{\circ7}(\{v\})
\end{aligned}
$$
and
$$
\begin{aligned}
Q_1&=\textstyle\{\langle \langle\langle x,y\rangle,z\rangle,s\rangle\in ((v\times\bigcup v)\times\bigcup^{\circ2} v)\times\bigcup^{\circ3} v:y\in x\}\\
&=\textstyle([(\bigcup v\times v)\cap\E]^{-1}\times \bigcup^{\circ2} v)\times\bigcup^{\circ3} v\in 
\Go^{\circ6}(\{v\}),\\
Q_2&=\textstyle\{\langle \langle\langle x,y\rangle,z\rangle,s\rangle\in ((v\times\bigcup v)\times\bigcup^{\circ2} v)\times\bigcup^{\circ3} v:z\in y\}\\
&=\textstyle[[(\bigcup^{\circ2} v\times \bigcup v)\cap\E]^{-1}\times v]^\circlearrowright\times\bigcup^{\circ3} v
\in\Go^{\circ8}(\{v\}),\\
Q_3&=\textstyle\{\langle \langle\langle x,y\rangle,z\rangle,s\rangle\in ((v\times\bigcup v)\times\bigcup^{\circ2} v)\times\bigcup^{\circ3} v:s\in z\}\\
&=\textstyle[\{\langle \langle z,s\rangle,\langle x,y\rangle\rangle\in (\bigcup^{\circ2}v\times\bigcup^{\circ3} v)\times(v\times \bigcup v):s\in z\}]^\circlearrowright\\
&=\textstyle[[(\bigcup^{\circ3}v\times\bigcup^{\circ2}v)\cap\E]^{-1}\times (v\times \bigcup v)]^\circlearrowright
\in\Go^{\circ8}(\{v\})\\
Q_4&=\textstyle\{\langle \langle\langle x,y\rangle,z\rangle,s\rangle\in ((v\times\bigcup v)\times\bigcup^{\circ2} v)\times\bigcup^{\circ3} v:s\notin y\}\\
&=[\textstyle\{\langle \langle z,s\rangle,\langle x,y\rangle\rangle\in (\bigcup^{\circ 2}v\times\bigcup^{\circ 3}v)\times(v\times \bigcup v):s\notin y\}]^\circlearrowright\\
&=[[\textstyle\{\langle \langle x,y\rangle,\langle z,s\rangle\rangle\in (v\times\bigcup v)\times(\bigcup^{\circ2} v\times\bigcup^{\circ3} v):s\notin y\}]^\circlearrowright]^{-1}\\
&=[[[\textstyle\{\langle \langle y,\langle z,s\rangle\rangle,x\rangle\in (\bigcup v\times(\bigcup^{\circ2} v\times\bigcup^{\circ3} v))\times v:s\notin y\}]^\circlearrowright]^{-1}]^\circlearrowright\\
&=[[[\textstyle\{\langle y,\langle z,s\rangle\rangle\in \bigcup v\times(\bigcup^{\circ2} v\times\bigcup^{\circ3} v):s\notin y\}\times v]^\circlearrowright]^{-1}]^\circlearrowright\\
&=[[[[\textstyle\{\langle s,y\rangle,z\rangle\in  (\bigcup^{\circ3}v\times\bigcup v)\times\bigcup^{\circ2} v:s\notin y\}]^\circlearrowleft\times v]^\circlearrowright]^{-1}]^\circlearrowleft\\
&=[[[[\textstyle((\bigcup^{\circ3}v\times\bigcup v)\setminus\E)\times\bigcup^{\circ2} v]^\circlearrowleft\times v]^\circlearrowright]^{-1}]^\circlearrowleft
\in\Go^{\circ12}(\{v\}).
\end{aligned}
$$

Applying Exercise~\ref{ex:God-op}(1),  we conclude that 
$T_1\cap T_2\in\Go^{\circ 9}(\{v\})$ and
$T_1\cap T_2\cap T_3\in\Go^{\circ11}(\{v\})$. 
By analogy, we can deduce from $Q_1,Q_2,Q_3\in \Go^{\circ8}(\{v\})$ and $Q_4\in\Go^{\circ12}(\{v\})$ that 
$Q_1\cap Q_2\cap Q_3\in\Go^{\circ12}(\{v\})$ and $Q_1\cap Q_2\cap Q_3\cap Q_4\in \Go^{\circ 14}(\{v\})$.
Then 
$$v\setminus \gamma=\dom^{\circ2}[T_1\cap T_2\cap T_3]\cup\dom^{\circ3}[Q_1\cap Q_2\cap Q_3\cap Q_4]\in \Go^{\circ19}(\{v\})$$
and finally,
$\gamma=v\setminus(v\setminus\gamma)\in\Go^{\circ20}(\{v\})$.

If $v\in\mathbf V$, then by Theorem~\ref{t:ordinal=ht}, $v\cap\On=\{x\in v:\forall y\in x\cup\{x\}\;(\bigcup y\subseteq y)\}=\gamma$. 
\end{proof}
\medskip




\section{G\"odel's Constructible Universe}\label{s:L}
\smallskip

\rightline{\em The universe is almost like a huge magic trick}

\rightline{\em  and scientists are trying to figure out}

\rightline{\em  how it does what it does.}
\smallskip

\rightline{Martin Gardner}
\bigskip

Consider an increasing transfinite sequence of sets $(L_\alpha)_{\alpha\in\On}$ defined by the recursive formula 
$$
L_\alpha=\bigcup_{\beta\in\alpha}\mathcal P(L_\beta)\cap\Go^{\circ\w}(L_\beta\cup\{L_\beta\})\quad\mbox{for}\quad \alpha\in\On.$$

\begin{definition} The class $$\mathbf L=\bigcup_{\alpha\in\On}L_\alpha$$ is called \index{G\"odel's constructible universe}\index{constructible universe of G\"odel}\index{class!{{\bf L}}}\index{{{\bf L}}}{\em the G\"odel's constructible universe}. Its elements are called \index{constructible set}\index{set!constructible}{\em constructible sets}.
\end{definition}

\begin{exercise} Prove that the classes $(L_\alpha)_{\alpha\in\On}$ and $\mathbf L$ are basic.
\smallskip

\noindent{\em Hint:} Apply Theorem~\ref{t:class}.
\end{exercise}

The following two propositions lists some essential properties of the transfinite sequence $(L_\alpha)_{\alpha\in\On}$.

\begin{proposition}\label{p:L-seq} For every ordinal $\alpha$,
\begin{enumerate}
\item[\textup{1)}]  the set $L_\alpha$ is transitive;
\item[\textup{2)}] $L_{\alpha+1}=\mathcal P(L_\alpha)\cap\Go^{\circ\w}(L_\alpha\cup\{L_\alpha\})$;
\item[\textup{3)}] $L_\alpha\subseteq V_\alpha$;
\item[\textup{4)}] $\alpha\in L_{\alpha+1}$.
\end{enumerate}
\end{proposition}

\begin{proof} 1. The transitivity of the set $L_\alpha$ will be proved by induction. Assume that for some ordinal $\alpha$ all sets $L_\beta$, $\beta\in\alpha$, are transitive. To show that the set $L_\alpha$ is transitive, take any $x\in L_\alpha=\bigcup_{\beta\in\alpha}\mathcal P(L_\beta)\cap \Go^{\circ\w}(L_\beta\cup\{L_\beta\})$ and find an ordinal $\beta\in\alpha$ such that $x\in\mathcal P(L_\beta)\cap\Go^{\circ\w}(L_\beta\cup\{L_\beta\})\subseteq \mathcal P(L_\beta)$. Then $x\subseteq L_\beta\subseteq L_\alpha$.
\smallskip

2. The transitivity of the set $L_\alpha$ implies $L_\alpha\subseteq\mathcal P(L_\alpha)$. Also for every $x\in L_\alpha$ we have $x=\dG_0(x,x)\in \Go^{\circ\w}(L_\alpha)\subseteq \Go^{\circ\w}(L_\alpha\cup\{L_\alpha\})$ and hence $L_\alpha\subseteq\mathcal P(L_\alpha)\cap\Go^{\circ\w}(L_\alpha\cup\{L_\alpha\})$. Then $$L_{\alpha+1}=L_\alpha\cup(\mathcal P(L_\alpha)\cap\Go^{\circ\w}(L_\alpha\cup\{L_\alpha\}))=\mathcal P(L_\alpha)\cap\Go^{\circ\w}(L_\alpha\cup\{L_\alpha\}).$$
\smallskip

3. The inclusion $L_\alpha\subseteq V_\alpha$ will be proved by induction on $\alpha$. Assume that for some ordinal $\alpha$ we know that $L_\beta\subseteq V_\beta$ for all $\beta\in\alpha$. If $\alpha=0$, then $L_0=\emptyset=V_0$. If $\alpha$ is a limit ordinal, then $L_\alpha=\bigcup_{\beta\in\alpha}L_\beta\subseteq \bigcup_{\beta\in\alpha}V_\beta=V_\alpha$. If $\alpha=\beta+1$ for some ordinal $\beta$, then by the transitivity of the set $L_\beta$, 
$$L_\alpha=L_{\beta+1}\subseteq  \mathcal P(L_\beta)\subseteq \mathcal P(V_\beta)=V_{\beta+1}=V_\alpha.$$
\smallskip

4. The inclusion $\alpha\in L_{\alpha+1}$ will be proved by induction on $\alpha$. Assume that for some ordinal $\alpha$ we know that $\beta\in L_{\beta+1}$ for all ordinals $\beta<\alpha$. If  $\alpha=0$, then $\alpha=\emptyset=\dG_0(\emptyset,\emptyset)\in \mathcal P(\emptyset)\cap \Go^{\circ\w}(\{\emptyset\})=L_1=L_{\alpha+1}$. If $\alpha=\beta+1$ is a successor ordinal, then the inductive hypothesis ensures that $\beta\in L_{\beta+1}=L_\alpha$ and hence $\alpha=\beta\cup\{\beta\}\subseteq L_\alpha$ and $\alpha\in\mathcal P(L_\alpha)$. Then 
$$\alpha=\beta\cup\{\beta\}=\textstyle{\bigcup}\{\beta,\{\beta\}\}=\dot\Go_7(\dG_8(\beta,\dot\Go_8(\beta)))\in \mathcal P(L_\alpha)\cap \Go^{\circ\w}(L_\alpha)\subseteq L_{\alpha+1}.$$ Finally, assume that $\alpha$ is a limit ordinal. Then $\alpha=\bigcup_{\beta\in\alpha}\beta\subseteq\bigcup_{\beta\in\alpha}L_\beta=L_\alpha$. By Proposition~\ref{p:vOn}, the set $\gamma=\{x\in L_\alpha:\forall y\in x\cup\{x\}\;\bigcup y\subseteq y\}$ coincides with the intersection $L_\alpha\cap\On$ and is an element of the set $\Go^{\circ20}(\{L_\alpha\})$. By the transitivity of  $L_\alpha$ and $\On$, the set $\gamma=L_\alpha\cap\On$ is transitive and hence is an ordinal. 
By the inductive assumption, for every $\beta\in\alpha$ we have $\beta\in L_{\beta+1}\subseteq L_\alpha$ and hence $\beta\in L_\alpha\cap\On=\gamma$. Then $\alpha\le\gamma$. If $\alpha=\gamma$, then 
$\alpha=\gamma\in \mathcal P(L_\alpha)\cap\Go^{\circ20}(\{L_\alpha\})\subseteq L_{\alpha+1}$. If $\alpha\ne\gamma$, then $\alpha\in\gamma\subseteq L_\alpha\subseteq L_{\alpha+1}$ as well.
\end{proof}

\begin{exercise} Show that $L_\alpha=V_\alpha$ for all $\alpha\le \w$.
\end{exercise}

Next, we show that the class $\LL$ is closed under G\"odel's operations $\dG_0$--$\dG_8$.

\begin{proposition}\label{p:L-G-closed} Let $\alpha$ be an ordinal and $x,y\in L_\alpha$ be any sets. Then
\begin{enumerate}
\item[\textup{1)}] $x\setminus y\in L_{\alpha+1}$;
\item[\textup{2)}] $x\cap\E\in L_{\alpha+1}$;
\item[\textup{3)}] $\bigcup x\in L_{\alpha+1}$;
\item[\textup{4)}] $\dom[x]\in L_{\alpha+1}$;
\item[\textup{5)}] $\{x,y\}\in L_{\alpha+1}$;
\item[\textup{6)}] $\langle x,y\rangle\in L_{\alpha+2}$;
\item[\textup{7)}] $x\times y\in L_{\alpha+2}$;
\item[\textup{8)}] $x^{-1}\in L_{\alpha+1}$;
\item[\textup{9)}] $x^\circlearrowright\in L_{\alpha+2}$.
\end{enumerate}
\end{proposition}

\begin{proof} 
{\parskip2pt
The transitivity of the set $L_\alpha$ implies that $x,y\subseteq L_\alpha$.

1. It follows from $x,y\in L_\alpha$ and $x\subseteq L_\alpha$  that $\dG_1(x,y)=x\setminus y\in \mathcal P(L_\alpha)\cap\Go(L_\alpha)\subseteq L_{\alpha+1}$.

2. By analogy, $\dG_5(x,y)=x\cap\E\in \mathcal P(L_\alpha)\cap\Go(L_\alpha)\subseteq L_{\alpha+1}$.

3. The transitivity of the set $L_\alpha$ ensures that $\bigcup x\subseteq L_\alpha$ and hence\newline $\dG_7(x,y)=\bigcup x\in\mathcal P(L_\alpha)\cap\Go(L_\alpha)\subseteq L_{\alpha+1}$.

4. To see that $\dom[x]\subseteq L_\alpha$, take any element $u\in\dom[x]$ and find a set $v$ such that $\{u\}\in \langle u,v\rangle \in x$ and hence $\{u\}\in\bigcup x$ and $u\in\bigcup\bigcup x$. The transitivity of $L_\alpha$ ensures that $u\in\bigcup\bigcup x\subseteq L_\alpha$. Then $\dG_6(x,y)=\dom[x]\in\mathcal P(L_\alpha)\cap\Go(L_\alpha)\subseteq L_{\alpha+1}$.

5. It follows from $x,y\in L_\alpha$ that $\dG_8(x,y)=\{x,y\}\in \mathcal P(L_\alpha)\cap\Go(L_\alpha)\subseteq L_{\alpha+1}$.

6. By the preceding statement, $\{x\},\{x,y\}\in L_{\alpha+1}$ and $\langle x,y\rangle=\{\{x\},\{x,y\}\}\in L_{\alpha+2}$.

7. To see that $x\times y\subseteq L_{\alpha+1}$, take any element $z\in x\times y$ and find sets $u\in x$ and $v\in y$ such that $z=\langle u,v\rangle$. Let $\beta\le\alpha$ be the smallest ordinal such that $x,y\in L_\beta$. If $\beta$ is limit, then $L_\beta=\bigcup_{\gamma\in\beta}L_\gamma$ and hence $x,y\in L_\gamma$ for some ordinal $\gamma<\beta$, which contradicts the minimality of $\beta$. Therefore, $\beta$ is not limit and hence $\beta=\gamma+1$ for some ordinal $\gamma$. Then $x,y\in L_{\gamma+1}\subseteq\mathcal P(L_\gamma)$ and $u,v\in x\cup y\subseteq L_\gamma$. By the preceding statement, $z=\langle u,v\rangle\in L_{\gamma+2}=L_{\beta+1}\subseteq L_{\alpha+1}$ and hence $x\times y\in \mathcal P(L_{\alpha+1})\cap\Go(L_{\alpha})\subseteq L_{\alpha+2}$.

8. To prove that  $x^{-1}\in L_{\alpha+1}$, take any element $z\in x^{-1}$ and find sets $u,v$ such that $\langle u,v\rangle\in x$ and $\langle v,u\rangle =z$. Repeating the argument from the preceding paragraph, find an ordinals $\gamma<\beta<\alpha$ such that $\{u,v\}\in L_\beta$ and $u,v\in L_\gamma$. Then $z=\langle v,u\rangle\in L_{\gamma+2}\subseteq L_\alpha$. This shows that $x^{-1}\subseteq L_\alpha$ and hence $x^{-1}\in\mathcal P(L_\alpha)\cap\Go(L_\alpha)=L_{\alpha+1}$.

9. To prove that $x^\circlearrowright\in L_{\alpha+2}$, take any element $z\in x^\circlearrowright$ and find sets $u,v,w$ such that $\langle u,v,w\rangle\in x$ and $\langle w,u,v\rangle=z$. Repeating the preceding argument, find ordinals $\delta<\gamma<\beta<\alpha$ such that $\langle\langle u,v\rangle,w\rangle\in L_\beta$ and $\langle u,v\rangle,w\in L_\delta$. The transitivity of $L_\delta$ ensures that $u,v\in L_\delta$. Then $\langle w,u\rangle\in L_{\delta+2}\subseteq L_\beta$ and $z=\langle \langle w,u\rangle,v\rangle\in L_{\beta+2}\subseteq L_{\alpha+1}$ by Proposition~\ref{p:L-G-closed}(6). This shows that $x^\circlearrowright\in\mathcal P(L_{\alpha+1})\cap\Go(L_\alpha)\subseteq L_{\alpha+2}$.
}
\end{proof}

\begin{definition} A class $X$ is defined to be
\begin{itemize}
\item \index{$\dG_i$-closed class}\index{class!$\dG_i$-closed}{\em $\dG_i$-closed} for $i\in 9$ if $\dG_i(x,y)\in X$ for all $x,y\in X$;
\item \index{G\"odel-closed class}\index{class!G\"odel-closed}{\em G\"odel-closed} if $X$ is $\dG_i$-closed for every $i\in 9$. 
\end{itemize}
\end{definition}


Propositions~\ref{p:L-seq} and \ref{p:L-G-closed} imply the following important fact.

\begin{theorem}\label{t:L-tpc} The constructible universe $\mathbf L$ is a G\"odel-closed transitive proper class such that $\On\subseteq \mathbf L\subseteq\mathbf V.$
\end{theorem}

An important feature of the class $\mathbf L$ is its well-orderability. In Theorem~\ref{t:well-order} below we shall prove that there exists a basic set-like well-order $\mathbf W_{<}$ with $\dom[\mathbf W_{<}]=\mathbf L$. 

To construct such a well-order, we first construct an enumeration of all possible compositions of G\"odels operations. There are only countably many such compositions. We shall enumerate them by the countable set $\bigcup_{n\in\w}9^{2^{<n}}$. Since we included the identity operation $\dG_0$ in the list of G\"odel's operations, any composition of G\"odel's operations can be encoded by  a $9$-labeled full binary tree $2^{<n}$ of some finite height $n$.

For example, operation of union $x\cup y=\bigcup\{x,y\}$  can be written as the composition $\dG_7(\dG_8(x,y)),\dG_i(a,b))$, which is  represented by the full binary tree of height 2:
$$
\xymatrix{
&&&\dG_7\\
&\dG_8\ar[urr]&&&&\dG_i\ar[ull]\\
x\ar[ur]&&y\ar[ul]&&a\ar[ur]&&b\ar[ul]
}
$$
Since the operation $\dG_7$ of union does not depend on the second variable, in the right-hand part of the tree we can write any operations and variables. 

\begin{exercise} Draw the corresponding tree for representing the operation of forming the ordered pair $\langle x,y\rangle$ and the triple  $\langle x,y,z\rangle=\langle\langle x,y\rangle,z\rangle$.
\end{exercise}


Such a representation suggests the idea of encoding of all possible compositions of G\"odel's operations by binary trees whose vertices are labeled by numbers that belong to the set $9=\{0,1,\dots,8\}$.
 
By a \index{binary tree}{\em  binary tree} we understand any ordinary tree $T$ which is a subset of the full binary $\w$-tree $2^{<\w}$. We recall that $2^{<\w}$ consists of all functions $f$ with $\dom[f]\in\w$ and $\rng[f]\subseteq 2=\{0,1\}$. A subset $T\subseteq 2^{<\w}$ is called an {\em ordinary tree} if for any $t\in T$ and $n\in\w$ the function $t{\restriction}_n=t\cap(n\times \UU)$ belongs to $T$. For every $n\in\w$, the set $2^{<n}=\bigcup_{k\in n}2^k$ is an ordinary tree, called the {\em full binary tree of height $n$}. 

For every $k\in\{0,1\}$, the injective function $$\vec k:2^{<\w}\to 2^{<\w},\quad \vec k:t\mapsto \{\langle 0,k\rangle\}\cup\{\langle \alpha+1,y\rangle:\langle \alpha,y\rangle\in t\}$$is called the {\em $k$-transplantation} of the full binary $\w$-tree. 

\begin{exercise} Show that  $\rng[\vec k]=\{t\in 2^{<\w}:t(0)=k\}$ and for every $n\in\w$ we have $\vec k[2^n]=\{t\in 2^{n+1}:t(0)=k\}$.  Deduce from this that for every function $x:2^{n+1}\to\UU$ the composition $x\circ\vec k$ is a function with $\dom[x\circ \vec k]=2^n$. Consequently, the function 
$\UU^{2^{n+1}}\to \UU^{2^{n}}$, $x\mapsto x\circ\vec k$, is well-defined.
\end{exercise}

\begin{exercise} For a function $x=\{\langle \langle 0,0\rangle,x_{00}\rangle, \langle\langle 0,1\rangle,x_{01}\rangle,\langle\langle 1,0\rangle,x_{10}\rangle,\langle1,1\rangle, x_{11},\rangle\}\in \UU^{2^2}$, find the functions $x\circ \vec 0$ and $x\circ \vec 1$.
\smallskip

\noindent {\em Hint:} $x\circ\vec 0=\{\langle 0,x_{00}\rangle, \langle 1,x_{01}\rangle\}$ and $x\circ\vec 1=\{\langle 0,x_{10}\rangle, \langle 1,x_{11}\rangle\}$. 
\end{exercise}

By a \index{tree!labeling of}\index{labeled tree}{\em $9$-labeling of a binary tree $T$} we understand any function $\lambda:T\to 9=\{0,1,\dots,8\}$. Therefore a 9-labeling $\lambda$ assigns to each vertex $t\in T$ of the tree some number $\lambda(t)\in 9$. Observe that for any $n\in\w$ and a $9$-labeling $\lambda:2^{<(n+1)}\to 9$ of the full binary $(n+1)$-tree $2^{<(n+1)}$, the composition $\lambda\circ \vec k$ has $\dom[\lambda\circ\vec k]=2^{<n}$, so $\lambda\circ \vec k$ is a labeling of the tree $2^{<n}$.

Now for every $n\in\w$ and every $9$-labeling $\lambda:2^{<n}\to 9$ of the tree $2^{<n}$ we define the function $\Go_\lambda:\UU^{2^n}\to\UU$ by the recursive formulas:
\begin{enumerate}
\item If $n=0$, then $\Go_\lambda(\langle 0,x\rangle)=\Go_\emptyset(\langle 0,x\rangle)=x$ for any $\{\langle 0,x\rangle\}\in \UU^{2^0}=\UU^{1}$;
\item If $n>0$, then $\Go_{\lambda}(x)=\dG_{\lambda(0)}\big(\Go_{\lambda\circ\vec 0}(x\circ \vec 0),\Go_{\lambda\circ\vec 1}(x\circ\vec 1)\big)$ for every $x\in \UU^{2^n}$.
\end{enumerate}

\begin{lemma}\label{l:code} For every $n\in\w$ and set $x$ we have $$\Go^{\circ n}(x)=\bigcup\{\Go_\lambda[x^{2^n}]:\lambda\in 9^{2^{<n}}\}.$$
\end{lemma}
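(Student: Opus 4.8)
The plan is to induct on $n \in \w \setminus \{0\}$, expanding the composition $\Go^{\circ(n+1)} = \Go \circ \Go^{\circ n}$ on the left and the recursive clause for $\dG_\lambda$ on the right, and to match the two sides through the binary splitting of the full binary tree induced by the transplantation maps $\vec 0, \vec 1$.

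First I would dispose of the base case $n = 1$. Here $2^{<1} = \{\emptyset\}$ is the single root vertex, so a labeling $\lambda \in 9^{2^{<1}}$ amounts to a choice of $\lambda(0) \in 9$, while $x^{2^1} = x^2$ consists of the functions $x'$ with $x'(0), x'(1) \in x$. Clause (2) gives $\dG_\lambda(x') = \mathsf{G}_{\lambda(0)}(x'(0), x'(1))$, whence
$$\bigcup\{\dG_\lambda[x^{2^1}] : \lambda \in 9^{2^{<1}}\} = \{\mathsf{G}_i(u,v) : i \in 9,\ u,v \in x\} = \bigcup_{i=0}^{8}\mathsf{G}_i[x \times x] = \Go(x) = \Go^{\circ 1}(x).$$

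For the inductive step I would assume the identity for some $n \geq 1$ and treat $n + 1 \geq 2$. The pivotal fact, which I would record first, is that $\vec 0$ and $\vec 1$ are injective with disjoint ranges realizing the decompositions $2^{<(n+1)} = \{\emptyset\} \cup \vec 0[2^{<n}] \cup \vec 1[2^{<n}]$ and $2^{n+1} = \vec 0[2^n] \cup \vec 1[2^n]$ (the latter being exactly the preceding exercise on $\vec k[2^n]$). From this the assignments $\lambda \mapsto (\lambda(\emptyset), \lambda \circ \vec 0, \lambda \circ \vec 1)$ and $w \mapsto (w \circ \vec 0, w \circ \vec 1)$ are bijections $9^{2^{<(n+1)}} \to 9 \times 9^{2^{<n}} \times 9^{2^{<n}}$ and $x^{2^{n+1}} \to x^{2^n} \times x^{2^n}$. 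Since $n + 1 \geq 2$, clause (3) reads $\dG_\lambda(w) = \mathsf{G}_{\lambda(\emptyset)}(\dG_{\lambda \circ \vec 0}(w \circ \vec 0), \dG_{\lambda \circ \vec 1}(w \circ \vec 1))$; letting $\lambda$ and $w \in x^{2^{n+1}}$ range freely and substituting the two bijections, the values $\dG_\lambda(w)$ range over exactly the sets $\mathsf{G}_i(\dG_\mu(u'), \dG_\nu(v'))$ with $i \in 9$, $\mu, \nu \in 9^{2^{<n}}$ and $u', v' \in x^{2^n}$. By the inductive hypothesis $\dG_\mu(u')$ and $\dG_\nu(v')$ sweep out all of $\Go^{\circ n}(x)$, so these values form precisely $\{\mathsf{G}_i(u,v) : i \in 9,\ u, v \in \Go^{\circ n}(x)\} = \Go(\Go^{\circ n}(x)) = \Go^{\circ(n+1)}(x)$, closing the induction.

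The only delicate point --- routine but worth stating carefully --- is the bijectivity of the two reconstruction maps, i.e.\ that a labeling of $2^{<(n+1)}$ and a leaf-assignment in $x^{2^{n+1}}$ are each uniquely determined by their restrictions along $\vec 0$ and $\vec 1$ (together with the root value in the labeling case). Handling this once, via the disjoint-range decomposition above, lets me read off both inclusions of the claimed equality simultaneously from clause (3), so that I never need to split into separate ``$\subseteq$'' and ``$\supseteq$'' arguments.
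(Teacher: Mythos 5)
Your proof is correct and follows essentially the same route as the paper: induction on $n$ with the base case $n=1$ handled by clause (2) and the inductive step by expanding $\Go^{\circ(n+1)}(x)=\Go(\Go^{\circ n}(x))$ and matching it against clause (3) via the splitting of $2^{<(n+1)}$ and $2^{n+1}$ along the transplantations $\vec 0,\vec 1$. The only difference is that you make explicit the bijectivity of the reconstruction maps $\lambda\mapsto(\lambda(\emptyset),\lambda\circ\vec 0,\lambda\circ\vec 1)$ and $w\mapsto(w\circ\vec 0,w\circ\vec 1)$, which the paper's chain of equalities leaves implicit.
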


\begin{proof} For $n=0$ we have $2^{<0}=\emptyset$ and  
$\Go^{\circ0}(x)=x=\Go_\emptyset[x^{2^0}]=\bigcup\{\Go_\lambda[x^{2^0}]:\lambda\in 9^0=\{\emptyset\}\}$.
Assume that for some $n\in\w$ the equality
$$\Go^{\circ n}(x)=\bigcup\{\Go_\lambda[x^{2^n}]:\lambda\in 9^{2^{<n}}\}$$
has been proved.
Then 
$$
\begin{aligned}
\Go^{\circ(n+1)}(x)=\;&
\Go(\Go^{\circ n}(x))=\{\dG_i(u,v):i\in 9,\;u,v\in\Go^{\circ n}(x)\}\\
&=\big\{\dG_i(u,v):i\in 9,\;u,v\in\textstyle\bigcup\{\Go_\lambda[x^{2^n}]:\lambda\in 9^{2^{<n}}\}\big\}\\
&=\big\{\dG_i(\Go_\mu(f),\Go_\nu(g)):i\in 9,\;\mu,\nu\in 9^{2^{<n}},\;f,g\in x^{2^n}\big\}\\
&=\big\{\Go_\lambda(\varphi):\lambda\in 9^{2^{<(n+1)}},\;\varphi\in x^{2^{n+1}}\big\}.
\end{aligned}
$$
\end{proof}

Lemma~\ref{l:code} implies the following corollary.

\begin{corollary}\label{c:code} For every  set $x$ its G\"odel's hull $\Go^{\circ\w}(x)$  is equal to  $$\bigcup_{n\in\w}\bigcup_{\lambda\in 9^{2^{<n}}}\Go_\lambda[x^{2^n}].$$
\end{corollary}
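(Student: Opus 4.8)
The plan is to read this corollary directly off Lemma~\ref{l:code} together with the defining formula $\Go^{\circ\w}(x)=\bigcup_{n\in\w}\Go^{\circ n}(x)$ for the G\"odel's hull. Since Lemma~\ref{l:code} already identifies $\Go^{\circ n}(x)$ with $\bigcup_{\lambda\in 9^{2^{<n}}}\dG_\lambda[x^{2^n}]$ for every $n\in\w\setminus\{0\}$, the whole argument amounts to substituting these identities into the defining union over $n\in\w$ and interchanging the two unions. Thus the bulk of the corollary is purely bookkeeping, and the substance is entirely contained in the preceding lemma.

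Concretely, I would first split the union $\Go^{\circ\w}(x)=\bigcup_{n\in\w}\Go^{\circ n}(x)$ into the base term $n=0$ and the terms with $n\ge 1$. For the terms with $n\ge 1$, Lemma~\ref{l:code} gives $\Go^{\circ n}(x)=\bigcup_{\lambda\in 9^{2^{<n}}}\dG_\lambda[x^{2^n}]$, so these contribute exactly $\bigcup_{n\ge 1}\bigcup_{\lambda\in 9^{2^{<n}}}\dG_\lambda[x^{2^n}]$ to the right-hand side. The only point not covered by Lemma~\ref{l:code}, and hence the only place needing a separate check, is the degenerate term $n=0$. Here I would unwind the relevant conventions directly: $2^{<0}=\{t:\dom[t]\in 0\}=\emptyset$, so that $9^{2^{<0}}=\{\emptyset\}$ consists of the single empty labeling; and $2^0=1$, so that $x^{2^0}=x^1$ is the set of functions $\{\langle 0,a\rangle\}$ with $a\in x$. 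By clause (1) of the definition of $\dG_\lambda$ the empty labeling yields the projection $\dG_\emptyset:\{\langle 0,a\rangle\}\mapsto a$, whence $\dG_\emptyset[x^{2^0}]=x=\Go^{\circ 0}(x)$. So the $n=0$ term on the right reproduces precisely $\Go^{\circ 0}(x)$.

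Combining the two cases then gives $\bigcup_{n\in\w}\bigcup_{\lambda\in 9^{2^{<n}}}\dG_\lambda[x^{2^n}]=\bigcup_{n\in\w}\Go^{\circ n}(x)=\Go^{\circ\w}(x)$, as required. I expect no real obstacle: the only mild subtlety is the base case $n=0$, where one must remember the conventions $2^{<0}=\emptyset$ and $2^0=1$ and invoke clause (1) of the definition of $\dG_\lambda$ to recover $\Go^{\circ 0}=\Id$; everything else is the content of Lemma~\ref{l:code} plus the definition of the hull.
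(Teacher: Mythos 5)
Your proposal is correct and follows exactly the route the paper intends: the paper states this corollary as an immediate consequence of Lemma~\ref{l:code} together with the definition $\Go^{\circ\w}(x)=\bigcup_{n\in\w}\Go^{\circ n}(x)$, giving no further argument. Your separate verification of the degenerate term $n=0$ (where $9^{2^{<0}}=\{\emptyset\}$ and $\dG_\emptyset[x^{2^0}]=x=\Go^{\circ 0}(x)$) is a detail the paper silently glosses over, and it is carried out correctly.
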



Now we are able to prove the promised 

\begin{theorem}[G\"odel]\label{t:well-order} There exists a set-like well-order $\mathbf W_{<}$ such that $\dom[\mathbf W_{<}]=\mathbf L$.
\end{theorem}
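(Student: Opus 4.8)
The plan is to exploit the parametrization of $\LL$ by \emph{codes} provided by Corollary~\ref{c:Code-sur}, to well-order these codes in a set-like fashion, and then to transport the resulting order down to $\LL$ along the map sending each constructible set to its least code. First I would introduce the class of codes
$$\mathbf C=\{\langle\lambda,f\rangle:\exists n\in\w\;(\lambda\in 9^{2^{<n}}\;\wedge\;f\in\Ord^{2^n})\},$$
which exists by Theorem~\ref{t:class}, together with the evaluation function $\ev:\mathbf C\to\LL$, $\ev:\langle\lambda,f\rangle\mapsto\dG_\lambda(f)$ (also existing by Theorem~\ref{t:class}). By Corollary~\ref{c:Code-sur} the function $\ev$ is surjective, so every constructible set carries at least one code.

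The key step is to equip $\mathbf C$ with a set-like well-order. For this I would inject $\mathbf C$ into the class $\Ord^{<\w}$ of finite sequences of ordinals. Fixing, for each $n\in\w$, the canonical enumerations $t_0,\dots,t_{p-1}$ of $2^{<n}$ and $s_0,\dots,s_{q-1}$ of $2^n$ (where $p=2^n-1$ and $q=2^n$), I encode a code $\langle\lambda,f\rangle$ as the finite sequence
$$\iota(\langle\lambda,f\rangle)=\langle n,\lambda(t_0),\dots,\lambda(t_{p-1}),f(s_0),\dots,f(s_{q-1})\rangle\in\Ord^{<\w},$$
whose entries are ordinals since $n\in\w$, $\lambda(t_i)\in 9$ and $f(s_j)\in\Ord$. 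As $n$ is recovered from the first entry (and then determines $p$, $q$ and the enumerations), the class function $\iota$ is injective. I then pull back along $\iota$ the canonical order on finite ordinal sequences, setting
$$\lhd=\{\langle a,b\rangle\in\mathbf C\times\mathbf C:\langle\iota(a),\iota(b)\rangle\in\mathsf W_{\!<}{\restriction}\Ord^{<\w}\}.$$
Since $\mathsf W_{\!<}{\restriction}\Ord^{<\w}$ is a set-like well-order (as shown in an earlier exercise) and $\iota$ is injective, $\lhd$ is a well-order; it is set-like because for every $a\in\mathbf C$ the initial interval $\{b\in\mathbf C:b\lhd a\}$ is the $\iota^{-1}$-image of the $\mathsf W_{\!<}$-initial interval of $\iota(a)$, hence a set by the Axiom of Replacement.

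Now I would define $\mathsf{code}:\LL\to\mathbf C$ by letting $\mathsf{code}(c)$ be the $\lhd$-least element of the nonempty class $\ev^{-1}[\{c\}]$; this is well-defined because $\lhd$ is well-founded, and injective because $\mathsf{code}(c)=\mathsf{code}(c')$ forces $c=\ev(\mathsf{code}(c))=\ev(\mathsf{code}(c'))=c'$. Finally I put
$$\mathbf W_{<}=\{\langle c,d\rangle\in\LL\times\LL:\mathsf{code}(c)\lhd\mathsf{code}(d)\}.$$
Linearity, transitivity and antisymmetry of $\mathbf W_{<}$ follow from the injectivity of $\mathsf{code}$ and the corresponding properties of $\lhd$; well-foundedness follows because any nonempty $X\subseteq\LL$ has a $\lhd$-least code in $\mathsf{code}[X]$, whose $\ev$-value is $\mathbf W_{<}$-least in $X$. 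For set-likeness one notes that for $c\in\LL$ the class
$$\{d\in\LL:\langle d,c\rangle\in\mathbf W_{<}\}=\mathsf{code}^{-1}\big[\{b\in\mathbf C:b\lhd\mathsf{code}(c)\}\big]$$
is a set, by set-likeness of $\lhd$ and Replacement applied to the function $\mathsf{code}^{-1}$. Lastly $\dom[\mathbf W_{<}]=\LL$: if some $c$ were not in the domain, then by linearity $c$ would be $\mathbf W_{<}$-greatest, so $\LL=\{d:\langle d,c\rangle\in\mathbf W_{<}\}\cup\{c\}$ would be a set, contradicting that $\LL$ is a proper class. The step I expect to be the main obstacle is exactly this set-likeness, that is, guaranteeing that only set-many constructible sets precede any given one; routing the construction through $\Ord^{<\w}$ is what secures it, since set-likeness transfers through the injection $\iota$ and the injection $\mathsf{code}$ precisely because the relevant initial intervals are images of sets under functions, to which Replacement applies, whereas the existence of $\ev$, $\iota$, $\mathsf{code}$ and the canonical enumerations of $2^{<n}$ and $2^n$ is routine bookkeeping.
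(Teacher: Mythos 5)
Your proof is correct and follows essentially the same route as the paper: parametrize $\LL$ by codes via Corollary~\ref{c:Code-sur}, well-order the codes in a set-like way using the canonical well-order $\mathsf{W}_{\!<}{\restriction}\Ord^{<\w}$, and transfer the order to $\LL$ through least codes. The only difference is cosmetic — you flatten each code into a single finite sequence of ordinals and pull back $\mathsf{W}_{\!<}$ along that injection, whereas the paper keeps the triples $\langle n,\lambda,f\rangle$ and orders them lexicographically with the ordinal-valued component $f$ dominant; both devices secure set-likeness for the same reason, and you correctly identify that as the crux.
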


\begin{proof} By Corollary~\ref{c:code}, $\mathbf L=\bigcup_{\alpha\in\On}L_\alpha$, where $$L_\alpha=\bigcup_{\beta\in\alpha}\mathcal P(L_\beta)\cap \Go^{\circ\w}(L_\beta\cup\{L_\beta\})=\bigcup_{\beta\in\alpha}\bigcup_{n\in\w}\bigcup_{\lambda\in 9^{<2^n}}\mathcal P(L_\beta)\cap\Go_\lambda[(L_\beta\cup\{L_\beta\})^{2^n}].$$

For every ordinal $\alpha$ we define a well-order $W_\alpha$ on the set $L_\alpha$ by recursion. First we fix well-orders on the sets $2^{<\w}=\bigcup_{n\in\w}2^n$ and $9^\star= \bigcup_{n\in\w}9^{2^{<n}}$. The set $2^{<\w}$ carries the well-order $$<_2=\{\langle f,g\rangle \in 2^{<\w}\times 2^{<\w}:|f|<|g|\;\vee\; \exists i \in \dom[f]=\dom[g]\;(f{\restriction}_i=g{\restriction}_i\wedge f(i)<g(i))\}$$  and the set $9^\star= \bigcup_{n\in\w}9^{2^{<n}}$ carries the well-order
 $$<_9=\{\langle f,g\rangle \in 9^\star\times 9^\star:|f|<|g|\;\vee\; \exists s \in \dom[f]=\dom[g]\;(f{\restriction}_{{\downarrow}s}=g{\restriction}_{{\downarrow}s}\;\wedge\; f(s)<g(s))\},$$
 where ${\downarrow}s=\{t\in 2^{<\w}:t<_2s\}$.

Assume that for some ordinal $\alpha$ we have defined a sequence $(W_\beta)_{\beta\in\alpha}$ of well-orders such that $W_\beta^\pm=(L_\beta\times L_\beta)\setminus\Id$ for every $\beta\in\alpha$. If $\alpha$ is a limit or zero ordinal, then let $$W_\alpha\defeq\bigcup_{\beta\in\alpha}\big(L_\beta\times (L_\alpha\setminus L_\beta)\big)\cup \big(W_\beta{\restriction} (L_{\beta+1}\setminus L_\beta)\big)$$and observe that $W_\alpha$ is a well-order on the set $L_\alpha$.

Next, assume that $\alpha=\beta+1$ for some ordinal $\beta$. Consider the well-order $W_\beta$ on the set $L_\beta$ and extend it to the well-order $\prec_\beta=W_\beta\cup (L_\beta\times\{L_\beta\})$ on the set $L_\beta\cup\{L_\beta\}$. Endow the set $T_\beta=\bigcup_{n\in\w}9^{2^{<n}}\times (L_\beta\cup\{L_\beta\})^{2^n}$ with the well-order $$W'_\beta=\{\langle \langle \lambda,f\rangle,\langle \mu,g\rangle\rangle\in T_\beta\times T_\beta:\lambda<_9\mu\vee (\lambda=\mu\wedge \exists s\in \dom[\lambda]\;(f{\restriction}_{{\downarrow}s}=g{\restriction}_{{\downarrow}s}\wedge f(s)\prec_\beta g(s)\},$$
where ${\downarrow} s=\{t\in\dom(\lambda):t<_2 s\}$.

Finally, consider the well-order
\begin{multline*}
W_\alpha\defeq W_\beta\cup (L_\beta\times (L_\alpha\setminus L_\beta))\cup
\{\langle a,b\rangle:a,b\in L_\alpha\setminus L_\beta\;\wedge \\
\exists \langle\lambda,f\rangle\in T_\beta\; (a=\Go_\lambda(f)\;\wedge\; \forall \langle \mu,g\rangle\in T_\beta\; (b=\Go_\mu(g)\Rightarrow \langle\langle\lambda,f\rangle,\langle \mu,g\rangle\rangle\in W_\beta')\}
\end{multline*}
on the set $L_\alpha$.
Then the union
$$
\mathbf W_<=\bigcup_{\alpha\in\On}W_\alpha$$ is a 
is a desired set-like well-order on the constructible universe $\mathbf L$.
\end{proof}

\begin{exercise} Prove that the well-order $\mathbf W_<$ is a basic class.
\smallskip

\noindent{\em Hint:} Analyse the definition of $\mathbf W_<$ and apply Theorem~\ref{t:class}.
\end{exercise}

In Theorem~\ref{t:Con(U=L)} we shall prove that the following statement does not contradict the axioms of the Classical Set Theory.
$$\boxed{\mbox{\bf Axiom of Constructibility:\quad}\mathbf U=\mathbf L \quad}$$

The Axiom of Constructibility postulates that every set is constructible.

\begin{theorem} The Axioms Classical Set Theory with added Axiom of Constructibility imply the Axiom of Foundation and the Axiom of Global Choice.
\end{theorem}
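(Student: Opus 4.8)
The plan is to treat the two conclusions separately, in each case reducing to a structural fact about $\LL$ that has already been established above. The only hypothesis I would exploit is $\UU=\LL$, combined with the facts that $\LL$ is transitive, that $\LL\subseteq\VV$, and that $\LL$ carries a canonical set-like well-order.

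First I would deduce the Axiom of Foundation. By the Corollary to Theorem~\ref{t:L-minimal} we have $\LL\subseteq\VV$, while the Axiom of Universe guarantees $\VV\subseteq\UU$ (every class is a subclass of $\UU$). Under the hypothesis $\UU=\LL$ these inclusions close up into the chain $\UU=\LL\subseteq\VV\subseteq\UU$, which forces $\UU=\VV$. Since Theorem~\ref{t:AF} asserts that $\UU=\VV$ is equivalent to the Axiom of Foundation, Foundation follows at once.

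Next I would establish the Axiom of Global Choice using the canonical well-order. By Theorem~\ref{t:well-order} there is a set-like well-order $\mathbf W_{<}$ with $\dom[\mathbf W_{<}^\pm]=\LL=\UU$. Consequently every nonempty set $x$ satisfies $x\subseteq\dom[\mathbf W_{<}^\pm]$, so by the least-element characterization of well-orders in Exercise~\ref{ex:char-WO} it has a $\mathbf W_{<}$-least element, namely an element $y\in x$ with $\langle y,z\rangle\in\mathbf W_{<}$ for every $z\in x\setminus\{y\}$, and the linearity of $\mathbf W_{<}$ makes this $y$ unique. The plan is then to let the assignment $x\mapsto y$ be the required choice function, realised as the class
$$F=\{\langle x,y\rangle\in\ddot\UU:y\in x\;\wedge\;\forall z\in\UU\;(z\in x\Rightarrow\langle y,z\rangle\in\mathbf W_{<}\cup\Id)\},$$
whose existence follows from G\"odel's class existence Theorem~\ref{t:class} with $\mathbf W_{<}$ as a class parameter. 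Uniqueness of the least element makes $F$ single-valued, and the preceding observation shows $\dom[F]=\UU\setminus\{\emptyset\}$ with $F(x)\in x$ for every nonempty $x$; this is precisely the Axiom of Global Choice.

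The hard part is essentially already behind us: all the genuine labour sits in the construction of $\mathbf W_{<}$ in Theorem~\ref{t:well-order} and in the chain of lemmas showing that $\LL$ is transitive and that $\LL\subseteq\VV$. What remains here is only to check that the displayed defining formula is $\UU$-bounded (so that Theorem~\ref{t:class} applies) and that $F$ is genuinely total and single-valued; neither point poses a real obstacle, so this final implication is a short corollary of the earlier structure theory of the constructible universe.
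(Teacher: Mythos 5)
Your proof is correct and follows essentially the same route as the paper: Foundation via the chain $\UU=\LL\subseteq\VV\subseteq\UU$ together with Theorem~\ref{t:AF}, and Global Choice by taking $\mathbf W_{<}$-least elements of nonempty sets using the canonical set-like well-order from Theorem~\ref{t:well-order}. The extra detail you supply on realising the choice function as a class via G\"odel's class existence theorem is a welcome elaboration of a step the paper leaves implicit.
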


\begin{proof} Since $\mathbf L\subseteq\mathbf V\subseteq \mathbf U$, the equality $\mathbf L=\mathbf U$ implies the equality $\mathbf V=\mathbf U$, which is equivalent to the Axiom of Foundation by Theorem~\ref{t:AF}.

To prove that $\mathbf L=\mathbf U$ implies the Axiom of Global Choice, define the choice function $C:2^{\UU}\setminus\{\emptyset\}\to\UU$ assigning to every nonempty subset $x\subseteq \UU=\mathbf L$ the unique $\mathbf W_{<}$-least element of $x$, where $\mathbf W_<$ in the well-order on $\mathbf L$, constructed in Theorem~\ref{t:well-order}.
\end{proof}

\section{G\"odel classes}

In this section we study G\"odel classes, one of which is the G\"odel constructible universe $\mathbf L$.

\begin{definition} A class $X$ is defined to be
\begin{itemize} 
\item \index{almost universal class}\index{class!almost universal}{\em almost universal} if for every subset $x\subseteq X$ there exists an element $u\in X$ such that $x\subseteq u$;
\item \index{G\"odel class}\index{class!G\"odel}{\em G\"odel} if $X$ is transitive, almost universal and G\"odel-closed.
\end{itemize}
\end{definition}

\begin{proposition} The class of constructible sets $\mathbf L$ is G\"odel. 
\end{proposition}

\begin{proof} By Theorem~\ref{t:L-tpc}, the class $\LL$ is transitive and G\"odel-closed. To see that $\LL$ is almost universal, take any set $x\subseteq \mathbf L=\bigcup_{\alpha\in\On}L_\alpha$. Applying the Axiom of Replacement, find an ordinal $\alpha$ such that $x\subseteq L_\alpha$. Since $L_\alpha\in L_{\alpha+1}\subseteq\LL$, the class $\LL$ is almost universal. 
\end{proof}

\begin{proposition} Every G\"odel class $X$ is proper.
\end{proposition}

\begin{proof} Assuming that $X$ is a set, we can find an element $u\in X$ such that $X\subseteq u$. The transitivity of the class $X$ ensures that $u\subseteq X$. We claim that the set $$a=\{\langle x,y\rangle\in u\times u:x\ne y\}$$ is an element of  the class $X$. Indeed, for any $x\in u$ and $z\in x$ we have $z\in \bigcup u=\dot\Go_7(u)\in X$. Then $a=\dom[p_1\cup p_2]$ where $p_1=\{\langle\langle x,y\rangle, z\rangle\in (u\times u)\times \bigcup u:z\in x\wedge z\notin y\}$ and
$p_2=\{\langle\langle x,y\rangle, z\rangle\in (u\times u)\times \bigcup u:z\notin x\wedge z\in y\}$. It follows that $p_1=p_{11}\cap p_{12}$ and $p_2=p_{21}\cap p_{22}$, where
$$\begin{aligned}
p_{11}&=\textstyle\{\langle\langle x,y\rangle, z\rangle\in (u\times u)\times \bigcup u:z\in x\}=[(\mathbf E\cap(\bigcup u\times u))\times u]^\circlearrowleft\in X;\\ 
p_{12}&=\textstyle\{\langle\langle x,y\rangle, z\rangle\in (u\times u)\times \bigcup u:z\notin y\}=
[((u\times \bigcup u)\setminus [\mathbf E\cap (\bigcup u\times u)]^{-1})\times u]^\circlearrowright\in X;\\
p_{21}&=\textstyle\{\langle\langle x,y\rangle, z\rangle\in (u\times u)\times \bigcup u:z\notin x\}=
[((\bigcup u\times u)\setminus (\mathbf E\cap (\bigcup u\times u))\times u]^\circlearrowleft\in X;\\
p_{22}&=\textstyle\{\langle\langle x,y\rangle, z\rangle\in (u\times u)\times \bigcup u:z\in y\}=
[[\mathbf E\cap(\bigcup u\times u)]^{-1}\times u]^\circlearrowright\in X.
\end{aligned}
$$
Since the class $X$ is G\"odel-closed, the set $a=\dom[(p_{11}\cap p_{12})\cup(p_{21}\cap p_{22})]$ is an element of $X$. Then the set $b=\{\langle x,y\rangle\in u\times u:x=y\}=(u\times u)\setminus a=\dG_1(\dot\Go_2(u,u),a)$ also is an element of $X$, and so is the set $$c=\{x\in u:x\notin x\}=\dom[\{\langle x,y\rangle\in u\times u:x=y\wedge x\notin y\}]=\dom[b\setminus (\mathbf E\cap (u\times u))]\in X.$$ Then $c\in X\subseteq u$ and $c\in u$. Now we obtain a contradiction of Russell's type: if $c\in c$, then $c\notin c$, and if $c\notin c$, then $c\in c$. This contradiction shows that the class $X$ is proper.
\end{proof}  

\begin{proposition}\label{p:Go-Ord} If $\mathbf X$ is a G\"odel class \textup{(}with $\mathbf X\subseteq\mathbf V$\textup{)}, then $\w\subset \mathbf X$ \textup{(}and $\On\subseteq\mathbf X$\textup{)}.
\end{proposition}

\begin{proof} The inclusion $\w\subseteq\mathbf X$ will be proved by induction.  Since $\emptyset\subset \mathbf X$, by the almost universality of $\mathbf X$, there exists a set $y\in \mathbf X$ such that $\emptyset \subseteq y$. Since $\mathbf X$ is G\"odel-closed, $0=\emptyset=y\setminus y=\Go_1(y,y)\in X$. Assume that for some number $n\in\w$ we have proved that $n\in\mathbf X$. Then $n+1=n\cup\{n\}=\bigcup\{n,\{n\}\}=\dot\Go_7(\dG_8(n,\dot\Go_8(n)))\in X$. By the Principle of Mathematical Induction, $\w\subseteq \mathbf X$. Since $\mathbf X$ is a proper class, $\w\ne\mathbf X$ and hence $\w\subset \mathbf X$.

Now assume that $\mathbf X\subseteq \mathbf V$. By transfinite induction, we shall prove that every ordinal $\alpha$ belongs to $\mathbf X$. Assume that for some ordinal $\alpha$ we have proved that $\beta\in \mathbf X$ for all $\beta<\alpha$, and hence $\alpha\subseteq\mathbf X$.
If $\alpha=\beta+1$ for some ordinal $\beta$, then $\beta\in\mathbf X$ and $\alpha=\beta\cup\{\beta\}=\dot\Go_7(\dG_8(\beta,\dot\Go_8(\beta)))\in \mathbf X$, because the class $\mathbf X$ is G\"odel-closed. If $\alpha=0$, then $0\in\w\subset\mathbf X$. It remains to consider the case of limit ordinal $\alpha$. By our assumption, $\alpha\subseteq \mathbf X$ and by the almost universality of $\mathbf X$, there exists a set $u\in\mathbf X$ such that $\alpha\subseteq u$. By Proposition~\ref{p:vOn}, the set $\gamma=u\cap\On$ is an element of $\Go^{\circ20}(\{u\})\subseteq \mathbf X$ and so is the set $\bigcup\gamma=\dot\Go_7(\gamma)\in\mathbf X$. By Theorem~\ref{t:Ord}(5), the set $\bigcup\gamma$ is an ordinal. It follows from $\alpha\subseteq u\cap\On=\gamma$ that $\alpha=\bigcup\alpha\subseteq\bigcup\gamma$. If $\alpha=\bigcup\gamma$, then $\alpha=\bigcup\gamma\in\mathbf X$. If $\alpha\ne\bigcup\gamma$, then $\alpha\in \bigcup\gamma\subseteq \mathbf X$ by Theorem~\ref{t:ord}(4) and the transitivity of the G\"odel class $\mathbf X$.
\end{proof}

\section{G\"odel operations over classes}

Let us observe that the G\"odel's operations $\dG_i$ for $i\le 7$ are well-defined on classes. Namely, by the axioms of $\CST$, for any classes $X,Y$ the following classes are well-defined:
 \begin{itemize}
\item[\textup{(0)}] $\dG_0(X,Y)\defeq X$
\item[\textup{(1)}] $\dG_1(X,Y)\defeq X\setminus Y$
\item[\textup{(2)}] $\dG_2(X,Y)\defeq X\times Y$
\item[\textup{(3)}] $\dG_3(X,Y)\defeq X^{-1}$
\item[\textup{(4)}] $\dG_4(X,Y)\defeq X^\circlearrowright$
\item[\textup{(5)}] $\dG_5(X,Y)\defeq X\cap\mathbf E$
\item[\textup{(6)}] $\dG_6(X,Y)\defeq \dom[X]$
\item[\textup{(7)}] $\dG_7(X,Y)\defeq\bigcup X$.
\end{itemize}

In fact, the operations $\dG_0$ and $\dG_3$--$\dG_7$ do not depend on the second variable. We have written them as binary operations in sake of uniform treatment of G\"odels operations over classes and enumerating their compositions by $8$-labeled full binary trees of finite height. However, for simplifying the expressions involving the G\"odel's operations, it will be convenient to consider for every $i\in 8$ the unary G\"odel's operation $\dot\Go_i$ assigning to every class $X$ the class
$$\dot\Go_i(X)\defeq\dG_i(X,X).$$

\begin{proposition}\label{p:G(X,Y)M} Let $\MM$ be a $\dG_8$-closed transitive class. For any subclasses $X,Y\subseteq\MM$ and every $i\in 8$, the class $\dG_i(X,Y)$ is a subclass of $\mathbf M$.
\end{proposition}

\begin{proof} Depending on the value of the number $i$ we consider eight cases.
{\parskip2pt

\noindent 0. If $i=0$, then $\dG_i(X,Y)=X\subseteq \mathbf M$.

\noindent 1. If $i=1$, then $\dG_i(X,Y)=X\setminus Y\subseteq X\subseteq \mathbf M$.

\noindent 2. If $i=2$, then $\dG_i(X,Y)=X\times Y=\{\langle x,y\rangle:x\in X\;\wedge\; y\in Y\}\subseteq\{\{\{x\},\{x,y\}\}:x,y\in\mathbf M\}=\{\dG_8(\dG_8(x,x),\dG_8(x,y)):x,y\in\mathbf M\}\subseteq\mathbf M$ because the class $\mathbf M$ is $\dG_8$-closed.

\noindent 3. If $i=3$, then $\dG_i(X,Y)=X^{-1}=\{\langle y,x\rangle:\langle x,y\rangle\in X\}\subseteq\{\langle y,x\rangle:x,y\in \bigcup\bigcup X\}\subseteq\mathbf M$ because the class $\mathbf M$ is transitive and $\dG_8$-closed.

\noindent 4. If $i=4$, then $\dG_i(X,Y)=X^\circlearrowright=\{\langle z,x,y\rangle:\langle x,y,z\rangle\in X\}\subseteq\{\langle z,x,y\rangle:x,y,z\in \mathbf M\}\subseteq\mathbf M$ because the class $\mathbf M$ is transitive and $\dG_8$-closed.

\noindent 5. If $i=5$, then $\dG_i(X,Y)=X\cap\E\subseteq X\subseteq \mathbf M$.

\noindent 6. If $i=6$, then $\dG_i(X,Y)=\dom[X]=\{x:\exists y\;\langle x,y\rangle\in X\}\subseteq \bigcup\bigcup X\subseteq\bigcup\bigcup\mathbf M\subseteq\mathbf M$, by the transitivity of the class $\mathbf M$.

\noindent 7. If $i=7$, then $\dG_i(X,Y)=\bigcup X\subseteq\mathbf M$, by the transitivity of the class $\mathbf M$.
}
\end{proof}

\begin{lemma}\label{l:M-reduction} Let $\mathbf M$ be a G\"odel class. For any  subclasses $X,Y\subseteq \mathbf M$ and set $u\in\mathbf M$, there exists a set $z\in\mathbf M$  such that for every number $i\in 6$ and every class $Z$ with $z\subseteq Z$ we have  the equality
$$u\cap \dG_i(X,Y)=u\cap\dG_i(Z\cap X,Z\cap Y).$$
\end{lemma}

\begin{proof} Fix  classes $X,Y\subseteq\mathbf M$ and a set $u\in\mathbf M$. 
Since the class $\mathbf M$ is G\"odel-closed and transitive, $$\textstyle\dom[u]\cup\rng[u]\cup u^{-1}\cup u^\circlearrowright\cup\bigcup u\subseteq \mathbf M.$$
By the almost universality of $\mathbf M$, there exists a set $z\in\mathbf M$ such that $$\textstyle u\cup \dom[u]\cup\rng[u]\cup u^{-1}\cup u^\circlearrowright\cup\bigcup u\subseteq z.$$
We claim that the set $z$ has the required property. Given a class $Z$ with $z\subseteq Z$, we should check that $u\cap \dG_i(X,Y)=u\cap\dG_i(Z\cap X,Z\cap Y)$ for every number $i\in 6$. Depending on the value of $i$, we consider six cases.
\smallskip

0. If $i=0$, then $u\cap \dG_i(X,Y)=u\cap X=u\cap Z\cap X=u\cap \dG_i(Z\cap X,Z\cap Y)$.
\smallskip

1. If $i=1$, then  $u\cap \dG_1(X,Y)=u\cap(X\setminus Y)=u\cap Z\cap (X\setminus Y)=u\cap ((Z\cap X)\setminus(Z\cap Y))=u\cap \dG_1(Z\cap X,Z\cap Y)$.
\smallskip

2. If $i=2$, then
$$
\begin{aligned}
u\cap\dG_i(X,Y)&=u\cap(X\times Y)=u\cap(\UU\times\UU)\cap (X\times Y)=u\cap(\dom[u]\times \rng[u])\cap(X\cap Y)\\
&=u\cap(Z\times Z)\cap (X\times Y)=u\cap\dG_i(Z\cap X,Z\cap Y).
\end{aligned}
$$ 
\smallskip

3. If $i=3$, then  
$$u\cap \dG_i(X,Y)=u\cap X^{-1}=u\cap(X\cap u^{-1})^{-1}=u\cap (X\cap Z)^{-1}=u\cap \dG_i(Z\cap X,Z\cap Y).$$

4. If $i=4$, then 
$$u\cap \dG_i(X,Y)=u\cap X^\circlearrowright=u\cap (u^{\circlearrowleft}\cap X)^{\circlearrowright}=u\cap(Z\cap X)^\circlearrowright=u\cap \dG_i(Z\cap X,Z\cap X).$$

5. If $i=5$, then $u\cap \dG_i(X,Y)=u\cap(X\cap \mathbf E)=(u\cap Z)\cap (X\cap\mathbf E)=u\cap\dG_i(Z\cap X,Z\cap Y)$.
\end{proof} 

For $\mathbf M$-collective G\"odel classes, Lemma~\ref{l:M-reduction} extends to all G\"odel's operation $\dG_0$--$\dG_7$. 

\begin{definition} A class $X$ is defined to be
\begin{itemize}
\item  \index{cumulative class}\index{class!cumulative}{\em cumulative} if there exists a function $F:X\to \On$ such that for any ordinal $\alpha$ the preimage $F^{-1}[\alpha]$ is a set; 
\item  \index{collective class}\index{class!collective}{\em collective} if for any set $u$ and any subclass $Y\subseteq u\times X$ there exists a set $v$  such that $\dom[Y]=\dom[Y\cap v]$;
\item \index{class!$\MM$-collective}{\em $X$-collective} if for any set $u\in X$ and any subclass $Y\subseteq u\times X$ there exists a set $v$  such that $\dom[Y]=\dom[Y\cap v]$.
\end{itemize}
\end{definition}

It is clear that every collective class $X$ is $X$-collective.

\begin{proposition}\label{p:V-cumucol} Every subclass of $\VV$  is cumulative, and every cumulative class is collective.
\end{proposition}

\begin{proof} The cumulativity of a subclass $X\subseteq\mathbf V=\bigcup_{\alpha\in\On}V_\alpha$ is witnessed by the function $\Lambda:X\to\On$ assigning to every $x\in X$ the smallest ordinal $\alpha$ such that $x\in V_\alpha$. 

Next, fix any cumulative class $Y$ and let $F:Y\to\On$ be a function such that for every ordinal $\alpha$ the preimage $F^{-1}[\alpha]$ is a set. To see that $Y$ is collective, fix any set $u$ and any subclass $Z\subseteq u\times Y$. Consider the function $\Lambda:\dom[Z]\to\On$ assigning to every $x\in \dom[X]$ the smallest ordinal $\alpha$ such that $\{y\in Y:\langle x,y\rangle\in Z\}\cap F^{-1}[\alpha]\ne\emptyset$. Since $\dom[Z]\subseteq u$ is a set, we can apply the  Axiom of Replacement and conclude that  the class $\Lambda[\dom[Z]]\subseteq \On$ is a set and hence $\Lambda[\dom[Z]]\subseteq\alpha$ for some ordinal $\alpha$. Then the set $v\defeq u\times F^{-1}[\alpha]$ has the required property $\dom[Z]=\dom[Z\cap v]$, witnessing that $Y$ is a collective class.
\end{proof} 

\begin{lemma}\label{l:dom-reduction} Let  $\mathbf M$ be an $\mathbf M$-collective G\"odel class. For any  subclass $X\subseteq \mathbf M$ and set $u\in\mathbf M$, there exists a set $z\in\mathbf M$  such that for any class $Z$ with $z\subseteq Z$ we have $u\cap \dom[X]=u\cap \dom[Z\cap X]$.
\end{lemma}

\begin{proof} Fix a subclass $X\subseteq\mathbf M$ and a set $u\in\mathbf M$. 
Since $\mathbf M$ is $\mathbf M$-collective, for the class $X\cap (u\times\mathbf M)$ there exists a set $v$ such that $\dom[X\cap(u\cap\mathbf M)]=\dom[v\cap X\cap(u\cap \mathbf M)]$. Since the class $\mathbf M$ is almost universal, there exists a set $w\in\mathbf M$ such that $\mathbf M\cap\rng[v]\subseteq w$. We claim that $u\cap\dom[X]=\dom[(u\times w)\cap X]$. The inclusion $\dom[(u\times w)\cap X]\subseteq u\cap\dom[X]$ is obvious. To see that $u\cap\dom[X]\subseteq\dom[(u\times w)\cap X]$, take any element $x\in u\cap\dom[X]$ and find a set $y$ such that $\langle x,y\rangle\in X\subseteq\mathbf M$. By transitivity of $\mathbf M$, the set $y\in\{x,y\}\in\langle x,y\rangle\in X\subseteq \mathbf M$ is an element of $\mathbf M$ and hence $\langle x,y\rangle\in X\cap(u\times\mathbf M)$ and $x\in \dom[X\cap(u\times\mathbf M)]=\dom[v\cap X\cap(u\times\mathbf M)]$. Then there exists a set $y'$ such that $\langle x,y'\rangle\in v\cap X\cap(u\times\mathbf M)$  and hence $y'\in \rng[v]\cap\mathbf M\subseteq w$. It follows from $\langle x,y'\rangle\in X\cap(u\times w)$ that $x\in\dom[X\cap(u\times w)]$, which completes the proof of the equality $u\cap\dom[X]=\dom((u\times w)\cap X]$. 

Since the class $\mathbf M$ is G\"odel-closed, the set $z=u\times w=\dG_2(u,w)$ is an element of $\mathbf M$. 
The set $z$ has the required property, because for every class $Z$ with $z\subseteq Z$, we have 
$$\dom[(u\times w)\cap X]=u\cap \dom[z\cap X]\subseteq u\cap\dom[Z\cap X]\subseteq u\cap\dom[X]=\dom[(u\times w)\cap X)$$and hence $u\cap\dom[X]=u\cap\dom[Z\cap X]$.
\end{proof}

\begin{lemma}\label{l:union-reduction} Let $\mathbf M$ be an $\mathbf M$-collective G\"odel class. For any  subclass $X\subseteq \mathbf M$ and set $u\in\mathbf M$, there exists a set $z\in\mathbf M$  such that for any class $Z$ with $z\subseteq Z$ we have $u\cap \bigcup X=u\cap \bigcup(Z\cap X)$.
\end{lemma}

\begin{proof} Fix an element $u\in\mathbf X$ and a subclass $X\subseteq\mathbf M$. The transitivity of the G\"odel class $\mathbf M$ implies $\bigcup X\subseteq \bigcup\mathbf M\subseteq\mathbf M$. Then 
$$\textstyle\bigcup X=\{x:\exists y\in X\;(x\in y)\}=\dom[\{\langle x,y\rangle\in \mathbf M\times X:x\in y\}]=\dom[(\mathbf M\times X)\cap \mathbf E].$$ Since the class $\mathbf M$ is $\dG_8$-closed the class $Y=(\mathbf M\times X)\cap\E$ is a subclass of $\mathbf M$. By Lemma~\ref{l:dom-reduction}, there exists a set $v\in\mathbf M$ such that $u\cap\dom[Y]=u\cap \dom[v\cap Y]$. Since the class $\mathbf M$ is G\"odel-closed, the set $z\defeq \rng[v]=\dom[v^{-1}]$ is an element of $\mathbf M$. Then $v\cap(\mathbf M\times X)=v\cap(\mathbf M\times(z\cap X))$. We claim that for every class $Z$ with $z\subseteq Z$ we have $u\cap \bigcup X=u\cap\bigcup(Z\cap X)$. The inclusion $u\cap\bigcup(Z\cap X)\subseteq u\cap\bigcup X$ is obvious. To see that $u\cap\bigcup X\subseteq u\cap\bigcup(Z\cap X)$, take any element $x\in u\cap\bigcup X$ and observe that $x\in u\cap\bigcup X=u\cap\dom[Y]=u\cap \dom[v\cap Y]$ and hence there exists a set $y$ such that $\langle x,y\rangle\in v\cap Y=v\cap(\mathbf M\times X)\cap\E=v\cap (\mathbf M\times (z\cap X))\cap \E$, which means that $x\in y\in z\cap X\subseteq Z\cap X$ and hence $x\in u\cap\bigcup(Z\cap X)$.
\end{proof}

Lemmas~\ref{l:M-reduction}, \ref{l:dom-reduction}, \ref{l:union-reduction} imply the following theorem which is a main result of this section.

\begin{theorem}\label{t:uG(X,Y)} If a G\"odel class $\mathbf M$ is $\mathbf M$-collective, then for every subclasses $X,Y\subseteq \mathbf M$ and set $u\in\mathbf M$ there exists a set $z\in\mathbf M$ such that $$\forall i\in 8\;\forall Z\;\big((z\subseteq Z)\to (u\cap\dG_i(X,Y)=u\cap \dG_i(Z\cap X,Z\cap Y))\big).$$
\end{theorem}

\section{Constructible classes}

In this section, given a G\"odel class $\mathbf M$, we introduce and study $\MM$-constructible classes. Those are classes that can be constructed by repeated application of G\"odel's operations $\dG_0$--$\dG_7$ to the class $\mathbf M$ and  elements of the class $\mathbf M$. 

Observe that all possible compositions of G\"odel's operations $\dG_0$--$\dG_7$ can be effectively enumerated by the set $\bigcup_{n\in\w}8^{2^{<n}}$ of 8-labeled full binary trees of finite height. This suggests the following definition of the $\MM$-constructibility.

\begin{definition}\label{d:Mdef} Let $\MM$ be a class. A class $X$ is defined to be  \index{$\MM$-constructible class}\index{class!$\MM$-constructible}{\em $\MM$-constructible} if there exist $n\in\w$, $\lambda\in 8^{2^{<n}}$, and an indexed family of classes $(X_a)_{a\in 2^{\le n}}$ satisfying the following conditions:
\begin{enumerate}
\item $X=X_\emptyset$;
\item $\forall a\in 2^n\;\; (X_a=\MM\;\vee\;X_a\in\MM)$;
\item $\forall a\in 2^{<n}\;\;\big(X_a=\dG_{\lambda(a)}(X_{a\hat{\;}0},X_{a\hat{\;}1})\big)$.
\end{enumerate}
\end{definition} 

In this definition $2^{\le n}\defeq\bigcup_{k\le n}2^k$ and for every function $a\in 2^{<\w}$ and number $y\in 2$ the function $a\hat{\;}y$ is defined as $a\cup\{\langle\dom[a],y\rangle\}$. If $a$ is a function with domain  $\dom[a]=n\in\w$, then $a\hat{\;}y$ is the unique function such that $\dom[a\hat{\;}y]=n+1$, $(a\hat{\;}y){\restriction}_n=a$ and $(a\hat{\;}y)(n)=y$.

\begin{exercise} Show that the class $\MM$ is $\MM$-constructible.
\end{exercise}

\begin{exercise} Show that every set $x\in\MM$ is an $\MM$-constructible class.
\end{exercise}

\begin{exercise}\label{ex:M-operations} Prove that for any $\MM$-constructible classes $X,Y$, the classes $X\setminus Y$, $X\cap Y$, $X\times Y$, $X^{-1}$, $X^\circlearrowright$, $\dom[X]$, $\bigcup X$ are $\MM$-constructible.
\end{exercise}

\begin{proposition}\label{p:MsubM} Let $\MM$ be a transitive $\dG_8$-closed class. Every $\mathbf M$-construcible class $X$ is a subclass of the class $\mathbf M$.
\end{proposition}

\begin{proof} Let $X$ be an $\mathbf M$-constructible class. Then there exist $n\in\w$, $\lambda\in 8^{2^{<n}}$ and an indexed family of classes $(X_a)_{a\in 2^{\le n}}$ satisfying the three conditions of Definition~\ref{d:Mdef}. Consider the set $A=\{a\in 2^{\le n}:X_a\not\subseteq\mathbf M\}$, which exists by Theorem~\ref{t:class}. Assuming that the set $A$ is not empty, find the largest number $k\le n$ such that $A\cap 2^k\ne\emptyset$ and choose an element $b\in A\cap 2^k$.  The transitivity of the class $\mathbf M$ and Definition~\ref{d:Mdef}(2) imply that $k<n$. The maximality of $k$ ensures that for every $i\in 2$ the function $b\hat{\;}i$ does not belong to the set $A$ and hence  $X_{b\hat{\;}0}\cup X_{b\hat{\;}1}\subseteq\mathbf M$. By Proposition~\ref{p:G(X,Y)M}, $X_b=\dG_{\lambda(b)}(X_{b\hat{\;}0},X_{b\hat{\;}1})\subseteq\mathbf M$, which contradicts the choice of $b\in A$. This contradiction shows that the set $A$ is empty and hence $X_a\subseteq\mathbf M$ for all $a\in 2^{\le n}$. In particular, $X=X_\emptyset\subseteq \mathbf M$.
\end{proof}

\begin{exercise}\label{ex:Mdefcup} Let $\MM$ be a transitive $\dG_8$-closed class. Prove that for any $\MM$-constructible classes $X,Y$ the class $X\cup Y$ is $\MM$-constructible.
\smallskip

\noindent{\em Hint:} By Proposition~\ref{p:MsubM}, $X\cup Y\subseteq \MM$, which implies $X\cup Y=\MM\setminus((\MM\setminus X)\cap(\MM\setminus Y))$.
\end{exercise}

\smallskip




\begin{proposition}\label{p:uXM} Let $\MM$ be an $\MM$-cumulative G\"odel class. 
For every $\mathbf M$-constructible class $X$ and every set $u\in\mathbf M$, the set $u\cap X$ is an element of the class $\mathbf M$.
\end{proposition}

\begin{proof} Let $u\in\mathbf M$ and $X$ be an $\mathbf M$-constructible class. Then there exist $n\in\w$, $\lambda\in 8^{2^{<n}}$ and an indexed family of classes $(X_a)_{a\in 2^n}$ satisfying the three conditions of Definition~\ref{d:Mdef}.

Let $\mathcal F$ be the class of all functions $f$ having the following properties:
\begin{enumerate}
\item $\dom[f]\subseteq 2^{\le n}$;
\item $\forall t\in \dom[f]\;\forall k\le n\;(t{\restriction}_k\in \dom[f])$;
\item $\forall t\in\dom[f]\;\big((\exists k\in 2\; (t\hat{\;}k\in\dom[f]))\to 
(\forall k\in 2\; (t\hat{\;}k\in\dom[f]))\big)$;
\item $\rng[f]\subseteq \mathbf M$ and $f(\emptyset)=u$;
\item $\forall a{\in} \dom[f]\;\big(\big(\{a\hat{\;}0,a\hat{\;}1\}{\subseteq}\dom[f]\big)\to \big(f(a){\cap} X_a=\dG_{\lambda(a)}(f(a\hat{\;}0){\cap} X_{a\hat{\;}0},f(a\hat{\;}1){\cap} X_{a\hat{\;}1})\big)\big)$.
\end{enumerate}
The existence of the class $\mathcal F$ follows from Theorem~\ref{t:class}.
The class $\mathcal F$ contains the empty function and hence is nonempty. Since the set $\{\dom[f]:f\in\mathcal F\}\subseteq\mathcal P(2^{\le n})$ is finite, there exists a function $f\in\mathcal F$ such that $\forall g\in\mathcal F\;(\dom[f]\subseteq \dom[g]\to \dom[f]=\dom[g])$. We claim that $\dom[f]=2^{\le n}$. To derive a contradiction, assume that $\dom[f]\ne 2^{\le n}$. The conditions (1)--(3) imply that there exists $t\in\dom[f]\cap 2^{<n}$ such that $t\hat{\;}0,t\hat{\;}1\notin \dom[f]$. Definition~\ref{d:Mdef}(3) ensures that $X_t=\dG_{\lambda(t)}(X_{t\hat{\;}0},X_{t\hat{\;}1})$. By Theorem~\ref{t:uG(X,Y)}, for the set $f(t)\in\mathbf M$ there exists a set $z\in\mathbf M$ such that $f(t)\cap \dG(X_{t\hat{\;}0},X_{t\hat{\;}1})=\dG_{\lambda(t)}(z\cap X_{t\hat{\;}0},z\cap X_{t\hat{\;}1})$. Consider the function $g=f\cup\{\langle t\hat{\;}0,z\rangle,\langle t\hat{\;}1,z\rangle\}$ and observe that $g\in \mathcal F$ and $\dom[f]$ is a proper subset of $\dom[g]$, which contradicts the choice of $f$. This contradiction shows that $\dom[f]=2^{\le n}$. 

Consider the set $A=\{a\in 2^{\le n}:f(a)\cap X_a\notin\mathbf M\}$. Assuming that $A$ is not empty, let $k\le n$ be the largest number such that $A\cap 2^k\ne\emptyset$. Take any $a\in A\cap 2^k$. Definition~\ref{d:Mdef}(2) implies that $k<n$. The maximality of $k$ guarantees that for every $k\in 2$ we have $a\hat{\;}k\notin A$ and hence $f(a\hat{\;}k)\cap X_{a\hat{\;}k}\in\mathbf M$. Then $f(a)\cap X_a=\dG_{\lambda(a)}(f(a\hat{\;}0)\cap X_{a\hat{\;}0},f(a\hat{\;}1)\cap X_{a\hat{\;}1})\in\mathbf M$ because the class $\mathbf M$ is  G\"odel-closed. But $f(a)\cap X_a\in\mathbf M$ contradicts the choice of $a\in A$. This contradiction shows that $A=\emptyset$ and hence $f(a)\cap X_a\in\mathbf M$ for all $a\in 2^{\le n}$. In particular, $u\cap X=f(\emptyset)\cap X_\emptyset\in\mathbf M$.
\end{proof}

\begin{corollary}\label{c:M-set}  Let $\MM$ be an $\MM$-collective G\"odel class. A set $x$ is $\MM$-constructible if and only if $x\in \MM$.
\end{corollary}

\begin{proof} If $x\in \MM$, then $x$ is $\MM$-constructible because $x=\dG_\emptyset((X_a)_{a\in 2^0})$ for the indexed family $(X_a)_{a\in 2^0}=(x)_{a\in 2^0}=2^0\times x$. Now assume that $x$ an  $\MM$-constructible set. By the almost universality of $\MM$, there exists a set $u\in\MM$ such that $x\subseteq u$. By Proposition~\ref{p:uXM}, $x=u\cap x\in\MM$.
\end{proof}

\begin{Exercise}\label{ex:M-standard-number} Let $\MM$ be a G\"odel class. A number $n\in\w$ is called \index{number!$\MM$-standard}{\em $\MM$-standard} if for every indexed family $(X_i)_{i\in n}$ consisting of $\MM$-constructible classes, the union $\bigcup_{i\in n}X_i$ is $\MM$-constructible. Show that $0$ is an $\MM$-standard number and for every $\MM$-standard number $n\in\w$, the number $n+1$ is $\MM$-standard. Is every number $\MM$-standard?
\smallskip

\noindent{\em Hint:} This question is independent of the axioms of $\CST$. More precisely, the axioms of $\CST$ do not imply the existence of the set of $\MM$-standard numbers, because the definition of the $\MM$-standardness uses a formula with $\UU$-unbounded quantifiers, so Theorem~\ref{t:class} on the existence of classes cannot be applied.
\end{Exercise}



\section{Absoluteness}

\rightline{\em The best material model of a cat is another,}

\rightline{\em  or preferably the same, cat.}
\smallskip

\rightline{Norbert Wiener}
\medskip

Let $\mathbf M$ be an $\mathbf M$-collective G\"odel class. For a formula $\varphi$ of the Classical Set Theory, its \index{$\MM$-relativization}\index{formula!$\MM$-relativization of}{\em $\mathbf M$-relativization} $\varphi^{\mathbf M}$ is the modification of the formula $\varphi$ in which all the quantifiers $\forall$ and $\exists$ are replaced with the quantifiers $\forall^{\mathbf M}$ and $\exists^{\mathbf M}$ that have meaning ``for all $\mathbf M$-constructible classes'' and ``there exists an $\mathbf M$-constructible class'', respectively. Corollary~\ref{c:M-set} implies that  the $\MM$-relativization of the $\UU$-bounded quantifiers $\forall x\in\UU$ and $\exists x\in\UU$ are the $\MM$-bounded quantifiers $\forall x\in\MM$ and $\exists x\in\MM$, respectively. 

A formula $\varphi$ with free variables $X_1,\dots,X_n$ is called \index{$\MM$-absolute formula}\index{formula!$\MM$-absolute}{\em $\mathbf M$-absolute} if for any $\mathbf M$-contructible classes $X_1,\dots,X_n$ the formula $\varphi(X_1,\dots,X_n)$ holds if and only if $\varphi^{\mathbf M}(X_1,\dots,X_n)$ holds. 

This definition implies that formulas without quantifiers are $\mathbf M$-absolute. In particular, the atomic formula $X\in Y$ is $\mathbf M$-absolute. Next, we show that  $\Delta_0$-formulas are $\mathbf M$-absolute.

\begin{theorem}\label{t:Delta-absolute} Every $\Delta_0$-formula is $\MM$-absolute. 
\end{theorem}

\begin{proof} Since every formula is equivalent to a formula containing only existential quantifiers and the Sheffer stoke as the only logical connectives, it suffices to prove $\MM$-absoluteness for $\Delta_0$-formulas of this special form. The proof is by induction on the complexity of a $\Delta_0$-formula.
 Assume that for some natural number $m$ we have proved that every $\Delta_0$-formula of length $<m$ is $\MM$-absolute. 

Let $\varphi$ be a $\Delta_0$-formula of length $m$. If $\varphi$ is atomic, then it has no quantifiers and hence is $\MM$-absolute. 

Next, assume that $\varphi$ is of the form $(\phi)|(\psi)$ for some formulas $\phi$ and $\psi$, which are $\Delta_0$-formulas and hence are $\MM$-absolute by the inductive hypothesis. Assume that the free variables of the formula $\varphi$ belong to the list $X_1,\dots,X_n$. Fix any $\MM$-constructible classes $X_1,\dots,X_n$. Since the formulas $\phi,\psi$ are $\MM$-absolute, $\phi^\MM(X_1,\dots,X_n)\leftrightarrow \phi(X_1,\dots,X_n)$ and $\psi^\MM(X_1,\dots,X_n)\leftrightarrow \psi(X_1,\dots,X_n)$. These two equivalences imply that $\varphi^\MM(X_1,\dots,X_n)\leftrightarrow\varphi(X_1,\dots,X_n)$.

Finally, assume that $\varphi$ has form $\exists X\in Y \;(\psi)$ for some formula $\psi$ whose free variables belong to the set $X,Y,X_1,\dots,X_n$. Then the free variables of the formula $\varphi$ belong to the list $Y,X_1,\dots,X_n$. Fix any $\MM$-constructible classes $Y,X_1,\dots,X_n$.  By Proposition~\ref{p:MsubM}, $Y\cup X_1\cup\dots\cup X_n\subseteq \MM$. If the formula $\varphi(Y,X_1,\dots,X_n)$ holds, then there exists $X_0\in Y$ such that $\psi(X_0,Y,X_1,\dots,X_n)$ holds. Since $X_0\in Y\subseteq\MM$, the class $X_0$ is $\MM$-constructible and by the inductive hypothesis, the formula $\psi^\MM(X_0,Y,X_1,\dots,X_n)$ holds being equivalent to $\psi(X_0,Y,X_1,\dots,X_n)$. Since the class $X_0$ is $\MM$-constructible, the formula $\exists^\MM X\in Y$ $\psi^\MM(X,Y,X_1,\dots,X_n)$ holds and so does the formula $\varphi^\MM(Y,X_1,\dots,X_n)$.

 Next, assume that the formula $\varphi^\MM(Y,X_1,\dots,X_n)$ holds. Then there exists an $\MM$-construc\-tible class $X_0\in Y$ such that the formula $\psi^\MM(X_0,Y,X_1,\dots,X_n)$ holds and so does the formula $\psi(X_0,Y,X_1,\dots,X_n)$, by the inductive hypothesis. Since $X_0\in Y$, the formula $\exists X\in Y$ $\psi(X,Y,X_1,\dots,X_n)$ holds and so does the formula $\varphi(Y,X_1,\dots,X_n)$.
 \end{proof} 
 
Theorem~\ref{t:Delta-absolute} and Exercises~\ref{ex:X=Y-Delta}--\ref{ex:X=0} imply the following corollary providing many examples of $\MM$-absolute  formulas.

\begin{corollary} The $\Delta_0$-formulas $X\subseteq Y$, $X=Y$, $z=\{x,y\}$, $z=\langle x,y\rangle$, $Z=X\setminus Y$, $Z=X\cap Y$, $Z=X\cup Y$, $Z=X\times Y$, $Y=X^{-1}$, $Y=X^\circlearrowright$, $Y=\dom[X]$, $Y=\bigcup X$, $X=\emptyset$, are $\MM$-absolute.
\end{corollary}

\begin{exercise} Show that the $\Sigma_1$-formula $\exists Y (X\in Y)$ is $\mathbf M$-absolute.
\end{exercise}

\begin{proof} Apply Corollary~\ref{c:M-set}.
\end{proof}

Now we will show that the $\MM$-relativizations of all axioms of $\CST$  are true (the relativization of the Axiom of Infinity holds under the additional assumption that $\MM$ contains an inductive set).

\begin{exercise}\label{ex:AE} Show that the $\MM$-relativization 
$$\forall^\MM X\;\forall^\MM Y\;(X=Y\to \forall^\MM Z (X\in Z\leftrightarrow Y\in Z))$$
of the Axiom of the Equality is true.
\smallskip

\noindent{\em Hint:} Apply the Axiom of Equality and the $\MM$-absoluteness of the $\Delta_0$-formula $X=Y$.
\end{exercise}

\begin{exercise} Check that the $\MM$-relativization 
$$\forall x\in\MM\;\forall x\in\MM\;\exists z\in\MM \;(z=\{x,y\})$$
of the Axiom of Pair is true.
\smallskip

\noindent{\em Hint:} Apply the Axiom of Pair, the $\dG_8$-closedness of the class $\MM$ and the $\MM$-absoluteness of the formula $z=\{x,y\}$.
\end{exercise}

\begin{exercise}  Check that the $\MM$-relativization  
$$\exists^\MM E\; \forall z\in\MM\;(z\in E\leftrightarrow \exists x\in\MM\; \exists y\in\MM\; (z=\langle x,y\rangle\wedge x\in y))$$of the Axiom of Membership is true.
\smallskip

\noindent{\em Hint:} Observe that the class $E=(\MM\times\MM)\cap \E=\dot\Go_5(\dG_2(\MM,\MM))$ is $\MM$-constructible and has the required property by the transitivity of the class $\MM$.
\end{exercise}

\begin{exercise} Show that the $\MM$-relativization  
$$\forall^\MM X\;\exists^\MM D\;(D=\dom[X])$$of the Axiom of Domain is true.
\smallskip

\noindent{\em Hint:} Given any $\MM$-constructible class $X$, consider the class $D=\dom[X]=\dot\Go_7(X)$ and observe that it is $\MM$-constructible and has the required property by the $\MM$-absoluteness of the $\Delta_0$-formula $D=\dom[X]$.
\end{exercise}

\begin{exercise} Show that the $\MM$-relativization  
$$\forall^\MM X\;\forall^\MM Y\;\exists^\MM Z\;(Z=X\setminus Y)$$
of the Axiom of Difference is true.
\smallskip

\noindent {\em Hint:} Given any $\MM$-constructible classes $X,Y$, consider the class $Z=X\setminus Y=\dG_1(X,Y)$ and observe that it is $\MM$-constructible and has the required property by the $\MM$-absoluteness of the $\Delta_0$-formula $Z=X\setminus Y$.
\end{exercise}

\begin{exercise} Show that the $\MM$-relativization 
$$\forall^\MM X\;\forall^\MM Y\;\exists^\MM Z\;(Z=X\times Y)$$
of the Axiom of Product is true.
\smallskip

\noindent {\em Hint:} Given an $\MM$-constructible classes $X,Y$, consider the class $Z=X\times Y=\dG_2(X,Y)$ and observe that it is $\MM$-constructible and has the required property by the $\MM$-absoluteness of the $\Delta_0$-formula $Z=X\times  Y$.
\end{exercise}

\begin{exercise} Show that the $\MM$-relativization 
$$\forall^\MM X\;\exists^\MM Y\;(Y=X^{-1})$$
of the Axiom of Inversion is true.
\smallskip

\noindent {\em Hint:}  Given an $\MM$-constructible class $X$, consider the class $Y=X^{-1}=\dG_3(X)$ and observe that it is $\MM$-constructible and has the required property by the $\MM$-absoluteness of the $\Delta_0$-formula $Y=X^{-1}$.
\end{exercise}

\begin{exercise} Show that the $\MM$-relativization 
$$\forall^\MM X\;\exists^\MM Y\;(Y=X^\circlearrowright)$$
of the Axiom of Cycle is true.
\smallskip

\noindent {\em Hint:}  Given an $\MM$-constructible class $X$, consider the class $Y=X^\circlearrowright=\dG_4(X)$ and observe that it is $\MM$-constructible and has the required property by the $\MM$-absoluteness of the $\Delta_0$-formula $Y=X^\circlearrowright$.
\end{exercise}

\begin{exercise} Show that the $\MM$-relativization of the Axiom of Replacement 
$$\forall F\,\forall x\big((\forall u\forall v\forall w(\langle u,v\rangle{\in} F\wedge\langle u,w\rangle{\in} F\to v=w))\to \exists y\forall z(z\in y\leftrightarrow \exists u\exists v(u\in x\wedge \langle u,v\rangle{\in} F)\big)$$
is true.
\smallskip

\noindent{\em Hint:} Take any $\MM$-constructible class $F$ such that 
$\forall u,v,w\in\MM\;(\langle u,v\rangle\in F\wedge\langle u,w\rangle\in F\to v=w)$, which means that $F\cap(\MM\times\MM)$ is a function.
By Proposition~\ref{p:MsubM}, $F\subseteq\MM$ and hence $F\cap(\UU\times\UU)=F\cap\MM\cap(\UU\times\UU)=F\cap(\MM\times\MM)$. By the Axiom of Replacement, for every set $x\in\MM$, the class $y=F[x]=\rng[F\cap (x\times\UU)]=\rng[F\cap(x\times\MM)]$ is a set. The $\MM$-constructibility of the classes $F$ and $x$ implies the $\MM$-constructibility of the set $y=\rng[F\cap(x\times\MM)]=\dom[[F\cap (x\times\MM)]^{-1}]$. It is easy to see that the $\MM$-constructible set  $y$ has the required property: $\forall z\in \MM\;\big(z\in y\leftrightarrow \exists u\in\MM \;\exists v\in\MM\;(u\in x\wedge \langle u,v\rangle\in F)\big)$.
\end{exercise}

\begin{exercise} Show that the $\MM$-relativization  
$$\textstyle\forall x\in\MM\; \exists y\in\MM\; (y=\bigcup x)$$of the Axiom of Union is true.
\smallskip

\noindent{\em Hint:} Given any set $x\in\MM$, consider the $\MM$-constructible class $y=\bigcup x=\dot\Go_7(x)$. The Axiom of Union ensures that $y$ is a set and hence $y\in\MM$, according to Corollary~\ref{c:M-set}. The $\MM$-absoluteness of the formula $y=\bigcup x$ completes the proof.
\end{exercise}

\begin{exercise}\label{ex:AP} Show that the $\MM$-relativization  
$$\forall x\in \MM\;\exists y\in\MM\; \forall z\in\MM\;\;(z\in y\leftrightarrow z\subseteq x)$$of the Axiom of Powerset is true.
\smallskip

\noindent{\em Hint:} Given any set $x\in\MM$, it suffices to check that the set $y=\mathcal P^\MM(x)=\{z\in\MM:z\subseteq x\}$ is $\MM$-constructible. Observe that $\MM\setminus\mathcal P^\MM(x)=\{z\in\MM:\exists u\;(u\in z\wedge u\notin x)\}$. The transitivity of $\MM$ guarantees that for every $z\in\MM$ and $u\in z$ we have $u\in\MM$. Consequently, the class 
$$\begin{aligned}
\MM\setminus\mathcal P^\MM(x)&=\{z\in\MM:\exists u\in\MM\;(u\in z\wedge u\notin x)\}\\
&=\dom[\{\langle z,u\rangle\in \MM\times\MM:u\in z\wedge u\notin x\}]\\
&=\dom[((\MM\times\MM)\cap \E^{-1})\setminus (\MM\times(\MM\setminus x))]\\
&=\dot\Go_6\big(\dG_1\big(\dot\Go_3(\dG_5(\MM,\MM)),\dG_2(\MM,\dG_1(\MM,x))\big)\big)
\end{aligned}
$$ is $\MM$-constructible and so is the set $\mathcal P^\MM(x)=\MM\setminus(\MM\setminus\mathcal P^\MM(x))$. 
\end{exercise}

\begin{exercise}\label{ex:AI} Assuming that the class $\MM$ contains an inductive set $i\in \MM$, prove that the $\MM$-relativization of the Axiom of Infinity
$$\exists x\;(\emptyset\in x\;\wedge\;\forall n (n\in x\to n\cup\{n\}\in x))$$is true.
\smallskip

\noindent{\em Hint:} Observe that the inductive set $i\in\MM$ witnesses that the $\MM$-relativization of the Axiom of Infinity holds. 
\end{exercise}

\begin{exercise}\label{ex:AF} Assuming that $\MM\subseteq\mathbf V$, prove that the $\MM$-relativization 
$$\forall x\in\MM \;\big(x\ne\emptyset\to \exists y\in\MM\;((y\in x)\wedge (y\cap x=\emptyset))\big)$$
of the Axiom of Foundation is true.
\smallskip

\noindent {\em Hint:} Given a nonempty $\MM$-constructible set $x\in\MM$, we need to find an $\MM$-constructible set $y\in x$ such that $x\cap y=\emptyset$. Since $x\in\MM\subset\mathbf V$, we can apply Theorem~\ref{t:vN}(5) and find a set $y\in x$ such that $x\cap y=\emptyset$. By the transitivity of $\MM$, $y\in\MM$. The set $y$ has the required property by the $\MM$-absoluteness of the $\Delta_0$-formula
$(y\in x\wedge y\cap x=\emptyset))$.
\end{exercise}

\begin{theorem}\label{t:CST=>M} Let $\MM$ be an $\MM$-collective G\"odel class containing an inductive set $x\in \MM$. If $\CST$ is consistent, then so is $\CST+(\UU=\MM)$.
\end{theorem}

\begin{proof} If $\CST$ is consistent, then it has a model $(M,E)$ whose elements are interpretations of the undefined notion ``class'' and $E\subseteq M\times M$ is an interpretation of the indefined relation $\in$. The class $\MM$ is some element of the model $M$. In the set $M$ consider the subset $N$ whose elements are $\MM$-constructible classes in the model $(M,E)$. In particular, $\MM\in N$. 

By Exercises~\ref{ex:AE}--\ref{ex:AI}, all the axioms of $\CST$ hold in the interpretation $(N,E{\restriction}N)$, which means that  $(N,E{\restriction}N)$ is a model of $\CST$. Since every $\MM$-constructible set is an element of $\MM$, in the model $(N,E{\restriction}N)$ the class $\MM$ coincides with the class of all sets, witnessing that $\CST+(\UU=\MM)$ is consistent.
\end{proof}

In order to obtain the consistency of the equality $\UU=\mathbf L$ we need to show that the notion of a constructible set is $\mathbf L$-absolute, i.e., does not change when we instead of all classes consider only $\mathbf L$-constructible classes. The $\LL$-absoluteness of the class $\LL$ will be proved in the next section.

\section{Absoluteness of the Constructibility}

In this section we assume that $\MM\subseteq\mathbf V$ is a G\"odel class. By Proposition~\ref{p:V-cumucol}, $\MM$ is cumulative and hence $\MM$-collective. By Proposition~\ref{p:Go-Ord}, $\On\subseteq\MM$. To prove the $\MM$-absoluteness of the definition of a constructible set, we will prove the $\MM$-absoluteness of all notions composing this definion.

\begin{lemma}\label{l:On-absolute} The formulas ``$X$ is a transitive class'' and ``$X$ is an ordinal'' are $\MM$-absolute.
\end{lemma}

\begin{proof} 1. The transitivity is expressed by the $\Delta_0$-formula $\forall x\in X\;\forall y\in x\;(y\in X)$ and hence is an $\MM$-absolute notion, according to Theorem  \ref{t:Delta-absolute}.
\smallskip

2. Denote by $\varphi(x)$ the formula ``$X$ is an ordinal''. Let $X$ be any $\MM$-constructible class. If the formula $\varphi(X)$ holds, then $X\in\On\subseteq\mathbf M$ and hence $X$ is a hereditarily transitive set, according to Theorem~\ref{t:ordinal=ht}. By Exercise~\ref{ex:AF}, the $\MM$-relativization of the Axiom of Foundation is true. Then the formula $\varphi^\MM(X)$ is true by Theorem~\ref{t:ordinal=ht} and by the $\MM$-absoluteness of the transitivity. Now assume that $\varphi^\MM(X)$ holds. Then  $X$ is a hereditarily transitive set by Theorem~\ref{t:ordinal=ht} and by the $\MM$-absolutenes of the notion of a transitivity set. By Corollary~\ref{c:M-set}, $X\in\MM\subseteq\mathbf V$. By Theorem~\ref{t:ordinal=ht}, $X$ is an ordinal, so $\varphi(X)$ holds.
\end{proof}

For a clas $X=\{x\in \UU:\varphi(x,Y_1,\dots,Y_n)\}$ described by a formula $\varphi(x,Y_1,\dots,Y_n)$ with parameters $Y_1,\dots,Y_n$ by $X^\MM$ we denote the class $\{x\in \MM:\varphi^\MM(x,Y_1,\dots,Y_n)\}$, called the {\em $\MM$-relativization} of the class $X$. Of course, the $\MM$-relativization depends on the formula $\varphi$ that defined $X$, so this notion will be applied to classes that have some fixed definitions, like $\On$, $\mathbf L$ or $\mathbf V$.

Lemma~\ref{l:On-absolute} implies that the  $\On^{\MM}=\On$. 

\begin{exercise} Prove that for every $i\in 9$ the $\MM$-relativization $\dG_i^\MM$ of the G\"odel's operation $\dG_i:\UU\times\UU\to\UU$ coincides with the restriction $\dG_i{\restriction}_{\MM\times\MM}:\MM\times\MM\to\MM$.
\smallskip

\noindent{\em Hint:} This fact follows from the $\MM$-absoluteness of the $\Delta_0$-formulas $Z=X\setminus Y$, $Z=X\times Y$, $Z=X^{-1}$, $Z=X^\circlearrowright$, $Z=X\cap \E$, $Z=\dom[X]$, $Z=\bigcup X$, $z=\{x,y\}$.
\end{exercise}

\begin{exercise} Prove that the $\MM$-relativization of the function $\Go:\UU\to\UU$ coincides with the restriction $\Go{\restriction}_\MM:\Go\to\Go$. 
\end{exercise}

\begin{exercise} Prove that the $\MM$-relativization of the function $\Go^{\circ\w}:\UU\to\UU$ coincides with the restriction $\Go^{\circ\w}{\restriction}_\MM:\MM\to\MM$.
\smallskip

\noindent{\em Hint:} Use the fact that the ordinal $\w$ is $\MM$-absolute (as the smallest nonzero limit ordinal).
\end{exercise}

Let us recall that $\mathbf L=\bigcup_{\alpha\in\On}L_\alpha$ where $L_\alpha=\bigcup_{\beta\in\alpha}\mathcal P(L_\beta)\cap\Go^{\circ\w}(L_\alpha\cup\{L_\alpha\})$ for every ordinal $\alpha$.

\begin{lemma}\label{l:La-absolute} If\/ $\mathbf L\subseteq\MM$, then $\forall \alpha\in\On\;(L_\alpha^\MM=L_\alpha)$.
\end{lemma}

\begin{proof} The lemma will be proved by transfinite induction. Assume that for some ordinal $\alpha$ we know that $\forall\beta\in\alpha\;(L_\beta^\MM=L_\beta)$. If $\alpha=0$, then $L_\alpha^\MM=\emptyset^\MM=\emptyset=L_\alpha$. If $\alpha$ is a limit ordinal, then $L_\alpha^\MM=\bigcup_{\beta\in \alpha}L_\beta^\MM=\bigcup_{\beta\in\alpha}L_\beta=L_\alpha$ by the inductive hypothesis. Finally assume that $\alpha=\beta+1$ for some ordinal $\beta$. The inductive assumption ensures that $L^\MM_\beta=L_\beta$ and hence  $L_\beta^\MM\cup\{L_\beta^\MM\}=L_\beta\cup \{L_\beta\}\subseteq\mathbf L\subseteq\MM$ and $\Go^{\circ\w}(L_\beta\cup\{L_\beta\})\subseteq\mathbf L\subseteq\MM$. Then   
$$
L^\MM_\alpha=\mathcal P^\MM(L_\beta^\MM)\cap\Go^{\circ\w}{\restriction}_\MM(L_\beta^\MM\cup\{L_\beta^\MM\})=\mathcal P(L_\alpha)\cap\MM\cap \Go^{\circ\w}(L_\beta\cup\{L_\beta\})=L_\alpha.
$$
\end{proof}

\begin{Exercise} Prove that $\mathbf L\subseteq\MM$ and hence $\mathbf L^\MM=\mathbf L$ for every G\"odel class  $\MM\subseteq\mathbf V$.
\smallskip

\noindent{\em Hint:} Use the fact that $\On\subseteq\MM$ and every set $L_\alpha$ has a canonical well-order, so is the image of a suitable ordinal under some basic function.
\end{Exercise}

Finally we can prove the promised consistency of the Axiom of Constructibility $(\UU=\mathbf L)$.

\begin{theorem}\label{t:Con(U=L)}  If $\CST$ is consistent, then so is $\CST+(\UU=\LL)$.
\end{theorem}

\begin{proof} If $\CST$ is consistent, then it has a model $(M,E)$ whose elements are interpretations of the undefined notion ``class'' and $E\subseteq M\times M$ is an interpretation of the indefined relation $\in$. Let $\mathbf L\in M$ be the class of constructible sets in the model $(M,E)$. In the set $M$ consider the subset $N$ whose elements are $\mathbf L$-constructible classes in the model $(M,E)$. In particular, $\mathbf L\in N$. 

By Exercises~\ref{ex:AE}--\ref{ex:AI}, all the axioms of $\CST$ hold in the interpretation $(N,E{\restriction}N)$, which means that  $(N,E{\restriction}N)$ is a model of $\CST$.
Moreover, Lemma~\ref{l:La-absolute} implies that the class $\mathbf L$ coincides with the class of constructible sets in the sense of the model $(N,E{\restriction}N)$. Since every $\mathbf L$-constructible set is an element of $\mathbf L$, in the model $(N,E{\restriction}N)$ the class of all sets coincides with the class $\mathbf L$ of all constructible sets, witnessing that $\CST+(\UU=\mathbf L)$ is consistent.
\end{proof}

\begin{Exercise} Prove that it is consistent to assume that every class is $\mathbf L$-constructible (which implies $\UU=\mathbf L$).
\smallskip

\noindent{\em Hint:} Mimic the proof of Theorem~\ref{t:Con(U=L)} but take for $N$ the subset of $M$ consisting of all classes that can be constructed from the class $\mathbf L$ and its elements by applications of finitely many G\"odel's operations $\dG_0$--$\dG_7$, where ``finitely many'' is understood in the sense of metalanguage. In general such a model $N$ will be smaller comparing to the model consisting of all $\mathbf L$-constructible classes and all elements of $N$ will remain $\mathbf L$-constructible classes in the sense of the model $(N,E{\restriction}N)$.
\end{Exercise} 

\newpage

\part{Choice and Global Choice}

The Axiom of Choice is the most controversial axiom in mathematics.
It has many valuable implications (Tychonoff's compactness theorem in Topology, Hahn-Banach Theorem in Functional Analysis) but it also implies some highly counter-intuitive statements like the Banach-Tarski Paradox\footnote{{\bf Exercise:} Read about Banach-Tarski Paradox in Wikipedia.}. In this part we survey some implications or equivalents of Axiom of Choice and its stronger version, the Axiom of Global Choice.

\section{Choice}\label{s:choice}

\rightline{\em The axiom of choice is obviously true,}

\rightline{\em  the well-ordering principle obviously false,}

\rightline{\em and who can tell about Zorn's lemma?}
\smallskip

\rightline{Jerry Lloyd Bona}

\vskip20pt

In this section we discuss some statements related to the Axiom of Choice. In fact, this subject is immense and there are good and complete books covering this topic in many details, see for example, \cite{JechAC}, \cite{Herrlich}, \cite{HR},  \cite{Moore}. So, we shall recall only the most important choice principles that have applications in mathematics.  


\begin{definition} We say that a class $X$ 
\begin{itemize} 
\item can be \index{well-ordered class}\index{class!well-ordered} {\em well-ordered}  if there exists a well-order $W$ such that $\dom[W^\pm]=X$;
\item is \index{class!well-orderable}\index{well-orderable class}{\em well-orderable} if there exists a set-like well-order $W$ such that $\dom[W^\pm]=X$;
\item has a \index{choice function}\index{function!choice}{\em choice function} if there exists a function $F:X\setminus\{\emptyset\}\to \bigcup X$ such that $F(x)\in x$ for every nonempty set $x\in X$; the function $F$ is called a {\em choice function} for $X$.
\end{itemize}
\end{definition}
Observe that a set can be well-ordered if and only if it is well-orderable. 

We recall that the {\bf Axiom of Choice} postulates that each set has a choice function.

The following fundamental result is known as the well-ordering theorem of Zermelo.

\begin{theorem}[Zermelo]\label{t:WO} For any set $x$ the following statements are equivalent.
\begin{enumerate} \setlength{\itemindent}{12pt}
\item[$\mathsf{WO}(x){:}$] The set $x$ can be well-ordered.
\item[$\mathsf{AC}(\mathcal Px){:}$] The power-set $\mathcal P(x)$ of $x$ has a choice function.
\end{enumerate}
\end{theorem}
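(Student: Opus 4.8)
The plan is to prove the two implications separately; the forward direction $\mathsf{WO}(x)\Rightarrow\mathsf{AC}(2^x)$ is routine, while the reverse direction is the substance of Zermelo's theorem and carries all the difficulty.

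For $\mathsf{WO}(x)\Rightarrow\mathsf{AC}(2^x)$, I would fix a well-order $W$ with $\dom[W^\pm]=x$. Every nonempty $y\in\mathcal P(x)$ is a nonempty subclass of $x=\dom[W^\pm]$, so by Exercise~\ref{ex:char-WO} it contains a unique $W$-least element. Sending each nonempty $y\subseteq x$ to that element defines a choice function $F:\mathcal P(x)\setminus\{\emptyset\}\to x$; since $F\subseteq\mathcal P(x)\times x$ is a subclass of a set it is itself a set, so $\mathsf{AC}(2^x)$ holds.

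For $\mathsf{AC}(2^x)\Rightarrow\mathsf{WO}(x)$, fix a choice function $c$ for $\mathcal P(x)$, so that $c(y)\in y$ for every nonempty $y\subseteq x$. The idea is to exhaust $x$ by transfinite recursion along $\Ord$, repeatedly picking a fresh element via $c$. Writing $R=\E{\restriction}\Ord$, I would use that $R$ is a set-like well-founded order with $\cev R(\alpha)=\alpha$ for every ordinal $\alpha$ (Theorem~\ref{t:Ord}), and apply the Recursion Theorem~\ref{t:recursion} to the function $F:\Ord\times\UU\to\UU$ given by $F(\alpha,y)=c(x\setminus y)$ when $x\setminus y\ne\emptyset$ and $F(\alpha,y)=x$ otherwise. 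This $F$ exists by G\"odel's class-existence Theorem~\ref{t:class} (with $c$ and $x$ as parameters), and the recursion yields a unique $G:\Ord\to\UU$ with $G(\alpha)=F(\alpha,G[\alpha])$, so $G(\alpha)=c(x\setminus G[\alpha])$ as long as $x\setminus G[\alpha]\ne\emptyset$. Next I would study the class $S=\{\alpha\in\Ord:x\setminus G[\alpha]\ne\emptyset\}$. On $S$ we have $G(\alpha)\in x\setminus G[\alpha]$, whence $G(\alpha)\in x$ and $G(\alpha)\ne G(\beta)$ for all $\beta\in\alpha$, so $G{\restriction}_S$ is injective into $x$. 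Since $G[\beta]\subseteq G[\alpha]$ for $\beta\le\alpha$, the class $S$ is an initial segment of $\Ord$. It cannot equal $\Ord$: by Hartogs' Theorem~\ref{t:Hartogs} there is an ordinal $\kappa$ admitting no injection $\kappa\to x$, and $S=\Ord$ would make $G{\restriction}_\kappa:\kappa\to x$ (a set, as its domain is a set) such an injection. Hence $S=\theta$ for some ordinal $\theta$, and then $x\setminus G[\theta]=\emptyset$ together with $G[\theta]\subseteq x$ forces $G[\theta]=x$, so $G{\restriction}_\theta:\theta\to x$ is a bijection. Transporting the reflexive well-order $\{\langle\beta,\gamma\rangle\in\theta\times\theta:\beta\subseteq\gamma\}$ on $\theta$ through this bijection, i.e. setting $W=\{\langle G(\beta),G(\gamma)\rangle:\beta,\gamma\in\theta,\ \beta\subseteq\gamma\}$, produces a well-order with $\dom[W^\pm]=x$, establishing $\mathsf{WO}(x)$.

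The main obstacle is making the recursion both legitimate and terminating: existence of $G$ rests on the Recursion Theorem applied to $\E{\restriction}\Ord$, and the halting is exactly where Hartogs' theorem is essential, since a priori the construction could run through every ordinal. A secondary point needing care is that the stopping behaviour is permanent — once $x\setminus G[\alpha]=\emptyset$ we have $x\subseteq G[\alpha]\subseteq G[\alpha']$ for all $\alpha'\ge\alpha$ — which is precisely what makes $S$ a genuine initial segment and renders the marker value $x$ in the ``otherwise'' clause harmless. Using the reflexive (rather than strict) transported order is the small adjustment that keeps $\dom[W^\pm]=x$ even in the degenerate cases where $x$ is empty or a singleton.
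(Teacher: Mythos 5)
Your proof is correct and follows essentially the same route as the paper: a transfinite recursion along $\Ord$ driven by the choice function, halted by the fact that the ordinals cannot be injected into the set $x$, and then the well-order is transported from the resulting ordinal. The only cosmetic differences are that you stop the recursion by invoking Hartogs' Theorem~\ref{t:Hartogs} where the paper argues directly that an injective $G:\Ord\to x$ would make $\Ord=G^{-1}[x]$ a set by Replacement, and that you use $x$ itself as the ``halt'' marker while the paper adjoins an external element $z\notin\bigcup x$ --- neither choice affects the argument.
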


\begin{proof} $\mathsf{AC}(\mathcal Px)\Ra\mathsf{WO}(x)$: Assume that there exists a choice function $c:\mathcal P(x)\setminus \{\emptyset\}\to x$ for the power-set $\mathcal P(x)$ of $x$.   Since $\UU$ is a proper class, there exists an element $z\in\UU\setminus  x$. Consider the function $\bar c=\{\langle\emptyset,z\rangle\}\cup c$.

Applying the Recursion Theorem~\ref{t:Recursion} to the function $$\textstyle F:\Ord\times\UU\to\{z\}\cup x,\quad F:\langle \alpha,y\rangle\mapsto \bar c(x\setminus \rng[y]),$$
and the set-like well-order $\E{\restriction}\Ord$, we obtain a (unique) function $G:\Ord\to\{z\}\cup  x$ such that $$G(\alpha)=F(\alpha,G{\restriction}_\alpha)$$ for every $\alpha\in\Ord$.   

We claim that $z\in G[\Ord]$. To derive a contradiction assume that $z\notin G[\Ord]$.  In this case for every ordinals $\beta\in\alpha$ we have
$$z\ne G(\alpha)=F(\alpha,G{\restriction}_\alpha)=c(x\setminus G[\alpha])\in x\setminus G[\alpha]\subseteq x\setminus\{G(\beta)\}$$
and hence $G(\beta)\ne G(\alpha)$. The injectivity of the function $G$ guarantees that $G^{-1}$ is a function, too. Then $\Ord=G^{-1}[x]$ is a set by the Axiom Replacement. But this contradicts Theorem~\ref{t:Ord}(6).
This contradiction shows that $z\in G[\Ord]$. Since the order $E{\restriction}\Ord$ is well-founded, for the nonempty set $G^{-1}[\{z\}]\subseteq\Ord$ there exists  an ordinal  $\alpha\in G^{-1}[\{z\}]$ such that $\alpha\cap G^{-1}[\{z\}]=\emptyset$. Then $z\notin G[\alpha]$ and hence $G[\alpha]\subseteq x$. Repeating the above argument, we can prove that the function $G{\restriction}_\alpha$ is injective. 

We claim that $G[\alpha]=x$. Assuming that $G[\alpha]\ne x$, we see that the set $x\setminus G[\alpha]$ is not empty and then the definition of the function $F$ ensures that $G(\alpha)=F(\alpha,G{\restriction}_\alpha)=c(x\setminus G[\alpha])\in x\setminus G[\alpha]\subseteq \bigcup x$ and hence $G(\alpha)\ne z$, which contradicts the choice of $\alpha$.

Therefore, the function $G{\restriction}_\alpha:\alpha\to x$ is bijective and we can define an irreflexive well-order on $x$ by the formula
$$w=\{\langle G(\beta),G(\alpha)\rangle:\beta\in\alpha\}.$$
\smallskip

$\mathsf{WO}(x)\Ra\mathsf{AC}(\mathcal Px):$  If there exists a well-order $w$ with $\dom[w^\pm]=x$, then the formula
$$c:\mathcal P(x)\setminus \{\emptyset\}\to \bigcup x,\quad c:a\mapsto {\min}_w(a)$$determines a choice function for $\mathcal P(x)$. In this formula by $\min_w(a)$ we denote the unique $w$-minimal element of a nonempty set $a\subseteq x$.
\end{proof}

An important statement which is equivalent to the Axiom of Choice was found by Kuratowski in 1922 and (independently) Zorn in 1935. It concerns the existence of maximal elements in orders.

Let us recall that for an order $R$, an element $x\in\dom[R^\pm]$ is called {\em $R$-maximal} if $$\forall y\in \dom[R^\pm]\;(\langle x,y\rangle\in R\;\Rightarrow\;y=x).$$

A subclass $L\subseteq \dom[R^\pm]$ is called 
\begin{itemize}
\item an \index{chain}{\em $R$-chain} if $L\times L\subseteq R^\pm\cup\Id$;
\item an \index{antichain}{\em $R$-antichain} if $(L\times L)\cap R\subseteq\Id$;
\item a \index{maximal chain}{\em maximal $R$-chain} if $L$ is an $R$-chain and $L$ is equal to any $R$-chain $L'\subseteq \dom[R^\pm]$ with $L\subseteq L'$;
\item a \index{maximal antichain}{\em maximal $R$-antichain} if $L$ is an $R$-antichain and  $L$ is equal to any $R$-antichain $L'\subseteq \dom[R^\pm]$ with $L\subseteq L'$.
\end{itemize}
An element $b\in\dom[R^\pm]$ is called an \index{upper bound}{\em upper bound} of a set $L\subseteq\dom[R^\pm]$ if $L\times\{b\}\subseteq R\cup\Id$.

We say that an order $R$ is \index{chain-bounded order}\index{order!chain-bounded}{\em chain-bounded} if each $R$-chain $L\subseteq \dom[R^\pm]$ has an upper bound in $\dom[R^\pm]$.

\begin{lemma}[Kuratowski--Zorn]\label{l:KZL}\index{Kuratowski--Zorn Lemma} Let $r\in\UU$ be a chain-bounded order on a nonempty set $x=\dom[r^\pm]$. If the power-set $\mathcal P(x)$ of $x$ has a choice function, then there exists an $R$-maximal element $z\in\dom[r^\pm]$.
\end{lemma}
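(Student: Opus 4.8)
The plan is to mirror the proof of Zermelo's Theorem~\ref{t:WO} given just above: using the choice function on $\mathcal P(x)$, I will greedily build a transfinite $r$-increasing sequence of elements of $x$ by recursion along $\Ord$, and show that it must run out of room, leaving a chain with no strict upper bound, whose upper bound is then maximal. First I fix a sentinel $z\in\UU\setminus x$, which exists because $\UU$ is a proper class while $x$ is a set, and I extend the given choice function $c$ for $\mathcal P(x)$ to $\bar c:\mathcal P(x)\to\{z\}\cup x$ by $\bar c(\emptyset)=z$ and $\bar c(a)=c(a)$ for $a\ne\emptyset$; note that $\bar c(a)=z$ if and only if $a=\emptyset$, since $c(a)\in a\subseteq x$ and $z\notin x$. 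For a set $y$ I consider its class of strict upper bounds
$$U(y)=\{w\in x: w\notin y\;\wedge\;\forall v\in y\;(\langle v,w\rangle\in r)\}\subseteq x,$$
and define $F:\Ord\times\UU\to\UU$, $F:\langle\alpha,y\rangle\mapsto\bar c(U(y))$, whose existence follows from G\"odel's class existence Theorem~\ref{t:class}.

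Applying the Recursion Theorem~\ref{t:recursion} to the set-like well-order $\E{\restriction}\Ord$ and the function $F$, I obtain a unique $G:\Ord\to\UU$ with $G(\alpha)=F(\alpha,G[\alpha])=\bar c(U(G[\alpha]))$ for every $\alpha\in\Ord$ (here the initial interval $\cev R(\alpha)$ of $\alpha$ for $R=\E{\restriction}\Ord$ equals $\alpha$, and $G[\alpha]=\{G(\beta):\beta\in\alpha\}$). The crux is the dichotomy on whether $z$ lies in $G[\Ord]$. If $z\notin G[\Ord]$, then for every ordinal $\alpha$ we have $U(G[\alpha])\ne\emptyset$ and $G(\alpha)=c(U(G[\alpha]))\in U(G[\alpha])$, so $G(\alpha)\in x$, $G(\alpha)\notin G[\alpha]$, and $\langle G(\beta),G(\alpha)\rangle\in r$ for all $\beta\in\alpha$; in particular $G$ is injective, whence $G^{-1}$ is a function and $\Ord=G^{-1}[G[\Ord]]$ is a set by the Axiom of Replacement (since $G[\Ord]\subseteq x$ is a set by Exercise~\ref{ex:subclass}). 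This contradicts Theorem~\ref{t:Ord}(6), so in fact $z\in G[\Ord]$.

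By the well-foundedness of $\E{\restriction}\Ord$ I then take the least ordinal $\alpha$ with $G(\alpha)=z$. For every $\beta\in\alpha$ the value $G(\beta)\ne z$, so the preceding computation shows that $L:=G[\alpha]$ is an $r$-chain contained in $x$, and, since $G(\alpha)=z=\bar c(U(L))$, that $U(L)=\emptyset$. As $r$ is chain-bounded, $L$ has an upper bound $b\in x$, i.e. $\langle v,b\rangle\in r\cup\Id$ for all $v\in L$. I claim $b$ is $r$-maximal: otherwise there is $b'\ne b$ with $\langle b,b'\rangle\in r$, and then $b'\in x$ (as $b'\in\rng[r]\subseteq\dom[r^\pm]=x$), $b'\notin L$ (else $\langle b',b\rangle\in r\cup\Id$ together with $\langle b,b'\rangle\in r$ forces $b=b'$ by antisymmetry), and transitivity gives $\langle v,b'\rangle\in r$ for all $v\in L$; hence $b'\in U(L)$, contradicting $U(L)=\emptyset$. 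Thus $b$ is the desired $r$-maximal element. The main obstacle is the bookkeeping in this last step---verifying that $G[\alpha]$ stays a genuine chain at each stage and that a hypothetical strict upper bound of $b$ re-enters $U(L)$---together with checking that $F$ is legitimately a function definable from the axioms; both are routine given the antisymmetry and transitivity of $r$. Alternatively, one could first invoke Theorem~\ref{t:WO} to well-order $x$ and run the same greedy construction indexed by the resulting ordinal (a set), avoiding the appeal to Theorem~\ref{t:Ord}(6).
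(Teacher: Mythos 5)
Your proof is correct and rests on exactly the same mechanism as the paper's: a transfinite greedy construction of a strictly $r$-increasing sequence via the choice function and the Recursion Theorem, with the properness of $\Ord$ (Theorem~\ref{t:Ord}(6)) plus the Axiom of Replacement forcing the construction to stall. The only difference is one of arrangement: the paper assumes for contradiction that no $r$-maximal element exists, shows that then every chain has a strict upper bound so the recursion never stalls, and derives the contradiction globally, whereas you let the recursion stall at a sentinel value and extract the maximal element from the chain built up to the stalling stage (verifying, by the same antisymmetry-and-transitivity argument the paper uses, that a non-maximal upper bound of that chain would be a strict upper bound of it) --- a minor rearrangement that follows the template of the paper's proof of Theorem~\ref{t:WO} more literally.
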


\begin{proof} To derive a contradiction, assume that $\dom[r^\pm]$ contains no $r$-maximal elements.

Let $c$ be the set of all $r$-chains in the set $x=\dom[r^\pm]$. Since the order $r$ is chain-bounded, for every chain $\ell\in c$ the set $u(\ell)=\{b\in\dom[r^\pm]:\ell\times\{b\}\subseteq r\cup\Id\}$ of its upper bounds is not empty. We claim that the subset $v(\ell)=\{b\in\dom[r^\pm]:\ell\times\{b\}\subseteq r\setminus\Id\}$ of $u(\ell)$ is not empty, too. For this take any element $b\in u(\ell)$. By our assumption, the element $b$ is not $r$-maximal. Consequently, there exists an element $d\in\dom[r^\pm]$ such that $\langle b,d\rangle\in r\setminus\Id$. The transitivity of the relation $r$ and the inequality $b\ne d$ guarantees that $d\notin \ell$ and hence $d\in v(\ell)$.

By our assumption, the power-set $\mathcal P(x)$ has a choice function $f:\mathcal P(x)\setminus\{\emptyset\}\to x$. 
Let $z\in\UU\setminus \dom[r^\pm]$ be any set (which exists as $\dom[r^\pm]$ is a set and $\UU$ is a proper class).  Consider the function $F:\Ord\times\UU\to\UU$ defined by the formula
$$F(\alpha,y)=\begin{cases} f(v(\rng[y]))&\mbox{if $\rng[y]\in c$};\\
z&\mbox{otherwise}.
\end{cases}
$$By the Recursion Theorem~\ref{t:Recursion}, there exists a (unique) function $G:\Ord\to \UU$ such that $G(\alpha)=F(\alpha,G{\restriction}_\alpha)$ for every ordinal $\alpha$.

We claim that for every ordinal $\alpha$ the image $G[\alpha]$ is an $r$-chain. Assuming that this is not true, we can find the smallest ordinal $\alpha$ such that $G[\alpha]$ is not an $r$-chain but for every $\beta\in \alpha$ the set $G[\beta]$ is an $r$-chain. Since $G[\alpha]$ is not an $r$-chain, there two elements $y,z\in G[\alpha]$ such that $\langle y,z\rangle\notin r^\pm\cup\Id$. Find two ordinals $\beta,\gamma\in\alpha$ such that $y=G(\beta)$ and $z=G(\gamma)$. Since the relation $r^\pm\cup\Id$ is symmetric, we lose no generality assuming that $\beta\le\gamma$. We claim that  $\gamma+1=\alpha$. In the opposite case, the minimality of $\alpha$ guarantees that $G[\gamma+1]$ is an $r$-chain and then $\langle y,z\rangle\in r^\pm\cup\Id$, which contradicts the choice of the elements $y,z$.  Therefore, $\alpha=\gamma+1$. Since $G[\gamma]$ is an $r$-chain, the definition of the function $F$ guarantees that $G(\gamma)=F(\gamma,G{\restriction}_\gamma)=f(v(G[\gamma]))$ and then $G[\alpha]=G[\gamma]\cup G(\gamma)=G[\gamma]\cup f(v(G[\gamma]))$ is an $r$-chain. But this contradicts the choice of $\alpha$. This contradiction shows that for all ordinals $\alpha$ the set $G[\alpha]$ is an $r$-chain. In this case for every ordinal $\alpha$ we have $G(\alpha)=F(\alpha,G{\restriction}_\alpha)=f(v(G[\alpha]))\in v(G[\alpha])\subseteq \bigcup x\setminus G[\alpha]$, which implies that the function $G:\Ord\to\dom[r^\pm]$ is injective. Then $G^{-1}$ is a function and $\Ord=G^{-1}[\dom[r^\pm]]$ is a set by the Axiom of Replacement. But this contradicts the properness of the class $\Ord$, see Theorem~\ref{t:Ord}(6).
\end{proof}

Another statement, which is equivalent to the Axiom of Choice is

\begin{center}
\index{Hausdorff's Maximality Principle ({{\sf MP}})}{\em  Hausdorff's Maximality Principle}
\smallskip

\index{({{\sf MP}})}\fbox{$\mbox{$(\mathsf{MP})${:}\;\; For every  order $r\in\UU$, the set $\dom[r^\pm]$ contains a maximal $r$-chain.}$}
\end{center}
\smallskip

Hausdorff's Maximality Principle restricted to ordinary trees is called the \index{Principle of Tree Choice ({{\sf TC}})}{\em Principle of Tree Choice} and is denoted by $(\mathsf{TC})$. 

Let us recall that an order $R$ is called a \index{tree-order}\index{order!tree-order}{\em tree-order} if for every $x\in\dom[R^\pm]$ the initial interval $\cev R(t)$ is well-ordered by the relation $R{\restriction}\cev R(x)$. A standard example of a tree-order is the order $\mathbf{S}{\restriction}\UU^{<\Ord}$ where $\mathbf S=\{\langle x,y\rangle:x\subseteq y\}$ and $\UU^{<\Ord}$ is the  class of all function $f$ with $\dom[f]\in\Ord$. An \index{ordinary tree}\index{tree!ordinary}{\em ordinary tree} is a subclass $T\subseteq \UU^{<\Ord}$ such that for every function $t\in T$ and ordinal $\alpha$ the function $t{\restriction}_\alpha=t\cap(\alpha\times\UU)$ belongs to $T$. For an ordinary tree $T$, a subclass $C\subseteq T$ is called a \index{maximal chain}\index{chain!maximal}({\em maximal}) {\em chain} if it is a (maximal) $\mathbf S{\restriction}T$-chain.
\smallskip

A set $\mathcal F$ is called {\em of finite character} if a set $x$ is an element of $\mathcal F$ if and only if every finite subset of $x$ is an element of $\mathcal F$. An important implication  Axiom of Choice was found by Oswald Teichm\"uller (1939) and John Tukey (1940):

\begin{center}
\index{Teichm\"uller--Tukey Lemma ({{\sf TT}})}{\em  Teichm\"uller--Tukey Lemma}
\smallskip

\index{({{\sf TT}})}\fbox{$\mbox{$(\mathsf{TT})${:}\;\; Every set $\mathcal F$ of finite character contains an $\mathbf S{\restriction}\mathcal F$-maximal element.}$}
\end{center}
\smallskip

Now we list some statements that are equivalent to the Axiom of Choice.

\begin{theorem}\label{t:mainAC} The following statements are equivalent:
\begin{enumerate}
\index{({{\sf AC}})}\index{Choice Principle!({{\sf AC}})}\item[$(\mathsf{AC})$] Every set  has a choice function.
\index{({{\sf KZ}})}\index{Choice Principle!({{\sf KZ}})}\item[$(\mathsf{KZ})$] Every chain-bounded order $r\in\UU$ has an $r$-maximal element.
\index{({{\sf MP}})}\index{Choice Principle!({{\sf MP}})}\item[$(\mathsf{MP})$] For every order $r\in\UU$, the set $\dom[r^\pm]$ contains an $r$-maximal chain.
\index{({{\sf TT}})}\index{Choice Principle!({{\sf TT}})}\item[$(\mathsf{TT})$] Every set $\mathcal F$ of finite character contains an $\mathbf S{\restriction}\mathcal F$-maximal element.
\index{({{\sf TC}})}\index{Choice Principle!({{\sf TC}})}\item[$(\mathsf{TC})$] Every ordinary tree $t\in\UU$ contains a maximal chain.
\index{({{\sf WO}})}\index{Choice Principle!({{\sf WO}})}\item[$(\mathsf{WO})$] Every set $x$ can be well-ordered.
\index{Choice Principle!$\Pi$}\item[$(\Pi)$] For any set $a$ and indexed family of nonempty sets $(x_\alpha)_{\alpha\in a}$,\\ the Cartesian product $\prod_{\alpha\in a}x_\alpha$ is not empty.
\end{enumerate}
\end{theorem}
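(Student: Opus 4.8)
The plan is to prove the six statements equivalent through the cycle
$$(\mathsf{AC})\Rightarrow(\mathsf{KZ})\Rightarrow(\mathsf{MP})\Rightarrow(\mathsf{TC})\Rightarrow(\mathsf{WO})\Rightarrow(\mathsf{AC}),$$
and then attach $(\Pi)$ by the two direct implications $(\mathsf{AC})\Rightarrow(\Pi)$ and $(\Pi)\Rightarrow(\mathsf{AC})$. Several links are short once the right auxiliary object is built. For $(\mathsf{AC})\Rightarrow(\mathsf{KZ})$ I would take a chain-bounded order $r\in\UU$ with $x=\dom[r^\pm]$; since $(\mathsf{AC})$ supplies a choice function for $\mathcal P(x)$, the Kuratowski--Zorn Lemma~\ref{l:KZL} at once yields an $r$-maximal element. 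For $(\mathsf{MP})\Rightarrow(\mathsf{TC})$ I would simply specialize $(\mathsf{MP})$ to the order $\mathbf S{\restriction}t$ of an ordinary tree $t\in\UU$, which is a set-sized order whose underlying class is all of $t$. And for $(\mathsf{WO})\Rightarrow(\mathsf{AC})$ I would, given a set $x$, well-order the set $\bigcup x$ and send each nonempty $y\in x$ to its $\min$; this is a choice function.

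For $(\mathsf{KZ})\Rightarrow(\mathsf{MP})$ I would run the standard ``Zornification''. Given an order $r\in\UU$ on the set $X=\dom[r^\pm]$, consider the set $\mathcal C\subseteq\mathcal P(X)$ of all $r$-chains, equipped with the inclusion order $\mathbf S{\restriction}\mathcal C$. The point to check is that this order is chain-bounded: the union of any $\subseteq$-chain of $r$-chains is again an $r$-chain (any two of its members lie in a common chain of the family, hence are $r$-comparable) and is an upper bound. Then $(\mathsf{KZ})$ applied to $\mathbf S{\restriction}\mathcal C$ gives an $\subseteq$-maximal $r$-chain, which is exactly a maximal $r$-chain in $X$.

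The hard part will be $(\mathsf{TC})\Rightarrow(\mathsf{WO})$, where choice must be extracted from a purely tree-theoretic hypothesis. Fix a set $x$ and form the class $T$ of all injective functions $t$ with $\dom[t]\in\Ord$ and $\rng[t]\subseteq x$, ordered by extension. The crucial observation is that $T$ is a set: by Hartogs' Theorem~\ref{t:Hartogs} there is an ordinal $\alpha$ admitting no injective function $\alpha\to x$, so every $t\in T$ has $\dom[t]\in\alpha$, whence $T\subseteq\mathcal P(\alpha\times x)$ and $T$ is a set by Exercise~\ref{ex:subclass}. One checks that $T$ is an ordinary tree, since restrictions of injective functions remain injective and stay in $T$, so $(\mathsf{TC})$ provides a maximal chain $C\subseteq T$. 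Its union $f=\bigcup C$ is an injective function whose domain $\delta=\bigcup\{\dom[t]:t\in C\}$ is an ordinal by Theorem~\ref{t:Ord}(5), and $f\in T$ (again $\delta\in\alpha$), so $f=\max C$. If some $a\in x\setminus\rng[f]$ existed, then $f\cup\{\langle\delta,a\rangle\}\in T$ would properly extend $f$ and enlarge $C$, contradicting maximality; hence $f\colon\delta\to x$ is a bijection, and transferring $\E{\restriction}\delta$ along $f$ well-orders $x$.

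Finally, for $(\mathsf{AC})\Leftrightarrow(\Pi)$ I would argue directly. Given a set $a$ and a family $(x_\alpha)_{\alpha\in a}$ of nonempty sets, the image $\{x_\alpha:\alpha\in a\}$ is a set by Theorem~\ref{t:index} and the Axiom of Replacement; a choice function for it, composed with $\alpha\mapsto x_\alpha$, is an element of $\prod_{\alpha\in a}x_\alpha$, giving $(\mathsf{AC})\Rightarrow(\Pi)$. Conversely, applying $(\Pi)$ to the family $(y)_{y\in x\setminus\{\emptyset\}}$ produces a function selecting an element of each nonempty $y\in x$, i.e. a choice function for $x$, giving $(\Pi)\Rightarrow(\mathsf{AC})$ and closing all the equivalences.
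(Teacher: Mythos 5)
Your proposal is correct and follows essentially the same route as the paper: the Kuratowski--Zorn Lemma for $(\mathsf{AC})\Rightarrow(\mathsf{KZ})$, the inclusion-ordered set of $r$-chains for $(\mathsf{KZ})\Rightarrow(\mathsf{MP})$, specialization to $\mathbf S{\restriction}t$ for $(\mathsf{MP})\Rightarrow(\mathsf{TC})$, the Hartogs-bounded tree of injective functions for $(\mathsf{TC})\Rightarrow(\mathsf{WO})$, and well-ordering the union and taking minima to recover choice. The only (immaterial) difference is bookkeeping: the paper threads $(\Pi)$ into the main cycle as $(\mathsf{WO})\Rightarrow(\Pi)\Rightarrow(\mathsf{AC})$, whereas you close the cycle at $(\mathsf{WO})\Rightarrow(\mathsf{AC})$ and attach $(\Pi)$ by the two direct implications.
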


\begin{proof} The 
 implication $(\mathsf{AC}\Ra\mathsf{KZ})$ has been proved in the Kuratowski--Zorn Lemma~\ref{l:KZL}.
\smallskip

$(\mathsf{KZ})\Ra(\mathsf{MP})$: Fix an order $r\in\UU$ and consider the set $$c=\{\ell\subseteq\dom[r^\pm]:\mbox{$\ell$ is an $r$-chain}\},$$ endowed with the partial order $\mathbf S{\restriction}c$. It is easy to see that for any $\mathbf S{\restriction}c$-chain $c'\subseteq c$, the union $\bigcup c'$ is an $r$-chain, which is an upper bound of the chain $c'$ in the set $c$ endowed with the partial order $\mathbf S{\restriction}c$. This means that the partial order $\mathbf S{\restriction}c$ is chain-bounded. By $(\mathsf{KZ})$, there exists an $\mathbf S{\restriction}c$-maximal element $c'\in c$, which is a required maximal $r$-chain in $\dom[r^\pm]$.
\smallskip

$(\mathsf{MP})\Ra(\mathsf{TT})$: Given a set $\mathcal F$ of finite character, use $(\mathsf{MP})$ and find a maximal $\mathbf S{\restriction}\mathcal F$-chain $C\subseteq\mathcal F$. We claim that the set $x=\bigcup C$ is $\mathbf S{\restriction}C$-maximal element of $\mathcal F$. To see that $x\in\mathcal F$, it suffices to check that every finite subset $y\subseteq x$ belongs to $\mathcal F$. For every $z\in y$ find a set $c_z\in C$ with $z\in c_z$. Then $\{c_z:z\in y\}$ is a finite subset of the chain $C$ and there exists $u\in y$ such that $c_z\subseteq c_u$ for all $z\in y$. Then $y\subseteq \bigcup_{z\in y}c_z=c_u\in C\subseteq\mathcal F$ and hence $y\in\mathcal F$ (because $\mathcal F$ has finite character). Therefore, $x=\bigcup C\in\mathcal F$. Assuming that the set $x\in\mathcal F$ is not $\mathcal S{\restriction}\mathcal F$-maximal, we can find a set $f\in\mathcal F$ such that $x\subset f$. Then $C\cup\{f\}$ is an $\mathbf S{\restriction}\mathcal F$-chain, which strictly contains $C$. But this contradicts the maximality of $C$.
\smallskip

$(\mathsf{TT})\Ra(\mathsf{TC})$: Given an ordinary tree $t$, observe that the family $\mathcal F$ of chains in $t$ is of finite character. By $(\mathsf{TT})$, this family contains an $\mathcal S{\restriction}\mathcal F$-maximal element $c\in\mathcal F$, which is a maximal chain in the ordinary tree $t$.
\smallskip

$(\mathsf{TC})\Ra(\mathsf{WO})$: Given any set $x$, consider the ordinary tree $$
T=\{f\in\UU^{<\Ord}:f^{-1}\in\mathbf{Fun}\;\wedge\;\rng[f]\subseteq x\}$$ consisting of all injective functions $f$ with $\dom[f]\in\Ord$ and $\rng[f]\subseteq x$. By the Hartogs Theorem~\ref{t:Hartogs}, there exists an ordinal $\alpha$ (equal to $\rank[WO(x)]$) that admits no injective function $f:\alpha\to x$.
Since $T\subseteq \mathcal P(\alpha\times x)$, the tree $T$ is a set. By $(\mathsf{TC})$, the ordinary tree $T$ contains a maximal chain $c\subseteq T$. It is easy to see that the union of this chain $f=\bigcup c$ is an injective function with $\dom[f]\in\Ord$ and $\rng[f]\subseteq x$. So, $f\in T$. We claim that $\rng[f]=x$.
Assuming that $x\ne\rng[f]$, take any element $y\in x\setminus\rng[f]$ and consider the function $\bar f=f\cup\{\langle\dom[f],y\rangle\}\in T$. The maximality of the chain $c$ guarantees that $c=c\cup\{\bar f\}$ and hence $\bar f\in c$. Then $\langle \dom[f],y\rangle\in \bar f\subseteq\bigcup c=f$ and $\dom[f]\in\dom[f]\in\Ord$, which contradicts the definition of an ordinal. Therefore, $f:\dom[f]\to x$ is a bijective function and 
$$w=\{\langle f(\gamma),f(\beta)\rangle:\gamma\in\beta\in\dom[f]\}$$is a well-order $w$ with $\dom[w^\pm]=x$, witnessing that the set $x$ is well-ordered. 
\smallskip

$(\mathsf{WO}\Ra\Pi)$: Let $a$ be a set and $(x_\alpha)_{\alpha\in a}$ be an indexed family of sets. By the Axiom of Union, the class $x=\bigcup_{\alpha\in A}x_\alpha$ is a set. By $(\mathsf {WO})$, the set $x$ admits a well-order $w$ with $\dom[w^\pm]=x$. Now consider the function $f:a\to x$ assigning to every $\alpha\in a$ the unique $w$-minimal element $\min_w(x_\alpha)$ of the set $x_\alpha$.
Then $f\in\prod_{\alpha\in a}x_a$, witnessing that the Cartesian product $\prod_{\alpha\in a}x_a$ is not empty.
\smallskip

$(\Pi)\Ra(\mathsf{AC})$: By $(\Pi)$, for any set $a$, the Cartesian product $\prod_{x\in a\setminus\{\emptyset\}}x$ contains some function $f:a\setminus\{\emptyset\}\to\bigcup a$, which is a choice function for the set $a$.
\end{proof}

Now we list some statements that are equivalent to the Axiom of Choice at presence of the Axiom of Foundation.

\begin{theorem}\label{t:AC+UV} Consider the following statements:
\begin{itemize}
\item[$(\mathsf{MC})$] {\sf Axiom of Multiple Choice.} For any set $x$ there exists a function $f:x\setminus\{\emptyset\}\to \UU$ assigning to each non-empty set $y\in x$ a nonempty finite set $f(y)\subseteq y$.
\item[$(\mathsf{AP})$] {\sf Antichain Principle.} Every order $r\in \UU$ has a maximal antichain $A\subseteq\dom[r^\pm]$.
\item[$(\mathsf{WL})$] Every linearly ordered set can be well-ordered.
\item[$(\mathsf{WP})$] The power-set of any well-ordered set can be well-ordered.
\item[$(\mathsf{WV})$] Every set $x\in\VV$ can be well-ordered.
\end{itemize}
Then $(\mathsf{AC})\Ra(\mathsf{MC})\Ra(\mathsf{AP})\Ra(\mathsf{WL})\Ra(\mathsf{WP})\Leftrightarrow(\mathsf{WV})$. If the Axiom of Foundation holds, then $(\mathsf{AC})\Leftrightarrow(\mathsf{MC})\Leftrightarrow(\mathsf{AP})\Leftrightarrow(\mathsf{WL})\Leftrightarrow(\mathsf{WP})\Leftrightarrow(\mathsf{WV})=(\mathsf{WO})$.
\end{theorem}

\begin{proof} The implication $(\mathsf{AC})\Ra(\mathsf{MC})$ is trivial. 
\smallskip

$(\mathsf{MC})\Ra(\mathsf{AP})$ Let $r\in\UU$ be any order and $x=\dom[r^\pm]$.
By $(\mathsf{MC})$, there exists a function $f:\mathcal P(x)\to\mathcal P(x)$ assigning to each (nonempty) subset $y\subseteq x$ a (nonempty) finite set $f(y)\subseteq y$. For every set $y$ let $$c(y)=\{z\in x:(\{z\}\times y)\cap (r^\pm\cup \Id)=\emptyset\}$$be the set of elements of $x$ that are $r$-incomparable with elements of the set $y$. Also let 
$$\mu(y)=\{z\in y:(y\times\{z\})\cap r\subseteq\Id\}$$ be the set of $r$-minimal elements of the set $y$. Consider the function
$$F:\On\times\UU\to\UU,\quad F(\alpha,y)=\textstyle(\bigcup \rng[y])\cup \mu(f(c(\bigcup \rng[y]))).$$ By the Recursion Theorem~\ref{t:Recursion}, there exists a function $G:\On\to\UU$ such that $G(\alpha)=F(\alpha,G{\restriction}_\alpha)$ for every ordinal $\alpha$. By transfinite induction it can be shown that for every ordinal $\alpha$ the set $G(\alpha)$ is an $r$-antichain in $x$. Since the transfinite sequence of sets $(G(\alpha))_{\alpha\in\On}$ is non-decreasing, there exists an ordinal $\alpha$ such that $G(\alpha+1)=G(\alpha)$. We claim that $G(\alpha)$ is a maximal $r$-antichain in $x$. In the opposite case the set $c(G(\alpha))$ is not empty and so are the sets $f(c(G(\alpha))$ and $\mu(f(c(G_\alpha)))$. Then 
\begin{multline*}
G(\alpha+1)=F(\alpha+1,G{\restriction}_{\alpha+1})=\\
\textstyle\bigcup\{G(\beta)\}_{\beta\le\alpha}\cup\mu(f(c(\bigcup\{G(\beta)\}_{\beta\le \alpha})))=G(\alpha)\cup\mu(f(c(G(\alpha)))\ne G(\alpha),
\end{multline*}
which contradicts the choice of $\alpha$. Therefore, $G(\alpha)=G[\On]$ is a maximal $r$-antichain for the order $r$.
\smallskip

$(\mathsf{AP})\Ra(\mathsf{WL})$ Given any linear order $l\in\UU$ we should prove that the set $x=\dom[l^\pm]$ can be well-ordered. Consider the order 
$$r=\{\langle \langle a,y\rangle,\langle b,z\rangle\rangle\in(\mathcal P(x)\times x)\times(\mathcal P(x)\times x):a=b\;\wedge\;y\in a\;\wedge\;z\in b\;\wedge\;\langle y,z\rangle\in l\cup\Id\}$$ on the set $\mathcal P(x)\times x$. By $(\mathsf{AP})$,  there exists a maximal $r$-antichain $A\subseteq P(x)\times x$. The maximality and the antichain property of $A$ ensures that for every nonempty set $a\subseteq x$ there exists a unique element $f(a)\in a$ such that $\langle a,f(a)\rangle\in A$. Then $f$ is a choice function for the power-set $\mathcal P(x)$. By the Zermelo Theorem~\ref{t:WO}, the set $x$ can be well-ordered.
\smallskip

$(\mathsf{WL})\Ra(\mathsf{WP})$ Let $x$ be a well-ordered set and $w$ be a well-order such that $x=\dom[w^\pm]$. Then the power-set $\mathcal P(x)$ carries the lexicographic linear order $l$ defiend by
$$l=\{\langle a,b\rangle\in\mathcal P(x)\times\mathcal P(x):\exists \alpha\in b\setminus a\;\wedge\;\forall \beta\in x\;(\langle\beta,\alpha\rangle\in w\setminus\Id\;\Ra\;(\beta\in a\;\Leftrightarrow\;\beta\in b))\}.$$ By $(\mathsf{WL})$, the linearly ordered set $\mathcal P(x)$ can be well-ordered.
\smallskip

$(\mathsf{WP})\Ra(\mathsf{WV})$ Since every set $x\in \VV$ is contained in some set $V_\alpha$, it suffices to prove that for every ordinal $\beta$ the set $V_\beta$ can be well-ordered. By the Hartogs Theorem \ref{t:Hartogs}, for every ordinal $\beta$ there exists an ordinal $\gamma$ that does not admit an injective function into $V_\beta$. By $(\mathsf{WP})$, the power-set $\mathcal P(\gamma)$ admits a well-order $w$. For every ordinal $\alpha\le \beta$, define a well-order $w_\alpha$ on the set $V_\alpha$ by the recursive formula:
$$w_\alpha=\bigcup_{\delta\in\alpha}((V_\delta\times (V_{\alpha}\setminus V_\delta))\cup\{\langle y,z\rangle\in (\mathcal P(V_\delta)\setminus V_\delta)\times\mathcal (\mathcal P(V_\delta)\setminus V_\delta):\langle \rank_{w_\delta}[y],\rank_{w_\delta}[z]\rangle\in w\}.$$
Then $w_\beta$ is a well-order of the set $V_\beta$.
\smallskip

The implication $(\mathsf{WV})\Ra(\mathsf{WP})$ follows from Theorem~\ref{t:wOrd} and the fact that the class $\VV$ contains all ordinals and is closed under taking the power-set.
\smallskip

If the Axiom of Foundation holds, then $\UU=\VV$ (by Theorem~\ref{t:vN}) and the statement $(\mathsf{WV})$ coincides with the statement $(\mathsf{WO})$, which is equivalent to the Axiom of Choice by Zermelo Theorem~\ref{t:WO}. In this case the cain of impliacations  $(\mathsf{AC})\Ra(\mathsf{MC})\Ra(\mathsf{AP})\Ra(\mathsf{WL})\Ra(\mathsf{WP})\Leftrightarrow(\mathsf{WV})=(\mathsf{WO})$ turns into a chain of equivalences.
\end{proof}

Now we survey some statements, which are weaker than the Axiom of Choice. We start with an application of the Kuratowski-Zorn Lemma to ultrafilters.

\begin{definition}[Cartan] A class  $F$ is called \index{filter}{\em a filter} if the following conditions are satisfied:
\begin{enumerate}
\item $\emptyset\notin F$;
\item $\forall x\in F\;\forall y\in F\;(x\cap y\in F)$;
\item $\forall x\;\forall y\;((x\in F\;\wedge\;x\subseteq y\subseteq\bigcup F)\;\Rightarrow\;y\in F)$.
\end{enumerate}
A filter $F$ is called a {\em filter on a class} $X$ if $X=\bigcup F$.\\
\smallskip
A filter $F$ is called an \index{ultrafilter}{\em ultrafilter} if $F$ is equal to any filter $F'$ such that $F\subseteq F'$ and $\bigcup F=\bigcup F'$.
\end{definition}

\begin{example} The family $F=\{x\in\mathcal P(\w):\exists n\in\w\;(\w\setminus x\subseteq n)\}$ is a filter, called the {\em Fr\'echet filter} on $\w$.
\end{example}

The Kuratowski-Zorn Lemma has the following implication (the statement $\mathsf{UF}$ below is called the \index{Ultrafilter Lemma ({{\sf UF}})}{\em Ultrafilter Lemma}).

\begin{lemma}\label{c:UF} The Axiom of Choice implies the following statement:
\begin{itemize}
\index{({{\sf UF}})}\index{Choice Principle!({{\sf UF}})}\item[$(\mathsf{UF}){:}$] Each filter $\varphi$ on a set $x$ is a subset of some ultrafilter $u$ on $x$.
\end{itemize}
\end{lemma}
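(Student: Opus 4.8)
The plan is to realize the desired ultrafilter as a maximal element of the partially ordered set of all filters on $x$ that contain $\varphi$, and to extract this maximal element from the Kuratowski--Zorn principle $(\mathsf{KZ})$, which is equivalent to the Axiom of Choice by Theorem~\ref{t:mainAC}. First I would form the class
$$\mathcal F=\{F\in\mathcal P(\mathcal P(x)):\mbox{$F$ is a filter},\;\textstyle\bigcup F=x,\;\varphi\subseteq F\}.$$
Since every filter $F$ on $x$ satisfies $F\subseteq\mathcal P(x)$, the class $\mathcal F$ is a subclass of the set $\mathcal P(\mathcal P(x))$ and hence is a set. I would endow $\mathcal F$ with the inclusion order $r=\mathbf S{\restriction}\mathcal F$, which is a reflexive order and is itself a set (being a subclass of $\mathcal F\times\mathcal F$), and I would record that $\varphi\in\mathcal F$, so that $\mathcal F\ne\emptyset$.

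The key step is to check that the order $r$ is chain-bounded. Given an $r$-chain $C\subseteq\mathcal F$, I would take $U=\bigcup C$ as its candidate upper bound. For the empty chain $\varphi$ itself serves as an upper bound, so assume $C\ne\emptyset$; then $\bigcup U=\bigcup_{F\in C}\bigcup F=x$, so $U$ is a family of subsets of $x$ with $\bigcup U=x$. Now $\emptyset\notin U$, since $\emptyset\notin F$ for each $F\in C$; if $a,b\in U$, then $a\in F_1$ and $b\in F_2$ for some $F_1,F_2\in C$, and since $C$ is a chain I may assume $F_1\subseteq F_2$, whence $a\cap b\in F_2\subseteq U$; and if $a\in U$ with $a\subseteq b\subseteq x$, then $a\in F$ for some $F\in C$ and upward closure of $F$ gives $b\in F\subseteq U$. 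Thus $U$ is a filter on $x$ containing $\varphi$, so $U\in\mathcal F$ is an upper bound of $C$, and $r$ is chain-bounded.

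Having verified chain-boundedness, I would apply $(\mathsf{KZ})$ to obtain an $r$-maximal element $u\in\mathcal F$, which is a filter on $x$ with $\bigcup u=x$ and $\varphi\subseteq u$. It then remains to verify that $u$ is an ultrafilter: let $F'$ be any filter with $u\subseteq F'$ and $\bigcup F'=\bigcup u=x$. Then $\varphi\subseteq u\subseteq F'$ and $\bigcup F'=x$, so $F'\in\mathcal F$, and the $r$-maximality of $u$ together with $u\subseteq F'$ forces $F'=u$. Hence $u$ is an ultrafilter on $x$ with $\varphi\subseteq u$, as required. The only delicate point in the whole argument is the chain-boundedness verification, where the chain hypothesis on $C$ is precisely what is needed to keep the union $U$ closed under finite intersections.
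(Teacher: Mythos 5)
Your proof is correct and follows the same route as the paper: form the set of filters on $x$ extending $\varphi$ inside $\mathcal P(\mathcal P(x))$, order it by inclusion, and apply the Kuratowski--Zorn Lemma to extract a maximal element, which is the desired ultrafilter. The only difference is that you spell out the chain-boundedness verification and the final maximality-implies-ultrafilter step, which the paper leaves implicit.
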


\begin{proof} Given a filter $\varphi$ on a set $x$, consider the set $\hat\varphi$ of all filters on $x$ that contain the filter $\varphi$ as a subset. The set $\hat\varphi$ is a subset of the double exponent $\mathcal P(\mathcal P(x))$, so it exists by the Axiom of Power-Set. By the Kuratowski-Zorn Lemma, the set $\hat\varphi$ endowed with the partial order $\mathbf S{\restriction}\hat\varphi$ contains an $\mathbf S{\restriction}\hat\varphi$-maximal element $u$, which is the required ultrafilter on $x$ that contains $\varphi$.
\end{proof}

It is known \cite{JechAC} that the statement $(\mathsf{UF})$ appearing in Lemma~\ref{c:UF} is not equivalent to the Axiom of Choice. Nonetheless, it implies that each set admits a linear order.

\begin{proposition}\label{p:UF=>LO} The statement $(\mathsf{UF})$ implies the following weak version of $(\mathsf{WO})$:
\begin{itemize}
\index{({{\sf LO}})}\index{Choice Principle!({{\sf LO}})}\item[$(\mathsf{LO}){:}$] for every set $x$ there exists a linear order $\ell$ such that $x=\dom[\ell^\pm]$.
\end{itemize}
\end{proposition}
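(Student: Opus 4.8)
The plan is to produce a linear order on $x$ as an ``ultrafilter limit'' of the manifestly existing linear orders on the finite subsets of $x$. The cases $x=\emptyset$ and $x$ a singleton are trivial (take $\ell=\emptyset$, respectively $\ell=\Id{\restriction}x=\{\langle a,a\rangle\}$), so assume $x$ has at least two elements. First I would form the set
$$P=\{\langle F,r\rangle: F\subseteq x\text{ is finite},\ r\text{ is a reflexive linear order},\ \dom[r^\pm]=F\},$$
which is a set: it is a subclass of $\mathcal P(x)\times\mathcal P(x\times x)$ cut out by a $\UU$-bounded condition, so it exists by Theorem~\ref{t:class} and is a set by Exercise~\ref{ex:subclass} (using that $x\times x$ and its power-set are sets). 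The one elementary input here is that \emph{every} finite $F\subseteq x$ carries a linear order: by the definition of finiteness there is a bijection $f$ with $\dom[f]=F$ and $\rng[f]\in\w$, and transporting the (reflexive) membership order $\E{\restriction}\rng[f]\cup\Id{\restriction}\rng[f]$ of the ordinal $\rng[f]$ along $f^{-1}$ yields such an order. Consequently, for each finite $F\subseteq x$ the set $D_F=\{\langle G,r\rangle\in P:F\subseteq G\}$ is nonempty.

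Next I would note that $\{D_F:F\subseteq x\text{ finite}\}$ has the finite intersection property, because $D_{F_1}\cap\dots\cap D_{F_n}=D_{F_1\cup\dots\cup F_n}$ and a finite union of finite sets is finite, while $D_\emptyset=P$. Hence the family generates a genuine filter
$$\varphi=\{A\subseteq P:\exists\text{ finite }F\subseteq x\ (D_F\subseteq A)\}$$
on $P$, with $\bigcup\varphi=P$ and $\emptyset\notin\varphi$. Applying the Ultrafilter Lemma $(\mathsf{UF})$ to $\varphi$, I obtain a maximal filter $u$ on $P$ with $\varphi\subseteq u$ and $\bigcup u=P$. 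I then define the candidate order
$$\ell=\big\{\langle a,b\rangle\in x\times x:\ a\ne b\ \wedge\ E_{ab}\in u\big\},\qquad E_{ab}=\{\langle G,r\rangle\in P:\ a,b\in G\ \wedge\ \langle a,b\rangle\in r\}.$$

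It remains to verify that $\ell$ is a strict linear order with $\dom[\ell^\pm]=x$. For distinct $a,b\in x$ the set $D_{\{a,b\}}\in\varphi\subseteq u$ decomposes as the disjoint union $D_{\{a,b\}}=E_{ab}\cup E_{ba}$ (each reflexive linear order $r$ is total and antisymmetric, so for distinct $a,b\in G$ exactly one of $\langle a,b\rangle,\langle b,a\rangle$ lies in $r$). A short consequence of the maximality of $u$ is that when a member of $u$ is split into two disjoint pieces, exactly one piece belongs to $u$ (if a set $A$ with $P\setminus A\notin u$ met every element of $u$, then $u\cup\{A\}$ would generate a strictly larger filter with the same union); this gives at once the totality and antisymmetry/irreflexivity of $\ell$ on $x$. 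Transitivity follows from closure of $u$ under intersection: if $E_{ab},E_{bc}\in u$ then $E_{ab}\cap E_{bc}\in u$, and every $\langle G,r\rangle$ in this intersection has $a,b,c\in G$ with $\langle a,b\rangle,\langle b,c\rangle\in r$, whence $\langle a,c\rangle\in r$ by transitivity of $r$; thus $E_{ab}\cap E_{bc}\subseteq E_{ac}$ and $E_{ac}\in u$. Since $x$ has at least two points, every element of $x$ is $\ell$-comparable to some other, so $\dom[\ell^\pm]=x$, as required. The main obstacle is not conceptual but bookkeeping: checking within the Classical Set Theory that $P$, the generated filter $\varphi$, and the relation $\ell$ are all legitimate (sets, respectively $\UU$-bounded classes), and extracting the two-valued ``dichotomy'' from the paper's maximality definition of ultrafilter; once the ultrafilter is in hand, the verification of the order axioms is short.
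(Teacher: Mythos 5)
Your argument is correct and follows essentially the same route as the paper: an ultrafilter (via $(\mathsf{UL})$) on the set of linear orders of finite subsets of $x$, extending the filter generated by the sets $D_F$, with your condition $E_{ab}\in u$ being exactly the unwound form of the paper's $\ell=\bigcap_{l\in u}\bigcup l$. You have in addition supplied the verifications (finite sets are linearly orderable, the ultrafilter dichotomy from the paper's maximality definition, and the order axioms for $\ell$) that the paper leaves to the reader as an exercise.
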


\begin{proof} Given a set $x$, consider the set $\lambda$ of all linear orders $f$, which are finite subsets of $x\times x$. Let $\mathcal P_{<\w}(x)$ be the set of finite subsets of $x$. We recall that a set $s$ is called {\em finite} if there exists an injective function $g$ such that $\dom[g]=s$ and $\rng[g]\in\w$.

For any finite subset $a\subseteq x$ let $\lambda_a=\{f\in \lambda:a\subseteq\dom[f^\pm]\}$. Consider the set 
$$\varphi=\{l\subseteq \lambda:\exists a\in \mathcal P_{<\w}(x)\;(\lambda_a\subseteq l)\}.$$
It is easy to see that $\varphi$ is a filter with $\bigcup\varphi=\lambda$. By $(\mathsf{UF})$, the filter $\varphi$ is contained in some ultrafilter $u$ with $\bigcup u=\lambda$. It can be shown that $\ell=\bigcap_{l\in u}\bigcup l$ is a linear order with $\dom[\ell^\pm]=x$. 
\end{proof}

\begin{exercise} Fill all the details in the proof of Proposition~\ref{p:UF=>LO}.
\smallskip

\noindent{\em Hint:} First show that $x\times x\subseteq \ell^\pm\cup\Id$, then that $\ell$ is antisymmetric and finally that $\ell$ is transitive.
\end{exercise}

\begin{remark} The statement $(\mathsf{UF})$ is equivalent to many important statements in Mathematics. For example, it is equivalent to the compactness of the Tychonoff product of any family of compact Hausdorff spaces, see \cite[2.6.15]{JechAC}. For a long list of statements which are equivalent to $(\mathsf{UF})$, see \cite[Form 14]{HR}.
\end{remark}

\begin{exercise}[Kurepa] Prove that $(\mathsf{AC})\Leftrightarrow(\mathsf{AP}+\mathsf{LO})$.
\end{exercise}

\begin{proposition} The statement $(\mathsf{LO})$ implies the following statement
\begin{itemize} \setlength{\itemindent}{3pt}
\index{({{\sf AC$^{<\w}$}})}\index{Choice Principle!({{\sf AC$^{<\w}$}})}\item[$(\mathsf{AC}^{<\w}){:}$] For every set $A$ and every indexed family of finite nonempty sets $(X_\alpha)_{\alpha\in A}$, there exists a function $f:A\to\bigcup_{\alpha\in A}X_\alpha$ such that $f(\alpha)\in X_\alpha$ for every $\alpha\in A$.
\end{itemize}
\end{proposition}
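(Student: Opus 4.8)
The plan is to eliminate the need for choice by using the linear order to make \emph{canonical} selections: from each finite nonempty set one simply takes the $\ell$-least element, which is unique and therefore requires no arbitrary choice. The decisive observation is that finiteness alone --- without well-ordering --- already guarantees that a linearly ordered set has a least element, which is precisely why $(\mathsf{LO})$ suffices for finite fibres but would not suffice for arbitrary ones (there one genuinely needs $(\mathsf{WO})$).

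First I would assemble the union of the family as a genuine set. Writing the indexed family as a class $P\subseteq A\times\UU$ with $X_\alpha=P[\{\alpha\}]$, Theorem~\ref{t:index} yields the function $P^\bullet_A:A\to\UU$, $\alpha\mapsto X_\alpha$; since $A$ is a set, the Axiom of Replacement makes $\{X_\alpha:\alpha\in A\}=P^\bullet_A[A]$ a set, and the Axiom of Union then makes $u=\bigcup_{\alpha\in A}X_\alpha$ a set. Applying $(\mathsf{LO})$ to $u$ produces a linear order $\ell$ with $\dom[\ell^\pm]=u$.

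Next, for each $\alpha\in A$ the set $X_\alpha$ is a finite nonempty subset of $u=\dom[\ell^\pm]$, so the restriction $\ell{\restriction}X_\alpha$ is a linear order on the finite set $X_\alpha$. By Exercise~\ref{ex:finite-order-mm} (or a direct induction on the number of elements of $X_\alpha$) this restriction has an $\ell$-minimal element; linearity upgrades ``minimal'' to ``least'' and antisymmetry makes it unique, so $m_\alpha=\min_\ell(X_\alpha)$ is well-defined (singletons being the trivial case). I would then set
$$f=\{\langle\alpha,y\rangle\in A\times u:\langle\alpha,y\rangle\in P\;\wedge\;\forall z\;(\langle\alpha,z\rangle\in P\;\Rightarrow\;\langle y,z\rangle\in\ell\cup\Id)\},$$
a class that exists by G\"odel's class existence Theorem~\ref{t:class}, being cut out by a $\UU$-bounded formula in the parameters $P$ and $\ell$. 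The uniqueness of the least element guarantees that $f$ is a function with $\dom[f]=A$ and $f(\alpha)=m_\alpha\in X_\alpha$ for every $\alpha\in A$, which is the required choice function.

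The only genuinely delicate point is the passage from a minimal to a least element on each fibre: I must check that $\ell{\restriction}X_\alpha$ really is a linear order whose underlying class is all of $X_\alpha$ (the trichotomy of $\ell$ on $u$ covers every $X_\alpha$ with at least two points, and singletons are immediate), and that the resulting selection is uniquely determined so that no hidden choice re-enters through the definition of $f$. Everything else is routine bookkeeping verifying that the relevant classes exist and, where claimed, are sets.
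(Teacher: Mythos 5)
Your proposal is correct and follows the same route as the paper's proof: apply $(\mathsf{LO})$ to the union $\bigcup_{\alpha\in A}X_\alpha$ and select from each finite fibre its unique $\ell$-least element, which exists by Exercise~\ref{ex:finite-order-mm}. The extra bookkeeping you supply (that the union is a set, that $f$ exists as a class via Theorem~\ref{t:class}, and that minimal equals least under linearity) is sound and merely makes explicit what the paper leaves implicit.
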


\begin{proof} By the statement $\mathsf{LO}$, for the set $X=\bigcup_{\alpha\in A}X_\alpha$ there exists a linear order $\ell$ such that $\dom[\ell^\pm]=X$. Let $f:A\to X$ be the function assigning to every $\alpha\in A$ the unique $\ell$-minimal element of the finite set $X_\alpha$. It is clear that $f\in\prod_{n\in\w}X_\alpha$.
\end{proof}

\begin{exercise} Prove that for every finite linear order $\ell$, the finite set $\dom[\ell^\pm]$ contains a unique $\ell$-minimal element.
\smallskip

\noindent{\em Hint:} Apply the Principle of Mathematical Induction.
\end{exercise}

Observe that the statement $(\mathsf{TC})$ from Theorem~\ref{t:mainAC} implies the following weaker statement 
\vskip3pt
\index{({{\sf TC$_\w$}})}
\index{Choice Principle!({{\sf TC$_\w$}})}
\fbox{$\mbox{$(\mathsf{TC}_\w)${:}\;\; Every ordinary tree $t\subseteq\UU^{<\w}$ with $t\in\UU$ contains a maximal chain.}$}
\vskip5pt

The statement $(\mathsf{TC}_\w)$ is equivalent to the \index{Axiom of Dependent Choice ({{\sf DC}})}\index{({{\sf DC}})}{\em Axiom of Dependent Choice} $(\mathsf{DC})$, introduced by Paul Bernays in 1942 whose aim was to suggest an axiom which is weaker than $\mathsf{AC}$ and does not have strange consequences like the Banach-Tarski Paradox\footnote{{\bf Task:} Read about the Banach-Tarski Paradox in Wikipedia.} but still is sufficient for normal development of Matematical Analysis.

\begin{proposition}\label{p:TC=>DC} The principle $(\mathsf{TC}_\w)$ is equivalent to the following statement
\begin{itemize}
\index{({{\sf DC}})}\index{Choice Principle!({{\sf DC}})}\index{Axiom of Dependent Choice ({{\sf DC}})}\item[$(\mathsf{DC}){:}$] For any relation $r\in\UU$ with $\dom[r]=\dom[r^\pm]$ there exists a function $f$ such that $\dom[f]=\w$ and $\langle f(n),f(n+1)\rangle\in r$ for all $n\in\w$.
\end{itemize}
\end{proposition}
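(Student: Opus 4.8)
The plan is to prove both implications by passing between infinite $r$-branches and maximal chains in the tree of finite $r$-approximations. Throughout, for a relation $r\in\UU$ with field $X=\dom[r^\pm]$ (a set) I write $X^{<\w}=\bigcup_{n\in\w}X^n$ for the set of finite sequences in $X$; each $X^n$ exists by Exercise~\ref{ex:functionclass} and is a set, and the union over $n\in\w$ is a set by the Axioms of Replacement and Union. The degenerate case $r=\emptyset$, where $\dom[r]=\dom[r^\pm]=\emptyset$ yet no infinite $r$-sequence can exist, is excluded by convention, so I assume $X\ne\emptyset$ in the relevant direction.

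First I would prove $(\mathsf{TC}_\w)\Ra(\mathsf{DC})$. Given $r$ with $\dom[r]=\dom[r^\pm]=X\ne\emptyset$, set
$$T=\{\sigma\in X^{<\w}:\forall i\;(i+1\in\dom[\sigma]\;\Ra\;\langle\sigma(i),\sigma(i+1)\rangle\in r)\},$$
the set of finite $r$-paths. Since a restriction of an $r$-path to an initial segment is again an $r$-path, $T$ is an ordinary tree in $\UU^{<\w}$ and, being a subclass of the set $X^{<\w}$, is a set by Exercise~\ref{ex:subclass}. By $(\mathsf{TC}_\w)$ there is a maximal chain $C\subseteq T$. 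Two preliminary observations drive the argument: (a) $C$ is closed under restrictions, since if $\sigma\in C$ and $m\le\dom[\sigma]$ then $C\cup\{\sigma{\restriction}_m\}$ is still a chain, so $\sigma{\restriction}_m\in C$ by maximality; and (b) because $C$ is $\subseteq$-linearly ordered, $f=\bigcup C$ is a function whose domain is the union of the lengths of members of $C$, hence an element of $\w$ or $\w$ itself. I would then show $\dom[f]=\w$: were $\dom[f]=n\in\w$ finite, closure under restriction forces $f\in C$ to be the $\subseteq$-greatest element; choosing any $x\in X$ when $n=0$, or using $f(n-1)\in X=\dom[r]$ to find $y$ with $\langle f(n-1),y\rangle\in r$ when $n\ge1$, the one-step extension of $f$ lies in $T$ and is comparable with every member of $C$, contradicting maximality. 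Thus $f:\w\to X$, and reading off the pairs from the restrictions $f{\restriction}_{n+2}\in T$ gives $\langle f(n),f(n+1)\rangle\in r$ for all $n$, which is exactly $(\mathsf{DC})$.

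Next I would prove $(\mathsf{DC})\Ra(\mathsf{TC}_\w)$. Let $t\subseteq\UU^{<\w}$ be an ordinary tree with $t\in\UU$; if $t=\emptyset$ the empty chain is maximal, so assume $t\ne\emptyset$ (hence $\emptyset\in t$). On the set $t$ define the total relation
$$R=\{\langle\sigma,\tau\rangle:\sigma\subset\tau\;\wedge\;\dom[\tau]=\dom[\sigma]+1\}\cup\{\langle\sigma,\sigma\rangle:\sigma\text{ is a leaf of }t\},$$
where a leaf is a node with no immediate successor in $t$; then $R\ne\emptyset$ and $\dom[R]=\dom[R^\pm]=t$. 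Applying $(\mathsf{DC})$ to $R$ yields $f:\w\to t$ with $\langle f(n),f(n+1)\rangle\in R$ for all $n$, so $(f(n))_{n\in\w}$ is $\subseteq$-nondecreasing. I would then take $C=\{\tau\in t:\exists n\in\w\;(\tau\subseteq f(n))\}$, the downward closure of the range of $f$: this is a set, is a chain since all its members are restrictions of a single $\subseteq$-increasing sequence, and is downward closed. Maximality follows from the dichotomy enforced by $R$: either the $f(n)$ stabilize at a leaf $\ell$, so $C$ is the set of restrictions of $\ell$ and nothing comparable lies above the leaf; or the $f(n)$ are cofinally strictly increasing, so $\bigcup C$ has domain $\w$ and any node comparable with all of $C$ is a restriction of some $f(n)$, hence already in $C$.

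The main obstacle in both directions is the verification of maximality rather than the construction of a long chain. The two structural facts I would isolate are that a maximal chain in an ordinary tree is automatically closed under restrictions, and the dichotomy ``finite chain terminating at a leaf versus infinite branch''. Getting the self-loop-at-leaves device right in $(\mathsf{DC})\Ra(\mathsf{TC}_\w)$, so that $R$ is genuinely total and $(\mathsf{DC})$ applies, and ruling out a premature finite maximal chain in $(\mathsf{TC}_\w)\Ra(\mathsf{DC})$, are precisely where the hypotheses $\dom[r]=\dom[r^\pm]$ and maximality are used in an essential way.
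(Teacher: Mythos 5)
Your proof is correct, and the two directions compare differently against the paper. The forward implication $(\mathsf{TC}_\w)\Ra(\mathsf{DC})$ is the paper's argument with the gaps filled: same tree of finite $r$-paths, same maximal chain, same union; the paper simply asserts that $f=\bigcup c$ has $\dom[f]=\w$, whereas you actually verify it via the closure of a maximal chain under restrictions and a one-step extension using $\dom[r]=\dom[r^\pm]$ (and the nonemptiness of the field). The converse is where you genuinely diverge. The paper encodes the tree's edge relation on triples $\langle f,k,f(k)\rangle$ and proceeds by cases: if a maximal chain exists it stops, and otherwise it claims that the absence of maximal chains forces the totality condition $\dom[r]=\dom[r^\pm]$ for that triple relation before invoking $(\mathsf{DC})$ and decoding the resulting sequence back into a branch. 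Your relation lives directly on the nodes of $t$, and the self-loop-at-leaves device makes $\dom[R]=\dom[R^\pm]=t$ true by construction, with no case split and no contradiction argument; the $(\mathsf{DC})$-sequence is then converted into a maximal chain uniformly through the dichotomy ``stabilizes at a leaf'' versus ``strictly increasing branch.'' What your route buys is precisely the elimination of the paper's most delicate step — the claim that no-maximal-chains implies $\rng[r]\subseteq\dom[r]$ for the triple relation, which is the least transparent point of the paper's proof — at the modest cost of checking that a leaf's only $R$-successor is itself and that the downward closure of the range of $f$ is maximal. Your remark that $(\mathsf{DC})$ as literally stated fails for $r=\emptyset$ is a fair observation about a degenerate case the paper also passes over in silence.
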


\begin{proof} $(\mathsf{TC}_\w)\Ra(\mathsf{DC})$: Given any relation $r\in \UU$ with $\dom[r]=\dom[r^\pm]$, consider the set $t$ of all functions $f$ such that $\dom[f]\in\w$, $\rng[f]\subseteq\dom[r^\pm]$ and $\langle f(k),f(k+1)\rangle\in r$ for any $k\in\dom[f]$ with $k+1\in\dom[f]$. It is clear that $t$ is an ordinary tree with $t\subseteq\UU^{<\w}$. By $(\mathsf{TC}_\w)$, $t$ contains a maximal chain $c$. Then the union $f=\bigcup c$ is a required function with $\dom[f]=\w$ and $\langle f(n),f(n+1)\rangle\in r$ for all $n\in\w$. 
\smallskip

\noindent$(\mathsf{DC})\Ra(\mathsf{TC}_\w)$: Let $t\subseteq\UU^{<\w}$ be an ordinary tree, which is a set. Then the relation $$r=\big\{\langle\langle f,k,f(k)\rangle,\langle f,k+1,f(k+1)\rangle\rangle:(f\in t)\;\wedge\;(k\in \dom[f])\;\wedge\;(k+1\in\dom[f])\big\}$$
is a set, too. If $t$ has a maximal chain, then we are done. If $t$ has no maximal chains, then $t$ contains no $\mathbf S{\restriction}T$-maximal elements and hence $\dom[r]=\dom[r^\pm]$. By $(\mathsf{DC})$, there exists a function $g$ such that $\dom[g]=\w$ and $\langle g(n),g(n+1)\rangle\in r$ for all $n\in\w$. Then $g(0)=\langle f,k,f(k)\rangle$ for some $f\in t$ and $k\in\w$. We claim that $g(n)=\langle f,k+n,f(k+n)\rangle$ for all $n\in\w$. For $n=0$ this follows from the choice of $k$.
Assume that for some $n\in\w$ we proved that $g(n)= \langle f,k+n,f(k+n)\rangle$.
Since $\langle g(n),g(n+1)\rangle\in r$, the definition of the relation $r$ ensures that $g(n+1)=\langle f,k+n+1,f(k+n+1)\rangle$. By the Principle of Mathematical Induction, $g(n)=\langle f,k+n,f(k+n)\rangle$ for all $n\in\w$. Then $\dom[f]=\w$ and $\{f{\restriction}_n:n\in\w\}$ is a maximal chain in the tree $t$. 
\end{proof}

In its turns, the principle $(\mathsf{TC}_\w)$ implies the Axiom of Countable Choice $(\mathsf{AC_\w})$, introduced in the following proposition. 

\begin{proposition}\label{p:TCw=>ACw} The Principle $(\mathsf{TC}_\w)$  implies the following statement:
\begin{itemize}
\index{({{\sf AC$_\w$}})}\index{Choice Principle!({{\sf AC$_\w$}})}\item[$(\mathsf{AC}_\w){:}$] For any indexed sequence of nonempty sets $(X_n)_{n\in\w}$ there exists a function $f:\w\to\bigcup_{n\in\w}X_n$ such that $f(n)\in X_n$ for every $n\in\w$.
\end{itemize}
\end{proposition}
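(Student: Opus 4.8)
The plan is to feed an appropriate tree of finite partial choice functions into the Principle of Tree Choice $(\mathsf{TC}_\w)$ and read off a choice function as the union of a maximal branch. Given an indexed sequence of nonempty sets $(X_n)_{n\in\w}$, identified with a subclass $X\subseteq\w\times\UU$, I would first check that $Y=\bigcup_{n\in\w}X_n=\rng[X]$ is a set: by Theorem~\ref{t:index} the function $n\mapsto X_n$ exists, so its range $\{X_n:n\in\w\}$ is a set by the Axiom of Replacement, and hence $Y$ is a set by the Axiom of Union. Then I would define
$$T=\{s\in\UU^{<\w}:\dom[s]\in\w\;\wedge\;\forall i\in\dom[s]\;(s(i)\in X_i)\},$$
the collection of all finite sequences that are partial choice functions.

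The second step is to see that $T$ is a set and an ordinary tree. Each $Y^k$ is a set because $Y^k\subseteq\mathcal P(k\times Y)$ (Exercise~\ref{t:prodsets} and the Axiom of Power-set), the image $\{Y^k:k\in\w\}$ is a set by Replacement, and so $\bigcup_{k\in\w}Y^k$ is a set by the Axiom of Union; since $T\subseteq\bigcup_{k\in\w}Y^k$, the class $T$ is a set by Exercise~\ref{ex:subclass}. Moreover $T\subseteq\UU^{<\w}$, and if $s\in T$ and $\alpha$ is an ordinal, then $s{\restriction}_\alpha=s\cap(\alpha\times\UU)$ has domain $\dom[s]\cap\alpha$ — an initial segment of a natural number — and is again a partial choice function, so $s{\restriction}_\alpha\in T$. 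Thus $T$ is an ordinary tree lying in $\UU$, and I may apply $(\mathsf{TC}_\w)$ to obtain a maximal $\mathbf S{\restriction}T$-chain $c\subseteq T$.

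Next I would form $f=\bigcup c$. As the members of $c$ are pairwise comparable under inclusion, $f$ is a function, and for every $n\in\dom[f]$ there is $s\in c$ with $n\in\dom[s]$, whence $f(n)=s(n)\in X_n$; so $f$ is a partial choice function. The crux is to prove $\dom[f]=\w$. Its domain equals $\bigcup\{\dom[s]:s\in c\}$, a supremum of natural numbers, hence either $\w$ or some $k\in\w$. I would exclude the second case by maximality: if $\dom[f]=k\in\w$, then $f\in T$, and using only the single existential instance $\exists y\;(y\in X_k)$ guaranteed by nonemptiness of $X_k$, I can form $f'=f\cup\{\langle k,y\rangle\}\in T$ with $f\subsetneq f'$. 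Then $c\cup\{f'\}$ is an $\mathbf S{\restriction}T$-chain strictly larger than $c$, contradicting maximality. Therefore $\dom[f]=\w$ and $f:\w\to Y$ satisfies $f(n)\in X_n$ for all $n\in\w$, which is precisely $(\mathsf{AC}_\w)$.

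The main obstacle is bookkeeping rather than conceptual. The two delicate points are verifying rigorously that $T$ really is a set — so that the hypothesis $t\in\UU$ of $(\mathsf{TC}_\w)$ is met — and making sure the extension step invokes no illegitimate choice: it uses only one existential statement about the single set $X_k$, which is permitted, and does not select elements of infinitely many $X_n$ simultaneously. Once these are in place, the argument is entirely parallel to the derivations $(\mathsf{TC}_\w)\Ra(\mathsf{DC})$ in Proposition~\ref{p:TC=>DC}.
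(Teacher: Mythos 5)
Your proposal is correct and follows essentially the same route as the paper's proof: form the ordinary tree of finite partial choice functions, apply $(\mathsf{TC}_\w)$ to obtain a maximal chain, and take its union, with maximality forcing the domain to be all of $\w$. The extra verifications you supply (that $T$ is a set via $T\subseteq\bigcup_{k\in\w}Y^k$, and that the extension step uses only a single existential instance) are sound and merely fill in details the paper leaves implicit.
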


\begin{proof} Consider an ordinary tree $t$ consisting of functions $f$ such that $\dom[f]\in \w$ and $f(k)\in X_k$ for all $k\in\dom[f]$. By $(\mathsf{TC}_\w)$, the tree $t$ contains a maximal chain $c$. By the maximality of $c$, the union $f=\bigcup c$ is a function such that $\dom[f]=\w$ and $f{\restriction}_n\in t$ for all $n\in\w$. The definition of the tree $t$ ensures that  $f(k)\in X_k$ for all $k\in\w$.
\end{proof}

\begin{exercise} Prove that for any finite set $X$ there exists a choice function $f:X\setminus\{\emptyset\}\to\bigcup X$.
\vskip5pt

\noindent {\em Hint:} Use the Mathematical Induction on the cardinality of $X$.
\end{exercise}

\begin{exercise} Prove that $(\mathsf{AC}_\w)$ is equivalent to the existence of a choice function $c:X\setminus\{\emptyset\}\to \bigcup X$ for any countable set $X$.
\end{exercise}

In its turn, the Axiom of Countable Choice implies the following statement $(\mathsf{UT_\w})$ called the \index{Countable Union Theorem ($\mathsf{UT_\omega}$)}{\em Countable Union Theorem.}

\begin{proposition}\label{p:UT} The Axiom of Countable Choice $(\mathsf{AC}_\w)$ implies the following statement.
\begin{enumerate}
\index{({{\sf UT$_\w^{}$}})}\index{Choice Principle!({{\sf UT$_\w$}})}\item[$(\mathsf{UT}_\w){:}$] For any indexed family of countable sets $(X_n)_{n\in\w}$ the set $\bigcup_{n\in\w}X_n$ is countable.
\end{enumerate}
\end{proposition}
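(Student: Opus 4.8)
The plan is to isolate the single genuine appeal to choice — the simultaneous selection of enumerations of the sets $X_n$ — and then finish by purely choice-free bookkeeping about $\w\times\w$.

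First I would record that the objects in play are sets. Since each $X_n$ is a set, Theorem~\ref{t:index} provides the function $n\mapsto X_n$, so $\{X_n:n\in\w\}$ is a set by Replacement and $S:=\bigcup_{n\in\w}X_n=\bigcup\{X_n:n\in\w\}$ is a set by the Axiom of Union. Now comes the choice step. For each $n\in\w$ let
$$G_n=\{g\in\mathbf{Fun}:g^{-1}\in\mathbf{Fun}\ \wedge\ \dom[g]\in\w\cup\{\w\}\ \wedge\ \rng[g]=X_n\}$$
be the set of all bijections of an ordinal $\kappa\le\w$ onto $X_n$. Each $G_n$ is a set, being a subclass of $\mathcal P((\w\cup\{\w\})\times X_n)$, and it is nonempty: countability of $X_n$ is precisely the assertion that some such bijection exists, while for $X_n=\emptyset$ the empty function witnesses $G_n=\{\emptyset\}$ (so no case split on empty members is needed). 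The family $(G_n)_{n\in\w}$ exists as a subclass of $\w\times\UU$ by G\"odel's class-existence Theorem~\ref{t:class}. Applying $(\mathsf{AC}_\w)$ to $(G_n)_{n\in\w}$, I obtain a function $n\mapsto g_n$ with $g_n\in G_n$; write $\kappa_n=\dom[g_n]\in\w\cup\{\w\}$.

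Then I would assemble these into one surjection. The class
$$e=\{\langle\langle n,k\rangle,y\rangle:n\in\w\ \wedge\ k\in\kappa_n\ \wedge\ y=g_n(k)\}$$
is a function whose domain $A=\{\langle n,k\rangle:k\in\kappa_n\}$ is a subclass of $\w\times\w$ and whose range is exactly $S$, since every $y\in S$ lies in some $X_n=\rng[g_n]$. Thus $e$ is a surjection of a subset of $\w\times\w$ onto $S$. For the choice-free conclusion I fix the canonical well-order of $\w\times\w$ (either an explicit pairing bijection $\w\times\w\to\w$, or the restriction of $\mathsf W_{\!<}{\restriction}\Ord^{<\w}$ to pairs) and define $t:S\to\w\times\w$ by sending each $y$ to the least pair $\langle n,k\rangle\in A$ with $e(\langle n,k\rangle)=y$; this uses only the well-ordering of $\w\times\w$, so no choice is involved. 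The map $t$ is injective, hence $S$ is in bijection with the subset $\rng[t]$ of $\w\times\w$. Since $\w\times\w$ is countable (a choice-free fact proved via an explicit pairing function), every subset of it is countable — finite, or in bijection with $\w$ — and therefore $S=\bigcup_{n\in\w}X_n$ is countable.

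The main obstacle is the appeal to $(\mathsf{AC}_\w)$ at the selection of the $g_n$: this is unavoidable and is the entire content of the theorem, the difficulty lying solely in making that step legal (packaging the bijections into a set-indexed family of nonempty sets to which the axiom literally applies). Everything afterwards — countability of $\w\times\w$, passing to minimal preimages under a canonical well-order, and subsets of $\w$ being countable — is provable without any form of choice.
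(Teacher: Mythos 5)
Your proof is correct, and the essential step --- packaging, for each $n$, the nonempty set of witnesses to the countability of $X_n$ into a set-indexed family and applying $(\mathsf{AC}_\w)$ once --- is exactly the paper's move. Where you diverge is in the bookkeeping after the choice. The paper selects \emph{injections} $\varphi_n:X_n\to\w\cup\{\w\}$, defines $\nu(x)$ as the least $n$ with $x\in X_n$ and $\mu(x)=\max\{\nu(x),\varphi_{\nu(x)}(x)\}$, and then puts an explicit well-order $W$ on the union whose initial intervals are all finite; Theorem~\ref{t:wOrd} then gives $\rank(W)\le\w$ and the rank function itself is the desired bijection. You instead select \emph{bijections from ordinals onto} the $X_n$, glue them into a surjection $e$ from a subset of $\w\times\w$ onto the union, invert $e$ by taking least preimages under a canonical well-order of $\w\times\w$, and finish by citing that $\w\times\w$ is countable and that subsets of countable sets are countable. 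Both finishes are choice-free and both ultimately rest on a canonical enumeration of $\w\times\w$ (the paper's order $W$ is just the pullback of such an enumeration along $x\mapsto\langle\nu(x),\varphi_{\nu(x)}(x)\rangle$). The one thing to be aware of is that in this particular text the countability of $\w\times\w$ and the closure of countability under subsets are proved only later, in the part on cardinals, so the paper's self-contained well-order-plus-rank argument fits the local state of the development better; your route would require either importing those (choice-free) facts forward or proving an explicit pairing function on the spot. Also, a minor point: a bijection $g$ with $\dom[g]\in\w\cup\{\w\}$ satisfies $g\subseteq\w\times X_n$, so $G_n\subseteq\mathcal P(\w\times X_n)$ suffices; your bound $\mathcal P((\w\cup\{\w\})\times X_n)$ is harmlessly generous.
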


\begin{proof} By the Axiom of Union, the union $X=\bigcup_{n\in\w}X_n$ is a set. Consider the function $\nu:X\to \w$ assigning to every element $x\in X$ the smallest ordinal $\nu(x)\in\w$ such that $x\in X_{\nu(x)}$. 

For every $n\in\w$ consider the set $F_n$ of all injective functions $f$ such that $\dom[f]=X_n$ and $\rng[f]\in\w\cup\{\w\}$. Since the set $X_n$ are countable, the sets $F_n$ are not empty. By the Axiom of Countable Choice, there exists a function $\varphi\in\prod_{n\in\w}F_n$. For every $n\in\w$ denote the function $\varphi(n)\in F_{n}$ by $\varphi_n$. Consider the function $$\mu:X\to\w,\quad\mu:x\mapsto\max\{\nu(x),\varphi_{\nu(x)}(x)\}=\mu(x)\cup\varphi_{\nu(x)}(x).$$

On the set $X$ consider the irreflexive set-like well-order 
\begin{multline*}
W=\big\{\langle x,y\rangle\in X\times X: \mu(x)<\mu(y)\;\vee\;\big(\mu(x)=\mu(y)\;\wedge\; \nu(x)<\nu(y)\big)\;\vee\\\big(\mu(x)=\mu(y)\;\wedge\;\nu(x)=\nu(y)\;\wedge\;\varphi_{\nu(x)}(x)<\varphi_{\nu(y)}(y)\big)\big\}.
\end{multline*}

By Theorem~\ref{t:wOrd}, the function $\rank_W:X\to\rank(W)\in\Ord$ is an order isomorphism. The definition of the well-order $W$ implies that each initial interval of $W$ is finite\footnote{{\bf Exercise:} Prove (by Mathematical Induction) that all initial intervals of the well-order $W$ are finite.} and hence $\rank(W)\le\w$. Then $\rank_W$ is an injective function with $\dom[\rank_W]=X$ and $\rng[\rank_W]=\rank(W)\in\w\cup\{\w\}$, witnessing that the set $X$ is countable.
\end{proof}

\begin{proposition} The Countable Union Theorem implies the following statement:
\begin{enumerate}
\index{({{\sf AC$^\w_\w$}})}\index{Choice Principle!({{\sf AC$^\w_\w$}})}\item[$(\mathsf{AC}^\w_\w)$] For any indexed sequence of countable sets $(X_n)_{n\in\w}$, there exists a function\\ $f:\w\to\bigcup_{n\in\w}X_n$ such that $f(n)\in X_n$ for every $n\in\w$.
\end{enumerate}
\end{proposition}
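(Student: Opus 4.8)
The plan is to use the hypothesis $(\mathsf{UT}_\w)$ to collapse the infinitely many selections demanded by $(\mathsf{AC}^\w_\w)$ into a single instantiation of an existential statement. I regard the indexed sequence $(X_n)_{n\in\w}$ as a subclass $X\subseteq\w\times\UU$ whose fibres $X_n=\{x:\langle n,x\rangle\in X\}$ are the given countable sets; since the desired $f$ must satisfy $f(n)\in X_n$, a choice function can exist only if every $X_n$ is nonempty, and I assume this throughout. First I would apply $(\mathsf{UT}_\w)$ to the family $(X_n)_{n\in\w}$ to conclude that the union $X_\cup=\bigcup_{n\in\w}X_n=\rng[X]$ is a countable set.

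By the definition of countability there is a bijection $h$ with $\dom[h]=X_\cup$ and $\rng[h]\in\w\cup\{\w\}$. Since every element of $\w\cup\{\w\}$ is a subset of $\w$ (by the transitivity of $\w$, Theorem~\ref{t:wTr}), I would fix this single $h$ and view it as an injective function $g:X_\cup\to\w$. The crucial point is that fixing one such $g$ merely instantiates the existential guaranteed by $(\mathsf{UT}_\w)$; it is not an appeal to choice. Next I would transport the well-order of $\w$ to $X_\cup$ along $g$, setting
$$\ell=\{\langle x,y\rangle\in X_\cup\times X_\cup:g(x)\in g(y)\},$$
which is a set by the Axioms of Product, Membership and Difference (equivalently by Theorem~\ref{t:class}). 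I would then check that $\ell$ is an irreflexive well-order: it is a strict linear order because $g$ is injective and $\E{\restriction}\w$ is a strict linear order, and it is well-founded because for any nonempty $A\subseteq X_\cup$ the nonempty set $g[A]\subseteq\w$ has an $\E$-least element $m$, whose unique $g$-preimage in $A$ is the $\ell$-least element of $A$.

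Finally, using that each $X_n$ is a nonempty subset of $X_\cup$, I would define $f$ by sending $n$ to the $\ell$-least element $\min_\ell(X_n)$ of $X_n$, explicitly
$$f=\{\langle n,x\rangle\in\w\times X_\cup:x\in X_n\;\wedge\;\forall y\in X_n\;(x=y\;\vee\;g(x)\in g(y))\}.$$
This class exists by Theorem~\ref{t:class}, is a function because every nonempty $X_n$ has a unique $\ell$-least element, has domain $\w$, and is a set (being a subclass of the set $\w\times X_\cup$, see Exercise~\ref{ex:subclass}). By construction $f(n)\in X_n\subseteq\bigcup_{n\in\w}X_n$ for every $n\in\w$, which is exactly $(\mathsf{AC}^\w_\w)$. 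The only real obstacle is the conceptual one already flagged: one must ensure that no choice beyond $(\mathsf{UT}_\w)$ slips in, and this is handled precisely by extracting a single enumeration $g$ of the whole union and then defining every value $f(n)$ uniformly as the $\ell$-minimum of $X_n$, rather than choosing separately inside each fibre.
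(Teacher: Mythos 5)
Your proof is correct and follows essentially the same route as the paper's: apply the Countable Union Theorem to make $\bigcup_{n\in\w}X_n$ countable, hence well-orderable, and then define $f(n)$ uniformly as the least element of $X_n$ in that well-order. You merely spell out more explicitly how the well-order is transported from $\w$ along a single fixed enumeration and why no hidden choice is involved, which matches the paper's (terser) argument.
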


\begin{proof} Let $(X_n)_{n\in\w}$ be an indexed sequence of nonempty countable sets. By Countable Union Theorem, the set $X=\bigcup_{n\in\w}X_n$ is countable and hence $X=\dom[W^\pm]$ for some well-order $W$. Consider the function $f:\w\to X$ assigning to each $n\in\w$ the unique $W$-minimal element of the nonempty set $X_n$. 
\end{proof}  

Each of the statements $(\mathsf{AC}_\w^\w)$ or $(\mathsf{AC}^{<\w})$ imply the equivalent statements in the following theorem.

\begin{theorem}\label{t:Konig} The following statements are equivalent:
\begin{itemize}\setlength{\itemindent}{6pt}
\index{({{\sf AC$^{<\w}_\w$}})}\index{Choice Principle!({{\sf AC$^\{<\w}_\w$}})}\item[$(\mathsf{AC}_{\w}^{<\w}){:}$] For any indexed sequence of nonempty finite sets $(X_n)_{n\in\w}$, there exists a function $f:\w\to\bigcup_{n\in\w}X_n$ such that $f(n)\in X_n$ for every $n\in\w$.
\index{({{\sf UT$_\w^{<\w}$}})}\index{Choice Principle!({{\sf UT$_\w^{<\w}$}})}\item[$(\mathsf{UT}_{\w}^{<\w}){:}$] For any indexed sequence of finite sets $(X_n)_{n\in\w}$ the union $\bigcup_{n\in\w}X_n$ is countable.
\index{({{\sf TC$_\w^{<\w}$}})}\index{Choice Principle!({{\sf TC$_\w^{<\w}$}})}\item[$(\mathsf{TC}_{\w}^{<\w}){:}$] Any locally finite ordinary tree $T\subseteq \UU^{<\w}$ contains a maximal chain.
\index{({{\sf DC$_\w^{<\w}$}})}\index{Choice Principle!({{\sf DC$_\w^{<\w}$}})}\item[$(\mathsf{DC}^{<\w}){:}$] For any relation $r\in \UU$ such that for every $x\in \dom[r]$ the set $\{y:\langle x,y\rangle\in r\}$ is finite and non-empty, there exists a function $f$ such that $\dom[f]=\w$ and\\ $\forall n\in\w\;\langle f(n),f(n+1)\rangle\in r$. 
\end{itemize}
\end{theorem}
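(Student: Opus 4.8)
The plan is to run the single cycle of implications
$$(\mathsf{AC}_\w^{<\w})\Ra(\mathsf{UT}_\w^{<\w})\Ra(\mathsf{TC}_\w^{<\w})\Ra(\mathsf{DC}^{<\w})\Ra(\mathsf{AC}_\w^{<\w}),$$
each arrow reusing machinery already built in the text. For $(\mathsf{AC}_\w^{<\w})\Ra(\mathsf{UT}_\w^{<\w})$ I would imitate the proof of Proposition~\ref{p:UT}. Given finite sets $(X_n)_{n\in\w}$, let $m_n\in\w$ be the unique natural number equinumerous with $X_n$, and let $F_n$ be the set of all bijections $X_n\to m_n$. Each $F_n$ is a nonempty finite set (it has $m_n!$ elements), so $(\mathsf{AC}_\w^{<\w})$ furnishes a choice $\varphi_n\in F_n$. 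With these enumerations in hand the set-like well-order $W$ on $X=\bigcup_{n\in\w}X_n$ built in Proposition~\ref{p:UT} (via $\nu(x)=\min\{n:x\in X_n\}$ and $\mu(x)=\max\{\nu(x),\varphi_{\nu(x)}(x)\}$) has only finite initial intervals, whence Theorem~\ref{t:wOrd} gives that $X$ is countable.

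The substantive step is $(\mathsf{UT}_\w^{<\w})\Ra(\mathsf{TC}_\w^{<\w})$, which is the finitely branching K\"onig lemma. Let $T\subseteq\UU^{<\w}$ be a locally finite ordinary tree that is a set; we may assume $T\ne\emptyset$, so $\emptyset\in T$. Put $T_n=\{t\in T:\dom[t]=n\}$; since $T_{n+1}=\bigcup_{t\in T_n}\Succ_T(t)$ is a finite union of finite sets, induction on $n$ shows every $T_n$ is finite. Applying $(\mathsf{UT}_\w^{<\w})$ to $(T_n)_{n\in\w}$ makes $T=\bigcup_{n\in\w}T_n$ countable, hence it carries a set-like well-order $\prec$. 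Using Corollary~\ref{c:recursion3} I would define $g:\w\to T$ by $g(0)=\emptyset$ and $g(n+1)=$ the $\prec$-least element of $\Succ_T(g(n))$ when this set is non-empty, and $g(n+1)=g(n)$ otherwise; then $C=\{g(n):n\in\w\}$ is a maximal chain (an infinite branch if $g$ never stabilises, otherwise the finite path up to a leaf). This is the crux: countability of $T$ is exactly what converts the repeated ``pick a successor'' into canonical choices through $\prec$, so no further choice is needed.

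The remaining two arrows are routine. For $(\mathsf{TC}_\w^{<\w})\Ra(\mathsf{DC}^{<\w})$ I would form the ordinary tree $T$ of finite $r$-paths, i.e.\ functions $p$ with $\dom[p]\in\w$, $\rng[p]\subseteq\dom[r]$ and $\langle p(k),p(k+1)\rangle\in r$ whenever $k+1\in\dom[p]$; finiteness of the successor sets $\{y:\langle x,y\rangle\in r\}$ makes $T$ locally finite, and their non-emptiness (read as seriality $\dom[r]=\dom[r^\pm]$, which the statement tacitly intends) makes $T$ leafless, so the maximal chain supplied by $(\mathsf{TC}_\w^{<\w})$ is infinite and $f=\bigcup C$ is the desired sequence. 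For $(\mathsf{DC}^{<\w})\Ra(\mathsf{AC}_\w^{<\w})$, given nonempty finite $(X_n)_{n\in\w}$ let $r$ join each finite partial selector $s$ (a function with $\dom[s]=n\in\w$ and $s(k)\in X_k$) to every one-step extension $s\cup\{\langle n,a\rangle\}$ with $a\in X_n$; the set of such $s$ is a subclass of $(\bigcup_{n\in\w}X_n)^{<\w}$, hence a set, and each $s$ has finitely many and at least one $r$-successor, so $(\mathsf{DC}^{<\w})$ returns a chain $g(0)\subset g(1)\subset\cdots$ whose union is a choice function on all of $\w$.

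The main obstacle is the K\"onig step of the second paragraph: one must resist appealing to full choice and instead extract countability of the entire tree from $(\mathsf{UT}_\w^{<\w})$ \emph{before} making any selections, so that the branch can be read off a fixed well-order. The secondary delicate point is the dead-end issue in $(\mathsf{TC}_\w^{<\w})\Ra(\mathsf{DC}^{<\w})$: the literal hypothesis of $(\mathsf{DC}^{<\w})$ permits successors lying outside $\dom[r]$, so to keep the statement true one reads it, in line with the earlier $(\mathsf{DC})$ of Proposition~\ref{p:TC=>DC}, as requiring $\dom[r]=\dom[r^\pm]$, which guarantees that the path tree has no terminal nodes.
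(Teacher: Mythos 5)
Your overall plan is sound and, for the two substantive arrows, coincides with the paper's own argument: the implication $(\mathsf{AC}_\w^{<\w})\Ra(\mathsf{UT}_\w^{<\w})$ via choosing bijections of the $X_n$ onto natural numbers and rerunning the well-order construction of Proposition~\ref{p:UT}, and the K\"onig step $(\mathsf{UT}_\w^{<\w})\Ra(\mathsf{TC}_\w^{<\w})$ via first making the whole tree countable and only then reading a branch off a fixed well-order, are exactly what the paper does. Closing the cycle through $(\mathsf{DC}^{<\w})$ instead of returning directly to $(\mathsf{AC}_\w^{<\w})$ is a harmless reorganization (the paper runs $(\mathsf{TC}_\w^{<\w})\Ra(\mathsf{AC}_\w^{<\w})$ and treats the equivalence with $(\mathsf{DC}^{<\w})$ separately, both only ``by analogy'' with earlier propositions, so you actually supply more detail than the text). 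Your observation that $(\mathsf{DC}^{<\w})$ must be read as requiring seriality, $\dom[r]=\dom[r^\pm]$, is the correct repair of a statement that is otherwise false as literally written.

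There is, however, one concrete flaw in your arrow $(\mathsf{TC}_\w^{<\w})\Ra(\mathsf{DC}^{<\w})$: the tree $T$ of \emph{all} finite $r$-paths is not locally finite whenever $\dom[r]$ is infinite, because the root $\emptyset$ has one immediate successor $\{\langle 0,x\rangle\}$ for every $x\in\dom[r]$, so $\Succ_T(\emptyset)$ is an infinite set and $(\mathsf{TC}_\w^{<\w})$ does not apply to $T$. (Take $r$ to be the successor relation on $\w$: every node above level $0$ has exactly one successor, yet the root has infinitely many.) The repair is immediate and costs no choice: fix a single element $x_0\in\dom[r]$ and restrict $T$ to those paths $p$ with $p(0)=x_0$ whenever $0\in\dom[p]$. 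Then $\Succ_T(\emptyset)$ is a singleton, every other successor set is finite by hypothesis and nonempty by seriality, so the rooted tree is locally finite and leafless, and the maximal chain supplied by $(\mathsf{TC}_\w^{<\w})$ is an infinite branch whose union is the required function. With this one-line amendment your proof is complete.
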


\begin{proof} $(\mathsf{AC}_{\w}^{<\w})\;\Ra\; (\mathsf{UT}_{\w}^{<\w})$: Given any sequence of finite sets $(X_n)_{n\in\w}$, for every $n\in\w$ consider the set $$F_n=\{f\in\Fun:f^{-1}\in\Fun\;\wedge\;\dom[f]=X_n\;\wedge\;\rng[f]\in\w\}.$$ Since the sets $X_n$ are finite, the sets $F_n$ are finite and nonempty (this can be proved by Mathematical Induction). By $(\mathsf{AC}_{\w}^{<\w})$, there exists a function $\varphi\in\prod_{n\in\w}F_n$. Repeating the argument of the proof of Proposition~\ref{p:UT}, we can prove that the union $\bigcup_{n\in\w}X_n$ is countable.
\smallskip

$(\mathsf{UT}_{\w}^{<\w})\;\Ra\; (\mathsf{TC}_{\w}^{<\w})$: Let $T\subseteq\UU^{<\w}$ be a locally finite ordinary tree. For every $n\in\w$ consider the class $T_n=\{t\in T:\dom[t]=n\}$. Using the Principle of Mathematical Induction and the local finiteness of the tree $T$, one can prove that for every $n\in\w$ the class $T_n$ is a finite set. By  $(\mathsf{UT}_{\w}^{<\w})$, the union $T=\bigcup_{n\in\w}T_n$ is a countable set. Consequently, the set $T$ admits a well-order $W$ such that $\dom[W^\pm]=T$.

Let $L$ be the class of chains in the tree $T$. For every chain $\ell\in L$, the union $\bigcup\ell$ is a function with $\dom[\bigcup\ell]\subseteq\w$. Let $\Succ_T(\bigcup\ell)=\{t\in T:\bigcup\ell\subset t\;\wedge\;\dom[t]=\dom[\bigcup\ell]+1\}$ be the (finite) set of immediate successors of the function $\bigcup\ell$ in the tree $T$.
Consider the function 
$F:\w\times\UU\to\UU$ assigning to every ordered pair $\langle n,\ell\rangle\in \w\times\UU$ the set
$$F(n,\ell)=\begin{cases}\min_W(\Succ_T(\bigcup\rng[\ell]))&\mbox{if $\rng[\ell]\in L$ and $\Succ_T(\bigcup\ell)\ne\emptyset$};\\
\emptyset&\mbox{otherwise}.
\end{cases}
$$By the Recursion Theorem~\ref{t:Recursion}, there exists a function $G\colon\w\to \UU$ such that $G(n)=F(n,G{\restriction}_n)$ for every $n\in\w$. It can be shown that $G[\w]$ is a maximal chain in the tree $T$.
\smallskip

The implication $(\mathsf{TC}_{\w}^{<\w})\;\Ra\;(\mathsf{AC}_{\w}^{<\w})$ can be proved by analogy with Proposition~\ref{p:TCw=>ACw}, and the equivalence $(\mathsf{TC}_{\w}^{<\w})\;\Leftrightarrow\;(\mathsf{DC}_{\w}^{<\w})$ can be proved by analogy with Proposition~\ref{p:TC=>DC}.
\end{proof}



\begin{theorem}[K\H onig, 1927]\index{K\H onig Theorem} The statement $(\mathsf{AC}_\w^{<\w})$ is equivalent to the statement
\begin{itemize}\setlength{\itemindent}{8pt}
\index{({{\sf TC$_\w^{<\w}$}})}\index{Choice Principle!({{\sf TC$_\w^{<\w}$}})}\item[$(\mathsf{TC}^{<\w}_\w){:}$] Every locally finite ordinary tree of countable height has a maximal chain.
\end{itemize}
\end{theorem}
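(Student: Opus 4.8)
The plan is to read ``locally finite ordinary tree of countable height'' as a locally finite ordinary tree $T\subseteq\UU^{<\w}$ (so that its height $\dom[T]\subseteq\w$ is automatically countable), whence $(\mathsf{TC}^{<\w}_\w)$ is literally the principle of the same name appearing in Theorem~\ref{t:Konig}. Thus the asserted equivalence is already contained in the web of equivalences proved there; to make the present statement self-contained I would exhibit the two implications directly.

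For $(\mathsf{TC}^{<\w}_\w)\Ra(\mathsf{AC}^{<\w}_\w)$ I would copy the construction used for Proposition~\ref{p:TCw=>ACw}. Given nonempty finite sets $(X_n)_{n\in\w}$, the class $\bigcup_{n\in\w}X_n$ is a set by the Axiom of Union, and I form the tree $T=\{f\in\Fun:\dom[f]\in\w\ \wedge\ \forall k\in\dom[f]\ (f(k)\in X_k)\}$, a subset of $\mathcal P(\w\times\bigcup_{n\in\w}X_n)$ and hence a set. It is an ordinary tree with $T\subseteq\UU^{<\w}$; it is locally finite because the immediate successors of a node $f$ of level $n=\dom[f]$ are exactly the one-point extensions of $f$ by a value of $X_n$, so $\Succ_T(f)$ is in bijection with the finite set $X_n$; and its height is $\w$. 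Applying $(\mathsf{TC}^{<\w}_\w)$ produces a maximal chain $C\subseteq T$, and since every $X_n$ is nonempty no node of $T$ is a leaf, so the chain cannot terminate; therefore $f=\bigcup C$ is a function with $\dom[f]=\w$ and $f(n)\in X_n$ for all $n\in\w$, i.e.\ a choice function for $(X_n)_{n\in\w}$.

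For $(\mathsf{AC}^{<\w}_\w)\Ra(\mathsf{TC}^{<\w}_\w)$ I would follow the route $(\mathsf{AC}^{<\w}_\w)\Ra(\mathsf{UT}^{<\w}_\w)\Ra(\mathsf{TC}^{<\w}_\w)$ of Theorem~\ref{t:Konig}. Let $T\subseteq\UU^{<\w}$ be a locally finite ordinary tree, which we may assume nonempty so that the root $\emptyset$ belongs to $T$. By induction on $n$ (using only the Principle of Mathematical Induction) each level $T_n=\{t\in T:\dom[t]=n\}$ is finite: $T_0=\{\emptyset\}$, and $T_{n+1}=\bigcup_{t\in T_n}\Succ_T(t)$ is a finite union of finite sets. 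Hence $T=\bigcup_{n\in\w}T_n$ is a countable union of finite sets, so $(\mathsf{UT}^{<\w}_\w)$, equivalent to $(\mathsf{AC}^{<\w}_\w)$ by Theorem~\ref{t:Konig}, shows that $T$ is a countable set, which therefore carries a well-order $W$ with $\dom[W^\pm]=T$. Finally I would produce the chain by the Recursion Theorem~\ref{t:recursion} along the canonical selector $\min_W$, setting $c_0=\emptyset$ and $c_{n+1}=\min_W(\Succ_T(c_n))$ as long as $\Succ_T(c_n)\ne\emptyset$; the resulting chain is maximal because it either stops at a leaf or is an infinite branch that, all node-domains being finite, admits no proper extension in $T$.

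I expect the only genuine use of the hypothesis to be the passage from the finite levels $T_n$ to the countability of $T$: this is exactly $(\mathsf{UT}^{<\w}_\w)$ and is the step that fails in $\CST$ without any choice principle, so it is the crux. Everything after it is choice-free, since the recursion selects the fixed $W$-minimal successor; the only point needing care is the verification that the constructed chain is maximal, which reduces to the remark that a chain $\{c_n\}$ exhausting an infinite branch cannot be enlarged inside $T\subseteq\UU^{<\w}$, because any enlarging node would have domain $\ge\w$.
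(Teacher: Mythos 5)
Your proof is correct and follows essentially the same route as the paper: under your (intended) reading of ``countable height'' as $T\subseteq\UU^{<\w}$, the statement is exactly the equivalence $(\mathsf{AC}^{<\w}_\w)\Leftrightarrow(\mathsf{TC}^{<\w}_\w)$ already established inside Theorem~\ref{t:Konig} via $(\mathsf{AC}^{<\w}_\w)\Ra(\mathsf{UT}^{<\w}_\w)\Ra(\mathsf{TC}^{<\w}_\w)$ together with the tree-of-partial-choice-functions argument of Proposition~\ref{p:TCw=>ACw}, which is why the paper states this theorem without a separate proof. Your two implications reproduce those steps (finite levels by induction, countability of $T$ from $(\mathsf{UT}^{<\w}_\w)$, recursion along $\min_W$, and the observation that a maximal chain in a leafless tree must reach every level), so there is nothing to object to.
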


Therefore, we have the following diagram of statements related to the Axiom of Choice.
$$
\xymatrix@C=21pt{
&&&\mathsf{MC}\ar@{=>}[r]&\mathsf{AP}\ar@{=>}[r]&\mathsf{WL}\ar@{=>}[r]&\mathsf{WP}\ar@{<=>}[r]&\mathsf{WV}\ar^{\UU=\VV}[d]\\
\mathsf{LO}\ar@{=>}[d]&\mathsf{UF}\ar@{=>}[l]&\mathsf{WO}\ar@{<=>}[r]\ar@{=>}[l]&\mathsf{AC}\ar@{<=>}[r]\ar@{=>}[u]&\mathsf{KZ}\ar@{<=>}[r]&\mathsf{MP}\ar@{<=>}[r]&\mathsf{TT}\ar@{<=>}[r]&\mathsf{TC}\ar@{=>}[r]&\mathsf{DC}\ar@{<=>}[d]\\
\mathsf{AC}^{<\w}\ar@{=>}[r]&\mathsf{AC}^{<\w}_\w\ar@{<=>}[r]&\mathsf{TC}^{<\w}_\w\ar@{<=>}[r]&\mathsf{DC}^{<\w}_\w&\mathsf{UT}^{<\w}_\w\ar@{<=>}[l]&\AC^\w_\w\ar@{=>}[l]&\mathsf{UT}_\w\ar@{=>}[l]&\mathsf{AC}_\w\ar@{=>}[l]&\mathsf{TC}_\w\ar@{=>}[l]\\
}
$$

\begin{remark} All implications in this diagram are strict (i.e., cannot be reversed). The proof of this fact requires more advanced technique, see \cite{JechAC}. For applications of weaker forms of choice in topology, see \cite{Wajch}. 
\end{remark}  

\newpage

\section{Global Choice}\label{s:GChoice}

\rightline{\em The strongest principle of growth lies in human choice}

\rightline{George Eliot}
\vskip25pt

In this section we study the interplay between global versions of Choice Principles that were analyzed in the preceding section. First recall some notions related to well-orderability of classes.

A class $X$ 
\begin{itemize}
\item can be \index{class!well-ordered} {\em well-ordered} if there exists a well-order $W$ such that $X=\dom[W^\pm]$;
\item is \index{class!well-orderable}{\em well-orderable} if there exists a set-like well-order $W$ such that $X=\dom[W^\pm]$;
\item {\em admits a choice function} if there exists a function $f:X\to\UU$ assigning to each non-empty set $x\in X$ some element $f(x)\in x$;
\item is \index{class!cumulative}{\em cumulative} if $X$ admits a function $f:X\to\Ord$ such that for every ordinal $\alpha$ the preimage  $f^{-1}[\alpha]=\{x\in X:f(x)\in \alpha\}$ is a set.
\end{itemize} 
A class $X$ is cumulative if and only if its can be written as the union $X=\bigcup_{\alpha\in \Ord}X_\alpha$ of a transfinite sequence of sets $(X_\alpha)_{\alpha\in\Ord}$. The von Neumann  cumulative hierarchy $(V_\alpha)_{\alpha\in\Ord}$ witness that the class $\VV$ is cumulative.

The following theorem is the class generalization of the Zermelo Theorem~\ref{t:WO}.

\begin{theorem}\label{t:CZ} For a class $X$ consider the statements:
\begin{enumerate}\setlength{\itemindent}{15pt}
\item[$\mathsf{WO}(X){:}$] The class $X$ is well-orderable.
\item[$\mathsf{Wo}(X){:}$] The class $X$ can be well-ordered.
\item[$\mathsf{wo}(X){:}$] The class $X$ admits an order $W$ such that $X=\dom[W^\pm]$ and each non-empty subset $y\subseteq X$ has the $W$-smallest element.
\item[$\mathsf{C}(\mathcal PX){:}$] The class $\mathcal P(X)$ admits a choice function $f:\mathcal P(X)\setminus\{\emptyset\}\to X$.
\end{enumerate}
Then $\mathsf{WO}(X)\Ra \mathsf{Wo}(X) \Ra \mathsf{wo}(X)\Ra \mathsf{C}(\mathcal PX)$. 

\noindent If the class $X$ is cumulative, then $\mathsf{C}(\mathcal P X)\Ra \mathsf{WO}(X)$ and hence $$\mathsf{WO}(X)\Leftrightarrow \mathsf{Wo}(X) \Leftrightarrow \mathsf{wo}(X)\Leftrightarrow \mathsf{C}(\mathcal P X).$$
\end{theorem}

\begin{proof} The implications $\mathsf{WO}(X)\Ra \mathsf{Wo}(X) \Ra \mathsf{wo}(X)$ are trivial. To prove that $\mathsf{wo}(X)\Ra \mathsf{C}(\mathcal PX)$, assume that the class admits an order $W$ such that $X=\dom[W^\pm]$ and each non-empty subset $y\subseteq X$ has the $W$-smallest element $f(y)\in y$. Then $f:\mathcal P(X)\setminus\{\emptyset\}\to X$ is a well-defined choice function for the class $\mathcal P(X)$ of all subsets of $X$.

Now assuming that the class $X$ is cumulative, we prove that $\mathsf{C}(\mathcal PX)\Ra\mathsf{WO}(X)$. By $\mathsf{C}(\mathcal PX)$, there exists a function $C:\mathcal P(X)\setminus\{\emptyset\}\to \UU$ such that  $C(y)\in y$ for every nonempty set $y\subseteq X$. If $X$ is a set, then the well-orderability of the set $X$ follows from Zermelo's Theorem~\ref{t:WO}. So, we assume that $X$ is a proper class. By the cumulativity of the class $X$, there exists a function $\lambda:X\to\Ord$ such that for every ordinal $\alpha$ the preimage $X_\alpha=\lambda^{-1}(\alpha)=\{x\in X:\lambda(x)=\alpha\}$ is a set. 

Observe that for every set $y$ the class $X\setminus y$ is proper and hence non-empty and then $\lambda[X\setminus y]$ is a nonempty subclass of $\Ord$, so we can consider the smallest ordinal $\min \lambda[X\setminus y]$ in $\lambda[X\setminus y]$. 
Then the function 
$$F:\Ord\times \UU\to\UU,\quad F:\langle\alpha,y\rangle\mapsto C(X_{\min\lambda[X\setminus \rng[y]]}\setminus \rng[y])$$is well-defined. By the Recursion Theorem~\ref{t:Recursion}, there exists a function $G:\Ord\to\UU$ such that 
$$G(\alpha)=F(\alpha,G{\restriction}_\alpha)=C(X_{\min\lambda(X\setminus G[\alpha])}\setminus G[\alpha])\in X\setminus G[\alpha]$$ for every $\alpha\in \Ord$.
This property of $G$ implies that $G$ is injective. Next, we show that $G[\Ord]=X$. Assuming that $G[\Ord]\ne X$, we can find a set $z\notin G[\Ord]$ and an ordinal $\alpha$ such that $z\in X_\alpha$. It follows that for every ordinal $\beta$, the set $X_\alpha\setminus G[\beta]\ni z$ is not empty. The definition of the function $F$ guarantees that $G(\beta)=F(\beta,G{\restriction}_\beta)\in \bigcup_{\gamma\le\alpha}X_\gamma$. Now we see that $G$ is an injective function from $\Ord$ to the set $\bigcup_{\gamma\le\alpha}X_\gamma$ which contradicts the Axiom of Replacement. This contradiction shows that $G[\Ord]=X$. Then we can define a set-like well-order $W$ on $X$ by the formula
$$W=\{\langle G(\alpha),G(\beta)\rangle:\alpha\in\beta\in\Ord\}.$$
\end{proof}

\begin{corollary}\label{c:CZ} A class $X$ is well-orderable if and only if it is cumulative and its power-class $\mathcal P(X)$ has a choice function $C:\mathcal P(X)\setminus\{\emptyset\}\to X$.
\end{corollary}

\begin{proof} The ``if'' part follows from Theorem~\ref{t:CZ}. To prove the ``only if'' part, assume that the class $X$ is well-orderable and hence admits a set-like well-order $W$ such that $\dom[W^\pm]=X$. By Theorem~\ref{t:CZ}, the power-set $\mathcal P(X)$ has a choice function. It remains to prove that the class $X$ is cumulative. By Theorem~\ref{t:wOrd}, the rank function $\rank_W:X\to \rank(W)\subseteq \Ord$ is an order isomorphism. Since the order $W$ is set-like, for every ordinal $\alpha$ the class $X_\alpha=\{x\in X:\rank_W(x)\in\alpha\}$ is a set, witnessing that the class $X$ is cumulative.
\end{proof}

In the following theorem we prove that under the assumption of cumulativity of the universe (denoted by \index{$\mathsf{C}{\mathbf U}$}$(\mathsf{C}\UU)$), the Axiom of Global Choice $(\mathsf{AGC})$ is equivalent to many global versions of the statements, equivalent to the Axiom of Choice. 

We recall that an ordinary tree $T\in\UU^{<\Ord}$ is called \index{tree!locally set}{\em locally set} if for each $t\in T$ the class $\Succ_T(t)$ of its immediate successors in $T$ is a set.

\begin{theorem}\label{t:mainAGC} If the universe $\UU$ is cumulative \textup{(}which follows from $\UU=\VV$\textup{)}, then the following statements are equivalent:
\begin{enumerate}\setlength{\itemindent}{6pt}
\index{({{\sf GWO}})}\index{Choice Principle!({{\sf GWO}})}\item[$(\mathsf{GWO}){:}$] There exists a set-like well-order $W$ such that $\dom[W^\pm]=\UU$.
\index{({{\sf GwO}})}\index{Choice Principle!({{\sf GwO}})}\item[$(\mathsf{GwO}){:}$] There exists a well-order $W$ such that $\dom[W^\pm]=\UU$.
\index{({{\sf Gwo}})}\index{Choice Principle!({{\sf Gwo}})}\item[$(\mathsf{Gwo}){:}$] There exists a linear order $W$ such that $\dom[W^\pm]=\UU$ and each nonempty set contains a $W$-minimal element.
\index{({{\sf GMP}})}\index{Choice Principle!({{\sf GMP}})}\item[$(\mathsf{GMP}){:}$] For every order $R$ there exists a maximal $R$-chain $C\subseteq\dom[R^\pm]$.
\item[$(\mathsf{GKZ}){:}$]\index{({{\sf GKZ}})}\index{Choice Principle!({{\sf GKZ}})} For every chain-bounded order $R$ there exists an $R$-maximal element $x\in\dom[R^\pm]$.
\item[$(\mathsf{GTC}){:}$]\index{({{\sf GTC}})}\index{Choice Principle!({{\sf GTC}})} Every ordinary tree has a maximal chain.
\item[$(\mathsf{TC^s}){:}$]\index{({{\sf TC$^{\mathsf s}$}})}\index{Choice Principle!({{\sf TC$^{\mathsf s}$}})} Every locally set ordinary tree has a maximal chain.
\item[$(\mathsf{EC}){:}$]\index{({{\sf EC}})}\index{Choice Principle!({{\sf EC}})} For every equivalence relation $R$ there exists a class $C$ such that for every $x\in\dom[R]$ the intersection $R[\{x\}]\cap C$ is a singleton.
\item[$(\mathsf{AC^c_c}){:}$]\index{({{\sf AC$_{\mathsf c}^{\mathsf c}$}})}\index{Choice Principle!({{\sf AC$_{\mathsf c}^{\mathsf c}$}})} For every indexed family of non-empty classes $(X_\alpha)_{\alpha\in A}$ there exists a function\\ $F\colon A\to\bigcup_{\alpha\in A}X_\alpha$ such that $F(\alpha)\in X_\alpha$ for all $\alpha\in A$.
\item[$(\mathsf{AC^s_c}){:}$]\index{({{\sf AC$_{\mathsf c}^{\mathsf s}$}})}\index{Choice Principle!({{\sf AC$_{\mathsf c}^{\mathsf s}$}})} For every indexed family of non-empty sets $(X_\alpha)_{\alpha\in A}$ there exists a function\\ $F\colon A\to\bigcup_{\alpha\in A}X_\alpha$ such that $F(\alpha)\in X_\alpha$ for all $\alpha\in A$.
\item[$(\mathsf{AGC}){:}$]\index{({{\sf AGC}})}\index{Choice Principle!({{\sf AGC}})} There exists a function $F{:}\UU{\setminus}\{\emptyset\}\to\UU$ such that $F(x){\in}x$ for every nonempty set $x$.
\end{enumerate}
Under the Axiom of Foundation $(\UU=\VV)$ these statements are equivalent to any of the following statements:
\begin{itemize}\setlength{\itemindent}{6pt}
\item[$(\mathsf{GMC}){:}$]\index{({{\sf GMC}})}\index{Choice Principle!({{\sf GMC}})} There exists a function $F\colon\UU\setminus\{\emptyset\}\to\UU$ assigning to each nonempty set $x$ a nonempty finite subset $F(x)\subseteq x$.
\item[$(\mathsf{GAP}){:}$]\index{({{\sf GAP}})}\index{Choice Principle!({{\sf GMC}})} For every order $R$ there exists a maximal $R$-antichain $C\subseteq\dom[R^\pm]$.
\item[$(\mathsf{GWL}){:}$]\index{({{\sf GWL}})}\index{Choice Principle!({{\sf GWL}})} Every linearly ordered class can be well-ordered.
\item[$(\mathsf{WPO}){:}$]\index{({{\sf WPO}})}\index{Choice Principle!({{\sf WPO}})} The class $\mathcal P(\Ord)$ can be well-ordered.
\item[$(\mathsf{GWV}){:}$]\index{({{\sf GWV}})}\index{Choice Principle!({{\sf GWV}})} The class $\VV$ is well-orderable.
\end{itemize}
\end{theorem} 

To prove Theorem~\ref{t:mainAGC}, in Lemmas~\ref{l:AV+TCs=>GWO}--\ref{l:AC=>TC} we shall prove the following implications.

$$
\xymatrix{
&(\mathsf{GWV})\ar@{<=>}^{\ref{l:WPO<=>GWV}}[r]\ar@{<->}^{\UU=\VV}[d]&(\mathsf{WPO})&(\mathsf{GWL})\ar@{=>}^{\ref{l:GWL=>WPO}}[l]&(\mathsf{GAP})\ar_{\mathsf{C}\UU}^{\ref{l:GAP=>GWL}}[l]&(\mathsf{GMC})\ar_{\mathsf{C}\UU}^{\ref{l:GMC=>GAP}}[l]\\
(\mathsf{TC^s})\ar^{\mathsf{C}\UU}_{\ref{l:AV+TCs=>GWO}}[r]&(\mathsf{GWO})\ar@{=>}[r]\ar@{=>}_{\ref{l:GWO=>GMP}}[d]&(\mathsf{GwO})\ar@{=>}[r]\ar@{=>}_{\ref{l:GMP=>EC}}[d]&(\mathsf{Gwo})\ar@{=>}_{\ref{t:CZ}}[r]&\mathsf{AGC}\ar@{<=>}[d]\ar@{=>}[ur]\\
&(\mathsf{GMP})\ar@{=>}_{\ref{l:GMP=>GKZ}}[d]&(\mathsf{EC})\ar@{=>}^{\ref{l:EC=>ACcc}}[r]&(\mathsf{AC^c_c})\ar@{=>}_{\ref{l:AC=>TC}}[d]\ar@{=>}[r]&(\mathsf{AC^s_c})\ar@{=>}^{\ref{l:AC=>TC}}[d]\\
&(\mathsf{GKZ})\ar@{=>}_{\ref{l:GKZ=>GTC}}[rr]&&(\mathsf{GTC})\ar@{=>}[r]&(\mathsf{TC^s})\\
}
$$

\begin{lemma}\label{l:AV+TCs=>GWO} $(\mathsf{C}\UU+\mathsf{TC^s})\;\Ra\;(\mathsf{GWO})$.
\end{lemma}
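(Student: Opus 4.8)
The plan is to imitate the proof of $(\mathsf{TC})\Ra(\mathsf{WO})$ from Theorem~\ref{t:mainAC}, but for the whole universe: I would build an ordinary tree $T$ whose nodes are injective partial enumerations $f\colon\beta\to\UU$ (with $\beta\in\Ord$) that fill the cumulative layers $(U_\alpha)_{\alpha\in\Ord}$ supplied by $(\mathsf{AV})$ in increasing order of $\alpha$. A maximal chain of $T$, furnished by $(\mathsf{TC^s})$, should union to a bijection $f\colon\Ord\to\UU$, and then the well-order $W=\{\langle f(\alpha),f(\beta)\rangle:\alpha\in\beta\in\Ord\}$ transported from $\E{\restriction}\Ord$ will witness $(\mathsf{GWO})$.

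First I would fix the family $(U_\alpha)_{\alpha\in\Ord}$ with $\UU=\bigcup_{\alpha\in\Ord}U_\alpha$ given by $(\mathsf{AV})$. For an injective set-function $g$ (i.e. $g^{-1}\in\mathbf{Fun}$), since $\rng[g]$ is a set while $\UU$ is a proper class, the ordinal $\mu(g)=\min\{\alpha\in\Ord:U_\alpha\not\subseteq\rng[g]\}$ is well-defined. I would then set
$$T=\{f\in\UU^{<\Ord}:f^{-1}\in\mathbf{Fun}\ \wedge\ \forall\gamma\in\dom[f]\ \ f(\gamma)\in U_{\mu(f{\restriction}_\gamma)}\}.$$
Closure under restrictions is immediate, so $T$ is an ordinary tree, and its existence follows from the axioms via Theorem~\ref{t:class}. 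The purpose of the layer condition is local-setness: for $t\in T$ with $\dom[t]=\gamma$, every immediate successor has the form $t\cup\{\langle\gamma,y\rangle\}$ with $y\in U_{\mu(t)}\setminus\rng[t]$, so $\Succ_T(t)$ injects into the set $U_{\mu(t)}$ and hence is a set. Thus $T$ is locally set and $(\mathsf{TC^s})$ applies, producing a maximal chain $C\subseteq T$; put $f=\bigcup C$.

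Next I would show that $f\colon\Ord\to\UU$ is a bijection. Since the defining condition of $T$ is checked pointwise on restrictions, $T$ is closed under unions of chains, so $f\in T$ and, by maximality, $f\in C$. If $f$ were a set then $\dom[f]=\beta\in\Ord$ and $U_{\mu(f)}\setminus\rng[f]\ne\emptyset$, so $f$ would admit a legal one-step extension $g\in T$, making $C\cup\{g\}$ a strictly larger chain and contradicting maximality. Hence $f$ is a proper class; since a set domain would make $f$ a set by Replacement, $\dom[f]=\Ord$, and $f$ is an injection $\Ord\to\UU$. The main obstacle is surjectivity, which I would prove by transfinite induction on $\alpha$ that $U_\alpha\subseteq\rng[f]$. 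Writing $\nu(\gamma)=\mu(f{\restriction}_\gamma)$, which is non-decreasing because $\rng[f{\restriction}_\gamma]$ grows with $\gamma$, the key point is that only set-many stages are spent inside a fixed layer: $f^{-1}[U_\alpha]$ injects into the set $U_\alpha$, hence is a bounded set of ordinals. Combining this with the inductive hypothesis $\bigcup_{\delta<\alpha}U_\delta\subseteq\rng[f]$, whose enumeration is again bounded by Replacement applied to the set $\bigcup_{\delta<\alpha}U_\delta$, I would locate a stage $\gamma_*$ past which $\nu(\gamma)\ge\alpha$ and $f(\gamma)\notin U_\alpha$, forcing $\nu(\gamma_*)>\alpha$ and therefore $U_\alpha\subseteq\rng[f{\restriction}_{\gamma_*}]\subseteq\rng[f]$. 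Thus $\rng[f]=\bigcup_{\alpha\in\Ord}U_\alpha=\UU$.

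Finally, from the bijection $f\colon\Ord\to\UU$ I would define $W=\{\langle f(\alpha),f(\beta)\rangle:\alpha\in\beta\in\Ord\}$. This is a well-order with $\dom[W^\pm]=\rng[f]=\UU$, and it is set-like because each initial interval $\cev{W}(f(\beta))=f[\beta]$ is a set by the Axiom of Replacement. Hence $(\mathsf{GWO})$ holds. I expect the bookkeeping in the surjectivity induction — keeping the two boundedness arguments straight and confirming that $\nu$ is non-decreasing — to be the only genuinely delicate part; the rest is a faithful transcription of the tree argument already used for $(\mathsf{TC})\Ra(\mathsf{WO})$, with the cumulative layers playing the role that makes the tree locally set.
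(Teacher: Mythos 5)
Your proof is correct and follows essentially the same route as the paper: the same tree of injective, layer-respecting partial enumerations, a maximal chain supplied by $(\mathsf{TC^s})$, and the well-order transported from $\E{\restriction}\Ord$ along the resulting bijection. The only difference is organizational: the paper handles surjectivity in one stroke (the least missed layer $\alpha$ would force $\rng[f]\subseteq\bigcup_{\gamma\le\alpha}U_\gamma$, hence $\dom[f]$ a set, hence the chain extendable), whereas you first establish $\dom[f]=\Ord$ and then run a separate transfinite induction on the layers; both arguments rest on the same use of Replacement and maximality.
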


\begin{proof} By $(\mathsf{C}\UU)$, there exists an indexed family of sets $(U_\alpha)_{\alpha\in\Ord}$ such that $\UU=\bigcup_{\alpha\in\Ord}U_\alpha$. This family induces the function $\mu:\UU\to\Ord$ assigning to every set $y$ the smallest ordinal $\alpha$ such that $U_\alpha\setminus y\ne\emptyset$. Let $T$ be the class of functions $f$ such that $\dom[f]\in\Ord$ and for every $\alpha\in\dom[f]$ with $\alpha+1\in\dom[f]$ we have $f(\alpha)\in U_{\mu(f[\alpha])}\setminus f[\alpha]$. It is easy to see that $T$ is a locally set ordinary tree. By $(\mathsf{TC^s})$, this tree contains a maximal chain $C$. Its union $F=\bigcup C$ is a function such that $\dom[f]\subseteq\Ord$ and $f(\alpha)\in U_{\mu(f[\alpha])}\setminus f[\alpha]$ for every $\alpha\in\dom[f]$. The latter condition ensures that $f$ is injective. 

We claim that $\rng[f]=\UU$. Assuming that $\rng[f]\ne\UU$, we can find the smallest ordinal $\alpha$ such that $U_\alpha\cap (\UU\setminus\rng[f])\ne\emptyset$. Then for every $\beta\in\Ord$ the set $U_\alpha\setminus f[\beta]\supseteq U_\alpha\setminus\rng[f]$ is not empty, which implies that $\mu(f[\beta])\le\alpha$ and $f(\beta)\in \bigcup_{\gamma\le\alpha}U_\gamma$. Consequently, $\rng[f]\subseteq\bigcup_{\gamma\le\alpha}U_\gamma$. By the injectivity of $f$ and the Axiom of Replacement, $\dom[f]\subseteq\Ord$ is a set and hence $\dom[f]\in\Ord$. Now take any element $z\in U_\alpha\setminus\rng[f]$ and consider the chain $\bar C\cup\{\langle\dom[f],z\rangle\}\subseteq T$, witnessing that the chain $C$ is not maximal. But this contradicts the choice of $C$. This contradiction shows that $\rng[f]=\UU$. Now we see that $$W=\{\langle f(\beta),f(\gamma)\rangle:\beta\in\gamma\in\dom[f]\subseteq\Ord\}$$ is a set-like well-order with $\dom[W^\pm]=\UU$.
\end{proof}

\begin{lemma}\label{l:GWO=>GKZ} $(\mathsf{GWO})\;\Ra\;(\mathsf{GKZ})$.
\end{lemma}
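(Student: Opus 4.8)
The plan is to globalize the proof of the Kuratowski--Zorn Lemma~\ref{l:KZL}, replacing its appeal to a choice function on $\mathcal P(x)$ by the global well-order supplied by $(\mathsf{GWO})$. From $(\mathsf{GWO})$ I fix a set-like well-order $W$ with $\dom[W^\pm]=\UU$; passing to $W\setminus\Id$ if necessary, I may assume $W$ is irreflexive. Since $W$ well-orders the whole universe, every nonempty subclass $A\subseteq\UU$ (not merely every nonempty set) has a unique $W$-least element $\min_W(A)$, and this is the only form of choice the argument will need. Suppose, toward a contradiction, that a chain-bounded order $R$ has no $R$-maximal element in $\dom[R^\pm]$.

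First I would introduce, for any set $y$, the class of strict upper bounds $v(y)=\{b\in\dom[R^\pm]:y\times\{b\}\subseteq R\setminus\Id\}$, and define a function $F:\Ord\times\UU\to\UU$ by $F(\alpha,y)=\min_W(v(y))$ whenever $v(y)\neq\emptyset$ (with an arbitrary fixed default otherwise); its existence follows from G\"odel's Theorem~\ref{t:class}, and note that $\min_W(v(y))$ makes sense even when $v(y)$ is a proper class. Applying the Recursion Theorem~\ref{t:recursion} to the set-like well-order $\E{\restriction}\Ord$ yields $G:\Ord\to\UU$ with $G(\alpha)=F(\alpha,G[\alpha])$. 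By transfinite induction I would verify that each $G[\alpha]$ is an $R$-chain and that $G$ is strictly $R$-increasing, i.e. $\langle G(\xi),G(\eta)\rangle\in R\setminus\Id$ whenever $\xi\in\eta$. The inductive heart of this is that $G[\alpha]$ is a set (by Replacement), so by chain-boundedness it has an upper bound $c$, which by hypothesis is not maximal; transitivity and antisymmetry of $R$ then promote a strict successor of $c$ to a strict upper bound of $G[\alpha]$, so $v(G[\alpha])\neq\emptyset$ and $G(\alpha)=\min_W(v(G[\alpha]))$ is itself a strict upper bound of $G[\alpha]$. In particular $G$ is injective.

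The decisive step will be extracting the contradiction. The image $C=G[\Ord]$ is an $R$-chain, so by chain-boundedness it has an upper bound $b\in\dom[R^\pm]$; using that $G$ is strictly increasing together with antisymmetry of $R$, I would first show $b\notin C$, which forces $b$ to be a strict upper bound of all of $C$, whence $b\in v(C)\subseteq v(G[\alpha])$ for every $\alpha$. Now comes the point where set-likeness of $W$ is used: since $G(\alpha)=\min_W(v(G[\alpha]))$ and $b\in v(G[\alpha])$ with $b\neq G(\alpha)$, we obtain $\langle G(\alpha),b\rangle\in W$, so $G(\alpha)\in\cev W(b)$ for every $\alpha$. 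Thus $C=G[\Ord]\subseteq\cev W(b)$, which is a set precisely because $W$ is set-like; hence $C$ is a set by Exercise~\ref{ex:subclass}, and then $\Ord=G^{-1}[C]$ is a set by the Axiom of Replacement, contradicting the properness of $\Ord$ from Theorem~\ref{t:Ord}(6).

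The main obstacle is exactly this last maneuver. In the set version of Kuratowski--Zorn the contradiction comes from injecting $\Ord$ into the \emph{set} $\dom[r^\pm]$, but here $\dom[R^\pm]$ may be a proper class, so the injectivity of $G$ is by itself harmless. What rescues the argument is that every value $G(\alpha)$ is the $W$-least strict upper bound of an initial segment of $C$, so a single strict upper bound $b$ of the whole chain $W$-dominates the entire range of $G$, trapping it inside the set $\cev W(b)$. This is the one place where $(\mathsf{GWO})$ is genuinely stronger than its non-set-like companion $(\mathsf{Gwo})$, and isolating it is the crux of the proof.
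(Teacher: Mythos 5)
Your proof is correct and follows essentially the same route as the paper's: transfinite recursion along $\E{\restriction}\Ord$ using $\min_W$ of the class of strict upper bounds, then trapping $G[\Ord]$ inside the set $\cev W(b)$ via set-likeness of $W$ and invoking Replacement against the properness of $\Ord$. Your explicit verification that the upper bound $b$ lies outside $G[\Ord]$ (hence is a strict upper bound) is a detail the paper glosses over, and your uniform treatment avoids the paper's separate appeal to the Kuratowski--Zorn Lemma when $R$ is a set.
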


\begin{proof} The proof is a suitable modification of the Kuratowski-Zorn Lemma~\ref{l:KZL}. By $(\mathsf{GWO})$, there exists a set-like well-order $W$ with $\dom[W]=\UU$. Since $W$ is set-like, for every $x\in\UU$ the initial interval $\cev W(x)$ is a set.

To prove $(\mathsf{GKZ})$, fix any chain-bounded order $R$. If $R$ is a set, then we can apply the Kuratowski--Zorn Lemma~\ref{l:KZL} and conclude that the set $\dom[R^\pm]$ contains an $R$-maximal element. So, we assume that $R$ is a proper class. To derive a contradiction, assume that no element $x\in\dom[R^\pm]$ is $R$-maximal. 

Let $C$ be the class of all $R$-chains which are subsets of the class $X=\dom[R^\pm]$. Repeating the argument of the proof of Lemma~\ref{l:KZL}, we can show that for every $R$-chain $\ell\in C$, the class $V(\ell)=\{b\in\dom[R^\pm]:\ell\times\{b\}\subseteq R\setminus\Id\}$ is not empty. The well-foundedness of $W$ guarantees that the class $V(\ell)$ contains a unique $W$-minimal element $\min_W(V(\ell))$.

Consider the function $F:\Ord\times\UU\to\UU$ defined by the formula
$$F(\alpha,y)=\begin{cases} \min_W(V(\rng[y]))&\mbox{if $y\in C$};\\
\emptyset&\mbox{otherwise}.
\end{cases}
$$By the Recursion Theorem~\ref{t:Recursion}, there exists a (unique) function $G:\Ord\to \UU$ such that $G(\alpha)=F(\alpha,G{\restriction}_\alpha)$ for every ordinal $\alpha$.

Repeating the argument of the proof of Lemma~\ref{l:KZL}, we can show that for every ordinal $\alpha$ the image $G[\alpha]$ is an $R$-chain and hence $$G(\alpha)=F(\alpha,G{\restriction}_\alpha)={\min}_W(V(G[\alpha]))\in V(G[\alpha])\subseteq \UU\setminus G[\alpha],$$ which implies that the function $G:\Ord\to\dom[R^\pm]$ is injective. Since for every $\alpha\in\Ord$ the element $G(\alpha)=V(G[\alpha])$ is an upper bound of the $R$-chain $G[\alpha]$, the image $G[\Ord]$ is an $R$-chain in $\dom[R^\pm]$. Since $R$ is chain-bounded, the $R$-chain $G[\Ord]$ has an upper bound $b$. It follows that for every ordinal $\alpha$ the element $b$ belongs to the set $V(G[\alpha])$ and hence $G(\alpha)=\min_W(V(G[\alpha]))\in\cev W(b)$. Therefore $G[\Ord]\subseteq\cev W(b)$ and hence $\Ord=G^{-1}[\cev W(b)]$ is a set by the Axiom of Replacement. But this contradicts Theorem~\ref{t:Ord}(6).
\end{proof}

\begin{lemma}\label{l:GWO=>GMP} $(\mathsf{GWO})\;\Ra\;(\mathsf{GMP})$.
\end{lemma}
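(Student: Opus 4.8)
The plan is to carry out a greedy transfinite construction along the set-like well-order supplied by $(\mathsf{GWO})$, processing the sets in their $W$-order and throwing each one into the chain under construction exactly when it is $R^\pm$-comparable to everything accepted so far. This is why we need the \emph{set-like} principle $(\mathsf{GWO})$ rather than a mere well-order: set-likeness is precisely what makes the Recursion Theorem~\ref{t:recursion} applicable.

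First I would fix, by $(\mathsf{GWO})$, a set-like well-order $W$ with $\dom[W^\pm]=\UU$; since $W$ is set-like, for every $x\in\UU$ the initial interval $\cev W(x)=W^{-1}[\{x\}]\setminus\{x\}$ is a set. Fix the given order $R$ and write $X=\dom[R^\pm]$. The heart of the argument is the function $F\colon\UU\times\UU\to\UU$ defined by
$$
F(x,y)=\begin{cases}
(\textstyle\bigcup y)\cup\{x\} & \mbox{if } x\in X \mbox{ and } \{x\}\times(\textstyle\bigcup y)\subseteq R^\pm\cup\Id,\\
\textstyle\bigcup y & \mbox{otherwise,}
\end{cases}
$$
whose existence follows from G\"odel's class existence Theorem~\ref{t:class}. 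Applying the Recursion Theorem~\ref{t:recursion} to the set-like well-founded order $W$ and this $F$, I obtain a function $G\colon\UU\to\UU$ with $G(x)=F(x,G[\cev W(x)])$ for every $x\in\UU$; here the intended meaning is that $G(x)$ is the chain accumulated after processing $x$, while $\bigcup G[\cev W(x)]=\bigcup_{y\in\cev W(x)}G(y)$ is the chain accumulated from all strict $W$-predecessors of $x$.

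Next I would establish two invariants by $W$-induction. Monotonicity, $G(y)\subseteq G(x)$ whenever $y\in\cev W(x)$, is essentially immediate: $G(y)$ is one of the sets appearing in the union $\bigcup G[\cev W(x)]$, and the two branches of $F$ both give $\bigcup G[\cev W(x)]\subseteq G(x)$. Using this, the chain invariant, that each $G(x)$ is an $R$-chain contained in $X$, follows: any two members of $\bigcup G[\cev W(x)]$ lie in a common $G(y)$ by monotonicity and the linearity of $W$, hence are $R^\pm$-comparable, and the ``add'' branch only fires when $x\in X$ is comparable to all of $\bigcup G[\cev W(x)]$.

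Finally I would set $C=\bigcup\rng[G]$, which exists as a class (the range of the function $G$ exists, and the union of a class is legitimate). Two members of $C$ lie in some $G(x),G(x')$, and by monotonicity and linearity of $W$ both lie in the larger of the two, so $C$ is an $R$-chain in $X$. For maximality, suppose $C\subseteq L'$ for an $R$-chain $L'\subseteq X$ and take any $x\in L'$: then $\{x\}\cup C\subseteq L'$ is an $R$-chain, so $x\in X$ and $\{x\}\times(\bigcup G[\cev W(x)])\subseteq\{x\}\times C\subseteq R^\pm\cup\Id$, whence the ``add'' branch of $F$ fires at stage $x$ and $x\in G(x)\subseteq C$; thus $L'=C$. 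I expect the only delicate point to be the bookkeeping in the transfinite induction, namely verifying that $\bigcup G[\cev W(x)]$ genuinely represents the chain built strictly before $x$ (which rests on the monotonicity step) and that the greedy acceptance condition is exactly the one needed to force maximality; everything else is routine once $F$ and $G$ are in place.
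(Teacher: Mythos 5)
Your proof is correct, but it takes a genuinely different route from the one in the text. The paper's argument first invokes Theorem~\ref{t:GWO=>AGC+AV} to extract the Axiom of Cumulativity $\UU=\bigcup_{\alpha\in\Ord}U_\alpha$, then runs a recursion over the ordinals: at stage $\alpha$ it uses the already-established global Kuratowski--Zorn principle $(\mathsf{GKZ})$ (Lemma~\ref{l:GWO=>GKZ}) to pick, via the $W$-least element, a maximal $R$-chain of the \emph{set} $U_\alpha\cap\dom[R^\pm]$ extending the union of the previously chosen chains, and finally takes the union over all $\alpha$. There $W$ serves only as a choice device among whole chains. You instead run a single greedy recursion directly along $W$ itself, accepting each set $x$ into the chain exactly when it lies in $\dom[R^\pm]$ and is $R^\pm\cup\Id$-comparable to everything accepted before it; set-likeness of $W$ is exactly what makes $\cev W(x)$, hence $G[\cev W(x)]$ and its union, a set, so Theorem~\ref{t:recursion} applies. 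Your monotonicity and chain invariants go through (two predecessors $y,y'$ of $x$ are $W$-comparable, so $G(y)$ and $G(y')$ are nested and $\bigcup G[\cev W(x)]$ is a chain), and the maximality argument is exactly the greedy one: any $x$ in a chain extending $C$ satisfies the acceptance test at its own stage and so already lies in $C$. What your approach buys is economy: it bypasses $(\mathsf{GKZ})$, the Kuratowski--Zorn machinery, and the Axiom of Cumulativity entirely, proving $(\mathsf{GMP})$ from $(\mathsf{GWO})$ in one self-contained recursion. What the paper's route buys is reuse of Lemma~\ref{l:GWO=>GKZ} and a construction that stays within sets at every stage (each $G(\alpha)$ is a maximal chain of a set), which fits the cumulative-hierarchy style used elsewhere in that section. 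Both are valid derivations of the implication.
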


\begin{proof} Assume that $(\mathsf{GWO})$ holds and fix a set-like well-order $W$ such that $\dom[W^\pm]=\UU$. By Corollary~\ref{c:CZ}, the universe is cumulative, so we can find an indexed family of sets $(U_\alpha)_{\alpha\in\Ord}$ such that $\UU=\bigcup_{\alpha\in\Ord}U_\alpha$. Replacing each set $U_\alpha$ by the union $\bigcup_{\beta\le\alpha}U_\beta$, we can assume that $U_\beta\subseteq U_\alpha$ for any ordinals $\beta\in\alpha$.

To prove the $(\mathsf{GMP})$, take any order $R$. For every ordinal $\alpha$ let $\Lambda_\alpha$ be the set of all $R$-chains $\ell\subseteq U_\alpha\cap\dom[R^\pm]$. The set $\Lambda_\alpha$ is endowed with the partial order $\mathbf S{\restriction}\Lambda_\alpha$. Let $M_\alpha$ be the subset of $\Lambda_\alpha$ consisting of $\mathbf S{\restriction}\Lambda_\alpha$-maximal chains. By the $(\mathsf{GKZ})$ (which follows from $(\mathsf{GWO})$ by Lemma~\ref{l:GWO=>GKZ}), for every chain $\ell\in\Lambda_\alpha$ the set $M_\alpha(\ell)=\{\lambda\in M_\alpha:\ell\subseteq \lambda\}$ is not empty and hence contains a unique $\min_W$-minimal element $\min_W(M_\alpha(\ell))$.

So, we can define the function $F:\Ord\times\UU\to\UU$ by the formula
$$F\colon\langle \alpha,y\rangle\mapsto\begin{cases}\min_W(M_\alpha(\bigcup \rng[y]))&\mbox{if $\bigcup \rng[y]\in\Lambda_\alpha$};\\
\emptyset&\mbox{otherwise}.
\end{cases}
$$
By the Recursion Theorem~\ref{t:Recursion}, there exists a function $G:\Ord\to\UU$ such that $G(\alpha)=F(\alpha,G{\restriction}_\alpha)$ for every $\alpha\in\Ord$.

We claim that for every ordinal $\alpha$ the set $G(\alpha)$ is an element of the set $M_\alpha$ and $G(\beta)\subseteq G(\alpha)$ for all $\beta\in\alpha$. For $\alpha=0$, $G(0)=F(0,\emptyset)=\min_W(M_0(\emptyset))\in M_0$. Assume that for some ordinal $\alpha$ we have proved that for every ordinal $\beta\in\alpha$ the set $G(\beta)$ is an element of $M_\beta$ and for every ordinal $\gamma\in\beta$  we have $G(\gamma)\subseteq G(\beta)$. Then the union $\ell=\bigcup_{\beta\in\alpha}G(\beta)$ is a chain in the set $\dom[R^\pm]\cap\bigcup_{\beta\in\alpha}U_\beta\subseteq\dom[R^\pm]\cap U_\alpha$ and hence $\ell\in\Lambda_\alpha$. Now the definition of the function $F$ guarantees that
$$G(\alpha)=F(\alpha,G{\restriction}_\alpha)={\min}_W(M_\alpha(\ell))\in M_\alpha$$is a maximal $\mathbf S{\restriction}\Lambda_\alpha$-chain containing the chain $\ell$ as a subset. Then for every $\beta\in\alpha$ we have $G_\beta\subseteq\ell\subseteq G_\alpha$. 
\smallskip

Since the transifinite sequence of $R$-chains $(G(\alpha))_{\alpha\in\Ord}$ is increasing, its union $$L=\bigcup_{\alpha\in\Ord}G(\alpha)$$ is an $R$-chain in $\dom[R^\pm]$. We claim that $L$ is a maximal $R$-chain. In the opposite case, we could find an element $b\in \dom[R^\pm]\setminus L$ such that $L\cup\{b\}$ is an $R$-chain. Find an ordinal $\alpha$ such that $b\in U_\alpha$. Consider the $R$-chain $G(\alpha)\subseteq L$ and observe that $G(\alpha)\cup\{b\}$ is a chain in $\dom[R^\pm]\cap U_\alpha$, which implies that $G(\alpha)\notin M_\alpha$. But this contradicts the choice of $G(\alpha)$. This contradiction completes the proof of the maximality of the $R$-chain $L$.
\end{proof}

\begin{lemma}\label{l:GMP=>GKZ} $(\mathsf{GMP})\;\Ra\;(\mathsf{GKZ})$.
\end{lemma}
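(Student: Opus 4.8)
The plan is to exploit the fact that $(\mathsf{GMP})$ hands us a maximal chain directly, so that—unlike the Kuratowski--Zorn Lemma~\ref{l:KZL}—no transfinite recursion is needed. Given a chain-bounded order $R$, I would apply $(\mathsf{GMP})$ to obtain a maximal $R$-chain $C\subseteq\dom[R^\pm]$. Since $R$ is chain-bounded, the chain $C$ has an upper bound $b\in\dom[R^\pm]$, meaning $C\times\{b\}\subseteq R\cup\Id$. The whole argument then reduces to showing that this $b$ is the desired $R$-maximal element. (Note that chain-boundedness applied to the empty chain already forces $\dom[R^\pm]\ne\emptyset$, so such a $b$ genuinely exists.)

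The key step is to derive a contradiction from the assumption that $b$ is not $R$-maximal. If it is not, there is some $d\in\dom[R^\pm]$ with $\langle b,d\rangle\in R$ and $d\ne b$; I would then check that $C\cup\{d\}$ is an $R$-chain strictly containing $C$, contradicting maximality of $C$. To see that $C\cup\{d\}$ is a chain, take any $x\in C$: since $b$ is an upper bound, $\langle x,b\rangle\in R\cup\Id$, so either $x=b$, giving $\langle x,d\rangle=\langle b,d\rangle\in R$, or $\langle x,b\rangle\in R$, in which case transitivity of $R$ together with $\langle b,d\rangle\in R$ yields $\langle x,d\rangle\in R$. In both cases $\langle x,d\rangle\in R^\pm$, and by symmetry of $R^\pm\cup\Id$ also $\langle d,x\rangle\in R^\pm$; combined with $C\times C\subseteq R^\pm\cup\Id$ this shows $(C\cup\{d\})\times(C\cup\{d\})\subseteq R^\pm\cup\Id$.

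It remains to confirm that the extension is proper, i.e. $d\notin C$, and here is where I would use antisymmetry of the order $R$ (recall $R\cap R^{-1}\subseteq\Id$): if $d$ were in $C$, then the upper-bound property would give $\langle d,b\rangle\in R\cup\Id$, whence $\langle d,b\rangle\in R$ (as $d\ne b$), and together with $\langle b,d\rangle\in R$ antisymmetry forces $b=d$, a contradiction. Thus $C\subsetneq C\cup\{d\}$ is an $R$-chain properly extending the maximal chain $C$, which is impossible. Hence $b$ is $R$-maximal, establishing $(\mathsf{GKZ})$. I expect the only delicate point to be the bookkeeping in verifying the chain condition for $C\cup\{d\}$, splitting on whether $x=b$ and invoking transitivity; everything else is immediate from the definitions of upper bound, chain, and antisymmetry.
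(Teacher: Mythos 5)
Your proof is correct and follows the same route as the paper's: apply $(\mathsf{GMP})$ to get a maximal $R$-chain, take an upper bound $b$ via chain-boundedness, and show that a non-maximal $b$ would let you properly extend the chain, contradicting maximality. The paper states the extension step without detail, whereas you carefully verify that $C\cup\{d\}$ is a chain (via transitivity) and that $d\notin C$ (via antisymmetry) — a welcome but not essentially different elaboration.
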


\begin{proof} Assume that $(\mathsf{GMP})$ holds. To prove $(\mathsf{GKZ})$, take any chain-bounded order $R$. We need to find an $R$-maximal element $b\in\dom[R^\pm]$. By  $(\mathsf{GMP})$, the class $\dom[R^\pm]$ contains a maximal $R$-chain $L$. By the chain-boundedness of the order $R$, the chain $L$ has an upper bound $b\in\dom[R^\pm]$. We claim that the element $b$ is $R$-maximal. In the opposite case we can find an element $b'\in\dom[R^\pm]$ such that $\langle b,b'\rangle\in R\setminus\Id$. Then the $R$-chain is contained in the strictly larger $R$-chain $C\cup\{b'\}$, which contradicts the maximality of $L$.
\end{proof} 

 \begin{lemma}\label{l:GMP=>EC} $(\mathsf{GwO})\;\Ra\;(\mathsf{EC}).$
\end{lemma}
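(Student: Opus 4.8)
The plan is to use the global well-order $W$ supplied by $(\mathsf{GwO})$ to select from each $R$-equivalence class its $W$-least element, and to take $C$ to be the class of all these chosen representatives. This is the standard ``choose the least representative'' construction, with the only novelty being that we must be careful about what kind of object $W$ is.

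First I would fix a well-order $W$ with $\dom[W^\pm]=\UU$. Since $W$ is a well-founded linear order, Exercise~\ref{ex:char-WO} shows that every nonempty subclass $X\subseteq\UU$ has a unique $W$-least element $\min_W(X)$, namely an $x\in X$ with $\langle x,y\rangle\in W\cup\Id$ for all $y\in X$. By Exercise~\ref{ex:reflex} the equivalence relation $R$ is reflexive, so for each $x\in\dom[R]$ the class $R[\{x\}]$ contains $x$ and is therefore nonempty, which makes $\min_W(R[\{x\}])$ well-defined. Here I would stress two points: the class $R[\{x\}]$ may itself be a proper class (this happens, for instance, for the cardinality relation), and $W$ is assumed only to be a well-order, not a set-like one; nonetheless the well-foundedness clause of Definition~\ref{d:well-founded}, which applies to an arbitrary nonempty class, still produces a least element.

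Next I would set
$$C=\{z\in\UU:\langle z,z\rangle\in R\;\wedge\;\forall y\in\UU\,(\langle z,y\rangle\in R\;\Rightarrow\;\langle z,y\rangle\in W\cup\Id)\},$$
that is, the class of those $z$ which are the $W$-least element of their own equivalence class $R[\{z\}]$. This is a $\UU$-bounded formula in the variable $z$ with the classes $R,W$ as parameters, so $C$ exists by G\"odel's class existence Theorem~\ref{t:class}.

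Finally I would check that $R[\{x\}]\cap C$ is a singleton for every $x\in\dom[R]$. Writing $m=\min_W(R[\{x\}])$, the containment $m\in R[\{x\}]$ forces $R[\{m\}]=R[\{x\}]$ (distinct equivalence classes are disjoint), whence $m=\min_W(R[\{m\}])$ and so $m\in C$; thus $m\in R[\{x\}]\cap C$. For uniqueness, any $z\in R[\{x\}]\cap C$ again satisfies $R[\{z\}]=R[\{x\}]$, and membership in $C$ gives $z=\min_W(R[\{z\}])=m$. Hence $R[\{x\}]\cap C=\{m\}$. The step I expect to need the most care is not this verification but the two structural points flagged above, namely the legitimacy of forming $\min_W$ of a possibly proper equivalence class and the definability of $C$ via Theorem~\ref{t:class}; once these are secured, the rest of the argument is routine.
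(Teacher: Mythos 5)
Your proof is correct and follows essentially the same route as the paper: both select the $W$-least element of each $R$-equivalence class and let $C$ be the class of these representatives (the paper realizes $C$ as the range of the function $x\mapsto\min_W(R[\{x\}])$, while you define $C$ directly by a formula via Theorem~\ref{t:class}, which is a purely presentational difference). Your explicit attention to the existence of $C$ and to the legitimacy of taking $\min_W$ of a possibly proper class is a welcome addition to what the paper dismisses as ``easy to see.''
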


\begin{proof} Assume that $(\mathsf{GwO})$ holds, which means that there exists a well-order $W$ such that $\dom[W^\pm]=\UU$.  To prove $(\mathsf{EC})$, take any equivalence relation $R$. Consider the function $F:\dom[R]\to\dom[R]$ assigning to every $x\in\dom[R]$ the unique $W$-minimal element of the class $R[\{x\}]$. It is easy to see that the class $C=\rng[F]$ has the required property: for every $x\in\dom[R]$ the intersection $R[\{x\}]\cap C$ is a singleton.
\end{proof}
 
 \begin{lemma}\label{l:EC=>ACcc} $(\mathsf{EC})\;\Ra\;(\mathsf{AC^c_c}).$
\end{lemma}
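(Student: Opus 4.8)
The plan is to reduce a choice problem for a family of non-empty classes to the selection of one representative from each block of a single equivalence relation, which is exactly the data $(\mathsf{EC})$ supplies. Following the conventions for indexed families, I would present the family $(X_\alpha)_{\alpha\in A}$ as a subclass $X\subseteq A\times\UU$ with $X_\alpha=X[\{\alpha\}]$; since every $X_\alpha$ is non-empty we have $\dom[X]=A$, and $\bigcup_{\alpha\in A}X_\alpha=\rng[X]$.

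First I would build the equivalence relation that glues together all the elements sitting over the same index. Writing $\dom$ and $\rng$ for the two coordinate projections $\langle a,b\rangle\mapsto a$ and $\langle a,b\rangle\mapsto b$ (Exercises~\ref{ex:dom} and \ref{ex:rng}), set
$$R=\{\langle p,q\rangle\in X\times X:\dom(p)=\dom(q)\}.$$
This class exists: it equals $(X\times X)\cap F_{\Id}G$ where $F=\dom\circ\dom$ and $G=\dom\circ\rng$, so Exercise~\ref{ex:FRG} applies (alternatively, invoke G\"odel's Theorem~\ref{t:class}). The relation $R$ is plainly symmetric and transitive, hence an equivalence relation, and it contains the diagonal of $X$, so $\dom[R]=X$. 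For $p=\langle\alpha,x\rangle\in X$ its equivalence class is the entire column $R[\{p\}]=\{\alpha\}\times X_\alpha$.

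Next I would apply $(\mathsf{EC})$ to $R$ to obtain a class $C$ meeting each equivalence class $R[\{p\}]$ in exactly one point; that is, for every $\alpha\in A$ the intersection $C\cap(\{\alpha\}\times X_\alpha)$ is a singleton $\{\langle\alpha,x_\alpha\rangle\}$ with $x_\alpha\in X_\alpha$. Finally set $f=C\cap X$. Since the columns $\{\alpha\}\times X_\alpha$ (for $\alpha\in A$) partition $X$, we get $f=\{\langle\alpha,x_\alpha\rangle:\alpha\in A\}$; any two elements of $f$ with the same first coordinate lie in one column and hence coincide, so $f$ is a function with $\dom[f]=A$, $\rng[f]\subseteq\rng[X]=\bigcup_{\alpha\in A}X_\alpha$, and $f(\alpha)=x_\alpha\in X_\alpha$ for every $\alpha\in A$. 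This is precisely the function demanded by $(\mathsf{AC^c_c})$.

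The only delicate points are bookkeeping: checking that $R$, a relation between ordered pairs (so a class of pairs of pairs), is a legitimate class, and confirming that the blocks of $R$ are exactly the columns, so that intersecting the selector $C$ with $X$ yields a genuine function rather than a multivalued relation. Both are dispatched by the construction above, and notably no further choice principle enters, so the implication holds without appealing to $(\mathsf{AV})$.
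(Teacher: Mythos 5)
Your proof is correct and follows essentially the same route as the paper: your relation $R$ (pairs of elements of $X$ with equal first coordinate) is exactly the paper's $\bigcup_{\alpha\in A}\big((\{\alpha\}\times X_\alpha)\times(\{\alpha\}\times X_\alpha)\big)$, and the subsequent steps — apply $(\mathsf{EC})$, intersect the selector with $X$, observe the result is the desired choice function — coincide with the paper's argument. The extra detail you supply on the existence of $R$ as a class is a welcome, if optional, elaboration.
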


\begin{proof} Assume that $(\mathsf{EC})$ holds. To prove $(\mathsf{AC^c_c})$, take any indexed family of non-empty classes $X=(X_\alpha)_{\alpha\in A}$. Consider the equivalence relation $$R=\bigcup_{\alpha\in A}((\{\alpha\}\times X_\alpha)\times(\{\alpha\}\times X_\alpha)).$$ By $(\mathsf{EC})$, there exists a class $C$ such that for every $\alpha\in A$ and $x\in X_\alpha$ the intersection $C\cap(\{\alpha\}\times X_\alpha)$ is a singleton. Replacing $C$ by $C\cap X$, we can assume that $C\subseteq X=\bigcup_{\alpha\in A}(\{\alpha\}\times X_\alpha)$. In this case, $C$ is a function such that $\dom[C]=A$ and $\forall \alpha\in A\;\;C(\alpha)\in X_\alpha$.
\end{proof}

\begin{lemma}\label{l:GKZ=>GTC} $(\mathsf{GKZ})\;\Ra\;(\mathsf{GTC})$.
\end{lemma}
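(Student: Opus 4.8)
The plan is to apply $(\mathsf{GKZ})$ to the tree order itself, splitting into two cases according to whether that order is chain-bounded. Given an ordinary tree $T\subseteq\UU^{<\Ord}$, I would work with the order $R=\mathbf S{\restriction}T$, where $\mathbf S=\{\langle x,y\rangle\in\ddot\UU:x\subseteq y\}$. Since $\mathbf S$ is a reflexive order and the restriction of an order to a class is again an order, $R$ is an order, and it is reflexive on $T$, so $\dom[R^\pm]=T$. By definition a maximal chain in $T$ is a maximal $R$-chain, so the goal is to produce one. The key structural observation I would use throughout is that any two comparable functions are restrictions one of the other, so that membership in a chain is controlled by the relation $\subseteq$.

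In the first case, where $R$ is chain-bounded, $(\mathsf{GKZ})$ applies directly and yields an $R$-maximal element $t_0\in T$, i.e.\ a tree node with no proper extension in $T$. I would then verify that its downward closure $C=\{s\in T:s\subseteq t_0\}$ is a maximal chain: it is a chain because any two members are restrictions of $t_0$ and hence comparable, and it is maximal because any $u\in T$ comparable to all of $C$ is in particular comparable to $t_0$, so either $u\subseteq t_0$, whence $u\in C$, or $t_0\subsetneq u$, contradicting the $R$-maximality of $t_0$.

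In the second case, where $R$ is \emph{not} chain-bounded, the definition unwinds to the existence of an $R$-chain $L\subseteq T$ having no upper bound in $T$; I would simply fix such an $L$, which is legitimate since existential quantification over (possibly proper) classes is available in $\NBG$. Here the cofinal chain is handed to us for free, so no level-by-level transfinite construction is needed. I would then show that the downward closure $C=\{s\in T:\exists t\in L\ (s\subseteq t)\}$ is the required maximal chain. That $C$ is a chain again follows from comparability, witnessed by a common $t\in L$ lying above any two given members. For maximality, suppose some $u\in T\setminus C$ were comparable to every element of $C$, hence to every $t\in L$: comparability with each $t$ gives the dichotomy that either $u\subseteq t$ for some $t\in L$, forcing $u\in C$ and a contradiction, or $t\subseteq u$ for all $t\in L$, making $u$ an upper bound of $L$ and contradicting the choice of $L$.

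The two cases are exhaustive and mutually exclusive: if $R$ is chain-bounded then no chain lacks an upper bound, so the witness $L$ of the second case cannot exist, and conversely; this is precisely why $(\mathsf{GKZ})$ is genuinely invoked only in the first case. I expect the only delicate point to be the maximality verification in the second case, specifically the dichotomy ``either some $t\in L$ lies above $u$, or every $t\in L$ lies below $u$'', which rests on $L$ being linearly ordered by $\subseteq$ and on $C$ being exactly the class of tree nodes below members of $L$. Notably, I do not anticipate needing the Axiom of Cumulativity for this particular implication, in contrast to several neighboring lemmas of the theorem, since the argument never attempts to build a branch by recursion.
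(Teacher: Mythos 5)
Your argument is correct; it reaches the same conclusion by a different decomposition than the paper's. The paper runs a single reductio: assuming $T$ has no maximal chain, it shows $\mathbf S{\restriction}T$ is chain-bounded (for any chain $L$, the chain $\{(\bigcup L){\restriction}_\alpha:\alpha\in\dom[\bigcup L]\}$ is non-maximal by hypothesis, and any element of a proper chain-extension of it is an upper bound of $L$), then applies $(\mathsf{GKZ})$ to get a maximal element whose downward closure is a maximal chain, contradicting the assumption. You instead case-split on chain-boundedness, invoking $(\mathsf{GKZ})$ only in the bounded case and handling the unbounded case by the direct observation that the downward closure of an unbounded chain is already a maximal chain. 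Both proofs rest on the same two facts --- downward closures in an ordinary tree are chains, and a maximal element, respectively an unbounded chain, generates a maximal chain --- but your version makes the second fact explicit instead of folding it into the reductio, uses $(\mathsf{GKZ})$ only where it is genuinely needed, and sidesteps the one slightly delicate point in the paper's argument, namely verifying that an element of a proper extension of $\{(\bigcup L){\restriction}_\alpha:\alpha\in\dom[\bigcup L]\}$ dominates every member of $L$ (which needs an extra word when $\dom[\bigcup L]$ is a successor ordinal). The paper's version is more uniform and closer to the usual Zorn-type bookkeeping; yours is marginally longer but each branch is immediate, and your remark that the Axiom of Cumulativity plays no role here is consistent with the paper.
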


\begin{proof} Assume that $(\mathsf{GKZ})$ hold. To check $(\mathsf{GTC})$, we should prove that any ordinary tree $T$ has a maximal $\mathbf S{\restriction}T$-chain. To derive a contradiction, assume that $T$ does not contain maximal $\mathbf S{\restriction}T$-chains. In this case we shall show that the partial order $ \mathbf{S}{\restriction}T$ is chain-bounded. Take any chain $L\subseteq T$ and consider its union $f=\bigcup L$, which is function with $\dom[f]\subseteq\Ord$. Then $C=\{f{\restriction}_\alpha:\alpha\in\dom[f]\}$ is an $\mathbf S{\restriction}T$-chain in $T$. By our assumption this chain is not maximal and hence there exists an $\mathbf S{\restriction}T$-chain $C'\subseteq T$ such that $C\subsetneq C'$. Take any element $c'\in C'\setminus C$ and observe that $f\subseteq c'$ for any function $f\in L$. This means that $c'$ is an upper bound of the chain $L$, and hence the partial order $\mathbf S{\restriction}T$ is chain-bounded. By $(\mathsf{GKZ})$, this order has a maximal element $t\in T$. This maximal element $t$ generates the maximal chain $M=\{t\}\cup\{t{\restriction}_\alpha:\alpha\in\dom[t]\}$ in $T$. But this contradicts our assumption.  
\end{proof}

\begin{lemma}\label{l:AC=>TC} $(\mathsf{AC^c_c})\;\Ra\;(\mathsf{GTC})$ and $(\mathsf{AC^s_c})\;\Ra\;(\mathsf{TC^s})$.
\end{lemma}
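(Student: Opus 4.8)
The plan is to prove both implications by one and the same construction: given an ordinary tree $T$, use the choice principle to select, once and for all, an immediate successor at every node that has one, and then follow these choices transfinitely to trace out a branch that will turn out to be a maximal chain. First I would isolate the class $T'=\{t\in T:\Succ_T(t)\ne\emptyset\}$ of non-leaf nodes and form the class $X=\{\langle t,s\rangle:t\in T'\wedge s\in\Succ_T(t)\}\subseteq T'\times\UU$, so that $X_t=\Succ_T(t)$ is a non-empty class for each $t\in T'$. Applying $(\mathsf{AC^c_c})$ to the family $(X_t)_{t\in T'}$ yields a function $c\colon T'\to T$ with $c(t)\in\Succ_T(t)$ for every non-leaf $t$. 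For the second implication the point is exactly that when $T$ is locally set each $\Succ_T(t)$ is a set, so the family $(X_t)_{t\in T'}$ is a family of non-empty sets and the weaker principle $(\mathsf{AC^s_c})$ already produces the same $c$; the remainder of the argument is then literally identical.

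Next I would fix a sentinel set $\ostar\notin\UU^{<\Ord}$ (for instance $\ostar=\{\emptyset\}$, which is not a function) and define, by the Recursion Theorem~\ref{t:recursion} applied to the set-like well-order $\E{\restriction}\Ord$, a function $G\colon\Ord\to\UU$ by $G(\alpha)=F(\alpha,G[\alpha])$, where $F$ is the definable function given as follows: $F(\alpha,y)=\ostar$ if $\ostar\in y$; otherwise, writing $f=\bigcup y$, set $F(\alpha,y)=c(f)$ when $\alpha$ is a successor and $f\in T'$, set $F(\alpha,y)=f$ when $\alpha$ is $0$ or a limit and $f\in T$, and set $F(\alpha,y)=\ostar$ in all remaining cases. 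The intended behaviour is that $G(\alpha)$ is the node of domain $\alpha$ on our branch: at a successor $\gamma+1$ one has $\bigcup G[\gamma+1]=G(\gamma)$, so $G(\gamma+1)=c(G(\gamma))$ extends the branch one step; at a limit $\lambda$ one has $\bigcup G[\lambda]=\bigcup_{\beta<\lambda}G(\beta)$ of domain $\lambda$; and the value freezes to $\ostar$ as soon as the branch meets a leaf, meets a limit whose union escapes $T$, or has already frozen. A routine transfinite induction then shows that the genuine values are nested, with $G(\beta)\subseteq G(\alpha)$ and $G(\alpha){\restriction}_\beta=G(\beta)$ for $\beta<\alpha$ whenever $G(\alpha)\ne\ostar$, and that $\dom[G(\beta)]=\beta$.

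I would then set $C=\rng[G]\setminus\{\ostar\}$ and prove it is a maximal $\mathbf S{\restriction}T$-chain. Nestedness gives that $C$ is a chain. For maximality I would distinguish whether a sentinel ever appears. If $G$ freezes first at a successor $\gamma+1$, then $G(\gamma)$ is a leaf and $\bigcup C=G(\gamma)$; any $s\in T$ comparable with all of $C$ either satisfies $s\subseteq G(\gamma)$, whence $s=G(\dom[s])\in C$, or $s\supsetneq G(\gamma)$, which would furnish an immediate successor of the leaf $G(\gamma)$, a contradiction. If $G$ freezes first at a limit $\lambda$, then $f=\bigcup C$ has domain $\lambda$ with $f\notin T$; a comparable $s$ either lies below some $G(\beta)$ (so $s\in C$) or extends all $G(\beta)$, forcing $f=s{\restriction}_\lambda\in T$, again a contradiction. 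Finally, if $G$ never freezes, then $C=G[\Ord]$ is a proper class, and any comparable $s$ with $\dom[s]=\beta$ must satisfy $s\subseteq G(\beta+1)$, hence $s=G(\beta)\in C$; so $C$ is maximal in every case.

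The step I expect to be the main obstacle is precisely this last case. In the set-theoretic counterparts (for example $(\mathsf{TC})\Ra(\mathsf{WO})$ inside Theorem~\ref{t:mainAC}) one halts the recursion by a Hartogs-type cardinality bound, but a general ordinary tree may be a proper class through which the branch runs along all of $\Ord$; so instead of arguing termination I must argue directly that the non-terminating class-branch $G[\Ord]$ admits no proper extension, using only that any candidate node $s$ has some fixed ordinal domain $\beta$ and is therefore pinned down as $G(\beta)$. A secondary point requiring care is checking that $F$ is a legitimate definable class function and that its limit clause correctly detects when the union $\bigcup G[\lambda]$ leaves $T$, so that the sentinel mechanism faithfully encodes the statement that the branch cannot be continued.
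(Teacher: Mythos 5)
Your proof is correct, and it uses the same core machinery as the paper's argument: a choice principle to select immediate successors, followed by the Recursion Theorem along $\E{\restriction}\Ord$ to trace out a branch. Where you genuinely diverge is in the logical organization of the endgame. The paper argues by contradiction: it assumes $T$ has \emph{no} maximal chain, which guarantees that every non-limit chain $\ell$ has a nonempty successor class $T_\ell$, so the recursion never needs a halting clause; the resulting $G[\Ord]$ is then a chain whose union has domain $\Ord$, which (by the opening observation of the paper's proof) already yields a maximal chain and hence a contradiction. You instead run the construction directly, with a sentinel $\ostar$ to record the three ways the branch can stop being extendable (leaf at a successor stage, union escaping $T$ at a limit stage, or never), and you verify maximality separately in each case. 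A second, smaller difference is the indexing of the choice: you choose from $(\Succ_T(t))_{t\in T'}$, a family indexed by non-leaf nodes, while the paper chooses from the family $(T_\ell)$ indexed by non-limit chains and folds the limit clause into the selector $\Psi$ itself; these are interchangeable here, and your indexing makes the locally-set hypothesis for $(\mathsf{AC^s_c})\Ra(\mathsf{TC^s})$ slightly more transparent. The trade-off is that the reductio buys the paper a shorter argument (no case analysis, no sentinel bookkeeping), whereas your direct version actually exhibits the maximal chain and makes explicit the point you correctly flag as delicate: in the non-terminating case one cannot appeal to a Hartogs-type bound, and maximality of the proper-class branch $G[\Ord]$ must be checked by pinning down any candidate extension $s$ as $G(\dom[s])$. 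Both your verification of that case and your limit-stage maximality argument (using that $T$ is an ordinary tree, so $s{\restriction}_\lambda\in T$ would contradict $\bigcup C\notin T$) are sound.
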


\begin{proof} Given a (locally set) ordinary tree $T\subseteq\UU^{<\Ord}$, we should find a maximal chain in $T$. We endow $T$ with the partial order $\mathbf S{\restriction}T$. To derive a contradiction, assume that $T$ contains no maximal chains. Observe that for every chain $C\subseteq T$ its union $f=\bigcup C\subseteq T$ is a function with $\dom[f]\subseteq \Ord$. If $\dom[f]=\Ord$, then $\{f{\restriction}_\alpha:\alpha\in\Ord\}$ is a maximal chain in $T$, which contradicts our assumption. Then $\dom[f]$ is some ordinal, which implies that the chain $C$ is a set. Therefore, the class $L$ of all chains in $T$ is well-defined. We say that a chain $\ell\in L$ is {\em limit} if $\dom[\bigcup\ell]$ is a limit ordinal. Let $L'$ be the subclass of $L$ consisting of limit chains. By our assumption, every chain in $T$ is not maximal. Consequently, for any chain $\ell\in L\setminus L'$, the class $T_\ell=\{t\in T:\bigcup\ell\subset t,\;\dom[t]=\dom[\bigcup\ell]+1\}$ is not empty. If the tree $T$ is locally set, then $T_\ell$ is a set. 

Using $(\mathsf{AC^c_c})$ (or $(\mathsf{AC}^s_c)$), we can construct a function $\Psi:\UU\to\UU$ satisfying the following conditions:
\begin{enumerate}
\item $\Psi(\ell)=\emptyset$ if $\ell\in\UU\setminus L$;
\item $\Psi(\ell)=\bigcup\ell$ if $\ell\in L'$;
\item $\Psi(\ell)\in T_\ell$ if $\ell\in L\setminus L'$.
\end{enumerate}
In fact,  $(\mathsf{AC^c_c})$ (or  $(\mathsf{AC}^s_c)$) were used only for satisfying condition (3).

Consider the function $$F:\Ord\times\UU\to\UU,\quad F:\langle \alpha,\ell\rangle\mapsto \Psi(\textstyle\bigcup \rng[\ell]).$$
Applying the Recursion Theorem~\ref{t:Recursion} to the function $F$ and the well-order $\E{\restriction}\Ord$, we can find a unique function $G:\Ord\to\UU$ such that $G(\alpha)=F(\alpha,G{\restriction}_\alpha)$ for every $\alpha\in\Ord$. By transfinite induction it can be shown that for every $\alpha$ the set $G[\alpha]$ is a chain in $T$ and $\dom[G(\alpha)]=\alpha$. Then $G[\Ord]$ is a maximal chain in $T$, which contradicts our assumption. 
\end{proof} 

\begin{lemma}\label{l:GMC=>GAP} $(\mathsf{GMC}+\mathsf{C}\UU)\Ra(\mathsf{GAP})$.
\end{lemma}

\begin{proof} Let $R$ be any order and $X=\dom[R^\pm]$. Assuming that the universe is cumulative, write the universe as the union $\UU=\bigcup_{\alpha\in \Ord}U_\alpha$ of a transfinite sequence of sets $(U_\alpha)_{\alpha\in\Ord}$ such that $U_0=\emptyset$ and $U_\alpha\subseteq U_\beta$ for any ordinals $\alpha\le\beta$. Let $\lambda:\UU\to\Ord$ be the function assigning to every set $x$ the smallest ordinal $\alpha$ such that $x\in U_\alpha$. 

By $(\mathsf{GMC})$, there exists a function $f:\UU\to\UU$ assigning to each (nonempty) set $x$ a (nonempty) finite set $f(x)\subseteq x$. For every set $y$ let $$C(y)=\{z\in X:(\{z\}\times y)\cap (R^\pm\cup \Id)=\emptyset\}$$be the class of elements of $X$ that are $R$-incomparable with elements of the set $y$. Let $c(y)=C(y)\cap U_{\alpha(y)}$ where $\alpha(y)=\min(\{0\}\cup \lambda[C(y)])$. If the class $C(y)$ is not empty, then $c(y)$ is a nonempty subset of $C(y)$ and $f(c(y))$ is a nonempty finite subset of $c(y)\subseteq C(y)$.

For every set $y$ let 
$$\mu(y)=\{z\in y:(y\times\{z\})\cap R\subseteq\Id\}$$ be the set of $R$-minimal elements of the set $y$. Consider the function
$$F:\On\times\UU\to\UU,\quad F(\alpha,y)=\textstyle(\bigcup \rng[y])\cup \mu(f(c(\bigcup \rng[y]))).$$ By the Recursion Theorem~\ref{t:Recursion}, there exists a function $G:\On\to\UU$ such that $G(\alpha)=F(\alpha,G{\restriction}_\alpha)$ for every ordinal $\alpha$. By transfinite induction it can be shown that for every ordinal $\alpha$ the set $G(\alpha)$ is an $R$-antichain in $X$ and so is the class $G[\Ord]$. We claim that $G[\Ord]$ is a maximal $R$-antichain in $X$. In the opposite case we can find an element $z\in C(X\setminus G[\Ord])$ and conclude that for every ordinal $\alpha$ the set $U_{\lambda(z)}\cap C(\bigcup G[\alpha])$ is not empty, which implies that $(G(\alpha))_{\alpha\in\Ord}$ is a strictly increasing sequence of subsets of the set $U_{\lambda(z)}$. But the existence of such sequence contradicts the Axiom of Replacement. Therefore, $G[\On]$ is a maximal $R$-antichain for the order $R$.
\end{proof}

\begin{lemma}\label{l:GAP=>GWL} $(\mathsf{GAP}+\mathsf{C}\UU)\Ra(\mathsf{GWL})$.
\end{lemma}

\begin{proof} Assume that the universe is cumulative and each order has a maximal antichain. Given any linear order $L$ we should prove that the class $X=\dom[L^\pm]$ can be well-ordered. By Theorem~\ref{t:CZ}, the well-orderability of $X$ will follow as soon as we show that the power-class $\mathcal P(X)$ has choice function. Consider the order 
$$R=\{\langle \langle a,y\rangle,\langle b,z\rangle\rangle\in(\mathcal P X\times X)\times(\mathcal PX\times X):a=b\;\wedge\;y\in a\;\wedge\;z\in b\;\wedge\;\langle y,z\rangle\in L\cup\Id\}$$ on the class $\mathcal P X\times X$. By $(\mathsf{GAP})$,  there exists a maximal $R$-antichain $A\subseteq P X\times X$. The maximality and the antichain property of $A$ ensures that for every nonempty set $a\subseteq X$ there exists a unique element $f(a)\in a$ such that $\langle a,f(a)\rangle\in A$. Then $f$ is a choice function for the power-set $\mathcal PX$. By Theorem~\ref{t:CZ}, the class $X$ can be well-ordered.
\end{proof}

\begin{lemma}\label{l:GWL=>WPO} $(\mathsf{GWL})\Ra(\mathsf{WPO})$.
\end{lemma}

\begin{proof} Observe that the power-set $\mathcal P(\Ord)$ carries the lexicographic linear order $L$ defined by
$$L=\{\langle a,b\rangle\in\mathcal P(\Ord)\times\mathcal P(\Ord):\exists \alpha\in b\setminus a\;\wedge\;\forall \beta\in \alpha\;(\beta\in a\;\Leftrightarrow\;\beta\in b))\}.$$ By $(\mathsf{GWL})$, the linearly ordered set $\mathcal P(\Ord)$ can be well-ordered.
\end{proof}

\begin{lemma}\label{l:WPO<=>GWV} $(\mathsf{WPO})\Leftrightarrow(\mathsf{GWV})$. 
\end{lemma}

\begin{proof} The implication $(\mathsf{GWV})\Ra(\mathsf{WPO})$ implies from the inclusion $\mathcal P(\Ord)\subseteq \VV$. To prove that  $(\mathsf{WPO})\Ra(\mathsf{GWV})$, assume that the power-class $\mathcal P(\Ord)$ can be well-ordered and fix a well-order $W$ such that $\dom[W^\pm]=\mathcal P(\Ord)$. 

For every ordinal $\alpha\le \beta$, define a well-order $w_\alpha$ on the set $V_\alpha$ by the recursive formula:
$$w_\alpha=\bigcup_{\delta\in\alpha}((V_\delta\times (V_{\alpha}\setminus V_\delta))\cup\{\langle y,z\rangle\in (\mathcal P(V_\delta)\setminus V_\delta)\times\mathcal (\mathcal P(V_\delta)\setminus V_\delta):\langle \rank_{w_\delta}[y],\rank_{w_\delta}[z]\rangle\in W\}.$$
Then $\bigcup_{\alpha\in\Ord}w_\alpha$ is a set-like well-order of the class $\VV$, witnessing that this class is well-orderable.
\end{proof}

\begin{exercise} (i) Prove that $(\mathsf{GTC})$ implies the principle
\begin{itemize}\setlength{\itemindent}{8pt}
\item[$(\mathsf{AC^c_s}){:}$] For every set $A$ and indexed family of nonempty classes $(X_\alpha)_{\alpha\in A}$ there exists a function $f:A\to\bigcup_{\alpha\in A}X_\alpha$ such that $f(\alpha)\in X_\alpha$ for all $\alpha\in A$.
\end{itemize}
(ii) Prove that $(\mathsf{C}\UU+\mathsf{AC})$ implies $\mathsf{AC^c_s}$.
\end{exercise}

\begin{exercise} Prove that $(\mathsf{C}\UU+\mathsf{AC}_\w)$ implies the principle
\begin{itemize}\setlength{\itemindent}{8pt}
\item[$(\mathsf{AC^c_\w}){:}$] For every  indexed sequence of classes $(X_n)_{n\in\w}$ there exists a function $f:\w\to\bigcup_{n\in\w}X_n$ such that $f(n)\in X_n$ for every $n\in\w$.
\end{itemize}
\end{exercise}

\begin{exercise} Prove that $(\mathsf{C}\UU+\mathsf{TC}_\w)$ implies the principle
\begin{itemize}\setlength{\itemindent}{8pt}
\item[$(\mathsf{TC^c_\w}){:}$] Every ordinary tree $T\subseteq\UU^{<\w}$ has a maximal chain.
\end{itemize}
\end{exercise}

\begin{exercise} Prove that $(\mathsf{TC_\w^c})\;\Ra\;(\mathsf{AC^c_\w})$.
\end{exercise}

\begin{exercise} Prove that $(\mathsf{GwO})\;\Leftrightarrow\;(\mathsf{Gwo}+\mathsf{TC^c_\w})$.
\end{exercise}

\newpage

\part{Ordinal Arithmetics}

In this section we define algebraic operations on ordinals: addition, multiplication, exponentiation. 

\section{Successors}

In this section we analyze the operation 
$$\Succ:\UU\to\UU,\quad \Succ:x\mapsto x\cup\{x\},$$
of taking the successor set.  For a set $x$ it successor $x\cup\{x\}$ will be denoted by $x+1$. 

Let us observe some immediate properties of the function $\Succ$.

\begin{proposition}\label{p:plus1} Let $x,y$ be two sets.
\begin{enumerate}
\item[\textup{1)}] $x+1\subseteq y$ if and only if $x\subseteq y$ and $x\in y$.
\item[\textup{2)}] If $x=x+1$, then $x\in x$;
\item[\textup{3)}] If $x+1=y+1$ and $x\ne y$, then $x\in y$ and $y\in x$.
\end{enumerate}
\end{proposition}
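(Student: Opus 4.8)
The plan is to prove the three statements by direct manipulation of the definitions of $x+1 = x\cup\{x\}$ and the Axiom of Extensionality, using no machinery beyond the basic set operations introduced early in the text. All three items are elementary, so the work is purely unwinding membership conditions; the only subtlety is keeping careful track of the cases arising from unordered pairs.

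For item 1, I would argue both directions. For the forward direction, assume $x+1 = x\cup\{x\}\subseteq y$. Since $x\subseteq x\cup\{x\}$ we immediately get $x\subseteq y$, and since $x\in\{x\}\subseteq x\cup\{x\}$ we get $x\in y$. For the reverse direction, assume $x\subseteq y$ and $x\in y$; then every element of $x\cup\{x\}$ is either an element of $x$ (hence in $y$ by $x\subseteq y$) or equal to $x$ (hence in $y$ by $x\in y$), so $x\cup\{x\}\subseteq y$. This is the cleanest of the three and sets up the pattern.

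For item 2, I would apply item 1 with $y = x$. The hypothesis $x = x+1$ gives in particular $x+1\subseteq x$, so by item 1 (with $y$ replaced by $x$) we conclude $x\in x$. This is a one-line deduction once item 1 is available, which is why I would prove item 1 first.

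For item 3, the plan is to use Extensionality on $x+1 = y+1$, i.e. $x\cup\{x\} = y\cup\{y\}$, together with the hypothesis $x\ne y$. Since $x\in x\cup\{x\} = y\cup\{y\}$, either $x\in y$ or $x = y$; the latter is excluded, so $x\in y$. Symmetrically, $y\in y\cup\{y\} = x\cup\{x\}$ forces $y\in x$ or $y = x$, and again $y = x$ is excluded, giving $y\in x$. The main (though still mild) obstacle here is simply being disciplined about the two symmetric membership extractions and noting that the equality alternative is ruled out by the standing assumption $x\ne y$; no appeal to Foundation is needed, and indeed the conclusion $x\in y$ and $y\in x$ shows that such $x,y$ cannot exist once Foundation is assumed, consistent with the earlier remark that $\Succ$ is strictly expansive under Foundation.
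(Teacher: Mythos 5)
Your proof is correct. The paper actually omits the proof of this proposition entirely (it is stated as immediate from the definition $x+1=x\cup\{x\}$ and Extensionality, and is followed directly by its corollaries), and your elementary unwinding of the membership conditions — proving item 1 first, deriving item 2 as the special case $y=x$, and handling item 3 by the two symmetric membership extractions with the equality alternative excluded by $x\ne y$ — is precisely the intended argument.
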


\begin{corollary} If the Axiom of Foundation holds, then for any sets $x,y$
\begin{enumerate}
\item[\textup{1)}] $x\subset x+1$;
\item[\textup{2)}] $x=y$ if and only if $x+1=y+1$.
\end{enumerate}
\end{corollary}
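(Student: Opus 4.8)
The plan is to derive both items directly from Proposition~\ref{p:plus1} together with the Axiom of Foundation, which (as noted after the statement of that axiom) forbids both a set belonging to itself and the existence of a two-element membership cycle. The only work is to turn the membership conclusions of Proposition~\ref{p:plus1}(2),(3) into explicit violations of Foundation.

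For item (1), the inclusion $x\subseteq x+1$ is immediate from the definition $x+1=x\cup\{x\}$, so I only need to rule out equality. Suppose $x=x+1$. By Proposition~\ref{p:plus1}(2) this yields $x\in x$. But then the nonempty set $\{x\}$ would contain no element $y$ with $y\cap\{x\}=\emptyset$ (the only candidate is $y=x$, and $x\in x\cap\{x\}$), contradicting the Axiom of Foundation. Hence $x\ne x+1$, and therefore $x\subset x+1$.

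For item (2), the ``only if'' direction is trivial, since $x=y$ gives $x\cup\{x\}=y\cup\{y\}$. For the ``if'' direction, assume $x+1=y+1$ and, toward a contradiction, that $x\ne y$. Then Proposition~\ref{p:plus1}(3) gives $x\in y$ and $y\in x$. I would then apply the Axiom of Foundation to the nonempty set $\{x,y\}$: an $\in$-minimal element $u\in\{x,y\}$ would satisfy $u\cap\{x,y\}=\emptyset$, but if $u=x$ then $y\in x\cap\{x,y\}$, and if $u=y$ then $x\in y\cap\{x,y\}$, so no such $u$ exists, contradicting Foundation. This forces $x=y$.

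The steps are short, and I do not anticipate a genuine obstacle; the only point requiring a little care is phrasing the Foundation violation correctly, namely exhibiting in each case a concrete nonempty set (here $\{x\}$ and $\{x,y\}$) all of whose elements meet it, rather than invoking ``no $\in$-cycles'' as a black box.
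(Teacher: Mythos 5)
Your proof is correct and is exactly the argument the paper intends: the corollary is stated without proof as an immediate consequence of Proposition~\ref{p:plus1}(2),(3) together with the Axiom of Foundation, and your explicit verification that the sets $\{x\}$ and $\{x,y\}$ would violate Foundation is the right way to fill in the details.
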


Since the membership relation $\E{\restriction}\Ord$ is an irreflexive well-order on $\Ord$, Proposition~\ref{p:plus1} has the following 

\begin{corollary} Let $\alpha,\beta$ be two ordinals.
\begin{enumerate}
\item[\textup{1)}] $\alpha+1\le\beta$ iff $\alpha<\beta$.
\item[\textup{2)}] $\alpha<\alpha+1$.
\item[\textup{3)}] $\alpha+1=\beta+1$ iff $\alpha=\beta$.
\item[\textup{4)}] $\alpha+1<\beta+1$ iff $\alpha<\beta$.
\end{enumerate}
\end{corollary}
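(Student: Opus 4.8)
The plan is to deduce all four items from Proposition~\ref{p:plus1} together with the dictionary between the order $<,\le$ on $\Ord$ and the set-theoretic relations $\in,\subseteq,\subset$. First I would record two facts used throughout: by Theorem~\ref{t:Ord}(4) the successor $\alpha+1=\alpha\cup\{\alpha\}$ is again an ordinal, so every comparison below is genuinely between ordinals; and by Theorem~\ref{t:ord}(4), for ordinals $\gamma,\delta$ we have $\gamma<\delta\Leftrightarrow\gamma\in\delta\Leftrightarrow\gamma\subset\delta$, whence $\gamma\le\delta\Leftrightarrow\gamma\subseteq\delta$. With this dictionary in hand the statements become purely set-theoretic assertions handled by Proposition~\ref{p:plus1}.

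The core is item (1). I would apply Proposition~\ref{p:plus1}(1) with $x=\alpha$ and $y=\beta$, which gives $\alpha+1\subseteq\beta$ iff $\alpha\subseteq\beta$ and $\alpha\in\beta$. Since $\alpha\in\beta$ already forces $\alpha\subseteq\beta$ by the transitivity of the ordinal $\beta$ (Theorem~\ref{t:ord}(4)), this two-part condition collapses to $\alpha\in\beta$. Translating the left side via the dictionary, $\alpha+1\le\beta\Leftrightarrow\alpha+1\subseteq\beta$, I obtain $\alpha+1\le\beta\Leftrightarrow\alpha\in\beta\Leftrightarrow\alpha<\beta$. Item (2) is then immediate: $\alpha\in\{\alpha\}\subseteq\alpha\cup\{\alpha\}=\alpha+1$, so $\alpha<\alpha+1$.

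For item (3) the ``if'' direction is trivial. For ``only if'' I would argue by contraposition: assuming $\alpha+1=\beta+1$ and $\alpha\ne\beta$, Proposition~\ref{p:plus1}(3) yields both $\alpha\in\beta$ and $\beta\in\alpha$; the transitivity of the relation $\E{\restriction}\Ord$ (Theorem~\ref{t:Ord}(2)) then gives $\alpha\in\alpha$, contradicting its irreflexivity, so $\alpha=\beta$. Finally, item (4) follows from item (1): unfolding $\beta+1=\beta\cup\{\beta\}$ gives $\alpha+1<\beta+1\Leftrightarrow\alpha+1\in\beta\cup\{\beta\}\Leftrightarrow(\alpha+1\in\beta\ \vee\ \alpha+1=\beta)\Leftrightarrow\alpha+1\le\beta$, and by item (1) the last condition is equivalent to $\alpha<\beta$.

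There is no genuine obstacle here; the one point requiring care is to keep the dictionary $\in/\subseteq\leftrightarrow</\le$ straight and to avoid invoking the corollary circularly. It is worth emphasizing that, unlike the preceding corollary for arbitrary sets, this ordinal version needs no Axiom of Foundation: the injectivity statement (3) rests on the irreflexivity of $\E{\restriction}\Ord$, which is built into the very definition of an ordinal, rather than on Foundation.
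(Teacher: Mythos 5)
Your proof is correct and follows exactly the route the paper intends: the paper states this corollary without proof, remarking only that it follows from Proposition~\ref{p:plus1} because $\E{\restriction}\Ord$ is an irreflexive well-order, and your argument fills in those details faithfully (using transitivity of ordinals to collapse the two-part condition in item (1), and irreflexivity of $\E{\restriction}\Ord$ in place of Foundation for item (3)). No gaps.
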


Applying Theorem~\ref{t:dynamics} to the $\mathbf S$-progresssive function $\Succ:\UU\to\UU$, we obtain the following 

\begin{corollary}\label{c:transucc} There exists a transfinite sequence of functions $(\Succ^{\circ\alpha})_{\alpha\in\Ord}$ such that for every set $x$ and ordinal $\alpha$ the following conditions are satisfied:
\begin{enumerate}
\item[\textup{1)}] $\Succ^{\circ0}(x)=x$;
\item[\textup{2)}] $\Succ^{\circ(\alpha+1)}(x)=\Succ(\Succ^{\circ\alpha}(x))$;
\item[\textup{3)}] $\Succ^{\circ\alpha}(x)=\sup\{\Succ^{\circ\gamma}(x):\gamma\in\alpha\}$ if the ordinal $\alpha>0$ is limit.
\end{enumerate}
\end{corollary}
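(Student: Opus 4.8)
The plan is to read off all four statements from Theorem~\ref{t:dynamics}, applied to the single expansive function $\Succ$. First I would verify the two hypotheses of that theorem for $X=\UU$ and $\Phi=\Succ$. The universal class $\UU$ is chain-inclusive (this was noted trivially in Section~\ref{s:dynamics}, since $\bigcup c$ is a set, hence an element of $\UU$, for any $\mathbf S$-chain $c\subseteq\mathcal P(\UU)$), and the function $\Succ:\UU\to\UU$, $\Succ:x\mapsto x\cup\{x\}$, is expansive because $x\subseteq x\cup\{x\}=\Succ(x)$ for every set $x$. Therefore Theorem~\ref{t:dynamics} applies and guarantees at once that the transfinite sequence $(\Succ^{\circ\alpha})_{\alpha\in\Ord}$ is well-defined, that each $\Succ^{\circ\alpha}$ is a function from $\UU$ to $\UU$, and that
$$\Succ^{\circ\alpha}(x)=x\cup{\textstyle\bigcup}\{\Succ(\Succ^{\circ\gamma}(x)):\gamma\in\alpha\}\subseteq\Succ^{\circ\beta}(x)$$
for every set $x$ and ordinals $\alpha\le\beta$. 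The displayed equality is exactly statement (1), while the displayed inclusion shows that, for each fixed $x$, the family $(\Succ^{\circ\gamma}(x))_{\gamma\in\Ord}$ is increasing in the inclusion order $\mathbf S$; I would record this monotonicity for use in (4).

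Next I would extract statements (2) and (3) directly from the recursive formula (\ref{eq:recursive-dymanics}) that defines the iterates and whose validity is asserted by Theorem~\ref{t:dynamics}. Statement (2) is the base case $\alpha=0$ of (\ref{eq:recursive-dymanics}), giving $\Succ^{\circ0}(x)=x$. Statement (3) is the successor case: reading (\ref{eq:recursive-dymanics}) at the successor ordinal $\alpha+1$, whose predecessor is $\alpha$, yields $\Succ^{\circ(\alpha+1)}(x)=\Succ(\Succ^{\circ\alpha}(x))$.

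Finally, for statement (4) I would use the limit case of (\ref{eq:recursive-dymanics}): when $\alpha$ is a nonzero limit ordinal, $\Succ^{\circ\alpha}(x)=\bigcup_{\gamma\in\alpha}\Succ^{\circ\gamma}(x)$. It then remains to identify this union with the supremum appearing in the statement. Since the family $\{\Succ^{\circ\gamma}(x):\gamma\in\alpha\}$ is an $\mathbf S$-chain by the monotonicity recalled above, its union is precisely its least upper bound in the inclusion order, so $\sup\{\Succ^{\circ\gamma}(x):\gamma\in\alpha\}=\bigcup_{\gamma\in\alpha}\Succ^{\circ\gamma}(x)=\Succ^{\circ\alpha}(x)$, this $\sup$ being the inclusion least upper bound, which agrees with the ordinal supremum of Lemma~\ref{l:sup-ord} whenever $x$ itself is an ordinal. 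There is essentially no serious obstacle here, since every clause is an instance of Theorem~\ref{t:dynamics} or of its defining recursion; the only point demanding a line of care is the last one, namely pinning down that the $\sup$ in (4) is the inclusion least upper bound and that, for the monotone chain, it coincides with the union.
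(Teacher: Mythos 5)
Your proposal is correct and follows exactly the paper's route: the paper derives this corollary by the single line "Applying Theorem~\ref{t:dynamics} to the (expansive) function $\Succ:\UU\to\UU$, we obtain the following," and you have simply made explicit the verification of the hypotheses (chain-inclusivity of $\UU$, expansivity of $\Succ$) and the reading-off of the four clauses from the theorem and the recursion (\ref{eq:recursive-dymanics}). Your extra care in identifying the $\sup$ in clause (4) with the union of the increasing $\mathbf S$-chain is a sensible clarification the paper leaves implicit.
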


\section{Addition}

The following theorem  introduces the addition of ordinals.

\begin{theorem} There exists a unique function $$+:\Ord\times\Ord\to\Ord,\quad+:\langle \alpha,\beta\rangle\mapsto\alpha+\beta$$such that for every ordinals $\alpha,\beta$ the following conditions are satisfied:
\begin{itemize}
\item[\textup{0)}] $\alpha+0=\alpha$ for any ordinal $\alpha$;
\item[\textup{1)}] $\alpha+(\beta+1)=(\alpha+\beta)+1$ for any ordinals $\alpha,\beta$;
\item[\textup{2)}] $\alpha+\beta=\bigcup\{\alpha+\gamma:\gamma\in\beta\}$ if the ordinal $\beta$ is limit.
\end{itemize} 
\end{theorem}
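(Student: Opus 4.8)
The plan is to realize $+$ as the function produced by the Recursion Theorem~\ref{t:recursion} applied to a fibered set-like well-founded order on $\Ord\times\Ord$ whose initial intervals record, for a pair $\langle\alpha,\beta\rangle$, exactly the already-computed values $\alpha+\gamma$ with $\gamma\in\beta$. Concretely, I would set $X=\Ord\times\Ord$ and take
$$R=\{\langle\langle\alpha,\gamma\rangle,\langle\alpha,\beta\rangle\rangle:\alpha\in\Ord\;\wedge\;\gamma\in\beta\in\Ord\},$$
the same order used in Theorems~\ref{t:recursion2} and~\ref{t:dynamics}. Here $\cev R(\langle\alpha,\beta\rangle)=\{\alpha\}\times\beta$ is a set, so $R$ is set-like; it is an order, transitivity and antisymmetry being inherited from $\E{\restriction}\Ord$ on each fiber; and it is well-founded, since a minimal second coordinate of a nonempty $Y\subseteq X$, chosen by the well-foundedness of $\E{\restriction}\Ord$, yields a pair whose initial interval misses $Y$.

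I would then feed the Recursion Theorem the function $F:(\Ord\times\Ord)\times\UU\to\UU$ defined by $F(\langle\alpha,\beta\rangle,y)=\alpha$ when $\beta=\emptyset$ and $F(\langle\alpha,\beta\rangle,y)=\bigcup\{z\cup\{z\}:z\in y\}$ otherwise; its existence follows from G\"odel's class existence Theorem~\ref{t:class}, being given by a $\UU$-bounded formula. The theorem supplies a unique $G:\Ord\times\Ord\to\UU$ with $G(\alpha,\beta)=F(\langle\alpha,\beta\rangle,\{G(\alpha,\gamma):\gamma\in\beta\})$, that is,
$$G(\alpha,0)=\alpha,\qquad G(\alpha,\beta)=\textstyle\bigcup\{G(\alpha,\gamma)+1:\gamma\in\beta\}\quad(\beta\ne0).$$
Writing $\alpha+\beta:=G(\alpha,\beta)$, a transfinite induction on $\beta$ using Theorem~\ref{t:Ord}(4,5) (successors and unions of sets of ordinals are ordinals) shows $\alpha+\beta\in\Ord$, so $+$ maps into $\Ord$. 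The single unified clause for $\beta\ne0$ is chosen precisely so that no case split into successor and limit ordinals is needed at the definition stage.

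The verification of conditions (0)--(2) hinges on one observation read straight off the raw recursion: for $\gamma\in\beta$ the ordinal $G(\alpha,\gamma)+1$ is among the sets united to form $G(\alpha,\beta)$, whence $\alpha+\gamma\in(\alpha+\gamma)+1\subseteq\alpha+\beta$, so $\beta\mapsto\alpha+\beta$ is strictly $\E$-increasing. Condition (0) is immediate. For (1) I expand $\beta+1=\beta\cup\{\beta\}$ to get
$$\alpha+(\beta+1)=\big(\textstyle\bigcup\{(\alpha+\gamma)+1:\gamma\in\beta\}\big)\cup\big((\alpha+\beta)+1\big),$$
and use monotonicity together with the corollary $\mu+1\le\nu\Leftrightarrow\mu<\nu$ to see that each $(\alpha+\gamma)+1\subseteq(\alpha+\beta)+1$, so the first union is absorbed and the value is $(\alpha+\beta)+1$. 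For (2) with $\beta$ limit I note that $\gamma\in\beta$ implies $\gamma+1\in\beta$, so by (1) the terms $(\alpha+\gamma)+1=\alpha+(\gamma+1)$ already occur among the $\alpha+\delta$ with $\delta\in\beta$; this gives $\bigcup\{(\alpha+\gamma)+1:\gamma\in\beta\}=\bigcup\{\alpha+\gamma:\gamma\in\beta\}$, which is the supremum by Lemma~\ref{l:sup-ord}.

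Uniqueness follows by transfinite induction on $\beta$: any $+'$ satisfying (0)--(2) agrees with $+$ at $0$, at successors via (1), and at limits via (2), using that every ordinal is $0$, a successor, or a limit, and invoking the Principle of Transfinite Induction for $\E{\restriction}\Ord$. I expect the only delicate point to be the bookkeeping in the successor step of (1)---confirming that the single union clause reproduces exactly $(\alpha+\beta)+1$---while everything else is routine once the monotonicity observation is in hand.
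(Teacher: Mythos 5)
Your proof is correct, but it is organized quite differently from the paper's. The paper's proof of this theorem is a one-liner: it sets $\alpha+\beta=\Succ^{\circ\beta}(\alpha)$ and appeals to Corollary~\ref{c:transucc}, which is itself an instance of the general Transfinite Dynamics machinery of Theorem~\ref{t:dynamics} (transfinite iteration of an expansive function on a chain-inclusive class, with the uniform clause $\Phi^{\circ\beta}(x)=x\cup\bigcup\{\Phi(\Phi^{\circ\gamma}(x)):\gamma\in\beta\}$). You bypass that framework entirely and run the Recursion Theorem~\ref{t:recursion} directly on the fibered order of $\Ord\times\Ord$ with the clause $G(\alpha,\beta)=\bigcup\{G(\alpha,\gamma)+1:\gamma\in\beta\}$ for $\beta\ne 0$; up to the harmless leading ``$x\cup{}$'' in the paper's formula (redundant for $\beta\ne 0$ since $0\in\beta$ and $\alpha\subseteq(\alpha+0)+1$), this is exactly the same recursion that sits inside the proof of Theorem~\ref{t:dynamics}, so the underlying engine is identical. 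What the two routes buy is different: your version is self-contained and shorter for addition alone, and your hand verification of (0)--(2) from monotonicity (via $(\alpha+\gamma)+1\subseteq(\alpha+\beta)+1$ for the successor absorption, and $(\alpha+\gamma)+1=\alpha+(\gamma+1)$ for the limit case) is sound; the paper's detour through expansive functions pays off later, since the same Theorem~\ref{t:dynamics} and the composition law $\Phi^{\circ\beta}\circ\Phi^{\circ\alpha}=\Phi^{\circ(\alpha+\beta)}$ of Theorem~\ref{t:as} are reused verbatim to define multiplication and exponentiation and to prove associativity and distributivity. Your uniqueness argument by transfinite induction on $\beta$ matches what the paper does (explicitly only for multiplication, implicitly here). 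The only minor imprecisions are that your $F$ needs $\dom[F]=(\Ord\times\Ord)\times\UU$ with a default value on junk inputs (as the paper does for its own $F$'s), and that your order $R$ is the Theorem~\ref{t:dynamics} order with the recursion running on the second coordinate rather than literally the order of Theorem~\ref{t:recursion2}; neither affects correctness.
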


\begin{proof} Define the addition letting $\alpha+\beta=\Succ^{\circ\beta}(\alpha)$ and apply Corollary~\ref{c:transucc}. The conditions (1)--(3) and Theorem~\ref{t:Ord}(3,4) imply that for any ordinals $\alpha,\beta$ their sum  $\alpha+\beta$ is an ordinal.
\end{proof}


\begin{theorem}\label{t:as} Let $X$ be a chain-inclusive class and $\Phi:X\to X$ be an expansive function. Then for any set $x\in X$ and ordinals $\alpha,\beta$ we have
$$\Phi^{\circ\beta}(\Phi^{\circ\alpha}(x))=\Phi^{\circ(\alpha+\beta)}(x).$$
\end{theorem}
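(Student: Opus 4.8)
The plan is to fix the set $x\in X$ and the ordinal $\alpha$, abbreviate $y=\Phi^{\circ\alpha}(x)$, and prove the equality $\Phi^{\circ\beta}(y)=\Phi^{\circ(\alpha+\beta)}(x)$ by transfinite induction on $\beta$ along the set-like well-founded order $\E{\restriction}\Ord$. By Theorem~\ref{t:dynamics} the value $y=\Phi^{\circ\alpha}(x)$ again lies in $X$, so every iterate $\Phi^{\circ\beta}(y)$ is defined and the statement is meaningful. Throughout I would use two inputs: from Theorem~\ref{t:dynamics}, the recursive clauses for $\Phi^{\circ\beta}$ together with the monotonicity $\Phi^{\circ\delta}(z)\subseteq\Phi^{\circ\eta}(z)$ for $\delta\le\eta$; and, from the definition of ordinal addition, its three defining clauses $\alpha+0=\alpha$, $\alpha+(\beta+1)=(\alpha+\beta)+1$, and $\alpha+\beta=\bigcup\{\alpha+\gamma:\gamma\in\beta\}$ for nonzero limit $\beta$.

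The base case $\beta=0$ is immediate, since $\Phi^{\circ0}(y)=y=\Phi^{\circ\alpha}(x)=\Phi^{\circ(\alpha+0)}(x)$ by the clause $\alpha+0=\alpha$. For the successor case $\beta=\gamma+1$, assuming the inductive hypothesis $\Phi^{\circ\gamma}(y)=\Phi^{\circ(\alpha+\gamma)}(x)$, I would compute
$$\Phi^{\circ(\gamma+1)}(y)=\Phi(\Phi^{\circ\gamma}(y))=\Phi(\Phi^{\circ(\alpha+\gamma)}(x))=\Phi^{\circ((\alpha+\gamma)+1)}(x)=\Phi^{\circ(\alpha+(\gamma+1))}(x),$$
where the first and third equalities are the successor clause of the $\Phi$-iteration and the last equality is the successor clause of ordinal addition.

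The limit case, where $\beta$ is a nonzero limit ordinal, is where the real work lies. Here the induction hypothesis gives $\Phi^{\circ\gamma}(y)=\Phi^{\circ(\alpha+\gamma)}(x)$ for all $\gamma\in\beta$, so the limit clause of the iteration yields $\Phi^{\circ\beta}(y)=\bigcup_{\gamma\in\beta}\Phi^{\circ\gamma}(y)=\bigcup_{\gamma\in\beta}\Phi^{\circ(\alpha+\gamma)}(x)$. It therefore remains to establish the key identity $\Phi^{\circ(\alpha+\beta)}(x)=\bigcup_{\gamma\in\beta}\Phi^{\circ(\alpha+\gamma)}(x)$. The inclusion $\supseteq$ is straightforward: for each $\gamma\in\beta$ we have $\alpha+\gamma\subseteq\bigcup\{\alpha+\gamma':\gamma'\in\beta\}=\alpha+\beta$, so monotonicity gives $\Phi^{\circ(\alpha+\gamma)}(x)\subseteq\Phi^{\circ(\alpha+\beta)}(x)$. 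For the reverse inclusion I would first check that $\alpha+\beta$ is itself a nonzero limit ordinal: were it a successor $\eta+1$, then $\eta\in\alpha+\beta=\bigcup\{\alpha+\gamma:\gamma\in\beta\}$ would give some $\gamma\in\beta$ with $\alpha+\beta\le\alpha+\gamma$; since $\beta$ is limit we also have $\gamma+1\in\beta$, whence $\alpha+\gamma<(\alpha+\gamma)+1=\alpha+(\gamma+1)\subseteq\alpha+\beta\le\alpha+\gamma$, a contradiction. Consequently the limit clause applies to give $\Phi^{\circ(\alpha+\beta)}(x)=\bigcup_{\delta\in\alpha+\beta}\Phi^{\circ\delta}(x)$, and each $\delta\in\alpha+\beta$ satisfies $\delta\in\alpha+\gamma$ for some $\gamma\in\beta$, so that $\Phi^{\circ\delta}(x)\subseteq\Phi^{\circ(\alpha+\gamma)}(x)$ by monotonicity. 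This proves $\subseteq$ and closes the induction.

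The main obstacle is precisely this limit step: one must reconcile the union $\bigcup_{\gamma\in\beta}\Phi^{\circ(\alpha+\gamma)}(x)$ produced by iterating $\Phi$ with the single transfinite iterate $\Phi^{\circ(\alpha+\beta)}(x)$, and the bridge is the fact that $\alpha+\beta$ is a limit ordinal together with the monotonicity of the iterates from Theorem~\ref{t:dynamics}. If one prefers to avoid the case distinction on the shape of $\alpha+\beta$, an alternative route is to apply the closed form $\Phi^{\circ\lambda}(x)=x\cup\bigcup\{\Phi(\Phi^{\circ\delta}(x)):\delta\in\lambda\}$ of Theorem~\ref{t:dynamics} directly with $\lambda=\alpha+\beta$ and reindex the union over $\delta\in\alpha+\beta=\bigcup_{\gamma\in\beta}(\alpha+\gamma)$, using $\beta\neq0$ to absorb the initial term $x$; this gives the same identity more uniformly.
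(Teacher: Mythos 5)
Your proof is correct and follows essentially the same route as the paper's: transfinite induction on $\beta$, with the successor case handled by the successor clauses of iteration and addition, and the limit case reduced to the identity $\Phi^{\circ(\alpha+\beta)}(x)=\bigcup_{\gamma\in\beta}\Phi^{\circ(\alpha+\gamma)}(x)$ via the observation that $\alpha+\beta$ is a nonzero limit ordinal and the monotonicity of the iterates. You merely spell out the two inclusions and the limitness of $\alpha+\beta$ in more detail than the paper does, which is a harmless (indeed helpful) elaboration rather than a different method.
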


\begin{proof}  This equality will be proved by transfinite induction on $\beta$. Fix $x\in X$ and an ordinal $\alpha$. Observe that
$$\Phi^{\circ0}(\Phi^{\circ\alpha}(x))=\Phi^{\circ\alpha}(x)=\Phi^{\circ(\alpha+0)}(x).$$
Assume that for some ordinal $\beta$ and all its elements $\gamma\in\beta$ we have proved that $\Phi^{\circ\gamma}(\Phi^{\circ\alpha}(x))=\Phi^{\circ(\alpha+\gamma)}(x)$. 

If $\beta$ is a successor ordinal, then $\beta=\gamma+1$ for some ordinal $\gamma<\beta$ and then
\begin{multline*}
\Phi^{\circ\beta}(\Phi^{\circ\alpha}(x))=\Phi^{\circ(\gamma+1)}(\Phi^{\circ\alpha}(x))=
\Phi(\Phi^{\circ\gamma}(\Phi^{\circ\alpha}(x)))=\\
\Phi(\Phi^{\circ(\alpha+\gamma)}(x))=\Phi^{\circ((\alpha+\gamma)+1)}(x)=\Phi^{\circ(\alpha+(\gamma+1))}(x)=\Phi^{\circ(\alpha+\beta)}(x).
\end{multline*}

Next, assume that $\beta$ is a nonzero limit ordinal. In this case the ordinal $\alpha+\beta=\bigcup\{\alpha+\gamma:\gamma\in\beta\}$ is also limit (since for any ordinal $\alpha+\gamma\in\beta$ the ordinal $(\alpha+\gamma)+1=\alpha+(\gamma+1)$ also belongs to $\alpha+\beta$). Moreover, $\{\alpha+\gamma:\gamma\in\beta\}\subseteq\alpha+\beta$ and for every $\delta\in\alpha+\beta$ there exists $\gamma\in\beta$ such that $\delta\le\alpha+\gamma$. Then
\begin{multline*}
\textstyle\Phi^{\circ\beta}(\Phi^{\circ\alpha}(x))=\bigcup\{\Phi^{\circ\gamma}(\Phi^{\circ\alpha}(x)):\gamma\in\beta\}=\bigcup\{\Phi^{\circ(\alpha+\gamma)}(x):\gamma\in\beta\}=\\
\textstyle\bigcup\{\Phi^{\circ\delta}:\delta\in\alpha+\beta\}=\Phi^{\circ(\alpha+\beta)}(x).
\end{multline*}
\end{proof}

Next we establish some properties of addition of ordinals.

\begin{theorem}\label{t:add-ord}  Let $\alpha,\beta,\gamma$ be ordinals.
\begin{enumerate}
\item[\textup{1)}] $(\alpha+\beta)+\gamma=\alpha+(\beta+\gamma)$.
\item[\textup{2)}] $0+\alpha=\alpha=\Succ^{\circ\alpha}(0)$ for any ordinal.
\item[\textup{3)}] If $\alpha\le\beta$, then $\alpha+\gamma\le\beta+\gamma$.
\item[\textup{4)}] $\beta<\gamma$ if and only if $\alpha+\beta<\alpha+\gamma$.
\item[\textup{5)}] For any ordinals $\alpha\le\beta$ there exists a unique ordinal $\gamma$ such that $\alpha+\gamma=\beta$.
\end{enumerate}
\end{theorem}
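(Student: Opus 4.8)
The plan is to read every clause off the definition $\alpha+\beta=\Succ^{\circ\beta}(\alpha)$ together with the iteration calculus already in place. Clause (1), associativity, is nothing but Theorem~\ref{t:as} applied to the expansive function $\Succ$ on the chain-inclusive class $\UU$: indeed $(\alpha+\beta)+\gamma=\Succ^{\circ\gamma}(\Succ^{\circ\beta}(\alpha))=\Succ^{\circ(\beta+\gamma)}(\alpha)=\alpha+(\beta+\gamma)$. For clause (2) I would note that $0+\alpha=\Succ^{\circ\alpha}(0)$ is immediate from the definition, and prove $\Succ^{\circ\alpha}(0)=\alpha$ by transfinite induction on $\alpha$ using Corollary~\ref{c:transucc}: the successor step uses $\Succ^{\circ(\alpha+1)}(0)=\Succ(\Succ^{\circ\alpha}(0))=\alpha\cup\{\alpha\}=\alpha+1$, and the limit step uses clause (4) of that corollary together with Lemma~\ref{l:sup-ord} and the equality $\bigcup\alpha=\alpha$ for a nonzero limit ordinal.

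The monotonicity clauses split into a right-variable and a left-variable statement. Non-strict monotonicity in the right variable, $\beta\le\gamma\Rightarrow\alpha+\beta\le\alpha+\gamma$, is read directly off Theorem~\ref{t:dynamics}, which yields $\Succ^{\circ\beta}(\alpha)\subseteq\Succ^{\circ\gamma}(\alpha)$, i.e. $\alpha+\beta\subseteq\alpha+\gamma$. Clause (3), left-monotonicity with the summand $\gamma$ fixed on the right, I would prove by transfinite induction on $\gamma$: the base and limit cases are immediate (the limit case nesting the two increasing chains of unions), and the successor case reduces to the implication $x\le y\Rightarrow x+1\le y+1$ for ordinals recorded in the corollary just before the definition of addition. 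For the forward direction of clause (4), from $\beta<\gamma$ I get $\beta+1\le\gamma$, whence $\alpha+\beta<(\alpha+\beta)+1=\alpha+(\beta+1)\le\alpha+\gamma$, the last inequality being the non-strict monotonicity just mentioned; the converse follows from the trichotomy of ordinals, since $\gamma\le\beta$ would force $\alpha+\gamma\le\alpha+\beta$.

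For clause (5), uniqueness is immediate from clause (4): the map $\gamma\mapsto\alpha+\gamma$ is strictly increasing, hence injective. Existence is the one genuinely substantial step, and I would prove it by transfinite induction on $\beta\ge\alpha$. The case $\beta=\alpha$ takes $\gamma=0$, and a successor $\beta=\beta'+1>\alpha$ satisfies $\beta'\ge\alpha$, so an inductively supplied $\gamma'$ with $\alpha+\gamma'=\beta'$ yields $\gamma=\gamma'+1$. The hard part will be the limit case: here I would collect, for every $\delta$ with $\alpha\le\delta<\beta$, the unique $\gamma_\delta$ with $\alpha+\gamma_\delta=\delta$ (the family $\{\gamma_\delta\}$ exists by the Axiom of Replacement) and set $\gamma=\bigcup\{\gamma_\delta:\alpha\le\delta<\beta\}$. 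One first checks, using uniqueness and $\alpha+(\gamma_\delta+1)=\delta+1$, that $\gamma_{\delta+1}=\gamma_\delta+1$, so that $\gamma$ is a nonzero limit ordinal; then the limit clause of the defining theorem gives $\alpha+\gamma=\bigcup\{\alpha+\eta:\eta\in\gamma\}$, and a two-sided comparison (every $\alpha+\eta$ with $\eta<\gamma$ lies below $\beta$, while every $\delta<\beta$ with $\delta\ge\alpha$ equals some $\alpha+\gamma_\delta$ with $\gamma_\delta<\gamma$) identifies this union with $\beta$. A useful auxiliary inequality throughout is $\delta\le\alpha+\delta$, obtained from clauses (3) and (2), which keeps the relevant suprema bounded.
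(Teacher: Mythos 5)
Your proposal is correct, and clauses (1)--(3) follow the paper's proof essentially verbatim: (1) is Theorem~\ref{t:as} for $\Phi=\Succ$, (2) is the same transfinite induction, and (3) is the same induction on $\gamma$ with the successor step reduced to $x\le y\Rightarrow x+1\le y+1$. Where you diverge is in (4) and (5). For the forward direction of (4) the paper invokes Proposition~\ref{p:sexp} (strict expansivity of $\Succ$, justified on ordinals via irreflexivity of $\E{\restriction}\Ord$) to get $\alpha+\beta\subset\alpha+\gamma$ directly; you instead pass through $\beta+1\le\gamma$ and the chain $\alpha+\beta<(\alpha+\beta)+1=\alpha+(\beta+1)\le\alpha+\gamma$, using only the non-strict monotonicity $\Succ^{\circ\beta}(\alpha)\subseteq\Succ^{\circ\gamma}(\alpha)$ from Theorem~\ref{t:dynamics} and $\delta<\delta+1$; this is a legitimate and arguably cleaner route, since it avoids the strict-expansivity machinery altogether. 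For the existence half of (5) the paper argues globally: it forms $\Gamma=\{\gamma:\alpha+\gamma\le\beta\}$, checks it is a set, takes $\gamma=\sup\Gamma$, shows $\alpha+\gamma\le\beta$ by the successor/limit dichotomy on $\gamma$, and rules out strict inequality by the contradiction $\gamma+1\in\Gamma$. Your transfinite induction on $\beta$ with the Replacement-collected family $(\gamma_\delta)_{\alpha\le\delta<\beta}$ in the limit case is also correct (the key points — that uniqueness gives $\gamma_{\delta+1}=\gamma_\delta+1$, hence the union is a nonzero limit ordinal, and the two-sided comparison of $\bigcup\{\alpha+\eta:\eta\in\gamma\}$ with $\beta$ — all go through), but it is somewhat longer and needs uniqueness established before existence, which you correctly arrange. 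The paper's supremum argument buys brevity and avoids the induction; yours makes the structure of the difference ordinal $\gamma$ more explicit as a limit of the partial differences $\gamma_\delta$.
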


\begin{proof} 1. The equality $(\alpha+\beta)+\gamma=\alpha+(\beta+\gamma)$ is nothing else but the equality $$\Succ^{\circ\gamma}(\Succ^{\circ\beta}(\alpha))=\Succ^{\circ(\beta+\gamma)}(\alpha)$$established in Theorem~\ref{t:as}.
\smallskip

2.  The equality $0+\alpha=\alpha$ will be proved by transfinite induction. For $\alpha=0$ the equality $0+0=0$ holds. Assume that for some ordinal $\alpha>0$ we have proved that $0+\beta=\beta$ for all $\beta\in\alpha$. If $\alpha=\beta+1$ is a successor ordinal, then $$0+\alpha=0+(\beta+1)=(0+\beta)+1=\beta+1=\alpha$$by the induction hypothesis. 

If $\alpha$ is a limit ordinal, then 
$$0+\alpha=\sup\{0+\beta:\beta\in\alpha\}=\sup\{\beta:\beta\in\alpha\}=\alpha.$$
\smallskip

3.  Assume that $\alpha\le \beta$. The inequality $\alpha+\gamma\le\beta+\gamma$ will be proved by Transfinite Induction. For $\gamma=0$ the inequality $\alpha+0=\alpha\le\beta+0$ trivially holds. Assume that for some nonzero ordinal $\gamma$ and all its elements $\delta\in\gamma$ we have proved that $\alpha+\delta\le\beta+\delta$, which implies $(\alpha+\delta)+1\subseteq(\beta+\delta)+1$. 
If $\gamma=\delta+1$ for some ordinal $\delta$, then
$$\alpha+\gamma=\alpha+(\delta+1)=(\alpha+\delta)+1\subseteq(\beta+\delta)+1=\beta+(\delta+1)=\beta+\gamma.$$
If $\gamma$ is a limit ordinal, then 
$\alpha+\gamma=\sup\{\alpha+\delta:\delta\in\gamma\}\le\sup\{\beta+\delta:\delta\in\gamma\}=\beta+\gamma$.
\smallskip

4. If $\beta<\gamma$, then $\alpha+\beta\subset \alpha+\gamma$ by the irreflexivity of the relation $\E{\restriction}\Ord$ (see Definition~\ref{d:ordinals}). Now we prove that $\alpha+\beta<\alpha+\gamma$ implies $\beta<\gamma$. In the oppositve case, we get $\gamma\le\beta$ (by Theorem~\ref{t:ord}(6)) and hence $\alpha+\gamma\le\alpha+\beta$, which contradicts our assumption.
\smallskip

5. Given two ordinals $\alpha\le\beta$, consider the class $\Gamma=\{\gamma\in\Ord:\alpha+\gamma\subseteq\beta\}$. By Theorem~\ref{t:add-ord}(3), for every $\gamma\in\Gamma$ we have $\gamma=0+\gamma\le\alpha+\gamma\le\beta$ and hence $\Gamma\subseteq\mathcal P(\beta)$ is a set. By Theorem~\ref{t:Ord}(5) and Lemma~\ref{l:sup-ord}, the set $\gamma=\sup \Gamma=\bigcup\Gamma$ is an ordinal.  First we prove that $\alpha+\gamma\le\beta$.

If $\gamma=\sup \Gamma$ is a successor ordinal, then $\gamma\in \Gamma$ and hence $\alpha+\gamma\subseteq\beta$ by the definition of the set $\Gamma$. If $\gamma$ is a limit ordinal, then 
$$
\alpha+\gamma=\alpha+\sup \Gamma=\sup\{\alpha+\delta:\delta\in \Gamma\}\le \beta$$
as $\alpha+\delta\subseteq\beta$ for all $\delta\in \Gamma$.
Therefore, $\alpha+\gamma\le\beta$. Assuming that $\alpha+\gamma\ne\beta$, we conclude that $\alpha+\gamma<\beta$ and hence $\alpha+(\gamma+1)=(\alpha+\gamma)+1\le\beta$ by  Proposition~\ref{p:plus1}(1). Then $\gamma+1\in \Gamma$ and $\gamma\in \gamma+1\subseteq\sup \Gamma=\gamma$, which contradicts the irreflexivity of the relation $\E{\restriction}\Ord$. This contradiction shows that $\alpha+\gamma=\beta$. The uniqueness of the ordinal $\gamma$ follows from Theorems~\ref{t:add-ord}(4) and \ref{t:ord}(6).
\end{proof}

Finally we establish the commutativity of  addition for natural numbers.

\begin{theorem}\label{t:add-nat} Let $k,n\in\w$ be two natural numbers.
\begin{enumerate}
\item[\textup{1)}] $k+n\in\w$;
\item[\textup{2)}] $n+1=1+n$;
\item[\textup{3)}] $n+k=k+n$.
\end{enumerate}
\end{theorem}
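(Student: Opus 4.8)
The plan is to prove all three statements by induction on a single natural-number variable, exploiting the recursive clauses (0) and (1) of the definition of ordinal addition together with the equality $0+\alpha=\alpha$ from Theorem~\ref{t:add-ord}(2) and the inductivity of $\w$. Since every induction here ranges over $\w$, only the Principle of Mathematical Induction is needed, not the full transfinite version.

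For statement (1) I would fix $k\in\w$ and induct on $n$. The base case $n=0$ is the equality $k+0=k\in\w$ supplied by clause (0). For the inductive step, assuming $k+n\in\w$, clause (1) gives $k+(n+1)=(k+n)+1=(k+n)\cup\{k+n\}$; since $\w$ is an inductive set containing $k+n$, its successor $(k+n)+1$ again lies in $\w$. For statement (2) I again induct on $n$: when $n=0$ both sides equal $1$, using $0+1=1$ (Theorem~\ref{t:add-ord}(2)) and $1+0=1$ (clause (0)); for the step, assuming $1+n=n+1$, I compute $1+(n+1)=(1+n)+1=(n+1)+1$ by clause (1) and the induction hypothesis, which is precisely the claim for $n+1$.

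The heart of the theorem is the commutativity in (3), and the main obstacle is the asymmetry of the definition: addition recurses on its right argument, so $n+(k+1)=(n+k)+1$ is immediate from clause (1), whereas $(k+1)+n$ does not simplify directly. To overcome this I would first establish the left successor law
$$(k+1)+n=(k+n)+1\qquad\text{for all }k,n\in\w$$
by induction on $n$: the base case is $(k+1)+0=k+1=(k+0)+1$, and the step applies clause (1) twice, $(k+1)+(n+1)=((k+1)+n)+1=((k+n)+1)+1=(k+(n+1))+1$, the middle equality being the induction hypothesis.

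With this lemma in hand, I would prove $n+k=k+n$ by induction on $k$ for fixed $n$. The base case $k=0$ reads $n+0=n=0+n$ by clause (0) and Theorem~\ref{t:add-ord}(2), and the inductive step computes
$$(k+1)+n=(k+n)+1=(n+k)+1=n+(k+1),$$
using the left successor law, the induction hypothesis, and clause (1) in turn. I expect the only delicate point to be keeping track of which argument each induction is performed on, so that every appeal to clause (1) is applied to the correct (right-hand) argument; once the left successor law is isolated as a separate lemma, the remaining manipulations are routine.
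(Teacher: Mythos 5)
Your proofs of parts (1) and (2) follow the paper essentially verbatim, and your proof of part (3) is also correct, but it takes a genuinely different route. The paper proves commutativity by induction on $n$ using the chain
$k+(n+1)=(k+n)+1=1+(k+n)=1+(n+k)=(1+n)+k=(n+1)+k$,
which leans on two previously established facts: the associativity of ordinal addition (Theorem~\ref{t:add-ord}(1), itself a consequence of the iteration identity $\Succ^{\circ\gamma}\circ\Succ^{\circ\beta}=\Succ^{\circ(\beta+\gamma)}$) and part (2) of the present theorem, used twice to shuttle the $1$ from the right of a sum to the left. You instead isolate the left successor law $(k+1)+n=(k+n)+1$ as a separate lemma proved by induction on $n$, and then run the main induction on $k$; this is the classical Peano-arithmetic argument. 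Your version is more self-contained: it uses only the recursive clauses of addition and $0+\alpha=\alpha$, never invoking associativity, and in fact never invoking part (2) inside the proof of part (3). The paper's version is shorter given that associativity is already available for all ordinals, but yours would survive in a setting where only the defining recursion had been established. Both are valid; just be aware that your left successor law is a statement about natural numbers only (it fails for general ordinals, e.g.\ $(1+1)+\w=\w\ne\w+1=(1+\w)+1$), so the restriction of both inductions to $\w$ is essential and correctly observed in your write-up.
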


\begin{proof} Fix any natural number $k\in\w$.
\smallskip

1. The inclusion  $k+n\in\w$ will be proved by Mathematical Induction. For $n=0$, we have $k+0=k\in\w$ by Theorem~\ref{t:add-ord}(2). Assume that for some $n\in\w$ we have proved that $k+n\in\w$. Taking into account that $\w$ is a limit ordinal, we conclude that $k+(n+1)=(k+n)+1\in\w$. By the Principle of Mathematical Induction, $\forall n\in\w\;(k+n\in\w)$.
\smallskip

2. By Theorems~\ref{t:add-ord}(2), $1+0=1=0+1$. Assume that for some $n\in\w$ we have proved that $1+n=n+1$. Then by Theorem~\ref{t:add-ord}(1), $1+(n+1)=(1+n)+1=(n+1)+1$. By the Principle of Mathematical Induction, the equality $1+n=n+1$ holds for all $n\in\w$.
\smallskip

3. Take any $k\in\w$. The equality $k+n=n+k$ will be proved by induction on $n\in\w$. For $n=0$ the equality $k+0=k=0+k$ holds by Theorems~\ref{t:add-ord}(2). Assume that for some $n\in\w$ the equality $k+n=n+k$ has been proved. By the inductive assumption and  Theorems~\ref{t:add-ord}(1), \ref{t:add-nat}(2), we obtain
$$k+(n+1)=(k+n)+1=1+(k+n)=1+(n+k)=(1+n)+k=(n+1)+k.$$
\end{proof}

\begin{exercise} Find two ordinals $\alpha,\beta$ such that $\alpha+\beta\ne\beta+\alpha$.
\smallskip

\noindent{\em Hint:} Show that $1+\w=\w\ne\w+1$.
\end{exercise}

\begin{exercise} Prove that every ordinal $\alpha$ can be uniquely written as the sum $\alpha=\beta+n$ of a limit ordinal $\beta$ and a natural number $n$.
\end{exercise}

In fact, the operation of addition can be defined ``geometrically'' for any relations. Namely, for two relations $R,P$ their sum $R\sqcuplus P$ is defined as the relation $$
\begin{aligned}
R\uplus P=\;&\{\langle\langle 0,x\rangle,\langle 0,y\rangle\rangle:\langle x,y\rangle\in R\}
\cup\{\langle\langle 1,x\rangle,\langle 1,y\rangle\rangle:\langle x,y\rangle\in P\}\cup\\&\{\langle\langle 0,x\rangle,\langle 1,y\rangle\rangle:\langle x,y\rangle\in\dom[R^\pm]\times \dom[P^\pm]\}
\end{aligned}
$$
with the underlying class $\dom[(R+P)^\pm]=(\{0\}\times \dom[R^\pm])\cup(\{1\}\times\dom[P^\pm])$.

\begin{exercise} Given ordinals $\alpha,\beta$, prove that
\begin{enumerate}
\item the order $\E{\restriction}(\alpha+\beta)$ is isomorphic to $\E{\restriction}\alpha\sqcuplus \E{\restriction}\beta$;
\item $\alpha+\beta=\rank(\E{\restriction}\alpha\sqcuplus \E{\restriction}\beta)$.
\end{enumerate}
\end{exercise}

\section{Multiplication}

The following theorem introduces the operation of multiplication of ordinals.

\begin{theorem}\label{t:mult-ord} There exists a unique function $$\cdot:\Ord\times\Ord\to\Ord,\quad \cdot:\langle\alpha,\beta\rangle\mapsto \alpha\cdot\beta,$$ such that for any ordinals $\alpha,\beta$ the following properties are satisfied:
\begin{enumerate}
\item[\textup{0)}] $\alpha\cdot 0=0$;
\item[\textup{1)}]  $\alpha\cdot(\beta+1)=\alpha\cdot \beta+\alpha$;
\item[\textup{2)}]  $\alpha\cdot \beta=\sup\{\alpha\cdot\gamma:\gamma\in\beta\}$ if the ordinal $\beta$ is limit.
\end{enumerate}
\end{theorem}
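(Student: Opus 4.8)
The plan is to obtain multiplication by transfinite iteration of addition, in exact analogy with the way addition was obtained by iterating $\Succ$. Fix an ordinal $\alpha$ and consider the function $\Phi_\alpha:\Ord\to\Ord$, $\Phi_\alpha:x\mapsto x+\alpha$; this maps $\Ord$ into $\Ord$ since a sum of ordinals is an ordinal. By Theorem~\ref{t:add-ord}(4) we have $x=x+0\subseteq x+\alpha$ for every $x$, so $\Phi_\alpha$ is expansive, and $\Ord$ is chain-inclusive by Theorem~\ref{t:Ord}(5). Hence Theorem~\ref{t:dynamics} applies and produces the transfinite sequence $(\Phi_\alpha^{\circ\beta})_{\beta\in\Ord}$ of functions $\Ord\to\Ord$. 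I would then set $\alpha\cdot\beta:=\Phi_\alpha^{\circ\beta}(0)$, which is again an ordinal.

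The three defining clauses are read off directly from the recursive clauses in Theorem~\ref{t:dynamics}. From $\Phi_\alpha^{\circ0}(0)=0$ we get $\alpha\cdot0=0$; the successor clause gives $\alpha\cdot(\beta+1)=\Phi_\alpha(\Phi_\alpha^{\circ\beta}(0))=(\alpha\cdot\beta)+\alpha$; and for a nonzero limit $\beta$ the limit clause together with Lemma~\ref{l:sup-ord} gives $\alpha\cdot\beta=\bigcup\{\alpha\cdot\gamma:\gamma\in\beta\}=\sup\{\alpha\cdot\gamma:\gamma\in\beta\}$. Thus clauses 0)--2) hold.

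The single point requiring care, and the main obstacle, is that the iterated map $\Phi_\alpha$ depends on the parameter $\alpha$, so I must exhibit one class function on $\Ord\times\Ord$ rather than a separate sequence for each $\alpha$. The device I would use is the uniform identity $\alpha\cdot\beta=\bigcup\{(\alpha\cdot\gamma)+\alpha:\gamma\in\beta\}$, which holds for every $\beta$ by the explicit formula of Theorem~\ref{t:dynamics} (for successor and limit $\beta$ it collapses to the two clauses above, using that $\Phi_\alpha^{\circ\gamma}(0)$ is $\subseteq$-increasing in $\gamma$, itself guaranteed by Theorem~\ref{t:dynamics}). This identity makes the recursion generator explicit, so I apply the Recursion Theorem~\ref{t:Recursion} on $X=\Ord\times\Ord$ with the relation $R=\{\langle\langle\alpha,\beta\rangle,\langle\alpha,\beta'\rangle\rangle:\beta\in\beta'\in\Ord\}$ and the generator $F(\langle\alpha,\beta\rangle,y)=\bigcup\{u+\alpha:u\in y\}$. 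I must first check that $R$ is set-like, since $\cev R(\langle\alpha,\beta\rangle)=\{\alpha\}\times\beta$ is a set, and well-founded: an $R$-minimal point of a nonempty class $Y\subseteq\Ord\times\Ord$ is obtained by choosing any $\langle\alpha_0,\beta_0\rangle\in Y$ and taking the $\E$-least second coordinate in the fibre $\{\beta:\langle\alpha_0,\beta\rangle\in Y\}$, which exists because $\E{\restriction}\Ord$ is well-founded. The resulting function $M$ satisfies $M(\alpha,\beta)=\bigcup\{M(\alpha,\gamma)+\alpha:\gamma\in\beta\}$, and a short transfinite induction on $\beta$ identifies $M(\alpha,\beta)$ with $\Phi_\alpha^{\circ\beta}(0)$, in particular shows $M(\alpha,\beta)\in\Ord$ and that clauses 0)--2) hold; I then define $\cdot:=M$.

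Finally, uniqueness is a transfinite induction on $\beta$ carried out for all $\alpha$ simultaneously: any two functions satisfying 0)--2) agree at $0$, the successor clause propagates agreement to successors, and the limit clause propagates it across limit ordinals, so by the Principle of Transfinite Induction the two functions coincide on $\Ord\times\Ord$.
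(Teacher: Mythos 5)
Your proposal is correct and follows essentially the same route as the paper: existence by applying Theorem~\ref{t:dynamics} to the expansive map $\Phi_\alpha:x\mapsto x+\alpha$ and setting $\alpha\cdot\beta=\Phi_\alpha^{\circ\beta}(0)$, with uniqueness by transfinite induction on $\beta$. The only difference is that you explicitly verify that the $\alpha$-parametrized iterations assemble into a single class function on $\Ord\times\Ord$ via the Recursion Theorem, a point the paper's proof leaves implicit.
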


\begin{proof} The uniqueness of $\cdot$ can be proved by transfinite induction on $\beta$. The existence of $\cdot$ follows from Theorem~\ref{t:dynamics} applied to the function
$$\Phi_\alpha:\Ord\to\Ord,\quad\Phi_\alpha:x\mapsto x+\alpha=\Succ^{\circ\alpha}(0).$$
Then $\alpha\cdot\beta=\Phi_\alpha^{\circ\beta}(0)$.
\end{proof}

\begin{lemma}\label{l:0alpha} $0\cdot\alpha=0$ for any ordinal $\alpha$.
\end{lemma}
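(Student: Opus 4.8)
The plan is to prove the statement $0\cdot\alpha=0$ for every ordinal $\alpha$ by transfinite induction on $\alpha$, using the recursive defining properties of multiplication from Theorem~\ref{t:mult-ord}. This is the natural approach because multiplication was defined via the three clauses (base case $0$, successor case, limit case), and transfinite induction splits exactly along these three cases. I will assume as the inductive hypothesis that $0\cdot\beta=0$ for all ordinals $\beta\in\alpha$, and derive $0\cdot\alpha=0$.

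First I would treat the base case $\alpha=0$: clause (0) of Theorem~\ref{t:mult-ord} gives directly $0\cdot 0=0$. Next, for the successor case $\alpha=\beta+1$, I would apply clause (1) to obtain $0\cdot(\beta+1)=0\cdot\beta+0$, then use the inductive hypothesis $0\cdot\beta=0$ together with the identity $0+0=0$ (clause (0) of the addition theorem, i.e. $\gamma+0=\gamma$ with $\gamma=0$) to conclude $0\cdot(\beta+1)=0+0=0$. Finally, for the limit case (with $\alpha$ nonzero and limit), clause (2) gives $0\cdot\alpha=\sup\{0\cdot\gamma:\gamma\in\alpha\}$; by the inductive hypothesis every $0\cdot\gamma=0$ for $\gamma\in\alpha$, so the set $\{0\cdot\gamma:\gamma\in\alpha\}=\{0\}$ and hence its supremum, which equals $\bigcup\{0\}=0$ by Lemma~\ref{l:sup-ord}, is $0$.

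Assembling these three cases via the Principle of Transfinite Induction (in its metaversion, applied to the well-founded order $\E{\restriction}\Ord$ and the formula $\varphi(\alpha)\equiv(0\cdot\alpha=0)$) yields the result for all ordinals. I do not anticipate any genuine obstacle here; the only mild subtlety is handling the limit case cleanly, where one must observe that the supremum of the singleton set $\{0\}$ is indeed $0$, which follows immediately from $\sup A=\bigcup A$ (Lemma~\ref{l:sup-ord}) since $\bigcup\{0\}=\bigcup\{\emptyset\}=\emptyset=0$. The argument is entirely routine and parallels the proof of $0+\alpha=\alpha$ given in Theorem~\ref{t:add-ord}(2).
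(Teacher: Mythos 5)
Your proof is correct and follows essentially the same route as the paper's: transfinite induction on $\alpha$, with the base case from clause (0) of the definition, the successor case via $0\cdot(\gamma+1)=0\cdot\gamma+0=0+0=0$, and the limit case via $\sup\{0\}=0$. Your extra remark that $\sup\{0\}=\bigcup\{\emptyset\}=0$ by Lemma~\ref{l:sup-ord} is a small but welcome clarification that the paper leaves implicit.
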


\begin{proof} This equality will be proved by transfinite induction. For $\alpha=0$ it follows from the definition of multiplication by zero. Assume that for for nonzero ordinal $\beta$ we proved that $0\cdot\gamma=0$ for all $\gamma\in\beta$. If $\beta$ is a successor ordinal, then $\beta=\gamma+1$ for some ordinal $\gamma\in\beta$ and hence
$$0\cdot\beta=0\cdot(\gamma+1)=0\cdot\gamma+0=0+0=0.$$
If $\beta$ is a limit ordinal, then
$$0\cdot\beta=\sup\{0\cdot\gamma:\gamma\in\beta\}=\sup\{0\}=0.$$
\end{proof}

\begin{theorem}\label{t:ass} Let $X$ be a chain-inclusive class and $\Phi:X\to X$ be an expansive function. Then for any set $x\in X$ and ordinals $\alpha,\beta,\gamma$ we have
\begin{enumerate}
\item[\textup{1)}] $(\Phi^{\circ\alpha})^{\circ\beta}(x)=\Phi^{\circ(\alpha\cdot\beta)}(x)$;
\item[\textup{2)}] $\Phi^{\circ(\alpha\cdot(\beta+\gamma))}(x)=\Phi^{\circ(\alpha\cdot\beta+\alpha\cdot\gamma)}(x)$.
\end{enumerate}
\end{theorem}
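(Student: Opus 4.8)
The plan is to establish statement (1) by transfinite induction on $\beta$, and then to obtain statement (2) formally from (1) together with Theorem~\ref{t:as}. The key preliminary observation is that for every ordinal $\alpha$ the iterate $\Phi^{\circ\alpha}\colon X\to X$ is itself an \emph{expansive} function: Theorem~\ref{t:dynamics} gives $\Phi^{\circ0}(x)=x\subseteq\Phi^{\circ\alpha}(x)$ by monotonicity of the transfinite sequence. Consequently Theorem~\ref{t:dynamics} applies verbatim to $\Psi:=\Phi^{\circ\alpha}$ on the chain-inclusive class $X$, so the iterates $(\Phi^{\circ\alpha})^{\circ\beta}=\Psi^{\circ\beta}$ are well-defined and obey the recursion~\eqref{eq:recursive-dymanics}; this is what makes both sides of (1) meaningful.

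For the induction in (1), the base case $\beta=0$ reads $(\Phi^{\circ\alpha})^{\circ0}(x)=x=\Phi^{\circ0}(x)=\Phi^{\circ(\alpha\cdot0)}(x)$, using $\alpha\cdot0=0$ from Theorem~\ref{t:mult-ord}(0). For the successor step $\beta=\delta+1$ I would compute
$$(\Phi^{\circ\alpha})^{\circ(\delta+1)}(x)=\Phi^{\circ\alpha}\big((\Phi^{\circ\alpha})^{\circ\delta}(x)\big)=\Phi^{\circ\alpha}\big(\Phi^{\circ(\alpha\cdot\delta)}(x)\big)=\Phi^{\circ(\alpha\cdot\delta+\alpha)}(x)=\Phi^{\circ(\alpha\cdot(\delta+1))}(x),$$
where the second equality is the inductive hypothesis, the third is Theorem~\ref{t:as} (inner exponent $\alpha\cdot\delta$, outer exponent $\alpha$), and the last uses $\alpha\cdot(\delta+1)=\alpha\cdot\delta+\alpha$ from Theorem~\ref{t:mult-ord}(1).

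The main obstacle is the limit step, where $\beta$ is a nonzero limit ordinal. The limit clause of~\eqref{eq:recursive-dymanics} for $\Psi=\Phi^{\circ\alpha}$ and the inductive hypothesis give $(\Phi^{\circ\alpha})^{\circ\beta}(x)=\bigcup_{\gamma\in\beta}\Phi^{\circ(\alpha\cdot\gamma)}(x)$, so everything comes down to identifying this union with $\Phi^{\circ(\alpha\cdot\beta)}(x)$. If $\alpha=0$ then $\alpha\cdot\gamma=0$ for all $\gamma$ by Lemma~\ref{l:0alpha} and both sides equal $x$ (the union is nonempty as $\beta\ne0$). If $\alpha\ge1$ I would first show $\alpha\cdot\beta$ is a nonzero limit ordinal: it is nonzero since $\alpha\cdot1=\alpha\ge1$, and it cannot be a successor $\mu+1$, for otherwise $\mu\in\alpha\cdot\beta=\bigcup\{\alpha\cdot\gamma:\gamma\in\beta\}$ would give $\mu<\alpha\cdot\gamma\le\alpha\cdot\beta=\mu+1$, hence $\alpha\cdot\gamma=\alpha\cdot\beta$ for some $\gamma\in\beta$; but then $\gamma+1\in\beta$ and $\alpha\cdot(\gamma+1)=\alpha\cdot\gamma+\alpha>\alpha\cdot\gamma=\alpha\cdot\beta$ (by Theorem~\ref{t:mult-ord}(1) and Theorem~\ref{t:add-ord}(4)), contradicting $\alpha\cdot(\gamma+1)\le\alpha\cdot\beta$. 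Since $\alpha\cdot\beta=\sup\{\alpha\cdot\gamma:\gamma\in\beta\}=\bigcup\{\alpha\cdot\gamma:\gamma\in\beta\}$ by Theorem~\ref{t:mult-ord}(2) and Lemma~\ref{l:sup-ord}, the ordinals $\alpha\cdot\gamma$ are cofinal in the limit ordinal $\alpha\cdot\beta$. Applying the limit clause of~\eqref{eq:recursive-dymanics} to $\Phi$ at $\alpha\cdot\beta$ and using the monotonicity of Theorem~\ref{t:dynamics} in both directions (each $\delta\in\alpha\cdot\beta$ satisfies $\delta<\alpha\cdot\gamma$ for some $\gamma\in\beta$, and conversely each $\alpha\cdot\gamma\le\alpha\cdot\beta$) then yields $\Phi^{\circ(\alpha\cdot\beta)}(x)=\bigcup_{\delta\in\alpha\cdot\beta}\Phi^{\circ\delta}(x)=\bigcup_{\gamma\in\beta}\Phi^{\circ(\alpha\cdot\gamma)}(x)$, closing the induction.

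Finally, statement (2) follows with no further work on ordinals by setting $\Psi=\Phi^{\circ\alpha}$ and chaining
$$\Phi^{\circ(\alpha\cdot(\beta+\gamma))}(x)=\Psi^{\circ(\beta+\gamma)}(x)=\Psi^{\circ\gamma}\big(\Psi^{\circ\beta}(x)\big)=\Phi^{\circ(\alpha\cdot\gamma)}\big(\Phi^{\circ(\alpha\cdot\beta)}(x)\big)=\Phi^{\circ(\alpha\cdot\beta+\alpha\cdot\gamma)}(x),$$
where the first and third equalities are statement (1) (the third used twice, once at $x$ and once at the point $\Phi^{\circ(\alpha\cdot\beta)}(x)\in X$), the second is Theorem~\ref{t:as} for the expansive function $\Psi$, and the last is Theorem~\ref{t:as} for $\Phi$. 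Thus no separate distributive identity for ordinal multiplication is needed; the only genuine difficulty is the cofinality bookkeeping in the limit step of (1).
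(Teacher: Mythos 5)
Your proof is correct and follows essentially the same route as the paper's: transfinite induction on $\beta$ for part (1), with the successor step reduced to Theorem~\ref{t:as} and the limit step handled by showing that the ordinals $\alpha\cdot\gamma$ ($\gamma\in\beta$) are cofinal in the limit ordinal $\alpha\cdot\beta$, followed by the same four-step chain for part (2). Your explicit check that $\Phi^{\circ\alpha}$ is itself expansive (so that Theorem~\ref{t:dynamics} applies to it) and your remark that (2) must not invoke ordinal distributivity (which the paper only proves later, \emph{from} this theorem) are welcome clarifications of points the paper leaves implicit.
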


\begin{proof} 1. If $\alpha=0$, then by Lemma~\ref{l:0alpha},
$$(\Phi^{\circ0})^{\circ\beta}(x)=x=\Phi^{\circ0}(x)=\Phi^{\circ(\alpha\cdot0)}(x).$$
So, we assume that $\alpha\ne0$.

 The equality $(\Phi^{\circ\alpha})^{\circ\beta}(x)=\Phi^{\circ(\alpha\cdot\beta)}(x)$ will be proved by transfinite induction on $\beta$. Fix $x\in X$ and an ordinal $\alpha$. Observe that $(\Phi^{\circ\alpha})^{\circ0}(x)=x=\Phi^{\circ0}(x)=\Phi^{\circ(\alpha\cdot 0)}(x)$. Assume that for some ordinal $\beta$ and all its elements $\gamma\in\beta$ we have proved that $(\Phi^{\circ\alpha})^{\circ\gamma}=\Phi^{\circ(\alpha\cdot\gamma)}(x)$. 

If $\beta$ is a successor ordinal, then $\beta=\gamma+1$ for some ordinal $\gamma$ and by the inductive assumption, Theorem~\ref{t:as} and the definition of ordinal multiplication,
\begin{multline*}
(\Phi^{\circ\alpha})^{\circ\beta}(x)=(\Phi^{\circ\alpha})^{\circ(\gamma+1)}(x)=\Phi^{\circ\alpha}((\Phi^{\circ\alpha})^{\circ\gamma}(x))=\Phi^{\circ\alpha}(\Phi^{\circ(\alpha\cdot\gamma)}(x))=\\
\Phi^{\circ((\alpha\cdot\gamma)+\alpha)}(x)=\Phi^{\circ(\alpha\cdot(\gamma+1))}(x)=\Phi^{\circ(\alpha\cdot\beta)}(x).
\end{multline*}

Next, assume that $\beta$ is a limit ordinal. In this case the ordinal $\alpha\cdot\beta=\bigcup\{\alpha\cdot\gamma:\gamma\in\beta\}$ is also limit (since for any ordinal $\alpha\cdot\gamma\in\beta$ the ordinal $(\alpha\cdot\gamma)+1\le\alpha\cdot\gamma+\alpha=\alpha\cdot(\gamma+1)$ also belongs to $\alpha+\beta$). Moreover, $\{\alpha\cdot\gamma:\gamma\in\beta\}\subseteq\alpha\cdot\beta$ and for every $\delta\in\alpha\cdot\beta$ there exists $\gamma\in\beta$ such that $\delta\le\alpha\cdot\gamma$. Then
$$
\textstyle(\Phi^{\circ\alpha})^{\circ\beta}(x)=\bigcup\{(\Phi^{\circ\alpha})^{\circ\gamma}(x):\gamma\in\beta\}=\bigcup\{\Phi^{\circ(\alpha\cdot\gamma)}(x):\gamma\in\beta\}=
\textstyle\bigcup\{\Phi^{\circ\delta}:\delta\in\alpha{\cdot}\beta\}=\Phi^{\circ(\alpha{\cdot}\beta)}(x).
$$
\smallskip

2. Applying Theorems~\ref{t:as} and \ref{t:ass}(1), we see that
$$
\Phi^{\circ(\alpha\cdot(\beta+\gamma))}(x)=(\Phi^{\circ\alpha})^{\circ(\beta+\gamma)}(x)=(\Phi^{\circ\alpha})^{\circ\gamma}((\Phi^{\circ\alpha})^{\circ\beta}(x))=
\Phi^{\circ(\alpha\cdot\gamma)}(\Phi^{\circ(\alpha\cdot\beta)}(x))=\Phi^{\circ(\alpha\cdot\beta+\alpha\cdot\gamma)}(x).
$$
\end{proof}

Next we establish some properties of multiplication of ordinals.

\begin{theorem}\label{t:mult-ord-prop}  Let $\alpha,\beta,\gamma$ be ordinals.
\begin{enumerate}
\item[\textup{1)}] $(\alpha\cdot\beta)\cdot\gamma=\alpha\cdot(\beta\cdot\gamma)$.
\item[\textup{2)}] $\alpha\cdot(\beta+\gamma)=\alpha\cdot\beta+\alpha\cdot\gamma$.
\item[\textup{3)}] If $\alpha\le\beta$, then $\alpha\cdot\gamma\le\beta\cdot\gamma$.
\item[\textup{4)}] If $\alpha>0$ and $\beta>0$, then $\alpha\cdot\beta>0$.
\item[\textup{5)}] $\alpha>0$ and $\beta<\gamma$, then $\alpha\cdot\beta<\alpha\cdot\gamma$.
\item[\textup{6)}] For any ordinals $\alpha\ne0$ and $\beta$ there exist unique ordinals $\gamma$ and $\delta$ such that $\beta=\alpha\cdot\gamma+\delta$ and $\delta<\alpha$.
\end{enumerate}
\end{theorem}

\begin{proof} 1. Applying Theorems~\ref{t:add-ord}(2) and \ref{t:ass}(1), we obtain
\begin{multline*}
\alpha\cdot(\beta\cdot\gamma)=\Succ^{\circ(\alpha\cdot(\beta\cdot\gamma))}(0)=(\Succ^{\circ\alpha})^{\circ{(\beta\cdot\gamma)}}(0)=((\Succ^{\circ\alpha})^{\circ\beta})^{\circ\gamma}(0)=\\ (\Succ^{\circ(\alpha\cdot\beta)})^{\circ\gamma}(0)=
\Succ^{\circ((\alpha\cdot\beta)\cdot\gamma)}(0)=(\alpha\cdot\beta)\cdot\gamma.
\end{multline*}

2.  Applying Theorems~\ref{t:add-ord}(2) and \ref{t:ass}(2), we obtain
$$
\alpha\cdot(\beta+\gamma)=\Succ^{\circ(\alpha\cdot(\beta+\gamma))}(0)=\Succ^{\circ(\alpha{\cdot}\beta+\alpha{\cdot}\gamma)}(0)=\alpha\cdot\beta+\alpha\cdot\gamma.$$


3.  Assume that $\alpha\le \beta$. The inequality $\alpha\cdot\gamma\le\beta\cdot\gamma$ will be proved by Transfinite Induction. For $\gamma=0$ the inequality $\alpha\cdot0=0=\beta\cdot0$ trivially holds. Assume that for some nonzero ordinal $\gamma$ and all its elements $\delta\in\gamma$ we have proved that $\alpha\cdot\delta\le\beta\cdot\delta$, which implies $\alpha{\cdot}\delta+\alpha\le\beta{\cdot}\delta+\alpha\le\beta{\cdot}\delta+\beta$, see Theorem~\ref{t:add-ord}(3,4). 
If $\gamma=\delta+1$ for some ordinal $\delta$, then
$$\alpha\cdot\gamma=\alpha\cdot(\delta+1)=\alpha\cdot\delta+\alpha\le\beta\cdot\delta+\beta=\beta\cdot(\delta+1)=\beta\cdot\gamma.$$
If $\gamma$ is a limit ordinal, then 
$\alpha\cdot\gamma=\sup\{\alpha\cdot\delta:\delta\in\gamma\}\le\sup\{\beta\cdot\delta:\delta\in\gamma\}=\beta\cdot\gamma$.
\smallskip

4. Assume that $\alpha$ and $\beta$ are nonzero ordinals. By the preceding statement $\alpha\cdot\beta\ge 1\cdot\beta=\beta>0$.
\smallskip

5. If $\alpha>0$ and $\beta<\gamma$, then by Theorem~\ref{t:add-ord}(5), we can find a unique ordinal $\delta$ such that $\beta+\delta=\gamma$. Since $\beta\ne\gamma$, the ordinal $\delta$ is nonzero. By Theorem~\ref{t:mult-ord-prop}(4), $0<\alpha\cdot\delta$. By Theorems~\ref{t:add-ord}(4) and \ref{t:mult-ord}(2), we have
$$\alpha\cdot\beta=\alpha\cdot\beta+0<\alpha\cdot\beta+\alpha\cdot\delta=\alpha\cdot(\beta+\delta)=\alpha\cdot\gamma.$$
\smallskip

6. Given two ordinals $\alpha>0$ and $\beta$, consider the class $\Gamma=\{\gamma\in\Ord:\alpha\cdot\gamma\le\beta\}$.  By Theorem~\ref{t:mult-ord-prop}(3), for every $\gamma\in\Gamma$ we have $\gamma=1\cdot\gamma\le\alpha\cdot\gamma\le\beta$  and hence $\Gamma\subseteq\mathcal P(\beta)$ is a set. By Theorem~\ref{t:Ord}(5) and Lemma~\ref{l:sup-ord}, the set $\gamma=\sup \Gamma=\bigcup\Gamma$ is an ordinal.  First we prove that $\alpha\cdot \gamma\le\beta$.

If $\gamma=\sup \Gamma$ is a successor ordinal, then $\gamma\in \Gamma$ and hence $\alpha\cdot\gamma\subseteq\beta$ by the definition of the set $\Gamma$. If $\gamma$ is a limit ordinal, then 
$$
\alpha\cdot\gamma=\alpha\cdot\sup \Gamma=\sup\{\alpha\cdot\delta:\delta\in \Gamma\}\le \beta$$
as $\alpha\cdot\delta\subseteq\beta$ for all $\delta\in \Gamma$.
Therefore, $\alpha\cdot\gamma\le\beta$. By Theorem~\ref{t:add-ord}(5), there exists a unique ordinal $\delta$ such that $\alpha\cdot\gamma+\delta=\beta$. We claim that $\delta<\alpha$. Assuming that $\delta\not<\alpha$ and applying Theorem~\ref{t:ord}(6), we conclude that $\alpha\le\delta$. By Theorem~\ref{t:add-ord}(5), there exists an ordinal $\delta'$ such that $\alpha+\delta'=\delta$. Then $$\beta=\alpha\cdot\gamma+\delta=\alpha\cdot\gamma+(\alpha+\delta')=(\alpha\cdot\gamma+\alpha)+\delta'=\alpha\cdot(\gamma+1)+\delta'\ge \alpha\cdot(\gamma+1)$$
and hence $\gamma+1\in\Gamma$ and $\gamma+1\le\sup\Gamma=\gamma$, which contradicts the irreflexivity of the relation $\E{\restriction}\Ord$. This contradiction shows that $\delta<\alpha$. 

It remains to show that the ordinals $\gamma$ and $\delta$ are unique. Assume that $\gamma',\delta'$ are ordinals such that $\delta'<\alpha$ and $\alpha{\cdot}\gamma'+\delta'=\beta$. Since $$\alpha\cdot\gamma'=\alpha\cdot\gamma'+0\le\alpha\cdot\gamma'+\delta'=\beta,$$
the ordinal $\gamma'$ belongs to $\Gamma$ and hence $\gamma'\le\gamma$. Assuming that $\gamma'\ne\gamma$, we conclude that $\gamma'<\gamma$. By Theorem~\ref{t:add-ord}(5), there exists a unique ordinal $\delta''>0$ such that $\gamma'+\delta''=\gamma$. Then $$\alpha\cdot\gamma'+\delta'=\beta=\alpha\cdot\gamma+\delta=\alpha\cdot(\gamma'+\delta'')+\delta=(\alpha\cdot\gamma'+\alpha\cdot\delta'')+\delta=\alpha\cdot\gamma'+(\alpha\cdot\delta''+\delta).$$
By Theorem~\ref{t:add-ord}(5,4), \ref{t:mult-ord-prop}(5) $$\delta'=\alpha\cdot\delta''+\delta\ge \alpha\cdot\delta''+0=\alpha\cdot\delta''\ge\alpha\cdot 1=\alpha,$$
which contradicts the chocie of $\delta'<\alpha$. This contradiction completes the proof of the equality $\gamma=\gamma'$. Now the equality 
$$\alpha\cdot\gamma+\delta'=\alpha\cdot\gamma'+\delta'=\beta=\alpha\cdot\gamma+\delta$$ and Theorem~\ref{t:add-ord}(5) imply $\delta=\delta'$.
\end{proof}

\begin{exercise} Find two ordinals $\alpha,\beta$ such that $\alpha\cdot\beta\ne\beta\cdot\alpha$.
\smallskip

\noindent{\em Hint:} Show that $\w\cdot 2=\w+\w\ne\w=2\cdot\w$.
\end{exercise}

\begin{theorem}\label{t:mult-nat} For any natural numbers $n,k\in\w$ the following conditions hold:
\begin{enumerate}
\item[\textup{1)}] $n\cdot k\in\w$;
\item[\textup{2)}] $(n+1)\cdot k=n\cdot k+k$;
\item[\textup{3)}] $n\cdot k=k\cdot n$.
\end{enumerate}
\end{theorem}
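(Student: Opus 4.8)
The plan is to prove all three statements by the Principle of Mathematical Induction, fixing $n\in\w$ throughout and inducting on $k\in\w$ in each case. All three inductions will lean on the recursive defining clauses of multiplication (Theorem~\ref{t:mult-ord}, clauses 0--1) together with the arithmetic facts about addition already established in Theorems~\ref{t:add-ord} and \ref{t:add-nat}. The key observation making the successor step clean is that for natural numbers $k$, the ordinal $k+1$ is always a successor ordinal, so clause (1) of the definition of $\cdot$ applies directly and the limit clause (2) never intervenes.

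First I would prove statement (1), that $n\cdot k\in\w$. For $k=0$ we have $n\cdot 0=0\in\w$ by clause (0). Assuming $n\cdot k\in\w$, clause (1) gives $n\cdot(k+1)=n\cdot k+n$, which lies in $\w$ by Theorem~\ref{t:add-nat}(1) (closure of $\w$ under addition). The Principle of Mathematical Induction then yields $\forall k\in\w\;(n\cdot k\in\w)$.

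Next I would handle statement (2), the identity $(n+1)\cdot k=n\cdot k+k$, again by induction on $k$. For $k=0$ both sides equal $0$ using clause (0) and $n\cdot 0+0=0$. For the successor step, assume $(n+1)\cdot k=n\cdot k+k$. Then applying clause (1) twice,
\begin{align*}
(n+1)\cdot(k+1)&=(n+1)\cdot k+(n+1)=(n\cdot k+k)+(n+1)\\
&=n\cdot k+(k+(n+1))=n\cdot k+((k+n)+1)\\
&=n\cdot k+((n+k)+1)=n\cdot k+(n+(k+1))\\
&=(n\cdot k+n)+(k+1)=n\cdot(k+1)+(k+1),
\end{align*}
where the regroupings use associativity of addition (Theorem~\ref{t:add-ord}(1)) and commutativity for natural numbers (Theorem~\ref{t:add-nat}(3)) to turn $k+n$ into $n+k$. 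This completes the induction.

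Finally, statement (3), commutativity $n\cdot k=k\cdot n$, is proved by induction on $k$. The base case $k=0$ requires $n\cdot 0=0=0\cdot n$, where the right-hand equality is exactly Lemma~\ref{l:0alpha}. For the successor step, assuming $n\cdot k=k\cdot n$, I would compute $n\cdot(k+1)=n\cdot k+n=k\cdot n+n$ using clause (1) and the induction hypothesis, and then invoke statement (2) in the form $(k+1)\cdot n=k\cdot n+n$ to conclude $n\cdot(k+1)=(k+1)\cdot n$. The main obstacle here is purely bookkeeping: statement (2) must be available for \emph{all} natural numbers before proving (3), so I would establish (2) in full generality (as above) rather than only for the specific values arising, and I must be careful that statement (2) was proved with roles that match the substitution needed in (3). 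Since (2) holds for arbitrary $n,k\in\w$, specializing it with $k$ playing the role of the left factor and $n$ the right gives precisely $(k+1)\cdot n=k\cdot n+n$, closing the argument.
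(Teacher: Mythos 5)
Your proposal is correct and follows essentially the same route as the paper: induction on one variable for each part, using closure of $\w$ under addition for (1), associativity and commutativity of natural-number addition for the regrouping in (2), and Lemma~\ref{l:0alpha} plus statement (2) for (3). The only cosmetic difference is that in (3) you induct on $k$ where the paper inducts on $n$, which is a mirror image of the same argument.
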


\begin{proof} 1. Fix any natural number $n$. The inclusion $n\cdot k\in\w$ will be proved by induction on $k$. For $k=0$ we have $n\cdot 0=0\in\w$. Assume that for some $k\in\w$ we have proved that $n{\cdot}k\in\w$. Then $n\cdot(k+1)=n{\cdot}k+n\in\w$ by Theorem~\ref{t:add-nat}(1).
\smallskip

2. The equality $(n+1)\cdot k=n{\cdot}k+k$ will be proved by induction on $k$. For $k=0$ we have $(n+1)\cdot 0=0=n{\cdot}0+0$. Assume that for some $k\in\w$ we have proved that $(n+1)\cdot k=n\cdot k+k$. Taking into account that the addition of natural numbers is associative and commutative, we conclude that
$$(n+1)\cdot(k+1)=(n+1)\cdot k+(n+1)=n\cdot k+k+n+1=n\cdot k+n+k+1=n\cdot (k+1)+(k+1).$$By the Principle of Mathematical Induction, the equality $(n+1)\cdot k=n\cdot k+k$ holds for all natural numbers. 
\smallskip

3. The equality $n\cdot k=k\cdot n$ will be proved by induction on $n\in\w$. For $n=0$ we have $0\cdot k=0=k\cdot 0$, by Lemma~\ref{l:0alpha} and the definition of the multiplication. Assume that for some $n$ we have proved that $k\cdot n=n\cdot k$. By the preceding statement,
$$(n+1)\cdot k=n\cdot k+k=k\cdot n+k=k\cdot(n+1).$$
\end{proof}

In fact, the operation of multiplication can be defined ``geometrically'' for any relations. Namely, for two relations $R,P$ we can defined their left and right lexicographic products $R\ltimes P$ and $R\rtimes P$ by the formulas:
\begin{multline*}
R\ltimes P=\{\langle \langle x,y\rangle,\langle x',y'\rangle\rangle\colon\\
(\langle x,x'\rangle\in R\setminus \Id\;\wedge\;\{y,y'\}\subseteq\dom[P^\pm])\;\vee\;(x=x'\in\dom[R^\pm]\;\wedge\;\langle y,y'\rangle\in P)\}
\end{multline*}
and 
\begin{multline*}
R\rtimes P=\{\langle \langle x,y\rangle,\langle x',y'\rangle\rangle\colon\\
(\langle y,y'\rangle\in P\setminus\Id\;\wedge\;\{x,x'\}\subseteq\dom[R^\pm])\;\vee\;(y=y'\in\dom[P^\pm]\;\wedge\;\langle x,x'\rangle\in R)\}.
\end{multline*}

\begin{exercise} Given any  ordinals $\alpha,\beta$, prove that
\begin{enumerate}
\item the well-order $\E{\restriction}\alpha{\cdot}\beta$ is isomorphic to the orders $\E{\restriction}\alpha\rtimes \E{\restriction}\beta$ and $\E{\restriction}\beta\ltimes \E{\restriction}\alpha$;
\item $\alpha{\cdot}\beta=\rank(\E{\restriction}\alpha\rtimes \E{\restriction}\beta)=\rank(\E{\restriction}\beta\ltimes \E{\restriction}\alpha)$.
\end{enumerate}
\end{exercise}

\section{Exponentiation}

The following theorem introduces the operation of exponentiation of ordinals.

\begin{theorem}\label{t:exp-ord-def} There exists a unique function $$\exp:\Ord\times\Ord\to\Ord,\quad \exp:\langle\alpha,\beta\rangle\mapsto \alpha^{\cdot\beta},$$ such that for any ordinals $\alpha,\beta$ the following properties are satisfied:
\begin{enumerate}
\item[\textup{0)}] $\alpha^{\cdot0}=1$;
\item[\textup{1)}]  $\alpha^{\cdot(\beta+1)}=\alpha^{\cdot\beta}\cdot\alpha$;
\item[\textup{2)}]  $\alpha^{\cdot\beta}=\sup\{\alpha^{\cdot\gamma}:\gamma\in\beta\}$ if the ordinal $\beta$ is limit.
\end{enumerate}
\end{theorem}

\begin{proof} Given an ordinal $\alpha$, consider the expanding function $$\Phi_\alpha:\Ord\to\Ord,\quad\Phi_\alpha:x\mapsto x\cdot \alpha.$$ By Theorem~\ref{t:dynamics} there exists a transfinite function sequence $(\Phi_\alpha^{\circ\beta})_{\beta\in\Ord}$ such that for any ordinals $x$ and $\beta$ the following conditions hold:
\begin{itemize}
\item $\Phi_\alpha^{\circ0}(x)=x$;
\item $\Phi_\alpha^{\circ(\beta+1)}(x)=\Phi_\alpha(\Phi_\alpha^{\circ\beta}(x))=\Phi^{\circ\beta}(x)\cdot \alpha$;
\item $\Phi_\alpha^{\circ\beta}(x)=\sup\{\Phi_\alpha^{\circ\gamma}(x):\gamma\in\beta\}$ if the ordinal $\beta$ is limit.
\end{itemize}
Comparing these three conditions with the definition of exponentiation, we can see that $\alpha^\beta=\Phi_\alpha^{\circ\beta}(1)$ for all $\alpha,\beta$.
\end{proof}

\begin{remark} In most textbooks on Set Theory,  the ordinal exponentiation is denoted by $\alpha^\beta$, which unfortunately coincides with the notation $\alpha^\beta$ for the set of all functions from $\beta$ to $\alpha$. For distinguishing these two notions we denote the ordinal exponentiation $\alpha^{\cdot\beta}$ using the dot before $\beta$, which indicates that $\alpha^{\cdot\beta}$ is the result of repeated multiplication of $1$ by $\alpha$, $\beta$ times.
\end{remark}   

Now we establish some properties of the exponentiation of ordinals.

\begin{theorem}\label{t:exp-ord} Let $\alpha,\beta,\gamma$ be any non-zero ordinals.
\begin{enumerate}
\item[\textup{1)}] If $\alpha\le\beta$, then $\alpha^{\cdot\gamma}\le\beta^{\cdot\gamma}$.
\item[\textup{2)}] $\alpha^{\cdot(\beta+\gamma)}=\alpha^{\cdot\beta}\cdot\alpha^{\cdot\gamma}$.
\item[\textup{3)}] $\alpha^{\cdot(\beta\cdot\gamma)}=(\alpha^{\cdot\beta})^{\cdot\gamma}$.
\item[\textup{4)}] If $\beta\le\gamma$, then $\alpha^{\cdot\beta}\le\alpha^{\cdot\gamma}$.
\item[\textup{5)}] If $\alpha>1$ and $\beta<\gamma$, then $\alpha^{\cdot\beta}<\alpha^{\cdot\gamma}$.
\item[\textup{6)}] For any ordinals $\alpha>1$ and $\beta\ge \alpha$ there exists unique ordinals $x,y,z$ such that $\beta=\alpha^{\cdot x}\cdot y+z$, $0<y<\alpha$, and $z<\alpha^{\cdot x}$.
\item[\textup{7)}] For any ordinal $\beta\ge\w$ there are unique ordinals $x,y,z$ such that $0<y<\w$, $z<\w^{\cdot x}$ and $\beta=\w^{\cdot x}{\cdot}y+z$.
\end{enumerate}
\end{theorem}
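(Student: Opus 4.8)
The plan is to dispatch the monotonicity statements (4), (5), (1) first by routine transfinite induction, then to reduce the two algebraic laws (2) and (3) to a single continuity property of multiplication, and finally to obtain the division-type statements (6) and (7) from the ordinal division algorithm of Theorem~\ref{t:mult-ord}(6). Throughout I would use the presentation $\alpha^{\cdot\beta}=\Phi_\alpha^{\circ\beta}(1)$ with $\Phi_\alpha\colon x\mapsto x\cdot\alpha$, established in the proof of Theorem~\ref{t:exp-ord}.

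First I would record the positivity fact $\alpha^{\cdot\beta}\ge 1$ for $\alpha\ge 1$ by transfinite induction on $\beta$ (base $\alpha^{\cdot0}=1$; successor $\alpha^{\cdot(\delta+1)}=\alpha^{\cdot\delta}\cdot\alpha>0$ by Theorem~\ref{t:mult-ord}(4); limit step immediate since the supremum dominates $\alpha^{\cdot0}=1$). Since $\Phi_\alpha$ is expansive for $\alpha\ge 1$ (as $x=x\cdot 1\subseteq x\cdot\alpha$), Theorem~\ref{t:dynamics} gives $\alpha^{\cdot\beta}=\Phi_\alpha^{\circ\beta}(1)\subseteq\Phi_\alpha^{\circ\gamma}(1)=\alpha^{\cdot\gamma}$ for $\beta\le\gamma$, which is (4). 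Statement (5) follows because $\beta<\gamma$ gives $\beta+1\le\gamma$, whence $\alpha^{\cdot\beta}<\alpha^{\cdot\beta}\cdot\alpha=\alpha^{\cdot(\beta+1)}\le\alpha^{\cdot\gamma}$, the strict step using $\alpha>1$, $\alpha^{\cdot\beta}>0$ and Theorem~\ref{t:mult-ord}(5). Statement (1) is a direct induction on $\gamma$: the successor step chains $\alpha^{\cdot\delta}\cdot\alpha\le\beta^{\cdot\delta}\cdot\alpha\le\beta^{\cdot\delta}\cdot\beta$ via the two monotonicities in Theorem~\ref{t:mult-ord}(3,5), and the limit step passes to suprema.

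The heart of the argument is the continuity of left multiplication: for a nonzero ordinal $\mu$ and an increasing family $(\nu_\delta)_{\delta\in\gamma}$ with $\gamma$ a nonzero limit ordinal, $\mu\cdot\sup_{\delta\in\gamma}\nu_\delta=\sup_{\delta\in\gamma}(\mu\cdot\nu_\delta)$. When the $\nu_\delta$ are eventually constant this is trivial; otherwise $\lambda=\sup_\delta\nu_\delta$ is a limit ordinal, so by the definition of multiplication at a limit (Theorem~\ref{t:mult-ord}(2)) we have $\mu\cdot\lambda=\sup\{\mu\cdot\nu:\nu\in\lambda\}$, and since $\{\nu_\delta:\delta\in\gamma\}$ is cofinal in $\lambda$ and multiplication is monotone (Theorem~\ref{t:mult-ord}(3,5)), the two suprema coincide (here $\sup A=\bigcup A$ by Lemma~\ref{l:sup-ord}). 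With this I would prove the key identity $\Phi_\alpha^{\circ\beta}(x)=x\cdot\alpha^{\cdot\beta}$ for all ordinals $x,\beta$ by transfinite induction on $\beta$ (successor step by associativity, Theorem~\ref{t:mult-ord}(1); limit step by the continuity property; the case $x=0$ being trivial). Statement (2) then reads off from Theorem~\ref{t:as}: $\alpha^{\cdot(\beta+\gamma)}=\Phi_\alpha^{\circ(\beta+\gamma)}(1)=\Phi_\alpha^{\circ\gamma}(\Phi_\alpha^{\circ\beta}(1))=\Phi_\alpha^{\circ\gamma}(\alpha^{\cdot\beta})=\alpha^{\cdot\beta}\cdot\alpha^{\cdot\gamma}$. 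Statement (3) follows from Theorem~\ref{t:ass}(1) together with the observation that the key identity makes $\Phi_\alpha^{\circ\beta}$ and $\Phi_{\alpha^{\cdot\beta}}$ the same function on $\Ord$, so $\alpha^{\cdot(\beta\cdot\gamma)}=\Phi_\alpha^{\circ(\beta\cdot\gamma)}(1)=(\Phi_\alpha^{\circ\beta})^{\circ\gamma}(1)=\Phi_{\alpha^{\cdot\beta}}^{\circ\gamma}(1)=(\alpha^{\cdot\beta})^{\cdot\gamma}$.

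For (6), given $\alpha>1$ and $\beta\ge\alpha$, I would take $x$ to be the largest ordinal with $\alpha^{\cdot x}\le\beta$. Existence rests on the fact that a strictly $\E$-to-$\E$-increasing map $\Ord\to\Ord$ dominates the identity (apply Theorem~\ref{t:rank}(2) to the increasing map $\xi\mapsto\alpha^{\cdot\xi}$ of (5), noting the rank function of $\E{\restriction}\Ord$ is the identity), so $\xi\le\alpha^{\cdot\xi}$ and hence $\{\xi:\alpha^{\cdot\xi}\le\beta\}$ is a nonempty set of ordinals bounded by $\beta+1$; it is an initial segment by (4), and cannot be a limit ordinal (its supremum would again satisfy the inequality), so it equals a successor $x+1$ with $\alpha^{\cdot x}\le\beta<\alpha^{\cdot(x+1)}=\alpha^{\cdot x}\cdot\alpha$. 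The division algorithm (Theorem~\ref{t:mult-ord}(6)) with divisor $\alpha^{\cdot x}\ne 0$ gives unique $y,z$ with $\beta=\alpha^{\cdot x}\cdot y+z$, $z<\alpha^{\cdot x}$; then $y>0$ (else $\beta=z<\alpha^{\cdot x}\le\beta$) and $y<\alpha$ (else $\beta\ge\alpha^{\cdot x}\cdot\alpha=\alpha^{\cdot(x+1)}>\beta$). For uniqueness, any admissible triple forces $\alpha^{\cdot x'}\le\beta<\alpha^{\cdot x'}\cdot(y'+1)\le\alpha^{\cdot(x'+1)}$, pinning $x'=x$, after which the uniqueness of $y,z$ is that of the division algorithm. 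Statement (7) is then the instance $\alpha=\w$ of (6), since $\w>1$ and $\beta\ge\w$. The main obstacle is the continuity property and the identity $\Phi_\alpha^{\circ\beta}(x)=x\cdot\alpha^{\cdot\beta}$; every remaining step is a routine transfinite induction or a direct appeal to the arithmetic of Theorems~\ref{t:add-ord} and~\ref{t:mult-ord}.
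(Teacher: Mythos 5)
Your proposal is correct, and in several places it is tighter than the paper's own argument. For statements (1), (5) and (7) you do essentially what the paper does (induction on $\gamma$ for (1); writing $\gamma=\beta+\delta$, or equivalently $\beta+1\le\gamma$, for (5); specializing (6) to $\alpha=\w$ for (7)). The differences are in (2), (3), (4) and (6). For (2) and (3) the paper runs a separate transfinite induction on $\gamma$ for each identity, and at the limit stages silently interchanges $\sup$ with left multiplication; you instead isolate that interchange as an explicit continuity lemma, prove the single identity $\Phi_\alpha^{\circ\beta}(x)=x\cdot\alpha^{\cdot\beta}$, and then read both laws off the general iteration theorems (Theorem~\ref{t:as} and Theorem~\ref{t:ass}(1)) — this is closer in spirit to how the paper itself derives the multiplication laws in Theorem~\ref{t:mult-ord}(1,2), and it makes visible the one analytic fact the paper leaves implicit. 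For (6) the paper takes suprema of three successive sets $X$, $Y$, $Z$, leaves each verification ``to be shown'' and the uniqueness to the reader; you instead pin down $x$ by showing $\{\xi:\alpha^{\cdot\xi}\le\beta\}$ is a bounded initial segment that cannot be a limit ordinal (bounded because $\xi\le\alpha^{\cdot\xi}$, which you get from Theorem~\ref{t:rank}(2)), and then delegate $y,z$ and their uniqueness to the already-proved division algorithm of Theorem~\ref{t:mult-ord}(6) — a more economical and more complete route. Finally, note that the paper's proof never actually addresses statement (4) (its item numbered 4 proves statement (5)), whereas your one-line appeal to the expansivity of $\Phi_\alpha$ and the monotonicity clause of Theorem~\ref{t:dynamics} fills that gap.
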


\begin{proof} 1. Take any ordinals $\alpha\le\beta$. The inequality $\alpha^{\cdot\gamma}\le\beta^{\cdot\gamma}$ will be proved by induction on the ordinal $\gamma$. For $\gamma=0$ the inequality $\alpha^{\cdot0}=1=\beta^{\cdot0}$ is true. Assume that for some ordinal $\gamma$ and all ordinals $\delta\in\gamma$ we have proved that $\alpha^{\cdot\delta}\le\beta^{\cdot\delta}$.

If $\gamma$ is a successor ordinal, then $\gamma=\delta+1$ for some $\delta\in\gamma$. By Theorem~\ref{t:mult-ord-prop}(3,5), 
$$\alpha^{\cdot\gamma}=\alpha^{\cdot(\delta+1)}=\alpha^{\cdot\delta}\cdot\alpha\le\beta^{\cdot\delta}\cdot \beta=\beta^{\cdot(\delta+1)}=\beta^{\cdot\gamma}.$$
If the ordinal $\gamma$ is limit, then
$$\alpha^{\cdot\gamma}=\sup\{\alpha^{\cdot\delta}:\delta\in\gamma\}\le\sup\{\beta^{\cdot\delta}:\delta\in\gamma\}=\beta^{\cdot\gamma}.$$
\smallskip

2. Fix ordinals $\alpha,\beta$. The equality $\alpha^{\cdot(\beta+\gamma)}=\alpha^{\cdot\beta}\cdot\alpha^{\cdot\gamma}$ will be proved by transfinite induction on $\gamma$. For $\gamma=0$ we have $$\alpha^{\cdot(\beta+0)}=\alpha^{\cdot\beta}=0+\alpha^{\cdot\beta}=\alpha^{\cdot\beta}\cdot 1=\alpha^{\cdot\beta}\cdot\alpha^{\cdot0}.$$
 
 Assume that for some ordinal $\gamma$ and all its elements $\delta\in\gamma$ we have proved that $\alpha^{\cdot(\beta+\delta)}=\alpha^{\cdot\beta}\cdot\alpha^{\cdot\delta}$. If $\gamma$ is a successor ordinal, then $\gamma=\delta+1$ for some $\delta\in\gamma$ and then
 $$\alpha^{\cdot(\beta+\gamma)}=\alpha^{\cdot(\beta+\delta+1)}=\alpha^{\cdot(\beta+\delta)}\cdot \alpha=(\alpha^{\cdot\beta}\cdot\alpha^{\cdot\delta})\cdot\alpha=\alpha^{\cdot\beta}\cdot(\alpha^{\cdot\delta}\cdot\alpha)=\alpha^{\cdot\beta}\cdot\alpha^{\cdot(\delta+1)}=\alpha^{\cdot\beta}\cdot\alpha^{\cdot\gamma}.$$
 If $\gamma$ is a limit ordinal, then $\beta+\gamma=\sup\{\beta+\delta:\delta\in\gamma\}$ is limit too and hence
 $$\alpha^{\cdot(\beta+\gamma)}=\sup\{\alpha^{\cdot(\beta+\delta)}:\delta\in\gamma\}=\sup\{\alpha^{\cdot\beta}\cdot\alpha^{\cdot\delta}:\delta\in\gamma\}=\alpha^{\cdot\beta}\cdot\sup\{\alpha^{\cdot\delta}:\delta\in\gamma\}=\alpha^{\cdot\beta}\cdot\alpha^{\cdot\gamma}.$$
\smallskip

3. Fix ordinals $\alpha,\beta$. If $\beta=0$, then for any ordinal $\gamma$ we have
$$\alpha^{\cdot(\beta\cdot\gamma)}=\alpha^{\cdot(0\cdot\gamma)}\alpha^{\cdot0}=1=1^{\cdot \gamma}=(\alpha^{\cdot0})^{\cdot\gamma}=(\alpha^{\cdot\beta})^{\cdot\gamma}.$$ So, we assume that $\beta>0$.

 The equality $\alpha^{\cdot(\beta\cdot\gamma)}=(\alpha^{\cdot\beta})^{\cdot\gamma}$ will be proved by transfinite induction on $\gamma$. For $\gamma=0$ we have $$\alpha^{\cdot(\beta\cdot 0)}=\alpha^{\cdot0}=1=(\alpha^{\cdot\beta})^{\cdot0}.$$
Assume that for some ordinal $\gamma$ and all its elements $\delta\in\gamma$ we have proved that $\alpha^{\cdot(\beta\cdot\delta)}=(\alpha^{\cdot\beta})^{\cdot\delta}$. If $\gamma$ is a successor ordinal, then $\gamma=\delta+1$ for some $\delta\in\gamma$ and then
 $$\alpha^{\cdot(\beta\cdot\gamma)}=\alpha^{\cdot(\beta\cdot(\delta+1))}=\alpha^{\cdot(\beta\cdot\delta+\beta)}=\alpha^{\cdot(\beta\cdot\delta)}\cdot \alpha^{\cdot\beta}=(\alpha^{\cdot\beta})^{\cdot\delta}\cdot\alpha^{\cdot\beta}=(\alpha^{\cdot\beta})^{\cdot(\delta+1)}=(\alpha^{\cdot\beta})^{\cdot\gamma}.$$
 
If $\gamma$ is a limit ordinal, then $\beta\cdot\gamma=\sup\{\beta\cdot\delta:\delta\in\gamma\}$ is limit, too and hence
 $$\alpha^{\cdot(\beta\cdot\gamma)}=\sup\{\alpha^{\cdot(\beta\cdot\delta)}:\delta\in\gamma\}=\sup\{(\alpha^{\cdot\beta})^{\cdot\delta}:\delta\in\gamma\}=(\alpha^{\cdot\beta})^{\cdot\gamma}.$$
 \smallskip
 
4. If $\beta\le\gamma$, then by Theorem~\ref{t:add-ord}(5), there exists a unique ordinal $\delta$ such that $\gamma=\beta+\delta$. Applying Theorem~\ref{t:mult-ord-prop}(5), we obtain 
$$\alpha^{\cdot\beta}=\alpha^{\cdot\beta}\cdot 1\le\alpha^{\cdot\beta}\cdot\alpha^{\cdot\delta}=\alpha^{\cdot(\beta+\delta)}=\alpha^{\cdot\gamma}.$$
\smallskip
 
5. Let $\alpha>1$ and $\beta<\gamma$ be any ordinals.  By Theorem~\ref{t:add-ord}(5), there exists a unique ordinal $\delta\ge 1$ such that $\beta+\delta=\gamma$. By Theorem~\ref{t:mult-ord-prop}(5), $$\alpha^{\cdot\beta}=\alpha^{\cdot\beta}\cdot 1<\alpha^{\cdot\beta}\cdot\alpha\le \alpha^{\cdot\beta}\cdot\alpha^{\cdot\delta}=\alpha^{\cdot(\beta+\delta)}=\alpha^{\cdot\gamma}.$$
\smallskip

6. Fix any ordinals $\alpha>1$ and $\beta\ge\alpha$. Let $X=\{x\in\Ord:\alpha^{\cdot x}\le\beta\}$ and $x=\sup X$. It can be shown that $\alpha^{\cdot x}\le\beta$ and $\alpha^{\cdot(x+1)}>\beta$. Let $Y=\{y\in\Ord:\alpha^{\cdot x}\cdot y\le\beta\}$. It can be shown that for the ordinal $y=\sup Y$ we have $\alpha^{\cdot x}{\cdot}y\le\beta$ but $\alpha^{\cdot x}\cdot (y+1)>\beta$.  The choice of $x$ implies that $0<y<\alpha$. Let $Z=\{z\in\Ord:\alpha^{\cdot x}\cdot y+z\le\beta\}$. It can be shown that the ordinal $z=\sup Z$ has the required property: $\alpha^{\cdot x}{\cdot}y+z=\beta$. It follows from $\alpha^{\cdot x}\cdot(y+1)=\alpha^{\cdot x}\cdot y+\alpha^{\cdot x}>\beta=\alpha^{\cdot x}\cdot y+z$ that $z<\alpha^{\cdot x}$. The proof of the uniqueness of the ordinals $x,y,z$ is left to the reader.
 \smallskip
 
7. The seventh statement is a partial case of the sixth statement for $\alpha=\w$.
 \end{proof}
 
 \begin{exercise} Prove that $\forall n\in\w\;\forall k\in\w\;(n^{\cdot k}\in\w)$.
 \end{exercise}
 
 \begin{exercise} Simplify:
 \begin{itemize}
 \item $(\w+2)\cdot\w$;
 \item $(\w+\w^{\cdot 2})\cdot(\w^{\cdot3}+\w^{\cdot4})$;
 \item $\w+\w^{\cdot2}+\w^{\cdot3}$.
 \end{itemize} 
 \end{exercise}

\begin{exercise} Find $\alpha<\beta$ and $\gamma$ such that
\begin{itemize}
\item $\alpha+\gamma=\beta+\gamma$;
\item $\alpha\cdot\gamma=\beta\cdot\gamma$;
\item $\alpha^{\cdot\gamma}=\beta^{\cdot\gamma}$.
\end{itemize}
\end{exercise}

\begin{exercise} Consider the sequence of ordinals $(\alpha_n)_{n\in\w}$ defined by the recursive formula $\alpha_0=1$ and $\alpha_{n+1}=\w^{\cdot \alpha_n}$. Prove that $\e_0=\sup_{n\in\w}\alpha_n$ the smallest ordinal $\e$ such that $\e=\w^{\cdot \e}$.
\end{exercise}

An  ordinal $\gamma>1$ is called \index{ordinal!indecomposable}\index{indecomposable ordinal}{\em indecomposable} if $\gamma\ne\alpha+\beta$ for any ordinals $\alpha,\beta<\gamma$.

\begin{exercise} Prove that an ordinal $\alpha>1$ is indecomposable if and only if $\alpha=\w^{\cdot \beta}$ for some ordinal $\beta$. 
\end{exercise}

 The exponentiation of ordinals  has a nice geometric model. Let $L$ be a linear order on a set $X=\dom[L^\pm]$ and $R$ be an order on a set $Y=\dom[R^\pm]$. Let $Y^{<X}$ be the set of all functions $f$ such that $f$ is a finite set with $\dom[f]\subseteq X$ and $\rng[X]\subseteq Y$. We endow the set $Y^{<X}$ with the irreflexive order $W$ consisting of all ordered pairs $\langle f,g\rangle$ such that there exists $x\in\dom[g]$ such that $f\cap((L[\{x\}]\setminus\{x\})\times Y)=g\cap((L[\{x\}]\setminus\{x\})\times Y)$ and either $x\notin\dom[f]$ or $x\in\dom[f]$ and $\langle f(x),g(x)\rangle\in R\setminus\Id$.
 
\begin{exercise} Prove that for any ordinals $\alpha,\beta$ the order $\E{\restriction}\alpha^{\cdot\beta}$ is isomorphic to the order $W$ on the set $\alpha^{<\beta}$.
\end{exercise}

\section{Cantor's normal form}

The Cantor's normal form of an ordinal is its power expansion with base $\w$.

\begin{theorem}\label{t:Cantor-norm} Each nonzero ordinal $\alpha$ can be uniquely written as $$\alpha=\w^{\cdot\beta_1}\cdot k_1+\dots+\w^{\cdot\beta_n}\cdot k_n$$for some ordinals $\beta_1>\dots>\beta_n$  and nonzero natural numbers $k_1,\dots,k_n$.
\end{theorem}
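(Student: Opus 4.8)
The plan is to prove both existence and uniqueness by transfinite induction on $\alpha$, using as the main engine the base-$\w$ division statement of Theorem~\ref{t:exp-ord}(7): every ordinal $\beta\ge\w$ can be written uniquely as $\beta=\w^{\cdot x}\cdot y+z$ with $0<y<\w$ and $z<\w^{\cdot x}$. The monotonicity and distributivity laws of Theorems~\ref{t:add-ord}, \ref{t:mult-ord} and \ref{t:exp-ord} will be invoked freely.

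For existence I would argue as follows. If $\alpha<\w$, then $\alpha$ is a nonzero natural number and $\alpha=\w^{\cdot0}\cdot\alpha$ is already a Cantor normal form (with $n=1$, $\beta_1=0$, $k_1=\alpha$), using $\w^{\cdot0}=1$ and $1\cdot\alpha=\alpha$. If $\alpha\ge\w$, apply Theorem~\ref{t:exp-ord}(7) to obtain $x,y,z$ with $0<y<\w$, $z<\w^{\cdot x}$ and $\alpha=\w^{\cdot x}\cdot y+z$. Since $y\ge1$ one has $z<\w^{\cdot x}\le\w^{\cdot x}\cdot y\le\alpha$, so $z<\alpha$ and the inductive hypothesis applies to $z$. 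If $z=0$, then $\alpha=\w^{\cdot x}\cdot y$ is already a normal form. Otherwise write $z=\w^{\cdot\gamma_1}\cdot m_1+\dots+\w^{\cdot\gamma_p}\cdot m_p$ by the inductive hypothesis. From $\w^{\cdot\gamma_1}\le z<\w^{\cdot x}$ and the monotonicity of exponentiation in the exponent for base $\w>1$ (Theorem~\ref{t:exp-ord}(4)) I get $\gamma_1<x$, so
$$\alpha=\w^{\cdot x}\cdot y+\w^{\cdot\gamma_1}\cdot m_1+\dots+\w^{\cdot\gamma_p}\cdot m_p$$
with $x>\gamma_1>\dots>\gamma_p$ is the required normal form.

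For uniqueness the key auxiliary estimate is that the tail of a normal form is small: if $\alpha=\w^{\cdot\beta_1}\cdot k_1+\dots+\w^{\cdot\beta_n}\cdot k_n$ with $\beta_1>\dots>\beta_n$ and nonzero naturals $k_i$, then $\w^{\cdot\beta_1}\cdot k_1\le\alpha<\w^{\cdot\beta_1}\cdot(k_1+1)$. I would prove this by induction on $n$: for $n=1$ it is immediate from $k_1<\w$ and Theorem~\ref{t:mult-ord}(5); for the step the tail $T=\w^{\cdot\beta_2}\cdot k_2+\dots+\w^{\cdot\beta_n}\cdot k_n$ satisfies $T<\w^{\cdot(\beta_2+1)}\le\w^{\cdot\beta_1}$ (using $\beta_2+1\le\beta_1$ and Theorem~\ref{t:exp-ord}(4)), whence $\alpha=\w^{\cdot\beta_1}\cdot k_1+T<\w^{\cdot\beta_1}\cdot k_1+\w^{\cdot\beta_1}=\w^{\cdot\beta_1}\cdot(k_1+1)$ by distributivity (Theorem~\ref{t:mult-ord}(2)). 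In particular $\w^{\cdot\beta_1}\le\alpha<\w^{\cdot(\beta_1+1)}$ and $T<\w^{\cdot\beta_1}$, so the triple $(\beta_1,k_1,T)$ realizes $\alpha$ as a base-$\w$ division $\alpha=\w^{\cdot\beta_1}\cdot k_1+T$ of the shape governed by Theorem~\ref{t:exp-ord}(7). Its uniqueness forces $\beta_1$, $k_1$ and $T$ to be determined by $\alpha$ alone; applying the inductive hypothesis to the strictly smaller tail $T$ then yields uniqueness of the entire expansion. The case $\alpha<\w$ is handled directly: any leading exponent $\beta_1\ge1$ would give $\alpha\ge\w^{\cdot\beta_1}\ge\w$, a contradiction, so $\beta_1=0$, forcing $n=1$ and $\alpha=k_1$.

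The main obstacle is the tail estimate $T<\w^{\cdot\beta_1}$ in the uniqueness half, since this is exactly what licenses identifying a given normal form with the canonical base-$\w$ division and hence pins down the leading exponent and coefficient. Everything else reduces to the division and monotonicity laws of Theorem~\ref{t:exp-ord} together with the distributive and monotonicity laws for ordinal multiplication and addition established in Theorems~\ref{t:add-ord} and \ref{t:mult-ord}.
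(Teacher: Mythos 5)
Your proof is correct and follows essentially the same route as the paper: transfinite induction on $\alpha$ with the base-$\w$ division theorem (Theorem~\ref{t:exp-ord}(6,7)) supplying the leading term $\w^{\cdot\beta_1}\cdot k_1$ and the strictly smaller remainder to which the inductive hypothesis applies. Your tail estimate $T<\w^{\cdot\beta_1}$ in the uniqueness half is a welcome addition, since the paper's own proof leaves the uniqueness part (and the check that $\beta_2<\beta_1$) to the reader.
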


\begin{proof} The proof is by induction on $\alpha>0$. For $\alpha=1$, we have the representation $\alpha=\w^{\cdot0}\cdot 1$. Assume that the theorem has been proved for all ordinal smaller than some ordinal $\alpha$. By Theorem~\ref{t:exp-ord}(7), there are unique ordinals $\beta_1,k_1,\alpha_1$ such that $\alpha=\w^{\cdot\beta_1}\cdot k_1+\alpha_1$ and $0<k_1<\w$, $\alpha_1<\w^{\cdot\beta_1}$. Since $\alpha_1<\alpha$ we can apply the inductive assumption and find ordinals  $\beta_2>\dots>\beta_n$  and nonzero natural numbers $k_2,\dots,k_n$ such that $\alpha_1=\w^{\cdot\beta_2}{\cdot}k_2+\dots+\w^{\cdot\beta_n}{\cdot}k_n$. Then $$\alpha=\w^{\cdot\beta_1}\cdot k_1+\dots+\w^{\cdot\beta_n}\cdot k_n$$
the required decomposition of $\alpha$.
\end{proof}

Theorem~\ref{t:Cantor-norm} allows to identify ordinals with functions $f:\w^\Ord\to\w$ defined on the class $\w^\Ord=\{\w^{\cdot\beta}:\beta\in\Ord\}$ such that $\mathrm{supp}(f)=\{\beta\in\w^\Ord:f(\w^{\cdot\beta})>0\}$ is finite. The coordinatewise addition of such functions induces the so-called \index{normal addition of ordinals}{\em normal addition} $\alpha\oplus\beta$ of ordinals. The normal addition of ordinals is both associative and commutative.  

\begin{example} $(\w^{\cdot2}{\cdot}3+\w{\cdot}5+1)\oplus(\w^{\cdot3}{\cdot}4+\w^{\cdot2}\cdot{2}+3)=\w^{\cdot 3}{\cdot}4+\w^{\cdot2}{\cdot}5+\w{\cdot}5+4$.
\end{example}

\newpage 

\part{Cardinals}

This part is devoted to cardinals and cardinalities. Since we will mostly speak about sets (and rarely about classes), we will deviate from our convention of denoting sets by small characters and shall use both small and capital letters for denoting sets.

\section{Cardinalities and cardinals}


\begin{definition} Two classes $X,Y$ are defined to be \index{sets!equipotent}\index{equipotent sets}{\em equipotent} if there exists a bijective function $F:X\to Y$. In this case we write $|X|=|Y|$ and say that $X$ and $Y$ have the same cardinality.
\end{definition}

\begin{exercise}\label{ex:equipotent} Prove that for any classes $X,Y,Z$
\begin{itemize}
\item[1)] $|X|=|X|$;
\item[2)] $|X|=|Y|\;\Ra\;|Y|=|X|$;
\item[3)] $(|X|=|Y|\;\wedge\;|Y|=|Z|)\;\Ra\;|X|=|Z|)$.
\end{itemize}
\end{exercise}

Exercise~\ref{ex:equipotent} witnesses that
$$\{\langle x,y\rangle\in\ddot\UU:|x|=|y|\}$$is an equivalence relation on the universe $\UU$. For every set $x$ its cardinality $|x|$ is the unique equivalence class $\{y\in\UU:|y|=|x|\}$ of this relation, containing the set $x$. Let us write down this as a formal definition.

\begin{definition} For a set $x$ its \index{cardinality}{\em cardinality} $|x|$ is the class
$$\{y\in\UU:\exists f\in\Fun\;\wedge\;(f^{-1}\in\Fun\;\wedge\;\dom[f]=x\;\wedge\;\rng[f]=y)\}$$
consisting of all sets $y$ that are equiponent with $x$.
\end{definition}

\begin{example} 
\begin{enumerate}
\item[0)] The cardinality $|0|$ of the emptyset $0=\emptyset$ is the singleton $\{\emptyset\}=1$.
\item[1)] The cardinality $|1|$ of the natural number $1=\{\emptyset\}$ is the proper class of all singletons $\{\{x\}:x\in\UU\}$. 
\item[2)] The cardinality $|2|$ of the natural number $2=\{0,1\}$ is the proper class of all  doubletons $\{\{x,y\}:x\in\UU\;\wedge\;y\in\UU\;\wedge\;\;x\ne y\}$.
\end{enumerate}
\end{example}

The Hartogs' Theorem~\ref{t:Hartogs} implies that for every set $x$ the class $|x|\cap\On$ is a set. This  intersection is not empty if and only if the set $x$ can be  well-ordered.

\begin{definition} An ordinal $\alpha$ is called a \index{cardinal}{\em cardinal} if $\alpha$ is the $\E$-least element in the set $|\alpha|\cap\Ord$. The class of cardinals will be denoted by \index{{{\bf Card}}}\index{class!{{\bf Card}}}$\Crd$.
\end{definition}

If the Axiom of Choice holds, then by Zermelo's Theorem~\ref{t:WO}, each set $x$ can be well-ordered and then Theorem~\ref{t:wOrd} ensures that the cardinality $|x|$ contains an ordinal and hence contains a unique cardinal. This unique cardinal is called the {\em cardinal} of the set $x$. 
For example, the cardinal of the ordinal $\w+1$ is $\w$.

The cardinal of a set $x$ is well-defined if and only of the cardinality $|x|$ contains some ordinal. 
\smallskip

Let us recall that for two classes $X,Y$ we write $|X|=|Y|$ iff there exists a bijective function $F:X\to Y$.

Given two classes $X,Y$ we write  $|X|\le|Y|$ if there exists an injective function $F:X\to Y$. Also we write $|X|<|Y|$ if $|X|\le |Y|$ but $|X|\ne|Y|$.
 
\begin{theorem}[Cantor--Bernstein--Schr\"oder]\label{t:CBDS} For two classes $X,Y$,
$$|X|=|Y|\;\Leftrightarrow\;(|X|\le|Y|\;\wedge\;|Y|\le|X|).$$
\end{theorem}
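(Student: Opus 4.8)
The statement is the Cantor--Bernstein--Schr\"oder theorem for classes. The only nontrivial direction is $(|X|\le|Y|\;\wedge\;|Y|\le|X|)\;\Rightarrow\;|X|=|Y|$, so I will focus there. The plan is to start from an injection $F\colon X\to Y$ and an injection $G\colon Y\to X$ and produce a bijection $H\colon X\to Y$ by the classical back-and-forth partition argument, being careful that the construction goes through even when $X,Y$ are proper classes (so that I cannot simply iterate a function $\omega$ times using the recursion theorems, which are stated for set-like well-founded relations on classes but whose outputs must land in $\UU$).

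First I would reduce to working inside $X$. Consider the composite $G\circ F\colon X\to X$, which is injective, and the ``missing part'' $C_0 = X\setminus G[Y]$, the set (or class) of elements of $X$ not in the range of $G$. The idea is to define the class $D = \bigcup_{n\in\w}(G\circ F)^{\circ n}[C_0]$ of all elements reachable from $C_0$ by iterating $G\circ F$, and then set
$$H(x)=\begin{cases} F(x) & \text{if } x\in D,\\ G^{-1}(x) & \text{if } x\in X\setminus D.\end{cases}$$
The key point is that $G^{-1}(x)$ is defined for every $x\in X\setminus D$: indeed $x\notin C_0$ (since $C_0\subseteq D$) means $x\in G[Y]$, and $G$ is injective so $G^{-1}(x)$ is a well-defined element of $Y$. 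I would then verify that $H$ is injective and surjective by the standard case analysis on whether points lie in $D$ or its complement.

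The main obstacle, and the step needing the most care, is the legitimacy of forming $D=\bigcup_{n\in\w}(G\circ F)^{\circ n}[C_0]$ when everything in sight may be a proper class. The iteration $(G\circ F)^{\circ n}$ is the $n$-fold composition of the \emph{function} $G\circ F\colon X\to X$, and Theorem~\ref{t:iterate} guarantees that the function sequence $\big((G\circ F)^{\circ n}\big)_{n\in\w}$ is well-defined for any class $X$ and function $\Phi=G\circ F\colon X\to X$. Using G\"odel's class existence Theorem~\ref{t:class}, the class $D=\{x\in X:\exists n\in\w\;\exists f\;(f=(G\circ F)^{\circ n}\;\wedge\;\exists c\in C_0\;(x=f(c)))\}$ exists, and then $H$ exists as a $\UU$-bounded definable subclass of $X\times Y$. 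I would note that $D$ is exactly the smallest subclass of $X$ containing $C_0$ and closed under $G\circ F$, which gives the two invariances I need: $x\in D\Rightarrow G\circ F(x)\in D$, and $x\in X\setminus D$ with $x=G(y)$ forces $y\notin F[D]$ hence the preimage behaves correctly.

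Finally I would assemble injectivity and surjectivity. For injectivity: $F{\restriction}_D$ and $G^{-1}{\restriction}_{X\setminus D}$ are each injective, and their ranges $F[D]$ and $G^{-1}[X\setminus D]$ are disjoint --- if $y$ lay in both then $y=F(x)$ for some $x\in D$ and $y=G^{-1}(x')$ for some $x'\notin D$, giving $x'=G(y)=G(F(x))=(G\circ F)(x)\in D$, a contradiction with the closure property of $D$. For surjectivity: given $y\in Y$, either $G(y)\in D$, in which case (by the definition of $D$ via $C_0=X\setminus G[Y]$ and the structure of the iterates) $G(y)=(G\circ F)(x)$ for some $x\in D$, whence $y=F(x)\in H[X]$; or $G(y)\notin D$, whence $y=G^{-1}(G(y))=H(G(y))\in H[X]$. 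This shows $H\colon X\to Y$ is bijective, giving $|X|=|Y|$; the reverse implication is immediate since a bijection is in particular an injection in each direction, so $|X|=|Y|$ yields both $|X|\le|Y|$ and $|Y|\le|X|$.
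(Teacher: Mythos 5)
Your argument is correct, and it belongs to the same family as the paper's proof: both start from injections $F\colon X\to Y$ and $G\colon Y\to X$, iterate the injective composite $G\circ F$ by appealing to Theorem~\ref{t:iterate}, partition $X$ according to those iterates, and define the bijection $H$ piecewise by $F$ and $G^{-1}$. The difference lies in the partition. The paper forms the decreasing chains $X_{2n}=(G\circ F)^{\circ n}[X]$ and $X_{2n+1}=(G\circ F)^{\circ n}[G[Y]]$ and splits $X$ into \emph{three} pieces --- the even rings $\bigcup_{n\in\w}(X_{2n}\setminus X_{2n+1})$, the odd rings $\bigcup_{n\in\w}(X_{2n+1}\setminus X_{2n+2})$, and the core $X_\w=\bigcap_{n\in\w}X_n$ --- sending the first and third by $F$ and the second by $G^{-1}$. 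Your class $D=\bigcup_{n\in\w}(G\circ F)^{\circ n}[X\setminus G[Y]]$ is precisely the union of the even rings, so your $H$ agrees with the paper's everywhere except on the core $X_\w$, where you use $G^{-1}$ instead of $F$; both choices work. Your two-piece decomposition buys a shorter verification: the disjointness of $F[D]$ and $G^{-1}[X\setminus D]$ and the surjectivity case split are immediate from the closure of $D$ under $G\circ F$, and you never need the monotonicity $X_{n+1}\subseteq X_n$ or the identity $F[X_\w]=Y_\w$ that the paper's three-piece formula relies on. One formal caveat about your definition of $D$: when $X$ is a proper class you cannot literally write $\exists f\;(f=(G\circ F)^{\circ n}\wedge\dots)$, since each iterate is then a proper class and the quantifier $\exists f$ ranges over sets; the correct formalization is to use the single class function $\Psi\colon\w\times X\to X$, $\Psi(n,x)=(G\circ F)^{\circ n}(x)$, supplied by Theorem~\ref{t:recursion2}, and to set $D=\rng[\Psi\cap((\w\times C_0)\times\UU)]$, which exists by the axioms of existence of classes. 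This is exactly how the paper's classes $X_n$ are legitimized, so the repair is cosmetic rather than substantive.
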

 
\begin{proof} The implication $|X|=|Y|\;\Ra\;(|X|\le|Y|\;\wedge\;|Y|\le|X|)$ is trivial. To prove the reverse implication, assume that $|X|\le|Y|$, $|Y|\le|X|$ and fix injective functions $F:X\to Y$ and $G:Y\to X$.  For every $n\in\w$ let $(G\circ F)^{\circ n}:X\to X$ and $(F\circ G)^{\circ n}:Y\to Y$ be the $n$-th iterations of the functions $G\circ F$ and $F\circ G$, respectively. These iterations exist by Theorem~\ref{t:iterate}.

For every $n\in\w$ let $X_{2n}=(G\circ F)^{\circ n}[X]$, $Y_{2n}=(F\circ G)^{\circ n}[Y]$, $X_{2n+1}=(F\circ G)^{\circ n}[G[Y]]$ and $Y_{2n+1}=(F\circ G)^{\circ n}[F[X]]$.

By induction we can prove that $X_{n+1}\subseteq X_n$ and $Y_{n+1}\subseteq Y_n$ for every $n\in\w$. Let $X_\w=\bigcap_{n\in\w}X_n$ and $Y_\w=\bigcap_{n\in\w}Y_n$, and observe that
$$F[X_\w]=\textstyle\bigcap_{n\in\w}F[X_n]=\bigcap_{n\in\w}Y_{n+1}=Y_\w.$$
It is easy to check that the function $H:X\to Y$ defined by
$$H(x)=\begin{cases}
F(x)&\mbox{if $x\in \bigcup_{n\in\w}(X_{2n}\setminus X_{2n+1})$};\\
G^{-1}(x)&\mbox{if $x\in\bigcup_{n\in\w}(X_{2n+1}\setminus X_{2n+2})$};\\
F(x)&\mbox{if $x\in X_\w$}
\end{cases}
$$is bijective and witnesses that $|X|=|Y|$.
\end{proof}

Since the cardinalities of sets $|x|$ are proper classes, we cannot speak about the class of cardinalities. Nonetheless, the indexed family of cardinalities $(|x|)_{x\in\UU}$ is absolutely legal; and Theorem~\ref{t:CBDS} implies that $\le$ is a partial order on this indexed family.

Given two classes $X,Y$ we write $|X|\le^*|Y|$ if there exists a surjective function $f:Y\to X$ or $X$ is empty.

The following proposition shows that for any sets $x,y$ we have the implications
$$(|x|\le|y|)\;\Ra\;(|x|\le^*|y|)\;\Ra\;(|\mathcal P(x)|\le|\mathcal P(y)|).$$

\begin{proposition}\label{p:card-ineq} Let $X,Y$ be nonempty classes.
\begin{enumerate}
\item[\textup{1)}] If $|X|\le|Y|$, then $|X|\le^*|Y|$.
\item[\textup{2)}] If $Y$ is well-ordered, then $|X|\le|Y|$ is equivalent to $|X|\le^*|Y|$.
\item[\textup{3)}] If $X,Y$ are sets and $|X|\le^*|Y|$, then $|\mathcal P(X)|\le|\mathcal P(Y)|$.
\end{enumerate}
\end{proposition}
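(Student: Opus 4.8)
The plan is to prove the three parts in order, using only injective/surjective functions and the Axioms already available (Product, Domain, Inversion, Global Choice is \emph{not} assumed, so well-orderability hypotheses must be used where present).

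\textbf{Part (1).} Suppose $F\colon X\to Y$ is injective; I want a surjection $g\colon Y\to X$. Since $X$ is nonempty, fix some $x_0\in X$. The function $F$ is a bijection onto its range $\rng[F]\subseteq Y$, so its inverse $F^{-1}\colon\rng[F]\to X$ is a function (by the Axiom of Inversion and injectivity of $F$). I would then set
$$g=F^{-1}\cup\big((Y\setminus\rng[F])\times\{x_0\}\big),$$
which is a function with $\dom[g]=Y$ and $\rng[g]=X$ (surjectivity holds because $F^{-1}$ is already onto $X$). Hence $|X|\le^*|Y|$. The existence of $g$ as a class follows from the Axioms of Difference, Product, Inversion and Union, so no choice is needed here.

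\textbf{Part (2).} The implication $(|X|\le|Y|)\Ra(|X|\le^*|Y|)$ is part (1), so I only need the converse under the hypothesis that $Y$ is well-orderable. Fix a set-like well-order $W$ with $\dom[W^\pm]=Y$, and let $g\colon Y\to X$ be a surjection. For each $x\in X$ the preimage $g^{-1}[\{x\}]$ is a nonempty subclass of $Y=\dom[W^\pm]$, so by Exercise~\ref{ex:char-WO} it contains a (unique) $W$-least element $\min_W(g^{-1}[\{x\}])$. This uses the well-order of $Y$ as a \emph{global} choice device restricted to $Y$, circumventing the need for the Axiom of Choice. I would define $F\colon X\to Y$ by $F(x)=\min_W(g^{-1}[\{x\}])$; this $F$ is a function by the characterization of $\rank$/least elements, and it is injective because distinct $x,x'$ have disjoint preimages under $g$, hence distinct $W$-least elements. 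Therefore $|X|\le|Y|$. The one point to check carefully is that $F$ exists as a legitimate class, which follows from G\"odel's class existence Theorem~\ref{t:class} applied to the formula defining $F$ in terms of $W$ and $g$.

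\textbf{Part (3).} Assume $X,Y$ are sets with a surjection $g\colon Y\to X$ (the case $X=\emptyset$ is trivial, as $\mathcal P(\emptyset)=\{\emptyset\}$). I would construct an injection $\Phi\colon\mathcal P(X)\to\mathcal P(Y)$ by taking preimages:
$$\Phi\colon A\mapsto g^{-1}[A]=\{y\in Y:g(y)\in A\}.$$
This $\Phi$ is well-defined (each $g^{-1}[A]\subseteq Y$ is a set by the Axiom of Power-set, since $Y$ is a set) and is a function by Theorem~\ref{t:index}. Injectivity is the key computation: if $A\ne A'$, pick without loss of generality $a\in A\setminus A'$; by surjectivity of $g$ there is $y\in Y$ with $g(y)=a$, whence $y\in g^{-1}[A]\setminus g^{-1}[A']$, so $g^{-1}[A]\ne g^{-1}[A']$. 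Thus $\Phi$ is injective and $|\mathcal P(X)|\le|\mathcal P(Y)|$.

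\textbf{Main obstacle.} The delicate part is part (2): I must avoid invoking the Axiom of Choice, and the hypothesis ``$Y$ is well-orderable'' is exactly what lets me \emph{canonically} select the $W$-least element of each fiber $g^{-1}[\{x\}]$, so that the resulting $F$ is a bona fide class definable from $W$ and $g$ via Theorem~\ref{t:class}. In parts (1) and (3) no choice is required and the constructions are explicit, so those steps are routine; the conceptual weight lies in seeing that well-orderability of the \emph{codomain} supplies a uniform choice function turning a surjection into an injection.
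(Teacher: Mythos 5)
Your proof is correct and follows essentially the same route as the paper's: part (1) extends $F^{-1}$ by a constant on $Y\setminus\rng[F]$, part (2) selects the $W$-least element of each fiber of the surjection, and part (3) uses the injection $A\mapsto g^{-1}[A]$ whose injectivity comes from the surjectivity of $g$. The only cosmetic difference is your citation of Theorem~\ref{t:index} in part (3), where the existence of the preimage function really just needs G\"odel's class existence theorem, but this does not affect the argument.
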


\begin{proof} 1. If $|X|\le|Y|$, then there exists an injective function $F:X\to Y$. If $X=\emptyset$, then $|X|\le^*|Y|$ by definition. If $X\ne\emptyset$, then we can choose an element $b\in X$ and consider the function $G:Y\to X$ defined by
$$G(y)=\begin{cases}F^{-1}(y)&\mbox{if $y\in F[X]$};\\
b&\mbox{if $y\in Y\setminus F[X]$}.
\end{cases}
$$
Using Theorem~\ref{t:class}, one can check that the function $G$ is well-defined and surjective.
\smallskip

2. Assume that $Y$ is well-ordered and fix a well-order $W$ on $Y=\dom[W^\pm]$. If $|X|\le|Y|$, then $|X|\le^*|Y|$ by the preceding statement. Now assume that $|X|\le^*|Y|$. If $X$ is empty, then the empty injective function $\emptyset:\emptyset\to Y$ witnesses that $|X|\le|Y|$. If $X$ is not empty, then there exists a surjective function $F:Y\to X$. For every $x\in X$ let $G(x)$ be the unique $W$-minimal element of the nonempty class $\{y\in Y:F(y)=x\}$. Then $G:X\to Y$, $G:x\mapsto G(x)$, is a well-defined injective function witnessing that $|X|\le|Y|$.
\smallskip

3. If $|X|\le^*|Y|$ and $X,Y$ are sets, then either $X=\emptyset$ or there exists a surjective function $G:Y\to X$. In the first case we  have $|\mathcal P(X)|=|\{\emptyset\}|\le|\mathcal P(Y)|$. In the second case we can consider the injective function $G^{-1}:\mathcal P(X)\to\mathcal P(Y)$, $G^{-1}:a\mapsto G^{-1}[a]$, witnessing that $|\mathcal P(X)|\le|\mathcal P(Y)|$.
\end{proof}

\begin{exercise} Show the equivalence of two statements:
\begin{itemize} 
\item[(i)] For any sets $x,y$ and a surjective function $f:x\to y$ there exists an injective function $g:y\to x$ such that $f\circ g=\Id{\restriction}y$.
\item[(ii)] The Axiom of Choice holds.
\end{itemize}
Prove that the equivalent conditions (i),(ii) imply the condition
\begin{itemize}
\item[(PP)] For any sets $x,y$\;$(|x|\le|y|\;\Leftrightarrow\;|x|\le^*|y|)$.
\end{itemize}
\end{exercise}

\begin{remark} The statement (PP) is known is Set Theory as the Partition Principle.\\ It is an open problem whether (PP) implies (AC), see\\ {\tt\small https://karagila.org/2014/on-the-partition-principle/}
\end{remark}


\begin{theorem}[Cantor]\label{t:CantorP} For any set $x$ there is no surjective function $f:x\to\mathcal P(x)$.
\end{theorem}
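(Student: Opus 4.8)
The plan is to prove Cantor's theorem by the classical diagonal argument, which is essentially a refined version of Russell's Paradox adapted to the function $f$. The statement asserts that no function $f:x\to\mathcal P(x)$ can be surjective, so I would argue by contradiction: assume $f:x\to\mathcal P(x)$ is surjective and produce an element of $\mathcal P(x)$ that cannot lie in the range of $f$.

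First I would form the diagonal set
$$D=\{z\in x:z\notin f(z)\}.$$
This class exists by the Axiom of Difference (or more formally by G\"odel's class existence Theorem~\ref{t:class}, since the defining property $z\notin f(z)$ is $\UU$-bounded with the parameter $f$), and since $D\subseteq x$ it is a subset of $x$ by Exercise~\ref{ex:subclass}. Hence $D\in\mathcal P(x)$. The key point is that $D$ is a legitimate element of the codomain $\mathcal P(x)$, so if $f$ were surjective then $D$ would have to be a value of $f$.

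Next I would exploit the assumed surjectivity of $f$: there would exist some $a\in x$ with $f(a)=D$. Then I would ask whether $a\in D$ and derive a contradiction in both cases. By the very definition of $D$, we have $a\in D\;\Leftrightarrow\;a\notin f(a)$. But $f(a)=D$, so this reads $a\in D\;\Leftrightarrow\;a\notin D$, which is impossible (this is where the Law of the Excluded Middle is used, exactly as in Russell's Paradox). This contradiction shows that no such $a$ exists, so $D\notin\rng[f]$, contradicting surjectivity.

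The argument is short and there is no serious obstacle; the only points requiring a little care are purely foundational rather than combinatorial. Specifically, I must make sure the diagonal class $D$ genuinely exists as a set within the Classical Set Theory framework, which is guaranteed by the class existence theorem together with Exercise~\ref{ex:subclass} establishing that a subclass of a set is a set. With $D\in\mathcal P(x)$ verified, the biconditional $a\in D\Leftrightarrow a\notin D$ delivers the contradiction immediately, completing the proof.
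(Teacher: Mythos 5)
Your proof is correct and uses the same diagonal argument as the paper: form $D=\{z\in x:z\notin f(z)\}$ and derive the contradiction $a\in D\Leftrightarrow a\notin D$ from any preimage $a$ of $D$. The only cosmetic difference is that the paper shows directly that $D\notin\rng[f]$ for an arbitrary $f$ rather than assuming surjectivity upfront, and your extra remarks on why $D$ exists as a set are a harmless (and welcome) elaboration.
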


\begin{proof} Given any function $f:x\to\mathcal P(x)$, consider the set $a=\{z\in x:z\notin f(z)\}\in\mathcal P(x)$. Assuming that $a\in\rng[f]$, we could find an element $z\in x$ such that $a=f(z)$. If $z\notin f(z)$, then $z\in a=f(z)$ which is a contradiction. If $z\in f(z)$, then $z\in a$ and hence $z\notin f(z)$. In both cases we obtain a contradiction, which shows that $a\notin\rng[f]$ and hence $f$ is not surjective.
\end{proof}

\begin{corollary}[Cantor]\label{c:CantorP} For any set $x$ we have $|x|<|\mathcal P(x)|$.
\end{corollary}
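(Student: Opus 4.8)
The goal is Corollary~\ref{c:CantorP}: for any set $x$, we have $|x|<|\mathcal P(x)|$. Recall that $|x|<|\mathcal P(x)|$ means $|x|\le|\mathcal P(x)|$ together with $|x|\ne|\mathcal P(x)|$. So the plan naturally splits into two tasks, and the second is where Cantor's Theorem~\ref{t:CantorP} does the real work.

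First I would establish $|x|\le|\mathcal P(x)|$ by exhibiting an injective function from $x$ to $\mathcal P(x)$. The obvious candidate is the singleton map $s:x\to\mathcal P(x)$, $s:z\mapsto\{z\}$. I would note that for each $z\in x$ the set $\{z\}$ is a subset of $x$ (so it genuinely lands in $\mathcal P(x)$), and that $s$ is injective because $\{z\}=\{z'\}$ forces $z=z'$ by the Axiom of Extensionality. Strictly, one should remark that $s$ exists as a function in the sense of the theory; this follows from G\"odel's class existence Theorem~\ref{t:class} (or one can write $s$ explicitly via the already-constructed operations), and its domain $x$ is a set, so $s$ is a set. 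This gives $|x|\le|\mathcal P(x)|$.

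Next I would rule out $|x|=|\mathcal P(x)|$. Suppose toward a contradiction that $|x|=|\mathcal P(x)|$; then there is a bijective function $f:x\to\mathcal P(x)$. In particular $f$ is surjective, which directly contradicts Theorem~\ref{t:CantorP} asserting that no surjection $f:x\to\mathcal P(x)$ exists. Hence $|x|\ne|\mathcal P(x)|$. Combining this with the inequality from the first paragraph yields $|x|<|\mathcal P(x)|$ by the definition of the strict order on cardinalities.

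The two steps are genuinely routine once Theorem~\ref{t:CantorP} is in hand, so there is no serious obstacle; the only point requiring a little care is the legality of the maps involved. I would make sure to justify that the singleton function $s$ is a well-defined set-sized function (invoking Theorem~\ref{t:class}), rather than treating it as self-evident, since the whole framework here insists that every constructed object be shown to exist from the axioms. Everything else is immediate from the definitions of $\le$, $=$, and $<$ on cardinalities together with the Cantor diagonal result.
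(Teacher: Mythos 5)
Your proof is correct and follows essentially the same route as the paper: the singleton map $z\mapsto\{z\}$ gives $|x|\le|\mathcal P(x)|$, and equality is excluded because a bijection would be a surjection onto $\mathcal P(x)$, contradicting Theorem~\ref{t:CantorP}. The extra remark on the set-theoretic legality of the singleton function is a reasonable addition but not a substantive difference.
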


\begin{proof} The inequality $|x|\le|\mathcal P(x)|$ follows from the injectivity of the function $x\to\mathcal P(x)$, $z\mapsto \{z\}$. Assuming that $|x|=|\mathcal P(x)|$, we would get a bijective (and hence surjective) function $x\to\mathcal P(x)$, which is forbidden by Cantor's Theorem~\ref{t:CantorP}.
\end{proof}

\section{Finite and Countable sets}

In this section we shall establish some elementary facts about finite and countable sets. Let us recall that a set $x$ is {\em countable} (resp. {\em finite}) if $|x|=|\alpha|$ for some ordinal $\alpha\in\w\cup\{\w\}$ (resp. $\alpha\in\w$). Elements of the set $\w$ are called \index{natural number}{\em natural numbers}.

\begin{lemma}\label{l:injn} Let $n$ be a natural number. Every injective function $f:n\to n$ is bijective.
\end{lemma}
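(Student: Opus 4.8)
The claim is that every injective function $f\colon n\to n$ on a natural number $n$ is automatically bijective. The plan is to prove this by the Principle of Mathematical Induction on $n$, which is available since $n\in\w$ and $\w$ is the smallest inductive set.

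For the base case $n=0=\emptyset$, the only function $f\colon\emptyset\to\emptyset$ is the empty function, which is trivially bijective. For the inductive step, I would assume the statement holds for some $n\in\w$ and consider an injective function $f\colon (n+1)\to (n+1)$, where $n+1=n\cup\{n\}$. Since $f$ is injective, it suffices to show $f$ is surjective, i.e.\ $\rng[f]=n+1$. The natural strategy is to reduce to the inductive hypothesis by modifying $f$ so that it fixes the top element $n$, thereby restricting to an injective self-map of $n$.

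Concretely, I would split into two cases according to whether $f(n)=n$. If $f(n)=n$, then by injectivity no element of $n$ maps to $n$, so $f{\restriction}_n\colon n\to n$ is a well-defined injective function; by the inductive hypothesis it is bijective onto $n$, hence $\rng[f]=n\cup\{f(n)\}=n\cup\{n\}=n+1$. If $f(n)=m$ for some $m\in n$, then since $f$ is injective there is at most one $k\in n$ with $f(k)=n$; I would compose $f$ with the transposition $\tau$ of $m$ and $n$ (the bijection $n+1\to n+1$ swapping $m$ and $n$ and fixing everything else, which exists as an explicit finite set of ordered pairs). The composite $\tau\circ f$ is injective (composition of injections) and satisfies $(\tau\circ f)(n)=n$, so by the previous case $\tau\circ f$ is bijective; since $\tau$ is a bijection, $f=\tau^{-1}\circ(\tau\circ f)$ is bijective as well.

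The main obstacle, and the point requiring genuine care, is handling the case $f(n)=m\ne n$: I must confirm that the transposition $\tau$ is a genuine bijection of $n+1$ (which follows because $\tau=\tau^{-1}$ is its own inverse) and that the composition arguments are valid as functions in the sense of our theory (using that the composition of functions is a function, proved earlier). One must also be slightly careful that in the subcase where no $k\in n$ has $f(k)=n$ at all, the function $f{\restriction}_n$ maps into $n$ directly; this is exactly guaranteed by injectivity together with $f(n)=n$ in the first case, or by the swap in the second case. I expect no essential difficulty beyond this bookkeeping, and the whole argument closes cleanly by induction.
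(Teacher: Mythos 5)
Your proof is correct and takes essentially the same approach as the paper's: induction on $n$, handling the case $f(n)=n$ by restricting to an injective self-map of $n$ and applying the inductive hypothesis, and reducing the case $f(n)\ne n$ to the first case by composing with the transposition swapping $f(n)$ and $n$. The paper's proof uses exactly this transposition (called $g$ there) and concludes via $f=g^{-1}\circ g\circ f$, so there is nothing to add.
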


\begin{proof} This lemma will be proved by induction. For $n=0$ the unique function $\emptyset:\emptyset\to\emptyset$ is bijective. Assume that for some $n\in\w$ we have proved that any injective function $f:n\to n$ is bijective.  

Take any injective function $f:n+1\to n+1$. If $f(n)=n$, then the injectivity of $f$ guarantees that $n\notin f[n]$ and hence $f[n]\subseteq (n+1)\setminus\{n\}=n$. By the inductive assumption, the injective function $f{\restriction}_n:n\to n$ is bijective and hence $$f[n+1]=f[n\cup\{n\}]=f[n]\cup\{f(n)\}=n\cup\{n\}=n+1,$$ which means that $f$ is surjective and hence bijective.  If $f(n)\ne n$, then consider the bijective function $g:n+1\to n+1$ defined by
$$g(x)=\begin{cases} f(n)&\mbox{if $x=n$};\\
n&\mbox{if $x=f(n)$};\\
x&\mbox{otherwise}.
\end{cases}$$
Then $g\circ f:n+1\to n+1$ is an injective function with $g\circ f(n)=n$. As we already proved, such function is bijective. Now the bijectivity of $g$ implies that $f=g^{-1}\circ g\circ f$ is bijective, too.
\end{proof}

\begin{lemma}\label{l:surjn} Let $f:x\to n$ be a surjective function from a set $x$ onto a natural number $n\in\w$. Then there exists a function $g:n\to x$ such that $f\circ g=\Id{\restriction}n$.
\end{lemma}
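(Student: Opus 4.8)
The statement to prove is Lemma~\ref{l:surjn}: given a surjective function $f:x\to n$ onto a natural number $n$, there exists $g:n\to x$ with $f\circ g=\Id{\restriction}n$. This is essentially a right-inverse (section) for a surjection onto a finite set, and the key point is that no Axiom of Choice is needed because $n$ is well-ordered (indeed, every natural number carries the canonical well-order $\E{\restriction}n$).

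The plan is to define $g$ explicitly using the well-ordering of the domain $x$ that is available "for free" here. The cleanest route: for each $k\in n$, the preimage $f^{-1}[\{k\}]=\{z\in x:f(z)=k\}$ is a nonempty set, since $f$ is surjective. To pick a single element from each such preimage without invoking choice, I would note that $n$ itself is finite and do an induction on $n$, exactly paralleling the structure of the preceding Lemma~\ref{l:injn}. First I would handle $n=0$ trivially (the empty function works since a surjection onto $\emptyset$ forces $x=\emptyset$, and $g=\emptyset$ satisfies $f\circ g=\emptyset=\Id{\restriction}0$). For the inductive step, given a surjection $f:x\to n+1$, split $x$ into $x'=f^{-1}[n]$ and the nonempty fiber $f^{-1}[\{n\}]$; apply the inductive hypothesis to the restriction $f{\restriction}_{x'}:x'\to n$ (which is surjective) to obtain $g':n\to x'$, then extend $g'$ by sending $n$ to some fixed element of the nonempty fiber $f^{-1}[\{n\}]$.

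The main subtlety, and the step I expect to require the most care, is justifying the choice of a witness element in the fiber $f^{-1}[\{n\}]$ at the successor step without appealing to any choice principle, and more importantly justifying that the resulting $g$ exists as a legitimate function in the theory. Since at each stage we only need to choose from a single nonempty set (not an infinite family simultaneously), this is an instance of the trivial finite-choice available in the base theory; one element exists simply because the set is nonempty ($\exists z\;(z\in f^{-1}[\{n\}])$), and we may name it. To make the construction fully rigorous and avoid any hidden choice, I would instead phrase the whole thing as a single recursion: define $g$ by recursion on $\E{\restriction}n$ (using the Recursion Theorem~\ref{t:recursion}), where at stage $k$ we set $g(k)$ to be any element of $f^{-1}[\{k\}]$; the existence of the function $g$ then follows from the Recursion Theorem together with the fact that finitely many successive single-element selections can be packaged via a function defined by an explicit formula. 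The equality $f\circ g=\Id{\restriction}n$ is then immediate from $g(k)\in f^{-1}[\{k\}]$, i.e. $f(g(k))=k$ for every $k\in n$.

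An even slicker alternative I would consider presenting is to avoid recursion entirely: define $g(k)$ directly as the element of the fiber $f^{-1}[\{k\}]$ which is $\E{\restriction}n$-least in an auxiliary sense — but since the elements of $x$ need not be ordered, this does not immediately work, so I would fall back on the inductive argument above, which only uses that each individual fiber is nonempty. Thus the overall shape mirrors Lemma~\ref{l:injn}: induction on $n$, base case $n=0$ vacuous, successor case extending a section on the smaller domain by one value. The whole argument is constructive modulo the single-set existence quantifier, so the final lemma holds in $\CST$ without the Axiom of Choice.
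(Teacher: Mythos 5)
Your proposal is correct and follows essentially the same route as the paper: induction on $n$, with the trivial base case $n=0$, and at the successor step restricting $f$ to $x'=f^{-1}[n]$ to get a section $g:n\to x'$ by the inductive hypothesis, then extending it by a single witness $z$ with $f(z)=n$, whose existence needs only the surjectivity of $f$ and no choice principle. The digression about repackaging the argument via the Recursion Theorem is unnecessary but harmless; the paper simply uses the plain induction you describe as your primary plan.
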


\begin{proof} This lemma will be proved by induction on $n$. For $n=\emptyset$ the statement of the lemma is trivally true. Assume that the lemma has been proved for some natural number $n\in\w$. Take any surjective function $f:x\to n+1$. Consider the subset $x'=f^{-1}[n]\subseteq x$ and observe that the function $f{\restriction}_{x'}:x'\to n$ is surjective. By the inductive assumption, there exists a function $g:n\to x'$ such that $f{\restriction}{x'}\circ g=\Id{\restriction}n$. By the surjectivity of $f$, there exists an element $z\in x$ such that $f(z)=n$. Define the function $\bar g:n+1\to x$ by the formula $$\bar g(y)=\begin{cases} g(y)&\mbox{if $y\in n$};\\
z&\mbox{if $y=n$}.
\end{cases}
$$It is clear that $f\circ\bar g=\Id{\restriction}n$.
\end{proof}

\begin{theorem}\label{t:inj=sur} For any natural number $n$ and function $f:n\to n$ the following conditions are equivalent:
\begin{enumerate}
\item[\textup{1)}] $f$ is injective;
\item[\textup{2)}] $f$ is surjective;
\item[\textup{3)}] $f$ is bijective.
\end{enumerate}
\end{theorem}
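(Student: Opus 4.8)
The plan is to prove the chain of implications $(1)\Ra(3)$, $(3)\Ra(2)$, and $(2)\Ra(1)$, which together establish the equivalence of all three conditions. Most of the work has already been done in the preceding lemmas, so the task is largely one of assembling them correctly.

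First I would prove $(1)\Ra(3)$. This is almost immediate from Lemma~\ref{l:injn}, which states precisely that an injective function $f:n\to n$ is bijective. So if $f$ is injective, then $f$ is bijective.

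Next, $(3)\Ra(2)$ is trivial by the definition of bijectivity: a bijective function is by definition surjective (and injective), so no argument is needed beyond unwinding the definition.

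The only substantive step is $(2)\Ra(1)$, and here the plan is to apply Lemma~\ref{l:surjn}. Suppose $f:n\to n$ is surjective. By Lemma~\ref{l:surjn} (with $x=n$) there exists a function $g:n\to n$ such that $f\circ g=\mathsf{Id}{\restriction}n$. The existence of such a right inverse forces $g$ to be injective: if $g(a)=g(b)$ then $a=f(g(a))=f(g(b))=b$. Now $g:n\to n$ is an injective self-map, so by Lemma~\ref{l:injn} (equivalently, by the already-proved implication $(1)\Ra(3)$ applied to $g$) the function $g$ is bijective. Then $f=f\circ g\circ g^{-1}=(\mathsf{Id}{\restriction}n)\circ g^{-1}=g^{-1}$, so $f$ equals the inverse of a bijection and is therefore itself bijective, hence in particular injective. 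I expect the main (though still minor) obstacle to be bookkeeping: verifying that $g$ is genuinely a function from $n$ to $n$ with the stated right-inverse property and that the composition identities hold, rather than any deep difficulty. This completes the cycle $(1)\Ra(3)\Ra(2)\Ra(1)$ and yields the equivalence.
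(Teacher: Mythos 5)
Your proof is correct and follows essentially the same route as the paper: $(1)\Ra(3)$ by Lemma~\ref{l:injn}, $(3)\Ra(2)$ trivially, and $(2)\Ra(1)$ by producing a right inverse $g$ via Lemma~\ref{l:surjn}, noting that $g$ is injective and hence bijective, and concluding that $f=g^{-1}$. The paper phrases the last step as a direct verification that $f(x)\ne f(y)$ for distinct $x,y$, but the computation is the same as yours.
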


\begin{proof} The implication $(1)\Ra(3)$ was proved in Lemma~\ref{l:injn} and $(3)\Ra(2)$ is trivial. To prove $(2)\Ra(1)$, assume that the function $f:n\to n$ is surjective. By Lemma~\ref{l:surjn}, there exists a function $g:n\to n$ such that $f\circ g=\Id{\restriction}n$. The function $g$ is injective since for any distinct elements $x,y\in n$ we have $f(g(x))=x\ne y=f(g(y))$ and hence $g(x)\ne g(y)$. By Lemma~\ref{l:injn}, the injective function $g:n\to n$ is bijective. Then for any distinct elements $x,y\in n$ the injectivity of $g$ implies that $g^{-1}(x)\ne g^{-1}(y)$ and finally
$$f(x)=f\circ g(g^{-1}(x))=\Id(g^{-1}(x))=g^{-1}(x)\ne g^{-1}(y)=f\circ g(g^{-1}(y))=f(y),$$ which means that $f$ is injective.
\end{proof}

\begin{corollary} For any finite set $x$ and function $f:x\to x$ the following conditions are equivalent:
\begin{enumerate}
\item[\textup{1)}] $f$ is injective;
\item[\textup{2)}] $f$ is surjective;
\item[\textup{3)}] $f$ is bijective.
\end{enumerate}
\end{corollary}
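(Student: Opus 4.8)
The plan is to reduce the statement about an arbitrary finite set $x$ to the already-proved Theorem~\ref{t:inj=sur} about a natural number $n$, using the definition of finiteness. By definition, a finite set $x$ admits a bijective function $h:x\to n$ for some natural number $n\in\w$ (the bijection onto some $\rng\in\w$). The key idea is conjugation: any function $f:x\to x$ can be transported along $h$ to a function $\tilde f=h\circ f\circ h^{-1}:n\to n$, and the relevant properties (injectivity, surjectivity, bijectivity) are preserved in both directions under conjugation by a bijection.

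First I would fix a bijection $h:x\to n$ witnessing the finiteness of $x$, so that $h^{-1}:n\to x$ is also a bijection (by the definition of injective/bijective functions, since $h^{-1}$ is a function). Given $f:x\to x$, I would form $\tilde f=h\circ f\circ h^{-1}$, whose existence as a function follows from the Exercise stating that compositions of functions are functions, and whose domain is $n$ with range contained in $n$. The second step is to check the three preservation claims: (a) $f$ injective $\Leftrightarrow$ $\tilde f$ injective, (b) $f$ surjective $\Leftrightarrow$ $\tilde f$ surjective, (c) $f$ bijective $\Leftrightarrow$ $\tilde f$ bijective. Each is a short routine verification using that $h$ and $h^{-1}$ are bijections — for injectivity, $\tilde f(a)=\tilde f(b)$ forces $f(h^{-1}(a))=f(h^{-1}(b))$, hence $h^{-1}(a)=h^{-1}(b)$, hence $a=b$, and conversely; surjectivity is dual.

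With the preservation lemmas in hand, the corollary follows immediately: the equivalence of injectivity, surjectivity, and bijectivity for $\tilde f:n\to n$ is exactly Theorem~\ref{t:inj=sur}, and transporting back along $h$ yields the same equivalence for $f:x\to x$. Concretely, $f$ is injective $\Leftrightarrow$ $\tilde f$ is injective $\Leftrightarrow$ $\tilde f$ is surjective $\Leftrightarrow$ $f$ is surjective, and likewise for bijectivity.

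I do not expect any serious obstacle here, since all the substantive combinatorial content is already contained in Theorem~\ref{t:inj=sur} (which in turn rests on Lemmas~\ref{l:injn} and \ref{l:surjn}). The only mild care needed is bookkeeping: making sure the conjugated function $\tilde f$ genuinely has domain $n$ and range in $n$, and invoking the earlier exercises on composition and on $h^{-1}$ being a function. An equivalent and perhaps cleaner route, avoiding conjugation, would be to prove the preservation statements in the abstract form ``if $h:x\to n$ is a bijection then $f:x\to x$ is injective/surjective/bijective iff $h\circ f\circ h^{-1}$ is,'' but the conjugation argument is the most direct and I would present it as above.
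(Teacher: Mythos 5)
Your proposal is correct and is precisely the intended argument: the paper states this corollary immediately after Theorem~\ref{t:inj=sur} without a written proof, leaving the reduction via a bijection $h:x\to n$ (guaranteed by the definition of finiteness) and conjugation $\tilde f=h\circ f\circ h^{-1}$ implicit. Your preservation lemmas for injectivity and surjectivity under conjugation by a bijection are exactly the routine bookkeeping the author omits, so there is nothing to add or correct.
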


\begin{theorem} Every natural number is a cardinal.
\end{theorem}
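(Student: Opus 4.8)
The plan is to show that every natural number $n\in\w$ is a cardinal, which by definition means that $n$ is the $\E$-least element in the set $|n|\cap\Ord$. Since $n$ is already an ordinal, it suffices to prove that no ordinal $m\in n$ (i.e. no $m<n$) satisfies $|m|=|n|$. In other words, the core claim is that \emph{distinct natural numbers are not equipotent}: if $m\in n\in\w$, then there is no bijection between $m$ and $n$.

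First I would reduce the statement to the non-existence of a bijection $n\to m$ for $m<n$. Suppose toward a contradiction that $m\in n$ and $|m|=|n|$, witnessed by a bijection $f:n\to m$. Since $m\subset n$ (by Theorem~\ref{t:ord}(4), as $m\in n$ gives $m\subset n$), the inclusion $\iota:m\to n$, $\iota:x\mapsto x$, is an injective function. Then the composition $\iota\circ f:n\to n$ is an injective function (it is injective as a composite of injections). By Theorem~\ref{t:inj=sur}, every injective function from $n$ to $n$ is surjective, hence bijective, so $\rng[\iota\circ f]=n$. But $\rng[\iota\circ f]=\iota[f[n]]=\iota[m]=m\subset n$, which contradicts $m=n$. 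This contradiction shows that no such bijection exists.

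Having established that $n$ is not equipotent to any smaller ordinal, I conclude that for every ordinal $\beta\in|n|\cap\Ord$ we have $n\le\beta$: indeed, by the trichotomy for ordinals (Theorem~\ref{t:ord}(6)), either $\beta\in n$, or $\beta=n$, or $n\in\beta$; the first case is impossible since $|\beta|=|n|$ would then contradict the previous paragraph, so $n\subseteq\beta$ in the remaining cases. Thus $n$ is the $\E$-least element of $|n|\cap\Ord$, which is exactly the definition of $n$ being a cardinal.

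The main obstacle is really already packaged into Theorem~\ref{t:inj=sur} (the pigeonhole-type principle that injective self-maps of $n$ are bijective), so the work here is just the clean reduction: being careful that $m\subset n$ for $m<n$ and that composing with the inclusion turns a hypothetical bijection $n\to m$ into an injection $n\to n$ whose range is the proper subset $m$. One should also verify that the argument covers the edge case $n=0$ (vacuously a cardinal, since $|0|\cap\Ord=\{0\}$) and $n=1$, where the reasoning applies uniformly. No appeal to the Axiom of Choice is needed, since well-orderability of finite sets is automatic.
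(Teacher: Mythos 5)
Your proof is correct and follows essentially the same route as the paper's: both reduce the claim to the non-existence of a bijection $n\to m$ for $m\in n$ by viewing such a bijection (via $m\subset n$, from Theorem~\ref{t:ord}(4)) as an injective non-surjective self-map of $n$, contradicting Theorem~\ref{t:inj=sur}. Your version merely spells out the composition with the inclusion and the final trichotomy step, which the paper leaves implicit.
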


\begin{proof} We need to prove that every natural number $n$ is the smallest ordinal in the set $|n|\cap\Ord$. Assuming the opposite, we can find an ordinal $k<n$ and a bijective function $f:n\to k$. By Theorem~\ref{t:ord}(4), $k\subset n$ and hence the function $f:n\to k\subset n$ is injective but not surjective. But this contradicts Theorem~\ref{t:inj=sur}.
\end{proof}

\begin{theorem} The ordinal $\w$ is a cardinal.
\end{theorem}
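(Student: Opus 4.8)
The plan is to unwind the definition of a cardinal: $\w$ is a cardinal precisely when it is the $\E$-least element of $|\w|\cap\Ord$. Since $\w$ is an ordinal by Theorem~\ref{t:Ord}(8), we certainly have $\w\in|\w|\cap\Ord$, so the entire content of the statement is to show that no strictly smaller ordinal lies in this set. As the ordinals below $\w$ are exactly the natural numbers, it suffices to prove that no natural number $n$ satisfies $|n|=|\w|$.

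The key combinatorial fact I would isolate first is a pigeonhole statement: for every $n\in\w$ there is no injective function $g\colon n+1\to n$. To see this, suppose such a $g$ existed. Since $n\subseteq n+1$, we may regard $g$ as a function $g\colon n+1\to n+1$, which is still injective; by Lemma~\ref{l:injn} it is then bijective, and in particular surjective, so $\rng[g]=n+1$. But by construction $\rng[g]\subseteq n$, while $n\ne n+1$ (because $n\in n+1$ and the relation $\E{\restriction}\Ord$ is irreflexive), a contradiction.

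With this in hand, I would argue by contradiction. Assume some ordinal $\alpha<\w$ belongs to $|\w|\cap\Ord$; then $\alpha\in\w$ is a natural number $n$ with $|n|=|\w|$. By Exercise~\ref{ex:equipotent}(2) equipotence is symmetric, so there is a bijective function $f\colon\w\to n$. Since $\w$ is inductive and transitive (Theorem~\ref{t:wTr}), the inclusion $n\in\w$ gives $n+1=n\cup\{n\}\in\w$ and hence $n+1\subseteq\w$. The restriction $f{\restriction}_{n+1}\colon n+1\to n$ is then an injective function (a restriction of the injective $f$), which contradicts the pigeonhole statement proved above. Therefore no natural number is equipotent with $\w$, which is exactly the assertion that $\w$ is the $\E$-least ordinal in $|\w|\cap\Ord$, i.e.\ that $\w$ is a cardinal.

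The only genuinely non-routine step is the pigeonhole claim, and even that becomes immediate once Lemma~\ref{l:injn} is invoked; the remaining steps amount to bookkeeping about the transitivity and inductivity of $\w$ and the fact that a restriction of an injective function is injective.
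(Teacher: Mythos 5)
Your proof is correct and follows essentially the same route as the paper: assume a bijection $f\colon\w\to n$ exists, restrict it to $n+1\subseteq\w$, and contradict the finite pigeonhole principle. The only cosmetic difference is that you isolate the pigeonhole fact as a separate claim derived from Lemma~\ref{l:injn}, whereas the paper invokes Theorem~\ref{t:inj=sur} directly on $f{\restriction}_{n+1}\colon n+1\to n\subset n+1$.
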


\begin{proof} Assuming that $\w$ is not a cardinal, we could find a bijective function $f:\w\to n$ to some natural number $n\in\w$. Then  for the natural number $n+1$ the function $f{\restriction}_{n+1}:n+1\to n\subset n+1$ is injective but not surjective, which contradicts Theorem~\ref{t:inj=sur}.
\end{proof}

\begin{theorem}\label{t:comp-n} For a set $x$ and a natural number $n$ we have $|n|\le|x|$ or $|x|\le |n|$.
\end{theorem}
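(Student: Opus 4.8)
The plan is to fix an arbitrary set $x$ and argue by induction on the natural number $n$, using the Principle of Mathematical Induction. For the base case $n=0=\emptyset$, the empty function $\emptyset\colon\emptyset\to x$ is injective, so $|0|\le|x|$ and the required disjunction holds. Throughout I will use that $\le$ is transitive (the composition of two injective functions is injective) and that the inclusion $n\subseteq n+1=n\cup\{n\}$ yields the injective function $\Id{\restriction}_n\colon n\to n+1$, whence $|n|\le|n+1|$.

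For the inductive step, suppose the statement holds for $n$, i.e. for every set $x$ we have $|n|\le|x|$ or $|x|\le|n|$. I fix $x$ and apply the inductive hypothesis to this same $x$. If $|x|\le|n|$, then combining with $|n|\le|n+1|$ by transitivity gives $|x|\le|n+1|$, and we are done. So the only remaining case is $|n|\le|x|$, witnessed by an injective function $f\colon n\to x$.

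In this case I would split according to whether $f$ is surjective. If $\rng[f]=x$, then $f$ is a bijection, so $|x|=|n|\le|n+1|$ and hence $|x|\le|n+1|$. If $\rng[f]\ne x$, I would pick an element $z\in x\setminus\rng[f]$ and extend $f$ to the function $\bar f=f\cup\{\langle n,z\rangle\}$ with $\dom[\bar f]=n+1$; since $f$ is injective and $z\notin\rng[f]$, the function $\bar f\colon n+1\to x$ is injective, witnessing $|n+1|\le|x|$. In either subcase the required disjunction for $n+1$ holds, completing the induction.

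The argument is largely routine; the only point requiring genuine care is the surjectivity dichotomy in the second case, which is exactly what forces us to distinguish $|n+1|\le|x|$ from $|x|\le|n+1|$. I would also emphasize that no form of the Axiom of Choice is needed here: every function produced (the empty function, the extension $\bar f$, and the bijection coming from $f$) is given explicitly, and the single choice of $z\in x\setminus\rng[f]$ is a selection of one element from one nonempty set, which is unproblematic.
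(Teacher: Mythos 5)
Your proof is correct and follows essentially the same route as the paper: induction on $n$ with the empty function for the base case, and in the inductive step the same dichotomy on whether the injective function $f\colon n\to x$ is surjective, extending it by a point $z\in x\setminus\rng[f]$ to witness $|n+1|\le|x|$ otherwise. The only cosmetic difference is that the paper organizes the step as a proof by contradiction starting from $|x|\not\le|n+1|$, whereas you argue directly; the mathematical content is identical.
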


\begin{proof} This theorem will be proved by induction on $n$. For $n=0$ the inequality $|\emptyset|\le|x|$ holds for every set $x$ as the empty function $\emptyset:\emptyset\to x$ is injective.

Assume that for some natural number $n$ and all sets $x$ we have proved that $|x|\le|n|$ or $|n|\le|x|$. Now consider the natural number $n+1$ and take any set $x$. If $|x|\le n+1$, then we are done. So, assume that $|x|\not\le n+1$. By the inductive assumption, $|x|\le|n|\;\vee\;|n|\le|x|$. The first case is impossible since it leads to the contradiction: $|x|\le|n|\le|n+1|$. Therefore, $|n|\le|x|$ and hence there exists an injective function $f:n\to x$. If $f$ is surjective, then $f$ is bijective and hence $|x|=|n|\le|n+1|$, which contradicts our assumption. Therefore, $f$ is not surjective and we can choose an element $z\in x\setminus f[n]$ and extend the function $f$ to the injective function $\bar f=f\cup\{\langle n,z\rangle\}$ from $n+1$ to $x$, witnessing that $|n+1|\le|x|$. 
\end{proof}

\begin{proposition}\label{p:lew} If for some set $x$ we have $|x|<|\w|$, then $x$ is finite.
\end{proposition}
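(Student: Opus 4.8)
The plan is to reduce the statement to a claim about subsets of $\w$ and then read off finiteness from the rank machinery of Theorem~\ref{t:wOrd} and Theorem~\ref{t:rank}. Since $|x|<|\w|$ gives in particular $|x|\le|\w|$, I would first fix an injective function $f\colon x\to\w$ and set $A=f[x]\subseteq\w$. Then $f\colon x\to A$ is a bijection, so $|x|=|A|$, and $x$ is finite precisely when $A$ is; thus it suffices to prove that every subset $A\subseteq\w$ is equipotent with some ordinal $\beta\le\w$. Two degenerate cases are disposed of at once: if $A=\emptyset$ then $|A|=|0|$, and if $A=\{a\}$ is a singleton then $|A|=|1|$, so in both cases $A$ (hence $x$) is finite and there is nothing more to do.

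Assuming now that $A$ has at least two elements, I would consider the relation $R=\E{\restriction}A=\E\cap(A\times A)$. It is a well-order: linearity for distinct $a,a'\in A\subseteq\w$ is the trichotomy of Theorem~\ref{t:ord}(6), and well-foundedness holds because every nonempty subclass of $A$ is a nonempty subset of $\w$ and so has an $\E$-minimal element. Moreover $R$ is a set (a subclass of $A\times A$), hence set-like by Exercise~\ref{ex:subclass}. Using that $A$ has two distinct members, every $a\in A$ is $\E$-comparable with some other element of $A$, so $\dom[R^\pm]=A$. Applying Theorem~\ref{t:wOrd} to the set-like well-order $R$ yields that $\rank_R\colon A\to\rank(R)$ is an order isomorphism, in particular a bijection, so $|A|=|\rank(R)|$.

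It remains to bound $\rank(R)$, and this is the crux. I would take the inclusion $L\colon A\to\Ord$, $L(a)=a$ (that is, $L=\Id\cap(A\times A)$, whose range $A\subseteq\w\subseteq\Ord$), and observe that $L$ is $R$-to-$\E$-increasing: if $a,a'\in A$ are distinct with $\langle a,a'\rangle\in R$, i.e.\ $a\in a'$, then $\langle L(a),L(a')\rangle=\langle a,a'\rangle\in\E$. By Theorem~\ref{t:rank}(2) this forces $\rank_R(a)\le L(a)=a<\w$ for every $a\in A$, so $\rank(R)=\rng[\rank_R]\subseteq\w$. By Theorem~\ref{t:rank}(3) the class $\rank(R)$ is either $\Ord$ or an ordinal; since it is contained in $\w$ it is an ordinal $\beta$ with $\beta\subseteq\w$, and Theorem~\ref{t:ord}(4) gives $\beta\in\w$ or $\beta=\w$. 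Thus $|x|=|A|=|\beta|$ with $\beta\in\w\cup\{\w\}$.

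Finally I would finish by invoking the hypothesis: from $|x|<|\w|$ we have $|x|\ne|\w|$, so $\beta\ne\w$ (otherwise $|x|=|\beta|=|\w|$), whence $\beta\in\w$ and $x$ is finite by definition. The one point demanding care is the passage from $A$ to a genuine well-order on $A$ with $\dom[R^\pm]=A$ — this is exactly what fails for $A$ of size $\le 1$, which is why those cases are handled separately; after that, the bound $\rank(R)\subseteq\w$ is the essential computation, and everything else is bookkeeping. As an alternative route avoiding the underlying-class subtlety, one could instead use the Recursion Theorem to enumerate $A$ in increasing order by $G(n)=\min(A\setminus G[n])$ and show $G\colon\w\to A$ is a bijection whenever $A$ is infinite, contradicting $|x|\ne|\w|$; I expect the rank argument to be shorter and less computational.
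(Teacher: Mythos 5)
Your proof is correct. It follows the same overall strategy as the paper — use the injection into $\w$ to well-order the set, apply Theorem~\ref{t:wOrd} to identify it order-isomorphically with an ordinal, and then show that ordinal lies in $\w\cup\{\w\}$ — but the mechanism in the last step is genuinely different. The paper takes the \emph{smallest} ordinal $\alpha$ with $|x|=|\alpha|$ and rules out $\alpha\ge\w$ by contradiction: from $\w\le\alpha$ it gets $|\w|\le|x|\le|\w|$ and must invoke the Cantor--Bernstein--Schr\"oder theorem to conclude $|x|=|\w|$. You instead bound the ordinal \emph{a priori}: the inclusion $L\colon A\to\Ord$, $L(a)=a$, is $R$-to-$\E$-increasing for $R=\E{\restriction}A$, so Theorem~\ref{t:rank}(2) gives $\rank_R(a)\le a<\w$ pointwise, hence $\rank(R)\subseteq\w$ and the order type is some $\beta\le\w$ with no appeal to Cantor--Bernstein at all; the hypothesis $|x|\ne|\w|$ then excludes $\beta=\w$ directly. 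This buys you a slightly more self-contained argument (it uses only the rank machinery already needed for Theorem~\ref{t:wOrd}), at the cost of some bookkeeping: you must pass to the image $A=f[x]$, verify that $\dom[R^{\pm}]=A$, and split off the cases $|A|\le 1$ where the underlying class of $\E{\restriction}A$ degenerates — a subtlety the paper's proof never has to confront because it well-orders $x$ abstractly rather than via the concrete membership relation on a subset of $\w$. Both routes are sound; yours trades one classical theorem for a small amount of case analysis.
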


\begin{proof} The inequality $|x|<|\w|$ implies that $x$ admits an injective function $x\to\w$ and hence is well-orderable. By Theorem~\ref{t:wOrd}, there exists a bijective function $f:x\to\alpha$ to some ordinal $\alpha$. We can assume that this ordinal is the smallest possible.  We claim that $\alpha<\w$. Assuming that $\alpha\not<\w$ and applying Theorem~\ref{t:ord}(6), we conclude that $\w\le\alpha$ and hence $|\w|\le|\alpha|=|x|$. Since $|x|\le|\w|$, we can apply Theorem~\ref{t:CBDS} and conclude that $|x|=|\w|$, which contradicts our assumption. This contradiction shows that $\alpha<\w$ and then $|x|=|\alpha|\in\w$, which means that the set $x$ is finite.
\end{proof}  

\begin{corollary}\label{c:count=lew} A set $x$ is countable if and only if $|x|\le|\w|$.
\end{corollary}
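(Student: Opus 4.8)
The plan is to unpack the definition of countability and treat the two implications separately, in each case reducing to an inclusion of ordinals or to Proposition~\ref{p:lew}. Recall that $x$ is countable means $|x|=|\alpha|$ for some $\alpha\in\w\cup\{\w\}$, and that $|x|\le|\w|$ means there is an injective function $x\to\w$.

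For the forward implication, suppose $x$ is countable, so that $|x|=|\alpha|$ for some ordinal $\alpha\in\w\cup\{\w\}$. In either case $\alpha\subseteq\w$: if $\alpha=\w$ this is trivial, and if $\alpha\in\w$ then $\alpha\subseteq\w$ because $\w$ is transitive (Theorem~\ref{t:wTr}). Hence the inclusion of $\alpha$ into $\w$ is an injective function, witnessing $|\alpha|\le|\w|$; composing a bijection $x\to\alpha$ with this inclusion gives an injection $x\to\w$, i.e. $|x|=|\alpha|\le|\w|$.

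For the reverse implication, suppose $|x|\le|\w|$. I would split into two cases according to whether $|x|=|\w|$. If $|x|=|\w|$, then since $\w\in\w\cup\{\w\}$ the set $x$ is countable by definition. If $|x|\ne|\w|$, then $|x|<|\w|$ by the definition of the strict inequality, and Proposition~\ref{p:lew} then yields that $x$ is finite, so $|x|=|\alpha|$ for some $\alpha\in\w\subseteq\w\cup\{\w\}$; again $x$ is countable.

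There is essentially no hard step here: the only nontrivial input is Proposition~\ref{p:lew}, which already does the work of turning the strict inequality $|x|<|\w|$ into finiteness. In particular I would not need the Cantor--Bernstein--Schr\"oder Theorem~\ref{t:CBDS}, since the dichotomy ``$|x|=|\w|$ versus $|x|\ne|\w|$'' is immediate, and the passage from $|x|\ne|\w|$ to $|x|<|\w|$ is just the definition of $<$ on cardinalities.
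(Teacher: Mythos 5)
Your proof is correct and is exactly the derivation the paper intends: the corollary is stated without proof as an immediate consequence of Proposition~\ref{p:lew}, and your two-case argument (using transitivity of $\w$ for the forward direction, and the definition of $<$ plus Proposition~\ref{p:lew} for the reverse) is the canonical way to fill it in. Your observation that Theorem~\ref{t:CBDS} is not needed is also right, since the paper defines $|x|<|\w|$ as $|x|\le|\w|$ together with $|x|\ne|\w|$, so the dichotomy is immediate.
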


\begin{proposition}\label{p:Df} For a set $x$ the following conditions are equivalent:
\begin{enumerate}
\item[\textup{1)}] $|\w|\le|x|$;
\item[\textup{2)}] there exists an injective function $f:x\to x$ which is not surjective.
\end{enumerate}
\end{proposition}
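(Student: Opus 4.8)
The plan is to prove the two implications separately, in both cases exploiting the iteration machinery of Theorem~\ref{t:iterate} (equivalently Corollary~\ref{c:recursion3}) to manufacture the required injections.

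For $(1)\Rightarrow(2)$, I would start from an injective function $g\colon\w\to x$ witnessing $|\w|\le|x|$, and set $A=g[\w]\subseteq x$. Since $g$ is injective it is a bijection onto $A$, so I can define $f\colon x\to x$ by letting $f$ be the identity on $x\setminus A$ and the ``forward shift'' $f(g(n))=g(n+1)$ on $A$; concretely $f=(\Id{\restriction}_{x\setminus A})\cup\{\langle g(n),g(n+1)\rangle:n\in\w\}$, which exists by Theorem~\ref{t:class}. Injectivity of $f$ is routine: on $x\setminus A$ it is the identity, on $A$ it is injective because $g$ is injective and $n\mapsto n+1$ is injective, and the two image pieces $f[x\setminus A]=x\setminus A$ and $f[A]\subseteq A$ are disjoint. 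Finally $f$ is not surjective because $g(0)$ is omitted: it lies in $A$ (so it is not in $f[x\setminus A]$) and it is not of the form $g(n+1)$ since $g$ is injective and $0\ne n+1$ for every $n\in\w$.

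For $(2)\Rightarrow(1)$, I would fix an injective non-surjective $f\colon x\to x$ and choose $a\in x\setminus f[x]$. Applying Corollary~\ref{c:recursion3} to the function $F\colon\w\times\UU\to\UU$ with $F(n,y)=f(y)$ for $y\in x$ (and $F(n,y)=\emptyset$ otherwise), I obtain the sequence $h=(h(n))_{n\in\w}$ with $h(0)=a$ and $h(n+1)=f(h(n))$; an easy induction using $f\colon x\to x$ shows $h(n)\in x$ for all $n$, so $h\colon\w\to x$ is a genuine function (equivalently $h(n)=f^{\circ n}(a)$ via Theorem~\ref{t:iterate}). The substantive point, which I expect to be the main obstacle, is injectivity of $h$, and I would prove it by a minimal-counterexample argument against the well-order $\E{\restriction}\w$. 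Suppose $h$ is not injective and let $n$ be the least element of $\w$ admitting some $m\in n$ with $h(m)=h(n)$. If $m=0$ then $h(n)=a\notin f[x]$, yet $n\ge 1$ forces $n=k+1$ and $h(n)=f(h(k))\in f[x]$, a contradiction. If $m>0$, write $m=j+1$, $n=k+1$ with $j<k<n$; then $f(h(j))=h(m)=h(n)=f(h(k))$, so injectivity of $f$ gives $h(j)=h(k)$ with $j<k<n$, contradicting the minimality of $n$. Hence $h$ is injective and $|\w|\le|x|$.

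The only care needed is bookkeeping: verifying that the piecewise function in the first implication is well-defined as a class (citing Theorem~\ref{t:class}) and that the recursively defined $h$ indeed lands in $x$; both are short inductions. The conceptual heart is entirely the injectivity of the iterate sequence $f^{\circ n}(a)$, where the element $a\notin f[x]$ serves to break any potential cycle.
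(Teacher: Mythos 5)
Your proof is correct and follows essentially the same route as the paper's: the forward-shift construction $f(g(n))=g(n+1)$ (identity off $g[\w]$) for $(1)\Rightarrow(2)$, and the orbit $a,f(a),f^{\circ 2}(a),\dots$ of an element $a\notin f[x]$ for $(2)\Rightarrow(1)$. The only cosmetic difference is that you establish injectivity of the orbit by a least-counterexample argument where the paper runs a direct induction on $k$ showing $g(k)\ne g(n)$ for all $n>k$; both hinge on the same two facts, namely that $a\notin f[x]$ breaks the base case and that injectivity of $f$ pushes any collision back one step.
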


\begin{proof} $(1)\Ra(2)$ If $|\w|\le|x|$, then there exists an injective function $g:\w\to x$. 
Define a function $f:x\to x$ by the formula
$$f(z)=\begin{cases}
g(g^{-1}(z)+1)&\mbox{if $z\in g[\w]$};\\
z&\mbox{if $z\in x\setminus g[\w]$};
\end{cases}
$$and observe that $f$ is injective but $g(0)\notin f[x]$, so $f$ is not surjective.
\smallskip

$(2)\Ra(1)$ Assume that there exists an injective function $f:x\to x$ which is not surjective. Choose any element $x_0\in x\setminus f[x]$ and consider the sequence $(x_n)_{n\in\w}$  defined by the recursive formula $x_{n+1}=f(x_n)$ for $n\in\w$. We claim that the function $g:\w\to x$, $g:n\mapsto x_n$ is injective. This will follow from Theorem~\ref{t:ord}(6) as soon as we check that $g(k)\ne g(n)$ for any natural numbers $k<n$. This will be proved by induction on $k\in\w$. For $k=0$ this follows from the choice of $x_0\notin f[x]$. Assume that for some $k\in\w$ we have proved that $g(k)\ne g(n)$ for all $n>k$. Take any natural number $n>k+1$. By Theorem~\ref{t:add-ord}(5), there exists an ordinal $\alpha>0$ such that $(k+1)+\alpha=n$. Theorem~\ref{t:add-ord}(3) implies that $\alpha=0+\alpha\le (k+1)+\alpha=n$ and hence $\alpha\in\w$. By Theorem~\ref{t:add-nat}, $n=(k+1)+\alpha=(k+\alpha)+1$. By Theorem~\ref{t:add-ord}(4), $k=k+0<k+\alpha$ and then $g(k)\ne g(k+\alpha)$ by the inductive assumption. The injectivity of $f$ guarantees that $g(k+1)=x_{k+1}=f(x_k)=f(g(k))\ne f(g(k+\alpha))=g(k+\alpha+1)=g(n)$. This completes the proof of the inductive step. Now the Principle of Mathematical Induction implies that $g(k)\ne g(n)$ for all natural numbers $k<n$. Finally, Theorem~\ref{t:ord}(6) implies that the function $g:\w\to x$ is injective and hence $|\w|\le|x|$.
\end{proof}

\begin{definition} A set $x$ is called \index{Dedekind finite set}\index{set!Dedekind finite}{\em Dedekind-finite} if every injective function $f:x\to x$ is surjective.
\end{definition}

By Proposition~\ref{p:Df}, a set $x$ is Dedekind-finite if and only if $|\w|\not\le|x|$. By Theorem~\ref{t:inj=sur}, every finite set is Dedekind-finite.

\begin{proposition}\label{p:DC=>Df} Assume that the Axiom of Dependent Choice $(\mathsf{DC})$ holds. A set $x$ is Dedekind-finite if and only if it is finite.
\end{proposition}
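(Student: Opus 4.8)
The plan is to prove the two implications separately, noting that only one of them requires $(\mathsf{DC})$. The implication ``finite $\Rightarrow$ Dedekind-finite'' holds unconditionally: it was already recorded after Proposition~\ref{p:Df} that every finite set is Dedekind-finite, by Theorem~\ref{t:inj=sur}. So the whole work goes into the reverse implication, which I would phrase contrapositively: if $x$ is not finite, then $x$ is not Dedekind-finite. By Proposition~\ref{p:Df} a set $x$ is Dedekind-finite exactly when $|\w|\not\le|x|$, so it suffices to produce, for every infinite set $x$, an injective function $g\colon\w\to x$; this is where $(\mathsf{DC})$ enters.

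First I would set up the relation to which $(\mathsf{DC})$ is applied. Let $S$ be the set of all injective functions $s$ with $\dom[s]\in\w$ and $\rng[s]\subseteq x$; this is a set, being a subclass of $\bigcup_{n\in\w}x^n$. On $S$ define the ``one-step extension'' relation
$$r=\{\langle s,t\rangle\in S\times S:\dom[t]=\dom[s]+1\;\wedge\;s\subseteq t\}.$$
The key point is that $\dom[r]=\dom[r^\pm]$: given $s\in S$, its range $\rng[s]$ is finite (being the image of a finite ordinal under an injection), so the infinitude of $x$ forces $x\setminus\rng[s]\ne\emptyset$ (otherwise $x=\rng[s]$ would be finite); picking any $z\in x\setminus\rng[s]$, the function $t=s\cup\{\langle\dom[s],z\rangle\}$ lies in $S$ and satisfies $\langle s,t\rangle\in r$. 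Thus every element of $S$ has an $r$-successor, and since $\rng[r]\subseteq S=\dom[r]$ we get $\dom[r^\pm]=\dom[r]$.

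Now I would apply $(\mathsf{DC})$ to obtain a function $f\colon\w\to S$ with $\langle f(n),f(n+1)\rangle\in r$ for all $n\in\w$. By the definition of $r$ and induction, $f(n)\subseteq f(m)$ whenever $n\le m$, and $\dom[f(n)]=\dom[f(0)]+n$, so the union $g=\bigcup_{n\in\w}f(n)$ is a function with $\dom[g]=\bigcup_{n\in\w}\dom[f(n)]=\w$ and $\rng[g]\subseteq x$. Since the $f(n)$ are injective and nested, $g$ is injective, witnessing $|\w|\le|x|$. By Proposition~\ref{p:Df} this means $x$ admits an injective non-surjective self-map, i.e.\ $x$ is not Dedekind-finite, completing the contrapositive.

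The main obstacle is purely in the bookkeeping of the first two steps: choosing the relation $r$ so that the hypothesis $\dom[r]=\dom[r^\pm]$ of $(\mathsf{DC})$ is genuinely met (which is exactly where ``$x$ infinite'' is used, via $\rng[s]$ finite $\Rightarrow x\setminus\rng[s]\ne\emptyset$), and then verifying that the $(\mathsf{DC})$-sequence glues to a single injection of $\w$. Everything after the application of $(\mathsf{DC})$ is the routine verification that a nested union of injective finite sequences is an injective function with domain $\w$.
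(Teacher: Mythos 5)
Your proof is correct, but it takes a slightly different route from the paper's. The paper invokes Proposition~\ref{p:TC=>DC} to replace $(\mathsf{DC})$ by the equivalent tree principle $(\mathsf{TC}_\w)$, forms the ordinary tree $T\subseteq x^{<\w}$ of finite injective sequences in $x$, takes a \emph{maximal} chain $C$, and then uses the maximality of $C$ together with the non-finiteness of $x$ to rule out $\dom[\bigcup C]\in\w$ (a proper finite sequence could be extended, contradicting maximality). You instead apply $(\mathsf{DC})$ directly to the one-step extension relation $r$ on the set $S$ of finite injective sequences; the burden then shifts to verifying the hypothesis $\dom[r]=\dom[r^\pm]$, which is exactly where the infinitude of $x$ enters (every $s\in S$ has finite range, so $x\setminus\rng[s]\ne\emptyset$ and $s$ extends). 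Note that no choice is needed to define $r$ or to establish $\dom[r]=S$ — that is an existential statement, and $(\mathsf{DC})$ does all the choosing — so your argument is sound. Your version is more self-contained (it does not lean on the equivalence $(\mathsf{DC})\Leftrightarrow(\mathsf{TC}_\w)$), at the cost of the small bookkeeping check on $\dom[r^\pm]$ and of handling the fact that $f(0)$ need not be the empty sequence (which you do correctly via $\dom[f(n)]=\dom[f(0)]+n$ and $\bigcup_{n\in\w}(\dom[f(0)]+n)=\w$). The paper's version reuses already-established machinery and gets the ``start from the bottom of the tree'' feature for free from maximality. Both arguments are essentially the same construction of a nested sequence of finite injections whose union witnesses $|\w|\le|x|$, combined with Proposition~\ref{p:Df}.
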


\begin{proof} The ``only if'' part follows from Theorem~\ref{t:inj=sur}. To prove the ``if'' part, assume that a set $x$ is not finite. Consider the ordinary tree $T\subseteq x^{<\w}$ whose elements are injective functions $f$ with $\dom[f]\in\w$ and $\rng[f]\subseteq x$. By  Proposition~\ref{p:TC=>DC}, the $(\mathsf{DC})$ is equivalent to $(\mathsf{TC}_\w)$ and hence the tree $T$ contains a maximal chain $C\subseteq T$. It follows that $f=\bigcup C$ is an injective function such that $\dom[f]\in\w\cup\{\w\}$ and $\rng[f]\subseteq x$. We claim that $\dom[f]=\w$. To derive a contradiction, assume that $\dom[f]=n\in\w$. Since $x$ is not finite, the injecive function $f:n\to X$ is not surjective. Consequently, we can find an element $z\in x\setminus f[n]$ and consider the function $\bar f=f\cup\{\langle n,z\rangle\}\in T$ and the chain $\bar C=C\cup\{\bar f\}$, which is strictly larger than $C$. But this contradicts the maximality of $C$. This contradiction shows that $\dom[f]=\w$ and hence $f:\w\to x$ is an injective function witnessing that $|\w|\le|x|$ and hence $x$ is not Dedekind-finite.
\end{proof}

Theorem~\ref{t:comp-n} and Propositions~\ref{p:lew}, \ref{p:Df} imply

\begin{corollary} If a set $x$ is Dedekind-finite but not finite, then 
\begin{enumerate}
\item[\textup{1)}] $|n|\le|x|$ for all $n\in\w$;
\item[\textup{2)}] $|\w|\not\le|x|$;
\item[\textup{3)}] $|x|\not\le|\w|$.
\end{enumerate}
\end{corollary}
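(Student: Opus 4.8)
The plan is to read off all three conclusions from Propositions~\ref{p:lew} and \ref{p:Df} and Theorem~\ref{t:comp-n}, proving them in the order (2), (3), (1), since each later part will lean on the earlier ones. The one genuinely logical point to get right is the translation of the hypothesis: by definition $x$ being Dedekind-finite means that \emph{every} injective function $f\colon x\to x$ is surjective, i.e. there is \emph{no} injective non-surjective self-map of $x$. This is exactly the negation of condition~(2) of Proposition~\ref{p:Df}, so by the equivalence there it is also the negation of condition~(1), giving $|\w|\not\le|x|$. That disposes of part~(2) immediately, and it is really the only place where the hypothesis is used.

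For part~(3) I would argue by contradiction. Suppose $|x|\le|\w|$. If moreover $|x|=|\w|$ then $|\w|\le|x|$, contradicting part~(2); hence $|x|\ne|\w|$ and therefore $|x|<|\w|$. Proposition~\ref{p:lew} then forces $x$ to be finite, contradicting the standing assumption that $x$ is not finite. So $|x|\not\le|\w|$.

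Part~(1) I would also handle by contradiction, using the dichotomy of Theorem~\ref{t:comp-n}. Fix $n\in\w$; the theorem gives either $|n|\le|x|$ (which is what we want) or $|x|\le|n|$. In the second case, since $\w$ is a transitive set (Theorem~\ref{t:wTr}) we have $n\subseteq\w$, so the inclusion is an injective function witnessing $|n|\le|\w|$; composing injections yields $|x|\le|n|\le|\w|$, i.e. $|x|\le|\w|$, which contradicts part~(3). Hence $|n|\le|x|$ for every $n\in\w$.

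The argument is short and involves no real obstacle beyond bookkeeping; the only thing to be careful about is the dependency structure, namely that part~(1) uses part~(3), which in turn uses part~(2), so the three items must be proved in that order rather than in the order listed.
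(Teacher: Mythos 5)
Your proposal is correct and follows exactly the route the paper intends: the corollary is stated there without proof as an immediate consequence of Theorem~\ref{t:comp-n} and Propositions~\ref{p:lew} and \ref{p:Df}, and your argument simply fills in the straightforward details of that derivation, including the correct translation of Dedekind-finiteness as the negation of condition~(2) of Proposition~\ref{p:Df}. The ordering (2), (3), (1) and the dependency chain you describe are the natural way to assemble these ingredients.
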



\begin{remark} The existence of infinite Dedekind-finite sets does not contradicts the Axioms of $\CST$, see \cite[\S4.6]{JechAC} for the proof of this fact. On the other hand, such sets do not exist under the axioms $(\CST+\mathsf{DC})$, see \ref{p:DC=>Df}.
\end{remark}

Next, we show that the countability is preserved by some operations over sets.

\begin{proposition} For any function $F$ and a countable set $x$ the image $F[x]$ is a countable set.
\end{proposition}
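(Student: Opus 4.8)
The statement to prove is: for any function $F$ and countable set $x$, the image $F[x]$ is countable. Since $x$ is countable, by definition (and Corollary~\ref{c:count=lew}) we have $|x|\le|\w|$, so there is an injective function $h:x\to\w$. My plan is to produce a well-ordering of $F[x]$ by transporting the natural order on $\w$ back through $h$ and then picking canonical preimages, and from this well-order extract an injection $F[x]\to\w$, whence $|F[x]|\le|\w|$ gives countability by Corollary~\ref{c:count=lew} again.

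First I would note that $F[x]=\{F(y):y\in h^{-1}[\rng[h]]\}$ is a set by the Axiom of Replacement (it is $F[x]$ for the set $x$). The key construction is a function $g:F[x]\to\w$ defined by
$$g(z)=\min{}_{\E}\{h(y):y\in x\;\wedge\;F(y)=z\},$$
that is, $g(z)$ is the $\E$-least natural number in the nonempty set of $h$-values of preimages of $z$ in $x$. This set is nonempty for each $z\in F[x]$ (by definition of the image there exists $y\in x$ with $F(y)=z$), it is a subset of $\w$, and hence it is a nonempty subset of the ordinal $\w$; since $\E{\restriction}\Ord$ is an irreflexive well-order by Theorem~\ref{t:Ord}(2), such a set has a unique $\E$-least element, so $g$ is well-defined. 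Using G\"odel's class existence Theorem~\ref{t:class} one checks $g$ is a genuine function with $\dom[g]=F[x]$.

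Next I would verify that $g$ is injective. Suppose $z,z'\in F[x]$ with $g(z)=g(z')=n$. Then $n=h(y)=h(y')$ for some $y,y'\in x$ with $F(y)=z$ and $F(y')=z'$. Since $h$ is injective, $h(y)=h(y')$ forces $y=y'$, and then $z=F(y)=F(y')=z'$. Hence $g$ is an injective function from $F[x]$ into $\w$, which witnesses $|F[x]|\le|\w|$. By Corollary~\ref{c:count=lew}, the set $F[x]$ is countable, completing the proof.

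The main obstacle — really the only delicate point — is confirming that $g$ is legitimately a function (a set-theoretic object) rather than merely an informal assignment; this is where Theorem~\ref{t:class} is invoked to express $g$ via a $\UU$-bounded formula, and where one uses that each fibre $\{h(y):y\in x,\;F(y)=z\}$ is a nonempty subset of $\w$ so that the $\min_{\E}$ operation is everywhere defined and single-valued. A pleasant feature of this argument is that it uses no form of the Axiom of Choice: the well-order of $\w$ supplies a canonical choice of preimage-code for each $z$, so no choice function on $F[x]$ is needed. One should also remark that if $x=\emptyset$ then $F[x]=\emptyset$ is trivially finite and hence countable, so the construction above may freely assume $x$, and thus $F[x]$, is nonempty.
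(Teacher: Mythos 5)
Your proof is correct and follows essentially the same route as the paper's: both transport $F[x]$ into $\w$ via an injective coding of $x$ by natural numbers and then use the well-order $\E{\restriction}\w$ to select the least code of a preimage for each $z\in F[x]$, yielding an injection $F[x]\to\w$ and hence countability by Corollary~\ref{c:count=lew} (the paper phrases this with $f=F\circ g$ for a bijection $g$ onto $x$ and minimal elements of the fibres $f^{-1}[\{v\}]$, which is your construction up to replacing $h$ by $g^{-1}$). Your remarks that no choice is needed and that $g$ exists by Theorem~\ref{t:class} match the paper's intent.
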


\begin{proof} Let $y=F[x]$. Since $x$ is countable, there exists an injective function $g$ such that $\dom[g]\in\w\cup\{\w\}$ and $\rng[g]=x$. Then $f=F\circ g$ is a  function such that $y=\rng[f]$ and $\dom[f]\subseteq\w$. Consider the function $h:y\to\w$ assigning to each element $v\in y$ the unique $\E$-minimal element of the set $f^{-1}[\{v\}]\subseteq\w$. It is easy to see that the function $h:y\to\w$ is injective and hence $|y|\le|\w|$.   By Proposition~\ref{p:lew}, the set $y$ is countable.
\end{proof}

\begin{corollary} If $x$ is a countable set, then each subset of $x$ is countable.
\end{corollary}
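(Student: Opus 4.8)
The final statement is the corollary: if $x$ is a countable set, then each subset of $x$ is countable.

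The plan is to reduce this immediately to the proposition that precedes it (images of countable sets under functions are countable), which I am entitled to assume. First I would observe that a subset $y\subseteq x$ is precisely the image of $x$ under a suitable function, so that the preceding proposition applies directly. The cleanest witness is the restricted identity: let $F=\Id{\restriction}y=\Id\cap(y\times\UU)$, which is a function whose domain is $y$. However, to apply the proposition I need $y$ to appear as $F[x]$ for a function $F$ whose domain contains $x$, so a slightly better choice is a function defined on all of $x$ that collapses $y$ onto itself.

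The concrete step I would carry out is the following. Since $y\subseteq x$, consider the function $F:x\to\UU$ defined by $F(z)=z$ for every $z\in x$; formally $F=\Id{\restriction}_x=\Id\cap(x\times\UU)$, which exists by Exercise~\ref{ex:Id} and the Axiom of Product, and is a function with $\dom[F]=x$. Then $F[x]=\{F(z):z\in x\}=x$, which is not quite what I want. To isolate $y$ I instead fix any element $b\in x$ (if $x=\emptyset$ then $y=\emptyset$ is trivially countable, so assume $x\ne\emptyset$) and define the retraction
$$
F:x\to x,\qquad F(z)=\begin{cases} z&\text{if }z\in y;\\ b'&\text{if }z\in x\setminus y,\end{cases}
$$
where $b'$ is any fixed element of $y$ when $y\ne\emptyset$; again if $y=\emptyset$ the empty set is countable and there is nothing to prove. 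With $y\ne\emptyset$ this $F$ is a well-defined function (its existence follows from G\"odel's class existence Theorem~\ref{t:class}) satisfying $F[x]=y$.

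Having exhibited $y$ as $F[x]$ for a function $F$ and the countable set $x$, I would invoke the preceding proposition to conclude that $y=F[x]$ is countable, which completes the proof. The only obstacle worth flagging is the degenerate bookkeeping: one must dispose of the cases $x=\emptyset$ and $y=\emptyset$ separately so that the fixed elements $b'$ can be chosen, but in both degenerate cases $y$ is empty and hence countable (via the empty injection $\emptyset:\emptyset\to\w$). Once these cases are set aside, the argument is an immediate application of the image proposition and requires no further calculation.
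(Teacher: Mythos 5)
Your proof is correct, but it takes a different route from the paper's. You reduce the corollary to the preceding proposition (images of countable sets under functions are countable) by building a retraction $F:x\to y$ that fixes $y$ pointwise and sends $x\setminus y$ to a chosen point $b'\in y$, then note $F[x]=y$; this works, and your handling of the degenerate cases $x=\emptyset$ and $y=\emptyset$ is the right bookkeeping (the element $b$ of $x$ you fix first is actually never used -- only $b'\in y$ matters). The paper instead argues directly with injections: the identity embedding gives $|y|\le|x|$, countability of $x$ gives $|x|\le|\w|$, hence $|y|\le|\w|$, and Corollary~\ref{c:count=lew} (equivalently Proposition~\ref{p:lew}) finishes. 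The paper's argument is shorter and avoids both the case analysis and the choice of $b'$; yours buys a uniform treatment of "subset" as a special case of "image," at the cost of routing through the image proposition, whose own proof already does the work of extracting an injection $y\to\w$ from a surjection. Both are legitimate one-step deductions from results already established.
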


\begin{proof} Observe that each subset $y\subseteq x$ has $|y|\le|x|\le|\w|$. Applying Proposition~\ref{p:lew}, we obtain that $|y|=|n|$ for some $n\in\w\cup\{\w\}$.
\end{proof}

\begin{proposition} The set $\w^{<\w}=\bigcup_{n\in\w}\w^n$ is countable.
\end{proposition}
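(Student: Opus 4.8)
The plan is to produce a single explicit injection $\w^{<\w}\to\w$ and then conclude with Corollary~\ref{c:count=lew}. The first ingredient is a pairing injection $p\colon\w\times\w\to\w$, for which I would take $p\colon\langle m,n\rangle\mapsto 2^{\cdot m}\cdot(2\cdot n+1)$. By Theorems~\ref{t:add-nat} and \ref{t:mult-nat}, together with the fact (proved earlier) that $n^{\cdot k}\in\w$ for naturals $n,k$, this is a function into $\w$. Its injectivity is a parity computation: if $2^{\cdot m}(2\cdot n+1)=2^{\cdot m'}(2\cdot n'+1)$ with, say, $m\le m'$, write $m'=m+d$, so $2^{\cdot m'}=2^{\cdot m}\cdot 2^{\cdot d}$ by Theorem~\ref{t:exp-ord}(2); left-cancelling the nonzero factor $2^{\cdot m}$ (Theorem~\ref{t:mult-ord}(5)) gives $2\cdot n+1=2^{\cdot d}(2\cdot n'+1)$. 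The left side is odd while the right side is even as soon as $d>0$, forcing $d=0$, hence $m=m'$ and then $n=n'$. This already witnesses $|\w\times\w|\le|\w|$, so $\w\times\w$ is countable.

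Next I would lift $p$ to finite sequences. Using the Recursion Theorem in the form of Theorem~\ref{t:recursion} applied to the set-like well-founded relation ``proper initial segment'' on $\w^{<\w}$ (equivalently, by ordinary recursion on the length $\dom[f]\in\w$ via Theorem~\ref{t:recursion2}), I define $c\colon\w^{<\w}\to\w$ by $c(\emptyset)=0$ and, for $f$ with $\dom[f]=m+1$, by $c(f)=1+p\big(f(m),c(f{\restriction}_m)\big)$. I then prove $c$ injective by induction on $\dom[f]\cup\dom[g]$. Suppose $c(f)=c(g)$. The value $0$ is attained only at $\emptyset$, so either both $f,g$ are empty (hence equal) or both are nonempty; in the latter case the injectivity of $p$ yields simultaneously that the last entries agree and that $c(f{\restriction}_m)=c(g{\restriction}_{m'})$, whence the inductive hypothesis gives $f{\restriction}_m=g{\restriction}_{m'}$. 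In particular the domains match and $f=g$.

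Finally, the existence of the injection $c$ gives $|\w^{<\w}|\le|\w|$, so $\w^{<\w}$ is countable by Corollary~\ref{c:count=lew}. (Since $k\mapsto\{\langle 0,k\rangle\}$ embeds $\w$ into $\w^{<\w}$, one also has $|\w|\le|\w^{<\w}|$, and Theorem~\ref{t:CBDS} even yields $|\w^{<\w}|=|\w|$.) The main obstacle is organizational rather than conceptual: setting up the recursion so that $c$ is genuinely a function on the whole tree $\w^{<\w}$, and carrying the injectivity induction cleanly. The only genuinely arithmetic content is the parity lemma behind the injectivity of $p$ -- that a product of $2^{\cdot m}$ with an odd number determines both factors -- which requires a short separate induction on the arithmetic of $\w$ (that no number of the form $2\cdot n+1$ is of the form $2\cdot k$). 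I would emphasize that the whole argument is choice-free: no selection is made over the individual sets $\w^n$, the single function $c$ being defined uniformly, so the countability of this particular countable union of countable sets needs no appeal to $(\mathsf{AC}_\w)$.
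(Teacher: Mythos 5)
Your proof is correct, but it follows a genuinely different route from the paper's. The paper defines the function $\mu\colon f\mapsto\max\{\dom[f],\max\rng[f]\}$ and uses it to build an explicit well-order $W$ on $\w^{<\w}$ (first by $\mu$, then by length, then lexicographically) whose initial intervals are all contained in finite sets of the form $k^k$; it then invokes the rank machinery of Theorem~\ref{t:wOrd} to conclude that $\rank_W$ is a bijection onto an ordinal $\le\w$. You instead code sequences arithmetically: a pairing injection $p(m,n)=2^{\cdot m}\cdot(2\cdot n+1)$ lifted by recursion on length to an injection $c\colon\w^{<\w}\to\w$, finishing with Corollary~\ref{c:count=lew}. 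Both arguments are choice-free, as you rightly stress. What the paper's approach buys is reuse and uniformity: the same ``bound-then-lexicographic'' well-order trick recurs in the proofs of the Countable Union Theorem (Proposition~\ref{p:UT}) and of $\aleph_\alpha\cdot\aleph_\alpha=\aleph_\alpha$ (Theorem~\ref{t:aleph-times}), so no new arithmetic is needed beyond what Theorem~\ref{t:wOrd} already provides. What your approach buys is concreteness -- an explicit injection, and even an explicit bijection after Theorem~\ref{t:CBDS} -- at the cost of a small amount of extra number theory (the parity lemma that no $2\cdot n+1$ equals a $2\cdot k$, plus the cancellation and exponent laws from Theorems~\ref{t:mult-ord} and \ref{t:exp-ord}) and the organizational care you already flag in setting up the recursion on $\w^{<\w}$; both of these are routine and your sketch of them is sound.
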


\begin{proof} Observe that the set $\w^{<\w}$ consists of all functions $f$ such that $\dom[f]\in\w$ and $\rng[f]\subseteq\w$. It follows that the set $\rng[f]$ is finite and hence has an $\E$-maximal element $\max\rng[f]\in\w$, see Exercise~\ref{ex:fin-max}. Then the function $\mu:\w^{<\w}\to\w$, $\mu:f\mapsto \max\{\dom[f],\max\rng[f]\}$, is well-defined.

On the set $\w^{<\w}$ consider the well-order
\begin{multline*}
W=\{\langle f,g\rangle\in\w^{<\w}\times\w^{<\w}:\mu(f)<\mu(g)\;\vee\;(\mu(f)=\mu(g)\wedge\;\dom[f]<\dom[g])\;\vee\;\\
(\mu(f)=\mu(g)\;\wedge\;\dom[f]=\dom[g]\;\wedge\;\exists n\in\dom[f]=\dom[g]\;(f(n)<g(n)\;\wedge\;\forall i\in n\;f(i)=g(i))\}.
\end{multline*}
By Theorem~\ref{t:wOrd}, the function $\rank_W:\w^{<\w}\to \rank(W)$ is bijective.
Observe that for any $f\in\w^{<\w}$ the initial interval $\cev W(f)$ is contained in the finite set $k^k$ for some natural number $k\in\w$. This implies that $\rank(W)\le\w$. Then $|\w^{<\w}|=|\rank(W)|\le|\w|$ and the set $\w^{<\w}$ is countable.
\end{proof}

\begin{corollary}\label{c:exp-w} For any countable set $x$ the set $x^{<\w}$ is countable.
\end{corollary}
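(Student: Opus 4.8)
The final statement is Corollary~\ref{c:exp-w}: for any countable set $x$, the set $x^{<\w}=\bigcup_{n\in\w}x^n$ is countable. The plan is to reduce this to the already-established countability of $\w^{<\w}$ by transporting a bijection (or injection) $x\to\w$ up to the level of finite sequences.

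First I would use that $x$ is countable to fix an injective function $g:x\to\w$ (this exists by Corollary~\ref{c:count=lew}, since $|x|\le|\w|$). The goal is to build from $g$ an injective function $G:x^{<\w}\to\w^{<\w}$, so that $|x^{<\w}|\le|\w^{<\w}|\le|\w|$ and then invoke Proposition~\ref{p:lew} (or Corollary~\ref{c:count=lew}) to conclude that $x^{<\w}$ is countable. The natural candidate is postcomposition: for a finite sequence $f\in x^{<\w}$ with $\dom[f]=n\in\w$ and $\rng[f]\subseteq x$, set $G(f)=g\circ f$, which is a function with $\dom[g\circ f]=n$ and $\rng[g\circ f]\subseteq\w$, hence $g\circ f\in\w^{<\w}$.

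The key step is to verify that $G:f\mapsto g\circ f$ is injective. Suppose $G(f)=G(f')$, i.e.\ $g\circ f=g\circ f'$. Then these two functions have the same domain, so $\dom[f]=\dom[f']=n$ for some $n\in\w$; and for every $k\in n$ we have $g(f(k))=g(f'(k))$, whence $f(k)=f'(k)$ by the injectivity of $g$. By the Axiom of Extensionality this gives $f=f'$. I would also note that $G$ exists as a class (and is a function) by the G\"odel class existence Theorem~\ref{t:class}, being defined by a $\UU$-bounded formula expressing $z=g\circ f$; and that $x^{<\w}$ is a genuine set, e.g.\ as a subclass of $\mathcal P(\w\times x)$, so that the cardinality comparison $|x^{<\w}|\le|\w^{<\w}|$ is legitimate.

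I do not expect a serious obstacle here: once $g$ is fixed, the whole argument is the injectivity check above, which is immediate from injectivity of $g$ plus Extensionality. The only point requiring mild care is the bookkeeping that $g\circ f$ really lands in $\w^{<\w}$ (finite domain, range in $\w$) and that $G$ is a legitimate function-class rather than an informal assignment; both are handled by the observations just mentioned. With $|x^{<\w}|\le|\w^{<\w}|$ in hand, the countability of $\w^{<\w}$ from the preceding proposition finishes the proof.
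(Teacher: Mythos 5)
Your proof is correct and is exactly the intended argument: the paper states this as an immediate corollary of the countability of $\w^{<\w}$ without writing out a proof, and the natural route is precisely the one you take — fix an injection $g:x\to\w$ and transport it to an injection $f\mapsto g\circ f$ of $x^{<\w}$ into $\w^{<\w}$, then conclude via Corollary~\ref{c:count=lew}. Your injectivity check and the remarks on the legitimacy of $G$ as a function-class are all in order.
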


\begin{exercise} Prove that for any countable ordinals $\alpha,\beta$ the ordinals $\alpha+\beta$, $\alpha\cdot \beta$ and $\alpha^{\cdot\beta}$ are countable.
\end{exercise}

\begin{proposition}\label{p:f-union} For any natural number $n$ and an indexed family of countable sets $(x_i)_{i\in n}$ the union $\bigcup_{i\in n}x_i$ is countable.
\end{proposition}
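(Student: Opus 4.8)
The plan is to argue by induction on the natural number $n$ (via the Principle of Mathematical Induction), after isolating the key reduction that it suffices to treat the union of \emph{two} countable sets. Throughout I would use the characterization from Corollary~\ref{c:count=lew}: a set is countable precisely when it admits an injective function into $\w$.

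For the base case $n=0$ the index set is empty, so $\bigcup_{i\in 0}x_i=\bigcup\emptyset=\emptyset$, which is finite and hence countable. For the inductive step, suppose the claim holds for some $n\in\w$ and let $(x_i)_{i\in n+1}$ be an indexed family of countable sets. Writing $u=\bigcup_{i\in n}x_i$, the induction hypothesis gives that $u$ is countable, and clearly $\bigcup_{i\in n+1}x_i=u\cup x_n$. Thus everything reduces to the lemma that the union of two countable sets is countable.

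To prove this lemma for countable sets $a,b$, I would fix injective functions $f\colon a\to\w$ and $g\colon b\to\w$, which exist by Corollary~\ref{c:count=lew}, and then code $a\cup b$ into $\w$ using even and odd values. Since $a\cup b=a\cup(b\setminus a)$ is a disjoint decomposition, I set
\[
h(z)=\begin{cases} 2\cdot f(z) & \mbox{if } z\in a,\\ 2\cdot g(z)+1 & \mbox{if } z\in b\setminus a.\end{cases}
\]
The values lie in $\w$ by Theorems~\ref{t:mult-nat} and \ref{t:add-nat}; the function $h$ is injective because $f$ and $g$ are injective and no even number $2\cdot f(z)$ equals an odd number $2\cdot g(z)+1$ (a routine parity fact). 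Hence $|a\cup b|\le|\w|$, so $a\cup b$ is countable by Corollary~\ref{c:count=lew}. Applying this with $a=u$ and $b=x_n$ completes the inductive step.

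The statement superficially resembles a union of countably many countable sets, for which some form of the Axiom of Choice (namely $(\mathsf{AC}_\w)$, as exploited in Proposition~\ref{p:UT}) is genuinely needed; so the point I would most want to get right is that here \emph{no} choice principle is required. The reason is that the induction processes only finitely many sets, and at the single inductive step I instantiate just two existential statements — one injection for $u$ and one for $x_n$ — which is a legitimate use of existential instantiation rather than a simultaneous choice over an infinite family. Ensuring that the argument never secretly selects injections for all $n$ sets at once is the only real subtlety; the explicit coding above is otherwise entirely routine.
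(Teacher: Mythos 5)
Your proof is correct and follows essentially the same route as the paper: induction on $n$ with the even/odd coding $h(z)=2\cdot f(z)$ or $2\cdot g(z)+1$ on the disjoint decomposition $u\cup(x_n\setminus u)$ at the inductive step. Isolating the two-set case as a lemma and noting explicitly that no choice principle is invoked are minor presentational additions; the substance matches the paper's argument.
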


\begin{proof} The proof is inductive. For $n=0$ the union $\bigcup_{i\in k}x_i$ is empty and hence countable. Assume that for some $n\in\w$ we proved that the union $\bigcup_{i\in n}x_i$ of any family $(x_i)_{i\in n}$ of countable sets is countable. 

Take any family of countable sets $(x_i)_{i\in n+1}$ and consider its union $x=\bigcup_{i\in n+1}x_i$. Using Theorems~\ref{t:mult-ord-prop}(6) it can be shown that the functions $e:\w\to\w$, $e:n\mapsto 2n$, and $o:\w\to\w$, $o:n\mapsto 2n+1$, are injective and have disjoint images.

By the inductive assumption, the set $\bigcup_{i\in n}x_i$ is countable and hence admits an injective function $f:\bigcup_{i\in n}x_i\to \w$. Since the set $x_n$ is countable there exists an injective function $g:x_n\to\w$. Consider the injective function $h:x\to\w$ defined by the formula
$$h(u)=\begin{cases}2\cdot f(z)&\mbox{if $u\in\bigcup_{i\in n}x_i$};\\
2\cdot g(z)+1&\mbox{if  $u\in x_n\setminus\bigcup_{i\in n}x_i$}.
\end{cases}
$$
The injective function $h$ witnesses that $|x|\le|\w|$. By Corollary~\ref{c:count=lew}, the set $x$ is countable.
\end{proof}

\begin{remark} The axioms of $\CST$ do not imply that the union $\bigcup_{n\in\w}x_n$ of an indexed family $(x_n)_{n\in\w}$ of countable sets is countable. This holds only under the axiom $(\mathsf{UT}_\w)$, which is  weak version of the Axiom of Choice, see Chapter~\ref{s:choice}.
\end{remark}

\begin{proposition} For two countable sets $x,y$ their Cartesian product $x\times y$ is countable.
\end{proposition}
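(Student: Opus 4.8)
The plan is to reduce everything to the countability of $\w\times\w$, which in turn rests on the already-established countability of $\w^{<\w}$. Since countability of a set $z$ is equivalent to the inequality $|z|\le|\w|$ by Corollary~\ref{c:count=lew}, it suffices to show $|x\times y|\le|\w\times\w|$ and $|\w\times\w|\le|\w|$, and then combine these by transitivity of $\le$.

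First I would exploit the hypotheses directly: by Corollary~\ref{c:count=lew} there exist injective functions $f\colon x\to\w$ and $g\colon y\to\w$. Using G\"odel's class existence Theorem~\ref{t:class} one checks that $h=\{\langle\langle a,b\rangle,\langle f(a),g(b)\rangle\rangle:a\in x,\ b\in y\}$ is a well-defined function $h\colon x\times y\to\w\times\w$. Its injectivity is immediate: if $h(\langle a,b\rangle)=h(\langle a',b'\rangle)$, then $\langle f(a),g(b)\rangle=\langle f(a'),g(b')\rangle$, so $f(a)=f(a')$ and $g(b)=g(b')$ by Proposition~\ref{p:pair}, whence $a=a'$ and $b=b'$ by the injectivity of $f$ and $g$, and then $\langle a,b\rangle=\langle a',b'\rangle$ again by Proposition~\ref{p:pair}. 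This yields $|x\times y|\le|\w\times\w|$.

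The heart of the argument is the countability of $\w\times\w$. Here I would identify $\w\times\w$ with the function space $\w^2=\{t\in\mathbf{Fun}:\dom[t]=2,\ \rng[t]\subseteq\w\}$ via the bijection $\ddot\UU\to\UU^2$, $\langle a,b\rangle\mapsto\{\langle 0,a\rangle,\langle 1,b\rangle\}$, established in the exercise on $F_2$; restricting this bijection to $\w\times\w$ gives a bijection onto $\w^2$. Now $\w^2\subseteq\w^{<\w}$, and $\w^{<\w}$ is countable by Corollary~\ref{c:exp-w} applied to the countable set $\w$ (equivalently, by the displayed proposition showing $\w^{<\w}$ countable). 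Since every subset of a countable set is countable, $\w^2$ is countable, and therefore so is $\w\times\w$; that is, $|\w\times\w|\le|\w|$.

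Putting the pieces together gives $|x\times y|\le|\w\times\w|\le|\w|$, so $|x\times y|\le|\w|$ and $x\times y$ is countable by Corollary~\ref{c:count=lew}. The only step requiring genuine care is the countability of $\w\times\w$: once it is known, the rest is a routine composition of injections and an appeal to Corollary~\ref{c:count=lew}. Reusing the countability of $\w^{<\w}$ is what lets me avoid any explicit pairing function and the accompanying number-theoretic bookkeeping.
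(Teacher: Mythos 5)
Your proof is correct and follows essentially the same route as the paper: both arguments reduce the claim to the countability of the class of finite sequences over a countable base and embed the product as a set of two-element sequences. The only (cosmetic) difference is that the paper passes through $(x\cup y)^2\subseteq(x\cup y)^{<\w}$, invoking Proposition~\ref{p:f-union} to see that $x\cup y$ is countable, whereas you first inject $x\times y$ into $\w\times\w\cong\w^2\subseteq\w^{<\w}$ via the product of two injections, which bypasses the finite-union lemma.
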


\begin{proof} By Proposition~\ref{p:f-union} and Corollary~\ref{c:exp-w}, the sets $x\cup y$ and $(x\cup y)^{<\w}$ are countable. Since $|x\times y|\le|(x\cup y)^2|\le|(x\cup y)^{<\w}|\le|\w|$, the set $x\times y$ is countable by Corollary~\ref{c:count=lew}.
\end{proof}

\begin{definition} A set $x$ is called \index{set!hereditarily countable}\index{hereditarily countable set}\index{set!hereditarily finite}\index{hereditarily finite set}{\em hereditarily countable} (resp. {\em hereditarily finite}) if $x$ is countable (resp. finite) and each set $y\in\mathsf{TC}(x)$ is countable (resp. finite).
\end{definition}

\begin{exercise} Prove that a set $x\in \mathbf V$ is hereditarily finite if and only if its transitive closure $\mathsf{TC}(x)$ is finite.
\end{exercise}

\begin{exercise} Assuming the principle ($\mathsf{UT_\w})$, prove that a set $x$ is hereditarily countable if and only if its transitive closure $\mathsf{TC}(x)$ is countable.
\end{exercise}

\section{Successor cardinals and alephs}

In the following theorem  for a set $x$ by $W\!O(x)$ we denote the set of all well-orders $w\subseteq x\times x$.

\begin{theorem}[Hartogs--Sierpi\'nski]\label{t:Hartogs2}\index{Hartogs--Sierpi\'nski Theorem}\index{Theorem!Hartogs--Sierpi\'nski} There exists a function $(\cdot)^+:\UU\to\Crd$ assigning to every set $x$ its \index{successor cardinal}\index{cardinal!successor}{\em successor cardinal} $x^+=\{\alpha\in\Ord:|\alpha|\le|x|\}$. For this cardinal we have the upper bounds $$|x^+|\le^*|W\!O(x)|\le|\mathcal P(x\times x)|,\;\;
|x^+|\le \min\{|\mathcal P^{\circ 2}(x\times x)|,|\mathcal P^{\circ3}(x)|\}\mbox{  and\/ }|x^+\cap\;\Crd|\le |\mathcal P^{\circ 2}(x)|.$$
\end{theorem}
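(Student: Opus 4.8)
The plan is to first justify that $x^+$ deserves the name \emph{successor cardinal} and that $(\cdot)^+$ is a genuine function, and then to establish the displayed inequalities in turn, the bound by $\mathcal P^{\circ 3}(x)$ carrying the real work. Fix a set $x$. I would record that $\emptyset\in x^+$ (the empty injection) and that $x^+$ is transitive: if $\alpha\in x^+$ and $\gamma\in\alpha$, restricting an injection $\alpha\to x$ to $\gamma\subseteq\alpha$ gives $|\gamma|\le|x|$, so $\gamma\in x^+$. By the Hartogs Theorem~\ref{t:Hartogs} there is an ordinal $\delta$ with no injection $\delta\to x$; transitivity together with the trichotomy of Theorem~\ref{t:ord}(6) forces $x^+\subseteq\delta$, so $x^+$ is a set by Exercise~\ref{ex:subclass}, and being a transitive set of ordinals it is an ordinal. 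It is a cardinal, since $|x^+|\le|x|$ would mean $x^+\in x^+$, contradicting the irreflexivity of $\E{\restriction}\Ord$ (Theorem~\ref{t:Ord}(2)). Finally $\{\langle x,\gamma\rangle\in\UU\times\Ord:|\gamma|\le|x|\}$ is a $\UU$-bounded definable class (Theorem~\ref{t:class}) each of whose fibres is the set $x^+$, so Theorem~\ref{t:index} yields the function $(\cdot)^+\colon\UU\to\Crd$.

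Next I would prove $\rank[W\!O(x)]=x^+$, where $\rank(w)$ is the order type of $w$ (Theorem~\ref{t:wOrd}). For $w\in W\!O(x)$ the isomorphism $\rank_w\colon\dom[w^\pm]\to\rank(w)$ gives $|\rank(w)|=|\dom[w^\pm]|\le|x|$, so $\rank(w)\in x^+$; conversely, given $\gamma\in x^+$ and an injection $f\colon\gamma\to x$, the transported order $w=\{\langle f(\delta),f(\beta)\rangle:\delta\in\beta\in\gamma\}$ lies in $W\!O(x)$ with $\rank(w)=\gamma$. Hence $\rank$ (corestricted) is a surjection $W\!O(x)\to x^+$, which is exactly $|x^+|\le^*|W\!O(x)|$; and the inclusion $W\!O(x)\subseteq\mathcal P(x\times x)$ gives $|W\!O(x)|\le|\mathcal P(x\times x)|$, completing the first two inequalities.

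For the $\min$-bound I would use the schema: whenever $|x^+|\le^*|Y|$, Proposition~\ref{p:card-ineq}(3) gives $|\mathcal P(x^+)|\le|\mathcal P(Y)|$, and since $|x^+|\le|\mathcal P(x^+)|$ by Corollary~\ref{c:CantorP}, we conclude $|x^+|\le|\mathcal P(Y)|$. Taking $Y=W\!O(x)$ and using $\mathcal P(W\!O(x))\subseteq\mathcal P(\mathcal P(x\times x))=\mathcal P^{\circ2}(x\times x)$ yields $|x^+|\le|\mathcal P^{\circ2}(x\times x)|$. The second, sharper half $|x^+|\le|\mathcal P^{\circ3}(x)|$ requires instead a surjection $\mathcal P^{\circ2}(x)\to x^+$, and this is where the main obstacle lies: the naive coding $W\!O(x)\subseteq\mathcal P(x\times x)\subseteq\mathcal P^{\circ3}(x)$ only produces $\mathcal P^{\circ4}(x)$ after the power-set step. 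The idea is that the order type of a well-order $w$ on a subset $S\subseteq x$ is already determined by the family of its strict initial intervals $I(w)=\{\cev{w}(a):a\in S\}\subseteq\mathcal P(x)$, because $a\mapsto\cev{w}(a)$ is an isomorphism of $w$ onto $(I(w),\subseteq)$. Thus the map sending $\mathcal F\in\mathcal P^{\circ2}(x)$ to $\rank(\subseteq{\restriction}\mathcal F)$ when $(\mathcal F,\subseteq)$ is a well-order whose rank lies in $x^+$, and to $0$ otherwise, is a well-defined surjection $\mathcal P^{\circ2}(x)\to x^+$; note $\rank$ ignores the reflexive/irreflexive form of the order, so the value is genuinely a function of $\mathcal F$.

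Applying the schema with $Y=\mathcal P^{\circ2}(x)$ then gives $|x^+|\le|\mathcal P(\mathcal P^{\circ2}(x))|=|\mathcal P^{\circ3}(x)|$, and combining the two bounds yields $|x^+|\le\min\{|\mathcal P^{\circ2}(x\times x)|,|\mathcal P^{\circ3}(x)|\}$. I expect the verification that the initial-interval family recovers the order type (and the care needed so that the coding surjection lands in $x^+$ rather than in a larger segment of $\Ord$) to be the only delicate point; everything else is bookkeeping with the inclusions $W\!O(x)\subseteq\mathcal P(x\times x)$ and repeated use of Proposition~\ref{p:card-ineq} and Corollary~\ref{c:CantorP}. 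All steps are choice-free, as befits the Hartogs--Sierpi\'nski theorem.
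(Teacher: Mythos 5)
Your proof is correct and rests on the same key idea as the paper's: the order type of a well-order on a subset of $x$ is coded by its chain of initial segments, a family of subsets of $x$, which is exactly how the paper obtains the $|\mathcal P^{\circ3}(x)|$ bound (via the sets $F_\alpha$ of chains $C_f=\{f[\beta]:\beta\le\dom[f]\}$). The only difference is packaging — you build a surjection $\mathcal P^{\circ2}(x)\to x^+$ and then apply one power-set step, where the paper injects $x^+$ directly into $\mathcal P^{\circ3}(x)$ via $\alpha\mapsto F_\alpha$ — and your preliminary verification that $x^+$ is a cardinal and that $(\cdot)^+$ exists is a welcome expansion of details the paper only sketches.
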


\begin{proof} By Theorem~\ref{t:Hartogs} for any set $x$ there exists an ordinal $\alpha$ admitting no injective map $\alpha\to x$. So, we can define $x^+$ as the smallest ordinal with this property. The minimality of the ordinal $x^+$ ensures that $x^+$ is a cardinal and for any $\alpha\in x^+$ there exists an injective function $\alpha\to x$ and hence $|\alpha|\le|x|$. The function $(\cdot)^+:\UU\to\Crd,\quad (\cdot)^+:x\mapsto x^+$, exists  by Theorem~\ref{t:class} on the existence of classes. 

Next we prove the upper bounds for the successor cardinal $x^+$ of a set $x$. Let $W\!O(x)$ be the set of all well-orders $w\subseteq x\times x$. It is easy to see that $WO(x)\subseteq\mathcal P(x\times x)$ and $x^+=\rank[WO(x)]$, which implies
$$|x^+|\le^*|WO(x)|\le|\mathcal P(x\times x)|\quad\mbox{and hence}\quad|x^+|\le|\mathcal P(\mathcal P(x\times x))|=|\mathcal P^{\circ2}(x\times x)|$$
by  Proposition~\ref{p:card-ineq}(3).

On the other hand, any injective function $f$ with $\dom[f]\in x^+$ and $\rng[f]\subseteq x$ is uniquely determined by the chain of subsets $C_f=\{f[\beta]:\beta\le\dom[f]\}\subseteq \mathcal P(x)$ and each ordinal $\alpha\in x^+$ is uniquely determined by the subset $$F_\alpha=\{C_f:f\in\Fun\;\wedge\;f^{-1}\in\Fun\;\wedge\;\dom[f]=\alpha\;\wedge\;\rng[f]\subseteq x\}\subseteq\mathcal P(\mathcal P(x)),$$ which implies that
$$|x^+|\le|\{F_\alpha:\alpha\in x^+\}|\le |\mathcal P(\mathcal P(\mathcal P(x)))|=|\mathcal P^{\circ 3}(x)|.$$

To see that $|x^+\cap\;\Crd|\le|\mathcal P^{\circ2}(x)|$, observe that the function $$x^+\cap\;\Crd\to\mathcal P^{\circ 2}(x),\quad \kappa\mapsto [x]^\kappa:=\{y\in\mathcal P(x):|y|=|\kappa|\},$$ is injective.
\end{proof}

Theorem~\ref{t:inj=sur} implies 

\begin{corollary} For every natural number $n$ its successor cardinal $n^+$ is equal to $n+1$.
\end{corollary}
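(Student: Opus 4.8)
The plan is to use the characterization of the successor cardinal recorded inside the proof of Theorem~\ref{t:Hartogs2}: $n^+$ is the smallest ordinal admitting no injective function into $n$. Equivalently, by the displayed formula $n^+=\{\alpha\in\Ord:|\alpha|\le|n|\}$, an ordinal $\alpha$ lies in $n^+$ precisely when there is an injective function $\alpha\to n$. I would then pin down $n^+$ by exhibiting injections for all $\alpha\le n$ and ruling them out at $\alpha=n+1$.

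For the bound $n^+\ge n+1$, I would check that every $\alpha\in n+1=n\cup\{n\}$ admits an injective function into $n$. Indeed $\alpha\subseteq n$: either $\alpha=n$, or $\alpha\in n$ and then $\alpha\subset n$ by Theorem~\ref{t:ord}(4). In either case the inclusion map $\Id{\restriction}_\alpha$ is an injective function $\alpha\to n$, so $|\alpha|\le|n|$ and $\alpha\in n^+$. Since every ordinal $\le n$ belongs to $n^+$, the smallest ordinal outside $n^+$ is at least $n+1$.

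The essential step is the reverse bound $n^+\le n+1$, i.e. that there is no injective function $f\colon n+1\to n$. Assuming such an $f$, its restriction $f{\restriction}_n\colon n\to n$ is injective, hence bijective by Theorem~\ref{t:inj=sur} (which rests on Lemma~\ref{l:injn}); thus $f[n]=n$ and so $f(n)\in n=f[n]$, producing some $m\in n$ with $m\ne n$ and $f(m)=f(n)$, contradicting injectivity. Therefore $n+1\notin n^+$, so $n^+\le n+1$, and combining the two bounds gives $n^+=n+1$. The only real obstacle is this pigeonhole argument, and it is already isolated in Theorem~\ref{t:inj=sur}, so the proof amounts to peeling off the top point $n$ and invoking that theorem; the rest is routine comparison of ordinals.
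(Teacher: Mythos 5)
Your proof is correct and matches the paper's intent exactly: the paper gives no written argument beyond the remark that Theorem~\ref{t:inj=sur} implies the corollary, and your fleshed-out version (inclusions $\alpha\subseteq n$ give $n+1\subseteq n^+$; restricting a hypothetical injection $n+1\to n$ to $n$ and invoking Theorem~\ref{t:inj=sur} rules out $n+1\in n^+$) is precisely the argument being alluded to, mirroring how the paper proves that every natural number is a cardinal. No gaps.
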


The Hartogs--Sierpi\'nski Theorem~\ref{t:Hartogs2} has an interesting implication. Given a cardinality $\kappa$, we say that a cardinality $\kappa^{+}$ is a \index{successor cardinality}{\em successor cardinality} of $\kappa$ if $\kappa<\kappa^+$ and $\kappa^+\le\lambda$ for any cardinality $\lambda$ with $\kappa<\lambda$. By Theorem~\ref{t:CBDS} a successor cardinality if exists, then it is unique.

\begin{theorem}[Tarski]\label{t:card-compare} The following statements are equivalent:
\begin{enumerate}
\item[\textup{1)}] The Axiom of Choice holds.
\item[\textup{2)}] For any sets $x,y$ we have $|x|\le|y|$ or $|y|\le|x|$.
\item[\textup{3)}] For any set $x$ we have $|x|<|x^+|$.
\item[\textup{4)}] For any set $x$ the cardinality $|x^+|$ of the successor cardinal $x^+$ is the  successor cardinality of $|x|$.
\end{enumerate}
\end{theorem}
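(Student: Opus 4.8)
The plan is to prove the cycle $(1)\Rightarrow(2)\Rightarrow(3)\Rightarrow(1)$ and then close the loop through $(4)$ via $(1)\Rightarrow(4)\Rightarrow(3)$. Everything hinges on a single observation: for any set $x$, the inequality $|x|\le|x^+|$ holds \emph{if and only if} $x$ is well-orderable. Indeed, an injective function $f\colon x\to x^+\subseteq\Ord$ transports the well-order $\E{\restriction}\Ord$ back to a well-order on $x$; conversely, if $x$ is well-orderable, then by Theorem~\ref{t:wOrd} we have $|x|=|\alpha|$ for some ordinal $\alpha$, and then $\alpha\in x^+$ (since $|\alpha|\le|x|$), so $|x|=|\alpha|\le|x^+|$. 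This observation is what converts the arithmetical statement $(3)$ into the Well-Ordering Principle.

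For $(1)\Rightarrow(2)$, I would use the equivalence $\mathsf{AC}\Leftrightarrow\mathsf{WO}$ from Theorem~\ref{t:mainAC}: any two sets $x,y$ are then well-orderable, hence equipotent to ordinals $\alpha,\beta$, and the ordinal dichotomy $\alpha\subseteq\beta\;\vee\;\beta\subseteq\alpha$ (Theorem~\ref{t:ord}(5)) produces an injection in one direction or the other, giving $|x|\le|y|$ or $|y|\le|x|$. For $(2)\Rightarrow(3)$, I would compare $x$ with $x^+$: by the Hartogs--Sierpi\'nski Theorem~\ref{t:Hartogs2}, $x^+$ is the least ordinal admitting no injective function into $x$, so $|x^+|\not\le|x|$; hence the remaining alternative in $(2)$, namely $|x|\le|x^+|$, must hold, and $|x|=|x^+|$ is impossible (it would force $|x^+|\le|x|$). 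Thus $|x|<|x^+|$. For $(3)\Rightarrow(1)$, note that $(3)$ in particular gives $|x|\le|x^+|$ for every set $x$, so by the observation above every set is well-orderable; this is $\mathsf{WO}$, which is equivalent to $\mathsf{AC}$ by Theorem~\ref{t:mainAC}.

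It remains to incorporate $(4)$. Since being a successor cardinality of $|x|$ presupposes $|x|<|x^+|$, the implication $(4)\Rightarrow(3)$ is immediate, and then $(3)\Rightarrow(1)$ closes the loop. For $(1)\Rightarrow(4)$, I would assume AC; statement $(3)$ (already available) gives $|x|<|x^+|$, so only minimality needs proof: if $|x|<\lambda$, say $\lambda=|y|$, then $|x^+|\le\lambda$. Under AC both $x$ and $y$ are equipotent to cardinals $\kappa$ and $\mu$ with $\kappa<\mu$. Since $x^+=\{\alpha\in\Ord:|\alpha|\le|x|\}$ collects exactly the ordinals of cardinality at most $\kappa$, it coincides with the ordinal-theoretic successor cardinal $\kappa^+$, which is by definition the least cardinal exceeding $\kappa$; hence $\kappa^+\le\mu$ and therefore $|x^+|=|\kappa^+|\le|\mu|=\lambda$, using Theorem~\ref{t:CBDS} where needed to compare cardinalities.

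The main obstacle is not any single deep step but the bookkeeping in $(1)\Rightarrow(4)$: making the identification $x^+=\kappa^+$ fully precise and keeping the direction of each inequality consistent with the Hartogs bound. The genuinely load-bearing ingredient throughout is the equivalence ``$|x|\le|x^+|\iff x$ well-orderable,'' which is what lets $(3)$ collapse to the Well-Ordering Principle and drives the whole equivalence.
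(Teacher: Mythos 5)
Your proof is correct and follows essentially the same route as the paper: the cycle $(1)\Rightarrow(2)\Rightarrow(3)\Rightarrow(1)$ is identical, driven by the same key facts (the Hartogs bound $|x^+|\not\le|x|$ and the observation that an injection of $x$ into the ordinal $x^+$ well-orders $x$). The only cosmetic difference is in statement (4): the paper proves $(2)\Rightarrow(4)\Rightarrow(1)$ using comparability to show any $y$ with $|x|<|y|<|x^+|$ would be well-orderable, while you prove $(1)\Rightarrow(4)\Rightarrow(3)$ by identifying $x^+$ with the least cardinal above $|x|$ under AC — both reduce minimality to ordinal trichotomy, so this is the same argument in a slightly different wiring.
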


\begin{proof} $(1)\Ra(2)$ If the Axiom of Choice holds, then by Zermelo Theorem~\ref{t:WO}, every set is equipotent to some ordinal. Now the comparability of cardinalities follows from the comparability of ordinals, see Theorem~\ref{t:ord}(6).
\smallskip

$(2)\Ra(3)$ Assume that any two cardinalities are comparable. For any set $x$, the definition of the successor cardinal $x^+$ implies that $|x^+|\not\le|x|$ and hence $|x|<|x^+|$.
\smallskip

$(3)\Ra(1)$ If for every $x$ we have $|x|<|x^+|$, then there exists an injective function $f:X\to x^+$ and hence $x$ is well-orderable. By Theorem~\ref{t:mainAC}, the Axiom of Choice holds.
\smallskip

$(2)\Ra(4)$ Take any set $x$ and consider its successor cardinal $x^+$. The definition of $x^+$ guarantees that $|x^+|\not\le |x|$. Now the condition (2) implies that $|x|<|x^+|$. 
Assuming that $|x^+|$ is not a successor cardinality of $|x|$, we can find a set $y$ such that $|x|<|y|$ but $|x^+|\not\le |y|$. The comparability of the cardinalitites $|x^+|$ and $|y|$ implies that $|y|<|x^+|$. Then the set $y$ admits an injective function to $x^+$ and hence is well-orderable. By Theorem~\ref{t:wOrd}, $y$ admits a bijective function on some ordinal $\alpha$. We claim that $\alpha<x^+$. In the opposite case, we can apply Theorem~\ref{t:ord}(6) and conclude that $x^+\le\alpha$ and then $|x^+|\le|\alpha|=|y|$. Since $|y|<|x^+|$ we can apply Theorem~\ref{t:CBDS} and conclude that $|y|=|x^+|$ which contradicts our assumption. This contradiction shows that $\alpha<x^+$.  Since $|x|<|y|=|\alpha|$, the cardinal $\alpha$ admits no injective function into $x$ and hence $x^+\le\alpha$ as $x^+$ is the smallest ordinal with this property. But this contradicts the strict inequality $\alpha<x^+$ established earlier. This contradiction shows that $|x^+|$ is the successor cardinality of $|x|$.
\smallskip

$(4)\Ra(1)$ If for every set $x$, the cardinality $|x^+|$ is a successor cardinality of $|x|$, then $|x|<|x^+|$ and hence $x$ admits an injective function $f:x\to x^+$ to the cardinal $x^+$, which implies that $x$ can be well-ordered. By Theorem~\ref{t:mainAC}, the Axiom of Choice holds.
\end{proof}

Transfinite iterations of the operation of taking the successor cardinals yield a nice paramet\-rization of the class $\Crd$ of cardinals by ordinals.

Consider the transfinite sequence of cardinals \index{$\w_\alpha$}$(\w_\alpha)_{\alpha\in\Ord}$ defined by the recursive formula:
\begin{itemize}
\item $\w_0=\w$;
\item $\w_{\alpha+1}=\w_\alpha^+$ for any ordinal $\alpha$;
\item $\w_\alpha=\sup\{\w_\beta:\beta\in\alpha\}$ for any limit ordinal $\alpha>0$.
\end{itemize}
Therefore,  $\w_\alpha=\w^{+\circ\alpha}$ for every ordinal $\alpha$.

\begin{proposition}\label{p:walpha} The function $\w_*:\Ord\to\Crd$, $\w_*:\alpha\to\w_\alpha$, is well-defined. For every ordinal $\alpha$ we have $\alpha\le\w_\alpha<\w_{\alpha+1}$.
\end{proposition}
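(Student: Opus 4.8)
The plan is to realize the sequence $(\w_\alpha)_{\alpha\in\Ord}$ as the transfinite iteration of the successor-cardinal operation and to read off everything from the machinery of Section~\ref{s:dynamics}. First I would record that the successor-cardinal function $(\cdot)^+$ is a genuine function $\UU\to\Crd$ by the Hartogs--Sierpi\'nski Theorem~\ref{t:Hartogs2}; restricting it to $\Ord$ gives a function $(\cdot)^+\colon\Ord\to\Ord$. I claim it is strictly expansive: for an ordinal $\beta$ and any $\gamma\in\beta$ the inclusion $\gamma\subseteq\beta$ gives $|\gamma|\le|\beta|$, so $\gamma\in\beta^+$ and hence $\beta\subseteq\beta^+$; moreover $|\beta|\le|\beta|$ yields $\beta\in\beta^+$, so $\beta\ne\beta^+$ and thus $\beta\subset\beta^+$. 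Since $\Ord$ is chain-inclusive (Theorem~\ref{t:Ord}(5)), Theorem~\ref{t:dynamics} applies and produces the transfinite function sequence $\big((\cdot)^{+\circ\alpha}\big)_{\alpha\in\Ord}$ and, via the two-variable iteration function built in its proof, the evaluation at $\w$ as a legitimate class. Setting $\w_*\colon\Ord\to\Ord$, $\w_*(\alpha)=\w^{+\circ\alpha}=\w_\alpha$, the defining recursion~(\ref{eq:recursive-dymanics}) together with Lemma~\ref{l:sup-ord} (to rewrite $\bigcup$ as $\sup$) shows that $\w_*$ satisfies the three clauses $\w_0=\w$, $\w_{\alpha+1}=\w_\alpha^+$, and $\w_\alpha=\sup_{\beta\in\alpha}\w_\beta$ for nonzero limit $\alpha$.

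It then remains to check that $\w_*$ takes values in $\Crd$, which I would do by transfinite induction. The base case uses that $\w$ is a cardinal, and the successor case is immediate since $(\cdot)^+$ outputs cardinals by Theorem~\ref{t:Hartogs2}. For a nonzero limit $\alpha$ I must verify that $\lambda=\sup\{\w_\beta:\beta\in\alpha\}=\bigcup\{\w_\beta:\beta\in\alpha\}$ is again a cardinal. This is the one genuinely non-formal point: if $\lambda$ failed to be a cardinal there would be $\mu<\lambda$ with $|\mu|=|\lambda|$; since $\lambda$ is the supremum, $\mu<\w_\beta$ for some $\beta\in\alpha$, and then $|\w_\beta|\le|\lambda|=|\mu|\le|\w_\beta|$ forces $|\mu|=|\w_\beta|$ with $\mu<\w_\beta$, contradicting that $\w_\beta$ is the $\E$-least ordinal of its cardinality. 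Hence $\lambda\in\Crd$ and the induction closes, proving that $\w_*\colon\Ord\to\Crd$ is well-defined.

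For the inequalities I would first dispose of $\w_\alpha<\w_{\alpha+1}$: because $(\cdot)^+$ is strictly expansive, Proposition~\ref{p:sexp} gives $\w_\alpha=\w^{+\circ\alpha}\subset\w^{+\circ(\alpha+1)}=\w_{\alpha+1}$, and for ordinals strict inclusion is strict membership by Theorem~\ref{t:ord}(4); in fact the same proposition yields the stronger strict monotonicity $\w_\beta<\w_\alpha$ whenever $\beta<\alpha$. The inequality $\alpha\le\w_\alpha$ is then the standard fact that a strictly increasing ordinal function dominates the identity, proved by transfinite induction: the successor step combines the inductive hypothesis $\beta\le\w_\beta$ with $\w_\beta<\w_{\beta+1}$ to get $\beta<\w_{\beta+1}$, hence $\beta+1\le\w_{\beta+1}$; and the limit step uses $\alpha=\sup_{\beta\in\alpha}\beta\le\sup_{\beta\in\alpha}\w_\beta=\w_\alpha$, invoking $\beta\le\w_\beta\le\w_\alpha$ for each $\beta\in\alpha$. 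The main obstacle is really just the well-definedness, concentrated in the limit-stage verification that a supremum of cardinals stays a cardinal; everything else is bookkeeping around Theorems~\ref{t:dynamics} and~\ref{t:Hartogs2} and Proposition~\ref{p:sexp}.
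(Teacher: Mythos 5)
Your proof is correct and follows essentially the same route as the paper: existence of $\w_*$ via Theorem~\ref{t:dynamics} applied to the successor-cardinal function, transfinite induction for $\alpha\le\w_\alpha$, and the definition of $\w_\alpha^+$ for the strict inequality. You additionally verify two points the paper leaves implicit — that $(\cdot)^+$ is strictly expansive on $\Ord$ (so Theorem~\ref{t:dynamics} genuinely applies) and that the limit-stage suprema remain cardinals (so $\w_*$ really lands in $\Crd$) — both of which are handled correctly.
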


\begin{proof} The existence of the function $\w_*$ follows from Theorem~\ref{t:dynamics} applied to the Hartogs' function $\Ord\to\Ord$, $\alpha\mapsto\alpha^+$, of taking the successor cardinal. 
\smallskip

The inequality $\alpha\le\w_\alpha$ will be proved by transfinite induction on $\alpha$.
 For $\alpha=0$ we have $0<\w$. Assume that for some ordinal $\alpha$ and all its elements $\beta\in\alpha$ we proved that $\beta\le\w_\beta$. If $\alpha$ is a successor ordinal, then $\alpha=\beta+1$ for some $\beta\in\alpha$ and hence $\w_{\alpha}=\w_{\beta+1}=\w_{\beta}^+>\w_\beta\ge\beta$, 
which implies $\alpha=\beta+1\le \w_{\alpha}$. 

If $\alpha$ is a limit ordinal, then $$\w_\alpha=\sup\{\w_\beta:\beta\in\alpha\}\ge\sup\{\beta:\beta\in\alpha\}=\alpha$$
by the inductive assumption. By the Principle of Transfinite Induction, the inequality $\alpha\le\w_\alpha$ is true for all ordinals $\alpha$.
\smallskip

The strict inequality $\w_\alpha<\w_{\alpha+1}=\w_\alpha^+$ follows from the definition of the successor cardinal $\w_\alpha^+>\w_\alpha$. 
\end{proof}

\begin{theorem}\label{t:alephs} For every infinite cardinal $\kappa$ there exists an ordinal $\alpha$ such that $\kappa=\w_\alpha$.
\end{theorem}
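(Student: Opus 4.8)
The plan is to show that every infinite cardinal $\kappa$ appears in the list $(\w_\alpha)_{\alpha\in\Ord}$. The natural approach is by contradiction combined with the well-ordering of $\Ord$: suppose some infinite cardinal is omitted from the range $\w_*[\Ord]$, and derive a contradiction by examining the smallest such cardinal.

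First I would set up the key monotonicity properties of the map $\w_*$. By Proposition~\ref{p:walpha} we already know $\alpha\le\w_\alpha<\w_{\alpha+1}$, so in particular $\w_*$ is strictly $\E$-to-$\E$-increasing: if $\beta<\alpha$ then $\w_\beta<\w_\alpha$. (For the successor step this is immediate from $\w_{\alpha}<\w_{\alpha+1}$, and for limit $\alpha$ it follows from the definition $\w_\alpha=\sup\{\w_\gamma:\gamma\in\alpha\}$ together with $\w_\beta<\w_{\beta+1}\le\w_\alpha$.) Strict monotonicity guarantees that $\w_*:\Ord\to\Crd$ is injective.

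Next I would run the minimal-counterexample argument. Assume for contradiction that the class $\Crd_\infty\setminus\w_*[\Ord]$ of infinite cardinals not of the form $\w_\alpha$ is nonempty, where $\Crd_\infty$ denotes the infinite cardinals. Since every cardinal is an ordinal and the relation $\E{\restriction}\Ord$ is a well-order by Theorem~\ref{t:Ord}(2), this class has a least element $\kappa$. The inequality $\alpha\le\w_\alpha$ from Proposition~\ref{p:walpha} forces every $\w_\alpha$ with $\w_\alpha\le\kappa$ to come from an ordinal $\alpha\le\kappa$, so I would consider the set $B=\{\alpha\in\kappa^+:\w_\alpha<\kappa\}$ (a set, being a subclass of the ordinal $\kappa^+$, see Exercise~\ref{ex:subclass}), and its supremum $\delta=\sup B=\bigcup B$, an ordinal by Theorem~\ref{t:Ord}(5). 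The main obstacle is the careful case analysis that $\w_\delta=\kappa$, contradicting $\kappa\notin\w_*[\Ord]$. If $\delta$ is a limit ordinal, then $\w_\delta=\sup\{\w_\gamma:\gamma\in\delta\}\le\kappa$; here minimality of $\kappa$ shows each $\w_\gamma<\kappa$ (as $\w_\gamma$ is an infinite cardinal smaller than $\kappa$ and hence in $\w_*[\Ord]$ with index below $\kappa$), and if the supremum were $<\kappa$ then $\delta+1\in B$, contradicting $\delta=\sup B$; so $\w_\delta=\kappa$. If $\delta=\gamma+1$ is a successor, then $\w_\gamma<\kappa$ gives $\w_\gamma^+=\w_{\gamma+1}\le\kappa$, while minimality of $\kappa$ forbids any cardinal strictly between $\w_\gamma$ and $\kappa$ other than $\kappa$ itself; since $\w_{\gamma+1}$ is the successor cardinal of $\w_\gamma$ and $\kappa>\w_\gamma$ is a cardinal, we get $\w_{\gamma+1}\le\kappa$ and then $\w_\delta=\w_{\gamma+1}=\kappa$.

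The delicate point I expect to require the most care is justifying, at each stage, that the infinite cardinals below $\kappa$ are exactly the $\w_\alpha$ with smaller index, which is precisely where the minimality of $\kappa$ is invoked: any infinite cardinal $\mu<\kappa$ lies outside the counterexample class and so equals some $\w_\alpha$, necessarily with $\w_\alpha<\kappa$ and hence $\alpha\in B$. Assembling these observations, both cases yield $\kappa=\w_\delta\in\w_*[\Ord]$, contradicting the choice of $\kappa$. Therefore no infinite cardinal is omitted, and every infinite cardinal $\kappa$ is of the form $\w_\alpha$ for some ordinal $\alpha$.
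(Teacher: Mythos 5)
Your argument is correct, but it wraps the paper's direct argument in an unnecessary minimal-counterexample layer. The paper fixes the given $\kappa$, forms $A=\{\alpha\in\Ord:\w_\alpha\le\kappa\}$ (a set by injectivity of $\w_*$ and Replacement), sets $\alpha=\sup A$, checks $\w_\alpha\le\kappa$ by the same limit/successor dichotomy you use, and then closes with the single key step you also rely on: if $\w_\alpha<\kappa$ then, since $\kappa$ is a cardinal, there is no injection $\kappa\to\w_\alpha$, so $\w_{\alpha+1}=\w_\alpha^+\le\kappa$ and $\alpha+1\in A$, contradicting $\alpha=\sup A$. Nothing in that argument needs $\kappa$ to be a least counterexample, so your appeals to minimality (e.g.\ ``every infinite cardinal $\mu<\kappa$ equals some $\w_\alpha$'') are doing no real work; the facts you actually use in both cases are the downward closure of $B$ under $\in$ (via strict monotonicity of $\w_*$, which also makes your claim ``each $\w_\gamma<\kappa$ for $\gamma\in\delta$'' follow directly from $\gamma\in\beta\in B$, rather than from minimality as you wrote) and the successor-cardinal step $\w_\gamma<\kappa\Rightarrow\w_\gamma^+\le\kappa$. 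Your version is sound once those justifications are tightened, but the direct form is shorter and avoids the slightly muddled spots in your limit case (where $\delta+1\in B$ needs $\w_{\delta+1}<\kappa$, which in turn needs both the Hartogs argument and $\kappa\notin\w_*[\Ord]$) and in your successor case (``no cardinal strictly between $\w_\gamma$ and $\kappa$'' should be derived from $B\subseteq\delta$ and monotonicity, not asserted from minimality alone).
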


\begin{proof} Given a cardinal $\kappa$, consider the class $A=\{\alpha\in\Ord:\w_\alpha\le\kappa\}$. 

Since the function $\w_*:\Ord\to\Crd$, $\w_*:\alpha\mapsto \w_\alpha$, is injective, the class $A$ is a set by the Axiom of Replacement. So, we can consider the ordinal $\alpha=\sup A$. If $\alpha$ is a limit ordinal, then $\w_\alpha=\sup_{\beta\in\alpha}\w_\beta=\sup_{\beta\in A}\w_\beta\le\sup_{\beta\in A}\kappa=\kappa$. If $\alpha=\sup A$ is a successor ordinal, then $\alpha\in A$ and again $\w_\alpha\le\kappa$. In both cases we obtain $\w_\alpha\le\kappa$. Assuming that $\w_\alpha\ne\kappa$, we conclude that $\w_\alpha<\kappa$. Since $\kappa$ is a cardinal, there exists no bijective function $\kappa\to\w_\alpha$. By Theorem~\ref{t:CBDS}, there is no injective functions from $\kappa\to\w_\alpha$. Then $\w_{\alpha+1}=\w_\alpha^+\le\kappa$ by the definition of the successor cardinal $\w_\alpha^+$. Then $\alpha+1\in A$ and hence $\alpha\in \alpha+1\le\sup A=\alpha$, which contradicts the irreflexivity of the relation $\E{\restriction}\Ord$.
 This contradiction shows that $\kappa=\w_\alpha$.
 \end{proof}  

For every  ordinal $\alpha$, denote by \index{$\aleph_\alpha$}$\aleph_\alpha$ the cardinality $|\w_\alpha|$ of the cardinal $\w_\alpha$. 

Theorems~\ref{t:WO}, \ref{t:wOrd}  and \ref{t:alephs} imply the following characterization. 

\begin{corollary}\label{c:alephAC} The following statements are equivalent:
\begin{enumerate}
\item[\textup{1)}] For every infinite set $x$ there exists an ordinal $\alpha$ such that $|x|=\aleph_\alpha$.
\item[\textup{2)}] The Axiom of Choice holds.
\end{enumerate}
\end{corollary}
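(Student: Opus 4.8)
The plan is to prove the two implications separately, drawing on the three cited theorems together with the equivalence $(\mathsf{WO})\Leftrightarrow(\mathsf{AC})$ from Theorem~\ref{t:mainAC}, where $(\mathsf{WO})$ asserts that every set can be well-ordered.

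For $(2)\Rightarrow(1)$, I would begin with an arbitrary infinite set $x$. Assuming the Axiom of Choice, Theorem~\ref{t:mainAC} yields $(\mathsf{WO})$, so there is a well-order $w$ with $\dom[w^\pm]=x$; being a relation on a set, $w$ is automatically set-like. By Theorem~\ref{t:wOrd} the rank function $\rank_w\colon x\to\rank(w)$ is an order isomorphism, so in particular $|x|=|\rank(w)|$, and thus the cardinality $|x|$ meets $\Ord$. Consequently $|x|\cap\Ord$ is a nonempty subclass of $\Ord$, and since $\E{\restriction}\Ord$ is an irreflexive well-order (Theorem~\ref{t:Ord}(2)) it has an $\E$-least element $\kappa$, which is by definition a cardinal with $|\kappa|=|x|$. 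Because $x$ is infinite, $\kappa$ is not a natural number, so $\w\le\kappa$ by trichotomy and $\kappa$ is an infinite cardinal. Theorem~\ref{t:alephs} then supplies an ordinal $\alpha$ with $\kappa=\w_\alpha$, whence $|x|=|\kappa|=|\w_\alpha|=\aleph_\alpha$, as required.

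For $(1)\Rightarrow(2)$, by the equivalence $(\mathsf{WO})\Leftrightarrow(\mathsf{AC})$ of Theorem~\ref{t:mainAC} it suffices to show that every set $x$ can be well-ordered. If $x$ is finite, then $|x|=|n|$ for some $n\in\w$ by the definition of finiteness, and pulling the irreflexive well-order $\E{\restriction}n$ back along a bijection $x\to n$ produces a well-order on $x$. If $x$ is infinite, the hypothesis $(1)$ provides an ordinal $\alpha$ with $|x|=\aleph_\alpha=|\w_\alpha|$, that is, a bijection $f\colon x\to\w_\alpha$; transporting the well-order $\E{\restriction}\w_\alpha$ through $f$ again gives a well-order on $x$. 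Hence every set is well-orderable and the Axiom of Choice holds.

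Both directions are essentially bookkeeping once the cited results are available; the only step needing a little care is the passage in $(2)\Rightarrow(1)$ from ``$x$ is equipotent to an ordinal'' to ``$|x|$ contains an infinite cardinal'', which rests on the well-orderedness of $\E{\restriction}\Ord$ to extract the $\E$-least ordinal of $|x|\cap\Ord$ and on the infinitude of $x$ to guarantee this ordinal is at least $\w$. I do not anticipate any genuine obstacle beyond this.
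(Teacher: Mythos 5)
Your proof is correct and follows the route the paper intends: the paper's entire proof is the one-line citation of Theorems~\ref{t:WO}, \ref{t:wOrd} and \ref{t:alephs}, and your argument is exactly the natural expansion of that citation (passing through well-orderability, the rank isomorphism to an ordinal, the least ordinal in $|x|\cap\Ord$, and Theorem~\ref{t:alephs}). The only cosmetic difference is that you invoke the equivalence $(\mathsf{WO})\Leftrightarrow(\mathsf{AC})$ from Theorem~\ref{t:mainAC} where the paper cites Zermelo's Theorem~\ref{t:WO} for the same link; this changes nothing of substance.
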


\section{Arithmetics of Cardinals}

In this section, we study the operations of sum, product and exponent of cardinalities.

Namely, for any cardinalities $|x|$, $|y|$ we put 
\begin{itemize}
\item $|x|+|y|=|(\{0\}\times x)\cup(\{1\}\times y)|$;
\item $|x|\cdot|y|=|x\times y|$;
\item $|x|^{|y|}=|x^y|$.
\end{itemize}

\begin{exercise} Show that the sum, product and exponent of cardinalities are well-defined, i.e., do not depend on the choice of sets in the corresponding equivalence classes.
\end{exercise}

\begin{exercise} Prove that for any ordinals $\alpha,\beta$ we have $|\alpha|+|\beta|=|\alpha+\beta|$ and $|\alpha|\cdot|\beta|=|\alpha\cdot\beta|$. If the ordinal $\beta$ is finite, then $|\alpha|^{|\beta|}=|\alpha^{\cdot\beta}|$.
\end{exercise}

\begin{exercise} Find two ordinals $\alpha,\beta$ such that $|\alpha|^{|\beta|}\ne |\alpha^{\cdot\beta}|$.
\smallskip

\noindent{\em Hint}: Observe that $|2^{\cdot\w}|=|\w|<|2^\w|$.
\end{exercise} 

\begin{exercise}\label{ex:sum-prod-card} Given cardinalities $\kappa,\lambda,\mu$, prove that
\begin{enumerate}
\item $\kappa+\lambda=\lambda+\kappa$;
\item $(\kappa+\lambda)+\mu=\kappa+(\lambda+\mu)$;
\item $\kappa\cdot\lambda=\lambda\cdot\kappa$;
\item $(\kappa\cdot\lambda)\cdot\mu=\kappa\cdot(\lambda\cdot\mu)$;
\item $\kappa\cdot(\lambda+\mu)=(\kappa\cdot\lambda)+(\kappa\cdot\mu)$;
\item If $\kappa\le\lambda$, then $\kappa+\mu\le\lambda+\mu$ and $\kappa\cdot\mu\le\lambda\cdot\mu$;
\item $\kappa+|\emptyset|=\kappa=|\kappa|\cdot|1|$;
\item  if $|2|\le\kappa$ and $|2|\le\lambda$, then $\kappa+\lambda\le\kappa\cdot \lambda$.
\end{enumerate}
\smallskip

\noindent{\em Hint to} (8): Fix two sets $x,y$ with $|x|=\kappa\ge|2|$ and $|y|=\lambda\ge |2|$. Fix points $a,b\in x$ and  $c,d\in y$ with $a\ne b$ and $c\ne d$. Consider the injective function $f:(\{0\}\times x)\cup(\{1\}\times y)\to x\times y$ assigning to each point $\langle 0,z\rangle\in \{0\}\times x)$ the ordered pair $\langle z,c\rangle$, to each point $\langle 1,z\rangle\in\{1\}\times (y\setminus\{c\}$ the ordered pair $\langle a,z\rangle$, and to the ordered pair $\langle 1,c\rangle$ the ordered pair $\langle b,d\rangle$. 
\end{exercise}


\begin{theorem}\label{t:aleph-times} For any ordinal $\alpha$ we have $\aleph_\alpha\cdot\aleph_\alpha=\aleph_\alpha$.
\end{theorem}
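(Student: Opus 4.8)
The plan is to reduce the statement to the single inequality $|\w_\alpha\times\w_\alpha|\le|\w_\alpha|$, since the reverse inequality $|\w_\alpha|\le|\w_\alpha\times\w_\alpha|$ is witnessed by the injection $\xi\mapsto\langle\xi,0\rangle$, after which the Cantor--Bernstein--Schr\"oder Theorem~\ref{t:CBDS} delivers equality. By Theorem~\ref{t:alephs} every infinite cardinal is of the form $\w_\alpha$, so it suffices to prove, by transfinite induction on the infinite cardinals $\kappa$ (viewed as a subclass of $\Ord$ well-ordered by $\E{\restriction}\Ord$), that $|\kappa\times\kappa|\le|\kappa|$. No appeal to the Axiom of Choice is required, because each $\kappa$ is already an ordinal and hence well-ordered, and so is $\kappa\times\kappa$.

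First I would introduce on $\Ord\times\Ord$ the canonical ``maximum'' order $\mathsf W$, declaring that $\langle\alpha_1,\beta_1\rangle$ precedes $\langle\alpha_2,\beta_2\rangle$ iff $\max(\alpha_1,\beta_1)<\max(\alpha_2,\beta_2)$, or these maxima coincide and $\langle\alpha_1,\beta_1\rangle$ precedes $\langle\alpha_2,\beta_2\rangle$ lexicographically. A routine check shows $\mathsf W$ is an irreflexive linear order, and it is well-founded because the maximum, the first coordinate, and the second coordinate are all ordinal-valued, so any nonempty subclass has a $\mathsf W$-least element. The decisive structural fact is that the initial interval $\cev{\mathsf W}(\alpha,\beta)$ is contained in $\gamma\times\gamma$ with $\gamma=\max(\alpha,\beta)+1$, hence is a set; thus $\mathsf W$ is set-like, and by Theorems~\ref{t:rank} and \ref{t:wOrd} its rank function $\Gamma=\rank_{\mathsf W}\colon\Ord\times\Ord\to\Ord$ is an injective order isomorphism onto its range.

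The heart of the argument is the inductive step. Fix an infinite cardinal $\kappa$ and assume that $|\lambda\times\lambda|=|\lambda|$ for every infinite cardinal $\lambda<\kappa$. I claim $\Gamma[\kappa\times\kappa]\subseteq\kappa$, which at once gives $|\kappa\times\kappa|=|\Gamma[\kappa\times\kappa]|\le|\kappa|$ by injectivity of $\Gamma$. Take $\langle\alpha,\beta\rangle\in\kappa\times\kappa$, set $\gamma=\max(\alpha,\beta)+1<\kappa$, and let $c$ be the cardinal of $\gamma$, so that $c\le\gamma<\kappa$. By Theorem~\ref{t:wOrd} the value $\Gamma(\alpha,\beta)$ is the order type of $\cev{\mathsf W}(\alpha,\beta)\subseteq\gamma\times\gamma$, hence an ordinal equipotent to a subset of $\gamma\times\gamma$, which injects into $c\times c$. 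If $c$ is finite, then $c\times c$ is finite and $\Gamma(\alpha,\beta)<\w\le\kappa$; if $c$ is infinite, the induction hypothesis supplies a bijection $c\times c\to c$, so $\Gamma(\alpha,\beta)$ injects into $c<\kappa$, and were $\Gamma(\alpha,\beta)\ge\kappa$ the cardinal $\kappa$ would inject into $c<\kappa$, contradicting that $\kappa$ is a cardinal. In either case $\Gamma(\alpha,\beta)\in\kappa$, as claimed.

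I expect the main technical nuisance to be the verification that $\mathsf W$ is genuinely a well-order (linearity and antisymmetry across its three clauses) and that the rank of a point really equals the order type of its bounded initial interval. The conceptual crux, however, is the boundedness estimate for $|c\times c|$, which is exactly where the induction hypothesis on smaller cardinals is consumed. The base of the induction needs no separate treatment: for $\kappa=\w$ every relevant $c$ is finite, so the finite case of the estimate applies and $\Gamma[\w\times\w]\subseteq\w$ follows directly, recovering $|\w\times\w|=|\w|$ as the starting instance.
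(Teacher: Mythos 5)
Your proof is correct and follows essentially the same route as the paper: the canonical ``maximum-then-lexicographic'' well-order on pairs of ordinals, the rank function of Theorem~\ref{t:wOrd} to convert the bound on initial intervals into the inclusion $\Gamma[\kappa\times\kappa]\subseteq\kappa$, consumption of the induction hypothesis exactly at the estimate $|c\times c|=|c|$ for the smaller cardinal $c$, and Theorem~\ref{t:CBDS} to finish. The only cosmetic difference is that you define the order globally on $\Ord\times\Ord$ and index the induction by cardinals rather than by the subscript $\alpha$ of $\w_\alpha$.
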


\begin{proof} This theorem will be proved by transfinite induction on $\alpha$. Assume that for some ordinal $\alpha$ and all its elements $\beta\in\alpha$ we have proved that $\aleph_\beta\cdot\aleph_\beta=\aleph_\beta$. 

Consider the cardinal $\w_\alpha$ and its square $\w_\alpha\times\w_\alpha$ endowed with the canonical well-order
\begin{multline*}
W=\{\langle\langle x,y\rangle,\langle x',y'\rangle\rangle\in(\w_\alpha\times\w_\alpha)\times(\w_\alpha\times\w_\alpha):\\
(x\cup y\subset x'\cup y')\;\vee\;(x\cup y=x'\cup y'\;\wedge\;x\in x')\;\vee\;(x\cup y=x'\cup y'\;\wedge\; x=x'\;\wedge\; y\in y')\}.
\end{multline*} 
By Theorem~\ref{t:wOrd}, there exists an order isomorphism $\rank_W:\dom[W^\pm]\to \rank(W)$. The definition of the well-order $W$ guarantees that for any $z\in \w_\alpha\times\w_\alpha$ the initial interval $\cev W(z)$ is contained in the square $\beta\times\beta$ of some ordinal $\beta\in\w_\alpha$. Since $\w_\alpha$ is a cardinal, $|\beta|<|\w_\alpha|=\aleph_\alpha$. By Theorem~\ref{t:alephs}, there exists an ordinal $\gamma$ such that $|\beta|=\aleph_\gamma$. Taking into account that $\aleph_\gamma=|\beta|<|\w_\alpha|=\aleph_\alpha$ and applying Proposition~\ref{p:walpha} and Theorem~\ref{t:ord}(6), we conclude that $\gamma<\alpha$. Then by the inductive assumption, $|\beta\times\beta|=\aleph_\gamma\cdot\aleph_\gamma=\aleph_\gamma<\aleph_\alpha=|\w_\alpha|$ and consequently, $|\cev W(z)| \le|\beta\times\beta|<|\w_\alpha|$.
Since $\rank_W:\w_\alpha\times\w_\alpha\to\rank(W)\subset\Ord$ is an order isomorphism, for every $z\in\rank(W)$ the initial interval $\cev E(z)=z\in\Ord$ has cardinality $|z|<|\w_\alpha|$ and hence $z\subset \w_\alpha$. Then $\rank(W)=\bigcup\{z:z\in\rank(W)\}\subseteq\w_\alpha$ and hence $\aleph_\alpha\cdot\aleph_\alpha\le |\w_\alpha\times\w_\alpha|=|\rank(W)|\le|\w_\alpha|=\aleph_\alpha$. The inequality $\aleph_\alpha\le\aleph_\alpha\cdot\aleph_\alpha$ is trivial. By Theorem~\ref{t:CBDS}, $\aleph_\alpha\cdot\aleph_\alpha=\aleph_\alpha$.
\end{proof}

Theorems~\ref{t:aleph-times} and \ref{t:CBDS} imply:

\begin{corollary}\label{c:aleph-plus} For any ordinals $\alpha\le\beta$ we have 
$$\aleph_\alpha+\aleph_\beta=\aleph_\alpha\cdot\aleph_\beta=\aleph_\beta.$$
\end{corollary}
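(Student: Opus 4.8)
The final statement is Corollary~\ref{c:aleph-plus}: for any ordinals $\alpha\le\beta$ we have $\aleph_\alpha+\aleph_\beta=\aleph_\alpha\cdot\aleph_\beta=\aleph_\beta$.

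The plan is to deduce everything from Theorem~\ref{t:aleph-times} (stating $\aleph_\gamma\cdot\aleph_\gamma=\aleph_\gamma$ for every ordinal $\gamma$), the Cantor--Bernstein--Schr\"oder Theorem~\ref{t:CBDS}, and the monotonicity properties of cardinal arithmetic collected in Exercise~\ref{ex:sum-prod-card}. The whole argument is a chain of inequalities squeezed together by Theorem~\ref{t:CBDS}, so there is no genuine obstacle; the only care needed is to invoke the correct monotonicity facts, which hold unconditionally (no Axiom of Choice is required since $\aleph_\alpha,\aleph_\beta$ are cardinalities of ordinals and hence are comparable via Theorem~\ref{t:ord}(6)).

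First I would establish the product equality. Since $\alpha\le\beta$ we have $\w_\alpha\subseteq\w_\beta$ by Proposition~\ref{p:walpha} (or by Theorem~\ref{t:ord}(4)), hence $\aleph_\alpha\le\aleph_\beta$. Using monotonicity of the product (Exercise~\ref{ex:sum-prod-card}(6)) and then Theorem~\ref{t:aleph-times} applied to $\gamma=\beta$, I get
\[
\aleph_\beta=\aleph_\beta\cdot|1|\le\aleph_\alpha\cdot\aleph_\beta\le\aleph_\beta\cdot\aleph_\beta=\aleph_\beta,
\]
where the first inequality uses $|1|\le\aleph_\alpha$ (as $\aleph_\alpha$ is infinite). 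By Theorem~\ref{t:CBDS} all these cardinalities coincide, giving $\aleph_\alpha\cdot\aleph_\beta=\aleph_\beta$.

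Next I would handle the sum. The inclusion of $\{1\}\times\w_\beta$ into $(\{0\}\times\w_\alpha)\cup(\{1\}\times\w_\beta)$ gives $\aleph_\beta\le\aleph_\alpha+\aleph_\beta$. For the reverse direction I apply Exercise~\ref{ex:sum-prod-card}(8): since $|2|\le\aleph_\alpha$ and $|2|\le\aleph_\beta$, one has $\aleph_\alpha+\aleph_\beta\le\aleph_\alpha\cdot\aleph_\beta$, which equals $\aleph_\beta$ by the product equality just proved. Thus
\[
\aleph_\beta\le\aleph_\alpha+\aleph_\beta\le\aleph_\alpha\cdot\aleph_\beta=\aleph_\beta,
\]
and another application of Theorem~\ref{t:CBDS} yields $\aleph_\alpha+\aleph_\beta=\aleph_\beta$. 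Combining the two computations gives the desired chain $\aleph_\alpha+\aleph_\beta=\aleph_\alpha\cdot\aleph_\beta=\aleph_\beta$.
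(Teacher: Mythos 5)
Your proof is correct and follows exactly the route the paper intends: the paper states the corollary with only the remark that it follows from Theorem~\ref{t:aleph-times} and Theorem~\ref{t:CBDS}, and your squeeze arguments (via the monotonicity facts of Exercise~\ref{ex:sum-prod-card}, in particular items (6) and (8)) are precisely the omitted details.
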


In its turn, Corollaries~\ref{c:aleph-plus}  and \ref{c:alephAC} imply

\begin{corollary}\label{c:card-plus-AC} Assume that Axiom of Choice. Then for any infinite cardinalities $\kappa,\lambda$ we have $$\kappa+\lambda=\kappa\cdot\lambda=\max\{\kappa,\lambda\}.$$
\end{corollary}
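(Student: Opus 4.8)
The plan is to reduce the statement to the aleph-indexed case, where all the arithmetic has already been carried out. First I would invoke the Axiom of Choice together with Corollary~\ref{c:alephAC} to write the two given infinite cardinalities as $\kappa=\aleph_\alpha$ and $\lambda=\aleph_\beta$ for suitable ordinals $\alpha,\beta$. This is precisely the point where choice is used: without it an infinite cardinality need not be an aleph, and the identification with a $\w_\gamma$ could fail.

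Next, by the trichotomy of ordinals (Theorem~\ref{t:ord}(6)) I may assume without loss of generality that $\alpha\le\beta$. This is legitimate because cardinal addition and multiplication are commutative by Exercise~\ref{ex:sum-prod-card}(1,3), so interchanging $\kappa$ and $\lambda$ alters none of the three quantities $\kappa+\lambda$, $\kappa\cdot\lambda$, and $\max\{\kappa,\lambda\}$. With $\alpha\le\beta$ fixed, Corollary~\ref{c:aleph-plus} then yields at once
$$\kappa+\lambda=\aleph_\alpha+\aleph_\beta=\aleph_\beta\quad\text{and}\quad \kappa\cdot\lambda=\aleph_\alpha\cdot\aleph_\beta=\aleph_\beta.$$

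It remains only to identify $\aleph_\beta$ with $\max\{\kappa,\lambda\}$. The canonical injection of $y$ into the disjoint union $(\{0\}\times x)\cup(\{1\}\times y)$ witnesses $\lambda\le\kappa+\lambda$, and symmetrically $\kappa\le\kappa+\lambda$; since we have just shown $\kappa+\lambda=\aleph_\beta=\lambda$, the inequality $\kappa\le\kappa+\lambda=\lambda$ gives $\kappa=\aleph_\alpha\le\aleph_\beta=\lambda$. Hence $\max\{\kappa,\lambda\}=\lambda=\aleph_\beta$, which matches both computed values. I do not anticipate any real obstacle here: the substantive work lies entirely in Theorem~\ref{t:aleph-times} (the square of an aleph equals itself) and its Corollary~\ref{c:aleph-plus}, and the present statement is essentially their repackaging under the Axiom of Choice via Corollary~\ref{c:alephAC}.
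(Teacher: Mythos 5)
Your proof is correct and follows exactly the route the paper intends: it reduces to alephs via Corollary~\ref{c:alephAC} (the only place choice is used), applies Corollary~\ref{c:aleph-plus} after a harmless commutativity-based normalization $\alpha\le\beta$, and identifies the common value with $\max\{\kappa,\lambda\}$. The paper states this corollary as an immediate consequence of those same two results, so your argument is just the fully written-out version of its proof.
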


We are going to show that the Axiom of Choice cannot be removed from Corollary~\ref{c:card-plus-AC}.

\begin{lemma}\label{l:tarski0} If for some sets $x,y,z,\alpha$ we have $|x|+|\alpha|=|y\times z|$, then $|z|\le|x|$ or $|y|\le^*|\alpha|$.
\end{lemma}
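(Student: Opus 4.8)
The plan is to work with a bijection witnessing the hypothesis and to analyze how the image of the $\alpha$-part sits inside the product $y\times z$, viewed as a union of ``rows'' $\{b\}\times z$, $b\in y$. First I would fix a bijective function $h\colon(\{0\}\times x)\cup(\{1\}\times \alpha)\to y\times z$, which exists by the hypothesis $|x|+|\alpha|=|y\times z|$ and the definition of the sum of cardinalities. Let $p\colon y\times z\to y$, $p\colon\langle b,c\rangle\mapsto b$, be the projection to the first coordinate, and set $A=h[\{1\}\times\alpha]$. The whole argument will turn on the dichotomy $p[A]=y$ versus $p[A]\ne y$.

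If $p[A]=y$, I would define $g\colon\alpha\to y$ by $g(a)=p(h(\langle 1,a\rangle))$. Then $g[\alpha]=p[h[\{1\}\times\alpha]]=p[A]=y$, so $g$ is a surjection of $\alpha$ onto $y$, witnessing $|y|\le^*|\alpha|$. If instead $y=\emptyset$, the conclusion $|y|\le^*|\alpha|$ holds immediately by the definition of $\le^*$, so this branch gives the second alternative of the lemma.

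If $p[A]\ne y$, I would pick $b_0\in y\setminus p[A]$. Since no element of $A$ has first coordinate $b_0$, the row $\{b_0\}\times z$ is disjoint from $A$; as $h$ is onto we have $y\times z=A\cup h[\{0\}\times x]$, whence $\{b_0\}\times z\subseteq h[\{0\}\times x]$. Thus for every $c\in z$ the pair $h^{-1}(\langle b_0,c\rangle)$ lies in $\{0\}\times x$ and has the form $\langle 0,\psi(c)\rangle$ for a unique $\psi(c)\in x$. The resulting function $\psi\colon z\to x$ is injective because $h^{-1}$ is: from $\psi(c)=\psi(c')$ one gets $h^{-1}(\langle b_0,c\rangle)=\langle 0,\psi(c)\rangle=\langle 0,\psi(c')\rangle=h^{-1}(\langle b_0,c'\rangle)$, hence $\langle b_0,c\rangle=\langle b_0,c'\rangle$ and $c=c'$. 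This yields $|z|\le|x|$, the first alternative of the lemma.

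The existence of the auxiliary functions $g$ and $\psi$ as genuine sets is routine: each is definable from $h$, $p$ and $b_0$ together with the already-constructed projections $\dom,\rng$, so Theorem~\ref{t:class} applies. The only real idea here is the dichotomy on $p[A]$ — whether the image of $\alpha$ meets every row of $y\times z$ — and so I do not expect a genuine obstacle; the main thing to watch is the bookkeeping with Kuratowski pairs and the careful reading of the definition of $\le^*$, which already absorbs the degenerate case $y=\emptyset$.
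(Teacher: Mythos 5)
Your proof is correct and follows essentially the same route as the paper's: both arguments reduce to the single dichotomy of whether some row $\{b\}\times z$ is entirely covered by the $x$-part of the decomposition (equivalently, whether the projection of the $\alpha$-part onto $y$ is surjective), extracting an injection $z\to x$ in the first case and a surjection $\alpha\to y$ in the second. The only cosmetic difference is that you work with an explicit bijection from the tagged disjoint union while the paper phrases the hypothesis as $y\times z=f[x]\cup g[\alpha]$ with disjoint injective images, and uses $\dom\circ g$ for the projection.
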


\begin{proof} The equality  $|x|+|\alpha|=|y\times z|$ implies that $y\times z=f[x]\cup g[\alpha]$ for some injective functions $f:x\mapsto y\times z$ and $g:\alpha\to y\times z$ with $f[x]\cap g[\alpha]=\emptyset$. If for some $v\in y$ the set $\{v\}\times z$ is a subset of $f[x]$, then the injective function $f^{-1}{\restriction}_{\{v\}\times z}$ witnesses that $|z|\le|x|$. So, assume that for every $v\in y$ the set $\{v\}\times z$ is not contained in $f[x]$.
 Then it intersects the set $g[\alpha]=(y\times z)\setminus f[x]$ and the function $\dom\circ g:\alpha\to y$ is surjective, witnessing that $|y|\le^*|\alpha|$.
\end{proof}

Combining Lemma~\ref{l:tarski0} with Proposition~\ref{p:card-ineq}(2), we obtain the following lemma.

\begin{lemma}\label{l:tarski} If for some sets $x,y,z$ and ordinal $\alpha$ we have $|x|+|\alpha|=|y\times z|$, then $|z|\le|x|$ or $|y|\le|\alpha|$.
\end{lemma}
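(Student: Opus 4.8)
The plan is to obtain this lemma directly from Lemma~\ref{l:tarski0} by upgrading the surjective comparison $\le^*$ to the injective comparison $\le$, which is precisely the service provided by Proposition~\ref{p:card-ineq}(2). So the whole statement should fall out as a short corollary of the two immediately preceding results, exactly as the sentence introducing the lemma suggests.

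First I would apply Lemma~\ref{l:tarski0} verbatim to the hypothesis $|x|+|\alpha|=|y\times z|$. This yields at once the dichotomy: either $|z|\le|x|$, or $|y|\le^*|\alpha|$. In the first case the conclusion of the present lemma already holds, and there is nothing more to prove.

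The only real work is in the second case, where I have $|y|\le^*|\alpha|$ and must produce $|y|\le|\alpha|$. Here I would invoke Proposition~\ref{p:card-ineq}(2), which asserts that for a well-orderable class $Y$ the relations $|X|\le|Y|$ and $|X|\le^*|Y|$ coincide. Taking $X=y$ and $Y=\alpha$, it remains to observe that $\alpha$, being an ordinal, is well-orderable: the restriction $(\E\cup\Id){\restriction}\alpha$ is a set-like reflexive well-order whose underlying class is $\alpha$, as follows from Definition~\ref{d:ordinals} and Theorem~\ref{t:Ord}(2). Consequently $|y|\le^*|\alpha|$ implies $|y|\le|\alpha|$, which finishes the second case.

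I do not expect any genuine obstacle, since the statement is essentially a restatement of Lemma~\ref{l:tarski0} modulo one conversion. The single point deserving a word of care is the degenerate situation $\alpha=\emptyset$ (or very small $\alpha$): there $|y|\le^*|\alpha|$ forces $y=\emptyset$ by the very definition of $\le^*$, whence the empty injection witnesses $|y|\le|\alpha|$ directly, so the argument is unaffected. Thus the proof reduces to citing the two earlier results and verifying that every ordinal is well-orderable.
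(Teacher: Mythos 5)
Your proof is correct and follows exactly the route the paper intends: the lemma is stated in the text as an immediate consequence of combining Lemma~\ref{l:tarski0} with Proposition~\ref{p:card-ineq}(2), which is precisely your argument, and your care about the degenerate empty cases (needed since Proposition~\ref{p:card-ineq} is stated for nonempty classes) is a welcome extra detail.
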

\begin{theorem}[Tarski] The following conditions are equivalent.
\begin{enumerate}
\item[\textup{1)}] For any infinite sets $x,y$ we have $|x|+|y|=|x|\cdot|y|$;
\item[\textup{2)}]  For any infinite set $x$ we have $|x|+|x^+|=|x|\cdot|x^+|$;
\item[\textup{3)}]  For any infinite set $x$ we have $|x\times x|=|x|$;
\item[\textup{4)}]  The Axiom of Choice holds.
\end{enumerate}
\end{theorem}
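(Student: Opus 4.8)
The plan is to establish the four statements as equivalent by running the cycle $(4)\Rightarrow(3)\Rightarrow(1)\Rightarrow(2)\Rightarrow(4)$, arranged so that three of the four links are routine and the single genuinely substantial step is isolated at the end. First I would treat $(4)\Rightarrow(3)$: assuming the Axiom of Choice, Corollary~\ref{c:alephAC} provides for every infinite set $x$ an ordinal $\alpha$ with $|x|=\aleph_\alpha$, and then Theorem~\ref{t:aleph-times} gives $|x\times x|=\aleph_\alpha\cdot\aleph_\alpha=\aleph_\alpha=|x|$, which is exactly (3).

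Next I would prove $(3)\Rightarrow(1)$ by a squaring trick. Given infinite sets $x,y$, form the disjoint union $x\sqcup y=(\{0\}\times x)\cup(\{1\}\times y)$, whose cardinality is $|x|+|y|$ by definition; it is infinite since it contains an injective copy of $x$. Applying (3) to $x\sqcup y$ yields $(|x|+|y|)\cdot(|x|+|y|)=|x|+|y|$. Because $|x|\le|x|+|y|$ and $|y|\le|x|+|y|$, monotonicity of the product (Exercise~\ref{ex:sum-prod-card}(6)) gives $|x|\cdot|y|\le(|x|+|y|)^2=|x|+|y|$; conversely $|2|\le|x|$ and $|2|\le|y|$, so Exercise~\ref{ex:sum-prod-card}(8) gives $|x|+|y|\le|x|\cdot|y|$. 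The Cantor--Bernstein--Schr\"oder Theorem~\ref{t:CBDS} then forces $|x|+|y|=|x|\cdot|y|$.

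The implication $(1)\Rightarrow(2)$ is a mere specialization: for infinite $x$ every finite ordinal injects into $x$ by Theorem~\ref{t:comp-n}, so $\w\subseteq x^+$ and the Hartogs cardinal $x^+$ is infinite; thus (1) applied to the pair $x,x^+$ is precisely (2). The decisive step is $(2)\Rightarrow(4)$. Given (2), for infinite $x$ we have $|x|+|x^+|=|x|\cdot|x^+|=|x\times x^+|$. I would feed this equality into Lemma~\ref{l:tarski}, taking the lemma's ordinal to be $x^+$ and writing the product as $x\times x^+$ (so the lemma's factors are $y=x$ and $z=x^+$); the conclusion is $|x^+|\le|x|$ or $|x|\le|x^+|$. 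The first alternative is impossible by the very definition of $x^+$ as the least ordinal admitting no injection into $x$, so $|x|\le|x^+|$, meaning $x$ injects into an ordinal and is therefore well-orderable. Finite sets are trivially well-orderable, hence every set can be well-ordered, and Theorem~\ref{t:mainAC} (the equivalence $\mathsf{WO}\Leftrightarrow\mathsf{AC}$) delivers the Axiom of Choice.

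The main obstacle is exactly this last link: the other three implications amount to aleph arithmetic and Cantor--Bernstein bookkeeping, whereas $(2)\Rightarrow(4)$ requires seeing that the Hartogs number $x^+$ together with the combinatorial dichotomy of Lemma~\ref{l:tarski} converts a single instance of ``sum equals product'' into an injection of $x$ into the ordinals. I would also be careful to apply the lemma to the factorization $x\times x^+$ rather than $x^+\times x$, since the reversed choice produces only the vacuous conclusion $|x|\le|x|$ and carries no information.
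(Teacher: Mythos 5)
Your proof is correct and uses the same key ingredients as the paper's: Lemma~\ref{l:tarski} applied to the equality $|x|+|x^+|=|x\times x^+|$ for the decisive implication $(2)\Rightarrow(4)$, the squaring trick on a disjoint union to extract a sum-equals-product statement from (3), and aleph arithmetic for the consequences of (4). The only difference is organizational --- you run a single cycle $(4)\Rightarrow(3)\Rightarrow(1)\Rightarrow(2)\Rightarrow(4)$, proving the general $(3)\Rightarrow(1)$ where the paper proves only the special case $(3)\Rightarrow(2)$ and handles $(4)\Rightarrow(1)$ directly via Corollary~\ref{c:card-plus-AC} --- and your cautionary remark about which factorization to feed into Lemma~\ref{l:tarski} (using $x\times x^+$ rather than $x^+\times x$) is exactly right.
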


\begin{proof} The implication $(1)\Ra(2)$ is trivial.
\smallskip

$(2)\Ra(4)$: Assuming some set $x$ has $|x|+|x^+|=|x|\cdot|x^+|$, we can apply Lemma~\ref{l:tarski} and conclude that $|x^+|\le|x|$ or $|x|\le|x^+|$. The inequality $|x^+|\le|x|$ contradicts the definition of the ordinal $x^+$. Therefore, $|x|\le|x^+|$ which implies that the set $x$ admits an injective function into the ordinal $x^+$ and hence $x$ can be well-ordered. By Theorem~\ref{t:mainAC}, the Axiom of Choice holds.

The implication $(4)\Ra(1)$ has been proved in Corollary~\ref{c:card-plus-AC}.

The implication $(4)\Ra(3)$ follows from Corollary~\ref{c:card-plus-AC}. 

$(3)\Ra(2)$: Given any infinite set $x$, consider the set $y=(\{0\}\times x)\cup(\{1\}\times x^+)$. By (3) and Exercise~\ref{ex:sum-prod-card}(8), we have
$$ 
|y\times y|=|y|=|x|+|x^+|\le|x\times x^+|\le |y\times y|$$ and hence 
$|x|+|x^+|=|x\times x^+|$.
\end{proof}

\begin{theorem}\label{t:AC-union} Let $I$ be a set and $(x_i)_{i\in I}$ be an indexed family of sets and $\kappa$ be an infinite cardinal such that $|I|\le|\kappa|$ and $|x_i|\le|\kappa|$ for all $i\in I$. If the Axiom of Choice holds, then $|\bigcup_{i\in I}x_i|\le|\kappa|$.
\end{theorem}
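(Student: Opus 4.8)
Let $I$ be a set, $(x_i)_{i\in I}$ an indexed family of sets, and $\kappa$ an infinite cardinal with $|I|\le|\kappa|$ and $|x_i|\le|\kappa|$ for all $i\in I$. Under the Axiom of Choice, $\big|\bigcup_{i\in I}x_i\big|\le\kappa$.

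My plan is to reduce the union to the image of a product of two copies of $\kappa$, and then invoke the idempotence of $\kappa$ under multiplication. First I would use the Axiom of Choice in the form $(\Pi)$ from Theorem~\ref{t:mainAC}: for each $i\in I$ the hypothesis $|x_i|\le|\kappa|$ guarantees a nonempty set of injective functions $f\colon x_i\to\kappa$, so the Cartesian product of these nonempty sets over the index set $I$ is nonempty. Choosing a point in this product yields an indexed family $(f_i)_{i\in I}$ where each $f_i\colon x_i\to\kappa$ is injective. Similarly, since $|I|\le|\kappa|$, I would fix one injective function $h\colon I\to\kappa$.

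With these data in hand, the key step is to build an injective function from $\bigcup_{i\in I}x_i$ into $\kappa\times\kappa$. The obstacle is that an element $z\in\bigcup_{i\in I}x_i$ may belong to several $x_i$, so I cannot simply send $z\mapsto\langle h(i),f_i(z)\rangle$ without first pinning down a canonical index. To resolve this I would again use that $\kappa$ is well-ordered (it is an ordinal): for each $z\in\bigcup_{i\in I}x_i$ the set $\{h(i):i\in I,\ z\in x_i\}\subseteq\kappa$ is nonempty, so I define $\iota(z)$ to be the unique $i\in I$ with $h(i)$ the $\E$-least element of this set (injectivity of $h$ makes this $i$ unique). Then the assignment
$$
g\colon\bigcup_{i\in I}x_i\to\kappa\times\kappa,\qquad g\colon z\mapsto\big\langle h(\iota(z)),\,f_{\iota(z)}(z)\big\rangle
$$
is well-defined, and I claim it is injective: if $g(z)=g(z')$ then the first coordinates force $h(\iota(z))=h(\iota(z'))$, hence $\iota(z)=\iota(z')=:i$ by injectivity of $h$, and then the second coordinates give $f_i(z)=f_i(z')$, whence $z=z'$ since $f_i$ is injective.

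Finally I would conclude by cardinal arithmetic. The injection $g$ witnesses $\big|\bigcup_{i\in I}x_i\big|\le|\kappa\times\kappa|=|\kappa|\cdot|\kappa|$. Since $\kappa$ is an infinite cardinal, Theorem~\ref{t:alephs} (or its equivalent under the Axiom of Choice via Corollary~\ref{c:alephAC}) gives $\kappa=\w_\alpha$ for some ordinal $\alpha$, and Theorem~\ref{t:aleph-times} yields $\aleph_\alpha\cdot\aleph_\alpha=\aleph_\alpha$, i.e.\ $|\kappa|\cdot|\kappa|=|\kappa|$. Combining these inequalities gives $\big|\bigcup_{i\in I}x_i\big|\le\kappa$, as desired. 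I expect the construction of the canonical index function $\iota$ (handling overlapping $x_i$) to be the only genuinely delicate point; the choice steps and the closing arithmetic are routine applications of results already established.
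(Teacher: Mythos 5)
Your proof is correct and follows essentially the same route as the paper's: choose the injections $f_i\colon x_i\to\kappa$ via the product form of the Axiom of Choice, use an injection of $I$ into $\kappa$ to induce a well-order that pins down a canonical index for each element of the union, inject the union into $\kappa\times\kappa$, and finish with $|\kappa\times\kappa|=|\kappa|$ from Theorems~\ref{t:aleph-times} and~\ref{t:alephs}. Your version even fixes a small typo in the paper's final formula (which writes $g(u)$ where $g(\mu(u))$ is meant).
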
 

\begin{proof} For every $i\in I$ consider the set $F_i$ of all injective functions from the set $x_i$ to the cardinal $\kappa$. By our assumption, $|x_i|\le|\kappa|$ and hence the set $F_i$ is not empty. By the Axiom of Choice, the Cartesion product $\prod_{i\in I}F_i$ is not empty and hence contains some indexed family of injective functions $(f_i)_{i\in I}$. Since $|I|\le\kappa$, there exists an injective function $g:I\to\kappa$. The function $g$ induced the well-order $W=\{\langle i,j\rangle\in I\times I:g(i)\in g(j)\}$ on the index set $I$. For every $u\in\bigcup_{i\in I}X_i$ let $\mu(u)$ be the unique $W$-minimal element of the non-empty set $\{i\in I: u\in x_i\}$. Then the injective function $$h:\bigcup_{i\in I}x_i\to \kappa\times\kappa,\quad h:u\mapsto \langle g(u),f_{\mu(u)}(u)\rangle$$witnesses that
$|\textstyle\bigcup_{i\in I}x_i|\le |\kappa\times\kappa|=|\kappa|$, where the last equality follows from Theorems~\ref{t:aleph-times} and \ref{t:alephs}.
\end{proof}

Next, we consider the exponentiation of cardinalities.

\begin{exercise} For any nonzero  cardinalities $\kappa,\lambda,\mu$, the exponentiation has the following properties:
\begin{itemize}
\item $\kappa^{\lambda+\mu}=\kappa^\lambda\cdot\kappa^\mu$;
\item $\kappa^{\lambda\cdot\mu}=(\kappa^\lambda)^{\mu}$;
\item If $\kappa\le\lambda$, then $\kappa^\mu\le\lambda^\mu$;
\item If $\lambda\le^*\mu$, then $\kappa^\lambda\le\kappa^\mu$.
\end{itemize}
\end{exercise}

\begin{exercise} Prove that $|2^x|=|\mathcal P(x)|$ for every set $x$.
\smallskip

\noindent{\em Hint}: Observe that the function $\xi:2^x\to\mathcal P(x)$, $\xi:f\mapsto f^{-1}[\{1\}]$, is bijective.
\end{exercise} 

\begin{exercise} Prove that $|x|<|2^x|$ for any set $x$.
\end{exercise}

Many results on exponents of cardinalites can be derived from K\H onig's Theorem~\ref{t:Konig}, which compares the cardinalities of sum and products of cardinals.

For an indexed family of classes $(X _i)_{i\in I}$, consider the class 
$$\textstyle\sum\limits_{i\in I}X_i=\bigcup\limits_{i\in I}\{i\}\times X_i.$$

\begin{lemma}\label{l:Konig1} Let $I$ be a set and $(\kappa_i)_{i\in I}$ and $(\lambda_i)_{i\in I}$ be two indexed families of cardinals. If  $\forall i\in I\;(\max\{2,\kappa_i\}\le\lambda_i)$, then $|\sum_{i\in I}\kappa_i|\le|\prod_{i\in I}\lambda_i|$.
\end{lemma}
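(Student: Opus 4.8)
The plan is to produce, entirely explicitly, an injective function $F$ from $\sum_{i\in I}\kappa_i=\bigcup_{i\in I}\{i\}\times\kappa_i$ into $\prod_{i\in I}\lambda_i$; by the definition of $|{\cdot}|\le|{\cdot}|$ this is exactly the assertion. First I would record the two consequences of the hypothesis that make the construction canonical. Since cardinals are ordinals and $\kappa_i\le\lambda_i$, we have $\kappa_i\subseteq\lambda_i$, so every $x\in\kappa_i$ is automatically an element of $\lambda_i$; and since $2\le\lambda_i$, the ordinals $0$ and $1$ belong to every $\lambda_i$. No choices are made in extracting these data, so the map below is given by a formula and exists as a genuine function by the G\"odel class existence Theorem~\ref{t:class}, and is a set, its domain and codomain being sets.

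For $\langle i,x\rangle$ with $x\in\kappa_i$ I would set $F(\langle i,x\rangle)=\phi_{i,x}$, where $\phi_{i,x}$ is defined on all of $I$ by
$$\phi_{i,x}(j)=\begin{cases} x & \mbox{if $j=i$,}\\ 1 & \mbox{if $j\ne i$ and $x=0$,}\\ 0 & \mbox{if $j\ne i$ and $x\ne 0$.}\end{cases}$$
Writing $\beta(x)$ for the common background value (so $\beta(0)=1$ and $\beta(x)=0$ for $x\ne 0$), one checks $\phi_{i,x}(j)\in\lambda_j$ for every $j$ (the value at $j=i$ lies in $\kappa_i\subseteq\lambda_i$, and each background value lies in $\{0,1\}\subseteq\lambda_j$), so indeed $\phi_{i,x}\in\prod_{j\in I}\lambda_j$. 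The reason for carrying two background values — and this is precisely where the hypothesis $2\le\lambda_i$ is spent — is to let a reader of $\phi_{i,x}$ recover both the index $i$ and the value $x$.

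For injectivity suppose $\phi_{i,x}=\phi_{i',x'}$. If $i=i'$, evaluating at coordinate $i$ gives $x=x'$. If $i\ne i'$ and $I$ contains a third index $j_0\notin\{i,i'\}$, then evaluating at $i$, at $i'$, and at $j_0$ yields $x=\beta(x')$, $x'=\beta(x)$, and $\beta(x)=\beta(x')$; combining the first and third forces $x=\beta(x)$, which is impossible since $\beta(x)\in\{0,1\}$ always differs from $x$. Hence $F$ is injective as soon as $I$ has at least three elements, in particular whenever $I$ is infinite. The main obstacle is exactly the remaining low-index cases, where this encoding genuinely breaks down: without a third coordinate all pairs $\langle i,0\rangle$ would collapse, and for a two-element $I$ the pairs $\langle i,0\rangle$ and $\langle i',1\rangle$ coincide. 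I would dispatch these separately. For $|I|\le 1$ the inequality is immediate ($\prod_{i\in I}\lambda_i$ is $\{\emptyset\}$ or is in bijection with $\lambda_i$, and $\kappa_i\le\lambda_i$). For $|I|=2$, say $I=\{a,b\}$, the coordinatewise inclusion $\kappa_i\subseteq\lambda_i$ gives $|\sum_{i\in I}\kappa_i|\le|\lambda_a|+|\lambda_b|$, and then Exercise~\ref{ex:sum-prod-card}(8) applies, since $|\lambda_a|\ge 2$ and $|\lambda_b|\ge 2$ yield $|\lambda_a|+|\lambda_b|\le|\lambda_a|\cdot|\lambda_b|=|\prod_{i\in I}\lambda_i|$. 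In each case an injection is produced, which is all that is required.
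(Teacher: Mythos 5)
Your proof is correct and follows essentially the same route as the paper: the injection you define (value $x$ at coordinate $i$, background $1$ when $x=0$ and $0$ when $x\ne 0$) is literally the paper's function $f_{\langle i,x\rangle}$, with the cases $|I|\le 2$ likewise dispatched via Exercise~\ref{ex:sum-prod-card}(8). Your write-up is somewhat more explicit about why injectivity needs a third coordinate, but the argument is the same.
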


\begin{proof}  If $|I|\le |2|$ then the inequality $|\sum_{i\in I}\kappa_i|\le|\prod_{i\in I}\lambda_i|$ follows from Exercise~\ref{ex:sum-prod-card}(8). So, we assume that $|I|\ge |3|$.

In this case the inequality $|\sum_{i\in I}\kappa_i|\le|\prod_{i\in I}\lambda_i|$ is witnessed by the injective function $f:\sum_{i\in I}\kappa_i\to\prod_{i\in I}\lambda_i$ assigning to every ordered pair $\langle i,x\rangle\in \bigcup_{j\in I}(\{j\}\times\kappa_j)$ the function $f_{\langle i,x\rangle}:I\to \bigcup_{i\in I}\lambda_i$ such that 
$$f_{\langle i,x\rangle}(j)=
\begin{cases}
0&\mbox{if $x>0$ and $j\in I\setminus\{i\}$};\\
x&\mbox{if $x>0$ and $j=i$};\\
1&\mbox{$x=0$ and $j\in I\setminus\{i\}$};\\
0&\mbox{$x=0$ and $j=i$}.
\end{cases}
$$
\end{proof}

\begin{lemma}\label{l:Konig2}  Let $I$ be a set,  $(x_i)_{i\in I}$ an indexed family of sets and $(\lambda_i)_{i\in I}$ an indexed family of cardinals. If  $\forall i\in I\;|\lambda_i|\not\le^*|x_i|$, then $|\prod_{i\in I}\lambda_i|\not\le^* |\sum_{i\in I}x_i|$.
\end{lemma}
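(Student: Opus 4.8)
The plan is to run a König-style diagonal argument, taking care to make every selection canonical so that no form of the Axiom of Choice is invoked. Two things must be established: that $\prod_{i\in I}\lambda_i$ is nonempty (so that the conclusion $\not\le^*$ is not vacuously false), and that no function from $\sum_{i\in I}x_i$ onto $\prod_{i\in I}\lambda_i$ can be surjective. Unfolding the definition of $\le^*$, these two facts are exactly what is needed, since $|\prod_{i\in I}\lambda_i|\le^*|\sum_{i\in I}x_i|$ would mean either that such a surjection exists or that $\prod_{i\in I}\lambda_i$ is empty.

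First I would dispose of nonemptiness. The hypothesis $|\lambda_i|\not\le^*|x_i|$ forces each $\lambda_i$ to be nonempty, since otherwise $|\lambda_i|\le^*|x_i|$ would hold by the very definition of $\le^*$. As each $\lambda_i$ is a cardinal, hence a nonempty ordinal, we have $\emptyset=0\in\lambda_i$, so the constant function $I\to\bigcup_{i\in I}\lambda_i$, $i\mapsto 0$, is a well-defined element of $\prod_{i\in I}\lambda_i$. Thus the product is nonempty.

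For the main part I would argue by contradiction, assuming a surjection $F\colon\sum_{i\in I}x_i\to\prod_{i\in I}\lambda_i$. Recall $\sum_{i\in I}x_i=\bigcup_{i\in I}\{i\}\times x_i$, so its elements are ordered pairs $\langle i,t\rangle$ with $t\in x_i$. For each $i\in I$ the value $F(\langle i,t\rangle)$ is a function with domain $I$, so reading off its $i$-th coordinate defines a function $g_i\colon x_i\to\lambda_i$, $g_i(t)=F(\langle i,t\rangle)(i)$, definable from $F$ by Theorem~\ref{t:class}. Since there is no surjection $x_i\to\lambda_i$, the function $g_i$ fails to be surjective, and hence $\lambda_i\setminus g_i[x_i]$ is a nonempty subclass of the ordinal $\lambda_i$. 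This is precisely the point where a choice would ordinarily be required; I avoid it by using that $\lambda_i$ is well-ordered by $\E$ and setting $d(i)=\min(\lambda_i\setminus g_i[x_i])$, its $\E$-least element. The resulting $d$ is a well-defined member of $\prod_{i\in I}\lambda_i$, its existence following from Theorem~\ref{t:class} and the Axiom of Replacement applied to the set $I$.

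It then remains to verify $d\notin\rng[F]$. If $d=F(\langle i,t\rangle)$ for some $i\in I$ and $t\in x_i$, evaluating at $i$ gives $d(i)=F(\langle i,t\rangle)(i)=g_i(t)\in g_i[x_i]$, contradicting the fact that $d(i)=\min(\lambda_i\setminus g_i[x_i])\notin g_i[x_i]$. Hence $F$ is not surjective, which together with the nonemptiness of $\prod_{i\in I}\lambda_i$ yields $|\prod_{i\in I}\lambda_i|\not\le^*|\sum_{i\in I}x_i|$. The only genuine obstacle is the elimination of the Axiom of Choice when selecting the diagonal values; everything else is routine bookkeeping, and the obstacle dissolves exactly because the $\lambda_i$ are cardinals and are therefore canonically well-ordered by $\E$.
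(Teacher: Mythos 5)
Your proof is correct and follows essentially the same diagonal argument as the paper: restrict the putative surjection to $\{i\}\times x_i$, project to the $i$-th coordinate, and use the $\E$-least element of the complement of the image in the well-ordered cardinal $\lambda_i$ to build an element of the product missed by the surjection, with no appeal to choice. Your explicit verification that $\prod_{i\in I}\lambda_i$ is nonempty (so that $\le^*$ cannot hold vacuously) is a small point of extra care that the paper's proof passes over silently, but it does not change the substance of the argument.
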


\begin{proof} To derive a contradiction, assume that $|\prod_{i\in I}\lambda_i|\le^* |\sum_{i\in I}x_i|$ and find a surjective function $f:\sum_{i\in I}x_i\to\prod_{i\in I}\lambda_i$. For every $i\in I$ denote by $$\pr_i:\prod_{j\in I}\lambda_j\to \lambda_i,\quad \pr_i:g\mapsto g(i),$$ the projection onto the $i$-th coordinate. It follows from $|\lambda_i|\not\le^*|x_i|$ that $\pr_i\circ f[\{i\}\times x_i]\ne\lambda_i$. So, we can define a function $g\in\prod_{j\in I}\lambda_j$ assigning to every $i\in I$ the unique $\E$-minimal element of the nonempty subset $\lambda_i\setminus\pr_i\circ f[\{i\}\times x _i]$ of the cardinal $\lambda_i$.
 
 By the surjectivity of $f$, there exists $i\in I$ and $z\in x_i$ such that $g=f(i,z )$. Then $g(i)=\pr_i\circ f(i,z)\in\pr_i\circ f[\{i\}\times x_i]$, which contradicts the definition of $g$. 
 \end{proof}

Lemmas~\ref{l:Konig1} and \ref{l:Konig2} imply the following 

\begin{theorem}[K\H onig]\label{t:Konig2} Let $I$ be a set, and $(\kappa_i)_{i\in I}$ and $(\lambda_i)_{i\in I}$ be two indexed families of cardinals. If\/ $\forall i\in I\;(\kappa_i<\lambda_i)$, then $|\sum_{i\in I}\kappa_i|<|\prod_{i\in I}\lambda_i|$.
\end{theorem}
 
 \begin{remark} For $\kappa_i=1$ and $\lambda_i=2$, K\H onig's Theorem~\ref{t:Konig2} implies the strict inequality $|I|=|\sum_{i\in I}\kappa_i|<|\prod_{i\in I}\lambda_i|=|2^I|$, which has been established in Corollary~\ref{c:CantorP}.
 \end{remark}

\section{Cofinality of cardinals}

\begin{definition} The \index{cofinality}\index{ordinal!cofinality of}{\em cofinality} $\cf(\alpha)$ of an ordinal $\alpha$ is the smallest cardinal $\lambda$ for which there exists a function $f:\lambda\to\alpha$ which is \index{unbounded function}\index{function!unbounded}{\em unbounded} in the sense that for every $\beta<\kappa$ there exists $\gamma\in\lambda$ such that $\beta\le f(\gamma)$.
\end{definition}

\begin{exercise} Check that:
\begin{itemize}
\item $\cf(0)=0$;
\item $\cf(n)=1$ for any natural number $n>0$;
\item $\cf(\w)=\w$;
\item $\cf(\alpha+1)=1$ for any ordinal $\alpha$;
\item $\cf(\omega_\alpha)\le\cf(\alpha)$ for any limit ordinal $\alpha$. 
\item $\cf(\alpha)\le\alpha$ for any ordinal $\alpha$.
\end{itemize}
\end{exercise}

\begin{proposition}\label{p:moncf} For every limit ordinal $\kappa$ there exists an unbounded increasing function $f:\cf(\kappa)\to\kappa$.
\end{proposition}

\begin{proof} By the definition of the cardinal $\cf(\kappa)$, there exists an unbounded function $\varphi:\cf(\kappa)\to\kappa$. Consider the function $F:\cf(\kappa)\times\UU\to\UU$ assigning to each ordered pair $\langle \alpha,x\rangle\in\cf(\kappa)\times\UU$ the set $\varphi(\alpha)\cup (\bigcup\rng[x])\cup\{\bigcup \rng[x]\}$. By Recursion Theorem~\ref{t:Recursion}, there exists a function $f:\cf(\kappa)\to\UU$ such that $f(\alpha)=F(\alpha,f{\restriction}_\alpha)$ for every ordinal $\alpha\in\cf(\kappa)$.  We claim that for every ordinal $\alpha\in\cf(\kappa)$, the set $f(\alpha)$ is an ordinal such that $f(\alpha)\in\kappa$ and $f(\beta)\in f(\alpha)$ for every $\beta\in\alpha$. Indeed, for $\alpha=0$, the set $f(0)=F(0,\emptyset)=\varphi(0)\cup(\bigcup\emptyset)\cup\{\bigcup\emptyset\}=\varphi(0)\cup\{0\}=\max\{\varphi(0),1\}$ is an ordinal with $f(0)\in\kappa$. Assume that for some ordinal $\alpha\in\cf(\kappa)$ and every ordinals $\gamma<\beta<\alpha$ we have proved that the set $f(\beta)$ is an ordinal such that $f(\gamma)<f(\beta)<\kappa$.  The minimality of $\cf(\kappa)>\alpha$ implies that the set $f[\alpha]$ is bounded in $\kappa$ and hence $\bigcup f[\alpha]$ is an element of $\kappa$. Since the ordinal $\kappa$ is limit, the set $f(\alpha)=\varphi(\alpha)\cup(\bigcup f[\alpha])\cup\{\bigcup f[\alpha]\}$ is an element of $\kappa$, too. Moreover, for every $\beta\in\alpha$ we have $f(\beta)\subseteq\bigcup f[\alpha]\subset f(\alpha)$ and hence $f(\beta)<f(\alpha)$ according to Theorem~\ref{t:ord}(4).

Since the function $\varphi:\cf(\kappa)\to\kappa$ is unbounded and $\varphi(\alpha)\le f(\alpha)$ for all $\alpha\in\cf(\kappa)$, the strictly increasing function $f:\cf(\kappa)\to\kappa$ is unbounded as well. 
\end{proof}

\begin{definition} An infinite cardinal $\kappa$ is called
\begin{itemize}
\item \index{regular cardinal}\index{cardinal!regular}{\em regular} if $\cf(\kappa)=\kappa$;
\item  \index{singular cardinal}\index{cardinal!singular} {\em singular} if $\cf(\kappa)<\kappa$.
\end{itemize}
\end{definition}

\begin{example} The cardinal $\w_\w$ is singular because $\w_\w>\cf(\w_\w)=\w$.
\end{example}

\begin{proposition} For any limit ordinal $\alpha$ its cofinality $\cf(\alpha)$ is a regular cardinal.
\end{proposition}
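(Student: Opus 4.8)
The plan is to set $\kappa=\cf(\alpha)$ and prove two things: that $\kappa$ is an infinite cardinal, and that $\cf(\kappa)=\kappa$. The first is almost immediate. By definition $\kappa$ is a cardinal, so only infiniteness needs checking. I would argue that no function $f\colon n\to\alpha$ with $n\in\w$ can be unbounded: the range $\rng[f]$ is then a finite set of ordinals below $\alpha$, hence has an $\E$-maximal element $\beta<\alpha$ (by Exercise~\ref{ex:fin-max}), and since $\alpha$ is a limit ordinal $\beta+1<\alpha$ is a bound that no value $f(\gamma)\le\beta$ reaches. Therefore every unbounded $f\colon\lambda\to\alpha$ has infinite domain, so $\kappa=\cf(\alpha)\ge\w$.

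The technical heart of the argument is to upgrade a witnessing unbounded function to a non-decreasing one. Fix an unbounded $f\colon\kappa\to\alpha$ and define
$$\bar f\colon\kappa\to\Ord,\qquad \bar f(\xi)=\sup\{f(\eta):\eta\le\xi\}.$$
First I would check $\bar f(\xi)<\alpha$ for every $\xi<\kappa$: the domain $\xi+1$ of $f{\restriction}_{\xi+1}$ has cardinality $|\xi+1|<\kappa=\cf(\alpha)$, so by minimality of $\kappa$ the restriction $f{\restriction}_{\xi+1}$ cannot be unbounded in $\alpha$, i.e. $\{f(\eta):\eta\le\xi\}$ is bounded below $\alpha$ and its supremum lies below $\alpha$. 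Thus $\bar f\colon\kappa\to\alpha$ is well-defined, manifestly non-decreasing, and since $\bar f(\xi)\ge f(\xi)$ pointwise it is still unbounded in $\alpha$.

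Finally I would establish regularity by contradiction. We always have $\cf(\kappa)\le\kappa$ (the preceding exercise on $\cf(\alpha)\le\alpha$), so suppose $\mu:=\cf(\kappa)<\kappa$ and fix an unbounded $g\colon\mu\to\kappa$. The claim is that the composition $\bar f\circ g\colon\mu\to\alpha$ is unbounded in $\alpha$. Indeed, given any $\beta<\alpha$, unboundedness of $\bar f$ yields $\eta<\kappa$ with $\beta\le\bar f(\eta)$, and unboundedness of $g$ yields $\zeta<\mu$ with $\eta\le g(\zeta)$; then monotonicity of $\bar f$ gives $\bar f(g(\zeta))\ge\bar f(\eta)\ge\beta$. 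So $\bar f\circ g$ is an unbounded function from the cardinal $\mu<\kappa$ into $\alpha$, contradicting the minimality defining $\kappa=\cf(\alpha)$. Hence $\cf(\kappa)=\kappa$, and $\kappa$ is regular.

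The main obstacle is the monotone-replacement step: without it the composition $f\circ g$ need not be unbounded, and keeping $\bar f$ inside $\alpha$ is exactly where the minimality of $\cf(\alpha)$ (together with $\kappa$ being a cardinal, so $|\xi+1|<\kappa$) must be invoked. Once that lemma is in place, the contradiction via composition is routine.
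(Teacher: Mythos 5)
Your proof is correct and follows essentially the same route as the paper's: both replace the witnessing unbounded function $f\colon\cf(\alpha)\to\alpha$ by the monotone function $\xi\mapsto\sup f[\,\cdot\,]$ (kept inside $\alpha$ via the minimality of $\cf(\alpha)$), then compose with an unbounded $g\colon\cf(\kappa)\to\kappa$ to contradict that minimality. The only differences are cosmetic (your $\sup$ over $\eta\le\xi$ versus the paper's $\sup f[\beta]$ over $\eta<\beta$, and your explicit verification that $\cf(\alpha)$ is infinite, which the paper only asserts).
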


\begin{proof} By Proposition~\ref{p:moncf}, there exists an unbounded increasing function $f:\cf(\alpha)\to\alpha$.

Since the ordinal $\alpha$ is limit, the cardinal $\cf(\alpha)$ is infinite and hence is a limit ordinal. Assuming that $\cf(\alpha)$ is a singular, we can find a cardinal $\kappa<\cf(\alpha)$ and an unbounded increasing function $g:\kappa\to\cf(\kappa)$.
Then $f\circ g:\kappa\to \alpha$ is an unbounded function, which contradicts the minimality of the cardinal $\cf(\alpha)>\kappa$.
\end{proof}

\begin{theorem}\label{t:w-reg} Under the Axiom of Choice, for every ordinal $\alpha$ the cardinal $\w_{\alpha+1}$ is regular.
\end{theorem}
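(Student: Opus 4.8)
The plan is to argue by contradiction, showing that if $\w_{\alpha+1}$ were singular then it would be covered by fewer than $\w_{\alpha+1}$ sets each of cardinality at most $\aleph_\alpha$, which under the Axiom of Choice forces the absurdity $\aleph_{\alpha+1}\le\aleph_\alpha$. Write $\kappa=\w_{\alpha+1}=\w_\alpha^+$ and suppose, for contradiction, that $\kappa$ is singular, i.e.\ $\lambda:=\cf(\kappa)<\kappa$. By the definition of cofinality there is an unbounded function $f\colon\lambda\to\kappa$, meaning that for every $\beta<\kappa$ there is $\gamma\in\lambda$ with $\beta\le f(\gamma)$.

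The first step is to record the cardinality bounds, all of which come from the defining property of the successor cardinal $\w_\alpha^+=\{\beta\in\Ord:|\beta|\le|\w_\alpha|\}$ of the Hartogs--Sierpi\'nski Theorem~\ref{t:Hartogs2}. Since $\lambda<\kappa=\w_\alpha^+$ we get $|\lambda|\le\aleph_\alpha$, and since $f(\gamma)<\w_\alpha^+$ for each $\gamma\in\lambda$ we get $|f(\gamma)|\le\aleph_\alpha$. Putting $x_\gamma=f(\gamma)\cup\{f(\gamma)\}=f(\gamma)+1$, a one-line cardinal computation using Corollary~\ref{c:aleph-plus} gives $|x_\gamma|\le\aleph_\alpha$. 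The reason I would pass to $x_\gamma$ rather than work with $f(\gamma)$ directly is that unboundedness then yields the clean containment $\kappa\subseteq\bigcup_{\gamma\in\lambda}x_\gamma$: given any $\beta<\kappa$ choose $\gamma$ with $\beta\le f(\gamma)$, whence $\beta\in f(\gamma)+1=x_\gamma$. (This sidesteps any need to verify that $\kappa$ is a limit ordinal.)

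The decisive step is to feed this cover into Theorem~\ref{t:AC-union}, which is exactly where the Axiom of Choice is used. Applying it with index set $I=\lambda$, the family $(x_\gamma)_{\gamma\in\lambda}$, and the infinite cardinal $\w_\alpha$ (the hypotheses hold because $|\lambda|\le\aleph_\alpha=|\w_\alpha|$ and $|x_\gamma|\le|\w_\alpha|$ for every $\gamma$) yields $\bigl|\bigcup_{\gamma\in\lambda}x_\gamma\bigr|\le\aleph_\alpha$. Combined with $\kappa\subseteq\bigcup_{\gamma\in\lambda}x_\gamma$ this forces $\aleph_{\alpha+1}=|\kappa|\le\aleph_\alpha$. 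But Proposition~\ref{p:walpha} gives $\w_\alpha<\w_{\alpha+1}=\w_\alpha^+$, and since $\w_\alpha^+$ is a cardinal it admits no injection into $\w_\alpha$, i.e.\ $\aleph_\alpha<\aleph_{\alpha+1}$; this contradiction completes the proof.

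I expect the only delicate part to be bookkeeping rather than ideas: one must make sure the bounds $|\lambda|\le\aleph_\alpha$ and $|x_\gamma|\le\aleph_\alpha$ are read off correctly from the successor-cardinal definition, and that the three hypotheses of Theorem~\ref{t:AC-union} (a genuine set $I$, an honest indexed family, and a single infinite $\kappa$ dominating every piece) are literally satisfied. The conceptual heart---that a successor cardinal cannot be written as a union of fewer than $\kappa$ sets each of size $<\kappa$---is entirely carried by Theorem~\ref{t:AC-union}, so no separate combinatorial argument is required.
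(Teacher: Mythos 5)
Your proof is correct and follows essentially the same route as the paper's: assume singularity, bound $|\cf(\w_{\alpha+1})|$ and each $|f(\gamma)|$ by $\aleph_\alpha$ via the definition of the successor cardinal, and invoke Theorem~\ref{t:AC-union} to contradict $\aleph_\alpha<\aleph_{\alpha+1}$. Your replacement of $f(\gamma)$ by $f(\gamma)+1$ is a small technical polish (the paper's identity $\w_{\alpha+1}=\bigcup_{\gamma}f(\gamma)$ tacitly uses that $\w_{\alpha+1}$ is a limit ordinal), but it does not change the argument.
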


\begin{proof} To derive a contradiction, assume that the cardinal $\w_{\alpha+1}$ is singular and hence the cardinal $\kappa=\cf(\w_{\alpha+1})$ is strictly smaller than $\w_{\alpha+1}$. Then $|\kappa|\le\w_\alpha$ by the definition of $\w_{\alpha+1}=\w_\alpha^+$.

By the definition of the cofinality $\cf(\w_{\alpha+1})=\kappa$, there exists an unbounded function $f:\kappa\to \w_{\alpha+1}$. Since $\w_{\alpha+1}$ is a cardinal, for every $\gamma\in\kappa$, the ordinal $f(\gamma)\subset\w_{\alpha+1}$ has cardinality $|f(\gamma)|<|\w_{\alpha+1}|$. By the definition of the successor cardinal $\w_{\alpha}^+=\w_{\alpha+1}$, the ordinal $f(\gamma)$ admits an injective function to $\w_\alpha$ and hence $|f(\gamma)|\le\w_\alpha$. 

Applying Theorem~\ref{t:AC-union}, we conclude that
$$|\w_\alpha^+|=|\w_{\alpha+1}|=|\textstyle\bigcup_{\alpha\in\kappa}f(\alpha)|\le|\w_\alpha|,$$
which contradicts the definition of the successor cardinal $\w_\alpha^+$. This contradiction shows that the cardinal $\w_{\alpha+1}$ is regular.
\end{proof}

\begin{remark} The Axiom of Choice is essential in the proof of Theorem~\ref{t:w-reg}:   by \cite[Theorem 10.6]{Jech} it is consistent with the axioms of ZF that the cardinal $\w_1$ is a countable union of countable sets and hence $\cf(\w_1)=\w$.
\end{remark}

Now derive some corollaries of K\H onig's Theorem~\ref{t:Konig2} that involve the cofinality.

\begin{corollary}\label{c:k<kcf} Every infinite cardinal $\kappa$ has cardinality $|\kappa|<|\kappa^{\cf(\kappa)}|$.
\end{corollary}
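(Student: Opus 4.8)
The plan is to derive this from K\H onig's Theorem~\ref{t:Konig} by slicing $\kappa$ along a cofinal map. Write $\lambda=\cf(\kappa)$. Since $\kappa$ is an infinite cardinal it is a limit ordinal, so $\lambda$ is an infinite cardinal and, by the definition of cofinality, there is an unbounded function $f\colon\lambda\to\kappa$. The aim is to produce ordinals $\kappa_\gamma$ with $0<\kappa_\gamma<\kappa$, indexed by $\gamma\in\lambda$, such that $|\kappa|\le|\sum_{\gamma\in\lambda}\kappa_\gamma|$. Taking $\lambda_\gamma=\kappa$ for every $\gamma$ gives $\prod_{\gamma\in\lambda}\lambda_\gamma=\kappa^{\lambda}=\kappa^{\cf(\kappa)}$, and the strict inequality $0<\kappa_\gamma<\lambda_\gamma$ then yields $|\sum_\gamma\kappa_\gamma|<|\prod_\gamma\lambda_\gamma|=|\kappa^{\cf(\kappa)}|$; combined with $|\kappa|\le|\sum_\gamma\kappa_\gamma|$ this is exactly $|\kappa|<|\kappa^{\cf(\kappa)}|$.

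First I would build the decomposition. Using the well-order of $\lambda$ and the unboundedness of $f$, define $g\colon\kappa\to\lambda$ by $g(\beta)=\min\{\gamma\in\lambda:\beta\le f(\gamma)\}$ and set $A_\gamma=g^{-1}[\{\gamma\}]$. Then $(A_\gamma)_{\gamma\in\lambda}$ is a partition of $\kappa$ into pairwise disjoint pieces, and $\beta\mapsto\langle g(\beta),\beta\rangle$ is a bijection of $\kappa$ onto $\sum_{\gamma\in\lambda}A_\gamma=\bigcup_\gamma\{\gamma\}\times A_\gamma$. Each $A_\gamma$ is contained in the ordinal $f(\gamma)+1<\kappa$, so its order type $\mathrm{ot}(A_\gamma)=\rank(\E{\restriction}A_\gamma)$ satisfies $\mathrm{ot}(A_\gamma)\le f(\gamma)+1<\kappa$. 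Setting $\kappa_\gamma=\mathrm{ot}(A_\gamma)+1$ gives $0<\kappa_\gamma<\kappa$ for all $\gamma$, again using that $\kappa$ is a limit ordinal.

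The key point, and the main obstacle, is to get $|\kappa|\le|\sum_\gamma\kappa_\gamma|$ without any appeal to the Axiom of Choice, since in general the statement ``the cardinality of a union is at most the sum of the cardinalities'' needs choice. Here it is choice-free precisely because every $A_\gamma$ is well-ordered: by Theorem~\ref{t:wOrd} the rank function $\rank_{\E{\restriction}A_\gamma}\colon A_\gamma\to\mathrm{ot}(A_\gamma)$ is the unique order isomorphism, hence a canonical injection $A_\gamma\hookrightarrow\mathrm{ot}(A_\gamma)\subseteq\kappa_\gamma$. Because each of these injections is uniquely determined by $A_\gamma$, the family is definable and the injections assemble, via G\"odel's class-existence Theorem~\ref{t:class} and with no choices made, into an injection $\sum_\gamma A_\gamma\to\sum_\gamma\kappa_\gamma$. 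Thus $|\kappa|=|\sum_\gamma A_\gamma|\le|\sum_\gamma\kappa_\gamma|$.

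One caveat I would flag is that the $\kappa_\gamma$ constructed above are ordinals but need not be cardinals, whereas Theorem~\ref{t:Konig} is phrased for families of cardinals. This is harmless: the witnessing injection in Lemma~\ref{l:Konig1} and the diagonal argument in Lemma~\ref{l:Konig2} use only the ordinal structure of the summands (the elements $0,1$ and the test $x>0$) together with $\kappa_\gamma<\lambda_\gamma=\kappa$, so applying those two lemmas directly delivers the strict inequality $|\sum_\gamma\kappa_\gamma|<|\kappa^{\cf(\kappa)}|$. Alternatively one can bypass K\H onig with a self-contained diagonalization: if $|\kappa^{\cf(\kappa)}|\le|\kappa|$ then by Proposition~\ref{p:card-ineq}(1) there is a surjection $S\colon\kappa\to\kappa^{\lambda}$, and the function $h(\gamma)=\min\bigl(\kappa\setminus\{S(\beta)(\gamma):\beta\le f(\gamma)\}\bigr)$ lies in $\kappa^{\lambda}$ yet avoids $\rng[S]$, a contradiction. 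Since $|\kappa|\le|\kappa^{\cf(\kappa)}|$ via constant functions, either route closes the argument.
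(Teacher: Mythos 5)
Your argument is correct, and its backbone -- slice $\kappa$ along an unbounded $f\colon\cf(\kappa)\to\kappa$, bound each slice by an ordinal below $\kappa$, and feed the result into K\H onig's Theorem~\ref{t:Konig} against the constant family $\lambda_\gamma=\kappa$ -- is exactly the paper's strategy. The one place you work harder than necessary is the decomposition: the paper does not partition $\kappa$ into the fibers $A_\gamma$ and pass to order types, but simply observes that $\kappa=\bigcup_{\alpha\in\cf(\kappa)}f(\alpha)$ (the values $f(\alpha)$ are ordinals, hence already subsets of $\kappa$) and maps $x\mapsto\langle\alpha(x),x\rangle$ where $\alpha(x)$ is the least $\alpha$ with $x\in f(\alpha)$; this lands injectively in $\sum_{\alpha}f(\alpha)$ with the identity on the second coordinate, so no canonical well-order isomorphisms or appeal to Theorem~\ref{t:class} are needed to keep things choice-free. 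Your caveat about the summands being ordinals rather than cardinals applies equally to the paper's own invocation of Theorem~\ref{t:Konig} (it uses the family $(f(\alpha))_\alpha$), and your observation that Lemmas~\ref{l:Konig1} and \ref{l:Konig2} only need the ordinal structure is the right way to discharge it. Finally, your closing diagonalization -- refuting a surjection $S\colon\kappa\to\kappa^{\cf(\kappa)}$ by diagonalizing against $\{S(\beta)(\gamma):\beta\le f(\gamma)\}$ -- is a genuinely different, self-contained route that bypasses K\H onig altogether; it is essentially the proof of K\H onig's lemma specialized to this instance, and it is a perfectly good alternative if one wants to avoid the cardinals-versus-ordinals bookkeeping entirely.
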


\begin{proof} By the definition of the cardinal $\cf(\kappa)$, there exists an unbounded function $f:\cf(\kappa)\to\kappa$. The unboundedness of $f$ guarantees that $\kappa=\bigcup_{\alpha\in\cf(\kappa)}f(\alpha)$.  For every $x\in\kappa$ let $\alpha(x)$ be the smallest ordinal such that $x\in f(\alpha(x))$. The injective map 
$$\textstyle g:\kappa\to\sum_{\alpha\in\cf(\kappa)}f(\alpha),\quad g:x\mapsto \langle \alpha(x),x\rangle$$ witnesses that $|\kappa|\le |\sum_{\alpha\in\cf(\kappa)}f(\alpha)|$. Since $\kappa$ is a cardinal, for every $\alpha\in\cf(\kappa)$, the ordinal $f(\alpha)\in\kappa$ has cardinality $|f(\alpha)|<|\kappa|$. Applying Theorem~\ref{t:Konig2},  we obtain that
$$\textstyle|\kappa|\le|\sum_{\alpha\in\cf(\kappa)}f(\alpha)|<|\prod_{\alpha\in\cf(\kappa)}\kappa|=|\kappa^{\cf(\kappa)}|.$$
\end{proof}

\begin{corollary} Let $\kappa,\lambda$ be infinite cardinals. If $|\lambda|=|a^\kappa|$ for some set $a$, then $\kappa<\cf(\lambda)$.
\end{corollary}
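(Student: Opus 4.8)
The plan is to derive a contradiction from the König-type strict inequality of Corollary~\ref{c:k<kcf} under the assumption that $\cf(\lambda)\le\kappa$. The first step is a purely computational observation: from $|\lambda|=|a^\kappa|$ I would show that the cardinality $|\lambda|$ is idempotent under exponentiation by $\kappa$, that is,
$$|\lambda|^{|\kappa|}=|\lambda|.$$
Indeed, a function $\kappa\to a^\kappa$ is the same datum as a function $\kappa\times\kappa\to a$, so there is a natural bijection between $(a^\kappa)^\kappa$ and $a^{\kappa\times\kappa}$; combining this with $|\kappa\times\kappa|=|\kappa|$, which holds for the infinite cardinal $\kappa$ by Theorems~\ref{t:alephs} and \ref{t:aleph-times}, I obtain
$$|\lambda|^{|\kappa|}=|a^\kappa|^{|\kappa|}=|(a^\kappa)^\kappa|=|a^{\kappa\times\kappa}|=|a|^{|\kappa\times\kappa|}=|a|^{|\kappa|}=|a^\kappa|=|\lambda|.$$

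Next I would argue by contradiction. Since $\kappa$ and $\cf(\lambda)$ are cardinals, the failure of $\kappa<\cf(\lambda)$ gives, by trichotomy (Theorem~\ref{t:ord}(6)), the inequality $\cf(\lambda)\le\kappa$. As $\lambda$ is an infinite cardinal, Corollary~\ref{c:k<kcf} applies to $\lambda$ and yields the strict inequality $|\lambda|<|\lambda^{\cf(\lambda)}|=|\lambda|^{|\cf(\lambda)|}$. On the other hand, from $\cf(\lambda)\le\kappa$ I get $|\cf(\lambda)|\le|\kappa|$, hence $|\cf(\lambda)|\le^*|\kappa|$ by Proposition~\ref{p:card-ineq}(1), and monotonicity of exponentiation in the exponent gives $|\lambda|^{|\cf(\lambda)|}\le|\lambda|^{|\kappa|}$. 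Putting this together with the idempotence computed above,
$$|\lambda|<|\lambda|^{|\cf(\lambda)|}\le|\lambda|^{|\kappa|}=|\lambda|,$$
which forces $|\lambda|\le|\lambda|^{|\cf(\lambda)|}\le|\lambda|$ and hence, by Theorem~\ref{t:CBDS}, $|\lambda|=|\lambda|^{|\cf(\lambda)|}$, contradicting the strict inequality. This contradiction establishes $\kappa<\cf(\lambda)$.

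The argument is routine once the idempotence $|\lambda|^{|\kappa|}=|\lambda|$ is in hand, so the only point deserving genuine care is that first computation. I must make sure the currying bijection $(a^\kappa)^\kappa\cong a^{\kappa\times\kappa}$ is set up correctly and, crucially, that the equality $|\kappa\times\kappa|=|\kappa|$ is invoked without any appeal to the Axiom of Choice; this is legitimate precisely because $\kappa$, being an infinite cardinal, equals some $\w_\alpha$ by Theorem~\ref{t:alephs}, so $\aleph_\alpha\cdot\aleph_\alpha=\aleph_\alpha$ of Theorem~\ref{t:aleph-times} supplies what is needed. The remaining monotonicity and trichotomy steps follow immediately from the cited results.
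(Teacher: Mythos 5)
Your proof is correct. It reaches the same contradiction as the paper but by a slightly different decomposition: the paper assumes $\cf(\lambda)\le\kappa$, takes an unbounded $f:\kappa\to\lambda$, and applies K\H onig's Theorem~\ref{t:Konig} directly to the resulting cover $\lambda=\bigcup_{i\in\kappa}f(i)$, obtaining $|\lambda|\le|\sum_{i\in\kappa}\kappa_i|<|\prod_{i\in\kappa}\lambda|=|\lambda^\kappa|=|(a^\kappa)^\kappa|=|a^{\kappa\cdot\kappa}|=|a^\kappa|=|\lambda|$ in one chain. You instead invoke the already-packaged Corollary~\ref{c:k<kcf} (which is the $\cf(\lambda)$-indexed instance of that K\H onig argument), and then transport the strict inequality up to the exponent $\kappa$ via $|\cf(\lambda)|\le^*|\kappa|$ and monotonicity of exponentiation, before closing with the same idempotence computation $|\lambda|^{|\kappa|}=|a^{\kappa\times\kappa}|=|\lambda|$. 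What your route buys is that K\H onig's theorem is used only once, through the corollary already on record, at the modest price of two extra lemmas (Proposition~\ref{p:card-ineq}(1) and the exponent-monotonicity exercise); the paper's route avoids those lemmas by re-running the K\H onig estimate with the larger index set. Your care about choice-freeness is well placed and correctly resolved: $|\kappa\times\kappa|=|\kappa|$ for an infinite cardinal $\kappa$ follows from Theorems~\ref{t:alephs} and~\ref{t:aleph-times} without any appeal to $(\mathsf{AC})$, which is exactly the same fact the paper's own chain relies on in the step $|a^{\kappa\cdot\kappa}|=|a^\kappa|$.
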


\begin{proof} Assuming that $\cf(\lambda)\le\kappa$, find an unbounded function $f:\kappa\to \lambda$. For every $i\in\kappa$ consider the ordinal $\kappa_i=f(i)\subset\lambda$. Since $\lambda$ is a cardinal, $|\kappa_i|<|\lambda|$. Since $f$ is unbounded, $\lambda=\bigcup_{i\in\kappa}f(i)$.  By Theorems~\ref{t:Konig2} and \ref{t:aleph-times},
$$|\lambda|=|\bigcup_{i\in\kappa}f(i)|\le|\sum_{i\in\kappa}\kappa_i|<|\prod_{i\in\kappa}\lambda|=|\lambda^\kappa|=|(a^{\kappa})^\kappa|=|a^{\kappa\cdot\kappa}|=|a^\kappa|=|\lambda|,$$
which is a desired contradiction.
\end{proof}

\section{(Generalized) Continuum Hypothesis}

\rightline{\em Wir m\"ussen wissen, wir werden wissen}
\rightline{David Hilbert}
\vskip10pt

By Cantor's Theorem~\ref{t:CantorP}, for every set $x$ the cardinality of its power-set is strictly larger than the cardinality of $x$. Observe that for a finite set $x$ the set $\{y\in\Crd:|x|<|y|<|\mathcal P(x)|\}$ has cardinality $|2^x|-|x+1|$ and hence contains many cardinals.

In 1878 Cantor made a conjecture that for infinite sets the situation is different: there is no cardinality $|x|$ such that $|\w|<|x|<|\mathcal P(\w)|$. This conjecture is known as the Continuum Hypothesis\index{Continuum Hypothesis ({{\sf CH}})}\index{({{\sf CH}})} (briefly $(\mathsf{CH})$). At the presence of the Axiom of Choice the Continuum Hypothesis is equivalent to the equality $\aleph_1=\mathfrak c$, where $\mathfrak c$ denotes the cardinality $|\mathcal P(\w)|$ of the power-set $\mathcal P(\w)$ and is called the  \index{cardinality of continuum $\mathfrak c$}\index{$\mathfrak c$}{\em cardinality of continuum}. 

Cantor himself tried to prove the Continuum Hypothesis but without success. David Hilbert included the Continuum Hypothesis as problem number one in his famous list of open problems announced in the II World Congress of mathematicians in 1900. For the complete solution, this problem waited more than 60 years. The final solution appeared to be a bit unexpected. First, in 1939 Kurt G\"odel proved that the Continuum Hypothesis does not contradict the axioms $\NBG$ or {\sf ZFC}. Twenty four years later, in 1963 Paul Cohen proved that the negation {\sf CH} does not contradict the axioms {\sf ZFC}. So, CH turned out to be independent of {\sf ZFC}. It can be neither proved nor disproved. To prove that CH does not contradicts the axioms $\NBG$, G\"odel established that it holds in the constructible universe. Moreover, G\"odel showed that his Axiom of Constructibility $\UU=\LL$ implies the following more general statement, called the \index{Generalized Continuum Hypothesis ({{\sf GCH}})}\index{({{\sf GCH}})}{\em Generalized Continuum Hypothesis}
\begin{center}
\fbox{$\mbox{({\sf GCH}): For any infinite set $x$ there is no cardinality $\kappa$ such that $|x|<\kappa<|2^x|$.}$}
\end{center}
\smallskip

Under the Axiom of Choice the Generalized Continuum Hypothesis is equivalent to the statement that $|\kappa^+|=|2^\kappa|$ for every infinite cardinal $\kappa$. 

\begin{theorem}[G\"odel] $(\UU=\LL)\;\Ra\;(\mathsf{GCH})$.
\end{theorem}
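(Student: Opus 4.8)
The plan is to follow Gödel's classical argument, recast in the present framework where the constructible hierarchy is stratified by the Gödel hulls $L_\alpha:=\Go^{\circ\w}(\alpha)$, $\alpha\in\Ord$. First I would record two reductions. Since $\UU=\LL$ implies the Axiom of Global Choice and hence the Axiom of Choice, Corollary~\ref{c:alephAC} lets me replace an arbitrary infinite set $x$ by a cardinal $\kappa$ with $|x|=|\kappa|$, and Tarski's Theorem~\ref{t:card-compare} identifies $|\kappa^+|$ as the successor cardinality of $|\kappa|$. Thus $(\mathsf{GCH})$ will follow once I show $|2^\kappa|\le|\kappa^+|$ for every infinite cardinal $\kappa$: together with Cantor's strict inequality $|\kappa|<|2^\kappa|$ (Corollary~\ref{c:CantorP}) this forces $|2^\kappa|=|\kappa^+|$, leaving no cardinality strictly between $|\kappa|$ and $|2^\kappa|$. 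Because $\UU=\LL$, every subset of $\kappa$ is constructible, so it suffices to prove the containment $\mathcal P(\kappa)\subseteq L_{\kappa^+}$.

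Next I would establish the basic properties of the filtration $(L_\alpha)_{\alpha\in\Ord}$. Each $L_\alpha=\Go^{\circ\w}(\alpha)$ is a transitive set by Theorem~\ref{t:L-transitive} (applied to the transitive set $\alpha$), the family is increasing by monotonicity of $\Go$, and $\LL=\bigcup_{\alpha\in\Ord}L_\alpha$. The crucial estimate is $|L_\alpha|=|\alpha|$ for infinite $\alpha$: since $\Go(y)=\bigcup_{i=0}^{8}\mathsf G_i[y\times y]$ has $|\Go(y)|\le 9\cdot|y\times y|=|y|$ for infinite $y$ (using $|y\times y|=|y|$ from Theorem~\ref{t:aleph-times}), an induction gives $|\Go^{\circ n}(\alpha)|=|\alpha|$, and the countable union $L_\alpha=\bigcup_{n\in\w}\Go^{\circ n}(\alpha)$ again has size $|\alpha|$ by Theorem~\ref{t:AC-union}. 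In particular $|L_{\kappa^+}|=|\kappa^+|$, so once $\mathcal P(\kappa)\subseteq L_{\kappa^+}$ is known I will conclude $|2^\kappa|=|\mathcal P(\kappa)|\le|L_{\kappa^+}|=|\kappa^+|$, as required.

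The heart of the argument is a Condensation Lemma: whenever a set $X$ is closed under the Skolem functions of the structure $(L_\theta,\E{\restriction}L_\theta)$ and under the decoding map $\Code$ of Theorem~\ref{t:well-order}, its Mostowski transitive collapse is again a genuine level $L_{\bar\theta}$ of the hierarchy. Granting this, I would fix a constructible $A\subseteq\kappa$, choose $\theta$ with $A\in L_\theta$, and build such an $X$ with $\kappa\cup\{A\}\subseteq X$ and $|X|=|\kappa|$ by closing $\kappa\cup\{A\}$ under countably many Skolem functions. The collapse $\bar X=L_{\bar\theta}$ then satisfies $|\bar\theta|\le|X|=|\kappa|$, hence $\bar\theta<\kappa^+$; and since $\kappa$ is transitive and contained in $X$ it is pointwise fixed by the collapse, so $A\subseteq\kappa$ collapses to itself and $A\in L_{\bar\theta}\subseteq L_{\kappa^+}$. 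This yields $\mathcal P(\kappa)\subseteq L_{\kappa^+}$ and completes the proof.

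The main obstacle is the Condensation Lemma, and within it the absoluteness of the hierarchy. I would first develop the Mostowski collapse for extensional well-founded relations, available from the Recursion Theorem~\ref{t:Recursion} together with the Axiom of Foundation supplied by $\UU=\LL$. The genuinely delicate point is showing that the predicate ``$y=\Go^{\circ\w}(\alpha)$'' and the well-order $\mathbf W_{<}$ are absolute between transitive $\E$-models: one must verify that the recursive construction of $\Go$, of its iterates $\Go^{\circ n}$, and of the coding functions $\dG_\lambda$ from Lemma~\ref{l:code} is expressible by a bounded formula whose truth is preserved under the collapse, so that the collapsed structure recognizes its own elements as constructible and the collapse of a level is exactly the level indexed by the collapsed ordinal. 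Packaging the Gödel operations as an absolute generating scheme is precisely where Gödel's original care is needed, and it is the step I expect to occupy most of the work; the surrounding cardinal arithmetic is then routine given the results already proved in this text.
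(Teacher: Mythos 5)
Your plan is correct in outline and is the classical condensation route; the paper's own proof takes a superficially different but mathematically equivalent path, and both hinge on exactly the same unproved kernel. The text reduces $(\mathsf{GCH})$ to the claim that every constructible $y\subseteq\kappa$, written as $y=\dG_\lambda(f)$ with $f\in\Ord^{2^n}$ via Corollary~\ref{c:Code-sur}, can be re-coded as $\dG_\lambda(g)$ with $g\in(\kappa^+)^{2^n}$, and then counts codes to get $|\mathcal P(\kappa)|\le|\kappa^+|$; it explicitly declines to prove this parameter-bounding claim, deferring to \cite[13.20]{Jech} and \cite{Koepke} and even posing it as a starred exercise with a prize attached. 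You instead stratify $\LL$ by the hulls $L_\alpha=\Go^{\circ\w}(\alpha)$, compute $|L_\alpha|=|\alpha|$ for infinite $\alpha$ (correctly, via $|\Go(y)|=|y|$ and Theorem~\ref{t:aleph-times}), and reduce to $\mathcal P(\kappa)\subseteq L_{\kappa^+}$ via a Condensation Lemma for Skolem hulls and the Mostowski collapse. These are two faces of one argument: the standard proof of the paper's re-coding claim is precisely your condensation step (collapse a hull of size $|\kappa|$ containing $\kappa\cup\{A\}$ and note that $A$ is fixed pointwise). Your surrounding reductions via Corollary~\ref{c:alephAC}, Theorem~\ref{t:card-compare} and Corollary~\ref{c:CantorP} are sound and in fact spelled out more carefully than in the text. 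The one caveat, which you flag yourself, is that the absoluteness of ``$y=\Go^{\circ\w}(\alpha)$'' under the collapse is the real content and remains unproved; but since the paper leaves the corresponding step to external references, your proposal sits at the same level of completeness as the text's own proof, merely organized around levels of the hierarchy rather than around codes.
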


\begin{proof} Assume that $\UU=\LL$. By Theorem~\ref{t:well-order}, $(\UU=\LL)$ implies the Axiom of (Global) Choice. So, ({\sf GCH}) will be established as soon as we prove the equality $|\mathcal P(\kappa)|=|\kappa^+|$ for every infinite cardinal $\kappa$. We recall that $\mathbf L=\bigcup_{\alpha\in\On}L_\alpha$ where $L_\alpha=\bigcup_{\beta\in\alpha}\mathcal P(L_\beta)\cap\Go^{\circ\w}(L_\beta\cup\{L_\beta\})$ for every ordinal $\alpha$. Here $\Go^{\circ\w}(L_\beta\cup\{L_\beta\})$ is the smallest set that contains $L_\beta\cup\{L_\beta\}$ as a subset and is closed under G\"odel's operations $\dG_0$--$\dG_8$, see Section~\ref{s:L}. It is easy to see that $|L_{\beta+1}|\le|\Go^{\circ\w}(L_\beta\cup\{L_\beta\})|\le\max\{\w,|L_\beta|\}$ for every ordinal $\beta$. This implies that $|L_\alpha|=|\alpha|$ for every infinite ordinal $\alpha$. It can be shown (but it is difficult and requires more advanced model-theoretic tools, see e.g. \cite[13.20]{Jech} or \cite{Koepke}) that $\mathcal P(\kappa)\cap\mathbf L=\mathcal P(\kappa)\cap L_{\kappa^+}$ and hence  
$|\mathcal P(\kappa)|=|\mathcal P(\kappa)\cap\mathbf L|\le|L_{\kappa^+}|=|\kappa^+|$ under $(\UU=\LL)$.
\end{proof}  

\begin{Exercise} Prove\footnote{In case you find an elementary proof of this fact, write for a prize to {\tt t.o.banakh@gmail.com}.}  that $\mathcal P(\kappa)\cap\LL\subseteq L_{\kappa^+}$ for any infinite cardinal $\kappa$.
\end{Exercise}


It turns out that $(\mathsf{GCH})$ implies $(\mathsf{AC})$. The following theorem was announced by Lindenbaum and Tarski in 1926 but the first written proof was published only in 1947 by Sierpi\'nski \cite{GCH}. 

\begin{theorem}[Sierpi\'nski]\label{t:GCH=>AC} The Generalized Continuum Hypothesis implies the Axiom of Choice, i.e., $\mathsf{(GCH)}\;\Ra\;\mathsf{(AC)}$.
\end{theorem}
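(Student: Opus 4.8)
The plan is to derive the full strength of $(\mathsf{AC})$ from the purely combinatorial hypothesis $(\mathsf{GCH})$ by proving that every infinite set is well-orderable; once $(\mathsf{WO})$ is established, $(\mathsf{AC})$ follows from the equivalence $(\mathsf{WO})\Leftrightarrow(\mathsf{AC})$ of Theorem~\ref{t:mainAC}. Throughout I write $\aleph(\mathfrak m)$ for the Hartogs cardinal $x^+$ of a set $x$ of cardinality $\mathfrak m$; by the Hartogs--Sierpi\'nski Theorem~\ref{t:Hartogs2} it is an aleph with $\aleph(\mathfrak m)\not\le\mathfrak m$ and $\aleph(\mathfrak m)\le 2^{2^{2^{\mathfrak m}}}$. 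The supporting tools are Cantor's inequality $\mathfrak m<2^{\mathfrak m}$ (Corollary~\ref{c:CantorP}), the Cantor--Bernstein--Schr\"oder Theorem~\ref{t:CBDS}, the inequality $\kappa+\lambda\le\kappa\cdot\lambda$ valid for $\kappa,\lambda\ge|2|$ (Exercise~\ref{ex:sum-prod-card}), and above all Tarski's Lemma~\ref{l:tarski}, whose role is to convert an equation ``sum $=$ product'' into a comparability of cardinalities.

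The engine of the argument is the following claim: \emph{if $\mathfrak a$ is a Dedekind-infinite cardinal and $\aleph$ is an aleph with $\aleph\not\le\mathfrak a$ and $\aleph\le 2^{\mathfrak a}$, then $\mathfrak a\le\aleph$.} To prove it I would first apply $(\mathsf{GCH})$ at $\mathfrak a$: since $\mathfrak a<\mathfrak a+\aleph\le 2^{\mathfrak a}+2^{\mathfrak a}=2^{\mathfrak a}$ (the last equality using $1+\mathfrak a=\mathfrak a$, valid because $\mathfrak a$ is Dedekind-infinite) and no cardinality lies strictly between $\mathfrak a$ and $2^{\mathfrak a}$, I obtain $\mathfrak a+\aleph=2^{\mathfrak a}$. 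Next I would locate the product: from $2^{\mathfrak a}=\mathfrak a+\aleph\le\mathfrak a\cdot\aleph$ (Exercise~\ref{ex:sum-prod-card}) together with $\mathfrak a\cdot\aleph\le 2^{\mathfrak a}\cdot 2^{\mathfrak a}=2^{\mathfrak a+\mathfrak a}\le 2^{2^{\mathfrak a}}$, the product $\mathfrak a\cdot\aleph$ is sandwiched between $2^{\mathfrak a}$ and $2^{2^{\mathfrak a}}$, so $(\mathsf{GCH})$ at $2^{\mathfrak a}$ forces $\mathfrak a\cdot\aleph\in\{2^{\mathfrak a},2^{2^{\mathfrak a}}\}$. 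After showing $\mathfrak a\cdot\aleph=2^{\mathfrak a}$, I would have $\mathfrak a+\aleph=\mathfrak a\cdot\aleph$, and Tarski's Lemma~\ref{l:tarski} (with the ordinal $\aleph$ playing the role of $|\alpha|$ and a set of size $\mathfrak a$ playing the roles of both $x$ and one factor) yields $\aleph\le\mathfrak a$ or $\mathfrak a\le\aleph$; the first alternative is excluded by hypothesis, so $\mathfrak a\le\aleph$ and $\mathfrak a$ is an aleph.

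Granting the engine, the theorem follows by a short ladder argument. Given an infinite set $M$ of cardinality $\mathfrak m$, set $\mathfrak a_0=2^{2^{\mathfrak m}}$; this cardinal is Dedekind-infinite, since the pairwise disjoint families $C_n=\{B\subseteq M:|B|=n\}$, $n\in\w$, inject $\w$ into $\mathcal P(\mathcal P(M))$, and consequently every cardinal on the finite ladder $\mathfrak a_{k+1}=2^{\mathfrak a_k}$ is Dedekind-infinite. Let $\aleph=\aleph(\mathfrak a_0)$; then $\aleph\not\le\mathfrak a_0$ while $\aleph\le\mathfrak a_3$ by Theorem~\ref{t:Hartogs2}, so there is a least index $j\in\{1,2,3\}$ with $\aleph\le\mathfrak a_j=2^{\mathfrak a_{j-1}}$, and by minimality $\aleph\not\le\mathfrak a_{j-1}$. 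Applying the engine to the Dedekind-infinite base $\mathfrak a_{j-1}$ and the aleph $\aleph$ shows that $\mathfrak a_{j-1}$ is an aleph; since $\mathfrak m\le\mathfrak a_0\le\mathfrak a_{j-1}$, the set $M$ injects into a well-ordered set and is therefore well-orderable. As $M$ was arbitrary, $(\mathsf{WO})$ holds, and Theorem~\ref{t:mainAC} delivers $(\mathsf{AC})$.

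The main obstacle is the step asserting $\mathfrak a\cdot\aleph=2^{\mathfrak a}$, that is, excluding the value $\mathfrak a\cdot\aleph=2^{2^{\mathfrak a}}$. This is precisely where the absence of choice bites: without $(\mathsf{AC})$ one may not use the absorption laws $\mathfrak a+\mathfrak a=\mathfrak a$ or $\mathfrak a\cdot\mathfrak a=\mathfrak a$, and the inequality $2^{\mathfrak a+\mathfrak a}\le 2^{2^{\mathfrak a}}$ used above is not strict in general. I expect the resolution to proceed through a further application of $(\mathsf{GCH})$ at $\mathfrak a$ to the cardinal $2\mathfrak a=\mathfrak a+\mathfrak a$: as $\mathfrak a\le 2\mathfrak a\le 2^{\mathfrak a}$, either $2\mathfrak a=\mathfrak a$, in which case $\mathfrak a\cdot\aleph\le 2^{2\mathfrak a}=2^{\mathfrak a}$ forces the good value, or $2\mathfrak a=2^{\mathfrak a}$, a degenerate configuration that must be disposed of separately using that $\mathfrak a$ is a genuine double power set. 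Making this branching airtight, so that each case ends either in a contradiction or in $\mathfrak a\le\aleph$, is the delicate heart of Sierpi\'nski's argument; the surrounding reduction and the Tarski-lemma finish are comparatively routine.
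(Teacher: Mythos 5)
Your overall strategy --- Hartogs' aleph, instances of $(\mathsf{GCH})$ at iterated power sets of $x$, and Tarski's Lemma~\ref{l:tarski} to convert an equation ``sum $=$ product'' into a comparability --- is exactly the skeleton of the paper's proof, and your ladder locating the least level at which the aleph fits under a power set matches the paper's descent through $|\alpha|+|p_6|,|\alpha|+|p_5|,|\alpha|+|p_4|$. But the step you yourself flag as the ``delicate heart'' is a genuine gap, and it is a gap the paper never has to face, because the paper does not go through the product $\mathfrak a\cdot\aleph$ at all. Once one knows $\mathfrak a+\mathfrak a=\mathfrak a$, the power $2^{\mathfrak a}$ is \emph{already} a product: $2^{\mathfrak a}=2^{\mathfrak a+\mathfrak a}=2^{\mathfrak a}\cdot 2^{\mathfrak a}$. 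Hence the single GCH-derived equation $\aleph+\mathfrak a=2^{\mathfrak a}$ is of the form $|x|+|\alpha|=|y\times z|$ with $y=z$ of cardinality $2^{\mathfrak a}$, and Lemma~\ref{l:tarski} immediately yields $2^{\mathfrak a}\le\mathfrak a$ (excluded by Cantor's Theorem) or $2^{\mathfrak a}\le\aleph$, whence $\mathfrak a\le\aleph$. No second application of $(\mathsf{GCH})$, no evaluation of $\mathfrak a\cdot\aleph$, and no branching on $2\mathfrak a$ is required; your unresolved case $\mathfrak a\cdot\aleph=2^{2^{\mathfrak a}}$ simply never arises.

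The remaining input $\mathfrak a+\mathfrak a=\mathfrak a$ is obtained in the paper for free, not from $(\mathsf{GCH})$: the construction is run on $p_n=\mathcal P^{\circ n}(x)$ for $n=4,\dots,7$ with $\alpha=p_4^+$, precisely because $\w\le x^+$ and $|x^+|\le|p_3|$ make $p_3$ Dedekind-infinite, so that $2\cdot|p_n|=2\cdot 2^{|p_{n-1}|}=2^{1+|p_{n-1}|}=|p_n|$ for all $n\ge 4$ (Claim~\ref{cl:GCH1}). Your ladder starts one level too low for this to work: for $\mathfrak a_0=2^{2^{\mathfrak m}}$ the identity $2\mathfrak a_0=\mathfrak a_0$ needs $1+2^{\mathfrak m}=2^{\mathfrak m}$, i.e., that $\mathcal P(M)$ be Dedekind-infinite, which is not provable without choice (e.g., it fails for amorphous $M$); and your proposed repair via $(\mathsf{GCH})$ applied to $2\mathfrak a$ leaves the branch $2\mathfrak a=2^{\mathfrak a}$ genuinely open. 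Shifting the whole construction up so that every base cardinal is a power of a Dedekind-infinite cardinal dissolves both problems at once, and together with the ``$2^{\mathfrak a}$ is its own square'' observation it is the actual content of Sierpi\'nski's trick.
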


Theorem~\ref{t:GCH=>AC} will be derived from its local version.

\begin{lemma} A set $x$ can be well-ordered if for every $n\in\{3,4,5\}$ the class $$\{y\in\UU: |\mathcal P^{\circ n}(x)|<|y|<|\mathcal P^{\circ(n+1)}(x)|\}$$ is empty.
\end{lemma}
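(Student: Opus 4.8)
The plan is to reduce the statement to a well-orderability result about $M=\mathcal P^{\circ 4}(x)$, and to read the three vanishing classes as three instances of the Generalized Continuum Hypothesis sitting at $|\mathcal P^{\circ n}(x)|$ for $n\in\{4,5,6\}$. First I would dispose of the trivial case that $x$ is finite (then $x$ is trivially well-ordered), so assume $x$ infinite, and put $M=\mathcal P^{\circ 4}(x)$, $\mathfrak m=|M|$. The exponent $4$ is chosen so that $\mathfrak m$ is idemmultiplicative \emph{in ZF}, i.e.\ $\mathfrak m\cdot\mathfrak m=\mathfrak m$ (hence also $\mathfrak m+\mathfrak m=\mathfrak m$). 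Indeed, for infinite $x$ the set $\mathcal P^{\circ 2}(x)$ is Dedekind-infinite: the map $\w\to\mathcal P^{\circ2}(x)$, $n\mapsto\{y\subseteq\mathcal P(x):|y|=n\}$, is injective, and the fact that every infinite set has subsets of each finite size is provable without choice. Thus $|\mathcal P^{\circ2}(x)|+1=|\mathcal P^{\circ2}(x)|$, whence $|\mathcal P^{\circ3}(x)|+|\mathcal P^{\circ3}(x)|=2^{|\mathcal P^{\circ2}(x)|+1}=|\mathcal P^{\circ3}(x)|$, and finally $\mathfrak m\cdot\mathfrak m=2^{|\mathcal P^{\circ3}(x)|+|\mathcal P^{\circ3}(x)|}=\mathfrak m$. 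The same computation shows that $2^{\mathfrak m}=|\mathcal P(M)|$ and $2^{2^{\mathfrak m}}=|\mathcal P^{\circ2}(M)|$ are idemmultiplicative and idem-additive. Finally I introduce the Hartogs aleph $\aleph=|M^+|\in\Crd$ of Theorem~\ref{t:Hartogs2}; by construction $\aleph\not\le\mathfrak m$, while the bound $|M^+|\le|\mathcal P^{\circ2}(M\times M)|$ with $|M\times M|=\mathfrak m$ gives $\aleph\le 2^{2^{\mathfrak m}}$.

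The hypotheses supply the Generalized Continuum Hypothesis at $|\mathcal P^{\circ n}(x)|$ for $n\in\{4,5,6\}$; I will use the instances $n=4$ (no cardinality strictly between $\mathfrak m$ and $2^{\mathfrak m}$) and $n=5$ (none between $2^{\mathfrak m}$ and $2^{2^{\mathfrak m}}$). The key move is to locate $2^{\mathfrak m}+\aleph$. The injections $\mathcal P(M)\hookrightarrow \mathcal P(M)\sqcup M^+$ and, using $\aleph\le 2^{2^{\mathfrak m}}$ together with the idem-additivity of $2^{2^{\mathfrak m}}$, $\mathcal P(M)\sqcup M^+\hookrightarrow\mathcal P^{\circ2}(M)$ show $2^{\mathfrak m}\le 2^{\mathfrak m}+\aleph\le 2^{2^{\mathfrak m}}$ by genuine injections. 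Hence $2^{\mathfrak m}+\aleph$ is comparable to both endpoints of the GCH-interval at $n=5$, and therefore equals $2^{\mathfrak m}$ or $2^{2^{\mathfrak m}}$.

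The two cases are closed with Tarski's cancellation Lemma~\ref{l:tarski}, the point being that idemmultiplicativity lets me present a power-set cardinality as a Cartesian square. If $2^{\mathfrak m}+\aleph=2^{2^{\mathfrak m}}$, then since $2^{2^{\mathfrak m}}=|\mathcal P^{\circ2}(M)\times\mathcal P^{\circ2}(M)|$ the equation reads $|\mathcal P(M)|+|M^+|=|\mathcal P^{\circ2}(M)\times\mathcal P^{\circ2}(M)|$, and Lemma~\ref{l:tarski} yields $2^{2^{\mathfrak m}}\le 2^{\mathfrak m}$ (impossible by Corollary~\ref{c:CantorP}) or $2^{2^{\mathfrak m}}\le\aleph$; with $\aleph\le 2^{2^{\mathfrak m}}$ and Theorem~\ref{t:CBDS} this gives $\aleph=2^{2^{\mathfrak m}}$, so $\mathfrak m\le 2^{2^{\mathfrak m}}=\aleph$. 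If instead $2^{\mathfrak m}+\aleph=2^{\mathfrak m}$, then $\aleph\le 2^{\mathfrak m}$, so $\mathfrak m+\aleph$ lies in $[\mathfrak m,2^{\mathfrak m}]$ and the instance $n=4$ forces $\mathfrak m+\aleph=2^{\mathfrak m}$ (it cannot be $\mathfrak m$, else $\aleph\le\mathfrak m$); writing $2^{\mathfrak m}=|\mathcal P(M)\times\mathcal P(M)|$ and applying Lemma~\ref{l:tarski} again gives $2^{\mathfrak m}\le\mathfrak m$ (impossible) or $2^{\mathfrak m}\le\aleph$, whence $\mathfrak m\le 2^{\mathfrak m}\le\aleph$. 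In either case $\mathfrak m\le\aleph=|M^+|$, so $M$ injects into the ordinal $M^+$ and can be well-ordered; since $x\hookrightarrow M$ by the iterated-singleton map and a subset of a well-orderable set is well-orderable, $x$ can be well-ordered.

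The main obstacle is the careful bookkeeping of the two ZF-provable facts that carry the whole argument: first, that $|\mathcal P^{\circ4}(x)|$ (and the next two power-sets) are idemmultiplicative with no appeal to choice — this is exactly what pins the exponent at $4$ and places the needed GCH-levels at $n=4,5$; and second, the two applications of Lemma~\ref{l:tarski}, each of which requires rewriting the relevant power as a square so the lemma applies. Everything else is a short comparison of cardinalities. I note that this streamlined route uses only the instances $n=4,5$ of the hypothesis; the instance $n=6$ is available but turns out to be unnecessary.
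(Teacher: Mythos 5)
Your proof is correct, and it takes a genuinely leaner route than the paper's. Both arguments run on the same engine --- the Hartogs ordinal of an iterated power set, Tarski's cancellation Lemma~\ref{l:tarski} applied after presenting a power-set cardinality as a Cartesian square, and the hypothesis read as GCH instances at consecutive levels --- but they differ in where the Hartogs aleph is trapped. The paper takes $\alpha=p_4^+$ with the cruder bound $|\alpha|\le|\mathcal P^{\circ3}(p_4)|=|p_7|$ and then cascades downward through all three intervals $[|p_6|,|p_7|]$, $[|p_5|,|p_6|]$, $[|p_4|,|p_5|]$, at each stage either finishing via Claim~\ref{cl:GCH2} or descending one level; this is why it consumes the full hypothesis $n\in\{4,5,6\}$. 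You instead invoke the other Hartogs--Sierpi\'nski bound $|M^+|\le|\mathcal P^{\circ2}(M\times M)|$ and pay for it with the ZF-provable idemmultiplicativity $|M\times M|=|M|$ (which needs the Dedekind-infiniteness of $\mathcal P^{\circ2}(x)$, a fact the paper never requires), landing $\aleph$ already below $|p_6|$ and thereby eliminating the top interval; your two Tarski applications are exactly the content of the paper's Claim~\ref{cl:GCH2} at levels $5$ and $4$. What your version buys is a strictly stronger lemma --- the hypothesis at $n=6$ is redundant --- at the cost of one extra ZF fact.

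One small slip to fix: the Dedekind-infiniteness witness should be $n\mapsto\{y\subseteq x:|y|=n\}$, which lands in $\mathcal P^{\circ2}(x)$; as written, with $y\subseteq\mathcal P(x)$, your map lands in $\mathcal P^{\circ3}(x)$ and would only yield $1+|\mathcal P^{\circ3}(x)|=|\mathcal P^{\circ3}(x)|$, which gives $\mathfrak m+\mathfrak m=\mathfrak m$ but not $\mathfrak m\cdot\mathfrak m=\mathfrak m$; the latter needs $1+|\mathcal P^{\circ2}(x)|=|\mathcal P^{\circ2}(x)|$. The intended standard fact is the correct one, and with that correction the rest of your computation goes through.
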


\begin{proof} If the set $x$ is finite, then it can be well-ordered without any additional assumptions. So, assume that $x$ is infinite and for every $n\in\{3,4,5\}$ every cardinality $\kappa$ with $|\mathcal P^{\circ n}(x)|\le \kappa\le|\mathcal P^{\circ(n+1)}(x)|$ is equal either to $|\mathcal P^{\circ n}(x)|$ or to $|\mathcal P^{\circ(n+1)}(x)|$.

By Theorem~\ref{t:comp-n}, the successor cardinal $x^+$ of $x$ is infinite and hence $\w\le x^+$. By the Hartogs--Sierpi\'nski Theorem~\ref{t:Hartogs2}, $|\w|\le |x^+\cap\;\Crd|\le|\mathcal P^{\circ2}(x)|$. 

For every $n\in\w$ consider the iterated power-set $p_n=\mathcal P^{\circ n}(x)$ of $x$.
The inequality $|\w|\le|x^+\cap\;\Crd|\le|\mathcal P^{\circ2}(x)|=|p_2|$ implies that $1+|p_2|=|p_2|$.

\begin{claim}\label{cl:GCH1} For every natural number $n\ge 3$ we have $2\cdot|p_{n}|=|p_n|$. 
\end{claim}

\begin{proof} For $n=3$ we have $$2\cdot|p_3|=2\cdot |2^{p_2}|=2^{1+|p_2|}=2^{|p_2|}=|p_3|.$$
Assume that for some $n\ge 3$ we proved that $2\cdot |p_n|=|p_n|$.
Then $|p_n|\le 1+|p_n|\le|p_n|+|p_n|=2\cdot|p_n|=|p_n|$ and hence $1+|p_n|=|p_n|$ by Theorem~\ref{t:CBDS}.
Finally, 
 $$2\cdot |p_{n+1}|=2\cdot 2^{|p_n|}=2^{1+|p_n|}=2^{|p_n|}=|p_{n+1}|.$$
\end{proof}

\begin{claim}\label{cl:GCH2} For every  natural number $n\ge 3$ and every ordinal $\alpha$ the inequality $|\alpha|+|p_n|=|p_{n+1}|$ implies $|p_{n+1}|\le|\alpha|$.
\end{claim} 

\begin{proof} By Claim~\ref{cl:GCH1}, $$|p_{n+1}|=2^{|p_n|}=2^{|p_n|+|p_n|}=2^{|p_n|}\cdot 2^{|p_n|}=|p_{n+1}|\cdot|p_{n+1}|$$
and hence $|\alpha|+|p_n|=|p_{n+1}|\cdot|p_{n+1}|$. By Lemma~\ref{l:tarski}, either $|p_{n+1}|\le|p_n|$ or $|p_{n+1}|\le|\alpha|$. The first case is excluded by Cantor's Theorem~\ref{t:CantorP}. Therefore $|p_{n+1}|\le|\alpha|$.
\end{proof}

By Hartogs--Sierpi\'nski Theorem~\ref{t:Hartogs2}, the successor cardinal $\alpha=p_3^+$ of the set $p_3$ has cardinality $|\alpha|\le|\mathcal P^{\circ3}(p_3)|=|p_6|$.

Then $|p_5|\le |\alpha|+|p_5|\le 2\cdot|p_6|=|p_6|$. By our assumption, either $|\alpha|+|p_5|=|p_6|$ or $|\alpha|+|p_5|=|p_5|$.
In the first case we can apply Claim~\ref{cl:GCH2} and conclude that $|x|\le|p_6|\le|\alpha|$, which implies that $x$ admits an injective function into the cardinal $\alpha=p_3^+$ and hence $x$ can be well-ordered.

So, consider the second case $|\alpha|+|p_5|=|p_5|$. In this case $|p_4|\le |\alpha|+|p_4|\le|p_5|+|p_5|=|p_5|$ and by our assumption, the cardinality $|\alpha|+|p_4|$ is equal either to $|p_5|$ or to $|p_6|$. If $|\alpha|+|p_4|=|p_5|$, then by Claim~\ref{cl:GCH2}, $|x|\le|p_5|\le|\alpha|$ and hence $x$ can be well-ordered. 

It remains to consider the case $|\alpha|+|p_4|=|p_4|$. Then $|p_3|\le|\alpha|+|p_3|\le |p_4|+|p_4|=|p_4|$ and by our assumption, either $|\alpha|+|p_3|=|p_4|$ or  $|\alpha|+|p_3|=|p_3|$. In fact, the latter case is not possible as $|\alpha|=|p_3^+|\not\le|p_3|$. So,  $|\alpha|+|p_3|=|p_4|$ and by Claim~\ref{cl:GCH2}, $|x|\le |p_4|\le|\alpha|$ and $x$ can be well-ordered.
\end{proof}

The Generalized Continuum Hypothesis can be characterized as follows.

\begin{theorem}\label{t:GCH} The following statements are equivalent:
\begin{enumerate}
\item[\textup{(1)}] \textup{({\sf GCH})} holds;
\item[\textup{(2)}] $|2^x|=|x^+|$ for every infinite set $x$.
\end{enumerate}
Under $(\mathbf U=\mathbf V)$ the statements \textup{(1)--(2)} are equivalent to the equivalent statements:
\begin{enumerate}
\item[\textup{(3)}] $|2^\kappa|=|\kappa^+|$ for any infinite cardinal $\kappa$;
\item[\textup{(4)}] $|2^{<\kappa}|=|\kappa|$ for any infinite cardinal $\kappa$.
\end{enumerate}
 \end{theorem}
 
\begin{proof} $(1)\Ra(2)$: Assume ({\sf GCH}). By Theorem~\ref{t:GCH=>AC}, the Axiom of Choice holds. By Theorems~\ref{t:card-compare}, any two cardinalities are comparable. Then for every infinite subset $x$ we have $|x|<|x^+|$ (as $|x^+|\le|x|$ is forbidden by the definition of the cardinal $x^+$). By Theorem~\ref{t:CantorP}, $|x|<|2^x|$. The minimality of the cardinal $x^+$ and the comparability of the cardinalities $|x^+|$ and $|2^x|$ implies $|x^+|\le |2^x|$. Therefore, $|x|<|x^+|\le|2^x|$. Now ({\sf GCH}) implies $|x^+|=|2^x|$.
\smallskip

$(2)\Ra(1)$: If $|2^x|=|x^+|$ for any infinite set $x$, then $|x|\le |2^x|=|x^+|$ and hence $x$ admits an injective function into the cardinal $x^+$, which implies that $x$ can be well-ordered. Therefore, the Axiom of Choice holds. Assuming that ({\sf GCH}) fails, we can find infinite sets $x,y$ such that $|x|<|y|<|2^x|$. Then $|x^+|\le|y|$ by the definition of the cardinal $x^+$ and comparability of the cardinalities $|x^+|$ and $|y|>|x|$. Then $|x^+|\le|y|<|2^x|$, which contradicts (2).
\smallskip

$(3)\Ra(4)$: Take any infinite cardinal $\kappa$ and assume that $|2^\lambda|=|\lambda^+|$ for any infinite cardinal $\lambda\le \kappa^+$. Using Theorem~\ref{t:alephs}, Corollary~\ref{c:aleph-plus} and the equality $|\mathcal P(\kappa)|=|2^\kappa|=|\kappa^+|$, we can show that $$|\mathcal P(\mathcal P(\kappa)\times\kappa)|=|\mathcal P(\kappa^+\times\kappa)|=|\mathcal P(\kappa^+)|=|2^{\kappa^+}|=|\kappa^{++}|,$$
which implies that the set $\mathcal P(\mathcal P(\kappa)\times \kappa)$ admits a well-order $\prec$. For every ordinal $\alpha\in\kappa$, consider the set $I_\alpha$ of injective functions $f:\mathcal P(\alpha)\to\kappa$. Observe that $I_\alpha\subseteq \mathcal P(\mathcal P(\kappa)\times\kappa)$.  Since $|\mathcal P(\alpha)|=|2^{\alpha}|=|\alpha^+|\le |\kappa|$, the set $I_\alpha$ is not empty. Let $f_\alpha$ be the smallest element of $I_\alpha$ with respect to the well-order $\prec$. Consider the function $\Phi:2^{<\kappa}\to\kappa\times\kappa$ assigning to every function $\varphi\in 2^{<\kappa}$ the ordered pair $\langle\dom[\varphi],f_{\dom[\varphi]}(\varphi^{-1}(1))\rangle$. Observe that the function $\Phi$ is injective, witnessing that $|2^{<\kappa}|\le|\kappa\times\kappa|=|\kappa|$.
On the other hand, the injective function $$\Psi:\kappa\to 2^{<\kappa},\quad\Psi:\alpha\mapsto\alpha\times\{1\},$$witnesses that $|\kappa|\le |2^{<\kappa}|$. Now Cantor--Bernstein--Schr\"oder Theorem~\ref{t:CBDS} implies $|\kappa|=|2^{<\kappa}|$.
\smallskip

$(4)\Ra(3)$: If for any infinite cardinal $\kappa$ we have $|2^{<\kappa}|=|\kappa|$, then for any infinite cardinal $\kappa$ we also have
$$|\kappa|<|2^\kappa|\le |2^{<\kappa^+}|=|\kappa^+|.$$ Now the minimality of the cardinal $\kappa^+$ implies $|\kappa^+|\le |2^\kappa|\le|\kappa^+|$ and hence $|\kappa^+|=|2^\kappa|$ by the Cantor-Bertstein-Schr\"oder Theorem~\ref{t:CBDS}. 
\smallskip

The implication $(2)\Ra(4)$ is trivial. Finally, assuming that $(\mathbf U=\mathbf V)$, we shall prove that $(3)\Ra(2)$. If for any cardinal $\kappa$ we have $|2^\kappa|=|\kappa^+|$, then the set $2^\kappa$ is well-oderable. By Theorem~\ref{t:AC+UV}, the Axiom of Choice holds. Then for every infinite set $x$ there exists a cardinal $\kappa$ such that $|x|=|\kappa|$ and hence $|2^x|=|2^\kappa|=|\kappa^+|=|x^+|$.
\end{proof}

Under ({\sf GCH}) the exponentiation of cardinals can be described by a simple formula, presented in the following theorem.

\begin{theorem} Assume {\sf (GCH)}. Let $\kappa,\lambda$ be infinite cardinals.
\begin{enumerate}
\item[\textup{1)}] If $\kappa\le\lambda$, then $|\kappa^\lambda|=|\lambda^+|$.
\item[\textup{2)}] If $\cf(\kappa)\le\lambda<\kappa$, then $|\kappa^\lambda|=|\kappa^+|$.
\item[\textup{3)}] If $\lambda<\cf(\kappa)$, then $|\kappa^\lambda|=|\kappa|$.
\end{enumerate}
\end{theorem}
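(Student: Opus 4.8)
The plan is to run a transfinite induction on the cardinal $\kappa$ (uniformly in $\lambda$), after first noting that the hypothesis $(\mathsf{GCH})$ puts a lot of machinery at our disposal. By Sierpi\'nski's Theorem~\ref{t:GCH=>AC}, $(\mathsf{GCH})$ implies $(\mathsf{AC})$, so every infinite set is well-orderable; by Corollary~\ref{c:alephAC} every infinite cardinality is an aleph, the comparison of cardinalities is the ordinal comparison of the corresponding cardinals, and the arithmetic identities $\lambda\cdot\lambda=\lambda$ and $\mu+\nu=\mu\cdot\nu=\max\{\mu,\nu\}$ (Theorem~\ref{t:aleph-times} and Corollary~\ref{c:card-plus-AC}) are available. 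Crucially, under $(\mathsf{AC})$ the Generalized Continuum Hypothesis is exactly the statement that $2^\kappa=|\kappa^+|$ for every infinite cardinal $\kappa$, which I will use repeatedly. For a fixed infinite $\kappa$ the three hypotheses on $\lambda$ are mutually exclusive and exhaustive (either $\kappa\le\lambda$, or $\cf(\kappa)\le\lambda<\kappa$, or $\lambda<\cf(\kappa)\le\kappa$), so it suffices to treat the three cases; only case~3 will invoke the induction hypothesis.

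For case~1 ($\kappa\le\lambda$) no induction is needed: since $2\le\kappa\le\lambda$ and $\lambda\cdot\lambda=\lambda$, the laws of cardinal exponentiation give the sandwich
$$2^\lambda\;\le\;|\kappa^\lambda|\;\le\;|\lambda^\lambda|\;\le\;|(2^\lambda)^\lambda|\;=\;2^{\lambda\cdot\lambda}\;=\;2^\lambda\;=\;|\lambda^+|,$$
so $|\kappa^\lambda|=|\lambda^+|$. For case~2 ($\cf(\kappa)\le\lambda<\kappa$) I would establish the two-sided estimate separately. The upper bound comes from $\kappa\le 2^\kappa$, whence $|\kappa^\lambda|\le|\kappa^\kappa|\le|(2^\kappa)^\kappa|=2^{\kappa\cdot\kappa}=2^\kappa=|\kappa^+|$. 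The lower bound comes from Corollary~\ref{c:k<kcf}, which gives $\kappa<|\kappa^{\cf(\kappa)}|\le|\kappa^\lambda|$ because $\cf(\kappa)\le\lambda$. Thus $\kappa<|\kappa^\lambda|\le|\kappa^+|$, and since (under $(\mathsf{AC})$, by Theorem~\ref{t:card-compare}) there is no cardinality strictly between $\kappa$ and $\kappa^+$, I conclude $|\kappa^\lambda|=|\kappa^+|$.

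Case~3 ($\lambda<\cf(\kappa)$) is the heart of the argument and the one using the inductive hypothesis. The idea is that a function $f\in\kappa^\lambda$ has range of cardinality at most $\lambda<\cf(\kappa)$, hence is bounded: $\sup\rng[f]<\kappa$, so $f\in\alpha^\lambda$ for $\alpha=(\sup\rng[f])+1<\kappa$. This yields the decomposition $\kappa^\lambda=\bigcup_{\alpha<\kappa}\alpha^\lambda$ and therefore $|\kappa^\lambda|\le\sum_{\alpha<\kappa}|\alpha|^\lambda$. For each $\alpha<\kappa$ I would bound $|\alpha|^\lambda\le\kappa$ by applying the induction hypothesis to the cardinal $|\alpha|<\kappa$: if $|\alpha|$ is finite the value is $\le 2^\lambda=|\lambda^+|\le\kappa$ (using $\lambda<\kappa$, so $|\lambda^+|\le\kappa$); if $|\alpha|\le\lambda$ it is $|\lambda^+|\le\kappa$; if $\cf(|\alpha|)\le\lambda<|\alpha|$ it is $|\,|\alpha|^+|\le\kappa$ (as $|\alpha|<\kappa$); and if $\lambda<\cf(|\alpha|)$ it is $|\alpha|<\kappa$. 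Hence $\sum_{\alpha<\kappa}|\alpha|^\lambda\le\sum_{\alpha<\kappa}\kappa=\kappa\cdot\kappa=\kappa$ by Theorem~\ref{t:AC-union} (or directly by $\kappa\cdot\kappa=\kappa$), giving $|\kappa^\lambda|\le\kappa$; the reverse inequality $\kappa\le|\kappa^\lambda|$ is trivial, so $|\kappa^\lambda|=\kappa$ by Theorem~\ref{t:CBDS}. The main obstacle I anticipate is precisely the bookkeeping of this last case: one must verify that every one of the four sub-cases of the induction hypothesis produces a value bounded by $\kappa$, which rests on the two elementary but easy-to-misapply facts that $\lambda<\kappa$ forces $|\lambda^+|\le\kappa$ and $|\alpha|<\kappa$ forces $|\,|\alpha|^+|\le\kappa$.
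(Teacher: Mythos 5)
Your proofs of parts (1) and (2) coincide with the paper's: the same sandwich $|2^\lambda|\le|\kappa^\lambda|\le|(2^\lambda)^\lambda|=|2^{\lambda\cdot\lambda}|=|2^\lambda|$ for (1), and for (2) the same lower bound from Corollary~\ref{c:k<kcf} combined with the upper bound $|\kappa^\lambda|\le|2^{\kappa\cdot\lambda}|=|2^\kappa|=|\kappa^+|$, finished by the fact that under $(\mathsf{AC})$ nothing lies strictly between $|\kappa|$ and $|\kappa^+|$. Where you genuinely diverge is part (3). You run a transfinite induction on $\kappa$ and, after the (correct) decomposition $\kappa^\lambda=\bigcup_{\alpha<\kappa}\alpha^\lambda$, you bound each $|\alpha|^\lambda$ by a four-way case split on how $\lambda$ sits relative to $|\alpha|$ and $\cf(|\alpha|)$, invoking the induction hypothesis in the sub-case $\lambda<\cf(|\alpha|)$. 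The paper avoids the induction and the case analysis entirely with one uniform estimate: for $\mu<\kappa$ put $\nu=\max\{\mu,\lambda\}<\kappa$ and compute $|\mu^\lambda|\le|\nu^\nu|\le|(2^\nu)^\nu|=|2^{\nu\cdot\nu}|=|2^\nu|=|\nu^+|\le|\kappa|$, the last step using exactly the observation you isolate ($\nu<\kappa$ forces $|\nu^+|\le|\kappa|$); then $|\kappa^\lambda|\le|\kappa|\cdot\sup_{\alpha\in\kappa}|\alpha^\lambda|\le|\kappa|\cdot|\kappa|=|\kappa|$. Your route is correct and has the virtue of making transparent which clause of the theorem is used where, but it costs you the bookkeeping you yourself flag as the main hazard; the paper's crude bound $|\mu^\lambda|\le|\nu^\nu|$ trades a tiny loss of precision (harmless under $(\mathsf{GCH})$) for a self-contained, induction-free argument.
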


\begin{proof} 1. If $\kappa\le \lambda$, then by ({\sf GCH}), 
$$|\lambda^+|=|2^\lambda|\le|\kappa^\lambda|\le |\lambda^\lambda|\le |(2^\lambda)^\lambda|=|2^{\lambda\times\lambda}|=|2^\lambda|=|\lambda^+|.$$
\smallskip

2. If $\cf(\kappa)\le\lambda<\kappa$, then by Corollary~\ref{c:k<kcf}, 
$$|\kappa|<|\kappa^{\cf(\kappa)}|\le |\kappa^\lambda|\le |(2^\kappa)^\lambda|=2^{|\kappa\times\lambda|}=2^{|\kappa|}=|\kappa^+|$$and hence $|\kappa^\lambda|=|\kappa^+|$.
\smallskip

3. Finally, assume that $\lambda<\cf(\kappa)$.  Observe that for any cardinal $\mu<\kappa$ and the cardinal $\nu=\max\{\mu,\lambda\}<\kappa$ we have $|\mu^\lambda|\le |\nu^\nu|\le |(2^\nu)^\nu|=2^{|\nu\times \nu|}=|2^\nu|=|\nu^+|\le\kappa$. The strict inequality $\lambda<\cf(\kappa)$ implies that $\kappa^\lambda=\bigcup_{\alpha\in\kappa}\alpha^\lambda$ and hence
$$|\kappa|\le|\kappa^\lambda|\le|\kappa|\cdot\sup_{\alpha\in\kappa}|\alpha^\lambda|\le |\kappa|\cdot|\kappa|=|\kappa|.$$
\end{proof}

\section{Inaccessible and measurable cardinals}



\begin{definition} An uncountable cardinal $\kappa$ is called
\begin{itemize}
\item \index{cardinal!weakly inaccessible}\index{weakly inaccessible cardinal}{\em weakly inaccessible} if $\kappa$ is  regular and $|\lambda^+|<|\kappa|$ for every cardinal $\lambda<\kappa$;
\item \index{cardinal!strongly inaccessible}\index{strongly inaccessible cardinal} {\em strongly inaccessible} if $\kappa$ is regular and $|2^\lambda|<|\kappa|$ for every cardinal $\lambda<\kappa$.
\end{itemize}
\end{definition}

\begin{remark} Under ({\sf GCH}) a cardinal is weakly inaccessible if and only if it is strongly inaccessible. The existence of  weakly inaccessible or strongly inaccessible cardinals can not be proved within the axioms ZFC since for the smallest strongly inaccessible cardinal $\kappa$ the set $V_{\kappa}$ and its elements is a model of ZFC (in which strongly inaccessible cardinals do not exist).
\end{remark}

Inaccessible cardinals are examples of large cardinals, i.e., cardinals that are so large that their existence cannot be derived from the axioms of NBG or ZFC.
Important examples of large cardinals are measurable cardinals, defined with the help of 2-valued measures.

\begin{definition}
A function $\mu:\mathcal P(x)\to\{0,1\}$ is called a \index{measure}{\em $2$-valued measure} on a set $x$ if 
\begin{enumerate}
\item $\mu(x)=1$;
\item for any disjoint subsets $a,b\subseteq x$ we have $\mu(a\cup b)=\mu(a)+\mu(b)$;
\item any finite subset $a\subseteq x$ has measure $\mu(a)=0$.
\end{enumerate}
\end{definition}

\begin{exercise} Show that for any 2-valued measure $\mu:\mathcal P(x)\to 2$ the family $U=\{a\in\mathcal P(x):\mu(a)=1\}$ is an ultrafilter with $\bigcup U=x$ and $\bigcap U=\emptyset$. 
\end{exercise}

\begin{exercise} Show that for any ultrafilter $U$ with $\bigcap U=\emptyset$, the function $\mu:\mathcal P(\bigcup U)\to2$ such that $\mu^{-1}[\{1\}]=U$  is a $2$-valued measure on the set $\bigcup U$.
\end{exercise}

\begin{exercise} Show that under the Axiom of Choice for every infinite set $x$ there exists a $2$-valued measure $\mu:\mathcal P(x)\to\{0,1\}$.
\end{exercise}

\begin{definition} 
A $2$-valued measure $\mu:\mathcal P(x)\to 2$ is called 
\begin{itemize} 
\item \index{measure!$\kappa$-additive}{\em $\kappa$-additive}  if for any subset $y\subseteq\{a\in\mathcal P(x):\mu(a)=0\}$ of cardinality $|y|\le |\kappa|$ the union $\bigcup y$ has measure $\mu(\bigcup y)=0$;
\item \index{measure!$\kappa^<$-additive}{\em $\kappa^<$-additive} if $\mu$ is $\lambda$-additive for every cardinal $\lambda<\kappa$.
\end{itemize}
\end{definition}

The existence of a $\kappa$-additive 2-valued measure on a set $x$ imposes the following restriction on the cardinality of $x$.

\begin{lemma}\label{l:measurable} Let $\kappa$ be a cardinal. If a $2$-valued measure $\mu:\mathcal P(x)\to 2$ on some set $x$ is $\kappa$-additive, then $|x|\not\le|2^\kappa|$.
\end{lemma}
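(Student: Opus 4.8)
The plan is to argue by contradiction. Suppose $\mu$ is a $\kappa$-additive $2$-valued measure on $x$ but that nonetheless $|x|\le|2^\kappa|$, and fix an injective function $f\colon x\to 2^\kappa$, where $2^\kappa$ is the set of functions from $\kappa$ to $2=\{0,1\}$. The idea is to build a single ``diagonal'' function $g\in 2^\kappa$ recording, coordinate by coordinate, the measure-one behaviour of the functions $f(t)$, and then to show that all but at most one $t\in x$ disagree with $g$ at some coordinate, so that $x$ decomposes into a union of $|\kappa|$ null sets together with at most one point. This forces $\mu(x)=0$, contradicting axiom (1).

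First I would record the relevant sets. For each $\alpha\in\kappa$ and $i\in 2$ put $A_{\alpha,i}=\{t\in x:f(t)(\alpha)=i\}$. For a fixed $\alpha$ the sets $A_{\alpha,0}$ and $A_{\alpha,1}$ are disjoint with union $x$, so additivity (axiom (2)) together with $\mu(x)=1$ forces exactly one of them to have measure $1$ and the other measure $0$. This lets me define $g\colon\kappa\to 2$ by letting $g(\alpha)$ be the unique $i\in 2$ with $\mu(A_{\alpha,i})=1$; then $g\in 2^\kappa$ and $\mu(A_{\alpha,1-g(\alpha)})=0$ for every $\alpha\in\kappa$.

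Next I would analyse the set $B=\{t\in x:f(t)=g\}$. Since $f$ is injective, $B$ contains at most one element, hence is finite, so $\mu(B)=0$ by axiom (3). Its complement is $x\setminus B=\{t\in x:\exists\alpha\in\kappa\;(f(t)(\alpha)\ne g(\alpha))\}$, and because all values lie in $2$ this is exactly $\bigcup_{\alpha\in\kappa}A_{\alpha,1-g(\alpha)}$. This exhibits $x\setminus B$ as the union of the family $\{A_{\alpha,1-g(\alpha)}:\alpha\in\kappa\}$, whose members all have measure $0$ and whose cardinality is at most $|\kappa|$ (it is a surjective image of $\kappa$). By $\kappa$-additivity, $\mu(x\setminus B)=0$. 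Since $B$ and $x\setminus B$ are disjoint with union $x$, axiom (2) gives $\mu(x)=\mu(B)+\mu(x\setminus B)=0$, contradicting $\mu(x)=1$; this contradiction shows $|x|\not\le|2^\kappa|$.

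The step I expect to require the most care is the bookkeeping around the definition of $\kappa$-additivity: one must verify that the indexed family of null sets $\{A_{\alpha,1-g(\alpha)}\}_{\alpha\in\kappa}$ really satisfies the size hypothesis $|y|\le|\kappa|$, which holds because it is the image of $\kappa$ under a function and $\kappa$ is an ordinal, hence well-orderable. The only other point needing attention is the use of injectivity of $f$: it is precisely what collapses $B$ to at most a single point and thereby makes $B$ finite and null, which is the linchpin that allows additivity to be applied to the two-piece partition $x=B\cup(x\setminus B)$.
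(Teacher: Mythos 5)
Your proof is correct and is essentially the paper's own argument: the paper likewise fixes an injective $f:x\to 2^\kappa$, chooses for each coordinate $i\in\kappa$ the value $k_i$ with $\mu(a_{i,k_i})=1$, and uses $\kappa$-additivity to conclude that $\bigcap_{i\in\kappa}a_{i,k_i}$ (your set $B$) has measure $1$ while injectivity forces it to be at most a singleton. Your phrasing via the complementary union of null sets, and your explicit check of the cardinality hypothesis in the definition of $\kappa$-additivity, are just slightly more detailed renderings of the same steps.
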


\begin{proof} To derive a contradiction, assume that $|x|\le |2^{\kappa}|$. Then there exists an injective function $f:x\to 2^\kappa$. For every $i\in\kappa$  consider the projection $\pr_i:2^\kappa\to 2$, $\pr_i:g\mapsto g(i)$, onto the $i$-th coordinate. Next, for every $i\in\kappa$ and $k\in 2$, consider the set $a_{i,k}=\{y\in x:\pr_i\circ f(y)=k\}$. 
Since $x=a_{i,0}\cup a_{i,1}$ and $1=\mu(x)=\mu(a_{i,0})+\mu(a_{i,1})$ there exists a number $k_i\in 2$ such that $\mu(a_{i,k_i})=1$.  By the $\kappa$-additivity of the measure $\mu$, the intersection $a=\bigcap_{i\in\kappa}a_{i,k_i}$ has measure $\mu(a)=1$. On the other hand, the injectivity of $f$ implies that $|a|\le 1$ and hence $\mu(a)=0$.
\end{proof}

\begin{definition} An uncountable cardinal $\kappa$ is defined to be \index{cardinal!measurable}\index{measurable cardinal}{\em measurable} if it carries a  $\kappa^<$-additive $2$-valued measure $\mu:\mathcal P(\kappa)\to 2$. 
\end{definition}

\begin{theorem}[Tarski--Ulam] Under Axiom of Choice, each measurable cardinal is  strongly inaccessible.
\end{theorem}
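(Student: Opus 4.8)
The plan is to verify directly the two conditions in the definition of strong inaccessibility: that $\kappa$ is regular, and that $|2^{<\kappa}|=|\kappa|$. Fix a $\kappa^<$-additive $2$-valued measure $\mu\colon\mathcal P(\kappa)\to 2$. The one observation that drives both parts is that every subset $a\subseteq\kappa$ with $|a|<|\kappa|$ is $\mu$-null. Indeed, $a=\bigcup_{\xi\in a}\{\xi\}$ is the union of the family $\{\{\xi\}:\xi\in a\}$, which has cardinality $|a|<|\kappa|$; each singleton is finite and hence has measure $0$ by the third axiom of a $2$-valued measure, so $|a|$-additivity of $\mu$ (available since $|a|<\kappa$) gives $\mu(a)=0$.

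For regularity I would argue by contradiction. Assume $\kappa$ is singular, so $\lambda:=\cf(\kappa)<\kappa$ and there is an unbounded function $f\colon\lambda\to\kappa$; unboundedness gives $\kappa=\bigcup_{\alpha\in\lambda}f(\alpha)$. Since $\kappa$ is a cardinal, each ordinal $f(\alpha)\in\kappa$ satisfies $|f(\alpha)|<|\kappa|$, so $f(\alpha)$ is $\mu$-null by the observation above. Thus $\{f(\alpha):\alpha\in\lambda\}$ is a family of $\mu$-null sets of cardinality $\le\lambda<\kappa$, and $\lambda$-additivity forces $\mu(\kappa)=\mu\big(\bigcup_{\alpha\in\lambda}f(\alpha)\big)=0$, contradicting $\mu(\kappa)=1$. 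Hence $\kappa$ is regular.

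For the strong-limit property I would fix any cardinal $\lambda<\kappa$. Since $\mu$ is $\lambda$-additive, Lemma~\ref{l:measurable} (applied with $x=\kappa$) gives $|\kappa|\not\le|2^\lambda|$; as cardinalities are comparable under the Axiom of Choice by Theorem~\ref{t:card-compare}, this means $|2^\lambda|<|\kappa|$ for every $\lambda<\kappa$. Consequently $2^{<\kappa}=\bigcup_{\alpha\in\kappa}2^\alpha$ is a union indexed by $\kappa$ of sets each of cardinality $<\kappa$ (because $|2^\alpha|=|2^{|\alpha|}|$ and $|\alpha|<\kappa$ is a cardinal), so Theorem~\ref{t:AC-union} yields $|2^{<\kappa}|\le|\kappa|$. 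The map sending $\alpha\in\kappa$ to the constant-zero function of length $\alpha$ injects $\kappa$ into $2^{<\kappa}$, giving the reverse inequality, so $|2^{<\kappa}|=|\kappa|$ by the Cantor--Bernstein--Schr\"oder Theorem~\ref{t:CBDS}. Since a measurable cardinal is uncountable and we have shown it regular, it is strongly inaccessible.

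The main obstacle here is organizational rather than technical: isolating the null-set observation and checking that the three additivity parameters that occur ($|a|$ for null subsets, $\lambda=\cf(\kappa)$ in the regularity step, and each cardinal $\lambda<\kappa$ in the strong-limit step) all stay strictly below $\kappa$, so that $\kappa^<$-additivity genuinely applies in each case. The Axiom of Choice enters only twice, through cardinal comparability (Theorem~\ref{t:card-compare}) and through the cardinal bound on unions (Theorem~\ref{t:AC-union}), which are exactly the choice-dependent ingredients anticipated by the hypothesis of the theorem.
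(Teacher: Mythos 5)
Your proof is correct and follows essentially the same route as the paper: Lemma~\ref{l:measurable} together with cardinal comparability gives $|2^\lambda|<|\kappa|$ for all $\lambda<\kappa$, and regularity is obtained by the same contradiction via a cofinal decomposition of $\kappa$ into $<\kappa$ many null sets. You are in fact slightly more complete than the paper, which leaves the derivation of $|2^{<\kappa}|=|\kappa|$ from regularity and the strong-limit inequalities implicit, whereas you supply it via Theorem~\ref{t:AC-union} and Cantor--Bernstein--Schr\"oder.
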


\begin{proof} Let $\kappa$ be a  measurable cardinal and $\mu:\mathcal P(\kappa)\to 2$ be a 2-valued measure which is $\lambda$-additive for every cardinal $\lambda<\kappa$. By Lemma~\ref{l:measurable} and Theorem~\ref{t:card-compare}, for every cardinal $\lambda<\kappa$ we have $|2^\lambda|<|\kappa|$. To show that $\kappa$ is strongly inaccessible, it remains to prove that $\kappa$ is regular. To derive a contradiction, assume that $\cf(\kappa)<\kappa$ and choose an unbounded function $f:\cf(\kappa)\to\kappa$. Then $\kappa=\bigcup_{\alpha\in\cf(\kappa)}f(\alpha)$. Since $\kappa$ is a cardinal, for every $\alpha\in\cf(\kappa)$ the ordinal $f(\alpha)\in\kappa$ has cardinality $|f(\alpha)|<|\kappa|$. The $|f(\alpha)|$-additivity of the measure $\mu$ ensures that $\mu(f(\alpha))=\bigcup_{\beta\in f(\alpha)}\mu(\{\beta\})=0$. Applying the $\cf(\kappa)$-additivity of $\mu$, we conclude that $1=\mu(\kappa)=\mu(\bigcup_{\alpha\in\cf(\mu)}f(\alpha))=0$, which is a desired contradiction showing that the cardinal $\kappa$ is regular and hence strongly inaccessible.
\end{proof}

The cardinality of a set carrying an $\w$-additive $2$-valued measure still is very large.

\begin{proposition} The smallest cardinal $\kappa$ carrying an $\w$-additive $2$-valued measure is measurable and hence $\kappa$ strongly inaccessible under the Axiom of Choice.
\end{proposition}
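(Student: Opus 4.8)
The plan is to show directly that the minimal cardinal $\kappa$ in question carries a $\kappa^<$-additive $2$-valued measure, which is exactly what measurability demands; strong inaccessibility then follows at once from the Tarski--Ulam theorem proved above, using the Axiom of Choice. First I would check that $\kappa$ is uncountable. If $\kappa$ were finite, then $\kappa$ itself would be a finite set of measure $1$, contradicting the axiom that finite sets are null; and if $\kappa=\w$, then writing $\w=\bigcup_{n\in\w}\{n\}$ exhibits $\w$ as a countable union of the null singletons $\{n\}$, so $\w$-additivity would force $\mu(\w)=0\neq1$. Hence every cardinal carrying an $\w$-additive $2$-valued measure, in particular $\kappa$, is uncountable.

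Next I fix an $\w$-additive $2$-valued measure $\mu\colon\mathcal P(\kappa)\to2$ and let $\theta$ be the \emph{additivity} of $\mu$, that is, the least cardinal $\theta$ for which there is a family $(A_i)_{i\in\theta}$ of $\mu$-null sets with $\mu(\bigcup_{i\in\theta}A_i)=1$. Since $\mu$ is $\w$-additive, no countable family of null sets has non-null union, so $\theta\geq\w_1$; in particular $\theta$ is uncountable. Replacing each $A_i$ by $A_i\setminus\bigcup_{j\in i}A_j$ (still null, being a subclass of $A_i$) I may assume the $A_i$ are pairwise disjoint while keeping $\mu(\bigcup_{i\in\theta}A_i)=1$. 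I then transport $\mu$ to $\theta$ by setting $\nu(S)=\mu(\bigcup_{i\in S}A_i)$ for $S\subseteq\theta$. A routine check shows $\nu\colon\mathcal P(\theta)\to2$ is a $2$-valued measure: $\nu(\theta)=1$; finite additivity follows from disjointness of the $A_i$ together with finite additivity of $\mu$; and for finite $S$ one gets $\nu(S)=0$ because a finite union of null sets is null.

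The key step is to verify that $\nu$ is $\theta^<$-additive and then to pin down $\theta=\kappa$. Given $\lambda<\theta$ and $\nu$-null sets $(S_\alpha)_{\alpha\in\lambda}$, each $B_\alpha=\bigcup_{i\in S_\alpha}A_i$ is $\mu$-null and $\nu(\bigcup_{\alpha}S_\alpha)=\mu(\bigcup_{\alpha\in\lambda}B_\alpha)$; since $\lambda<\theta$ lies below the additivity of $\mu$, this union is $\mu$-null, so $\nu(\bigcup_\alpha S_\alpha)=0$. Thus $\nu$ is $\theta^<$-additive, and since $\theta$ is uncountable it is in particular $\w$-additive. Minimality of $\kappa$ then gives $\kappa\leq\theta$. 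Conversely, discarding the empty sets among the disjoint $A_i$ and sending each remaining nonempty $A_i$ to $\min A_i\in\kappa$ produces an injection of its index set into $\kappa$, whence $\theta\leq\kappa$; the reindexing is legitimate by minimality of $\theta$. Therefore $\theta=\kappa$, which means precisely that $\mu$ is $\lambda$-additive for every $\lambda<\kappa$, i.e. $\mu$ is $\kappa^<$-additive. Together with uncountability this shows $\kappa$ is measurable, and the Tarski--Ulam theorem yields strong inaccessibility under the Axiom of Choice.

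I expect the two bookkeeping points to be the only places needing genuine care: the verification that the transported measure $\nu$ is $\theta^<$-additive (which is where the definition of $\theta$ as the exact additivity is used), and the two-sided cardinality estimate $\theta=\kappa$, in particular the disjointification and the injection $i\mapsto\min A_i$ witnessing $\theta\leq\kappa$. Everything else is routine manipulation of the three measure axioms, and no choice is needed for the measurability conclusion itself---the Axiom of Choice enters only through the cited Tarski--Ulam theorem.
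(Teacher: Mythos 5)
Your proof is correct and takes essentially the same route as the paper's: both arguments disjointify a family of null sets with non-null union and transport $\mu$ to the index set, then invoke the minimality of $\kappa$. The paper runs this as a direct contradiction (a failure of $\lambda$-additivity for some $\lambda<\kappa$ yields an $\w$-additive two-valued measure on $\lambda$), whereas you package it as the computation $\theta=\kappa$ of the additivity of $\mu$ and additionally supply the uncountability check and the bound $\theta\le\kappa$ (which the singleton partition of $\kappa$ also witnesses, and which guarantees that $\theta$ exists) -- details the paper leaves implicit.
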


\begin{proof} By our assumption, there exists a $\w$-additive $2$-valued measure $\mu:\mathcal P(\kappa)\to 2$. To show that $\kappa$ is measurable, it suffices to prove that the measure $\mu$ is $\kappa^<$-additive. To derive a contradiction, assume that $\mu$ is not  $\lambda$-additive for some cardinal $\lambda<\kappa$. Then there exists a family $(x_i)_{i\in\lambda}$ of sets of measure $\mu(x_i)=0$ such that $\mu(\bigcup_{i\in\lambda}x_i)=1$. Replacing each set $x_i$ by $x_i\setminus\bigcup_{j\in i}x_j$, we can assume that the family $(x_i)_{i\in\lambda}$ consists of pairwise disjoint sets. Observe that the function $\nu:\mathcal P(\lambda)\to 2$ defined by $\nu(a)=\mu(\bigcup_{i\in a}x_i)$ is an $\omega$-additive $2$-valued measure on the cardinal $\lambda<\kappa$. But this contradicts the minimality of $\kappa$.
\end{proof}

\begin{Exercise} Prove that for every measurable cardinal $\kappa$ and function $h:\mathcal P_{2}(\kappa)\to 2$ on the set $\mathcal P_{2}(\kappa)=\{x\in\mathcal P(\kappa):|x|=|2|\}$ there exists a subset $a\subseteq \kappa$ of cardinality $|a|=|\kappa|$ such that $|h[\mathcal P_{2}(a)]|=1$.
\end{Exercise}

\begin{remark} It is consistent with ZF that the cardinal $\w_1$ is measurable, see \cite[12.2]{JechAC}.
\end{remark}

The existence of a measurable cardinal contradicts the Axiom of Constructibility. This non-trivial fact was discovered by D.S.~Scott \cite{Scott} in 1961.

\begin{theorem}[Scott]\label{t:Scott} If a measurable cardinal exists, then $\UU\ne\LL$.
\end{theorem}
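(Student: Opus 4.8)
The plan is to prove Scott's theorem by deriving a contradiction from the simultaneous assumptions that a measurable cardinal exists and that $\UU=\LL$. The key tool will be the ultrapower construction. Let $\kappa$ be the least measurable cardinal, witnessed by a $\kappa^<$-additive (in fact $\kappa$-additive, after noting $\kappa$ is regular) nonprincipal $2$-valued measure $\mu\colon\mathcal P(\kappa)\to 2$, equivalently a $\kappa$-complete nonprincipal ultrafilter $U=\{a\subseteq\kappa:\mu(a)=1\}$ with $\bigcap U=\emptyset$. First I would form the ultrapower $\UU^\kappa/U$: on the class of functions $f\colon\kappa\to\UU$ define $f\sim g$ iff $\{\alpha\in\kappa:f(\alpha)=g(\alpha)\}\in U$, and $[f]\,\mathrel{E^*}\,[g]$ iff $\{\alpha:f(\alpha)\in g(\alpha)\}\in U$. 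The central technical fact is \L o\'s's theorem, proved by induction on the complexity of a $\UU$-bounded formula using G\"odel's class existence Theorem~\ref{t:class}, which states that the ultrapower satisfies a formula on $[f_1],\dots,[f_n]$ exactly when $U$-almost every coordinate does. In particular the ultrapower is a model of $\CST$ and the canonical map $j\colon\UU\to\UU^\kappa/U$, $j\colon x\mapsto[c_x]$ (with $c_x$ the constant function) is an elementary embedding.

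The next step is to verify that $\kappa$-completeness of $U$ makes $E^*$ well-founded and set-like, so that the ultrapower is isomorphic (by the Mostowski collapse, whose existence follows from the Recursion Theorem~\ref{t:Recursion} applied to the rank function) to a transitive inner model $\MM$. Under the hypothesis $\UU=\LL$, every set is constructible, and since $\LL$ is absolute and $j$ is elementary, the transitive collapse $\MM$ must again satisfy $\UU=\LL$, forcing $\MM=\LL=\UU$. Thus $j\colon\UU\to\UU$ becomes a nontrivial elementary embedding of the universe into itself. The nontriviality comes from the measurability: the diagonal function $d\colon\alpha\mapsto\alpha$ satisfies $j(\beta)\,\mathrel{E^*}\,[d]\,\mathrel{E^*}\,j(\kappa)$ for every $\beta<\kappa$ by $\kappa$-completeness, so $[d]$ collapses to an ordinal lying strictly between $\sup_{\beta<\kappa}j(\beta)$ and $j(\kappa)$; in particular $j(\kappa)>\kappa$, so $j$ moves some ordinal and is not the identity.

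Finally I would extract the contradiction in the classical Kunen style, but specialized to the constructible setting to avoid Kunen's stronger apparatus: let $\lambda$ be the least ordinal moved by $j$, necessarily a cardinal with $j(\lambda)>\lambda$. Because $\UU=\LL$ carries the canonical well-order $\mathbf W_{<}$ from Theorem~\ref{t:well-order} which is definable by a $\UU$-bounded formula, $j$ must preserve it; elementarity applied to the least counterexample, together with the fact that $j$ fixes every ordinal below $\lambda$ pointwise yet satisfies $j(\lambda)\neq\lambda$, contradicts the $\mathbf W_{<}$-minimal definability of objects built from ordinals below $\lambda$. Concretely, the $U$-least element of a suitable $j$-invariant set must be simultaneously fixed and moved. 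I expect the genuine obstacle to be the careful formulation and proof of \L o\'s's theorem and the well-foundedness of the ultrapower within the \emph{class}-based framework of $\CST$, since functions $\kappa\to\UU$ form a proper class and one must check at each stage that the relevant collections are legitimate classes via Theorem~\ref{t:class}; the rest of the argument is then a relatively direct exploitation of the rigidity of $\LL$ furnished by its definable global well-order.
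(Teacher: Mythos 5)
Your overall strategy is the standard Scott argument and it is the right one: form the ultrapower of $\UU$ by a $\kappa$-complete nonprincipal ultrafilter $U$ on the least measurable $\kappa$, prove \L o\'s's theorem, use $\kappa$-completeness to get well-foundedness, collapse to a transitive inner model $\MM$, observe that $\UU=\LL$ forces $\MM=\LL=\UU$, and show $j(\kappa)>\kappa$ via the diagonal function. (Two technical points you wave at but should confront: equivalence classes $[f]$ of functions $\kappa\to\UU$ are proper classes, so in NBG you need Scott's trick -- replace $[f]$ by its members of minimal von Neumann rank, which is available here since $\UU=\LL\subseteq\VV$ gives Foundation; and ``$j$ is elementary'' for a proper-class target is not a single first-order statement, so one works with elementarity for each fixed formula, which suffices.)

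The genuine gap is your final step. You chose $\kappa$ to be the \emph{least} measurable cardinal, which is exactly the hypothesis that makes the contradiction immediate, and then you do not use it. The correct finish is: since $\kappa$ is the least measurable cardinal in $\UU$ and $j$ is elementary, $\MM$ satisfies ``$j(\kappa)$ is the least measurable cardinal''; but $\MM=\UU$, so the least measurable cardinal of $\UU$ equals both $\kappa$ and $j(\kappa)>\kappa$, a contradiction. Your substitute -- a Kunen-style rigidity argument from the definable well-order $\mathbf W_<$ of $\LL$ -- does not work as written. The mere existence of a nontrivial elementary $j\colon\LL\to\LL$ is not refuted by $\LL$ having a definable well-order: the ``least ordinal moved by $j$'' is not a notion internal to the structure $(\LL,\in)$ unless $j$ itself is treated as a definable parameter, and if you do treat it so, elementarity transports statements about $j$ to statements about $j(j)$ (the embedding derived from $j(U)$), not about $j$, so no least-counterexample argument closes. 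Your concluding sentence, that ``the $U$-least element of a suitable $j$-invariant set must be simultaneously fixed and moved,'' names no such set and proves nothing. Replace that paragraph with the one-line least-measurable argument and the proof is complete.
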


The proof of Theorem~\ref{t:Scott} is not elementary and can be found in \cite[17.1]{Jech}. 
\newpage

\part{Linear orders}

In this part we study linear orders. Linear orders often arise in mathematical practice. For example, the natural order on numbers (integer, rational or real) is a linear order, which fails to be a well-order. 

\section{Completeness}

In this section we study complete and boundedly complete linear orders. 

\begin{definition} An order $R$ is called \index{order!complete}\index{complete order}{\em complete} if every subclass $A\subseteq\dom[R^\pm]$ has $\sup_R(A)$ and $\inf_R(A)$.
\end{definition}

We recall that $\sup_R(A)$ is the unique $R$-least element of the class $\{b\in\dom[R^\pm]:A\times\{b\}\subseteq R\cup\Id\}$ and $\inf_R(A)$ the unique $R$-greatest element of the class $\{b\in\dom[R^\pm]:\{b\}\times A\subseteq R\}$.


By Exercise~\ref{ex:finite-order-mm}, any finite linear order is complete. 
Complete linear orders can be characterized as follows.

\begin{proposition}\label{p:comp} For an order $R$ the following conditions are equivalent:
\begin{enumerate}
\item[\textup{1)}] $R$ is  complete;
\item[\textup{2)}] each  subclass $A\subseteq\dom[R^\pm]$ has $\sup_R(A)$;
\item[\textup{3)}] each  subclass $B\subseteq\dom[R^\pm]$ has $\inf_R(B)$.
\end{enumerate}
\end{proposition}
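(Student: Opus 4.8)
The plan is to prove the three conditions equivalent by exploiting the duality between suprema and infima. The implications $(1)\Rightarrow(2)$ and $(1)\Rightarrow(3)$ are immediate, since completeness is by definition the assertion that every subclass of $\dom[R^\pm]$ has both a supremum and an infimum. Hence the entire content lies in the two implications $(2)\Rightarrow(3)$ and $(3)\Rightarrow(2)$; once these are established, condition $(2)$ yields suprema directly and infima through $(2)\Rightarrow(3)$, giving $(1)$, and symmetrically $(3)$ gives $(1)$. So it suffices to produce the two cross-implications.

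For $(2)\Rightarrow(3)$, I would fix a subclass $B\subseteq\dom[R^\pm]$ and pass to its class of lower bounds $\underline{B}=\{b\in\dom[R^\pm]:\{b\}\times B\subseteq R\cup\Id\}$. By $(2)$ the element $s=\sup_R(\underline{B})$ exists, and the claim is that $s=\inf_R(B)$. The verification splits into two steps. First, $s$ is itself a lower bound of $B$, i.e. $s\in\underline{B}$: every $b\in B$ is an upper bound of $\underline{B}$ (each lower bound of $B$ lies $R$-below $b$), so $b$ belongs to the class $\overline{\underline{B}}$ of upper bounds of $\underline{B}$; since $s$ is the $R$-least element of $\overline{\underline{B}}$, we get $\langle s,b\rangle\in R\cup\Id$, and as $b\in B$ was arbitrary, $\{s\}\times B\subseteq R\cup\Id$. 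Second, being $\sup_R(\underline{B})$, the element $s$ is an upper bound of $\underline{B}$, so $\langle \ell,s\rangle\in R\cup\Id$ for every $\ell\in\underline{B}$; combined with $s\in\underline{B}$ this exhibits $s$ as the $R$-greatest element of $\underline{B}$, which is precisely $\inf_R(B)$.

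The implication $(3)\Rightarrow(2)$ is the mirror image. Given $A\subseteq\dom[R^\pm]$ with class of upper bounds $\overline{A}$, I would set $t=\inf_R(\overline{A})$ (which exists by $(3)$) and run the same two-step argument: each $a\in A$ is a lower bound of $\overline{A}$, so $\langle a,t\rangle\in R\cup\Id$ and $t\in\overline{A}$; and since $t$ is a lower bound of $\overline{A}$, it is the $R$-least element of $\overline{A}$, i.e. $t=\sup_R(A)$. Combining the two cross-implications gives $(2)\Leftrightarrow(3)$, and since $(1)$ is exactly the conjunction of $(2)$ and $(3)$, all three conditions are equivalent.

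I expect no serious obstacle here. The argument uses only the antisymmetry of $R$, which guarantees uniqueness of the relevant least and greatest elements so that $\sup_R$ and $\inf_R$ are well-defined; it invokes neither linearity nor transitivity, so it holds for arbitrary orders. The only points deserving care are the careful bookkeeping of the quantifiers hidden in the definitions of $\overline{A}$ and $\underline{A}$, and the degenerate cases $B=\emptyset$ (where $\underline{B}=\dom[R^\pm]$ and $s$ becomes the $R$-greatest element of $\dom[R^\pm]$) and $A=\emptyset$, both of which the uniform argument above handles without modification.
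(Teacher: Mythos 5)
Your proof is correct and follows essentially the same route as the paper: both reduce everything to the cross-implications $(2)\Rightarrow(3)$ and $(3)\Rightarrow(2)$, obtaining $\inf_R(B)$ as $\sup_R(\underline{B})$ (and dually), with the same two-step verification that this element is both a member and an upper bound of $\underline{B}$. The only cosmetic difference is that the paper proves one cross-implication and dismisses the other ``by analogy,'' whereas you write out both.
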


\begin{proof} The implications $(1)\Ra(2,3)$ are trivial.
\smallskip

$(2)\Ra(3)$ Assume that every  subclass $A\subseteq\dom[R^\pm]$ has $\sup_R(A)$. In particular, the empty subset of $\dom[R^\pm]$ has $\sup_R(\emptyset)$, which is the $R$-least element of $\dom[R^\pm]$. Let $B$ be any subclass of $\dom[R^\pm]$. The subclass $A=\{a\in\dom[L^\pm]:\{a\}\times B\subseteq R\cup\Id\}$ of $\dom[R^\pm]$ contains the element $\sup_R(\emptyset)$ and hence it is not empty. By our assumption, the class $A$ has $\sup_R(A)$, which is the $R$-least element of the class $\overline{A}=\{b\in\dom[L^\pm]:A\times\{b\}\subseteq R\cup\Id\}$ of upper $R$-bounds of $A$ in $\dom[R^\pm]$. Since $B\subseteq\overline{A}$, the element $\sup_R(A)$ is  a lower $R$-bound for $B$. On the other hand, each lower $R$-bound $b\in\dom[R^\pm]$ for $B$ belongs to the class $\overline{A}$ and hence $\langle b,\sup_R(A)\rangle\in R\cup\Id$ by the definition of $\sup_R(A)$. This means that $\sup_R(A)=\inf_R(B)$, so $B$ has $\inf_R(B)$.
\smallskip

By analogy we can prove that $(3)\Ra(2)$.
\end{proof} 

 The following theorem implies that completeness is preserved by lexicographic powers of linear orders.

\begin{theorem}\label{t:complete-ord} For any complete linear order $L$ on a set $X=\dom[L^\pm]$ and ordinal $\alpha$ the lexicographic order $$L_\alpha=\{\langle f,g\rangle\in X^\alpha\times X^\alpha:\exists \beta\in\alpha\;(f{\restriction}_\beta=g{\restriction}_\beta\;\wedge\;\langle f(\beta),g(\beta)\rangle \in L\setminus \Id)\}$$
on the class $X^\alpha$ is complete.
\end{theorem}

\begin{proof} This theorem will be proved by transfinite induction. Observe that the set $X^0$ is a singleton and the order $L_0\subseteq X^0\times X^0$ complete.  Assume that for some ordinal $\alpha$ and all its elements $\beta\in\kappa$ we have proved that the order $L_\beta$ is complete. To show that the order $L_\alpha$ is complete, take any subclass $A\subseteq X^\alpha$. For every $\beta\in\alpha$, consider the projection $\pr_\beta:X^\alpha\to X^\beta$, $\pr_\beta:f\mapsto f{\restriction}_\beta$.

By the induction hypothesis, for every $\beta\in\alpha$ the linear order $L_\beta$ is complete and hence the set $\pr_\beta[A]\subseteq X^\beta$ has the smallest upper $L_\beta$-bound $s_\beta=\sup_{L_\beta}(\pr_\beta[A])\in X^\beta$.

If $\alpha$ is a successor ordinal, then $\alpha=\beta+1$ for some ordinal $\beta\in\alpha$. Consider the function $s_\beta=\sup_{L_\beta}(\pr_\beta[A])$ and the subset $A'=\{x\in X:s_\beta\cup\{\langle \beta,x\rangle\}\in A\}$. Since the order $L$ is complete, the set $A'$ has $\sup_L(A')\in X$ (remark that $\sup_L(\emptyset)=\inf_L(X)$). It can be shown that the function $$s_\alpha=s_\beta\cup\{\langle \beta,\sup{}_L(A')\rangle\}$$ is the required least upper bound $\sup_{L_\alpha}(A)$ on the set $A$ in $X^\alpha$.

If $\alpha$ is a limit ordinal, then we can show that the union $s_\alpha=\bigcup_{\beta\in\alpha}s_\beta$ is a function, which is the required least upper bound $\sup_{L_\alpha}(A)$ of the set $A$.

By analogy we can prove that $A$ has the greatest lower $L_\alpha$-bound $\inf_{L_\alpha}(A)$.
\end{proof}

\begin{corollary} For every ordinal $\alpha$ the lexicographic order
$$L=\{\langle f,g\rangle\in 2^\alpha\times 2^\alpha:\exists \beta\in\alpha\;(f{\restriction}_\beta=g{\restriction}_\beta\;\wedge\;f(\beta)\in g(\beta))\}$$on $2^\alpha$ 
is complete.
\end{corollary}

\begin{exercise} Complete all omitted details in the proof of Theorem~\ref{t:complete-ord}.
\end{exercise}

\begin{definition} An order $R$ is called \index{linear order!boundedly complete}\index{boundedly complete linear order}{\em boundedly complete} if 
\begin{itemize}
\item each  upper $R$-bounded nonempty subclass $A\subseteq\dom[R^\pm]$ has $\sup_R(A)$, and 
\item each  lower $R$-bounded nonempty subclass $A\subseteq\dom[R^\pm]$ has $\inf_R(A)$.
\end{itemize}
\end{definition} 

The following characterization of boundedly complete linear order can be proved by analogy with Proposition~\ref{p:comp}.

\begin{proposition}\label{p:bcomp} For an order $R$ the following conditions are equivalent:
\begin{enumerate}
\item[\textup{1)}] $R$ is boundedly complete;
\item[\textup{2)}] each  upper $R$-bounded nonempty subclass $A\subseteq\dom[R^\pm]$ has $\sup_R(A)$;
\item[\textup{3)}] each  lower $R$-bounded nonempty subclass $B\subseteq\dom[R^\pm]$ has $\inf_R(B)$.
\end{enumerate}
\end{proposition}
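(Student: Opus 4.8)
The plan is to follow the proof of Proposition~\ref{p:comp} almost verbatim, merely keeping track of the extra boundedness and nonemptiness hypotheses that distinguish the boundedly complete case from the complete case. The implications $(1)\Ra(2)$ and $(1)\Ra(3)$ are immediate from the definition of a boundedly complete order, so only $(2)\Ra(3)$ and its order-dual $(3)\Ra(2)$ require work.

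To prove $(2)\Ra(3)$, I would fix a lower $R$-bounded nonempty subclass $B\subseteq\dom[R^\pm]$ and pass to its class of lower $R$-bounds $A=\{a\in\dom[R^\pm]:\{a\}\times B\subseteq R\cup\Id\}$. The hypothesis that $B$ is lower $R$-bounded says precisely that $A\ne\emptyset$. The key point is that $A$ is upper $R$-bounded: since $B\ne\emptyset$, choose any $b\in B$; then every $a\in A$ satisfies $\langle a,b\rangle\in R\cup\Id$, so $b$ is an upper $R$-bound of $A$. Thus $A$ is a nonempty, upper $R$-bounded subclass of $\dom[R^\pm]$, and condition (2) supplies the element $\sup_R(A)$.

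It then remains to verify that $\sup_R(A)=\inf_R(B)$. First, $\sup_R(A)$ is a lower $R$-bound of $B$: each $b\in B$ is an upper $R$-bound of $A$, so by the minimality of $\sup_R(A)$ among the upper $R$-bounds of $A$ we obtain $\langle \sup_R(A),b\rangle\in R\cup\Id$ for all $b\in B$, which means $\sup_R(A)\in A$. Being simultaneously an element of $A$ and an upper $R$-bound of $A$, the element $\sup_R(A)$ is the $R$-greatest element of $A$, and this is exactly what it means to be $\inf_R(B)$. The implication $(3)\Ra(2)$ is then obtained by the order-dual argument, replacing $R$ by $R^{-1}$ and interchanging the roles of $\sup_R$ and $\inf_R$ and of lower and upper bounds.

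The computation is routine; the only point demanding care---and the single place where the bounded case genuinely differs from Proposition~\ref{p:comp}---is checking that passing from $B$ to its class of lower bounds $A$ keeps hypothesis (2) applicable, i.e.\ that $A$ is both nonempty (which follows from the lower boundedness of $B$) and upper $R$-bounded (which follows from the nonemptiness of $B$). In the unrestricted ``complete'' setting of Proposition~\ref{p:comp} these two conditions come for free, whereas here they are precisely what the two clauses in the definition of a boundedly complete order are arranged to guarantee.
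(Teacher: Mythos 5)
Your proof is correct and follows essentially the same route as the paper's: both reduce $(2)\Ra(3)$ to applying (2) to the class $A$ of lower $R$-bounds of $B$ and then checking that $\sup_R(A)$ is the $R$-greatest element of $A$, hence equals $\inf_R(B)$, with $(3)\Ra(2)$ by duality. Your explicit verification that $A$ is nonempty (from the lower boundedness of $B$) and upper $R$-bounded (from any $b\in B$) spells out a step the paper merely asserts, but the argument is the same.
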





\section{Universality}


\begin{definition}  A linear order $L$ is called \index{linear order!universal}\index{universal linear order}{\em universal} if for any subsets $a,b$ of $\dom[L^\pm]$ with $|a\cup b|<|\dom[L^\pm]|$ and $a\times b\subseteq L\setminus\Id$ there exists an element $x\in \dom[L^\pm]$ such that $(a\times\{x\})\cup(\{x\}\times b)\subseteq L\setminus\Id$. 
\end{definition}

Examples of universal orders can be constructed as follows.

Given a subclass $\kappa\subseteq \Ord$, consider the class $2^{<\kappa}$ of all functions $f$ with $\dom[f]\in\kappa$ and $\rng[f]\subseteq 2=\{0,1\}$, endowed with  the linear order
\begin{multline*}
\mathsf{U}_{2^{<\kappa}}=\{\langle f,g\rangle\in 2^{<\kappa}\times 2^{<\kappa}:\exists \alpha\in\dom[f]\cap\dom[g]\;(f{\restriction}_\alpha=g{\restriction}_\alpha\;\wedge\;f(\alpha)=0\;\wedge\;g(\alpha)=1)\\
\vee\;(f\cup\{\langle \dom[f],1\rangle\}\subseteq g)\;\vee\;(g\cup\{\langle\dom[g],0\rangle\}\subseteq f)\}.
\end{multline*}

\begin{theorem}\label{t:univ-ord} The linear order $\mathsf{U}_{2^{<\kappa}}$ is universal if $\kappa=\Ord$ or $\kappa$ is a regular cardinal with $|\kappa|=|2^{<\kappa}|$. 
\end{theorem}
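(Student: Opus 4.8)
The plan is to verify universality directly, constructing for the given $a,b$ a separating element $x\in 2^{<\kappa}$ by a transfinite recursion along the coordinates. Write $L:=\mathsf{U}_{2^{<\kappa}}$, so that $\dom[L^\pm]=2^{<\kappa}$, and assume $a,b\subseteq 2^{<\kappa}$ satisfy $|a\cup b|<|2^{<\kappa}|$ and $a\times b\subseteq L\setminus\Id$; the latter says every $f\in a$ is strictly $L$-below every $g\in b$ (in particular $a\cap b=\emptyset$ and no $f\in a$ equals any $g\in b$). The goal is to find $x$ with $\langle f,x\rangle\in L$ for all $f\in a$ and $\langle x,g\rangle\in L$ for all $g\in b$.

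First I would use the hypotheses on $\kappa$ to produce one ordinal $\delta\in\kappa$ with $\dom[h]<\delta$ for every $h\in a\cup b$. If $\kappa$ is a regular cardinal with $|\kappa|=|2^{<\kappa}|$, then $|a\cup b|<|2^{<\kappa}|=|\kappa|$ means the set $\{\dom[h]:h\in a\cup b\}$ has fewer than $\kappa$ elements, each an ordinal below $\kappa$, so regularity gives $\sigma:=\sup\{\dom[h]:h\in a\cup b\}<\kappa$ and $\delta:=\sigma+1<\kappa$. If $\kappa=\Ord$, the inequality just says $a\cup b$ is a set, so by the Axioms of Replacement and Union $\sigma=\bigcup\{\dom[h]:h\in a\cup b\}$ is an ordinal (Theorem~\ref{t:Ord}(5)) and $\delta:=\sigma+1$ works. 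In both cases $\delta$ is a length at which every element of $a\cup b$ has already ended.

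Then I would define $x\colon\delta\to 2$ by transfinite recursion (justified by the Recursion Theorem~\ref{t:recursion} applied to $\E{\restriction}\delta$), tracking the active sets $A_\xi=\{f\in a:\dom[f]\ge\xi,\ f{\restriction}_\xi=x{\restriction}_\xi\}$ and $B_\xi=\{g\in b:\dom[g]\ge\xi,\ g{\restriction}_\xi=x{\restriction}_\xi\}$. Call $f\in A_\xi$ \emph{$1$-demanding} if $\dom[f]=\xi$ or $f(\xi)=1$, and $g\in B_\xi$ \emph{$0$-demanding} if $\dom[g]=\xi$ or $g(\xi)=0$. The rule at stage $\xi$: set $x(\xi)=1$ if some active $f$ is $1$-demanding; otherwise set $x(\xi)=0$ if some active $g$ is $0$-demanding; otherwise set $x(\xi)=0$. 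The construction rests on a single claim, which I expect to be the crux: at no stage $\xi$ can a $1$-demanding $f\in A_\xi$ and a $0$-demanding $g\in B_\xi$ coexist. This follows from a short case analysis on whether $\dom[f]$ and $\dom[g]$ equal or exceed $\xi$, using $f{\restriction}_\xi=x{\restriction}_\xi=g{\restriction}_\xi$: in each case the definition of $\mathsf{U}_{2^{<\kappa}}$ (extending a common prefix by $0$ goes $L$-left, by $1$ goes $L$-right) forces $\langle g,f\rangle\in L$, contradicting $\langle f,g\rangle\in L$. This ``no-deadlock'' claim is exactly where $a\times b\subseteq L\setminus\Id$ is used, and it guarantees the recursion rule is always consistent.

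Finally I would check that $x$ separates $a$ from $b$. Since every $\dom[h]<\delta=\dom[x]$, we get $A_\delta=B_\delta=\emptyset$, so each $f\in a$ left $A$ at some $\xi_f<\delta$: either its domain was exhausted ($\dom[f]=\xi_f$, so $f$ is a proper prefix of $x$ and the $1$-demanding clause had set $x(\xi_f)=1$, whence $\langle f,x\rangle\in L$), or it diverged from $x$ at $\xi_f$, which by the no-deadlock analysis can only happen with $f(\xi_f)=0$ and $x(\xi_f)=1$, again giving $\langle f,x\rangle\in L$. Dually $\langle x,g\rangle\in L$ for all $g\in b$. As $\dom[h]<\dom[x]$ forces $h\neq x$ for every $h\in a\cup b$, this gives $(a\times\{x\})\cup(\{x\}\times b)\subseteq L\setminus\Id$, so $x$ witnesses universality. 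The only genuinely delicate point is the no-deadlock claim; everything else is bookkeeping about prefixes and first points of difference in $\mathsf{U}_{2^{<\kappa}}$.
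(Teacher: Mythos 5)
Your proof is correct, and it takes a genuinely different route from the paper's, although the two proofs open identically: the bound $\delta$ (the paper's $\gamma+1$) on the domains is obtained exactly as you obtain it, from regularity of $\kappa$ (together with the well-orderability of $2^{<\kappa}$, needed to pass from a surjection of $a\cup b$ onto the set of domains to an actual cardinality bound) or from Replacement and Union when $\kappa=\Ord$. From that point on the paper proceeds statically rather than recursively: it first dispenses with $a=\emptyset$ and $b=\emptyset$ via constant functions, then forms the single object $u=\bigcup\{f\cap g:f\in a,\ g\in b\}$, proves by a least-counterexample argument that $u$ is a function (this is where $a\times b\subseteq \mathsf{U}_{2^{<\kappa}}\setminus\Id$ and antisymmetry are consumed), and finally writes the witness in closed form --- $u$, or an initial segment of some $f\in a$ through $\dom[u]$, continued by a constant tail of $0$'s or $1$'s --- split into three cases according to whether elements of $a$, respectively of $b$, properly extend $u$. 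Your greedy transfinite recursion replaces all of this, and your no-deadlock claim is the exact analogue of the paper's ``$u$ is a function'': each is the unique place where the hypothesis $a\times b\subseteq \mathsf{U}_{2^{<\kappa}}\setminus\Id$ enters, via a four-way case analysis on whether the two domains equal or exceed the current coordinate, played off against the antisymmetry of the order (note only that your subcase $\dom[f]=\xi=\dom[g]$ contradicts $f\ne g$ directly rather than antisymmetry). The paper's version buys an explicit closed-form witness and avoids any appeal to the Recursion Theorem; yours buys a uniform treatment of the empty cases and of the paper's three cases, and a verification that is purely local (prefix bookkeeping coordinate by coordinate, including the routine check that the first exit stage from $A_\xi$ or $B_\xi$ is a successor). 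Both arguments are sound.
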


\begin{proof} Assume that $\kappa=\Ord$ or $\kappa$ is a regular cardinal with $|\kappa|=|2^{<\kappa}|$. Given any  subsets $a,b\subseteq 2^{<\kappa}$ with $|a\cup b|<|\dom[\mathsf{U}_{2^{<\kappa}}^\pm]|$ and $a\times b\subseteq \mathsf{U}_{2^{<\kappa}}$, we need to find an element $z\in 2^{<\kappa}$ such that $(a\times\{z\})\cup(\{z\}\times b)\subseteq\mathsf{U}_{2^{<\kappa}}$. By the Axiom of Replacement, the set $\gamma=\bigcup\{\dom[f]:f\in a\cup b\}\subseteq\Ord$ is an ordinal. 

We claim that $\gamma\subset\kappa$. This is clear if $\kappa=\Ord$. If $\kappa$ is a regular cardinal with $|\kappa|=|2^{<\kappa}|$, then $|a\cup b|<|2^{<\kappa}|=|\kappa|$. In this case the set $2^{<\kappa}$ is well-orderable and so is the set $a\cup b$. It follows that the set $\Gamma=\{\dom[f]:f\in a\cup b\}\subseteq\kappa$ has cardinality  $|\Gamma|\le|a\cup b|<|\kappa|$, see Proposition~\ref{p:card-ineq}(2). By the regularity of the cardinal $\kappa$, the union $\gamma=\bigcup\Gamma=\bigcup\{\dom[f]:f\in a\cup b\}$ is a proper subset of $\kappa$. Since $\kappa$ is a limit ordinal, $\gamma\subset\kappa$ implies $\gamma+1\in\kappa$.

If $a=\emptyset$ (resp. $b=\emptyset$), then the constant function $z=(\gamma+1)\times\{0\}$ (resp. $z=(\gamma+1)\times\{1\}$) is an element of $2^{<\kappa}$ that has the required property: $(a\times\{z\})\cup(\{z\}\times b)\subseteq\mathsf{U}_{2^{<\kappa}}$.

So, we assume that the sets $a,b$ are not empty. In this case, consider the set $u=\bigcup\{f\cap g:f\in a,\;g\in b\}$ and observe that it is a subset of $\gamma\times 2$.  Assuming that the set $u$ is not a function, we can find the smallest ordinal $\alpha$ such that $u{\restriction}_\alpha$ is not a function. It is easy to see that $\alpha$ is a successor ordinal and hence $\alpha=\beta+1$ for some ordinal $\beta\in\alpha$. It follows that $u{\restriction}\beta$ is a function but $u{\restriction}\alpha$ is not a function. Then both pairs $\langle\beta,0\rangle$  and $\langle\beta,1\rangle$ belong to the set $u$ and hence $\langle \beta,0\rangle\in f\cap g$ and $\langle \beta,1\rangle\in f'\cap g'$ for some functions $f,f'\in a$ and $g,g'\in b$. Then $\langle g,f'\rangle\in \mathsf{U}_{2^{<\kappa}}$ which contradicts $\langle f',g\rangle\in a\times b\subseteq\mathsf{U}_{2^{<\kappa}}$. This contradiction shows that $u$ is a function and hence $u\in 2^{<\kappa}$. 


Now three cases are possible.

1. There exist functions $f\in a$, $g\in b$ such that $\dom[u]\subset\dom[f]$, $\dom[u]\subset\dom[g]$ and $u=f{\restriction}\dom[u]=g{\restriction}\dom[u]$.
Taking into account that $f\cap g\subseteq u$, $\langle f,g\rangle\in a\times b\subseteq\mathsf{U}_{2^{<\kappa}}$,  we conclude that $f(\dom[u])=0$ and $g(\dom[u])=1$. 

If $u\notin a$, then defined a function $z\in 2^{\gamma+1}\subset 2^{<\kappa}$ by the formula $$z(\alpha)=\begin{cases}f(\alpha)&\mbox{if $\alpha\le\dom[u]$};\\
1&\mbox{if $\dom[u]<\alpha\le\gamma$}.
\end{cases}
$$
If $u\in a$, then defined a function $z\in 2^{\gamma+1}\subset 2^{<\kappa}$ by $$z(\alpha)=\begin{cases}g(\alpha)&\mbox{if $\alpha\le\dom[u]$};\\
0&\mbox{if $\dom[u]<\alpha\le\gamma$}.
\end{cases}
$$
It can be shown that the function $z$ has the required property: $(a\times\{z\})\cup(\{z\}\times b)\subseteq \mathsf{U}_{2^{<\kappa}}$.

2. There exist no functions $f\in a$ such that $\dom[u]\subset\dom[f]$ and $u=f{\restriction}_{\dom[u]}$. In this case define the function  $z\in 2^{\gamma+1}\subset 2^{<\kappa}$ by the formula $$z(\alpha)=\begin{cases}u(\alpha)&\mbox{if $\alpha<\dom[u]$};\\
0&\mbox{if $\dom[u]\le\alpha\le\gamma$};
\end{cases}
$$
and prove that $z$ has the required property: $(a\times\{z\})\cup(\{z\}\times b)\subseteq \mathsf{U}_{2^{<\kappa}}$. 

3. There exist no functions $g\in b$ such that $\dom[u]\subseteq\dom[g]$ and $u=g{\restriction}_{\dom[u]}$. In this case define the function  $z\in 2^{\gamma+1}\subset 2^{<\kappa}$ by the formula $$z(\alpha)=\begin{cases}u(\alpha)&\mbox{if $\alpha<\dom[u]$};\\
1&\mbox{if $\dom[u]\le\alpha\le\gamma$};
\end{cases}
$$
and prove that $z$ has the required property: $(a\times\{z\})\cup(\{z\}\times b)\subseteq \mathsf{U}_{2^{<\kappa}}$.
\end{proof}

\begin{remark} By Theorem~\ref{t:GCH}, under $(\mathsf{GCH})$, every infinite cardinal $\kappa$ satisfies the equality $|\kappa|=|2^{<\kappa}|$.
\end{remark}


The following theorem explains why universal orders are called universal.

\begin{theorem}\label{t:lin-univ-inj} Let $U$ be a universal linear order whose underlying class $\dom[U^\pm]$ is well-orderable. For a linear order $L$ the following conditions are equivalent.
\begin{enumerate}
\item[\textup{1)}] There exists an $L$-$U$-increasing function $\dom[L^\pm]\to\dom[U^\pm]$.
\item[\textup{2)}] There exists an injective function $\dom[L^\pm]\to\dom[U^\pm]$.
\end{enumerate}
\end{theorem}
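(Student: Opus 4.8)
The plan is to prove $(1)\Rightarrow(2)$ directly and $(2)\Rightarrow(1)$ by a transfinite greedy construction that embeds $\dom[L^\pm]$ into $\dom[U^\pm]$ point by point, calling on universality to make room at each step. Since the universal orders of interest (the orders $\mathsf U_{2^{<\kappa}}$ of Theorem~\ref{t:univ-ord}) are irreflexive, I take $U$ to be irreflexive throughout; if it is not, one replaces $U$ by its strict part $U\setminus\Id$, which affects neither condition.

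The implication $(1)\Rightarrow(2)$ is immediate. If $F\colon\dom[L^\pm]\to\dom[U^\pm]$ is $L$-to-$U$-increasing and $x\neq x'$, then by linearity of $L$ we may (after possibly swapping names) assume $\langle x,x'\rangle\in L$, so $\langle F(x),F(x')\rangle\in U$ by the increasing property; as $U$ is irreflexive this forces $F(x)\neq F(x')$, so $F$ is injective. Note this direction uses neither universality nor well-orderability.

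For $(2)\Rightarrow(1)$ I first well-order $\dom[L^\pm]$ with \emph{small} initial segments. Let $\kappa$ be the cardinal of $\dom[U^\pm]$ when this class is a set, and set $\kappa=\Ord$ when it is a proper class; in both cases well-orderability of $\dom[U^\pm]$ and Theorem~\ref{t:wOrd} supply an order isomorphism $\dom[U^\pm]\to\kappa$ (in the set case one passes on to the least equipotent ordinal). Composing with the injection furnished by $(2)$ yields an injection $j\colon\dom[L^\pm]\to\kappa$, and the pullback $W=\{\langle x,y\rangle:j(x)\in j(y)\}$ is a set-like well-order on $\dom[L^\pm]$ whose every initial interval $\cev W(x)$ injects into the ordinal $j(x)$, hence is a set with $|\cev W(x)|<|\dom[U^\pm]|$ (when $\kappa$ is a cardinal, because $j(x)<\kappa$ and $\kappa$ is initial; when $\dom[U^\pm]$ is a proper class, because any set is strictly smaller than a proper class). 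Fixing also a set-like well-order $W_U$ on $\dom[U^\pm]$, I define $F\colon\dom[L^\pm]\to\dom[U^\pm]$ by transfinite recursion along $W$ (Theorem~\ref{t:Recursion}), letting the recursion carry the partial function $F{\restriction}\cev W(x)$: with $a_x=\{F(y):y\in\cev W(x),\ \langle y,x\rangle\in L\}$ and $b_x=\{F(y):y\in\cev W(x),\ \langle x,y\rangle\in L\}$, I let $F(x)$ be the $W_U$-least $z\in\dom[U^\pm]$ with $(a_x\times\{z\})\cup(\{z\}\times b_x)\subseteq U\setminus\Id$, and a fixed default value if no such $z$ exists.

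It then remains to verify by transfinite induction along $W$ that $F$ is $L$-to-$U$-increasing and that the ``good'' case of the recursion always occurs. Assuming increasingness among elements of $\cev W(x)$, for any $u=F(y)\in a_x$ and $v=F(y')\in b_x$ transitivity of $L$ gives $\langle y,y'\rangle\in L$ with $y\neq y'$, so $\langle u,v\rangle\in U\setminus\Id$ by the hypothesis and irreflexivity; hence $a_x\times b_x\subseteq U\setminus\Id$, and since $|a_x\cup b_x|\le|\cev W(x)|<|\dom[U^\pm]|$, universality produces the required $z$, so $F(x)$ is a genuine witness. Finally, for distinct $x,x'$ with $\langle x,x'\rangle\in L$, whichever of $x,x'$ is $W$-larger has the other among its predecessors, and the defining property of $F$ at that later stage yields $\langle F(x),F(x')\rangle\in U$. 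The main obstacle is exactly the cardinal bookkeeping: universality may be invoked only when $|a_x\cup b_x|<|\dom[U^\pm]|$, which is why the enumeration of $\dom[L^\pm]$ must be taken through the cardinal $\kappa=|\dom[U^\pm]|$ rather than an arbitrary well-order, and why the proper-class case $\kappa=\Ord$ (where set-likeness of $W$ gives the bound for free) must be separated out; everything else is routine transfinite recursion.
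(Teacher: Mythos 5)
Your proof is correct and follows essentially the same route as the paper's: the forward implication is the same triviality, and the substantial direction is the same greedy transfinite embedding that adds one point of $\dom[L^\pm]$ at a time and invokes universality of $U$ at each step, the only real difference being bookkeeping (you pull the well-order back from $\kappa=|\dom[U^\pm]|$ along the given injection so that initial segments are automatically of cardinality $<|\dom[U^\pm]|$, whereas the paper enumerates $\dom[L^\pm]$ by its own cardinal $\kappa_1$ and uses $|\kappa_1|\le|\kappa_2|$ for the same bound). Your explicit reduction to irreflexive $U$ is a sensible reading of the statement --- the paper leaves this implicit, and indeed $(1)\Rightarrow(2)$ genuinely needs the strict reading of ``increasing,'' since for reflexive $U$ a constant function would vacuously satisfy (1).
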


\begin{proof} The implication $(1)\Ra(2)$ is trivial. To prove that $(2)\Ra(1)$, assume that there exists an injective function $J:\dom[L^\pm]\to\dom[U^\pm]$. The well-orderability of the class $\dom[U^\pm]$ and the injectivity of the function $J$ imply that the class $\dom[L^\pm]$ is well-orderable. If $\dom[L^\pm]$ is a proper class, then put $\kappa_1=\Ord$. If $\dom[L^\pm]$ is a set, then let $\kappa_1$ be the unique cardinal such that $|\kappa_1|=|\dom[L^\pm]|$ (the cardinal $\kappa_1$ exists since the set $\dom[L^\pm]$ is well-orderable). 
Using Theorem~\ref{t:wOrd}, find a bijective function $N_1:\kappa_1\to\dom[L^\pm]$. 

Now do the same with the order $U$. If $\dom[U^\pm]$ is a proper class, then put $\kappa_2=\Ord$ and if $\dom[U^\pm]$ is a set, then let $\kappa_2$ be the unique cardinal such that $|\kappa_2|=|\dom[U^\pm]$. 
Using Theorem~\ref{t:wOrd}, find a bijective function $N_2:\kappa_2\to\dom[U^\pm]$.

 Let $I$ be the class whose elements are injective functions $\varphi\subseteq \dom[L^\pm]\times\dom[U^\pm]$ such that $|\varphi|<|\kappa_1|$ and $\varphi$ is an isomorphism of the linear orders $L{\restriction}\dom[\varphi]$ and $U{\restriction}\rng[\varphi]$.

Let $\Phi:\kappa_1\times I\to I$ be the function assigning to each ordered pair $\langle \alpha,\varphi\rangle\in\kappa_1\times I$ the function $\Phi(\alpha,\varphi)\in I$ defined as follows. If $N_1(\alpha)\in\dom[\varphi]$, then $\Phi(\alpha,\varphi)=\varphi$. If $N_1(\alpha)\notin\dom[\varphi]$ then let $\beta(\alpha,\varphi)$ be the smallest ordinal $\beta\in\kappa_2$ such that the function $\varphi\cup\{\langle N_1(\alpha),N_2(\beta)\rangle\}$ is an element of the class $I$. Let us show that the ordinal $\beta(\alpha,\varphi)$ exists. Consider the sets $a=\{x\in\dom[\varphi]:\langle x,N_1(\alpha)\rangle\in L\}$ and $b=\{y\in\dom[\varphi]:\langle N_1(\alpha),y\rangle\in L\}$ and observe that $a\times b\subseteq L$. Taking into account that the function $\varphi$ is an isomorphism of the orders $L{\restriction}\dom[\varphi]$ and $U{\restriction}\rng[\varphi]$, we conclude that $\varphi[a]\times\varphi[b]\subseteq U\setminus\Id$. It follows that $|\varphi[a]\cup\varphi[b]|=|a\cup b|<|\kappa_1|\le|\kappa_2|\le|U|$. By the universality of the linear order $U$, 
there exists an element $z\in\dom[U^\pm]$ such that $(\varphi[a]\times\{z\})\cup(\{z\}\times\varphi[b])\subseteq U\setminus\Id$. Then the function $\varphi\cup\{\langle N_1(\alpha+1),z\rangle\}$ is an element of the class $I$. Since the function $N_2:\kappa_2\to\dom[U^\pm]$ is surjective, $z=N_2(\beta)$ for some ordinal $\beta\in\kappa_2$. Then $\varphi\cup\{\langle N_1(\alpha),N_2(\beta)\rangle\}\in I$, which completes the proof of the existence of the ordinal $\beta(\alpha,\varphi)$.

 Then put $\Phi(\alpha,\varphi)=\varphi\cup\{\langle N_1(\alpha),N_2(\beta(\alpha,\varphi))\rangle\}$.
Observe that the function $\Phi(\alpha,\varphi)$ has the properties: $\varphi\subseteq\Phi(\alpha,\varphi)\in I$ and $N_1(\alpha)\in\dom[\Phi(\alpha,\varphi)]$.

Finally, consider the function $F:\kappa_1\times \UU\to\UU$ assigning to every ordered pair $\langle \alpha,x\rangle\in\kappa_1\times \UU$ the set $$F(\alpha,x)=\begin{cases}\Phi(\alpha,\bigcup \rng[x])&\mbox{if $\bigcup \rng[x]\in I$};\\
\emptyset&\mbox{otherwise}.
\end{cases}
$$

By Recursion Theorem~\ref{t:Recursion}, there exists a transfinite sequence  $(\varphi_\alpha)_{\alpha\in\kappa_1}$ such that $\varphi_0=\emptyset$ and $\varphi_{\alpha}=F(\alpha,\{\langle \beta,\varphi_\beta\rangle\}_{\beta\in\alpha})$ for every ordinal $\alpha\in\kappa_1$. Using the Principle of Transfinite Induction, it can be proved that for every ordinal $\alpha\in\kappa_1$ the following conditions are satisfied:
\begin{itemize}
\item $\varphi_\alpha\in I$;
\item $\forall\beta\in\alpha\;(\varphi_\beta\subseteq \varphi_{\alpha})$;
\item $\varphi_{\alpha}=F(\alpha,\{\langle\beta,\varphi_\beta\rangle\}_{\beta\in\alpha+1})=\Phi(\alpha,\bigcup_{\beta\in\alpha}\varphi_\beta)$;
\item $N_1(\alpha)\in\dom[\varphi_{\alpha}]$.
\end{itemize}
Then $\varphi=\bigcup_{\alpha\in\kappa_1}\varphi_\alpha$ is a required $L$-$U$-increasing function from $\dom[L^\pm]$ to $\dom[U^\pm]$. 
\end{proof}

\begin{theorem}\label{t:lin-univ} Let $U$ be a universal linear order whose underlying class $\dom[U^\pm]$ is well-orderable. For a linear order $L$ the following conditions are equivalent:
\begin{enumerate}
\item[\textup{1)}]  there exists an  isomorphism $\dom[L^\pm]\to\dom[U^\pm]$ of the orders $L,U$;
\item[\textup{2)}] there exists a bijective  function $F:\dom[L^\pm]\to\dom[U^\pm]$ and the order $L$ is universal.
\end{enumerate}
\end{theorem}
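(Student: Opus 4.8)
The plan is to prove the two implications separately, the reverse one being a back-and-forth elaboration of the forth-only construction in Theorem~\ref{t:lin-univ-inj}. Throughout write $X=\dom[L^\pm]$ and $Y=\dom[U^\pm]$.

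For $(1)\Ra(2)$, note first that an order isomorphism $F\colon X\to Y$ is by definition a bijective function, so the only content is that $L$ inherits universality from $U$. Given subsets $a,b\subseteq X$ with $|a\cup b|<|X|$ and $a\times b\subseteq L\setminus\Id$, I would transport them through $F$: since $F$ is $L$-to-$U$-increasing and injective, $F[a]\times F[b]\subseteq U\setminus\Id$, and since $F$ is a bijection, $|F[a]\cup F[b]|=|a\cup b|<|X|=|Y|$. Universality of $U$ then supplies $z\in Y$ with $(F[a]\times\{z\})\cup(\{z\}\times F[b])\subseteq U\setminus\Id$, and because $F^{-1}$ is $U$-to-$L$-increasing, the point $x=F^{-1}(z)$ satisfies $(a\times\{x\})\cup(\{x\}\times b)\subseteq L\setminus\Id$. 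Hence $L$ is universal.

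For $(2)\Ra(1)$, assume $F\colon X\to Y$ is bijective and $L$ is universal. Since $Y$ is well-orderable and $F$ is a bijection, $X$ is well-orderable too, so $|X|=|Y|$ and there is a cardinal $\kappa$ (equal to $\Ord$ if $X$ is a proper class) with $|\kappa|=|X|=|Y|$; using Theorem~\ref{t:wOrd} I would fix bijections $N_1\colon\kappa\to X$ and $N_2\colon\kappa\to Y$. As in Theorem~\ref{t:lin-univ-inj}, let $I$ be the class of injective functions $\varphi\subseteq X\times Y$ with $|\varphi|<|\kappa|$ that are isomorphisms of $L{\restriction}\dom[\varphi]$ onto $U{\restriction}\rng[\varphi]$. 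The construction interleaves a \emph{forth} step and a \emph{back} step. The forth step is exactly the extension of Theorem~\ref{t:lin-univ-inj}: if $N_1(\alpha)\notin\dom[\varphi]$, I set $a=\{x\in\dom[\varphi]:\langle x,N_1(\alpha)\rangle\in L\}$ and $b=\{x\in\dom[\varphi]:\langle N_1(\alpha),x\rangle\in L\}$, observe $\varphi[a]\times\varphi[b]\subseteq U\setminus\Id$, and use universality of $U$ to find $z\in Y$ with $(\varphi[a]\times\{z\})\cup(\{z\}\times\varphi[b])\subseteq U\setminus\Id$, so $\varphi\cup\{\langle N_1(\alpha),z\rangle\}\in I$ forces $N_1(\alpha)$ into the domain. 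The back step is its mirror image: if $N_2(\alpha)\notin\rng[\varphi]$, I set $a'=\{y\in\rng[\varphi]:\langle y,N_2(\alpha)\rangle\in U\}$ and $b'=\{y\in\rng[\varphi]:\langle N_2(\alpha),y\rangle\in U\}$, note $\varphi^{-1}[a']\times\varphi^{-1}[b']\subseteq L\setminus\Id$ since $\varphi^{-1}$ is $U$-to-$L$-increasing, and use universality of $L$ to find $w\in X$ with $(\varphi^{-1}[a']\times\{w\})\cup(\{w\}\times\varphi^{-1}[b'])\subseteq L\setminus\Id$, so $\varphi\cup\{\langle w,N_2(\alpha)\rangle\}\in I$ forces $N_2(\alpha)$ into the range. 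Performing both steps at each stage $\alpha$ and taking unions at limits, I would invoke the Recursion Theorem~\ref{t:recursion} to build an increasing chain $(\varphi_\alpha)_{\alpha\in\kappa}$ in $I$; then $\varphi=\bigcup_{\alpha\in\kappa}\varphi_\alpha$ is a bijection $X\to Y$ which is $L$-to-$U$-increasing with $U$-to-$L$-increasing inverse, i.e.\ an order isomorphism.

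The main obstacle will be the careful verification of the back step and the bookkeeping that keeps $\varphi_\alpha\in I$. In particular one must check that the $w$ delivered by universality of $L$ does not already lie in $\dom[\varphi]$: by linearity of $U$ and $N_2(\alpha)\notin\rng[\varphi]$ one has $\rng[\varphi]=a'\cup b'$, whence $\dom[\varphi]=\varphi^{-1}[a']\cup\varphi^{-1}[b']$, and a $w$ lying strictly $L$-above $\varphi^{-1}[a']$ and strictly $L$-below $\varphi^{-1}[b']$ cannot equal any member of either set without contradicting irreflexivity; one then checks the new pair respects both orders, exactly as for the forth step. The size estimates $|\varphi_\alpha|<|\kappa|$ (each stage adding at most two pairs, so $|\varphi_\alpha|\le|\alpha|<|\kappa|$ as $\kappa$ is a cardinal) are handled just as in Theorem~\ref{t:lin-univ-inj}. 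Apart from these routine but essential checks, the reverse implication is a symmetric duplication of the construction already carried out there.
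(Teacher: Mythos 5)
Your proof is correct and follows essentially the same route as the paper: a back-and-forth extension of the forth-only construction from Theorem~\ref{t:lin-univ-inj}, with the forth step driven by universality of $U$ and the back step by universality of $L$, organized by the Recursion Theorem over a common cardinal $\kappa$. The paper merely compresses the back step into the phrase ``by analogy define $\Psi$,'' whereas you spell out the verification (including that the witness $w$ avoids $\dom[\varphi]$), which is exactly the content the paper leaves implicit.
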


\begin{proof} The implication $(1)\Ra(2)$ is trivial. To prove that $(2)\Ra(1)$, assume that there exists a bijective function $J:\dom[L^\pm]\to\dom[U^\pm]$ and the order $L$ is universal. If $\dom[U^\pm]$ is a set, then let $\kappa$ be a unique cardinal such that $|\kappa|=|\dom[U^\pm]|=|\dom[L^\pm]|$. If $\dom[U^\pm]$ is a proper class, then put $\kappa=\Ord$. 
Using Theorem~\ref{t:wOrd}, find  bijective functions $N_1:\kappa\to\dom[L^\pm]$ and $N_2:\kappa\to\dom[U^\pm]$. 

 Let $I$ be the class whose elements are injective functions $\varphi\subseteq \dom[L^\pm]\times\dom[U^\pm]$ such that $|\varphi|<|\kappa|$ and $\varphi$ is an isomorphism of the linear orders $L{\restriction}\dom[\varphi]$ and $U{\restriction}\rng[\varphi]$.

Let $\Phi:\kappa\times I\to I$ be the function assigning to each ordered pair $\langle \alpha,\varphi\rangle\in\kappa\times I$ the function $\Phi(\alpha,\varphi)\in I$ defined as follows. If $N_1(\alpha)\in\dom[\varphi]$, then $\Phi(\alpha,\varphi)=\varphi$. If $N_1(\alpha)\notin\dom[\varphi]$ then let $\beta(\alpha,\varphi)$ be the smallest ordinal $\beta\in\kappa$ such that the function $\varphi\cup\{\langle N_1(\alpha),N_2(\beta)\rangle\}$ is an element of the class $I$. Repeating the argument from the proof of Theorem~\ref{t:lin-univ-inj}, we can show that the ordinal $\beta(\alpha,\varphi)$ is well-defined.

By analogy define a function $\Psi:\kappa\times I\to I$ such that $\varphi\subseteq \Psi(\alpha,\varphi)\in I$ and $N_2(\alpha)\in\rng[\Psi(\alpha,\varphi)]$ for any $\langle \alpha,\varphi\rangle\in\kappa\times I$.

Finally, consider the function $F:\Ord\times \UU\to\UU$ assigning to every ordered pair $\langle \alpha,x\rangle\in\Ord\times \UU$ the set $$F(\alpha,x)=\begin{cases}\Psi(\alpha,\Phi(\alpha,\bigcup \rng[x]))&\mbox{if $\bigcup \rng[x]\in I$};\\
\emptyset&\mbox{otherwise}.
\end{cases}
$$

By Recursion Theorem~\ref{t:Recursion}, there exists a transfinite sequence  $(\varphi_\alpha)_{\alpha\in\kappa}$ such that $\varphi_0=\emptyset$ and $\varphi_{\alpha}=F(\alpha,\{\langle \beta,\varphi_\beta\rangle\}_{\beta\in\alpha})$ for every ordinal $\alpha\in\kappa$. By the Transfinite Induction it can be proved that for every ordinal $\alpha\in\kappa$ the following conditions are satisfied:
\begin{itemize}
\item $\varphi_\alpha\in I$;
\item $\forall\beta\in\alpha\;(\varphi_\beta\subseteq \varphi_{\alpha})$;
\item $\varphi_{\alpha}=\Psi(\alpha,\Phi(\alpha,\bigcup_{\beta\in\alpha}\varphi_\beta))$;
\item $N_1(\alpha)\in\dom[\varphi_{\alpha}]$ and $N_2(\alpha)\in\rng[\varphi_{\alpha}]$.
\end{itemize}
Then $\varphi=\bigcup_{\alpha\in\kappa}\varphi_\alpha$ is a required isomorphism of the linear orders $L$ and $U$.
\end{proof}  

\begin{corollary} Under $(\mathsf{GWO})$ every linear order $L$ admits an $L$-$\mathsf{U}_{2^{<\Ord}}$-increasing function $f:\dom[L]\to 2^{<\Ord}$.
\end{corollary}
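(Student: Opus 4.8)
The plan is to deduce the statement from the implication $(2)\Ra(1)$ of Theorem~\ref{t:lin-univ-inj}, applied with the universal order $U=\mathsf{U}_{2^{<\Ord}}$, whose underlying class is $\dom[U^\pm]=2^{<\Ord}$. To invoke that theorem I must verify exactly two hypotheses: that $U$ is a universal linear order whose underlying class is well-orderable, and that there exists an injective function $\dom[L^\pm]\to 2^{<\Ord}$. Conclusion (1) of Theorem~\ref{t:lin-univ-inj} is then precisely the desired $L$-to-$\mathsf{U}_{2^{<\Ord}}$-increasing function.

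First I would record the universality. Taking $\kappa=\Ord$ in Theorem~\ref{t:univ-ord} gives immediately that $\mathsf{U}_{2^{<\Ord}}$ is a universal linear order. For well-orderability I would use $(\mathsf{GWO})$: fix a set-like well-order $W$ with $\dom[W^\pm]=\UU$. Since $2^{<\Ord}\subseteq\UU$, the restriction $W{\restriction}2^{<\Ord}$ is again a well-order, and it remains set-like because each of its initial intervals is contained in the corresponding set $\cev W(x)$ and hence is itself a set. Thus $2^{<\Ord}$ is well-orderable, as required.

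Next I would produce the injection. Under $(\mathsf{GWO})$ the class $\dom[L^\pm]\subseteq\UU$ is well-orderable by the same restriction argument, so by Theorem~\ref{t:wOrd} the rank function is an order isomorphism of $\dom[L^\pm]$ onto $\rank(W{\restriction}\dom[L^\pm])$, which by Theorem~\ref{t:rank}(3) is either $\Ord$ or an ordinal; in either case it is a subclass $S\subseteq\Ord$. It then suffices to inject $\Ord$ into $2^{<\Ord}$, which I would do explicitly by $\iota:\alpha\mapsto\alpha\times\{0\}$, the constant-$0$ function with domain $\alpha$. Since $\dom[\iota(\alpha)]=\alpha$, distinct ordinals yield functions with distinct domains, so $\iota$ is injective, and $\iota(\alpha)\in 2^{<\Ord}$ because $\dom[\iota(\alpha)]\in\Ord$ and $\rng[\iota(\alpha)]\subseteq 2$. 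Composing the bijection $\dom[L^\pm]\to S$ with $\iota{\restriction}S$ yields an injective function $\dom[L^\pm]\to 2^{<\Ord}$, and Theorem~\ref{t:lin-univ-inj} completes the argument.

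There is essentially no serious obstacle: all the substance already resides in Theorems~\ref{t:univ-ord} and \ref{t:lin-univ-inj}, and the sole role of $(\mathsf{GWO})$ is to supply the two well-orderability hypotheses uniformly for $2^{<\Ord}$ and for $\dom[L^\pm]$. The only points deserving a line of care are that restricting a set-like well-order to a subclass preserves both set-likeness and well-foundedness, and that for a proper-class underlying order the rank image may be all of $\Ord$ rather than a genuine ordinal; both are dispatched above.
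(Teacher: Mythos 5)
Your proof is correct and is exactly the argument the paper intends: the corollary is stated without proof as an immediate consequence of Theorems~\ref{t:univ-ord} and \ref{t:lin-univ-inj}, with $(\mathsf{GWO})$ supplying the well-orderability of $2^{<\Ord}$ and of $\dom[L^\pm]$, and the injection obtained by passing through $\Ord$ via the rank function. Your explicit embedding $\alpha\mapsto\alpha\times\{0\}$ of $\Ord$ into $2^{<\Ord}$ and your check that restricting a set-like well-order to a subclass preserves set-likeness and well-foundedness are precisely the details the paper leaves implicit.
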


\begin{exercise} Prove that a nonempty countable order $L$ is universal if and only if it has two properties:
\begin{itemize}
\item[\textup{1)}] for any elements $x<_L y$ of $\dom[L^\pm]$ there exists $z\in\dom[L^\pm]$ such that $x<_L z<_L y$;
\item[\textup{2)}] for any element $z\in\dom[L^\pm]$ there are $x,y\in\dom[L^\pm]$ such that $x<_L z<_L y$.
\end{itemize}
\end{exercise}
In this exercise we write $x<_L y$ instead of $\langle x,y\rangle\in L\setminus\Id$. 

For countable orders, Theorems~\ref{t:lin-univ-inj} and \ref{t:lin-univ} have the following corollaries, proved by Georg Cantor.

\begin{corollary}[Cantor] Any countable linear order $L$ admits $L$-$\mathsf{U}_{2^{<\w}}$-increasing function $f:\dom[L]\to 2^{<\w}$.
\end{corollary}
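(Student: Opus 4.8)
The plan is to read this corollary as the instance $\kappa=\w$ of the universality machinery, so that almost all of the work has already been done in Theorems~\ref{t:univ-ord} and \ref{t:lin-univ-inj}. Concretely, I would take $U=\mathsf{U}_{2^{<\w}}$ and verify the two hypotheses of Theorem~\ref{t:lin-univ-inj}: that $\mathsf{U}_{2^{<\w}}$ is a universal linear order, and that its underlying class $2^{<\w}$ is well-orderable. Granting these, Theorem~\ref{t:lin-univ-inj} tells me that an $L$-to-$\mathsf{U}_{2^{<\w}}$-increasing function $\dom[L^\pm]\to 2^{<\w}$ exists precisely when an injective function $\dom[L^\pm]\to 2^{<\w}$ exists, so the corollary reduces to producing such an injection for countable $L$.

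To obtain universality I would apply Theorem~\ref{t:univ-ord}, whose hypothesis in the cardinal case is that $\w$ is a regular cardinal satisfying $|\w|=|2^{<\w}|$. That $\w$ is a cardinal and that $\cf(\w)=\w$ are recorded earlier, so $\w$ is regular. The equality $|\w|=|2^{<\w}|$ is the only computation with any content: since $2^{<\w}$ consists of all functions $f$ with $\dom[f]\in\w$ and $\rng[f]\subseteq 2$, it is a subclass of $\w^{<\w}$, which is countable by the proposition on the countability of $\w^{<\w}$; a subset of a countable set is countable, so $2^{<\w}$ is countable, and being infinite (it contains the finite sets $2^n$ for every $n\in\w$) it satisfies $|2^{<\w}|=|\w|$ by Corollary~\ref{c:count=lew}. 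Thus $\mathsf{U}_{2^{<\w}}$ is universal.

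It remains to check well-orderability and to build the injection. Since $2^{<\w}$ is countable and infinite, there is a bijection $b\colon 2^{<\w}\to\w$, and pulling back $\E{\restriction}\w$ gives a set-like well-order $W=\{\langle x,y\rangle:b(x)\in b(y)\}$ with $\dom[W^\pm]=2^{<\w}$; so $2^{<\w}$ is well-orderable, and no choice is needed here, matching the choice-free character of Cantor's theorem. Finally, for a countable linear order $L$ the class $\dom[L^\pm]$ is countable, so $|\dom[L^\pm]|\le|\w|=|2^{<\w}|$, which by definition furnishes an injective function $\dom[L^\pm]\to 2^{<\w}$. Feeding this injection into the equivalence of Theorem~\ref{t:lin-univ-inj} yields the desired $L$-to-$\mathsf{U}_{2^{<\w}}$-increasing function $f$. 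The main, and essentially only, obstacle is the cardinality bookkeeping of the second paragraph; everything else is a direct citation, and I would take care to invoke only the choice-free results so that the conclusion holds without the Axiom of Choice.
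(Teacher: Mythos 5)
Your proposal is correct and follows exactly the route the paper intends: the corollary is stated as an immediate consequence of Theorem~\ref{t:lin-univ-inj} applied to $U=\mathsf{U}_{2^{<\w}}$, with universality supplied by Theorem~\ref{t:univ-ord} (since $\w$ is a regular cardinal and $|2^{<\w}|=|\w|$) and the required injection supplied by the countability of $\dom[L^\pm]$. The cardinality and well-orderability bookkeeping you supply is precisely the content the paper leaves implicit, and it is carried out correctly and without appeal to choice.
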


\begin{corollary}[Cantor]\label{l:Cantor-lin-univ} An order $L$ is isomorphic to the universal linear order $\mathsf{U}_{2^{<\w}}$ if and only if $L$ is nonempty, countable, and has two properties:
\begin{itemize}
\item[\textup{1)}] for any elements $x<_L y$ of $\dom[L^\pm]$ there exists $z\in\dom[L^\pm]$ such that $x<_L z<_L y$;
\item[\textup{2)}] for any element $z\in\dom[L^\pm]$ there are $x,y\in\dom[L^\pm]$ such that $x<_L z<_L y$.
\end{itemize}
\end{corollary}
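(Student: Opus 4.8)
The plan is to derive the corollary from Theorem~\ref{t:lin-univ} applied to the concrete order $U=\mathsf{U}_{2^{<\w}}$, after verifying that this order meets the hypotheses of that theorem. First I would record two facts about $\mathsf{U}_{2^{<\w}}$. Its underlying class is $2^{<\w}=\bigcup_{n\in\w}2^n$, which is a subset of the countable set $\w^{<\w}$ (since $2=\{0,1\}\subseteq\w$), and therefore is itself a countable set; being countable it is equipotent to an ordinal $\le\w$ and hence well-orderable. Second, to see that $\mathsf{U}_{2^{<\w}}$ is universal I would invoke Theorem~\ref{t:univ-ord} with $\kappa=\w$: the cardinal $\w$ is regular, because $\cf(\w)=\w$, and it satisfies $|\w|=|2^{<\w}|$ by the countability just noted, so the hypothesis of Theorem~\ref{t:univ-ord} is fulfilled and $\mathsf{U}_{2^{<\w}}$ is universal.

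For the forward implication, suppose $L$ is isomorphic to $\mathsf{U}_{2^{<\w}}$. Then $\dom[L^\pm]$ is equipotent to the nonempty countable set $2^{<\w}$, hence is itself nonempty and countable. Universality is an order-theoretic property transported by order isomorphisms, so $L$ is universal as well. By the preceding exercise, which characterizes nonempty countable universal orders exactly as those enjoying properties (1) and (2), the order $L$ has both listed properties.

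For the converse, assume $L$ is nonempty, countable, and has properties (1) and (2). By the same exercise $L$ is universal. I would then argue that $\dom[L^\pm]$ is infinite: starting from any $z\in\dom[L^\pm]$, property (2) produces $x<_L z<_L y$, and iterating density (1) together with (2) yields arbitrarily many distinct elements, so $\dom[L^\pm]$ is countably infinite and $|\dom[L^\pm]|=|\w|=|2^{<\w}|$. Hence there is a bijective function $F\colon\dom[L^\pm]\to 2^{<\w}$. Now Theorem~\ref{t:lin-univ}, applied to the universal order $U=\mathsf{U}_{2^{<\w}}$ whose underlying class $2^{<\w}$ is well-orderable, states that the existence of such a bijection together with the universality of $L$ is equivalent to the existence of an order isomorphism $\dom[L^\pm]\to 2^{<\w}$; thus $L$ is isomorphic to $\mathsf{U}_{2^{<\w}}$.

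The only delicate points are bookkeeping rather than substance: confirming the cardinal hypothesis $|\w|=|2^{<\w}|$ of Theorem~\ref{t:univ-ord} (which reduces to the countability of $2^{<\w}$, available without any choice principle since $2^{<\w}\subseteq\w^{<\w}$ and $\w^{<\w}$ was shown countable), and checking that a nonempty dense order without endpoints is infinite so that the two underlying sets are genuinely equipotent. I expect no real obstacle, as the entire weight of the statement is carried by Theorems~\ref{t:univ-ord} and \ref{t:lin-univ} and the preceding characterization of countable universal orders.
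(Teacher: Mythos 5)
Your proposal is correct and follows essentially the same route the paper intends: the text derives this corollary directly from Theorem~\ref{t:lin-univ}, with the universality of $\mathsf{U}_{2^{<\w}}$ supplied by Theorem~\ref{t:univ-ord} (for $\kappa=\w$) and the translation between universality and properties (1)--(2) supplied by the preceding exercise on countable orders. Your additional bookkeeping (countability and well-orderability of $2^{<\w}$, and the fact that a nonempty dense endless countable order is countably infinite so that the required bijection exists) correctly fills in the details the paper leaves implicit.
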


\section{Cuts}\label{s:cuts}

In this section we study cuts of linear orders. 

Let $L$ be a linear order. An ordered pair of sets $\langle a,b\rangle$ is called an \index{cut}\index{linear order!cut of}{\em $L$-cut} if  $$a\cup b=\dom[L^\pm],\quad a\cap b=\emptyset\quad\mbox{and}\quad a\times b\subseteq L.$$ Let $\Cut(L)$ be the class of all $L$-cuts. For every $L$-cut $x=\langle a,b\rangle$ the sets $a$ and $b$ will be denoted by $\cev x$ and  $\vec x$, respectively.

In the following lemma, $(V_\alpha)_{\alpha\in\Ord}$ is von Neumann's cumulative hierarchy, studied in Section~\ref{s:vN}.

\begin{lemma}\label{l:sur1} Let $L$ be a linear order.
\begin{enumerate}
\item[\textup{1)}] The class $\Cut(L)$ exists and is a set.
\item[\textup{2)}] $\Cut(L)\ne\emptyset$ if and only if $L$ is a set.
\item[\textup{3)}] If $L\subseteq V_\alpha$ for some ordinal $\alpha$, then $\Cut(L)\subseteq V_{\alpha+3}$ and $L\cap\Cut(L)=\emptyset$.
\end{enumerate}
\end{lemma}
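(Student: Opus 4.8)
The plan is to handle all three items around a single observation: an $L$-cut $\langle a,b\rangle$ is an ordered pair of \emph{sets} with $a\cup b=\dom[L^\pm]$, so the existence of even one cut forces $\dom[L^\pm]$ to be a set. First I would record the auxiliary equivalence that $L$ is a set if and only if $\dom[L^\pm]$ is a set. The forward direction holds because the domain and inversion operations send sets to sets (Axiom of Replacement), so $\dom[L^\pm]=\dom[L\cup L^{-1}]$ is a set; the backward direction holds because $L\subseteq\dom[L^\pm]\times\dom[L^\pm]$ is then a subclass of a set (Exercise~\ref{t:prodsets}), hence a set by Exercise~\ref{ex:subclass}.

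The class $\Cut(L)$ exists by G\"odel's Theorem~\ref{t:class}, its defining condition being a $\UU$-bounded formula with the class parameter $L$. For (1) and (2) I would split on whether $L$ is a set. If $L$ is a proper class, then $\dom[L^\pm]$ is a proper class, no ordered pair of sets can have it as its union, and so $\Cut(L)=\emptyset$; in particular $\Cut(L)$ is a set. If $L$ is a set, then every cut has $a,b\subseteq\dom[L^\pm]$, so $\langle a,b\rangle\in\mathcal P(\dom[L^\pm])\times\mathcal P(\dom[L^\pm])$, whence $\Cut(L)$ is a subclass of a set and therefore a set (Exercise~\ref{ex:subclass}); moreover $\langle\emptyset,\dom[L^\pm]\rangle$ is readily checked to be a cut, so $\Cut(L)\ne\emptyset$. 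Together with the auxiliary equivalence this settles both (1) and (2).

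For (3), assume $L\subseteq V_\alpha$. Transitivity of $V_\alpha$ gives $\dom[L^\pm]\subseteq V_\alpha$, so for a cut $\langle a,b\rangle$ we have $a,b\in\mathcal P(V_\alpha)=V_{\alpha+1}$ by Theorem~\ref{t:vN}(1). Three applications of Exercise~\ref{ex:vNG}(6) then climb $\{a\},\{a,b\}\in V_{\alpha+2}$ and $\langle a,b\rangle=\{\{a\},\{a,b\}\}\in V_{\alpha+3}$, giving $\Cut(L)\subseteq V_{\alpha+3}$.

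The main obstacle is the disjointness $L\cap\Cut(L)=\emptyset$. Suppose $p\in L\cap\Cut(L)$; as a member of the relation $L$ it is a pair $\langle u,v\rangle$ with $u,v\in\dom[L^\pm]$, and as a cut it is $\langle a,b\rangle$ with $a\cup b=\dom[L^\pm]$ and $a\cap b=\emptyset$, so by uniqueness of pair components (Proposition~\ref{p:pair}) $u=a$ and $v=b$. Hence $u,v\in\dom[L^\pm]=u\cup v$ with $u\cap v=\emptyset$. Here I would use that $u,v\in\IV$ and that self-membership is impossible in $\IV$: the least $\beta$ with $w\in V_\beta$ is a successor $\gamma+1$, so $w\subseteq V_\gamma$ and $w\in w$ would contradict minimality. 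From $u\in u\cup v$ and $u\notin u$ I get $u\in v$, and symmetrically $v\in u$ (note $u\ne v$, since $u=v$ would force $u=u\cap v=\emptyset$ and then $u\in\emptyset$). Finally $u\in v$ and $v\in u$ with $u\ne v$ give $\rank_{\E{\restriction}\IV}(u)<\rank_{\E{\restriction}\IV}(v)<\rank_{\E{\restriction}\IV}(u)$, since the rank function is $(\E{\restriction}\IV)$-to-$\E$-increasing (Theorem~\ref{t:vN}(5), Theorem~\ref{t:rank}), a contradiction. The one delicate point is precisely this self-membership step: because of the $\setminus\{x\}$ in the definition of $\cev R(x)$, well-foundedness of $\E{\restriction}\IV$ by itself does not forbid $w\in w$, so that fact must be drawn from the structure of the cumulative hierarchy rather than from well-foundedness directly.
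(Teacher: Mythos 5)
Your proof is correct and follows essentially the same route as the paper's: existence of $\Cut(L)$ via G\"odel's class-existence theorem, the case split on whether $L$ is a set, the bound $\Cut(L)\subseteq\mathcal P(\dom[L^\pm])\times\mathcal P(\dom[L^\pm])$, the climb to $V_{\alpha+3}$ through the Kuratowski pair, and disjointness via the observation that a common element of $L$ and $\Cut(L)$ would give $a,b\in a\cup b$ and hence a membership cycle. Your treatment of that last step is in fact slightly more careful than the paper's, which refutes $a\in a$ and $a\in b\in a$ by appealing directly to well-foundedness of $\E{\restriction}\VV$; your observation that the $\setminus\{x\}$ in the definition of $\cev R(x)$ means well-foundedness alone does not forbid $w\in w$, together with your derivation of $w\notin w$ from the successor structure of the cumulative hierarchy and the rank-based contradiction for $u\in v\in u$, is a legitimate refinement of the same argument.
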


\begin{proof} The class $\Cut(L)$ exists by Theorem~\ref{t:class}. If $L$ is a proper class, then $\dom[L^\pm]$ is a proper class and then $\Cut(L)$ is the  empty set (since the proper class $\dom[L^\pm]$ cannot be represented as the union of two sets). If $L$ is a set, then the class $\Cut(L)$ is a set, being a subclass of the set $\mathcal P(\dom[L^\pm])\times\mathcal P(\dom[L^\pm])$. The set $\Cut(L)$ contains the $L$-cuts $\langle L,\emptyset\rangle$ and $\langle \emptyset,L\rangle$, witnessing that $\Cut(L)\ne\emptyset$.

 If $\Cut(L)$ is a nonempty set, then for any ordered pair $\langle a,b\rangle\in\Cut(L)$, the union $a\cup b=\dom[L^\pm]$ is a set and so is the linear order $L\subseteq\dom[L^\pm]\times\dom[L^\pm]$.

If $L\subseteq V_\alpha$ for some ordinal, then $\dom[L^\pm]=\dom[L]\cup\rng[L]\subseteq\bigcup\bigcup L\subseteq V_\alpha$ by the transitivity of the set $V_\alpha$, see Theorem~\ref{t:vN}(3). For every ordered pair $\langle a,b\rangle \in\Cut(L)$ we have $a,b\subseteq \dom[L^\pm]\subseteq V_\alpha$ and hence $a,b\in \mathcal P(V_\alpha)=V_{\alpha+1}$ and $\langle a,b\rangle\in V_{\alpha+3}$. Therefore, $\Cut(L)\subseteq V_{\alpha+3}$ and $\Cut(L)\in V_{\alpha+4}$.

Assuming that $L\cap \Cut(L)\ne\emptyset$, we would find an ordered pair $\langle a,b\rangle\in \Cut(L)\cap L$ and conclude that $a,b\in \dom[L^\pm]=a\cup b$. Taking into account that $a,b\in\VV$ and the relation $\E{\restriction}\VV$ is well-founded (see Theorem~\ref{t:vN}(5)), we conclude that $a\notin a$ and $b\notin b$. Then $a,b\in a\cup b$ implies $a\in b\in a$, which contradicts the well-foundedness of the relation $\E{\restriction}\VV$. This contradiction shows that $L\cap\Cut(L)=\emptyset$.
\end{proof}

The \index{cut extension}\index{linear order!cut extension of}{\em cut extension} of a linear order $L$ is the relation  
\begin{multline*}
\Xi(L)=L\cup \{\langle\langle a,b\rangle,\langle a',b'\rangle\rangle\in\Cut(L)\times\Cut(L):a\subseteq a'\}\cup\\ \{\langle x,\langle a,b\rangle\rangle\in \dom[L^\pm]\times\Cut(L):x\in a\}\cup\{\langle \langle a,b\rangle,y\rangle\in\Cut(L)\times \dom[L^\pm]:y\in b\}
\end{multline*}

\begin{theorem}\label{t:cut2} For any linear order $L\subseteq\VV$ its cut extension $\Xi[L]$ has the following properties:
\begin{enumerate}
\item[\textup{1)}] $L\subseteq \Xi(L)\subseteq\VV$.
\item[\textup{2)}] $\dom[\Xi(L)^\pm]=\dom[L^\pm]\cup\Cut(L)$.
\item[\textup{3)}] If $L$ is a set, then $L\ne \Xi(L)$.
\item[\textup{4)}] The relation $\Xi(L)$ is transitive.
\item[\textup{5)}] The relation $\Xi(L)$ is antisymmetric. 
\item[\textup{6)}] The relation $\Xi(L)$ is a linear order.
\item[\textup{7)}] If $L$ is reflexive, then $\Xi(L)$ is a reflexive linear order.
\end{enumerate}
\end{theorem}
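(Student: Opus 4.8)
The plan is to treat the seven assertions in order, relying throughout on the structural facts of Lemma~\ref{l:sur1} and on the observation that, writing $X=\dom[L^\pm]$, every element of the underlying class of $\Xi(L)$ is either a \emph{point} (a member of $X$) or a \emph{cut} (a member of $\Cut(L)$), and that the four summands in the definition of $\Xi(L)$ describe exactly the point/point, cut/cut, point/cut and cut/point comparisons. The single combinatorial engine is that for a cut $c=\langle a,b\rangle$ one has $b=X\setminus a$ together with $a\times b\subseteq L$; these let me convert statements about membership in $a$ or $b$ into $L$-comparisons and back. It is convenient to isolate once and for all the two conversion rules ``$x\in a$ and $y\in b$ imply $\langle x,y\rangle\in L$'' and ``$\langle x,y\rangle\in L$ and $y\in a$ imply $x\in a$'' (the latter by the antisymmetry $L\cap L^{-1}\subseteq\Id$), after which the case analyses below become one-line deductions.

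For (1) the inclusion $L\subseteq\Xi(L)$ is immediate. For $\Xi(L)\subseteq\VV$ I would split on whether $L$ is a set: if $L$ is a proper class then $\Cut(L)=\emptyset$ by Lemma~\ref{l:sur1}(2), so $\Xi(L)=L\subseteq\VV$ by hypothesis; if $L$ is a set then $L\in\VV$ by Theorem~\ref{t:vN}(6), hence $L\subseteq V_\alpha$ for some $\alpha$ by transitivity of $V_\alpha$, so $X\subseteq V_\alpha$ and $\Cut(L)\subseteq V_{\alpha+3}$ by Lemma~\ref{l:sur1}(3), and since each element of $\Xi(L)$ is an ordered pair of members of $\VV$ it again lies in $\VV$ by Theorem~\ref{t:vN}(6). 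For (2) I would check that every point appears (it already occurs in $L^\pm$) and every cut $c=\langle a,b\rangle$ appears (the pair $\langle c,c\rangle$ lies in $\Xi(L)$ since $a\subseteq a$), while inspection of the four summands shows no other objects occur as coordinates. For (3) the essential point is that no cut belongs to $X$: if $c=\langle a,b\rangle\in X=a\cup b$, then $c\in a$ yields the membership cycle $\{a\}\in c\in a\in\{a\}$ and $c\in b$ yields $\{a,b\}\in c\in b\in\{a,b\}$, both impossible since $\E{\restriction}\VV$ is well-founded (Theorem~\ref{t:vN}(5)); hence when $L$ is a set, so $\Cut(L)\ne\emptyset$, the underlying class of $\Xi(L)$ strictly contains $X$, forcing $\Xi(L)\ne L$.

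The heart of the argument is (4)--(6). For transitivity I would run through the eight cases indexed by the point/cut type of the three elements $p,q,r$: the point/point and cut/cut cases are transitivity of $L$ and of $\subseteq$, while every mixed case reduces via the two conversion rules and antisymmetry to excluding a configuration that would force two distinct elements to coincide (for instance, in the cut/point/cut case one shows $a\subseteq a'$ by deriving, for $w\in a$, that $\langle w,y\rangle\in L$ and then $w\in a'$). Antisymmetry (5) is shorter: the point/point case is antisymmetry of $L$, the cut/cut case uses $a=a'\Rightarrow b=b'\Rightarrow c=d$, and the mixed cases are vacuous because $x\in a$ and $x\in b$ contradict $a\cap b=\emptyset$. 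Together with (4) this makes $\Xi(L)$ an order. For linearity (6) the only new ingredient is that the left parts of any two cuts are $\subseteq$-comparable: incomparable left parts would produce $u\in a\setminus a'$ and $v\in a'\setminus a$, whence $\langle u,v\rangle\in a\times b\subseteq L$ and $\langle v,u\rangle\in a'\times b'\subseteq L$, so $u=v$, a contradiction; the remaining point/point and point/cut comparisons are immediate from linearity of $L$ and from $X=a\cup b$.

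Finally (7) is immediate from (6): reflexivity of $\Xi(L)$ at points follows from reflexivity of $L$, and at cuts from $a\subseteq a$. I expect the eight-case transitivity verification to be the main labor; organizing it around the two conversion rules and the identity $b=X\setminus a$ is what keeps each case to a single implication, and the same antisymmetry trick that powers those cases also delivers the comparability of cuts needed in (6), so the genuinely distinct ideas are few.
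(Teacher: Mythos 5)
Your proposal is correct and follows essentially the same route as the paper: statements (1)--(3) from Lemma~\ref{l:sur1} and well-foundedness of $\E{\restriction}\VV$, transitivity by the eight point/cut cases with the same antisymmetry trick for the mixed cases, antisymmetry by four cases using $a\cap b=\emptyset$, and linearity via the $\subseteq$-comparability of left parts of cuts obtained from $a\times b\subseteq L$ and $a'\times b'\subseteq L$. Your two ``conversion rules'' are just a tidy packaging of the inferences the paper repeats in each subcase, so the arguments coincide in substance.
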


\begin{proof} The first three statements follow from the definition of the relation $\Xi(L)$ and Lemma~\ref{l:sur1}.
\smallskip

4. To prove that the relation $\Xi(L)$ is transitive, fix any elements $x,y,z\in \dom[\Xi(L)^\pm]=\dom[L^\pm]\cup\Cut(L)$ with $\langle x,y\rangle,\langle y,z\rangle\in\Xi(L)$. We need to check that $\langle x,z\rangle\in\Xi(L)$. Since $y\in \dom[L^\pm]\cup\Cut(L)$, two cases are possible.

4a. First we assume that $y\in\dom[L^\pm]$. This case has four subcases.

4a1. If $x\in\dom[L^\pm]$ and $z\in \dom[L^\pm]$, then $\langle x,z\rangle\in L\subseteq \Xi(L)$ by the transitivity of the linear order $L$.

4a2. If $x\in \dom[L^\pm]$ and $z\in \Cut(L)$, then $z=\langle a,b\rangle$ for some 
$L$-cut $\langle a,b\rangle$ such that $y\in a$ (the latter inclusion follows from $\langle y,z\rangle\in\Xi(L)$). We claim that $x\in a$, too. In the opposite case $x\in b$ and then $\langle y,x\rangle\in a\times b\subseteq L$ and $x=y\in a$ by the antisymmetricity of $L$. But the inclusion $x\in a$ contradicts our assumption. This contradiction shows that $x\in a$ and hence $\langle x,z\rangle=\langle x,\langle a,b\rangle\rangle\in \Xi(L)$.

4a3. If $x\in \Cut(L)$ and $z\in \dom[L^\pm]$, then $x=\langle a,b\rangle$ for some $L$-cut $\langle a,b\rangle$ with  $y\in b$ (the  latter inclusion follows from $\langle x,y\rangle\in \Xi(L)$). We claim that $z\in b$. In the opposite case $z\in a$ and then $\langle z,y\rangle\in a\times b\subseteq L$. On the other hand, $\langle y,z\rangle\in\Xi(L)\cap(\dom[L^\pm]\times\dom[L^\pm])=L$ and the antisymetricity of $L$ imply that $z=y\in b$, which contradict our assumption. This contradiction shows that $z\in b$ and hence $\langle x,z\rangle=\langle\langle a,b\rangle,z\rangle\in\Xi(L)$.

4a4. If $x\in\Cut(L)$ and $z\in\Cut(L)$, then $x=\langle a,b\rangle$ and $z=\langle a',b'\rangle$ for some $L$-cuts $\langle a,b\rangle$ and $\langle a',b'\rangle$. It follows from $\langle x,y\rangle\in \Xi(L)$ and $\langle y,z\rangle\in\Xi(L)$ that $y\in b$ and $y\in a'$. To show that $\langle x,z\rangle\in\Xi(L)$ we need to check that $a\subseteq a'$. Given any element $\alpha\in a$, observe that $\langle \alpha,y\rangle\in a\times b\subseteq L$. Assuming that $\alpha\notin a'$, we conclude that $\alpha\in b'$ and hence $\langle y,\alpha\rangle\in a'\times b'\subseteq L$. The antisymmetry of $L$ implies $\alpha=y\in a'$, which contradicts our assumption. This contradiction shows that $\alpha\in a'$ and hence $a\subseteq a'$ and finally $\langle x,z\rangle=\langle\langle a,b\rangle,\langle a',b'\rangle\rangle\in \Xi(L)$.
\smallskip

Now consider the second case.

4b. $y\in\Cut(L)$. In this case $y=\langle a,b\rangle$ for some $L$-cut. This case also has four subcases.

4b1. $x,z\in\dom[L^\pm]$. In this subcase, $\langle x,y\rangle\in\Xi(L)$ and $\langle y,z\rangle\in\Xi(L)$ imply that $x\in a$ and $z\in b$. Then $\langle x,z\rangle\in a\times b\subseteq L$.

4b2. $x\in\dom[L^\pm]$ and $z\in\Cut(L)$. In this subcase $x\in a$ and $z=\langle a',b'\rangle$ for some $L$-cut $\langle a',b'\rangle$ such that $a\subseteq a'$ (the latter embedding follows from $\langle y,z\rangle\in\Xi(L)$). Then $x\in a\subseteq a'$ implies $\langle x,z\rangle=\langle x,\langle a',b'\rangle\rangle\in\Xi(L)$.

4b3. $x\in\Cut(L)$ and $z\in\dom[L^\pm]$. In this subcase $x=\langle a',b'\rangle$ for some $L$-cut $\langle a',b'\rangle$. It follows from $\langle x,y\rangle\in\Xi(L)$ and $\langle y,z\rangle\in\Xi(L)$ that $a'\subseteq a$ and $z\in b$. Taking into account that $a'\subseteq a$ and $a\cup b=a'\cup b'=\dom[L^\pm]$, we conclude that $z\in b\subseteq b'$ and hence $\langle x,z\rangle=\langle\langle a',b'\rangle,z\rangle\in \Xi(L)$.

4b4. $x,z\in\Cut(L)$. In this subcase $x=\langle a',b'\rangle$ and $y=\langle a'',b''\rangle$ for some $L$-cuts $\langle a',b'\rangle$ and $\langle a'',b''\rangle$. It follows from $\langle x,y\rangle\in\Xi(L)$ and $\langle y,z\rangle\in\Xi(L)$ that $a'\subseteq a\subseteq a''$ and hence $a'\subseteq a''$, which means that $\langle x,z\rangle=\langle\langle a',b'\rangle,\langle a'',b''\rangle\rangle\in \Xi(L)$.

Therefore, we have considered all 8 cases and thus proved the transitivity of the relation $\Xi(L)$.
\smallskip

5. To show that the relation $\Xi(L)$ is antisymmetric, take any elements $x,y\in\dom[\Xi(L)^\pm]=\dom[L^\pm]\cup \Cut(L)$ and assume that $\langle x,y\rangle\in\Xi(L)$ and $\langle y,x\rangle\in\Xi(L)$. By Lemma~\ref{l:sur1}(3), the sets $\dom[L^\pm]$ and $\Cut(L)$ are disjoint. Now we consider four possible cases.

5a. If $x,y\in\dom[L^\pm]$ then  $\langle x,y\rangle,\langle y,z\rangle\in\Xi(L)\cap(\dom[L^\pm]\times\dom[L^\pm])=L$ and $x=y$ by the antisymmetricity of $L$.

5b. If $x\in\dom[L^\pm]$ and $y\in\Cut(L)$, then $y=\langle a,b\rangle$ for some $L$-cut $\langle a,b\rangle$ and the inclusions $\langle x,y\rangle,\langle y,x\rangle\in\Xi(L)$ imply $x\in a$ and $x\in b$ which is not possible as $a\cap b=\emptyset$.

5c. By analogy we can show that the case $x\in\Cut(L)$, $y\in\dom[L^\pm]$ is incompatible with $\langle x,y\rangle,\langle y,x\rangle\in\Xi(L)$.

5d. If $x,y\in\Cut(L)$, then $x=\langle a,b\rangle$ and $y=\langle a',b'\rangle$ for some $L$-cuts $\langle a,b\rangle$ and $\langle a',b'\rangle$. The inclusions $\langle x,y\rangle,\langle y,x\rangle\in\Xi(L)$ imply $a\subseteq a'\subseteq a$ and hence $a=a'$ and $b=\dom[L^\pm]\setminus a=\dom[L^\pm]\setminus a'=b'$, which implies $x=\langle a,b\rangle=\langle a',b'\rangle=y$.
\smallskip

6. The statements 4,5 imply that the relation $\Xi(L)$ is a partial order. To show that it is a linear order, we should prove that for any $x,y\in\dom[\Xi(L)^\pm]=\dom[L^\pm]\cup\Cut(L)$ we have $\langle x,y\rangle\in\Xi(L)^\pm\cup\Id$. Four cases are possible.

6a. If $x,y\in\dom[\Xi(L)^\pm]$, then $\langle x,y\rangle\in L^\pm\cup\Id$ since $L$ is a linear order.

6b. If $x\in\dom[\Xi(L)^\pm]$ and $y\in\Cut(L)$, then $y=\langle a,b\rangle$ for some $L$-cut $\langle a,b\rangle$. Since $x\in\dom[L^\pm]=a\cup b$, either $x\in a$ and then $\langle x,y\rangle\in\Xi(L)$ or $x\in b$ and then $\langle y,x\rangle\in\Xi(L)$.

6c. By analogy with (6b) we can treat the case $x\in\Cut(L)$ and $y\in\dom[L^\pm]$.

6d. If $x,y\in\Cut(L)$, then $x=\langle a,b\rangle$  and $y=\langle a',b'\rangle$ for some $L$-cuts $\langle a,b\rangle$ and $\langle a',b'\rangle$. We claim that either $a\subseteq a'$ or $a'\subseteq a$. In the opposite case we can find elements $x\in a\setminus a'$ and $x'\in a'\setminus a$. It follows from $a\cup b=\dom[L^\pm]=a'\cup b'$ that $x\in b'$ and $x'\in b$. Then $\langle x,x'\rangle\in a\times b\subseteq L$ and $\langle x',x\rangle\in a'\times b'\subseteq L$. The antisymmetricity of $L$ ensures that $x=x'\in a\cap b=\emptyset$ which is a desired contradiction showing that  $a\subseteq a'$ or $a'\subseteq a$ and hence $\langle x,y\rangle\in\Xi(L)$ or $\langle y,x\rangle\in\Xi(L)$.
\smallskip

7. If the linear order $L$ is reflexive, then the linear order $\Xi(L)$ is reflexive by the definition of $\Xi(L)$.
\end{proof}

\section{Gaps} 

Let $L$ be a linear order. An $L$-cut $\langle a,b\rangle$ is called an \index{gap}\index{linear order!gap in}{\em $L$-gap} if the sets $a,b$ are not empty and for any $x\in a$ and $y\in b$ there are elements $x'\in a$ and $y'\in b$ such that $x<_L x'$ and $y'<_L y$. By $\Gap(L)$ we denote the set of $L$-gaps. It is a subset of the set $\Cut(L)$ of $L$-cuts.

The linear order $$\Theta(L)=\{\langle x,y\rangle\in \Xi(L):x,y\in\dom[L^\pm]\cup\Gap(L)\}=\Xi(L)\restriction (\dom[L^\pm]\cup\Gap(L))$$is called the \index{gap extension}\index{linear order!gap extension of}{\em gap extension} of the linear order. 

\begin{theorem}\label{t:gap-ext} For any linear order $L\in\VV$ its gap extension has the following properties:
\begin{enumerate}
\item[\textup{1)}] $L\subseteq \Theta(L)\subseteq \Xi(L)\in\VV$.
\item[\textup{2)}] $\dom[\Theta(L)^\pm]=\dom[L^\pm]\cup\Gap(L)$.
\item[\textup{3)}] The relation $\Theta(L)$ is a linear order.
\item[\textup{4)}] If $L$ is reflexive, then $\Theta(L)$ is a reflexive linear order.
\item[\textup{5)}] $\Gap(\Theta(L))=\emptyset$ and hence $\Theta(\Theta(L))=\Theta(L)$.
\end{enumerate}
\end{theorem}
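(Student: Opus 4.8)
The first four statements are routine consequences of Theorem~\ref{t:cut2} together with the observation that $\Theta(L)=\Xi(L){\restriction}(\dom[L^\pm]\cup\Gap(L))$ is merely a restriction of the linear order $\Xi(L)$. For statement (1), $\Theta(L)\subseteq\Xi(L)$ is immediate from the definition, while $L\subseteq\Theta(L)$ holds because $L\subseteq\Xi(L)$ (Theorem~\ref{t:cut2}(1)) and every pair of $L$ lies in $\dom[L^\pm]\times\dom[L^\pm]$; since $L\in\VV$ is a set, Lemma~\ref{l:sur1}(1) makes $\Cut(L)$ a set, so $\Xi(L)\subseteq(\dom[L^\pm]\cup\Cut(L))^2$ is a set contained in $\VV$, whence $\Xi(L)\in\VV$ by Theorem~\ref{t:vN}(6). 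For statement (2), the inclusion $\dom[\Theta(L)^\pm]\subseteq\dom[L^\pm]\cup\Gap(L)$ is built into the definition; conversely $\dom[L^\pm]\subseteq\dom[\Theta(L)^\pm]$ follows from $L\subseteq\Theta(L)$, and each gap $g=\langle a,b\rangle\in\Gap(L)$ lies in $\rng[\Theta(L)]$ since choosing $x\in a$ gives $\langle x,g\rangle\in\Xi(L)$ with both coordinates in $\dom[L^\pm]\cup\Gap(L)$. Statements (3) and (4) are the general facts that a restriction of a linear order to a subclass is again a linear order, reflexive if the original is; here one notes $\Gap(L)\subseteq\Cut(L)=\dom[\Xi(L)^\pm]\setminus\dom[L^\pm]$, so the relevant points lie in $\dom[\Xi(L)^\pm]$, and one invokes Theorem~\ref{t:cut2}(6,7).

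The heart of the theorem is statement (5). Write $M=\Theta(L)$ and suppose, for contradiction, that $\langle A,B\rangle$ is an $M$-gap, so $A,B$ are nonempty, $A\times B\subseteq M$, $A\cup B=\dom[M^\pm]=\dom[L^\pm]\cup\Gap(L)$, and (reading the defining condition of a gap correctly) $A$ has no $M$-greatest element while $B$ has no $M$-least element. The plan is to project this gap down to $L$: set $a=A\cap\dom[L^\pm]$ and $b=B\cap\dom[L^\pm]$. A short computation shows $\langle a,b\rangle$ is an $L$-cut, the only nontrivial point being $a\times b\subseteq L$, which holds because for $x\in a$, $y\in b$ we have $\langle x,y\rangle\in A\times B\subseteq M\subseteq\Xi(L)$ with both coordinates in $\dom[L^\pm]$, so $\langle x,y\rangle\in\Xi(L)\cap(\dom[L^\pm]\times\dom[L^\pm])=L$. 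I would then record the key monotonicity facts: every gap $\langle a',b'\rangle\in A$ satisfies $a'\subseteq a$, and every gap $\langle a'',b''\rangle\in B$ satisfies $a\subseteq a''$, each obtained by pairing the gap against a point of $b$ (resp.\ $a$) through $A\times B\subseteq M$ and unravelling the definition of $\Xi(L)$.

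The contradiction is then extracted by a case analysis on the $L$-cut $\langle a,b\rangle$, exhausting all possibilities. If $a=\emptyset$ (resp.\ $b=\emptyset$), then $A$ (resp.\ $B$) consists only of gaps, and any gap $\langle a',b'\rangle\in A$ would have $a'\subseteq a=\emptyset$, contradicting that the left part of a gap is nonempty. If $a$ has an $L$-greatest element $m$, then $m\in A$ and I claim $m$ is $M$-greatest in $A$: for $z\in\dom[L^\pm]\cap A=a$ this is the maximality of $m$, and for a gap $z=\langle a',b'\rangle\in A$ one uses $a'\subseteq a$ together with the gaplessness of $a'$ to force $m\in b'$, i.e.\ $\langle z,m\rangle\in\Xi(L)$; this contradicts the absence of an $M$-greatest element of $A$. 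The case where $b$ has an $L$-least element is symmetric and contradicts the absence of an $M$-least element of $B$. The remaining case is that $\langle a,b\rangle$ is itself an $L$-gap; then $g=\langle a,b\rangle\in\Gap(L)\subseteq A\cup B$, and I would show $g\notin A$ and $g\notin B$. If $g\in A$, absence of a greatest element yields $z\in A$ with $\langle g,z\rangle\in M\setminus\Id$; unwinding $\langle g,z\rangle\in\Xi(L)$ gives either $z\in b\subseteq B$ (impossible) or a gap $z=\langle a',b'\rangle$ with $a\subsetneq a'$, and any $w\in a'\setminus a$ then lies in $b\subseteq B$ yet satisfies $\langle w,z\rangle\in\Xi(L)$, contradicting $A\times B\subseteq M$ via the antisymmetry of $M$; the case $g\in B$ is symmetric. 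In every case we reach a contradiction, so $\Gap(M)=\emptyset$.

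Finally, with $\Gap(\Theta(L))=\emptyset$ the definition collapses to $\Theta(\Theta(L))=\Xi(\Theta(L)){\restriction}\dom[\Theta(L)^\pm]$, and since the cut points of $\Xi(\Theta(L))$ are disjoint from $\dom[\Theta(L)^\pm]$ (Lemma~\ref{l:sur1}(3)), this restriction returns exactly $\Theta(L)$. The main obstacle is the case analysis in statement (5): the delicate point is verifying that a supposed filling element, the $L$-greatest element of $a$ or the gap point $g$, is genuinely extremal in $A$ or $B$ for the order $M$. This requires carefully transporting comparisons between gaps and points back and forth through the definition of $\Xi(L)$, and repeatedly using antisymmetry to rule out a coincidence between a point of $\dom[L^\pm]$ and a genuine gap (which is legitimate precisely because $\dom[L^\pm]$ and $\Cut(L)$ are disjoint).
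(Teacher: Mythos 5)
Your proof is correct and follows the paper's argument in all essentials: both project the hypothetical $\Theta(L)$-gap $\langle A,B\rangle$ down to the $L$-cut $\langle a,b\rangle=\langle A\cap\dom[L^\pm],\,B\cap\dom[L^\pm]\rangle$ and extract the contradiction from the fact that this cut must be an $L$-gap lying in $A\cup B$. The only difference is organizational: the paper first proves outright that $\langle a,b\rangle$ is an $L$-gap and then refutes $\langle a,b\rangle\in A$ and $\langle a,b\rangle\in B$, whereas you split into the equivalent exhaustive cases (empty part, extremal element, genuine gap) and refute each one; the underlying manipulations of $\Xi(L)$ and the appeals to antisymmetry and to the disjointness of $\dom[L^\pm]$ and $\Cut(L)$ are the same.
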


\begin{proof} The first four statements follow from Lemma~\ref{l:sur1} and Theorem~\ref{t:cut2}. It remains to prove that the gap extension $\Theta(L)$ of any linear order $L\in\VV$ has no $\Theta(L)$-gaps. Let $X=\dom[L^\pm]$ be the underlying set of the linear order $L$.

To derive a contradiction, assume that the linear order $\Theta(L)$ has a $\Theta(L)$-gap $\langle A,B\rangle$.  Consider the sets $a=A\cap X$ and $b=B\cap X$ and observe that $a\cap b\subseteq A\cap B=\emptyset$, $a\cup b=X\cap(A\cup B)=X$ and $a\times b=(A\times B)\cap (X\times X)\subseteq\Theta(L)\cap (X\times X)=L$ by Lemma~\ref{l:sur1}. Therefore, $\langle a,b\rangle$ is an $L$-cut. We claim that $\langle a,b\rangle$ is an $L$-gap. 

First we prove that the sets $a,b$ are not empty. To derive a contradiction, assume that the set $a=A\cap X$ is empty. 
Since the left set $A\subseteq X\cup \Gap(L)$ of the $\Theta(L)$-gap $\langle A,B\rangle$ is not empty, it contains some $L$-gap $\langle u,v\rangle$, whose left side $u$ is a non-empty subset of $X$. Then for every $x\in u$ we have $\langle x,\langle u,v\rangle\rangle\in \Theta(L)$. On the other hand, $x\in u\subseteq a\cup b=\emptyset \cup b=b\subseteq B$ and hence $\langle\langle u,v\rangle, x\rangle\in A\times B\subseteq\Theta(L)$, which implies $x=\langle u,v\rangle$ by the antisymmetricity of the linear order $\Theta(L)$. Then $x=\langle u,v\rangle\in X\cap\Gap(L)=\emptyset$, which is a contradiction showing that $a\ne\emptyset$. By analogy we can prove that $b\ne\emptyset$. 

To show that $\langle a,b\rangle$ is an $L$-gap, fix any elements $x\in a$ and $y\in b$. We need to find elements $x'\in a$ and $y'\in b$ such that $x<_L x'$ and $y'<_L y$.
Since $\langle A,B\rangle$ is a $\Theta(L)$-gap, for the elements $x\in a\subseteq A$ and $y\in b\subseteq B$ there are elements $x''\in A$ and $y''\in B$ such that $\langle x,x''\rangle\in\Theta(L)\setminus\Id$ and $\langle y'',y\rangle\in\Theta(L)\setminus\Id$. If $x''\in X$, then $x'=x''\in A\cap X=a$ is a required element of $a$ with $\langle x,x'\rangle\in (X\times X)\cap\Theta(L)\setminus\Id=L\setminus\Id$. If $x''\notin X$, then $x''=\langle u,v\rangle$ is an $L$-gap. It follows from $\langle x,\langle u,v\rangle\rangle=\langle x,x''\rangle\in\Theta(L)$ that $x\in u$. Since $\langle u,v\rangle$ is an $L$-gap, there exists an element $x'\in u$ such that $x<_L x'$. Then $\langle x',x''\rangle=\langle x',\langle u,v\rangle\rangle\in\Theta(L)$ and $x''\in A$ imply $x'\in A\cap X=a$. By analogy we can prove that the set $b$ contains an element $y'$ such that $y'<_L y$. 

Therefore, $\langle a,b\rangle$ is an $L$-gap and $\langle a,b\rangle\in\Gap(L)\in A\cup B$. If $\langle a,b\rangle\in A$, then by the definition of a $\Theta(L)$-gap, we can find an element $g\in A$ such that $\langle \langle a,b\rangle,g\rangle\in\Theta(L)\setminus \Id$. If $g\in X$, then $g\in A\cap X=a$ and hence $\langle g,\langle a,b\rangle\rangle\in\Theta(L)$, which contradicts the antisymmetricity of $\Theta(L)$. So, $g\notin X$ and hence $g=\langle u,v\rangle$ is an $L$-gap. Then $\langle \langle a,b\rangle,g\rangle\in\Theta(L)\setminus \Id$ and the definition of the linear order $\Theta(L)$ implies that $a\subset u$. Choose any $y\in u\setminus a\subseteq X\setminus a=b\subseteq B$ and conclude that $\langle g,y\rangle\in A\times B\subseteq \Theta(L)$. On the other hand, $y\in u$ implies $\langle y,g\rangle=\langle y,\langle u,v\rangle\rangle\in\Theta(L)$ and hence $g=y\in X$ by the antisymmetricity of $\Theta(L)$, which contradicts our assumption. By analogy we can prove that the inclusion $\langle a,b\rangle\in B$ leads to a contradiction. 
\end{proof}

\begin{definition} A linear order $L$ is called \index{gapless linear order}\index{linear order!gapless}{\em gapless} if $\Gap(L)=\emptyset$. 
\end{definition}

By Theorem~\ref{t:gap-ext}, a linear order $L\subseteq\VV$ is gapless if and only if $L=\Theta(L)$.

\begin{proposition}\label{p:gap<=>bc} A linear order $L$ \textup{(}on a set $X=\dom[L^\pm]$\textup{)} is gapless if \textup{(}and only if\textup{)} it is boundedly complete.
\end{proposition}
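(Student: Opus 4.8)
The plan is to prove both implications by relating a subset to the $L$-cut determined by its upper bounds. Throughout write $X=\dom[L^\pm]$, and recall that, being an order, $L$ is transitive and antisymmetric (so $L\cap L^{-1}\subseteq\Id$), while linearity makes any two elements of $X$ comparable. I also plan to use the reformulation of the gap condition: since $a,b\neq\emptyset$, an $L$-cut $\langle a,b\rangle$ is an $L$-gap precisely when $a$ has no greatest element and $b$ has no least element; hence a cut with $b\neq\emptyset$ fails to be a gap exactly when $a=\emptyset$, or $a$ has a greatest element, or $b$ has a least element.

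For the direction \emph{boundedly complete $\Rightarrow$ gapless}, I would argue by contradiction. Suppose $\langle a,b\rangle\in\Gap(L)$. Since $a\times b\subseteq L$ and $b\neq\emptyset$, every element of $b$ is an upper $L$-bound of the nonempty set $a$, so $a$ is upper $L$-bounded and bounded completeness yields $s=\sup_L(a)$. By linearity $s\in a$ or $s\in b$. If $s\in a$, then $s$ is a greatest element of $a$; but the gap condition provides $x'\in a$ with $s<_L x'$, while $\langle x',s\rangle\in L\cup\Id$ because $s$ bounds $a$, and antisymmetry forces $s=x'$, a contradiction. If $s\in b$, the gap condition gives $y'\in b$ with $y'<_L s$; as $y'\in b$ is itself an upper bound of $a$, minimality of $s$ gives $\langle s,y'\rangle\in L\cup\Id$, and antisymmetry again forces $s=y'$, a contradiction. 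Hence $\Gap(L)=\emptyset$.

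For the converse \emph{gapless $\Rightarrow$ boundedly complete}, by Proposition~\ref{p:bcomp} it suffices to show every nonempty upper $L$-bounded $A\subseteq X$ has $\sup_L(A)$. I would set $b=\overline{A}$ (the set of upper bounds) and $a=X\setminus b$. The one nontrivial point in checking that $\langle a,b\rangle$ is an $L$-cut is $a\times b\subseteq L$: if $x\in a$ then $x$ fails to bound $A$, so some $x_0\in A$ has $x<_L x_0$, and then for any $y\in b$ transitivity through $\langle x_0,y\rangle\in L\cup\Id$ gives $\langle x,y\rangle\in L$. Since $A$ is upper bounded, $b\neq\emptyset$, so gaplessness forces $\langle a,b\rangle$ not to be a gap; I then want to conclude in every case that $b=\overline{A}$ has a least element, which is exactly $\sup_L(A)$.

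The main obstacle is the bookkeeping in this last step, namely showing that the cases ``$a=\emptyset$'' and ``$a$ has a greatest element'' also produce a least element of $b$. The clean route is to locate an element $x_0\in A\cap\overline{A}$, which is simultaneously a greatest element of $A$ and, since $x_0\in A$, the $L$-least upper bound. If $a$ has a greatest element $m$, then $m$ does not bound $A$, so some $x_0\in A$ has $m<_L x_0$; this $x_0$ cannot lie in $a$ (otherwise $\langle x_0,m\rangle\in L\cup\Id$ contradicts $m<_L x_0$ by antisymmetry), so $x_0\in b=\overline{A}$, making it the least element of $b$. If $a=\emptyset$, then $\overline{A}=X$, so every element of $A$ bounds $A$, and antisymmetry forces $A=\{x_0\}$ with $x_0$ the least element of $X=b$. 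In the remaining case $b$ already has a least element. Once these are assembled, every nonempty upper $L$-bounded subset has a supremum, and Proposition~\ref{p:bcomp} delivers bounded completeness, completing the equivalence.
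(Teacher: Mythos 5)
Your proof is correct and follows essentially the same route as the paper: for one direction you take $\sup_L(a)$ of the left side of a putative gap and show it must be a greatest element of $a$ or a least element of $b$; for the other you form the cut whose right side is $\overline{A}$ and use gaplessness to extract $\sup_L(A)$. Your converse is in fact slightly more careful than the paper's, since you explicitly dispatch the cases "$a=\emptyset$" and "$a$ has a greatest element," which the paper passes over.
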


\begin{proof} Let $L$ be a boundedly complete linear order. To derive a contradiction, assume that $L$ has an $L$-gap $\langle a,b\rangle$. Then the sets $a$, $b$ are not empty and hence the set $a$ is upper $L$-bounded. By the bounded completeness of $L$, the set $a$ has $\sup_L(a)$. If $\sup_L(a)\in a$, then $\sup_L(a)$ is the $L$-greatest element of $a$. If $\sup_L(a)\in b$, then $\sup_L(a)$ is the $L$-least element of $b$. In both cases, $\langle a,b\rangle$ is not an $L$-gap.

Now assume that the linear order $L$ is gapless and $X=\dom[L^\pm]$ is a set. Assuming that  $L$ is not boundedly complete and applying Proposition~\ref{p:bcomp}, we conclude that $\dom[L^\pm]$ contains an upper $L$-bounded subset $A\subseteq X$ that has no $\sup_L(A)$. Then $A$ does not have an $L$-maximal element. Since $A$ is upper $L$-bounded, then set $b=\{x\in X:A\times\{x\}\subseteq L\cup\Id\}$ is not empty.
Consider the set $a=L^{-1}[A]$. Since $L$ is a linear order, $a\cup b=\dom[L^\pm]$. Since $L$ is gapless, the ordered pair $\langle a,b\rangle$ is not an $L$-gap and hence the set $b$ has an $L$-minimal element which is equal to $\sup_A(L)$.
\end{proof}

\begin{definition} Let $\kappa$ be a cardinal. A linear order $L$ is called \index{linear order!$\kappa$-universal}{\em $\kappa$-universal} if there exists a subset $U\subseteq \dom[L^\pm]$ of cardinality $|U|=\kappa$ such that the order $L\restriction U$ is universal and for any ordered pair $\langle x,y\rangle\in L\setminus\Id$ there are elements $u,v,w\in U$ such that $\{\langle u,x\rangle,\langle x,v\rangle, \langle v,y\rangle,\langle y,w\rangle\}\subseteq L\setminus\Id$.
\end{definition}

\begin{remark} Each universal linear order $L$ with $|\dom[L^\pm]|=\kappa$ is $\kappa$-universal.
\end{remark}

It can be shown that for any infinite cardinal $\kappa$ and universal linear order $L\in\VV$ with $|\dom[L^\pm]|=\kappa$, its gap extension $\Theta(L)$ is $\kappa$-universal. The following theorem  shows that all such orders are pairwise isomorphic.

\begin{theorem}\label{t:R-iso} Let $\kappa$ be an infinite cardinal. Any $\kappa$-universal gapless linear orders are isomorphic.
\end{theorem}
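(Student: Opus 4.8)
The plan is to reduce this to the isomorphism theorem for ordinary universal orders, Theorem~\ref{t:lin-univ}, by showing that every $\kappa$-universal gapless linear order is (canonically isomorphic to) the gap extension of a universal linear order on a set of cardinality $\kappa$. Throughout, let $L,M$ be $\kappa$-universal gapless linear orders with witnessing subsets $U\subseteq\dom[L^\pm]$ and $V\subseteq\dom[M^\pm]$ of cardinality $\kappa$, and write $x<_L y$ for $\langle x,y\rangle\in L\setminus\Id$.

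First I would extract the density consequences of $\kappa$-universality. Applying the defining property to pairs $a,b\subseteq U$ with $|a\cup b|<\kappa$ shows that $L{\restriction}U$ is itself a universal linear order; applying it to singletons and to the empty set (which satisfy $a\times b\subseteq L\setminus\Id$ trivially) shows that $U$ is dense in $L$ in the strong sense that between any two $<_L$-comparable elements of $\dom[L^\pm]$ there is an element of $U$, that every element of $\dom[L^\pm]$ has members of $U$ strictly above and strictly below it, and consequently that for $x\in\dom[L^\pm]\setminus U$ the sets $\{u\in U:u<_L x\}$ and $\{u\in U:x<_L u\}$ form an $(L{\restriction}U)$-gap (both nonempty, the first with no $\le_L$-maximum, the second with no $\le_L$-minimum). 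In particular $\dom[(L{\restriction}U)^\pm]=U$, and the same facts hold for $M$, $V$.

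Next comes the core step: the \emph{cut map} $c_L\colon\dom[L^\pm]\to U\cup\Gap(L{\restriction}U)=\dom[\Theta(L{\restriction}U)^\pm]$ sending $u\in U$ to itself and $x\notin U$ to the gap $\langle\{u\in U:u<_L x\},\{u\in U:x<_L u\}\rangle$. The density facts make $c_L$ a well-defined, injective, order-preserving map. The point where gaplessness enters is surjectivity onto $\Gap(L{\restriction}U)$: given an $(L{\restriction}U)$-gap $\langle a,b\rangle$, form the $L$-cut $\langle A',B'\rangle$ of $\dom[L^\pm]$ with $A'=\{z\in\dom[L^\pm]:z<_L u$ for all $u\in b\}$ and $B'=\dom[L^\pm]\setminus A'$; one checks $\langle A',B'\rangle\in\Cut(L)$ with both sides nonempty, so since $\Gap(L)=\emptyset$ it is not an $L$-gap and hence one side has an extremum, which by the density facts is the unique element of $\dom[L^\pm]$ realizing $\langle a,b\rangle$. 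Thus $c_L$ is an order isomorphism of $L$ onto the gap extension $\Theta(L{\restriction}U)$ (Theorem~\ref{t:gap-ext}), and likewise $c_M$ gives $M\cong\Theta(M{\restriction}V)$.

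Finally I would conclude by transport and functoriality. Choosing bijections $U\to\kappa$ and $V\to\kappa$, transport $L{\restriction}U$ and $M{\restriction}V$ to universal linear orders on the ordinal $\kappa\subseteq\Ord\subseteq\VV$ (this keeps us inside the von Neumann universe, so the cut/gap constructions of this section apply without invoking Foundation). Since both transported orders are universal and trivially in bijection, Theorem~\ref{t:lin-univ} provides an order isomorphism $\varphi$ between them. An order isomorphism of linear orders induces an order isomorphism of their gap extensions, acting as $\langle a,b\rangle\mapsto\langle\varphi[a],\varphi[b]\rangle$ on gaps; call it $\Theta(\varphi)\colon\Theta(L{\restriction}U)\to\Theta(M{\restriction}V)$. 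Then $c_M^{-1}\circ\Theta(\varphi)\circ c_L$ is the desired order isomorphism $\dom[L^\pm]\to\dom[M^\pm]$. The main obstacle is the cut-map step: proving that $\kappa$-universality makes $U$ dense enough for $c_L$ to be injective with image among the gaps, and that gaplessness forces each $(L{\restriction}U)$-gap to be realized by exactly one point of $\dom[L^\pm]$; the remaining verifications (that $L{\restriction}U$ is genuinely universal with underlying set exactly $U$, and that $\Theta$ is functorial on isomorphisms) are routine but require careful bookkeeping.
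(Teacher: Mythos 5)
Your proof is correct and follows essentially the same route as the paper's: both reduce to Theorem~\ref{t:lin-univ} applied to the universal restrictions $L{\restriction}U$ and $M{\restriction}V$, and then extend the resulting isomorphism across the gaps, using gaplessness to guarantee that every gap of the dense suborder is realized by a point of the ambient order and the density supplied by $\kappa$-universality to guarantee that this point is unique. The only difference is packaging: you factor the extension through the gap-extension construction, proving $L\cong\Theta(L{\restriction}U)$ and invoking functoriality of $\Theta$ on isomorphisms, whereas the paper extends the isomorphism of the dense parts pointwise and verifies bijectivity by building the inverse extension directly.
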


\begin{proof} Fix two $\kappa$-universal gapless linear orders $L_1,L_2$. For every $i\in\{1,2\}$, the underlying set $X_i=\dom[L_i^\pm]$ of the order $L_i$ contains a subset $D_i\subseteq X_i$ of cardinality $|D_i|=|\kappa|$ such that the linear order $U_i=L_i{\restriction}D_i$ is universal and for any  for any ordered pair $\langle x,y\rangle\in L_i\setminus\Id$ there are elements $u,v,w\in D_i$ such that $\{\langle u,x\rangle,\langle x,v\rangle, \langle v,y\rangle,\langle y,w\rangle\}\subseteq L_i\setminus\Id$.

 By Theorem~\ref{t:lin-univ}, there exists an isomorphism $f:D_1\to D_2$ of the universal  linear orders $U_1$ and $U_2$. Now we show that $f$ admits a unique extension $F$ to an isomorphism of the linear orders $L_1$ and $L_2$.
 
For every $x\in X_1\setminus D_1$, consider the $U_1$-gap $\langle \cev x,\vec x\rangle$ consisting of the sets $\cev x=\{y\in D_1:\langle y,x\rangle\in L_1\}$ and $\vec x=\{y\in D_1:\langle x,y\rangle\in L_1\}$. Since $f$ is an order isomorphism, the pair $\langle f[\cev x],f[\vec x]\rangle$ is a $U_2$-gap. Consider the sets ${\downarrow}f[\cev x]=L_2^{-1}[f[\cev x]]$ and ${\uparrow}f[\vec x]=L_2[f[\vec x]]$. Since the order $L_2$ is gapless, the ordered pair $\langle {\downarrow}f[\cev x],{\uparrow}f[\vec x]\rangle$ is not an $L_2$-gap, which implies that the complement $X_2\setminus( {\downarrow}f[\cev x]\cup {\uparrow}f[\vec x])\subseteq X_2\setminus D_2$ is not empty. The choice of the set $D_2$ ensures that this complement contains a unique point. We  denote this unique point by $F(x)$. Therefore, we have constructed an extension $F=f\cup\{\langle x,F(x)\rangle:x\in X_1\setminus D_1\}$ of the function $f$ to a function $F:X_1\to X_2$. It can be shown that the function $F$ is injective and $L_1$-$L_2$-increasing. By analogy we can extend the function $f^{-1}:D_2\to D_1$ to an injective $L_2$-$L_1$-increasing function $G:X_2\to X_1$. Then the composition $G\circ F:X_1\to X_1$ is an $L_1$-$L_1$-increasing function such that $G\circ F(x)=x$ for every $x\in D_1$. The $L_1$-density of the set $D_1$ implies that $G\circ F(x)=x$ for all $x\in X_1$. By analogy we can prove that $F\circ G$ is the identity function of the set $X_2$. Therefore, $F:X_1\to X_2$ is an isomorphism of the linear orders $L_1$ and $L_2$. 
\end{proof}

\begin{corollary}[Cantor]\label{t:wR-iso}  Any $\w$-universal gapless linear orders are isomorphic.
\end{corollary}

\begin{theorem}\label{t:order-continuum} Let $\kappa$ be a regular cardinal with $|\kappa|=|2^{<\kappa}|$.  Any $\kappa$-universal gapless linear order $L$ has cardinality $$|L|=|\dom[L^\pm]|=|2^\kappa|.$$
\end{theorem}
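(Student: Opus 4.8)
The plan is to establish the two inequalities $|\dom[L^\pm]|\le|2^\kappa|$ and $|2^\kappa|\le|\dom[L^\pm]|$ and then invoke the Cantor--Bernstein--Schr\"oder Theorem~\ref{t:CBDS}. Throughout I fix the subset $U\subseteq\dom[L^\pm]$ with $|U|=\kappa$ witnessing the $\kappa$-universality of $L$. Once $|\dom[L^\pm]|=|2^\kappa|$ is known, the remaining equality $|L|=|\dom[L^\pm]|$ is routine: the diagonal map $x\mapsto\langle x,x\rangle$ injects $\dom[L^\pm]$ into $\dom[L^\pm]\times\dom[L^\pm]$, while $L\subseteq\dom[L^\pm]\times\dom[L^\pm]$, so both $|L|$ and $|\dom[L^\pm]|$ lie between $|\dom[L^\pm]|$ and $|\dom[L^\pm]|\cdot|\dom[L^\pm]|=|2^\kappa|\cdot|2^\kappa|=|2^\kappa|$, the last equality coming from $|\kappa|\cdot|2|=|\kappa|$ and Theorem~\ref{t:aleph-times} since $\kappa$ is an infinite cardinal.

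For the upper bound I would send each $x\in\dom[L^\pm]$ to $\cev x:=\{u\in U:\langle u,x\rangle\in L\setminus\Id\}\subseteq U$ and check that the resulting map $\dom[L^\pm]\to\mathcal P(U)$ is injective. Given distinct $x,y$, say $\langle x,y\rangle\in L\setminus\Id$, apply $\kappa$-universality to the singletons $a=\{x\}$ and $b=\{y\}$: these satisfy $|a\cup b|=2<\kappa$ and $a\times b\subseteq L\setminus\Id$, so there is $u\in U$ with $\langle x,u\rangle\in L\setminus\Id$ and $\langle u,y\rangle\in L\setminus\Id$. Antisymmetry of the linear order $L$ forbids $\langle u,x\rangle\in L\setminus\Id$, so $u\in\cev y\setminus\cev x$. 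Hence $|\dom[L^\pm]|\le|\mathcal P(U)|=|2^U|=|2^\kappa|$, using $|U|=\kappa$.

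For the lower bound, first note that $L{\restriction}U$ is a universal linear order, since any $a,b\subseteq U$ with $|a\cup b|<\kappa=|U|$ and $a\times b\subseteq L\setminus\Id$ are separated inside $U$ by the $\kappa$-universality of $L$. As $|U|=\kappa=|2^{<\kappa}|$ and both underlying sets are well-orderable, Theorem~\ref{t:lin-univ} yields an order isomorphism $h\colon\mathsf U_{2^{<\kappa}}\to L{\restriction}U$. To each branch $f\in 2^\kappa$ I attach the cut $\langle\cev f,\vec f\rangle$ of $2^{<\kappa}$, where $\cev f=\{s\in 2^{<\kappa}:s<_{\mathsf U}f\}$ and $\vec f=\{s:f<_{\mathsf U}s\}$ under the evident extension of the order $\mathsf U_{2^{<\kappa}}$ that compares a node with the branch $f$. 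Its image $\langle h[\cev f],h[\vec f]\rangle$ is a cut of $U$ with no element of $U$ strictly between the two sides. Taking the $L$-downward closure $A_f=\{x\in\dom[L^\pm]:\exists u\in h[\cev f]\ \langle x,u\rangle\in L\cup\Id\}$ together with its complement gives an $L$-cut; since $L$ is gapless, this cut is not a gap, so it has a boundary point $p_f$, namely the $L$-greatest element of $A_f$ or the $L$-least strict upper bound of $h[\cev f]$. One checks $p_f\notin U$: otherwise $p_f$ would be a greatest element of $h[\cev f]$ or a least element of $h[\vec f]$ inside $U$, contradicting that these form a gap of $U$.

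The injectivity of $f\mapsto p_f$ is the heart of the matter and the main obstacle. Distinct branches can collide: if $f=s\frown\langle\alpha,0\rangle\frown\overline1$ and $g=s\frown\langle\alpha,1\rangle\frown\overline0$ (the dyadic pairs, with $\overline0,\overline1$ denoting constant tails), then no tree node lies strictly between them and $p_f=p_g$. I would resolve this exactly as in the real-number case: these are the \emph{only} collisions, because two branches determine the same cut precisely when one is the rightmost and the other the leftmost branch through sibling nodes, which forces constant tails. Hence the restriction of $f\mapsto p_f$ to the class $B$ of branches that are \emph{not} eventually constant is injective; concretely, for $f<_{\mathsf U}g$ in $B$ with first difference at $\alpha$, choosing $\beta>\alpha$ with $f(\beta)=0$ gives a node $t=(f{\restriction}_\beta)\frown\langle\beta,1\rangle$ in the $0$-subtree at $\alpha$ with $f<_{\mathsf U}t<_{\mathsf U}g$, so $p_f<_L h(t)<_L p_g$ (strict because $h(t)\in U$ while $p_f,p_g\notin U$). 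Finally the eventually-constant branches number at most $|2^{<\kappa}|\cdot|2|=\kappa<|2^\kappa|$ by Corollary~\ref{c:CantorP}, so $|B|=|2^\kappa|$; therefore $|2^\kappa|=|B|\le|\dom[L^\pm]\setminus U|\le|\dom[L^\pm]|$, and Theorem~\ref{t:CBDS} finishes the argument.
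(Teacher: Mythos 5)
Your proof is correct, but it takes a genuinely different route from the paper's. The paper proves both bounds by invoking the uniqueness theorem (Theorem~\ref{t:R-iso}) twice: for the upper bound it identifies $L$ with the gap extension $\Theta(\mathsf{U}_{2^{<\kappa}})$ and counts cuts inside $\mathcal P(2^{<\kappa})\times\mathcal P(2^{<\kappa})$; for the lower bound it builds a second $\kappa$-universal order, namely the lexicographic order $R$ on $(2^{<\kappa})^\kappa$, checks that its gap extension is again $\kappa$-universal and gapless, and concludes $|\dom[L^\pm]|\geq|\dom[R]|\geq|2^\kappa|$ from the isomorphism $L\cong\Theta(R)$. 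You instead work directly inside $L$: your upper bound via $x\mapsto\{u\in U:u<_L x\}$ is more elementary than the paper's (it needs only $\kappa$-universality and antisymmetry, not the isomorphism machinery), and your lower bound realizes $2^\kappa$ many boundary points of branch-cuts over an embedded copy of $\mathsf{U}_{2^{<\kappa}}$ --- the classical Dedekind-cut argument --- at the cost of having to handle the dyadic collisions, which you do correctly by restricting to non-eventually-constant branches and exhibiting a separating node $t$ with $p_f<_L h(t)<_L p_g$. The one step I would tighten is the final count: from $|2^\kappa|=|B|+|C|$ with $|C|\leq|\kappa|<|2^\kappa|$ you cannot in general conclude $|B|=|2^\kappa|$ by subtraction in the choiceless setting of this book; but the needed inequality $|2^\kappa|\leq|B|$ follows at once from an explicit injection of $2^\kappa$ into the non-eventually-constant branches (interleave each $g\in 2^\kappa$ with an alternating $0,1$ pattern along a bijection $\kappa\to\kappa\times 2$), after which Theorem~\ref{t:CBDS} closes the argument exactly as you say.
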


\begin{proof} By Theorem~\ref{t:univ-ord}, the linear order $\mathsf{U}_{2^{<\kappa}}$ is universal. 
By Theorem~\ref{t:R-iso}, the gap extension $\Theta(\mathsf{U}_{2^{<\kappa}})$ of $\mathsf{U}_{2^{<\kappa}}$ is isomorphic to the order $L$. Therefore, 
\begin{multline*}
|\dom[L^\pm]|=|\dom[\Theta(\mathsf{U}_{2^{<\kappa}})^\pm]|=|\dom[\mathsf{U}_{2^{<\kappa}}]|+|\Gap(\mathsf{U}_{2^{<\kappa}})|\le|2^{<\kappa}|+|\mathcal P(2^{<\kappa})\times\mathcal P(2^{<\kappa})|=\\
|\kappa|+|\mathcal P(\kappa)\times\mathcal P(\kappa)|\le |2^\kappa|+2^{|\kappa|+|\kappa|}=|2^\kappa|
\end{multline*}
and $|L|\le|\dom[L^\pm]\times\dom[L^\pm]|\le|2^\kappa\times 2^\kappa|=|2^\kappa|.$

To prove that $|\dom[L^\pm]|\ge |2^\kappa|$ and $|L|\ge |2^\kappa|$, consider the  set  $F=(2^{<\kappa})^\kappa$, endowed with the lexicographic linear order
$$R=\{\langle f,g\rangle\in F\times F :\exists \alpha\in\kappa\;(f{\restriction}_\alpha=g{\restriction}_\alpha\;\wedge\;\langle f(\alpha) ,g(\alpha)\rangle\in\mathsf{U}_{2^{<\kappa}})\}.$$
The set $$D=\{f\in F:|\{\alpha\in\kappa:f(\alpha)\ne \emptyset\}|<\kappa\}$$witnesses that the order $R$ is $\kappa$-universal. Then its gap extension $\Theta(R)$ is gapless and $\kappa$-universal. By Theorem~\ref{t:R-iso}, the orders $\Theta(R)$ and $L$ are isomorphic. Then $|\dom[L^\pm]|=|\dom[\Theta(R)]|\ge|\dom[R]|=|F|\ge|2^\kappa|$ and $|L|=|\Theta(R)|\ge|R|\ge|2^\kappa|$. 
Therefore, we have the inequalities
$$|2^\kappa|\le|\dom[L^\pm]|\le |2^\kappa|\quad\mbox{and}\quad|2^\kappa|\le|L|\le |2^\kappa|.$$
By Theorem~\ref{t:CBDS}, $|L|=|\dom[L^\pm]|=|2^\kappa|$.
\end{proof}

\newpage

\part{Numbers}\label{p:numbers}

\rightline{\em They [Pythagoreans] thought they found}

\rightline{\em in numbers more than in fire, earth, or water;}

\rightline{\em  all things seemed in their whole nature to be}





\rightline{\em assimilated to numbers, while numbers seemed}

\rightline{\em to be the first things in the whole of nature,}

\rightline{\em they supposed the elements of numbers}

\rightline{\em to be the elements of all things,}

\rightline{\em and the whole heaven to be}

\rightline{\em a musical scale and a number.}
\smallskip

\rightline{Aristotle, {\sf ``Metaphysics''}, 350 BC.}
\bigskip

\rightline{\em Die ganzen Zahlen hat der liebe Gott gemacht,}

\rightline{\em alles andere ist Menschenwerk.}

\rightline{Leopold Kronecker\footnote{{\bf Task:} Read at ({\tt https://mathshistory.st-andrews.ac.uk/Biographies/Kronecker/}) about Kronecker and his attitude to Set Theory.}}
\bigskip

The aim of this part is to introduce the sets $\IN,\IZ,\IQ,\IR,\IC$ which are of crucial importance for whole mathematics. The elements of those sets are numbers: natural, integer, rational, real, complex, respectively. In fact, the set $\IN$ of nonzero natural numbers has been introduced in Section~\ref{s:axioms} as the set $\w\setminus\{0\}$.\index{natural number}\index{number!natural}\index{$\mathbb N$}\index{set!$\mathbb N$}

\section{Integer numbers}

Theorem~\ref{t:add-ord}(5) implies that for any natural numbers $n\le m$ there exists a unique natural number $k$ such that $m=n+k$. This natural number $k$ is denoted by $m-n$ and called the result of subtraction of $n$ from $m$.  If $m<n$, then $m-n$ is not a natural number but is a negative integer. But what is a negative integer? For example, what is $\minus 1$? It should be equal to $1-2$, but also to $2-3$ and $3-4$ and so on. So, it is natural to define the negative integer $\minus 1$ as the set of ordered pairs $\{\langle 0,1\rangle,\langle 1,2\rangle,\langle 2,3\rangle,\dots\}$. The simplest  (in the sense of von Neumann hierarchy) element of this set is the  ordered pair $\langle 0,1\rangle$. We take this simplest pair $\langle 0,1\rangle$ to represent the negative integer $\minus 1$. As a result, the negative number $\minus 1$ becames a relatively simple set $\langle 0,1\rangle=\{\{0\},\{0,1\}\}=\{\{\emptyset\},\{\emptyset,\{\emptyset\}\}\}=\{1,2\}$, which is an element of the set $V_4$ of the von Neumann hierarchy. On the other hand, the set of ordered pairs $\{\langle n,m\rangle\in\w\times\w:n+1=m\}$ which can be taken as an alternative definition of $\minus 1$ appears only at the stage $V_{\w+1}$ of von Neumann hierarchy.

Realizing this idea, for every nonzero natural number $n$ define the negative integer $\minus n$ as the ordered pair $\langle 0,n\rangle$. Then $\minus\IN=\{\langle 0,n\rangle:n\in\IN\}=\{0\}\times\IN$ is the set of {\em negative integers} and the union $$\IZ=\minus\IN\cup\{0\}\cup \IN$$ is the {\em set of integer numbers}. \index{negative integer}\index{integer number}\index{number!integer}\index{$\mathbb Z$}\index{set!$\mathbb Z$}

\begin{exercise} Show that the sets $\minus\IN$ and $\w$ are disjoint.
\smallskip

\noindent{\em Hint}: A unique two-element set in $\w$ is $2=\{\emptyset,\{\emptyset\}\}$, which does not belong to the class $\ddot\UU\supset \minus\IN$.
\end{exercise} 

The reflexive linear order $\mathbf S{\restriction}\w$ on the set $\w$ can be extended to  the reflexive linear order  $$\le_\IZ\;=(\minus\IN\times\w)\cup\{\langle n,m\rangle:n\subseteq m\}\cup\{\langle\minus n,\minus m\rangle:m\subseteq n\}$$
on the set $\IZ=\dom[\le_\IZ]=\rng[\le_\IZ]=\dom[\le_\IZ^\pm]$. The irreflexive order
$$<_\IZ\;=\;\le_\IZ\!\setminus\Id$$ is called the {\em strict linear order} on $\IZ$.

\begin{exercise} Show that $\minus\IN=\cev{\le}_\IZ(0)=\cev{<}_\IZ(0)$.
\end{exercise}

Now we introduce some arithmetic operations on the set $\IZ$. The first one is {\em additive inversion} $\minus:\IZ\to\IZ$, $\minus:z\mapsto \minus z$, defined by the formula
$$\minus z=\begin{cases}\langle 0,z\rangle&\mbox{if $z\in\IN$};\\
0&\mbox{if $z=0$};\\
n&\mbox{if $z=\langle 0,n\rangle$ for some $n\in\IN$}.
\end{cases}
$$
The definition of the function $\minus$ implies that $\minus (\minus z)=z$ for any integer number $z$.

\begin{exercise} Show that $\forall x,y\in\IZ\;(x\le_\IZ y\;\Leftrightarrow\;\minus y\le_\IZ\minus x)\;\wedge\;(x<_\IZ y\;\Leftrightarrow\;\minus y<_\IZ\minus x)$.
\end{exercise}

Now we shall extend the operation of addition to integer numbers. Since the addition of natural numbers is already defined, we need to define the sum $x+y$ only for  pairs $$\langle x,y\rangle\in(\IZ\times\IZ)\setminus(\w\times\w)=((\minus\IN)\times(\minus \IN))\cup(\w\times(\minus\IN))\cup((\minus \IN)\times\w).$$ This is done by the formulas
$$
(\minus m)+(\minus n)=\minus(m+n)\quad\mbox{and}\quad m+(\minus n)=(\minus n)+m=\begin{cases}m-n&\mbox{if $n\le m$},\\
\minus(n-m)&\mbox{if $m\le n$},
\end{cases}
$$
for any $n,m\in\w$.

\begin{exercise}\label{ex:add-Z} Check that the addition $+:\IZ\times\IZ\to\IZ$ has the following properties for every integer numbers $x,y,z$:
\begin{enumerate}
\item $(x+y)+z=x+(y+z)$;
\item $x+y=y+x$;
\item $x+0=x$;
\item $x+(\minus x)=0$;
\item $x<_\IZ y\;\Leftrightarrow\;x+z<_\IZ y+z$.
\end{enumerate}
\smallskip

\noindent{\em Hint:} Prove these properties for the isomorphic copy of $\IZ$, which is the quotient set $\tilde \IZ=(\w\times\w)/Z$ of $\w\times\w$ by the equivalence relation 
$$Z=\{\langle\langle k,l\rangle,\langle m,n\rangle\rangle\in(\w\times\w)\times(\w\times\w):k+n=m+l\}.$$ For every ordered pair $\langle m,n\rangle\in\w\times\w$ its equivalence class $Z^\bullet(m,n)$ represents the integer number $m-n$. For two equivalence classes $Z^\bullet(m,n),Z^\bullet(k,l)$ their sum $Z^\bullet(m,n)+Z^\bullet(k,l)$ is defined as the equivalence class $Z^\bullet(m+k,n+l)$. 
\end{exercise}

\smallskip


Next, we extend the multiplication to integer numbers. Since the multiplication of natural numbers is already defined, we need to define the product $x{\cdot}y$ only for  pairs $$\langle x,y\rangle\in (\IZ\times\IZ)\setminus(\w\times\w)=((\minus\IN)\times(\minus \IN))\cup(\w\times(\minus\IN))\cup((\minus \IN)\times\w).$$ This is done by the formulas
$$
(\minus m)\cdot(\minus n)=m\cdot n\quad\mbox{and}\quad
m\cdot(\minus n)=(\minus n)\cdot m=\minus(n\cdot m)
$$
for any $n,m\in\w$.

\begin{exercise} Check that the multiplication $\cdot:\IZ\times\IZ\to\IZ$ has the following properties for every $x,y,z\in\IZ$:
\begin{enumerate}
\item $(x\cdot y)\cdot z=x\cdot(y\cdot z)$;
\item $x\cdot y=y \cdot x$;
\item $x\cdot 0=0$;
\item $x\cdot 1=x$;
\item $x \cdot(\minus 1)=\minus x$;
\item If $0<_\IZ x$ and $0<_\IZ y$, then $0<_\IZ x\cdot y$;
\item $x\cdot(y+z)=x{\cdot} y+x{\cdot}z$; 
\item If $0<_\IZ x$, then $y<_\IZ z\;\Leftrightarrow\;x \cdot y<_\IZ x\cdot z$;
\item $x\cdot y=0\;\Leftrightarrow\;(x=0\;\vee\;y=0)$.
\end{enumerate}
\smallskip

\noindent{\em Hint}: Prove these properties for the isomorphic copy $\tilde \IZ$ of $\IZ$, considered in the hint to Exercise~\ref{ex:add-Z}. In the proof apply Theorems~\ref{t:mult-ord-prop} and \ref{t:mult-nat}.
\end{exercise}

To introduce rational numbers, we shall need some standard facts about the divisibility of integer numbers. We say that a natural number $d$ {\em divides} an integer number $z$ if $z=d\cdot k$ for some integer number $k$, which is denoted by $\frac{z}{d}$.

 For an integer number $z$ by $\mathsf{Div}(z)$ we denote the set of natural numbers dividing $z$. The set $\mathsf{Div}(z)$ contains $1$ and hence is not empty. Two integer numbers $a,b$ are called \index{coprime integers}{\em coprime} if $\mathsf{Div}(a)\cap\mathsf{Div}(b)=\{1\}$. For two numbers $a,b$ the largest element of the set $\mathsf{Div}(a)\cap \mathsf{Div}(b)$ is called the \index{greatest common divisor}{\em greatest common divisor} of $a$ and $b$ and is denoted by $\gcd(a,b)$. If $d$ is the largest common divisor of integer numbers $a,b$, then the integer numbers $\frac{a}{d}$ and $\frac{b}{d}$ are coprime.

\section{Rational numbers} 

Rational numbers are introduced to ``materialize'' the result of division of integer numbers. For example, $\frac 12$ represents the result of division of $1$ by $2$ but also $2$ by $4$ and $3$ by $6$, etc. So, $\frac12$ can be defined as the set of pairs $\{\langle 1,2\rangle,\langle 2,4\rangle,\langle 3,6\rangle,\dots\}$. Among such pairs the simplest (in the sense of von Neumann hierarchy) is the pair $\langle 1,2\rangle$, which can be taken as the representative of $\frac12$. 

More generally, for any pair $\langle a,b\rangle\in\IZ\times\IN$ the fraction $\frac{a}{b}$ can be defined as the set of ordered pairs $\{\langle m,n\rangle\in\IZ\times\IN:a\cdot n=b\cdot m\}$. The simplest element of this set is the ordered pair $\langle \frac{a}{\gcd(a,b)},\frac{b}{\gcd(a,b)}\rangle$ consisting of relatively prime integers $ \frac{a}{\gcd(a,b)}$ and $\frac{b}{\gcd(a,b)}$. The pairs $\langle m,n\rangle\in\IZ\times\IN$ with  relatively prime numbers $m,n$ thus encode all rational numbers $\frac{m}{n}$.

This suggests to define the set of rational numbers as the set
$$\IQ=\IZ\cup\{\langle m,n\rangle\in \IZ\times\IN:\mbox{$m$ and $n$ are coprime and $n\ge 2$}\}.$$ 
Pairs $\langle m,n\rangle\in\IQ\setminus\IZ$ will be denoted as fractions $\frac{m}{n}$, and integers $z\in\IZ$ as fractions $\frac{z}1$. Therefore, the set $\IQ$ can be written more uniformly as
$$\big\{\tfrac{m}{n}:\mbox{$m\in\IZ$ and $n\in\IN$ are coprime}\big\}.$$\index{number!rational}\index{$\mathbb Q$}\index{set!$\mathbb Q$}
\begin{exercise} Which number is represented by the ordered pair $\langle 0,2\rangle$? Why the pairs used for encoding negative integers do not intersect the pairs used for encoding non-integer rationals?
\end{exercise}



The linear order $\le_\IZ$ can be extended to the linear order
$$\le_\IQ\;=\{\langle\tfrac{m}{n},\tfrac{k}{l}\rangle\in\IQ\times\IQ:m{\cdot}l\le_\IZ k{\cdot}n\}.$$
on the set $\IQ$.

\begin{exercise} Show that $\le_\IQ$ is indeed a reflexive linear order on $\IQ$.
\end{exercise}

The irreflexive linear order
$$<_\IQ\;=\;\le_\IQ\!\setminus\Id$$ is called the {\em strict linear order} on $\IQ$.

Next we extend the arithmetic operations from $\IZ$ to $\IQ$. The additive inversion is extended to $\IQ$ letting $\minus\langle m,n\rangle=\langle \minus m,n\rangle$ for any $\langle m,n\rangle\in \IQ\setminus\IZ$.

Let  $\div:\IZ\times\IN\to\IQ$ be the function assigning to any ordered pair $\langle m,n\rangle\in\IZ\times\IN$ the rational number  
$$\frac{m}{n}=\frac{\frac{m}{\gcd(m,n)}}{\frac{n}{\gcd(m,n)}}=\begin{cases}\langle \frac{m}{\gcd(m,n)},\frac{n}{\gcd(m,n)}\rangle&\mbox{if $\gcd(m,n)<n$};\\
\frac{m}{\gcd(m,n)}&\mbox{if $\gcd(m,n)=n$}.
\end{cases}
$$
We recall that for a nonzero integer $d$ dividing an integer number $z$ the fraction $\frac{z}{d}$ denotes the unique integer number $k$ such that $d\cdot k=z$. 

Given any rational numbers $\frac{m}{n},\frac{k}{l}\in\IQ$ define their sum $\frac{m}{n}+\frac{k}{l}$ and product $\frac{m}{n}\cdot \frac{k}{l}$ as 
$$\frac{m}{n}+\frac{k}{l}=\frac{m{\cdot}l+k{\cdot}n}{n\cdot l}\mbox{  \ and \ }\frac{m}{n}\cdot\frac{k}{l}=\frac{m\cdot k}{n\cdot l}.$$

\begin{exercise} Given any $x,y,z\in\IQ$, check the following properties of the addition and multiplication of rational numbers:
\begin{enumerate}
\item $x+(y+z)=(x+y)+z$;
\item $x+y=y+x$;
\item $x+0=x$;
\item $x+(\minus x)=0$;
\smallskip
\item $x\cdot(y\cdot z)=(x\cdot y)\cdot z$;
\item $x\cdot y=y\cdot x$;
\item $x\cdot 1=x$;
\item $(x\ne 0)\;\Rightarrow\;(x\cdot\frac1x=1)$;
\item $x\cdot(y+z)=(x\cdot y)+(x\cdot z)$;
\smallskip

\item $x<_\IQ y\le\;\Rightarrow\;x+z<_\IQ y+z$;
\item $(0<_\IQ x\;\wedge\;0<_\IQ y)\;\Ra\;0<_\IQ x\cdot y$.
\end{enumerate}
\end{exercise}

Since the linear orders $\le_\IQ$ and $<_\IQ$ are countable and universal, Cantor's Corollary~\ref{l:Cantor-lin-univ}  implies the following characterization.

\begin{theorem}[Cantor] An (ir)reflexive order $L$ is isomorphic to the (strict) linear order on $\IQ$ if and only if $L$ is nonempty, countable and  has two properties:
\begin{itemize}
\item[\textup{1)}] for any elements $x<_L y$ of $\dom[L^\pm]$ there exists $z\in\dom[L^\pm]$ such that $x<_L z<_L y$;
\item[\textup{2)}] for any element $z\in\dom[L^\pm]$ there are $x,y\in\dom[L^\pm]$ such that $x<_L z<_L y$.
\end{itemize}
\end{theorem}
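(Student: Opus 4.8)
The plan is to reduce this theorem to the already-established characterization of the countable universal order $\mathsf{U}_{2^{<\w}}$ given in Corollary~\ref{l:Cantor-lin-univ}. Observe first that the two listed properties~(1) and~(2) are phrased entirely in terms of the strict part $<_L = L\setminus\Id$, and that for a reflexive linear order $L$ its strict part $L\setminus\Id$ and for an irreflexive linear order $L$ its reflexive hull $L\cup\Id{\restriction}\dom[L^\pm]$ are interdefinable on the common underlying class. Since an order isomorphism of reflexive linear orders is the very same bijection as an order isomorphism of their strict parts (cf.\ Proposition~\ref{p:iso-equivalent}), it suffices to prove the statement in the strict case, comparing an irreflexive order $L$ with the strict linear order $<_\IQ$ on $\IQ$; the reflexive case then follows by passing to and from strict parts.

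The heart of the matter is therefore the single claim that $<_\IQ$ is itself isomorphic to $\mathsf{U}_{2^{<\w}}$, which by Corollary~\ref{l:Cantor-lin-univ} amounts to checking that $<_\IQ$ is nonempty, countable, and satisfies~(1) and~(2). Nonemptiness is immediate since $0\in\IQ=\dom[<_\IQ^\pm]$. For countability I would note that $\IN=\w\setminus\{0\}$ is countable, that $\minus\IN=\{0\}\times\IN$ is countable, and hence that $\IZ=\minus\IN\cup\{0\}\cup\IN$ is countable by Proposition~\ref{p:f-union}; then $\IZ\times\IN$ is countable as the Cartesian product of two countable sets, and $\IQ\subseteq\IZ\cup(\IZ\times\IN)$ is a union of two countable sets, so $\IQ$ is countable (and any subset of a countable set is countable). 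Property~(2) I would verify by taking, for a given $z\in\IQ$, the bounds $x=z+(\minus 1)$ and $y=z+1$: from $0<_\IQ 1$ and the order-compatibility of addition we get $z+(\minus 1)<_\IQ z<_\IQ z+1$. Property~(1) I would verify by taking, for $x<_\IQ y$, the midpoint $z=(x+y)\cdot\tfrac12$: adding the inequality $x<_\IQ y$ to itself gives $x+x<_\IQ x+y<_\IQ y+y$, and multiplying by the positive rational $\tfrac12$ (which preserves strict order) yields $x<_\IQ z<_\IQ y$.

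Having established $<_\IQ\cong\mathsf{U}_{2^{<\w}}$, the equivalence follows by transitivity of isomorphism: an order $L$ is isomorphic to $<_\IQ$ if and only if it is isomorphic to $\mathsf{U}_{2^{<\w}}$, which by Corollary~\ref{l:Cantor-lin-univ} holds if and only if $L$ is nonempty, countable and has properties~(1) and~(2). I expect no genuine obstacle here, since the deep back-and-forth construction is already packaged inside Theorem~\ref{t:lin-univ} and its Corollary~\ref{l:Cantor-lin-univ}; the only real work is the routine arithmetic verification that $(\IQ,<_\IQ)$ is a countable dense linear order without endpoints. The one point requiring mild care is the bookkeeping that lets a single argument cover both the reflexive and the irreflexive formulations simultaneously, which is why I would make the reduction to strict parts explicit at the outset.
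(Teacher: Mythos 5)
Your proposal is correct and follows essentially the same route as the paper, which likewise reduces the statement to Corollary~\ref{l:Cantor-lin-univ} by observing that $<_\IQ$ is a countable universal linear order (equivalently, a nonempty countable dense order without endpoints) and then transferring the characterization through the isomorphism with $\mathsf{U}_{2^{<\w}}$. Your explicit verifications of density via midpoints, endlessness via $z\pm 1$, and countability of $\IQ$ are exactly the routine details the paper leaves implicit.
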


\section{Real numbers}

The set of real numbers  is defined as the set
$$\IR=\IQ\cup\Gap(\le_\IQ)$$where $\Gap(\le_\IQ)$ is the set of $\le_\IQ$-gaps. We recall that a {\em $\le_\IQ$-gap} is an ordered pair $\langle a,b\rangle$ of nonempty disjoint sets $a,b$ such that $a\cup b=\IQ$ and for any elements $x\in a$ and $y\in b$ there are elements $x'\in a$ and $y'\in b$ such that $x<_\IQ x'<_\IQ y'<_\IQ y$.

The set $\IR$\index{$\mathbb R$}\index{set!$\mathbb R$} is called the \index{real line}\index{real number}\index{number!real}{\em real line} and its elements are called {\em real numbers}. The set $\IR$ carries the reflexive linear order $\le_\IR$ equal to the gap extension $\Theta(\le_\IQ)$ of the linear order $\le_\IQ$. Also $\IR$ carries the {\em strict linear order} $<_\IR\;=\;\le_\IR\!\setminus\Id$. By Theorem~\ref{t:gap-ext}(5), the linear orders $\le_\IR$ and $<_\IR$ are gapless.

Now we extend the arithmetic operations from the set of rationals $\IQ$ to the set of reals $\IR$. The operation of additive inverse $\minus:\IQ\to\IQ$ is extended to the operation $\minus:\IR\to\IR$ assigning to each $\le_\IQ$-gap $\langle a,b\rangle$ the $\le_\IQ$-gap  $\langle\minus b,\minus a\rangle$, where $\minus a=\{\minus x:x\in a\}$ and $\minus b=\{\minus y:y\in b\}$.

To define the addition and multiplication of real numbers, we need some preparation.

For every real number $x\in\IR$, consider its left and right sets
$$
\cev x=\begin{cases}a&\mbox{if $x=\langle a,b\rangle\in\Gap(\le_\IQ)$};\\
\{y\in\IQ:y<_\IQ x\}&\mbox{if $x\in\IQ$};
\end{cases}
$$and
$$
\vec x=\begin{cases}b&\mbox{if $x=\langle a,b\rangle\in\Gap(\le_\IQ)$};\\
\{y\in\IQ:x<_\IQ y\}&\mbox{if $x\in\IQ$}.
\end{cases}
$$
Let $\check\IR=\{\langle \cev x,\vec x\rangle:x\in\IR\}$ and $f:\IR\to\check \IR$ be the function assigning to each real number $x$ the ordered pair $\langle \cev x,\vec x\rangle$. It is clear that this function is bijective and $f{\restriction}_{\Gap(\le_\IQ)}=\Id{\restriction}_{\Gap(\le_\IQ)}$. The inverse function $f^{-1}:\check \IR\to\IR$ assigns to each ordered pair $\langle a,b\rangle$ the unique real number $y$ such that $\langle a,b\rangle=\langle\cev y,\vec y\rangle$. This unique real number $y$ will be denoted by $a\curlyvee b$. Therefore, $x=\cev x\curlyvee \vec x$ for every real number $x$. 

For subsets $a,b\subseteq\IQ$ let $a+b=\{x+y:x\in a,\;y\in b\}$. 
In these notations the addition of  real numbers $x,y\in\IR$ is defined by the simple formula:
$$x+y=(\cev x+\cev y)\curlyvee(\vec x+\vec y).$$

\begin{exercise} Show that the addition of real numbers is well-defined and prove the following its properties for any real numbers $x,y,z\in\IR$:
\begin{enumerate}
\item $(x+y)+z=x+(y+z)$;
\item $x+y=y+x$;
\item $x+0=x$;
\item $x+(\minus x)=0$;
\item If $x<_\IR y$, then $x+z<_\IR y+z$.
\end{enumerate}
\end{exercise}

Now we define the multiplication of  real numbers $x,y$. If one of these numbers is equal to zero, then we put $x\cdot y=0$. If $0<_\IR x$ and $0<_\IR y$, then $x\cdot y=a\curlyvee b$ where $$a=\{z\in\IQ:\exists u,v\in\IQ\;((0<_\IR u<_\IR x)\;\wedge\;(0<_\IR v<_\IR y)\;\wedge\;(z<u\cdot v))\}$$
and 
$$b=\{z\in\IQ:\exists u,v\in\IQ\;((x<_\IR u)\;\wedge\;(y<_\IR v)\;\wedge\;(u\cdot v<z)\}.$$
Having defined the product of $x\cdot y$ of strictly positive real numbers $x,y$, we also put
$$(\minus x)\cdot y=x\cdot(\minus y)=\minus(x\cdot y)\quad\mbox{and}\quad(\minus x)\cdot(\minus y)=x\cdot y.$$
Those formulas define the product of arbitrary real numbers.

\begin{exercise}   Show that the multiplication of real numbers is well-defined and prove the following its properties for any real numbers $x,y,z\in\IR$:
\begin{enumerate}
 \item $x\cdot(y\cdot z)=(x\cdot y)\cdot z$;
\item $x\cdot y=y\cdot x$;
\item $x\cdot 1=x$;
\item $x\cdot(y+z)=(x\cdot y)+(x\cdot z)$;
\item $(0<_\IR x\;\wedge\;0<_\IR y)\;\Ra\;0<_\IR x\cdot y$.
\end{enumerate}
\end{exercise}

\begin{Exercise} Show that the real line $\IR$ is {\em real closed} in the sense that for every odd number $n\in\IN$ and any real numbers $a_0,\dots,a_n$ with $a_n\ne 0$ there exists a real number $x$ such that 
$$a_0+a_1x+\dots+a_nx^n=\mathtt 0.$$
In particular, for any nonzero real number $x$ there exists a unique real number $y$ such that $x\cdot y=1$.
\end{Exercise}

A subset $a\subseteq\IR$ is called {\em upper-bounded} if $\exists y\in\IR\;\forall x\in a\;(x\le_\IR y)$.

\begin{exercise} Prove that for every nonempty upper bounded set $A\subseteq\IR$ the set $B=\{b\in\IR:\forall x\in A\;(x\le_\IR b)\}$ has the smallest element. This smallest element is denoted by $\sup A$.
\end{exercise}

\begin{exercise}[Axiom of Archimedes] Prove that for every positive real numbers $a$ and $\e$ there exists a natural number $n$ such that  $a\le n\cdot\e$.  
\end{exercise}

Since the linear orders $\le_\IR$ and $<_\IR$ are countably dense, endless and gapless, Proposition~\ref{p:gap<=>bc} and Cantor's Theorem~\ref{t:wR-iso} imply the following characterization.

\begin{theorem}\label{t:R-ord} A (ir)reflexive order $L$ is isomorphic to the (strict) linear order of the real line $\IR$ if and only if $L$ is boundedly complete and there exists a nonempty countable subset $D\subseteq\dom[L^\pm]$ that  has two properties:
\begin{itemize}
\item[\textup{1)}] for any elements $x<_L y$ of $\dom[L^\pm]$ there exists $z\in D$ such that $x<_L z<_L y$;
\item[\textup{2)}] for any element $z\in \dom[L^\pm]$ there are $x,y\in D$ such that $x<_L z<_L y$.
\end{itemize}
\end{theorem}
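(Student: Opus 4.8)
The plan is to recognize Theorem~\ref{t:R-ord} as a direct application of the characterization of $\kappa$-universal gapless linear orders to the case $\kappa=\w$, combined with the equivalence between gaplessness and bounded completeness from Proposition~\ref{p:gap<=>bc}. The statement asserts that a (strict or reflexive) order $L$ is isomorphic to the order on $\IR$ precisely when $L$ is boundedly complete and carries a countable dense-in-itself, endless subset $D$ with the two listed properties. The forward direction (``only if'') is the easy part: the real line $\IR=\IQ\cup\Gap(\le_\IQ)$ is boundedly complete because $\le_\IR=\Theta(\le_\IQ)$ is gapless by Theorem~\ref{t:gap-ext}(5), and the copy $D=\IQ$ sitting densely inside $\IR$ witnesses properties (1) and (2), since between any two reals lies a rational and every rational has reals on both sides. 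Any order isomorphic to $\IR$ inherits all these properties by transporting $D$ through the isomorphism.

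For the ``if'' direction I would first observe that the countable subset $D$, together with properties (1) and (2), makes the restriction $L{\restriction}D$ a nonempty countable linear order satisfying exactly the two density/endlessness conditions of Cantor's characterization (the exercise preceding Corollary~\ref{l:Cantor-lin-univ}). This means $L{\restriction}D$ is $\w$-universal: it is isomorphic to $\mathsf U_{2^{<\w}}$, hence is a universal order on a countable underlying set. The key structural claim is then that $D$ witnesses the $\w$-universality of the whole order $L$ in the sense of the definition preceding Theorem~\ref{t:R-iso}: for any finite (hence cardinality $<\w$) sets $a,b\subseteq\dom[L^\pm]$ with $a\times b\subseteq L\setminus\Id$, property (1) applied finitely many times produces an element of $D$ lying strictly $L$-between $a$ and $b$. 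Since $a\cup b$ is finite and $D$ is $L$-dense by (1) and cofinal/coinitial by (2), such a separating point in $D$ exists, so $L$ is $\w$-universal with witnessing set $D$.

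Next I would invoke Proposition~\ref{p:gap<=>bc}: the hypothesis that $L$ is boundedly complete is equivalent to $L$ being gapless, i.e. $\Gap(L)=\emptyset$. Thus $L$ is an $\w$-universal gapless linear order. The same reasoning shows $\le_\IR$ (equivalently $<_\IR$) is an $\w$-universal gapless linear order, using $\IQ$ as its witnessing dense subset. By Corollary~\ref{t:R-iso} (Cantor's theorem that any two $\w$-universal gapless linear orders are isomorphic), $L$ and the order on $\IR$ are isomorphic. This completes the reflexive case; the irreflexive (strict) case follows by passing between $L$ and $L\setminus\Id$, or equivalently between $\le_\IR$ and $<_\IR$, since bounded completeness, gaplessness, and the density properties (1),(2) are all invariant under adding or deleting the diagonal $\Id$.

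The main obstacle I anticipate is the verification that the countable set $D$ genuinely witnesses $\w$-universality of the full order $L$, rather than merely of $L{\restriction}D$. The definition of $\kappa$-universality quantifies over all finite subsets $a,b$ of $\dom[L^\pm]$, which may contain points outside $D$; I must check that property (1)—stated only for pairs $x<_L y$ in $\dom[L^\pm]$—still produces a \emph{single} separating element of $D$ that lies strictly above all of $a$ and strictly below all of $b$ simultaneously. This requires a small finite induction on $|a\cup b|$ using density to interpolate and endlessness (property (2)) to handle the extremal cases where $a$ or $b$ is empty; I would carry this out carefully since it is the one genuinely non-formal step, the remaining arguments being immediate appeals to Proposition~\ref{p:gap<=>bc} and Corollary~\ref{t:R-iso}.
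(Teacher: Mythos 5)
Your route is exactly the one the paper takes: the paper offers no detailed proof, only the remark that the theorem follows from Proposition~\ref{p:gap<=>bc} together with Cantor's Corollary~\ref{t:R-iso} on $\w$-universal gapless orders, and your proposal fills in precisely the verification that this citation leaves implicit. The ``only if'' direction, the identification of bounded completeness with gaplessness, and the final appeal to Corollary~\ref{t:R-iso} are all fine.

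However, the one step you single out as ``the genuinely non-formal step'' is where the argument actually breaks, and it cannot be repaired from the stated hypotheses. You propose to handle the extremal cases of the $\w$-universality check ($a=\emptyset$ or $b=\emptyset$) ``using endlessness (property (2))'', but property (2) is not endlessness of $L$: it only asserts that each element of $D$ is non-extremal in $\dom[L^\pm]$; it says nothing about elements of $\dom[L^\pm]\setminus D$. Concretely, take $L$ to be the usual order on the closed interval $[0,1]\subseteq\IR$ and $D=\IQ\cap(0,1)$. Then $L$ is boundedly complete, $D$ is nonempty and countable, property (1) holds by density of the rationals, and property (2) holds because every rational in the open interval has reals on both sides --- yet $L$ is not isomorphic to $\IR$, and correspondingly $L$ fails $\w$-universality (take $a=\emptyset$ and $b=\{0\}$: no $u\in D$ satisfies $u<_L 0$). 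So the finite induction you describe genuinely gets stuck when the minimum of $b$ (or the maximum of $a$) is an $L$-extremal point of $\dom[L^\pm]$. To make the argument work you must additionally know that $\dom[L^\pm]$ has no $L$-least and no $L$-greatest element --- for instance by strengthening (2) to quantify over all $z\in\dom[L^\pm]$, as is done in Corollary~\ref{l:Cantor-lin-univ} for the countable case, or by adding endlessness of $L$ as an explicit hypothesis. With that amendment your remaining steps (extracting the maximum of $a$ and minimum of $b$ via Exercise~\ref{ex:finite-order-mm}, interpolating with (1), then invoking Proposition~\ref{p:gap<=>bc} and Corollary~\ref{t:R-iso}) do go through and recover the intended characterization.
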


Theorem~\ref{t:order-continuum} implies

\begin{corollary} The real line has cardinality $|\IR|=|2^\w|$.
\end{corollary}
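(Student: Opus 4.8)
The final statement is the Corollary asserting $|\IR| = |2^\w|$. The plan is to obtain this as a direct application of Theorem~\ref{t:order-continuum}, which computes the cardinality of any $\kappa$-universal gapless linear order to be $|2^\kappa|$ whenever $\kappa$ is a regular cardinal satisfying $|\kappa|=|2^{<\kappa}|$. The natural choice is $\kappa=\w$, so first I would verify that $\w$ meets the hypotheses of that theorem: $\w$ is a regular cardinal (indeed $\cf(\w)=\w$, as recorded earlier), and the equality $|\w|=|2^{<\w}|$ holds because $2^{<\w}=\bigcup_{n\in\w}2^n$ is a countable union of finite sets and hence countable. Here the countability does not require any choice principle since each $2^n$ is canonically well-ordered and one can enumerate $2^{<\w}$ explicitly, for instance via the set-like well-order $\mathsf{W}_{<}{\restriction}\Ord^{<\w}$ from Section~\ref{s:trees}.

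With the hypotheses on $\kappa=\w$ confirmed, the second step is to exhibit the strict linear order $<_\IR$ of the real line as an $\w$-universal gapless linear order. Gaplessness is immediate: by construction $\IR=\IQ\cup\Gap(\le_\IQ)$ and $\le_\IR=\Theta(\le_\IQ)$ is the gap extension of $\le_\IQ$, so Theorem~\ref{t:gap-ext}(5) gives $\Gap(\le_\IR)=\emptyset$, meaning $\le_\IR$ (and hence $<_\IR$) is gapless. For $\w$-universality I would take the dense countable subset $D=\IQ\subseteq\IR$ and use the two density/endlessness properties of $\le_\IQ$ recorded in the Cantor characterization (Theorem~\ref{t:R-ord}, conditions (1),(2)): between any two rationals there is a rational, and every rational has rationals strictly above and below it. These properties are exactly what is needed to furnish, for any finite sets $a,b\subseteq\IR$ with $a\times b\subseteq\;<_\IR$, a separating element of $\IQ$, so $\IQ$ witnesses the $\w$-universality.

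Once $<_\IR$ is identified as an $\w$-universal gapless linear order, Theorem~\ref{t:order-continuum} applies verbatim with $\kappa=\w$ and yields
$$|\IR|=|\dom[<_\IR^\pm]|=|2^\w|,$$
which is the desired equality. I expect the main obstacle to be the careful verification of $\w$-universality, i.e.\ checking that $\IQ$ really does separate every finite pair of sets $a,b$ with $a\times b\subseteq\;<_\IR$; this reduces to taking $x=\max a$ and $y=\min b$ (finite sets in a linear order have extrema by Exercise~\ref{ex:finite-order-mm}) and invoking density of $\IQ$ to find a rational strictly between them, with the endpoint cases handled by property (2). The remaining pieces are routine once the framework of the cut and gap constructions from Section~\ref{s:cuts} and the cardinal-arithmetic input $|\w|=|2^{<\w}|$ are in place. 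The whole argument is non-circular and avoids the Axiom of Choice, since the countability of $2^{<\w}$ and the well-orderability of all the countable sets involved are established constructively.
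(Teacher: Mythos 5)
Your proposal is correct and follows essentially the same route as the paper: the text derives the corollary directly from Theorem~\ref{t:order-continuum} applied with $\kappa=\w$, relying on the fact (established just before, in the lead-up to Theorem~\ref{t:R-ord}) that $<_\IR$ is an $\w$-universal gapless linear order. Your additional verifications — regularity of $\w$, countability of $2^{<\w}$, gaplessness via Theorem~\ref{t:gap-ext}(5), and $\w$-universality witnessed by $\IQ$ — are exactly the implicit hypotheses the paper leaves to the reader.
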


\begin{exercise} Show that $\IR\subset V_{\w+3}$ and $\IR\in V_{\w+4}$.
\end{exercise}

\begin{exercise} Show that $\IR\cap(\{1\}\times\IR)=\{\frac1m:2\le m\in\IN\}$.
\end{exercise}

\noindent{\em Hint:} Choose any $x\in\IR\cap(\{1\}\times \IR)$ and find $y\in\IR$ such that $x=\langle 1,y\rangle$. Since the set $\langle 1,y\rangle$ is not empty, the real number $x$ is nonzero. Since the set $1$ is finite, the real number $x$ is rational and $|x|=|\langle 1,y\rangle|=|\{\{1\},\{1,y\}|\le 2$. If $x\in\w$, then $x\subseteq \{0,1\}$ and $\{\{1\},\{1,y\}\}=\langle 1,y\rangle\subseteq\{0,1\}$, which is not true.  Therefore, $\langle 1,y\rangle=x\in\IQ\setminus\w\subseteq \IZ\times \IN$ and $x=\{\langle 1,y\rangle:2\le y\in\IN\}$ by the definition of numbers in the set $\IQ\setminus\w$. \hfill $\square$ 
\medskip

\begin{proposition}\label{p:real-complex} Every real number $x$ has $x\cap( \{1\}\times\mathbf U)=\emptyset$.
\end{proposition}

\begin{proof} To derive a contradiction, assume that there exists a set $z\in x\cap(\{1\}\times \mathbf U)$, and find a set $u$  such that $z=\langle 1,u\rangle$. Assuming that the real number $x$ is irrational, we can find two disjoint infinite subsets $A,B$ of $\IQ$ such that $x=\langle A,B\rangle$. Then $\{\{1\},\{1,u\}\}=\langle 1,u\rangle=z\in x=\langle A,B\rangle=\{\{A\},\{A,B\}\}$ and hence $\{\{1\},\{1,u\}\}=\{A\}$ or $\{\{1\},\{1,u\}\}=\{A,B\}$. Then $\{1\}=A$ or $\{1\}=B$, which is not possible because the sets $A,B$ are infinite. This contradiction shows that the real number $x$ is rational. It follows from $z=\langle 1,u \rangle=\{\{1\},\{1,u\}\}\notin\w$ that $x\notin \w$ and hence $x\in\IQ\setminus \w$. The definition of rational numbers guarantees that $x=\langle n,m\rangle$ for some integer numbers $n,m$ with $n\ne m\ge 2$. It follows from $\langle 1,u\rangle=z\in x=\langle n,m\rangle=\{\{n\},\{n,m\}\}$ that $\langle 1,u\rangle=\{n\}$ or $\langle 1, u\rangle=\{n,m\}$ and hence $\{1\}\in\{\{1\},\{1,u\}\}=\langle 1,u\rangle \subseteq\{n,m\}\subseteq\IZ$, which contradicts the definition of the set $\IZ=\w\cup\{\langle 0,k\rangle:k\in\IN\}$.
\end{proof}

\section{Complex numbers}

In this section we construct a canonical model for the set $\IC$ of complex numbers.

For real numbers $x,y$, the set
$$x+iy\defeq x\cup (\{1\}\times y)$$is called the {\em complex number} with real part $x$ and imaginary part $y$. The set
$$\IC\defeq\{x+iy:x,y\in\IR\}$$ is called the {\em set of complex numbers} or else the {\em complex plane}.

Observe that every real number $x$ is equal to the complex number $x+i0=x\cup(\{1\}\times\emptyset)$, so $\IR\subset\IC$.

Proposition~\ref{p:real-complex} implies that two complex numbers $a+ib$ and $x+iy$ are equal if and only if $a=x$ and $b=y$. Therefore, the functions $$\Re:\IC\to\IR,\quad \Re:x+iy\mapsto x,\quad\mbox{and}\quad\Im:\IC\to\IR,\quad \Im: x+iy\mapsto y,$$of taking the real and imaginary parts of a complex number are well-defined. 

For two complex numbers $a+ib$ and $x+iy$ define their addition and multiplication by the formulas
$$(a+ib)+(x+iy)\defeq(a+x)+i(b+y)\quad\mbox{and}\quad(a+ib)\cdot(x+iy)\defeq(ax-by)+i(ay+bx).$$
The definition of the multiplication implies that $(0+i1)^2=\minus1$.

For a complex number $z=x+iy$, let $\minus z=(\minus x)+i(\minus y)$ be the {\em additive inverse} to $z$ and $\bar z\defeq x-iy\defeq x+i(\minus y)$ be the {\em conjugated complex number} to $z$. The definition of the multplication of complex numbers ensures that $z\bar z=x^2+y^2$ is a nonnegative real number. If $z\ne 0$, then $z\bar z=x^2+y^2>0$ and we can consider the complex number
$$z^{-1}=\frac{x}{x^2+y^2}-i\frac{y}{x^2+y^2}=\frac{\bar z}{z\bar z},$$
called the {\em multiplicative inverse} to $z$.

\begin{exercise} Show that the addition and multiplication of complex numbers is well-defined and prove the following its properties for any complex numbers $x,y,z\in\IC$:
\begin{enumerate}
\item $(x+y)+z=x+(y+z)$;
\item $x+y=y+x$;
\item $x+0=x$;
\item $x+(\minus x)=0$;
 \item $x\cdot(y\cdot z)=(x\cdot y)\cdot z$;
\item $x\cdot y=y\cdot x$;
\item $x\cdot 1=x$;
\item $x\cdot(y+z)=(x\cdot y)+(x\cdot z)$;
\item $z\cdot \bar z\in\IR$ and $z\cdot\bar z\ge0$;
\item $z=0$ if and only if $z\bar z=0$;
\item $z\ne 0\to z\cdot z^{-1}=1$.
\end{enumerate}
\end{exercise}

\begin{Exercise} Show that the complex plane $\IR$ is {\em algebraically closed} in the sense that for every number $n\in\IN$ and any complex  numbers $a_0,\dots,a_n$ with $a_n\ne 0$ there exists a complex number $z$ such that 
$$a_0+a_1z+\dots+a_nz^n=0.$$
\end{Exercise}

\begin{exercise} Show that $\IC\subset V_{\w+4}$ and $\IC\in V_{\w+5}$.
\end{exercise}

\section{Surreal numbers}\label{s:surreal}

{\small 
\rightline{\em An empty hat rests on a table made of a few axioms of standard set theory.}

\rightline{\em Conway waves two simple rules in the air, then reaches into almost nothing}

\rightline{\em and pulls out an infinitely rich tapestry of numbers.}
\smallskip

\rightline{Martin Gardner}
\vskip20pt

\rightline{\em I walked around for about six weeks after discovering the surreal numbers}

\rightline{\em  in a sort of permanent daydream, in danger of being run over...}
\smallskip

\rightline{John Horton Conway}
\vskip20pt

\rightline{\em The usual numbers are very familiar,}

\rightline{\em but at root they have a very  complicated structure.}

\rightline{\em Surreals are in every logical, mathematical and aesthetic
sense better.}
\smallskip

\rightline{Martin Kruskal}
}
\vskip30pt

In this section we make a brief introduction to Conway's surreal numbers. Surreal numbers are elements of a proper class $\No$ called the surreal line. The surreal line carries a natural structure of an ordered field, which contains an isomorphic copy of the field $\IR$ but also contains surreal numbers that can be identified with arbitrary ordinals.

Surreal numbers were introduced by John Horton Conway \cite{ONAG} who called them numbers (the adjective ``surreal'' was suggested by Donald Knuth \cite{Knuth}). %

The surreal line is obtained by transfinite iterations of cut extensions of linear orders, starting from the empty order. We recall (see Section~\ref{s:cuts}) that for a linear order $L$ an {\em $L$-cut} is an ordered pair of sets $\langle a,b\rangle$ such that $a\cap b=\emptyset$, $a\cup b=\dom[L^\pm]$ and $a\times b\subseteq L$. For an $L$-cut $x=\langle a,b\rangle$ the sets $a$ and $b$ will be denoted by $\cev x$ and $\vec x$ and called the {\em left} and {\em right parts} of the cut $x$.

 If the set $L$ is well-founded (i.e., $L\in\VV$), then the set $\dom[L^\pm]$ is disjoint with the set $\Cut(L)$ of $L$-cuts and the set $\dom[L^\pm]\cup\Cut(L)$ carries the linear order \begin{multline*}
 \Xi(L)=L\cup\{\langle\langle a,b\rangle,\langle a',b'\rangle\rangle\times\Cut(L)\times\Cut(L):a\subseteq a'\}\cup\\
 \{\langle x,\langle a,b\rangle\rangle\in \dom[L^\pm]\times\Cut(L):x\in a\}\cup\{\langle \langle a,b\rangle,y\rangle\in\Cut(L)\times \dom[L^\pm]:y\in b\}
 \end{multline*}
 called the {\em cut extension} of $L$, see Section~\ref{s:cuts}.
 
Let $(L_\alpha)_{\alpha\in\Ord}$ be the transfinite sequence of reflexive linear orders defined by the recursive formula
$$L_\alpha=\bigcup_{\beta\in\alpha}\Xi(L_\beta),$$
where $\Xi(L_\beta)$ is the cut extension of the linear order $L_\beta$.

 So,
 \begin{itemize}
 \item $L_0=\emptyset$, 
 \item $L_{\alpha+1}=\Xi(L_\alpha)$ for any ordinal $\alpha$;
 \item $L_\alpha=\bigcup_{\beta\in \alpha}L_\beta$ for any limit ordinal $\alpha$.
 \end{itemize}

 The existence of the transfinite sequence $(L_\alpha)_{\alpha\in\Ord}$ follows from Theorem~\ref{t:dynamics} applied to the expansive function
$$\Phi:\VV\to \VV,\quad\Phi(y)=\begin{cases}\Xi(y)&\mbox{if $y$ is a linear order};\\
y&\mbox{otherwise}.\end{cases}
$$ 
Using Lemma~\ref{l:sur1} it can be shown that for every ordinal $\alpha$ the underlying set $\mathsf{No}_\alpha=\dom[L_\alpha^\pm]$ of the linear order $L_\alpha$ can be written as the union $\bigcup_{\beta\in\alpha}\Cut(L_\beta)$ of the indexed family of pairwise disjoint sets $(\Cut(L_\beta))_{\beta\in\alpha}$.

The union $$\No=\bigcup_{\alpha\in\Ord}\mathsf{No}_\alpha=\bigcup_{\alpha\in\Ord}\Cut(L_\alpha)$$ is called the {\em surreal line}\index{$\mathbf{No}$} and its elements are called {\em surreal numbers}.\index{number!surreal}\index{surreal line}\index{surreal number} The surrreal line $\No$ carries the reflexive linear order 
$$\leqslant_\No\; =\bigcup_{\alpha\in\Ord}L_\alpha$$and the strict linear order
$$<_\No\;=\;\leqslant_\No\!\!\setminus\Id.$$
 \smallskip
 
Let us look at the structure of the sets $\mathsf{No}_\alpha$ for small ordinals $\alpha$.
The set $\mathsf{No}_0$ is empty and carries the empty linear order $L_0$ whose set of cuts $\Cut(L_0)=\{\langle \emptyset,\emptyset\rangle\}$ contains the unique ordered pair $\langle \emptyset,\emptyset\rangle$ denoted by $\mathtt 0$.

Therefore, $\mathsf{No}_1=\Cut(L_0)=\{\mathtt 0\}$. This set carries the linear order $L_1=\{\langle \mathtt 0,\mathtt 0\rangle\}$ that has two cuts denoted by 
$$\minus\mathtt 1:=\langle \emptyset,\{\mathtt 0\}\rangle\quad\mbox{and}\quad\mathtt{1}:=\langle\{\mathtt 0\},\emptyset\rangle.$$
The set $\mathsf{No}_2=\mathsf{No}_1\cup\Cut(L_1)=\{\minus\mathtt{1,0,1}\}$ carries the linear order $$L_2=\{\langle\mathtt{\minus 1},\mathtt{\minus 1}\rangle, \langle\mathtt{\minus 1},\mathtt{0}\rangle, \langle\mathtt{\minus 1},\mathtt{1}\rangle, \langle\mathtt 0,\mathtt 0\rangle, \langle\mathtt{0},\mathtt{1}\rangle,\langle\mathtt{1},\mathtt{1}\rangle\}$$  that has four cuts denoted by $$\mathtt{\minus 2}=\langle\emptyset,\{\mathtt{\minus 1},\mathtt{0},\mathtt{1}\}\rangle,\quad\mathtt{\minus }\tfrac{\mathtt1}{\mathtt2}:= \langle\{\mathtt{\minus 1}\},\{\mathtt{0},\mathtt{1}\}\rangle,\quad\tfrac{\mathtt1}{\mathtt2}:= \langle\{\mathtt{\minus 1},\mathtt{0}\},\{1\}\rangle,\quad\mathtt 2= \langle\{\mathtt{\minus 1},\mathtt{0},\mathtt{1}\},\emptyset\rangle.$$
Therefore, $\mathsf{No}_3=\{\minus{\mathtt2},\minus{\mathtt 1},\minus\tfrac{\mathtt1}{\mathtt 2},\mathtt{0,\tfrac12,1,2}\}$. 

The set $\mathsf{No}_4=\{\mathtt{\minus 3,\minus 2},\minus \tfrac{\mathtt3}{\mathtt2},\mathtt{\minus 1},\minus \tfrac{\mathtt 3}{\mathtt 4},\minus \tfrac{\mathtt 1}{\mathtt 2},\minus \tfrac{\mathtt1}{\mathtt 4},\mathtt 0,\tfrac{\mathtt1}{\mathtt4},\tfrac{\mathtt1}{\mathtt2},\tfrac{\mathtt3}{\mathtt 4},\mathtt{1},\tfrac{\mathtt3}{\mathtt 2},\mathtt{2,3}\}$ has 15 elements, in particlular:
$$
\begin{aligned}
&\qquad\quad\mathtt{\minus 3}=\langle \emptyset,\{\mathtt{\minus 2},\mathtt{\minus 1},\minus \tfrac{\mathtt 1}{\mathtt 2},\mathtt 0,\tfrac{\mathtt1}{\mathtt2},\mathtt{1},\mathtt{2}\}\rangle,\;\;
\minus \tfrac{\mathtt3}{\mathtt 2}=\langle \{\mathtt{\minus 2}\},\{\mathtt{\minus 1},\minus \tfrac{\mathtt 1}{\mathtt 2},\mathtt 0,\tfrac{\mathtt1}{\mathtt2},\mathtt{1},\mathtt{2}\}\rangle,\;\;\\
&\qquad\quad\minus \tfrac{\mathtt3}{\mathtt4}=\langle \{\mathtt{\minus 2},\mathtt{\minus 1}\},\{\minus \tfrac{\mathtt 1}{\mathtt 2},\mathtt 0,\tfrac{\mathtt1}{\mathtt2},\mathtt{1},\mathtt{2}\}\rangle,\;\;
\minus\tfrac{\mathtt1}{\mathtt 4}=\langle \{\mathtt{\minus 2},\mathtt{\minus 1},\minus \tfrac{\mathtt 1}{\mathtt 2}\},\{\mathtt 0,\tfrac{\mathtt1}{\mathtt2},\mathtt{1},\mathtt{2}\}\rangle,\;\;\\
&\qquad\quad\;\;\tfrac{\mathtt1}{\mathtt4}=\langle \{\mathtt{\minus 2},\mathtt{\minus 1},\minus \tfrac{\mathtt 1}{\mathtt 2},\mathtt 0\},\{\tfrac{\mathtt1}{\mathtt2},\mathtt{1},\mathtt{2}\}\rangle,\;\;\;\;
\tfrac{\mathtt3}{\mathtt 4}=\langle \{\mathtt{\minus 2},\mathtt{\minus 1},\minus \tfrac{\mathtt 1}{\mathtt 2},\mathtt 0,\tfrac{\mathtt1}{\mathtt2}\},\{\mathtt{1},\mathtt{2}\}\rangle,\;\;\\
&\qquad\quad\;\;\tfrac{\mathtt3}{\mathtt2}=\langle \{\mathtt{\minus 2},\mathtt{\minus 1},\minus \tfrac{\mathtt 1}{\mathtt 2},\mathtt 0,\tfrac{\mathtt1}{\mathtt2},\mathtt{1}\},\{\mathtt{2}\}\rangle,\;\;\;\;
\mathtt3=\langle \{\mathtt{\minus 2},\mathtt{\minus 1},\minus \tfrac{\mathtt 1}{\mathtt 2},\mathtt 0,\tfrac{\mathtt1}{\mathtt2},\mathtt{1},\mathtt{2}\},\emptyset\rangle.
\end{aligned}
$$The set $\mathsf{No}_5$ consists of 31 elements:
$$\mathtt{\mbox{\minus }4,\mbox{\minus }3},\minus \tfrac{\mathtt5}{\mathtt2},\minus \mathtt2,\minus \tfrac{\mathtt7}{\mathtt4},\minus \tfrac{\mathtt3}{\mathtt2},\minus \tfrac{\mathtt5}{\mathtt4},\mathtt{\minus 1},\minus \tfrac{\mathtt 7}{\mathtt 8},\minus \tfrac{\mathtt 3}{\mathtt 4},\minus \tfrac{\mathtt 5}{\mathtt 8},\minus \tfrac{\mathtt 1}{\mathtt 2},\minus \tfrac{\mathtt 3}{\mathtt 8},\minus \tfrac{\mathtt1}{\mathtt 4},\minus \tfrac{\mathtt1}{\mathtt8},\mathtt 0,\tfrac{\mathtt1}{\mathtt8}, \tfrac{\mathtt1}{\mathtt4},\tfrac{\mathtt3}{\mathtt8},\tfrac{\mathtt1}{\mathtt2},\tfrac{\mathtt5}{\mathtt8},\tfrac{\mathtt3}{\mathtt 4},\tfrac{\mathtt7}{\mathtt8},\mathtt{1},\tfrac{\mathtt5}{\mathtt4},\tfrac{\mathtt3}{\mathtt 2},\tfrac{\mathtt7}{\mathtt4},\mathtt{2},\tfrac{\mathtt5}{\mathtt2},\mathtt 3,
$$
and so on. 

Now we see that the set $\mathsf{No}_\w$ is countable and its elements can be labeled by dyadic rational numbers (with preservation of order).

The set $\mathsf{No}_{\w+1}=\mathsf{No}_\w\cup\Cut(L_\w)$ contains all $L_\w$-cuts which include all $L_\w$-gaps that can be interpreted as real numbers. Besides the real numbers the set $\mathsf{No}_{\w+1}$ contains the infinitely large number $\langle {\mathsf{No}_\w},\emptyset\rangle$ that can be identified with the ordinal $\w$ and also  its additive inverse $\minus\w=\langle\emptyset,\mathsf{No}_\w\rangle$.  Also for any dyadic rational $x$ the set $\mathsf{No}_{\w+1}$ contains two $L_\w$-cuts $$x_-=\langle \{y\in\mathsf{No}_\w:y<_\No x\},\{z\in\mathsf{No}_\w:x\le_\No z\}\rangle$$ and $$x_+=\langle \{y\in\mathsf{No}_\w:y\le_\No x\},\{z\in\mathsf{No}_\w:x<_\No z\}\rangle$$ that can be interpreted as numbers that are infinitely close to $x$.

The exists a natural injective function $\Ord\to\No$ which assigns to every ordinal $\alpha$ the $L_\alpha$-cut $\langle \mathsf{No}_\alpha,\emptyset\rangle$. Therefore, the surreal line contains a copy of the ordinal line, which implies that $\No$ is a proper class.





 
Let $\birth:\No\to\Ord$ be the function assigning to each element $x\in\No$ the unique ordinal $\alpha$ such that $x\in \Cut(L_\alpha)$. The function $\birth$ is called the \index{birthday function}\index{function!birthday} {\em birthday function}. 

Consider the class of ordered pairs
$$\mathcal P_<(\No)=\{\langle a,b\rangle:a,b\in\mathcal P(\No),\;a\times b\subset\; <_\No\}$$
and observe that $$\No=\bigcup_{\alpha\in\Ord}\Cut(L_\alpha)\subseteq\mathcal P_{<}(\No).$$

The following property of the birthday function is crucial for introducing various algebraic structures on the surreal line.

\begin{theorem}\label{t:birth} For any ordered pair $\langle a,b\rangle\in\mathcal P_{<}(\No)$ there exists a unique number $x\in\No$ such that 
\begin{itemize}
\item[\textup{1)}] $(a\times\{x\})\cup(\{x\}\times b)\subseteq\;<_{\No}$ and
\item[\textup{2)}] $(a\times\{y\})\cup(\{y\}\times b)\not\subseteq\;<_{\No}$ for any element $y\in\No$ with $\birth(y)<\birth(x)$.
\end{itemize}
This unique number $x$ will be denoted by $a\curlyvee b$.
\end{theorem}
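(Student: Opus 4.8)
The plan is to prove the statement by identifying the desired number as the \emph{simplest} (least-birthday) element lying strictly between $a$ and $b$; concretely, let $S=\{x\in\No:(a\times\{x\})\cup(\{x\}\times b)\subseteq\;<_\No\}$ be the class of numbers between $a$ and $b$. Condition (1) says precisely $x\in S$, and condition (2) says no member of $S$ is born before $x$; so the theorem reduces to showing that $S$ is nonempty and that it contains exactly one element of least birthday. First I would bound the birthdays occurring in $a\cup b$: since $a,b$ are sets, the Axiom of Replacement makes $\birth[a\cup b]$ a set of ordinals, so $\gamma=\bigcup\{\birth(z)+1:z\in a\cup b\}$ is an ordinal (Theorem~\ref{t:Ord}(5)) with $a\cup b\subseteq\mathsf{No}_\gamma=\dom[L_\gamma^\pm]$.

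For existence I would exhibit an explicit $L_\gamma$-cut. Set $B=\{z\in\mathsf{No}_\gamma:\exists\beta\in b\ (\langle\beta,z\rangle\in L_\gamma)\}$ (the upward closure of $b$, so $b\subseteq B$ by reflexivity of $L_\gamma$) and $A=\mathsf{No}_\gamma\setminus B$. Using $a\times b\subseteq\;<_\No$ together with the linearity and antisymmetry of $L_\gamma$ one checks that $a\subseteq A$ and $A\times B\subseteq L_\gamma$, so $x=\langle A,B\rangle\in\Cut(L_\gamma)\subseteq\No$. The description of the cut extension $\Xi(L_\gamma)$ (which places $\langle\alpha,\langle A,B\rangle\rangle$ in the order for $\alpha\in A$ and $\langle\langle A,B\rangle,\beta\rangle$ for $\beta\in B$), combined with $a\subseteq A$, $b\subseteq B$ and the disjointness of $\mathsf{No}_\gamma$ from $\Cut(L_\gamma)$, yields $(a\times\{x\})\cup(\{x\}\times b)\subseteq\;<_\No$; hence $x\in S$ and $S\neq\emptyset$. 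Then $\birth[S]$ is a nonempty subclass of $\Ord$ and has a least element $\delta$, and any $x\in S$ with $\birth(x)=\delta$ satisfies condition (1) by definition and condition (2) by minimality of $\delta$.

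The heart of the matter is uniqueness at the minimal birthday. Suppose $x_1,x_2\in S$ are distinct with $\birth(x_1)=\birth(x_2)=\delta$; both are $L_\delta$-cuts $x_i=\langle A_i,B_i\rangle\in\Cut(L_\delta)$ with $A_i,B_i\subseteq\mathsf{No}_\delta$ and $B_i=\mathsf{No}_\delta\setminus A_i$. Since the order on $\mathsf{No}_{\delta+1}$ is $\Xi(L_\delta)$, comparing $x_1$ and $x_2$ amounts to comparing $A_1,A_2$ by inclusion; assuming $x_1<_\No x_2$ gives $A_1\subsetneq A_2$, so I may choose $w\in A_2\setminus A_1=A_2\cap B_1$. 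Crucially $w\in\mathsf{No}_\delta=\bigcup_{\beta\in\delta}\Cut(L_\beta)$, so $\birth(w)<\delta$. From $x_1\in S$ we get $a\subseteq A_1$ and $b\subseteq B_1$, and from $x_2\in S$ we get $a\subseteq A_2$ and $b\subseteq B_2$; combining $a\subseteq A_1$ with $w\in B_1$ and $A_1\times B_1\subseteq L_\delta$ shows $a\times\{w\}\subseteq\;<_\No$, while combining $b\subseteq B_2$ with $w\in A_2$ and $A_2\times B_2\subseteq L_\delta$ shows $\{w\}\times b\subseteq\;<_\No$ (strictness in each case coming from the disjointness of $A_i$ and $B_i$). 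Hence $w\in S$ with $\birth(w)<\delta$, contradicting the minimality of $\delta$; therefore $x_1=x_2$.

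The main obstacle is exactly this last step: one must manufacture, out of two distinct minimal-birthday numbers between $a$ and $b$, a strictly earlier-born number that still lies between $a$ and $b$, thereby breaking minimality. The two structural facts that make this possible are the explicit form of $\Xi(L_\delta)$, which identifies $\leqslant_\No$ on cuts with inclusion of their left parts, and the elementary but essential observation that every element of $\mathsf{No}_\delta$ has birthday strictly less than $\delta$.
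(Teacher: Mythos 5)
Your proof follows the same overall strategy as the paper's: existence is obtained by exhibiting an $L_\gamma$-cut at a level $\mathsf{No}_\gamma$ containing $a\cup b$ (the paper splits into two cases according to whether ${\downarrow}a\cup{\uparrow}b$ exhausts $\mathsf{No}_\gamma$, while you always form the cut $\langle\mathsf{No}_\gamma\setminus B,B\rangle$ with $B$ the upward closure of $b$; both work), and uniqueness is obtained by extracting a lower-birthday witness $w$ from $A_2\setminus A_1$. The existence half and the choice of $w$ are fine.

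There is, however, a false step in the uniqueness half. From $x_1\in S$ you infer $a\subseteq A_1$ and $b\subseteq B_1$ (and similarly $a\subseteq A_2$, $b\subseteq B_2$). This would require $a\cup b\subseteq\mathsf{No}_\delta$, which is not available: $\delta$ is the least birthday of an element of $S$, and the elements of $a$ and $b$ may well be born later. For instance, for $a=\{\minus\tfrac{\mathtt1}{\mathtt4}\}$ and $b=\{\tfrac{\mathtt1}{\mathtt4}\}$ the simplest element of $S$ is $\mathtt 0=\langle\emptyset,\emptyset\rangle$, so $\delta=0$ and $A_1=B_1=\emptyset$, and $a\subseteq A_1$ fails. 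Consequently the inclusions you use to deduce $a\times\{w\}\subseteq\;<_\No$ and $\{w\}\times b\subseteq\;<_\No$ are unjustified as written. The repair is immediate and is exactly what the paper does: $w\in B_1$ gives $x_1<_\No w$ and $w\in A_2$ gives $w<_\No x_2$ (by the definition of $\Xi(L_\delta)$ together with the disjointness of $\mathsf{No}_\delta$ from $\Cut(L_\delta)$), so transitivity yields $u<_\No x_1<_\No w<_\No x_2<_\No v$ for all $u\in a$ and $v\in b$, whence $w\in S$ with $\birth(w)<\delta$, the desired contradiction. With this one change your argument coincides with the paper's proof.
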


\begin{proof} First we show that the class 
$$\No(a,b)=\{z\in\No:(a\times\{z\})\cup(\{z\}\times b)\subseteq\; <_\No\}$$is not empty.

By the Axiom of Replacement, the image $\birth[a\cup b]\subseteq\Ord$ of the set $a\cup b$ under the birthday function is a set, which implies that $a\cup b\subseteq \mathsf{No}_\alpha$ for some ordinal $\alpha$. Consider the sets ${\downarrow}a=\{x\in\mathsf{No}_\alpha:\exists y\in a\;\;(\langle x,y\rangle\in L_\alpha)\}$ and ${\uparrow}b=\mathsf{No}_\alpha\setminus{\downarrow}a$. Observe that $\langle {\downarrow}a,{\uparrow}b\rangle\in \Xi(L_\alpha)=L_{\alpha+1}$ and hence $\langle{\downarrow}a,{\uparrow}b\rangle\in\mathsf{No}(a,b)$, witnessing that the class $\No(a,b)$ is nonempty. Since the relation $\E{\restriction}\Ord$ is well-founded, the nonempty class $\birth[\No(a,b)]$ contains the smallest ordinal $\alpha$. Since $\mathsf{No}_\gamma=\bigcup_{\beta\in\gamma}\mathsf{No}_\beta$ for any limit ordinal $\gamma$, the ordinal $\alpha$ is not limit and hence hence $\alpha=\beta+1$ for some ordinal $\beta\in\alpha$. 

To finish the proof, it suffices to check that the class $\No(a,b)\cap \mathsf{No}_\alpha$ is a singleton. To derive a contradiction, assume that $\No(a,b)\cap\mathsf{No}_\alpha$ contains two distinct elements $x,y$. The minimality of $\alpha$ ensures that $x,y\notin\mathsf{No}_\beta$. Then $x,y\in \mathsf{No}_{\beta+1}\setminus\mathsf{No}_\beta=\Cut(L_\beta)$ are two distinct $L_\beta$-cuts. So, $x=\langle a',b'\rangle$ and $y=\langle a'',b''\rangle$ for some $L_\beta$-cuts $\langle a',b'\rangle$ and $\langle a'',b''\rangle$. Since $L_\alpha$ is a linear order, either $\langle x,y\rangle\in L_\alpha$ or $\langle y,x\rangle\in L_\alpha$. We lose no generality assuming that $\langle x,y\rangle\in L_\alpha$ and hence $a'\subset a''$ by the definition of the linear order $L_\alpha=L_{\beta+1}$. Choose any point $z\in a''\setminus a'\subseteq \No_\beta$ and observe that $z\in b'$ and hence $x=\langle a',b'\rangle < z< \langle a'',b''\rangle=y$. For every $u\in a$ and $v\in b$, we have $u<_\No x<_\No z<_\No y<_\No v$, which implies that $z\in\No(a,b)\cap \mathsf{No}_\beta$. But this contradicts the minimality of the ordinal $\alpha$. 
\end{proof}

\begin{exercise} Given any ordinal $\alpha$ and an $L_\alpha$-cut $x=\langle a,b\rangle\in \Cut(L_\alpha)\subset\No$, show that $x=a\curlyvee b$.
\end{exercise}

Theorem~\ref{t:birth} implies that the (strict) linear order of the surreal line is universal.
Under the Global Well-Orderability Principle ({\sf  GWO}) the surreal line in well-orderable.
Applying Theorem~\ref{t:lin-univ}, we obtain the following characterization of the (strict) linear order of the surreal line.

\begin{theorem} Assume $(\mathsf{GWO})$. An (ir)reflexive linear order $L$ is isomorphic to the (strict) linear order of the surreal line if and only if $L$ is universal and $\dom[L^\pm]$ is a proper class.
\end{theorem}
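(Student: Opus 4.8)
The plan is to deduce both implications from the universality machinery of Theorem~\ref{t:lin-univ}, together with the fact that under $(\mathsf{GWO})$ every proper class is in bijection with $\Ord$. Throughout, let $U$ denote the surreal order matching the reflexive or irreflexive choice for $L$ (namely $\leqslant_\No$ when $L$ is reflexive and $<_\No$ when $L$ is irreflexive). Recall that $U$ is universal by the remark following Theorem~\ref{t:birth}, that $\dom[U^\pm]=\No$ is a proper class, and that universality of $\leqslant_\No$ and of $<_\No$ coincide, since the universality condition is phrased entirely in terms of $\;<_\No\;=\;\leqslant_\No\setminus\Id$.

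For the ``only if'' direction I would argue that universality and properness are both invariants of order isomorphism. Suppose $\Phi\colon\dom[L^\pm]\to\No$ is an order isomorphism; being a bijection, it forces $\dom[L^\pm]$ to be a proper class. For universality, take subsets $a,b\subseteq\dom[L^\pm]$ with $|a\cup b|<|\dom[L^\pm]|$ and $a\times b\subseteq L\setminus\Id$. Since $\Phi$ is a bijective order isomorphism, it transports the cardinality condition and the order condition to $\Phi[a],\Phi[b]\subseteq\No$ with $|\Phi[a]\cup\Phi[b]|<|\dom[U^\pm]|$ and $\Phi[a]\times\Phi[b]\subseteq U\setminus\Id$. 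Universality of $U$ yields a witness $x\in\No$, and then $\Phi^{-1}(x)$ is the required witness for $L$, so $L$ is universal.

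For the ``if'' direction the plan is to check the hypotheses of Theorem~\ref{t:lin-univ} with this $U$. Universality of $U$ is already known, and $L$ is universal by assumption, so it remains to verify that $\dom[U^\pm]=\No$ is well-orderable and to produce a bijection $F\colon\dom[L^\pm]\to\No$. Here $(\mathsf{GWO})$ enters: fix a set-like well-order $W$ with $\dom[W^\pm]=\UU$. Restricting $W$ to any subclass yields a set-like well-order of that subclass, so both $\No$ and $\dom[L^\pm]$ carry set-like well-orders $W{\restriction}\No$ and $W{\restriction}\dom[L^\pm]$; in particular $\No$ is well-orderable, as Theorem~\ref{t:lin-univ} demands. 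Applying Theorem~\ref{t:wOrd}, the rank functions $\rank_{W{\restriction}\No}$ and $\rank_{W{\restriction}\dom[L^\pm]}$ are order isomorphisms onto their ranks, and by Theorem~\ref{t:rank}(3) each rank is either an ordinal or all of $\Ord$. Since $\No$ and $\dom[L^\pm]$ are proper classes, neither rank can be an ordinal (otherwise the Axiom of Replacement would make the class a set), so both ranks equal $\Ord$. Composing $\rank_{W{\restriction}\dom[L^\pm]}$ with the inverse of $\rank_{W{\restriction}\No}$ produces the desired bijection $F\colon\dom[L^\pm]\to\No$.

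With $U$ universal and $\dom[U^\pm]$ well-orderable, $L$ universal, and $F$ a bijection, condition (2) of Theorem~\ref{t:lin-univ} holds; hence condition (1) holds and $L$ is isomorphic to the surreal order $U$. The main obstacle is precisely the construction of $F$: turning the two proper classes into a bijection, which is exactly where $(\mathsf{GWO})$ is indispensable and where one must invoke that a set-like well-order restricted to a proper class has rank $\Ord$ rather than an ordinal.
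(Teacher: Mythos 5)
Your proposal is correct and follows exactly the route the paper intends: the paper gives no explicit proof, merely noting that the surreal order is universal (via Theorem~\ref{t:birth}), that $\No$ is well-orderable under $(\mathsf{GWO})$, and that Theorem~\ref{t:lin-univ} then yields the characterization. Your write-up supplies the details the paper leaves implicit — in particular the construction of the bijection between the two proper classes via the rank functions of the restricted global well-order, and the transport of universality and properness through an isomorphism in the ``only if'' direction — all of which are sound.
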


\begin{Exercise} Show that the linear order of the sureal line is isomorphic to the universal linear order $\mathsf{U}_{2^{<\Ord}}$ on $2^{<\Ord}$ (this gives the so-called sign representation of surreal numbers).
\end{Exercise}


It turns out that the surreal line carries a natural structure of an ordered field.
The operation of addition on $\No$ is defined by the recursive formula:
$$x+y=\big((\cev x+y)\cup(x+\cev y))\curlyvee((\vec x+y)\cup(x+\vec y)).$$
In this formula $x=\langle \cev x,\vec x\rangle$, $y=\langle \cev y,\vec y\rangle$, $\cev x+y=\{z+y:z\in \cev x\}$ etc.

\begin{exercise} Check that $\mathtt{0+0=0}$.
\smallskip

\noindent{\em Solution:} $\mathtt{0+0}=\langle \emptyset,\emptyset\rangle+\langle\emptyset,\emptyset\rangle=\\ ((\cev{\mathtt 0}+\mathtt 0)\cup(\mathtt 0+\cev{\mathtt 0}))\curlyvee ((\vec{\mathtt 0}+\mathtt 0)\cup(\mathtt 0+\vec{\mathtt 0}))=((\emptyset+\mathtt 0)\cup (\mathtt 0+\emptyset))\curlyvee((\emptyset+\mathtt 0)\cup(\mathtt 0+\emptyset))=\emptyset\curlyvee \emptyset=\mathtt 0$.
\end{exercise}

\begin{exercise} Check that $\mathtt{0+1=1}$.
\smallskip

\noindent{\em Solution:} $\mathtt{0+1}=\langle \emptyset,\emptyset\rangle+\langle\mathtt \{0\},\emptyset\rangle=((\cev{\mathtt 0}+\mathtt 1)\cup(\mathtt 0+\cev{\mathtt1}))\curlyvee((\vec{\mathtt 0}+\mathtt 1)\cup(\mathtt 0+\vec{\mathtt 1}))=\\ ((\emptyset+\mathtt 1)\cup(\mathtt 0+\mathtt\{0\}))\curlyvee((\emptyset+\mathtt 1)\cup(\mathtt 0+\emptyset))=\{\mathtt 0\}\curlyvee\emptyset=\mathtt 1$.
\end{exercise}

\begin{exercise} Check that $\mathtt{1+1=2}$.
\smallskip

\noindent{\em Solution:} $\mathtt{1+1}=\langle \{\mathtt 0\},\emptyset\rangle+\langle\mathtt \{0\},\emptyset\rangle=((\cev{\mathtt 1}+\mathtt 1)\cup(\mathtt 1+\cev{\mathtt 1}))\curlyvee((\vec{\mathtt 1}+\mathtt 1)\cup(\mathtt 1+\vec{\mathtt 1}))=\\ ((\{\mathtt 0\}+\mathtt 1)\cup(\mathtt 1+\{\mathtt 0\}))\curlyvee((\emptyset+\mathtt 1)\cup(\mathtt 1+\emptyset)) =(\{\mathtt{0+1,1+0}\}\curlyvee\emptyset=\{\mathtt 1\}\curlyvee\emptyset=\mathtt 2$.
\end{exercise}

By transfinite induction the following properties of the addition can be established.

\begin{proposition}\label{p:add-sur} For every numbers $x,y,z\in\No$ we have
\begin{enumerate}
\item[\textup{1)}] $x+\mathtt 0=x$;
\item[\textup{2)}] $x+y=y+x$;
\item[\textup{3)}] $x+(y+z)=(x+y)+z$;
\item[\textup{4)}] $x<_\No y\;\Ra\;x+z<_\No y+z$.
\end{enumerate}
\end{proposition}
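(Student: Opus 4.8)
The plan is to prove all four statements simultaneously by transfinite induction, exploiting the fact that every \emph{option} of a surreal number---that is, every element of $\cev x$ or $\vec x$---has strictly smaller birthday than $x$, so that the relation ``is an option of'' is a subrelation of the well-founded relation $\E{\restriction}\VV$ (Theorem~\ref{t:vN}(5)). Two facts will be used throughout. First, Theorem~\ref{t:birth}(1) applied to $x=\cev x\curlyvee\vec x$ (the exercise preceding this proposition) gives $p<_\No x<_\No q$ for all $p\in\cev x$ and $q\in\vec x$. Second, the identity $x=\cev x\curlyvee\vec x$ lets us collapse a computed cut back to the number it names. The induction compares pairs $(x,y)$ (or triples) using the product of the well-founded option relation, so that in proving a statement for $(x,y)$ one may freely assume it for every pair obtained by replacing $x$ by an option of $x$ and/or $y$ by an option of $y$ (not leaving both equal to the originals).

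Before the algebraic laws, I would first establish, by a single simultaneous induction, two interlocking facts: (a) the defining pair
$$\big\langle\,(\cev x+y)\cup(x+\cev y),\ (\vec x+y)\cup(x+\vec y)\,\big\rangle$$
actually lies in $\mathcal P_<(\No)$, so that $x+y$ is a well-defined surreal number; and (b) the monotonicity law, in the form $u\leqslant_\No v\Rightarrow u+w\leqslant_\No v+w$ together with its strict version, which is statement (4). This is the heart of the matter, since (a) and (b) feed into one another: checking that each left option of $x+y$ is $<_\No$ each right option---for instance that $\cev x+y<_\No x+\vec y$---requires the inequalities $\cev x<_\No x$ and $y<_\No\vec y$ from the basic fact above, promoted through monotonicity of addition for the \emph{simpler} sums furnished by the inductive hypothesis. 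I expect this mutual recursion between well-definedness and monotonicity to be the main obstacle; the order comparisons must be arranged so that every appeal to monotonicity is to a strictly simpler instance.

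Granting (a) and (b), statement (1) follows quickly: since $\cev{\mathtt 0}=\vec{\mathtt 0}=\emptyset$, the formula collapses to $x+\mathtt 0=(\cev x+\mathtt 0)\curlyvee(\vec x+\mathtt 0)$, and the inductive hypothesis applied to the options of $x$ gives $\cev x+\mathtt 0=\cev x$ and $\vec x+\mathtt 0=\vec x$ elementwise, whence $x+\mathtt 0=\cev x\curlyvee\vec x=x$ by the collapsing identity. Statement (2) is equally direct: the defining formula for $x+y$ is visibly symmetric under interchanging the roles of $(\cev x,\vec x)$ and $(\cev y,\vec y)$, so the inductive hypothesis applied to all option-sums turns $x+y$ and $y+x$ into the value $\curlyvee$ of the same pair of sets.

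Finally, for associativity (3) I would expand both $x+(y+z)$ and $(x+y)+z$ by one step of the recursion and then apply the inductive hypothesis to every sum in which at least one argument has been replaced by an option. The aim is to exhibit both expressions as the value $\curlyvee$ of one and the same pair of left and right sets, namely the terms of the shapes $\cev x+(y+z)$, $x+(\cev y+z)$, $x+(y+\cev z)$ on the left and the three right-hand analogues $\vec x+(y+z)$, $x+(\vec y+z)$, $x+(y+\vec z)$ on the right; associativity at the level of options, supplied by the inductive hypothesis, identifies the two expansions term by term. Uniqueness of the number named by a cut (Theorem~\ref{t:birth}) then forces equality. This step is longer but routine once (a), (b), (1) and (2) are in place.
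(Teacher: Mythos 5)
The paper gives no proof of this proposition beyond the remark that the properties ``can be established by transfinite induction,'' so there is no detailed argument to compare against; your sketch follows the standard Conway-style route, which is clearly what the author intends. Your diagnosis that well-definedness (the defining pair lying in $\mathcal P_<(\No)$) and monotonicity must be proved by a mutual recursion is exactly right, and your arguments for (1) and (2) are sound: the empty options of $\mathtt 0$ collapse the recursion to $\cev x\curlyvee\vec x=x$, and the defining formula for $x+y$ is literally symmetric once the inductive hypothesis is applied to the option-sums.

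There is, however, one step that fails as written: the associativity argument. In this paper's framework a surreal number is a specific cut $w=\langle\cev w,\vec w\rangle$, and the recursion for $x+w$ uses the \emph{canonical} components $\cev w,\vec w$. When $w=y+z$, the canonical left set $\cev{(y+z)}$ is in general \emph{not} the set $(\cev y+z)\cup(y+\cev z)$ from which $y+z$ was defined: the operation $\curlyvee$ returns the simplest number strictly between the two given sets, and that number's own cut components are unrelated to them. For instance $\tfrac{\mathtt1}{\mathtt2}+\tfrac{\mathtt1}{\mathtt2}=\{\minus\tfrac{\mathtt1}{\mathtt2},\tfrac{\mathtt1}{\mathtt2}\}\curlyvee\{\tfrac{\mathtt3}{\mathtt2}\}=\mathtt 1$, whose canonical cut is $\langle\{\mathtt 0\},\emptyset\rangle$. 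Hence one step of the recursion for $x+(y+z)$ produces the left set $(\cev x+(y+z))\cup(x+\cev{(y+z)})$, not the family of terms $\cev x+(y+z)$, $x+(\cev y+z)$, $x+(y+\cev z)$ that you propose to match term by term against the expansion of $(x+y)+z$. Bridging this requires a uniformity (cofinality) lemma: if $w=a\curlyvee b$ with $\langle a,b\rangle$ suitably cofinal in the canonical cut of $w$, then $x+w=\big((\cev x+w)\cup(x+a)\big)\curlyvee\big((\vec x+w)\cup(x+b)\big)$. That lemma is itself proved by induction using statement (4), and it must be established before the identification in (3) is legitimate; it is the genuinely non-routine part of the whole proposition. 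A smaller omission of the same kind: your proof of (4) needs a criterion for comparing two numbers presented as $\curlyvee$-values of explicit pairs, whereas Theorem~\ref{t:birth} only yields $a<_\No a\curlyvee b<_\No b$; one also needs the observation that $x<_\No y$ forces $x\in\cev y$, or $y\in\vec x$, or the existence of a common option strictly between them, in order to drive the induction on strictly simpler instances.
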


To introduce the subtraction of Conway's numbers, for every surreal number $x\in\No$ consider its inverse $-x$ defined by the recursive formula $-x=-\langle \cev x,\vec x\rangle:=\{-z:z\in \vec x\}\curlyvee\{-z:z\in \cev x\}\rangle$.

\begin{exercise} Show that $-\mathtt 0=\mathtt 0$.
\smallskip

\noindent{\em Solution}: $-\mathtt 0=-\langle \emptyset,\emptyset\rangle=\{- z:z\in\emptyset\}\curlyvee\{- z:z\in\emptyset\}=\emptyset\curlyvee\emptyset=\mathtt0$.
\end{exercise}

\begin{example} Show that $-\mathtt 1=\minus \mathtt 1$.
\smallskip

\noindent{\em Solution}:  $-\mathtt 1=-\langle \{\mathtt 0\},\emptyset\rangle=\{-z:z\in\emptyset\}\curlyvee\{-z:z\in \{\mathtt 0\}\}=\emptyset\curlyvee\{-\mathtt 0\}=\emptyset\curlyvee\{\mathtt 0\}=\minus1$.
\end{example}

The following proposition can be proved by transfinite induction.

\begin{proposition}\label{p:minus-sur} For every number $x\in\No$ we have $x+(-x)=\mathtt 0$.
\end{proposition}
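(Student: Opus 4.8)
The plan is to prove $x+(-x)=\mathtt 0$ by transfinite induction on the birthday $\birth(x)$. Recall that if $\birth(x)=\alpha$ then $x\in\Cut(L_\alpha)$, so that $\cev x\cup\vec x=\mathsf{No}_\alpha=\bigcup_{\beta\in\alpha}\Cut(L_\beta)$; hence every element of $\cev x$ and of $\vec x$ has birthday strictly smaller than $\alpha$. The induction hypothesis therefore supplies $a'+(-a')=\mathtt 0$ for every $a'\in\cev x$ and $b'+(-b')=\mathtt 0$ for every $b'\in\vec x$. I would also record at the outset the cut properties coming from Theorem~\ref{t:birth}(1): from $x=\cev x\curlyvee\vec x$ we get $a'<_\No x<_\No b'$ for $a'\in\cev x$, $b'\in\vec x$, and from $-x=\cev{(-x)}\curlyvee\vec{(-x)}$, together with $\cev{(-x)}=\{-z:z\in\vec x\}$ and $\vec{(-x)}=\{-z:z\in\cev x\}$, we get $-b'<_\No -x$ for $b'\in\vec x$ and $-x<_\No -a'$ for $a'\in\cev x$.

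Next I would unfold the definition of addition with $y=-x$. Using the expressions for $\cev{(-x)}$ and $\vec{(-x)}$ above, this yields $x+(-x)=A\curlyvee B$, where
\[
A=\{a'+(-x):a'\in\cev x\}\cup\{x+(-b'):b'\in\vec x\},\qquad
B=\{b'+(-x):b'\in\vec x\}\cup\{x+(-a'):a'\in\cev x\}.
\]
The heart of the argument is to verify that $\mathtt 0$ lies strictly between $A$ and $B$, that is, $(A\times\{\mathtt 0\})\cup(\{\mathtt 0\}\times B)\subseteq\;<_{\No}$.

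Each of the four required families of inequalities follows by adding a single summand, invoking the monotonicity and commutativity of addition (Proposition~\ref{p:add-sur}(2),(4)) and then the induction hypothesis. Concretely: from $-x<_\No -a'$ I obtain $a'+(-x)<_\No a'+(-a')=\mathtt 0$, and from $x<_\No b'$ I obtain $x+(-b')<_\No b'+(-b')=\mathtt 0$, so every element of $A$ is $<_\No\mathtt 0$. Symmetrically, from $-b'<_\No -x$ I obtain $\mathtt 0=b'+(-b')<_\No b'+(-x)$, and from $a'<_\No x$ I obtain $\mathtt 0=a'+(-a')<_\No x+(-a')$, so every element of $B$ is $>_\No\mathtt 0$. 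This establishes the desired cut condition for $\mathtt 0$.

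Finally, Theorem~\ref{t:birth} characterises $A\curlyvee B$ as the unique surreal of least birthday lying strictly between $A$ and $B$. Since $\mathtt 0=\emptyset\curlyvee\emptyset$ has birthday $0$, and the only number of birthday $0$ is $\mathtt 0$ itself, clause (2) of Theorem~\ref{t:birth} forbids $\birth(A\curlyvee B)>0$ (otherwise $\mathtt 0$ would be a strictly earlier witness of the cut condition). Hence $\birth(A\curlyvee B)=0$ and $x+(-x)=A\curlyvee B=\mathtt 0$, completing the induction. The main obstacle I anticipate is purely organisational: correctly tracing the four families of elements of $A$ and $B$ through the recursion for $-x$ and matching each to the right instance of monotonicity plus the hypothesis; once the cut properties of $x$ and $-x$ are isolated, the inequalities are routine. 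A minor preliminary worth confirming is that $-x$ is genuinely well-defined, i.e. $\langle-\vec x,-\cev x\rangle\in\mathcal P_{<}(\No)$ — equivalently that negation reverses order on numbers of smaller birthday — which, if not already in hand, can be folded into the same transfinite induction.
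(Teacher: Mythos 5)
The paper itself offers no proof of Proposition~\ref{p:minus-sur} beyond the remark that it ``can be proved by transfinite induction,'' and your argument is exactly the standard induction it alludes to: induct on $\birth(x)$, unfold $x+(-x)$ into the sets $A$ and $B$, use order-reversal of negation together with monotonicity of addition and the induction hypothesis $a'+(-a')=\mathtt 0$, $b'+(-b')=\mathtt 0$ to place $\mathtt 0$ strictly between $A$ and $B$, and then invoke the minimality clause of Theorem~\ref{t:birth} to conclude $A\curlyvee B=\mathtt 0$. All four families of inequalities are matched to the correct instances of Proposition~\ref{p:add-sur}, and the final birthday argument is sound since $\Cut(L_0)=\{\mathtt 0\}$.

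The one step that needs more than you give it is the identification $\cev{(-x)}=\{-z:z\in\vec x\}$ and $\vec{(-x)}=\{-z:z\in\cev x\}$, which you use to unfold the addition recursion. This is \emph{not} the definition of $-x$: the definition only says $-x=\{-z:z\in\vec x\}\curlyvee\{-z:z\in\cev x\}$, whereas the recursion for $+$ is stated in terms of the \emph{canonical} left and right parts of the cut that $-x$ actually is, i.e.\ the sets $a,b$ with $-x=\langle a,b\rangle\in\Cut(L_\beta)$. To know these coincide with $\{-z:z\in\vec x\}$ and $\{-z:z\in\cev x\}$ you need these two sets to partition $\mathsf{No}_{\birth(x)}$ and satisfy the product condition, which requires not only that negation reverses $<_\No$ on numbers of smaller birthday (the property you flag at the end) but also that it is an involution preserving birthday, so that $z\mapsto -z$ is a bijection of $\mathsf{No}_{\birth(x)}$ onto itself. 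Without birthday-preservation the pair $\langle\{-z:z\in\vec x\},\{-z:z\in\cev x\}\rangle$ need not be an $L_{\birth(x)}$-cut and your sets $A,B$ would not be the ones produced by the definition of $+$. The fix is routine but should be stated: run a single simultaneous transfinite induction establishing $\birth(-z)=\birth(z)$, $-(-z)=z$, order-reversal of negation, and $z+(-z)=\mathtt 0$ together. With that package in place the rest of your argument goes through verbatim; note also that the inequalities $-x<_\No -a'$ and $-b'<_\No -x$ that you use follow directly from clause (1) of Theorem~\ref{t:birth} applied to the defining cut of $-x$, so they need only the order-reversal part of the package.
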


Proposition~\ref{p:add-sur} and \ref{p:minus-sur} imply that the surreal line $\No$ endowed with the operation of addition has the structure of an ordered commutative group.




The multiplication of surreal numbers is defined by the recursive formula
$$xy=L\curlyvee R$$where
$$
\begin{aligned}
&L=\{\dot xy+x\dot y-\dot x\dot y:\dot x\in \cev x,\;\dot y\in \cev y\}\cup\{\ddot xy+x\ddot y-\ddot x\ddot y:\ddot x\in \vec x,\;\ddot y\in \vec y\},\\
&R=\{\dot xy+x\ddot y-\dot x\ddot y:\dot x\in \cev x,\;\ddot y\in \vec y\}\cup\{x\dot y+\ddot xy-\ddot x\dot y:\ddot x\in \vec x,\;\dot y\in \cev y\}.
\end{aligned}
$$

\begin{exercise} Prove that $\mathtt 0\cdot\mathtt 0=\mathtt 0$, $\mathtt 1\cdot\mathtt 1=\mathtt 1$, $\mathtt 1\cdot\mathtt 2=\mathtt 2$.
\end{exercise}

\begin{exercise} Prove that $\mathtt 2\cdot \mathtt 2=\mathtt 4$.
\smallskip

\noindent{\em Solution}: $\mathtt 2\cdot\mathtt 2=\{\mathtt{1\cdot 2+ 2\cdot 1- 1\cdot 1}\}\curlyvee \emptyset=\{\mathtt 3\}\curlyvee\emptyset=\mathtt 4$.
\end{exercise}

\begin{Exercise} Prove that the multiplication of surreal numbers is well-defined and has the following properties for any $x,y,z\in\No$:
\begin{enumerate}
 \item $x\cdot(y\cdot z)=(x\cdot y)\cdot z$;
\item $x\cdot y=y\cdot x$;
\item $x\cdot \mathtt 1=x$;
\item $x\cdot(y+z)=(x\cdot y)+(x\cdot z)$;
\item $(0<_\No x\;\wedge\;0<_\IR y)\;\Ra\;0<_\No x\cdot y$.
\end{enumerate}
\smallskip

\noindent{\em Hint}: See \cite[pp.19--20]{ONAG}.
\end{Exercise}

\begin{Exercise} Prove that the surreal line is real closed in the sense that for every odd number $n\in\w$ and any surreal numbers $a_0,\dots,a_n$ with $a_n\ne\mathtt 0$ there exists a surreal number $x$ such that 
$$a_0+a_1x+\dots+a_nx^n=\mathtt 0.$$
In particular, for any nonzero number $x\in\No$ there exists a unique surreal number $y\in\No$ such that $x\cdot y=\mathtt 1$.
\smallskip

\noindent{\em Hint}: See \cite[Theorem 25]{ONAG}.
\end{Exercise}

\newpage

\part{Mathematical Structures}


{\small\em

\rightline{Mathematics has less than ever been reduced to}

\rightline{a purely mechanical game of isolated formulas;}

\rightline{more than ever does intuition dominate}

\rightline{in the genesis of discoveries.}

\rightline{But henceforth, it possesses the powerful tools}

\rightline{furnished by the theory of the great types of structures;}

\rightline{in a single view, it sweeps over immense domains,}

\rightline{now unified by the axiomatic method,}

\rightline{but which were formerly in a completely chaotic state.}
\medskip
}

\rightline{\small ``The Architecture of Mathematics''}
\rightline{\small Bourbaki,  1950}
\vskip25pt

According to Nicolas Bourbaki\footnote{{\bf Task:} Read about Bourbaki in Wikipedia}, the mathematics is a science about mathematical structures. Many mathematical structures are studied by various areas of mathematics: graphs, partially ordered sets, semigroups, monoids, groups, rings, ordered fields, topological spaces, topological groups, topological vector spaces, Banach spaces, Banach algebras, Boolean algebras, etc etc. 

A particular type of a mathematical structure is determined by a list of axioms $\A$ it should satisfy. By an axiom in the list $\A$ we understand any formula $\varphi(x,s,C_1,\dots,C_n)$ in the language of CST with free variables $x,s$, and parameters $C_1,\dots,C_n$ which are some fixed classess.
\medskip

\noindent{\bf Definition.} A \index{mathematical structure}{\em mathematical structure} satisfying a list of axioms $\A$ is any pair of classes $(X,S)$ such that for every axiom $\varphi(x,s,C_1,\dots,C_n)$ in the list $\A$,  the formula $\varphi(X,S,C_1,\dots,C_n)$ is true.  The classes $X$ and $S$ are called respectively the \index{structure}{\em underlying class} and the {\em structure} of the mathematical structure $(X,S)$.
\medskip

Mathematical structures satisfying certain specific lists of axioms have special names and are studied by the corresponding fields of mathematics. 

\section{Examples of Mathematical Structures}

In this section we present some important examples of mathematical structures, studies in various areas of mathematics.

\subsection{Set Theory}

\begin{example} The structure of a set can be considered as a mathematical structure $(x,S)$ with empty structure $S=\emptyset$, i.e., sets are mathematical structures without structure. 
\end{example}

Another option is to endow the set $x$ with the structure $S\defeq \mathbf E{\restriction}x$ of membership between its elements.

\subsection{Graph Theory}

\begin{example} A \index{graph}\index{mathematical structure!graph}{\em graph} is a mathematical structure $(X,S)$ satisfying the axiom
\begin{itemize}
\item $S\subseteq\{\{u,v\}:u,v\in X\}$.
\end{itemize} 
\end{example} 

The list $\A$ of axioms for the structure of a graph consists of the unique formula $\varphi(x,s)$:
$$\forall z\;(z\in s\;\Rightarrow\;\exists u\;\exists v\;(u\in x\;\wedge\;v\in x\;\wedge\;\forall w\;(w\in z\;\Leftrightarrow\;(w=u\;\vee\;w=v))))$$
expressing the fact that elements of $s$ are unordered pairs of elements of $x$. A less formal way of writing this formula is $\forall z\in s\;\exists u\in x\;\exists v\in x\;(s=\{u,v\})$.
\smallskip

For our next examples of mathematical structures we shall use such shorthand versions of formulas in the axiom lists.

\begin{example} A  \index{directed graph}\index{mathematical structure!directed graph}\index{digraph}\index{mathematical structure!digraph}{\em directed graph} (or else a {\em digraph}) is a mathematical structure $(X,S)$ satisfying the axiom
\begin{itemize}
\item $S\subseteq X\times X$.
\end{itemize}
A directed graph $(X,S)$ is {\em simple} if $\forall x\in X\;\;(\langle x,x\rangle\notin S)$.
\end{example}

\subsection{Order Theory}

\begin{example} An {\em ordered class}\index{ordered class}\index{mathematical structure!ordered class} is a mathematical structure $(X,S)$ consisting of a class $X$ and an order $S\subseteq X\times X$. The list of axioms determining this mathematical structure consists of three axioms:
\begin{itemize}
\item $S\subseteq X\times X$;
\item $S\cap S^{-1}\subseteq\Id$;
\item $S\circ S\subseteq S$.
\end{itemize}
An {\em ordered set}\index{ordered set}\index{mathematical structure!ordered set} is an ordered class $(X,S)$ such that $X\in\UU$.
\end{example}

\begin{example} A {\em partially ordered class}\index{partially ordered class}\index{mathematical structure!partially ordered class} is a mathematical structure $(X,S)$ consisting of a class and an order $S$ such that $\Id{\restriction}X\subseteq S\subseteq X\times X$. The list of axioms determining this mathematical structure consists of three axioms:
\begin{itemize}
\item $S\subseteq X\times X$;
\item $S\cap S^{-1}=\Id{\restriction}X$;
\item $S\circ S\subseteq S$.
\end{itemize}
A {\em partially ordered set}\index{partially ordered set}\index{mathematical structure!partially ordered set} is a partially ordered class $(X,S)$ such that $X\in\UU$.
\end{example}
 
\begin{example} A {\em linearly ordered class}\index{linearly ordered class}\index{mathematical structure!linearly ordered class} is a mathematical structure $(X,S)$ satisfying the axioms
\begin{itemize}
\item $S\subseteq X\times X\subseteq S\cup S^{-1}\cup\Id$;
\item $S\cap S^{-1}\subseteq\Id$;
\item $S\circ S\subseteq S$.
\end{itemize}
A {\em linearly ordered set}\index{linearly ordered set}\index{mathematical structure!linearly ordered set} is a linear ordered class $(X,S)$ such that $X\in\UU$.
\end{example}

\begin{example} A {\em well-ordered class}\index{well-ordered class}\index{mathematical structure!well-ordered class} is a mathematical structure $(X,S)$ satisfying the axioms:
\begin{itemize}
\item $S\subseteq X\times X\subseteq S\cup S^{-1}\cup\Id$;
\item $S\cap S^{-1}\subseteq\Id$;
\item $S\circ S\subseteq S$;
\item $\forall Y\;(\emptyset\ne Y\subseteq X\;\Ra\;\exists y\in Y\;\forall x\in X\;(\langle x,y\rangle\in S\;\Ra\;x=y))$.
\end{itemize}
A {\em well-ordered set}\index{well-ordered set}\index{mathematical structure!well-ordered set} is a well-ordered class $(X,S)$ such that $X\in\UU$.
\end{example}

These mathematical structures relate as follows:
$$\mbox{well-ordered class $\Ra$ linearly ordered class $\Ra$ ordered class $\Ra$ directed graph}.$$

\begin{exercise} Find examples of:
\begin{enumerate}
\item an ordered set which is not partially ordered;
\item an ordered class which is not an ordered set;
\item a partially ordered set which is not linearly ordered;
\item a linearly ordered set which is not well-ordered.
\item a well-ordered class which is not a well-ordered set.
\end{enumerate}
\end{exercise}

For two directed graphs $(X,S_X)$, $(Y,S_Y)$ a function $f:X\to Y$ is called \index{function!increasing}\index{increasing function}{\em increasing} if $$\forall x\in X\;\forall y\in Y\;\;(\langle x,y\rangle\in S_X\setminus\Id\;\Ra\;\langle f(x),f(y)\rangle\in S_Y\setminus \Id).$$

\subsection{Algebra}

In this subsection we present examples of some elementary mathematical structures arising in Algebra.

\begin{example} A {\em magma}\index{magma}\index{mathematical structure!magma} is a mathematical structure $(X,S)$ such that $S$ is a function with $\dom[S]=X\times X$ and $\rng[S]\subseteq X$. This mathematical structure is determined by two axioms:
\begin{itemize}
\item $\forall t\;(t\in S\;\Rightarrow\;\exists x\;\exists y\;\exists z\;(x\in X\;\wedge\;y\in X\;\wedge\;z\in X\;\wedge \;(\langle \langle x,y\rangle,z\rangle=t))$;
\item $\forall x\;\forall y\;\;(x\in X\;\wedge\;y\in Y\;\Rightarrow\;\exists z\;(z\in X\;\wedge\; \langle \langle x,y\rangle, z\rangle\in S))$;
\item $\forall x\;\forall y\;\forall u\;\forall v\;(\langle \langle x,y\rangle, u\rangle\in S\;\wedge\;\langle\langle x,y\rangle,v\rangle\in S\;\Ra\;u=v)$.
\end{itemize}
\end{example}

The structure $S$ of a magma $(X,S)$ is called a \index{binary operation}{\em binary operation on $X$}. Binary operations are usually denoted by symbols: $+,\cdot,*,\star$, etc.

\begin{definition} For two magmas $(X,M_X)$ and $(Y,M_Y)$ a function $f:X\to Y$ is called a \index{magma homomorphism}{\em magma homomorphism} if $\forall x\in X\;\forall y\in X\;\;M_Y(f(x),f(y))=f(M_X(x,y))$.
\end{definition}

\begin{example} A {\em commutative magma}\index{commutative magma}\index{mathematical structure!commutative magma} is a magma $(X,S)$ such that
\begin{itemize}
\item $\forall x\;\forall y\;S(x,y)=S(y,x)$.
\end{itemize}
\end{example}

\begin{example} A \index{unital magma}\index{mathematical structure!unital magma}{\em unital magma} is a magma $(X,S)$ possessing a {\em two-sided unit}, which is a unique element $e\in X$ such that $S(x,e)=x=S(e,x)$.
\end{example}

\begin{example} A \index{quasigroup}\index{mathematical structure!quasigroup}{\em quasigroup} is a magma $(X,S)$ such that for every $a,b\in X$ there exist unique elements $x,y\in X$ such that $S(a,x)=b=S(y,a)$.
\end{example}

\begin{example} A \index{loop}\index{mathematical structure!loop}{\em loop} if a unital quasigroup.
\end{example}

\begin{exercise} Find an example of a magma, which is
\begin{enumerate}
\item commutative;
\item not commutative;
\item unital;
\item not unital;
\item a loop;
\item a quasigroup but not a loop.
\end{enumerate}
\end{exercise}

\begin{example} A {\em semigroup}\index{semigroup}\index{mathematical structure!semigroup} is a magma $(X,S)$ whose binary operation $S$ is \index{binary operation!associative}{\em associative} in the sense that $S(S(x,y),z)=S(x,S(y,z))$ for all $x,y,z\in X$.
\end{example}

\begin{example} A {\em regular semigroup}\index{regular semigroup}\index{mathematical structure!regular semigroup} is a semigroup $(X,S)$ such that
\begin{itemize}
\item $\forall x\in X\;\exists y\in X\;\;(S(S(x,y),x)=x\;\wedge\;S(S(y,x),y)=y)$.
\end{itemize}
\end{example}

\begin{example} An {\em inverse semigroup}\index{inverse semigroup}\index{mathematical structure!inverse semigroup} is a semigroup $(X,S)$ such that for every $x\in X$ there exists a unique element $y\in X$ such that $S(S(x,y),x)=x$ and $S(S(y,x),y)=y$. This unique element $y$ is denoted by $x^{-1}$.
\end{example}

\begin{example} A {\em Clifford semigroup}\index{Clifford semigroup}\index{mathematical structure!Clifford semigroup} is a regular semigroup $(X,S)$ such that
\begin{itemize}
\item $\forall x\in X\;(S(x,x)=x\;\Ra\;\forall y\in X\;(S(x,y)=S(y,x))$. 
\end{itemize}
\end{example}

\begin{Exercise} Prove that every Clifford semigroup is inverse.
\end{Exercise}

\begin{example} A {\em monoid}\index{monoid}\index{mathematical structure!monoid} is a unital semigroup.
\end{example}

\begin{example} A {\em group}\index{group}\index{mathematical structure!group} is an associative loop.
\end{example}

For these algebraic structures we have the implications:
$$
\xymatrix{
\mbox{commutative}\atop\mbox{quasigroup}\ar@{=>}[r]&\mbox{quasigroup}\\
\mbox{commutative}\atop\mbox{loop}\ar@{=>}[u]\ar@{=>}[r]&\mbox{loop}\ar@{=>}[u]\ar@{=>}[r]&\mbox{unital magma}\ar@{=>}[r]&\mbox{magma}\\
\mbox{commutative}\atop\mbox{group}\ar@{=>}[u]\ar@{=>}[r]\ar@{=>}[d]&\mbox{group}\ar@{=>}[d]\ar@{=>}[r]\ar@{=>}[u]&\mbox{monoid}\ar@{=>}[r]\ar@{=>}[u]&\mbox{semigroup}\ar@{=>}[u]\\
\mbox{commutative}\atop\mbox{inverse semigroup}\ar@{=>}[r]&\mbox{Clifford}\atop\mbox{semigroup}\ar@{=>}[r]&\mbox{inverse}\atop\mbox{semigroup}\ar@{=>}[r]&\mbox{regular}\atop\mbox{semigroup}\ar@{=>}[u]
}
$$

\begin{exercise} Find examples of:
\begin{enumerate}
\item a magma which is not a semigroup;
\item a semigroup which is not a monoid;
\item a monoid which is not a group;
\item a group which is not a commutative group;
\item a semigroup which is not regular;
\item a regular semigroup which is not inverse;
\item an inverse semigroup which is not Clifford;
\item a Clifford semigroup which is not a group;
\item a loop which is not a group.
\end{enumerate}
\end{exercise}

\subsection{Geometry}

\begin{example} A \index{liner}\index{mathematical structure!liner}{\em liner} is a mathematical structure $(X,S)$ satisfying the axioms:
\begin{enumerate}
\item $S\subseteq\mathcal P(X)$;
\item $\forall x,y\in X\;(x\ne y\Ra \exists !L\in S\;\;(\{x,y\}\subseteq L))$;
\item $\forall L\in S\;\exists x,y\in X\;(x\ne y\;\wedge\;\{x,y\}\subseteq L)$.
\end{enumerate}
\end{example}

\begin{exercise} Find a natural structure of a liner on the Euclidean plane $\IR\times\IR$.
\end{exercise}

\begin{example} A \index{teweenness space}\index{mathematical structure!betweenness space}{\em betweenness space} is a mathematical structure $(X,S)$ satisfying the axioms:
\begin{enumerate}
\item $S\subseteq X^3$;
\item $\forall x,y\in X\;(\langle x,y,x\rangle\in S\Ra x=y)$;
\item $\forall x,y\in X\;(\langle x,x,y\rangle\in S)$;
\item $\forall x,y,z\in X\;(\langle x,y,z\rangle\in S\Ra \langle z,y,x\rangle\in S)$;
\item $\forall a,b,x,y\in X\;\big(\langle a,x,b\rangle\in S\wedge \langle a,y,b\rangle\in S)\to(\langle a,x,y\rangle\in S\vee \langle a,y,x\rangle\in S\big)$;
\item $\forall a,b,x,y\in X\;\big(a\ne b\wedge 
\langle a,b,x\rangle\wedge \langle a,b,y\rangle\in S)\to(\langle b,x,y\rangle\in S\vee\langle b,y,x\rangle\in S\big)$;
\item $\forall a,b,c,d\in X\;(\langle a,b,d\rangle\in S\;\wedge\;\langle b,c,d\rangle\in S)\to(\langle a,b,c\rangle\in S\;\wedge\;\langle a,b,d\rangle\in S)$;
\item $\forall a,b,c,d\in X\;(\langle a,b,c\rangle\in S\wedge b\ne c\wedge \langle b,c,d\rangle\in S)\to (\langle a,b,d\rangle\in S\wedge \langle a,c,d\rangle\in S)$.
\end{enumerate}
\end{example}

\begin{exercise} Find a natural structure of a betweenness space on the Euclidean plane $\IR\times\IR$.
\end{exercise}

\begin{example} A {\em rope space} is a mathematical structure $(X,S)$ satisfying the axioms:
\begin{enumerate}
\item $S\subseteq (X\times X)\times (X\times X)$;
\item $\forall a,b,u,v,x,y\in X\; \{\langle \langle a,b\rangle,\langle u,v\rangle\rangle,\langle \langle u,v\rangle,\langle x,y\rangle\rangle\}\subseteq S\;\Rightarrow\;\langle \langle a,b\rangle,\langle x,y\rangle\rangle\in S$;
\item $\forall x,y\in X\;\;\langle\langle x,y\rangle,\langle y,x\rangle\rangle\in S$;
\item $\forall x,y,z\in X\;\;\langle\langle z,z\rangle,\langle x,y\rangle\rangle\in S$;
\item $\forall x,y,z\in X\;\;\langle\langle x,y\rangle,\langle z,z\rangle\rangle \in S\;\Rightarrow\;x=y$.
\end{enumerate}
\end{example}

\begin{example} A {\em metric space}\index{metric space}\index{mathematical structure!metric space} is a mathematical structure $(X,S)$ satisfying the axioms
\begin{itemize}
\item $S$ is a function with $\dom[S]=X\times X$ and $\rng[S]\subseteq\IR$;
\item $\forall x\in X\;\forall y\in X\;(S(x,y)=0\;\Leftrightarrow\;x=y)$;
\item $\forall x\in X\;\forall y\in X\;S(x,y)=S(y,x)$;
\item $\forall x\in X\;\forall y\in X\;\forall z\in X\;(S(x,z)\le S(x,y)+S(y,z))$.
\end{itemize}
\end{example}

\begin{exercise} Find a canonical rope structure on each metric space $(X,\rho)$.

{\em Hint:} $S\defeq\{\langle\langle a,b\rangle,\langle x,y\rangle\rangle:\rho(a,b)\le \rho(x,y)\}$.
\end{exercise}

Two rope spaces $(X,S_X)$ and $(Y,S_Y)$ are {\em isomorphic} if there exists a bijection $f:X\to Y$ such that $S_Y=\{\langle \langle f(x),f(y)\rangle,\langle f(u),f(v)\rangle\rangle:\langle \langle x,y\rangle,\langle u,v\rangle\rangle\in S_X\}$.

\begin{Exercise} Characterize rope spaces, isomorphic to the Euclidean plane endowed with its natural rope structure.
\smallskip

{\em Hint:} See \cite{BFG}.
\end{Exercise}

\subsection{Measure Theory} Observe that the axioms of a metric space contain as parameters the following sets: the set of real numbers $\IR$, the natural number $0=\emptyset$, the linear order $\le$ of the real line, and the addition operation $+:\IR\times\IR\to\IR$ on the real line.

Another structure whose definition involves the real line as a parameter is the structure of a measure space.

\begin{example} A {\em measure space}\index{measure space}\index{mathematical structure!measure space} is a mathematical structure $(X,S)$ satisfying the following axioms
\begin{itemize}
\item $S$ is a function with $\dom[S]\subseteq \mathcal P(X)$ and $\rng[S]\subset[0,\infty)\subset\IR$;
\item $\forall u\;\forall v\;(u\in \dom[S]\;\wedge\;v\in\dom[S]\;\Ra\;(u\setminus v\in \dom[S]\;\wedge\;u\cup v\in\dom[S])$;
\item $\forall u\;\forall v\;\big((u\in\dom[S]\;\wedge\;v\in\dom[S]\;\wedge\;u\cap v=\emptyset)\;\Ra\;S(u\cup v)=S(u)+S(v)\big)$.
\end{itemize}
\end{example}

\subsection{Topology}

\begin{example} A {\em topological space}\index{topological space}\index{mathematical structure!topological space} is a mathematical structure $(X,S)$ satisfying the following axioms:
\begin{itemize}
\item $\{\emptyset,X\}\subseteq S\subseteq\mathcal P(X)$;
\item $\forall x\;\forall y\;(x\in S\;\wedge\;y\in S\;\Ra\;x\cap y\in S)$;
\item $\forall u\;(u\subseteq S\;\Ra\;\bigcup u\in S)$.
\end{itemize}
\end{example}

For two topological spaces $(X,S_X)$, $(Y,S_Y)$ a function $f:X\to Y$ is called \index{function!continuous}\index{continuous function}{\em continuous} if $\forall u\;(u\in S_Y\;\Ra\;f^{-1}[u]\in S_X)$.

\begin{example} A {\em bornological space}\index{bornological space}\index{mathematical structure!bornological space} is a mathematical structure $(X,S)$ satisfying the following axioms:
\begin{itemize}
\item $\bigcup S=X$;
\item $\forall x\;\forall y\;(x\in S\;\wedge\;y\in S\;\Ra\;x\cup y\in S)$;
\item $\forall x\;\forall y\;(x\subseteq y\in S\;\Ra\;x\in S)$.
\end{itemize}
\end{example}

\begin{example} A {\em uniform space}\index{uniform space}\index{mathematical structure!uniform space} is a mathematical structure $(X,S)$ satisfying the following axioms:
\begin{itemize}
\item $\forall u\in S\;(\Id{\restriction}X\subseteq u\subseteq X\times X)$;
\item $\forall u\in S\;\forall v\in S\;\exists w\in S\;(w\circ w\subseteq u\cap v^{-1}$);
\item $\forall u\in S\;\forall v\;(u\subseteq v\subseteq X\times X\;\Ra\;v\in S$).
\end{itemize}
\end{example}

\begin{example} A {\em coarse space}\index{coarse space}\index{mathematical structure!coarse space} is a mathematical structure $(X,S)$ satisfying the following axioms:
\begin{itemize}
\item $\forall u\in S\;(\Id{\restriction}X\subseteq u\subseteq X\times X)$;
\item $\forall u\in S\;\forall v\in S\;\exists w\in S\;(u\circ v^{-1}\subseteq w$);
\item $\forall u\in S\;\forall v\;(\Id{\restriction}X\subseteq v\subseteq u\;\Ra\;v\in S$).
\end{itemize}
\end{example}

\begin{example} A {\em duoform space}\index{duoform space}\index{mathematical structure!duoform space} is a mathematical structure $(X,S)$ satisfying the following axioms:
\begin{itemize}
\item $\forall u\in S\;(\Id{\restriction}X\subseteq u\subseteq X\times X)$;
\item $\forall u\in S\;\forall v\in S\;\exists w\in S\;(w\circ w\subseteq u\cap v^{-1}$);
\item $\forall u\in S\;\forall v\in S\;\exists w\in S\;(u\circ v^{-1}\subseteq w$);
\item $\forall u\in S\;\forall v\in S\;\forall w\;(u\subseteq w\subseteq v\;\Ra\;w\in S$).
\end{itemize}
\end{example}

\begin{exercise} Prove that a duoform space $(X,S)$ is 
\begin{enumerate}
\item a uniform space if and only if $X\times X\in S$;
\item a coarse space if and only if $\Id{\restriction}X\in S$.
\end{enumerate}
\end{exercise} 

\begin{example} A {\em learning space} \index{learning space}\index{mathematical structure!learning space}  is a mathematical structure $(X,S)$ satisfying the following axioms:
\begin{itemize}
\item $\{\emptyset,X\}\subseteq S\subseteq \mathcal P(X)$;
\item $\forall A,B\in S\;\big(A\subset B\;\Rightarrow\;\exists n\in\IN\;\wedge \;\exists f\in X^{n}\;(B=A\cup f[n]\;\wedge\;(\forall k\in n\;A\cup f[k]\in S))\big)$;
\item $\forall A,B\in S\;\forall x\in X\;(A\subset B\;\wedge A\cup\{x\}\in S\;\Rightarrow B\cup\{x\}\in S)$.
\end{itemize}
\end{example}

\begin{exercise} Prove that for any learning space $(X,S)$ the sets $X$ and $S$ are finite. 
\end{exercise}

\begin{remark} More information on learning spaces and their applications in didactics can be found in the book \cite{FD}.
\end{remark}

\subsection{Ordered groups} Complex mathematical structures consists of several substructures which can be related each with the other.  A typical example is the structures of an ordered group.

\begin{example} An {\em ordered group}\index{ordered group}\index{mathematical structure!ordered group} is a mathematical structure $(X,S)$ whose structure $S$ is a pair of classes $(+,<)$ such that $(X,+)$ is a commutative group, and $(X,<)$ is a linearly ordered class such that 
 $\forall x\in X\;\forall y\in X\;\forall z\in X\;(x<y\;\Ra\;x+z<y+z)$.
 An ordered group $(X,(+,<))$ is called \index{Archimedean ordered group}\index{ordered group!Archimedean}{\em Archimedean} if for any elements $0<\e<a$ in $X$ there exists $n\in\IN$ such that $a<n\e$.
\end{example}

\begin{example} Any subgroup of the real line $(\mathbb R,(+,<))$ is an Archimedean ordered group. In particular, $(\mathbb R,(+,<))$ and $(\IQ,(+,<))$ are Archimedean ordered groups.
\end{example}

\begin{exercise} Construct an example of a non-Archimedean ordered group.

\noindent{\em Hint:} Consider the group $\IZ\times\IZ$ endowed with the lexicographic order.
\end{exercise}

\begin{exercise} Prove that an ordered group $(X,(+,<))$ is Archimedean (and trivial) if its linear order $<$ is (boundedly) complete.
\end{exercise}

\subsection{Ordered topological spaces} 
Another example of an important complex mathematical structure is that of partially ordered space.

\begin{example} A {\em partially ordered space}\index{partially ordered space}\index{mathematical structure!partially ordered space} is a mathematical structure $(X,S)$ whose structure $S$ is a pair of sets $(\le,\tau)$ such that $(X,\le)$ is a partially ordered set, $(X,\tau)$ is a topological space and the following conditions are satisfied:
\begin{itemize}
\item $\forall x,y\in X\;\;(x\not\le y\;\Ra\;\exists U,V\in \tau\;\;(\langle x,y\rangle\in U\times V\subseteq (X\times X)\setminus{\le}\;)$.
\end{itemize}
\end{example}

\subsection{Rings} 

\begin{example} A {\em ring}\index{ring}\index{mathematical structure!ring} is a mathematical structure $(X,S)$ whose structure $S$ is a pair of classes $(+,\cdot)$ such that $(X,+)$ is a commutative group, $(X,\cdot)$ is a monoid and the following axiom (called the {\em distributivity}) is satisfied:
\begin{itemize}
\item $\forall x,y,z\in X\;\;(x\cdot(y+z)=(x\cdot y)+(x\cdot z)\;\wedge\;(x+y)\cdot z=(z\cdot z)+(y\cdot z))$.
\end{itemize}
\end{example}

\begin{example} A {\em commutative ring}\index{commutative ring}\index{mathematical structure!commutative ring} is a ring $(X,(+,\cdot))$ such that the monoid $(X,\cdot)$ is commutative.
\end{example}

\begin{example} A {\em field}\index{field}\index{mathematical structure!field} is a commutative ring $(X,(+,\cdot))$ such that 
\begin{itemize}
\item $\forall x\in X\;(x+x\ne x\;\Ra\;\exists y\in X\;\forall z\in X\;z\cdot(x\cdot y)=z))$.
\end{itemize}
\end{example}

\subsection{Ordered fields}

\begin{example}\label{ex:ord-field} An {\em ordered field}\index{ordered field}\index{mathematical structure!ordered field} is a mathematical structure $(X,S)$ whose structure $S$ is a triple of classes $(+,\cdot,<)$ such that $(X,(+,\cdot))$ is a field, $(X,(+,<))$ is an ordered group and $\forall x,y,z\in X\;\;((z+z=z\;\wedge\;z<x\;\wedge\;z<y)\;\Ra\; \langle z<x\cdot y)$.
\end{example}

\begin{exercise} Show that $(\IR,(+,\cdot,<_\IR))$ is an ordered field.
\end{exercise}

\begin{exercise} Show that $(\No,(+,\cdot,<_\No))$ is an ordered field.
\end{exercise}

\begin{definition} An ordered field $(X,(+,\cdot,<))$ is called
\begin{itemize}
\item {\em Archimedean} if the ordered group $(X,(+,<))$ is Archimedean;
\item {\em Pithagorean} if for any $x,y\in X$ there exists $z\in X$ such that $x^2+y^2=z^2$;
\item {\em Euclidean} if for any $x\in X$ with $x>0$ there exists $y\in X$ such that $x=y^2$.
\end{itemize}
\end{definition}

\begin{Exercise}[Hilbert] Prove that the smallest Pithagorean subfield of the real line is not Euclidean.
\smallskip

\noindent{\em Hint:} Show that $1+\sqrt{2}$ does not belong to the smallest Pithagorean subfield of $\IR$.
\end{Exercise}

\subsection{Linear Algebra and Geometry}

\begin{example} A \index{linear space}\index{mathematical structure!linear space}{\em linear space over a field} $(F,(+,\cdot))$ is a mathematical structure $(X,S)$ whose structure $S$ is a pair $(+,\cdot)$ of two maps ${+}:X\times X\to X$, ${+}:\langle x,y\rangle\mapsto x+y$, and ${\cdot}:F\times X\to X$, ${\cdot}:\langle a,x\rangle\mapsto ax$, such that $(X,+)$ is a commutative group and the following axioms are satisfied:
\begin{itemize}
\item $a(x+y)=ax+ay$;
\item $(a+b)x=ax+bx$;
\item $(ab)x=a(bx)$;
\item $1x=x$;
\end{itemize}
for every $a,b\in F$ and $x,y\in X$.
\end{example}

\begin{example}\label{ex:inps} An \index{inner product space}\index{mathematical structure!inner product space}{\em inner product space over an ordered field} $(F,(+,\cdot,<))$ is a mathematical structure $(X,S)$ whose structure $S$ is a triple $(+,\cdot,\langle{\cdot}|{\cdot}\rangle)$ of three functions $+:X\times X\to X$, $\cdot:F\times X\to X$ and $,\langle{\cdot}|{\cdot}\rangle:X\times X\to F$, $\langle{\cdot}|{\cdot}\rangle:\langle x,y\rangle\mapsto,\langle x|y\rangle$, such that $(X,(+,\cdot))$ is a linear space of the field $(F,(+,\cdot))$ and the following axioms are satisfied:
\begin{itemize}
\item $x\ne 0\Rightarrow \langle x|x\rangle> 0$;
\item $\langle x|y\rangle=\langle y|x\rangle$;
\item $\langle (x+y)|z\rangle=\langle x|z\rangle+\langle y|z\rangle$;
\item $\langle ax|y\rangle=a\langle x|y\rangle$;
\end{itemize}
for any $a\in F$ and $x,y,z\in X$.
\end{example}

\begin{example} A \index{Tarski geometry}\index{mathematical structure!Tarski geometry}{\em Tarski geometry} is a mathematical structure $(X,(B,E))$ satisfying the axioms:
\begin{enumerate}
\item $B\subseteq X^3$;
\item $E\subseteq (X\times X)\times(X\times X)$;
\item $\forall a,b\in X\;\langle \langle a,b\rangle,\langle b,a\rangle\rangle\in E$; 
\item  $\forall a,b,c,d,e,f\in X\;\{\langle \langle a,b\rangle,\langle c,d\rangle\rangle,\langle\langle c,d\rangle,\langle e,f\rangle\rangle\}\subseteq E\;\Ra\;\langle\langle a,b\rangle,\langle e,f\rangle\rangle\in E$;
\item $\forall a,b,c\in X\;(\langle \langle a,b\rangle,\langle c,c\rangle\rangle\;\Ra a=b)$;
 \item $\forall x,y,a,b\in X\;\exists z\in X\;(\langle x,y,z\rangle\in B\;\wedge\;\langle\langle y,z\rangle,\langle a,b\rangle\rangle\in E)$;
\item $\forall a,p,b,q,c\in X\;\{\langle a,p,b\rangle,\langle b,q,c\rangle\}\subseteq B\;\Rightarrow\;\exists x\in X\;\{\langle a,x,q\rangle,\langle p,x,c\rangle\}\subseteq B$;
\item $\forall \hat x,\hat y,\hat z,\hat v,\check x,\check y,\check z,\check v\in X\;\;\big((\hat x\ne \hat y\,\wedge\,\{\langle \hat x,\hat y,\hat z\rangle,\langle\check x,\check y,\check z\rangle\}\subseteq B\;\wedge$\newline $\{\langle\langle\hat x,\hat y\rangle,\langle\check x,\check y\rangle\rangle,\langle\langle\hat y,\hat z\rangle,\langle\check y,\check z\rangle\rangle,\langle\langle\hat v,\hat x\rangle,\langle\check v,\check x\rangle\rangle,\langle\langle\hat v,\hat y\rangle,\langle\check v,\check y\rangle\rangle\}\subseteq E)\,\Rightarrow\,\langle\langle\hat v,\hat z\rangle,\langle\check v,\check z\rangle\rangle{\in} E$;
\item $\forall a,b,c,d,t\in X (\{\langle a,t,c\rangle,\langle b,a,d\rangle\}\subseteq B\;\wedge a\ne t)\;\Ra\;\exists u\in X\; 
\{\langle a,u,d\rangle,\langle b,y,u\rangle\}\subseteq B.$
\end{enumerate}
\end{example}

\begin{exercise} Show that every inner product space over an ordered field carries the canonical structure of a Tarski geometry. 
\end{exercise}

\begin{Exercise} Define a notion of isomorphism for Tarski geometries and prove that every Tarski geometry $(X,(B,E))$ with $X\ne \varnothing$ is isomorphic to the Tarski geometry of a suitable inner product space over an ordered field. 
\smallskip

{\em Hint:} See \cite{SST}.
\end{Exercise}

\subsection{Universal Algebra}

A quite general algebraic structure is that of universal algebra of a given signature.

\begin{definition} Let $\sigma$ be a function with $\rng[\sigma]\subseteq\w$. A {\em universal algebra of signature $\sigma$}\index{universal algebra}\index{mathematical structure!universal algebra} is a mathematical structure $(X,S)$ whose structure is an indexed family of classes $(S_i)_{i\in\dom[\sigma]}$ such that for every $i\in\dom[\sigma]$, $S_i$ is a function with $\dom[S_i]=X^{\sigma(i)}$ and $\rng[S_i]\subseteq X$.
\end{definition}

Even more general is a universal relation structure of a given signature.

\begin{definition} Let $\sigma$ be a function with $\rng[\sigma]\subseteq\w$. A {\em universal relation structure of signature $\sigma$}\index{universal relation structure}\index{mathematical structure!universal relation structure} is a mathematical structure $(X,S)$ whose structure is an indexed family $(S_i)_{i\in\dom[\sigma]}$ such that for every $i\in\dom[\sigma]$, $S_i\subseteq X^{\sigma(i)}$.
\end{definition}

Finally we consider the structure of an objectless category.

\begin{definition}\label{d:objectless} An \index{objectless category}\index{category!objectless}{\em objectless category} is a mathematical structure $(X,S)$ whose underlying class $X$ is called the {\em class of arrows} of the objectless category and the structure $S$ is a triple $(\fdom,\fran,\circ)$ consisting of functions $\fdom:X\to X$, $\fran:X\to X$ assigning to each arrow $x\in X$ its {\em source} $\fdom(x)$ and {\em target} $\fran(x)$, and a function $\circ$ called the function of {\em composition} of arrows satisfying the following axioms:
\begin{itemize}
\item $\forall x\in X\;\;\big(\fdom(\fdom(x))=\fran(\fdom(x))=\fdom(x)\;\wedge\;\fdom(\fran(x))=\fran(\fran(x))=\fran(x)\big)$;
\item $\dom[\circ]=\{\langle x,y\rangle:\fran(x)=\fdom(y)\}$ and $\rng[\circ]\subseteq X$;
\item $\forall x\in X\;\;(\circ(\fdom(x),x)=x=\circ(x,\fran(x))$;
\item $\forall x\in X\;\forall y\in X\;\big(\fran(x)=\fdom(y)\;\Ra\;\fdom(\circ(x,y))=\fdom(x)\;\wedge\;\fran(\circ(x,y))=\fran(y))\big)$;
\item $\forall x\in X\;\forall y\in X\;\forall z\in X\;((\fran(x)=\fdom(y)\;\wedge\;\fran(y)=\fdom(z))\;\Ra\;
\circ(\circ(x,y),z)=\circ(x,\circ(y,z))$.
\end{itemize}
\end{definition}

\section{Morphisms of Mathematical Structures}

For two mathematical structures $(X,S)$, $(X',S')$ a function $f:X\to X'$ is called a \index{morphism}\index{mathematical structure!morphism of}{\em morphism} of the mathematical structures if $f$ respects the structures in a certain sense (depending on the type of the structures). Definitions of morphisms should be chosen so that the composition of two morphisms between mathematical structures of the same type remain a morphism of mathematical structures of that type.

For the structure of magma and its specifications (semigroups, monoids, groups) morphisms  are called homomorphims and are defined as follows. 

\begin{definition}\label{d:magma-morphism} For two magmas $(X,S_X)$ and $(Y,S_Y)$ a function $f:X\to Y$ is called a \index{magma homomorphism}{\em homomorphism} if $\forall x\in X\;\forall y\in X\;\big(f(S_X(x,y))=S_Y(f(x),f(y))\big)$.
\end{definition}

For the structure of a ring (and field) homomorphisms should preserve both operations (of addition and multiplication).

\begin{definition} For two rings $(X,(+,\cdot))$ and $(Y,(\oplus,\odot))$ a function $f:X\to Y$ is called a \index{ring homomorphism}{\em homomorphism} if $\forall x\in X\;\forall y\in X\;\big(f(x+y)=f(x)\oplus f(y)\;\wedge\;f(x\cdot y)=f(x)\odot f(y)\big)$.
\end{definition}

A far generalization of a magma homomorphisms are homomorphisms of universal algebras.

\begin{definition} Let $\sigma$ be a function with $\rng[\sigma]\subseteq\w$ and $(X,S)$, $(Y,S')$ be two universal algebras of signature $\sigma$. A function $f:X\to Y$ is called a {\em homomorphism of universal algebras} if 
\begin{itemize}
\item $\forall i\in\dom[\sigma]\;\;\forall x\in X^{\sigma(i)}\;(f(S_i(x))=S_i'(f\circ x))$.
\end{itemize}
\end{definition}

\begin{definition} Let $\sigma$ be a function with $\rng[\sigma]\subseteq\w$ and $(X,S)$, $(Y,S')$ be two universal relation structures of signature $\sigma$. A function $f:X\to Y$ is called a {\em homomorphism of relation structures} if 
\begin{itemize}
\item $\forall i\in\dom[\sigma]\;\;\forall x\;(x\in S_i\;\Ra\;f\circ x\in S_i'))$.
\end{itemize}
\end{definition}

\begin{definition} Let $(X,(+,<))$ and $(Y,(+,<))$ be two ordered groups. A function $f:X\to Y$ is called an \index{order homomorphism}\index{ordered group!order homomorphism of}{\em order homomorphism} if $f$ is a homomorphism of the abelian groups $(X,+)$ and $(Y,+)$ and $f$ preserves the order in the sense that $f(x)<f(x')$ for any elements $x<x'$ of $X$. 
\end{definition}

\begin{exercise}[H\"older] Prove that an ordered group is Archimedean if and only if it admits an order homomorphism to the real line.
\end{exercise}

For the structure of a topological space, morphisms are defined as continuous functions.

\begin{definition} For two topological spaces $(X,S_X)$ and $(Y,S_Y)$ a function $f:X\to Y$ is {\em continuous} if $\forall u\;(u\in S_Y\;\Ra\;f^{-1}[u]\in S_X)$.
\end{definition} 

For the structure of a duoform space, morphisms are defined as duomorph functions.

\begin{definition}\label{d:duoform-morphism} For two duoform spaces $(X,S_X)$ and $(Y,S_Y)$ a function $f:X\to Y$ is called {\em duoform} if 
\begin{itemize}
\item $\forall u\in S_Y\;\exists v\in  S_X\;\forall x\;\forall y\;\;(\langle x,y\rangle\in v\;\Ra\;\langle f(x),f(y)\rangle\in u)$;
\item $\forall v\in S_X\;\exists u\in  S_Y\;\forall x\;\forall y\;\;(\langle x,y\rangle\in v\;\Ra\;\langle f(x),f(y)\rangle\in u)$.
\end{itemize}
\end{definition}

\begin{definition} For two objectness categories $(X,(\fdom,\fran,\circ))$ and  $(X',(\fdom',\fran',\circ'))$ a function $F:X\to X'$ is called a {\em functor} between objectless categories if 
\begin{itemize}
\item $\forall x\in X\;\;\big(\fdom'(F(x))=F(\fdom(x))\;\wedge\;\fran'(F(x))=F(\fran(x))\big)$;
\item $\forall x\in X\;\forall y\in X\;\;(\fran(x)=\fdom(y)\;\Ra\;F(\circ(x,y))=\circ'(F(x),F(y))$.
\end{itemize}
\end{definition}
 
\begin{exercise} Prove that the compositions of morphisms considered in Definitions~\ref{d:magma-morphism}--\ref{d:duoform-morphism} remain morphisms in the sense of those definitions.
\end{exercise} 

\begin{definition} Let $(X,\Gamma_X)$ and $(Y,\Gamma_Y)$ be two digraphs. 
 A function $f:X\to Y$ is called
\begin{itemize}
\item a {\em digraph  homomorphism} if $\forall x\;\forall x'\in X\;\;(\langle x,x'\rangle\in\Gamma_X\;\Ra\;\langle f(x),f(x')\rangle\in \Gamma_Y)$;
\item a {\em digraph isomomorphism} if $f$ is bijective and the functions $f$ and $f^{-1}$ are digraph homomorphisms.
\end{itemize}
\end{definition}
 
 A digraph $(X,\Gamma)$ is called {\em extensional} if $$\forall x\in X\;\forall y\in X\;\;(x=y\;\Leftrightarrow\;\cev \Gamma(x)=\cev\Gamma(y),\quad\mbox{where}\quad\cev \Gamma(x)=\{x'\in X:\langle x',x\rangle\in\Gamma\}\setminus\{x\}.$$  
 
 \begin{theorem}[Mostowski--Shepherdson collapse]\label{Mostowski--Shepherdson collapse} Let $(X,\Gamma)$ be a simple directed graph such that the relation $\Gamma$ is set-like and well-founded. Then there exists a unique function $f:X\to\UU$ such that $f(x)=f[\cev \Gamma(x)]$ for all $x\in X$ and $Y=f[X]\subseteq\IV$ is a transitive class. The function $f$ is a homomoprhism between the digraphs $(X,\Gamma)$ and $(Y,\E{\restriction}Y)$. The relation $\Gamma$ is extensional if and only if $f$ is a isomorphism of the digraphs $(X,\Gamma)$ and $(Y,\E{\restriction}Y)$.
\end{theorem}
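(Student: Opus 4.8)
The plan is to obtain $f$ from the general Recursion Theorem~\ref{t:Recursion} and then verify the listed properties one at a time, postponing the injectivity assertion to the end.

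First I would apply the Recursion Theorem~\ref{t:Recursion} to the class $X$, the set-like well-founded relation $R=\Gamma$ (whose domain lies in $X$ since $\Gamma\subseteq X\times X$), and the function $F\colon X\times\UU\to\UU$, $F\colon\langle x,y\rangle\mapsto y$, the second-coordinate projection, which exists by Exercise~\ref{ex:rng}. This yields a unique $f\colon X\to\UU$ with $f(x)=F(x,f[\cev\Gamma(x)])=f[\cev\Gamma(x)]$ for every $x\in X$, giving existence and uniqueness simultaneously. Transitivity of $Y=f[X]=\rng[f]$ is then immediate from this equation: any $z\in f(x)=f[\cev\Gamma(x)]$ has the form $z=f(x')$ with $x'\in\cev\Gamma(x)\subseteq X$, hence $z\in f[X]=Y$. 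To see $Y\subseteq\IV$ I would argue by induction along $\Gamma$: letting $Z=\{x\in X:f(x)\in\IV\}$ (which exists by Theorem~\ref{t:class}), whenever $\cev\Gamma(x)\subseteq Z$ the class $f[\cev\Gamma(x)]$ is a set (by set-likeness of $\Gamma$ and the Axiom of Replacement) and is a subset of $\IV$, so $f(x)=f[\cev\Gamma(x)]\in\mathcal P(\IV)\subseteq\IV$ by Theorem~\ref{t:vN}(6); the Principle of Transfinite Induction then gives $Z=X$.

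Next, the homomorphism property is read off the defining equation: if $\langle x,x'\rangle\in\Gamma$ then $x\in\cev\Gamma(x')$, so $f(x)\in f[\cev\Gamma(x')]=f(x')$, i.e. $\langle f(x),f(x')\rangle\in\E{\restriction}Y$. For the equivalence, the direction ``$f$ an isomorphism $\Rightarrow\;\Gamma$ extensional'' is easy: if $f$ is injective and $\cev\Gamma(x)=\cev\Gamma(y)$, then $f(x)=f[\cev\Gamma(x)]=f[\cev\Gamma(y)]=f(y)$, whence $x=y$.

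The main obstacle is the converse: assuming $\Gamma$ extensional, prove $f$ injective. Here I would prove by induction along $\Gamma$ the statement $P(x)$: ``for every $y\in X$, $f(x)=f(y)\;\Ra\;x=y$''. Assuming $P(x')$ for all $x'\in\cev\Gamma(x)$ and given $f(x)=f(y)$, I would establish $\cev\Gamma(x)=\cev\Gamma(y)$ by a double inclusion: for $x'\in\cev\Gamma(x)$ the element $f(x')$ lies in $f(x)=f(y)=f[\cev\Gamma(y)]$, so $f(x')=f(y')$ for some $y'\in\cev\Gamma(y)$, and $P(x')$ forces $x'=y'\in\cev\Gamma(y)$; the reverse inclusion is symmetric, again invoking the inductive hypotheses $P(x')$ for $x'\in\cev\Gamma(x)$. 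Extensionality then gives $x=y$, completing the inductive step, and the Principle of Transfinite Induction yields injectivity of $f$. Since $f\colon X\to Y$ is surjective onto $Y=f[X]$ by construction, $f$ is bijective; finally $f^{-1}$ is a homomorphism because $f(x)\in f(x')=f[\cev\Gamma(x')]$ gives $f(x)=f(x'')$ for some $x''\in\cev\Gamma(x')$, and injectivity yields $x=x''\in\cev\Gamma(x')$, so $\langle x,x'\rangle\in\Gamma$. Thus $f$ is a digraph isomorphism, which closes the equivalence.
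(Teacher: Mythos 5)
Your proposal is correct, and most of it (the application of the Recursion Theorem to the second-coordinate projection, the verification of transitivity of $Y$, the $\Gamma$-induction showing $Y\subseteq\IV$ via $\mathcal P(\IV)\subseteq\IV$, the homomorphism property, and the easy direction of the equivalence) follows the same route as the paper. The one place where you genuinely diverge is the key step, injectivity of $f$ under extensionality of $\Gamma$. You prove it by transfinite induction along the well-founded relation $\Gamma$ itself, establishing $P(x)$: ``$f(x)=f(y)\Rightarrow x=y$ for all $y$'' from the hypotheses $P(x')$ for $x'\in\cev\Gamma(x)$, correctly noting that both inclusions $\cev\Gamma(x)\subseteq\cev\Gamma(y)$ and $\cev\Gamma(y)\subseteq\cev\Gamma(x)$ only require the inductive hypothesis on elements of $\cev\Gamma(x)$. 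The paper instead argues by minimality of the von Neumann rank: it takes the least ordinal $\alpha$ such that some $z\in Y\cap V_\alpha$ has two distinct preimages, shows $\alpha$ is a successor $\beta+1$, and derives a contradiction from an element $x''$ of the symmetric difference of $\cev\Gamma(x)$ and $\cev\Gamma(x')$ whose image lands in $V_\beta$. Your argument is the more intrinsic one: it never needs the containment $Y\subseteq\IV$ for the injectivity step and works directly from the well-foundedness of $\Gamma$, which is the standard presentation of the Mostowski collapse; the paper's version trades this for a reuse of the cumulative hierarchy machinery it has already set up. Both are complete proofs, and your derivation that $f^{-1}$ is a homomorphism from injectivity is the same as the paper's.
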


\begin{proof} By Recursion Theorem~\ref{t:Recursion}, for the function
$$F:X\times\UU\to\UU,\quad F:\langle x,u\rangle\mapsto \rng[u],$$
there exists a unique function 
$f:X\to\UU$ such that $$f(x)=F(x,f{\restriction}_{\cev\Gamma(x)})=f[\cev\Gamma(x)]$$ for all $x\in X$.

Consider the class $Y=f[X]$. Given any set $y\in Y$, find $x\in X$ such that $y=f(x)$ and observe that $y=f(x)=f[\cev\Gamma(x)]\subseteq f[X]=Y$, which means that the class $Y$ is transitive. 

Assuming that $Y\not\subseteq\VV$, we conclude that the set $A=\{x\in X:f(x)\notin\VV\}$ is not empty. By the well-foundedness of the relation $\Gamma$, there exists an element $a\in A$ such that $\cev \Gamma(a)\cap A=\emptyset$ and hence $f[\cev \Gamma(a)]\subseteq\VV$ and $f(a)=f[\cev\Gamma(a)]\in\VV$, see Theorem~\ref{t:vN}(6). But the inclusion $f(a)\in\VV$ contradicts the choice of $a$.

To see that the function $f:X\to Y$ is a homomorphism of the digraphs $(X,\Gamma)$ and $(Y,\E{\restriction}Y)$, take any pair $\langle x',x\rangle\in \Gamma$. Then $x'\in\cev \Gamma(x)$ and hence $f(x)\in f(x')$, which is equivalent to $\langle f(x),f(x')\rangle\in\E$.

If the function $f:X\to Y$ is an isomorphism of the digraphs $(X,\Gamma)$ and $(Y,\E{\restriction}Y)$, then the relation $\Gamma$ is extensional by the extensionality of the memebership relation, which follows from Definition~\ref{d:equality}. Now assume that the relation $\Gamma$ is extensional. 

We claim that the function $f$ is injective. In the opposite case we can find a set $z\in Y\subseteq\IV$ such that $z=f(x)=f(x')$ for some distinct $x,x'\in X$. Find an ordinal $\alpha$ such that $z\in V_\alpha$. We can assume that $\alpha$ is the smallest possible, i.e., for any $y\in Y\cap \bigcup_{\beta\in\alpha}V_\beta$ with $y=f(x)=f(x')$ we have $x=x'$. The minimality of $\alpha$ ensures that $\alpha=\beta+1$ for some $\beta\in\alpha$. By the extensionality of the relation $\Gamma$, the sets $\cev\Gamma(x)$ and $\cev\Gamma(x')$ are distinct. Consequently, there exists $x''\in X$ such that $x''\in\cev\Gamma(x)\setminus\cev\Gamma(x')$ or $x''\in\cev\Gamma(x')\setminus\cev\Gamma(x)$. In the first case we have $f(x'')\in f(x)\in V_{\beta+1}=\mathcal P(V_\beta)$ and hence $f(x'')\in V_\beta$. The minimality of $\alpha$, and $x'\notin\cev\Gamma(x')$ imply $f(x'')\notin f[\cev \Gamma(x')]=f(x')=f(x)$ which is a contradiction. By analogy we can derive a contradiction from the assumption $x''\in\cev\Gamma(x')\setminus\cev\Gamma(x)$. This contradiction completes the proof of the injectivity of $f$. Since $Y=f[X]$, the function $f:X\to Y$ is bijective. 

It remains to prove that $f^{-1}:Y\to X$ is a digraph homomorphism. Indeed, for any $x',x\in X$  with $\langle f(x'),f(x)\rangle\in \E{\restriction}Y$ we have $f(x')\in f(x)=f[\cev \Gamma(x)]$ and by the injectivity of $f$, $x'\in\cev \Gamma(x)$ and finally $\langle x',x\rangle\in\Gamma$.
\end{proof}

\begin{remark} The Mostowski--Shepherdson collapse is often applied to extensional  digraphs $(X,\Gamma)$ whose relation $\Gamma$ satisfies the axioms of ZFC or NBG. Such digraphs are called {\em models} of ZFC or NBG, respectively. In this case Mostowski--Shepherdson collapse says that each model of ZFC or NBG is isomorphic to the digraph $(X,\E{\restriction}X)$, where $X$ is a suitable class (which can be also a set).
\end{remark}

\section{A characterization of the real line}

The real line is the most fundamental object in mathematics. It carries a bunch of mathematical structures: additive group, multiplicative semigroup, ring, field, linearly ordered set, metric space, topological space, bornological space, topological field etc etc. Some combinations of these structures determine the real line uniquely up to an isomorphisms. For Analysis the most important structure determining the real line uniquely is the structure of an ordered field. \index{real line}

We recall that an ordered field is a mathematical structure $(X,S)$ whose structure is a triple $S=(+,\cdot,<)$ consisting of two binary operations and a linear order on $X$ satisfying the axioms described in Example~\ref{ex:ord-field}.

We say that two ordered fields $(X,(+,\cdot,<))$ and $(Y,(\oplus,\odot,\prec))$ are {\em isomorphic} if there exists a bijective function $f:X\to X'$ that preserves the structure in the sense that
$$f(x+y)=f(x)\oplus f(y),\quad f(x\cdot y)=f(x)\odot f(y)\quad\mbox{and}\quad x<y\;\Leftrightarrow\;f(x)\prec f(y)$$
for any elements $x,y\in X$. If an isomorphism  $f$ between  ordered fields exists and is unique, then we say that these fields are uniquely isomorphic.

\begin{theorem} An ordered field $(X,(\oplus,\otimes,\prec))$ is (uniquely) isomorphic to the real line $(\IR,(+,\cdot,<))$ if and only if its order $\prec$ is boundedly complete.
\end{theorem}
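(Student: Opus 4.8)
The ``only if'' part is immediate: if $f\colon X\to\IR$ is an isomorphism of ordered fields, then it is in particular an order isomorphism, and bounded completeness is a purely order-theoretic property, hence transported along $f^{-1}$; since $\le_\IR$ is gapless by Theorem~\ref{t:gap-ext}(5) and therefore boundedly complete by Proposition~\ref{p:gap<=>bc}, the order $\prec$ is boundedly complete as well. Thus the whole content lies in the ``if'' part, for which the plan is to build an explicit field-and-order isomorphism $\varphi\colon X\to\IR$ and then to verify its uniqueness.

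First I would embed the rationals. In any ordered field one has $0_X\prec 1_X$, since $1_X\prec 0_X$ would force $0_X\prec(-1_X)\otimes(-1_X)=1_X$, contradicting the order-field axiom of Example~\ref{ex:ord-field}. Applying Theorem~\ref{t:iterate} to the translation $y\mapsto y\oplus 1_X$ produces a map $n\mapsto n\cdot 1_X$ on $\w$ that is strictly $\prec$-increasing, hence injective; extending it first by additive inverses and then by multiplicative inverses (available because $X$ is a field) yields a map $\iota\colon\IQ\to X$ that is simultaneously a field homomorphism and strictly order-preserving. Write $\IQ_X=\iota[\IQ]$, a countable subset with no $\prec$-greatest and no $\prec$-least element. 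The first essential use of the hypothesis is the Archimedean property: were $\iota[\w]$ $\prec$-bounded above, Proposition~\ref{p:bcomp} would furnish $s=\sup_\prec\iota[\w]$, but then $s\oplus(-1_X)$ is not an upper bound, so $\iota(n)\succ s\oplus(-1_X)$ for some $n$ and hence $\iota(n+1)=\iota(n)\oplus 1_X\succ s$, a contradiction. Thus $\iota[\w]$ is cofinal and, applying additive inversion, $\IQ_X$ is both cofinal and coinitial in $X$; the standard interpolation argument (choose $n$ with $\iota(n)\otimes(b\oplus(-a))\succ 1_X$ and slide an integer multiple of $\iota(1/n)$ into the interval) then shows that $\IQ_X$ meets every $\prec$-interval $(a,b)$, i.e.\ $\IQ_X$ is $\prec$-dense.

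Next I would define $\varphi\colon X\to\IR$ by sending $x$ to the real number determined by the Dedekind cut $\{q\in\IQ:\iota(q)\prec x\}$, which is nonempty and proper because $\IQ_X$ is cofinal and coinitial. Density of $\IQ_X$ makes $\varphi$ strictly order-preserving and injective. Surjectivity is precisely where bounded completeness is invoked a second time: a real number is a $\le_\IQ$-cut $\langle A,B\rangle$, the set $\iota[A]$ is nonempty and is bounded above by $\iota(b)$ for any $b\in B$, so $x=\sup_\prec\iota[A]$ exists in $X$ and satisfies $\varphi(x)=\langle A,B\rangle$. (That an order isomorphism $X\to\IR$ exists at all also follows from Theorem~\ref{t:R-ord} applied with $D=\IQ_X$, but I prefer the explicit cut map because only it is manifestly compatible with the field operations.)

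Finally, preservation of $\oplus$ and $\otimes$ reduces, through the set identity $L_{x\oplus y}=L_x+L_y$ and its multiplicative analogue for $\prec$-positive elements together with the fact that $\iota$ is a ring homomorphism, to the way $+$ and $\cdot$ act on cuts in the construction of $\IR$; this is routine but is the most computation-heavy step. Uniqueness then follows because any ordered-field isomorphism $X\to\IR$ must fix $1_X\mapsto 1$ and respect $\oplus,\otimes$, hence must restrict to the canonical identification $\IQ_X\to\IQ$, and a strictly order-preserving map is determined on all of $X$ by its values on the dense set $\IQ_X$. The main obstacle I anticipate is not a single deep step but coordinating the two distinct roles of bounded completeness — forcing the Archimedean property (density of $\IQ_X$) and then guaranteeing surjectivity of $\varphi$ — while carrying out the bookkeeping that upgrades the cut map from an order isomorphism to a genuine ring isomorphism.
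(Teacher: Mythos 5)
Your proposal is correct and follows essentially the same route as the paper: embed $\w$, then $\IZ$, then $\IQ$ into $X$, use bounded completeness once to get the Archimedean property (the same $\sup\oplus(-\mathtt 1)$ contradiction) and once more to complete the isomorphism, then get uniqueness from density of the embedded rationals. The only cosmetic difference is that you build the map $X\to\IR$ via Dedekind cuts while the paper builds its inverse $\IR\to X$ via suprema of images of rationals; these are the same construction read in opposite directions.
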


\begin{proof} The ``only if'' part follows from the bounded completeness of the linear order on the real line, which was established in Theorem~\ref{t:R-ord}.

To prove the ``if'' part, assume that $(X,(\oplus,\otimes,\prec))$ is an ordered field with boundedly complete linear order $\prec$. Let $\mathtt 0$ and $\mathtt 1$ be the identity element of the  group $(X,\oplus)$ and $\mathtt 1$ be the multiplicative element of the semigroup $(X,\odot)$. Let $f_{\w}:\w\to X$ be the function defined by the recursive formula: $f_{\w}(0)=\mathtt 0$ and $f_\w(n+1)=f_\w(n)\oplus \mathtt 1$ for every $n\in\w$. Using the connection between the addition and order in the ordered group $(X,(\oplus,\prec))$ we can prove (by Mathematical Induction) that $f_\w(n)\prec f_\w(n+1)$ for all $n\in\w$. This implies that the function $f_\w$ is injective. By Mathematical Induction, it can be shown that $f_\w(n+m)=f_\w(n)\oplus f_\w(m)$ and $f_\w(n\cdot m)=f(n)\odot f(m)$ for any $n,m\in\w$.

Extend $f_\w$ to a function $f_{\IZ}:\IZ\to X$ such that $f_{\IZ}(\minus n)=-f_\w(n)$ for every $n\in\IN$. Here $-f_\w(n)$ is the additive inverse of $f_\w(n)$ in the group $(X,\oplus)$. Using suitable properties of the ordered group $(X,(\oplus,\prec))$ we can show that 
$f_\IZ(\minus n)\prec 0$ for every $n\in\IN$. Using the properties of the addition and multiplication in the field $(X,(\oplus,\odot))$ it can be shown that $f_\IZ(n+m)=f_\IZ(n)\oplus f_\IZ(m)$ and $f_\IZ(n\cdot m)=f_\IZ(n)\odot f_\IZ(m)$ for all $n,m\in\IZ$.

Next, extend the function $f_\IZ$ to a function $f_{\IQ}:\IQ\to X$ letting $f_{\IQ}(\tfrac{m}{n})=f_\IZ(m)\odot (f_\w(n))^{-1}$ for any rational number $\frac{m}{n}\in\IQ\setminus\IZ$. In this formula $(f_\w(n))^{-1}$ is the multiplicative inverse of $f_\w(n)$ in the multiplicative group $(X\setminus\{\mathtt 0\},\odot)$. Using algebraic properties of the filed $(X,(\oplus,\odot))$, it can be shown that  the function $f_\IQ:\IQ\to X$ is injective and preserves the field operations and the order.

The bounded completeness of the orderd field $(X,(\oplus,\odot,\prec))$ implies that this field is Archimedean in the sense that for any $x\in X$ there exists $n\in\w$ such that $x\prec f(n)$. Assuming that such a number $n$ does not exist, we conclude that the set $f_\w[\w]\subseteq X$ is upper bounded by $x$ in the linear order $(X,\prec)$ and by the bounded completeness, $f_\w[\w]$ has the least upper bound $\sup f_\w[\w]$. On the other hand, $\sup f_\w[\w]\oplus (-\mathtt 1)\prec \sup f_\w[\w]$ also is an upper bound for $f_\w[\w]$, which a contradiction showing that the field $(X,(\oplus,\odot,\prec))$ is Archimedean.

Now extend $f_\IQ$ to a function $f_{\IR}:\IR\to X$ assigning to each real number $r\in\IR\setminus\IQ$ the element $\sup f_\IQ(\{q\in\IQ:q<r\})$ of the set $X$. This element exists by the bounded completenes of $X$. Using the Archimedean property of the field $(X,(\oplus,\odot,\prec))$, it can be shown that $f_\IR$ is a required isomorphism of the fields $(\IR,(+,\cdot,<))$ and $(X,(\oplus,\odot,\prec))$. The uniqueness of $f_{\IR}$ follows from the construction: at each step there is a unique way to extend the function so that the field operations and the order are preserved.
\end{proof}

\begin{exercise}[Tarski] Prove that a bijective function $f:\IR\to\IR$ is the identity function of $\IR$ if and only if it has the following three properties:
\begin{enumerate}
\item $f(1)=1$;
\item $\forall x\in\IR\;\forall y\in\IR\;\;(f(x+y)=x+y)$;
\item $\forall x\in\IR\;\forall y\in\IR\;\; (x<y\;\Leftrightarrow\;f(x)<f(y))$.
\end{enumerate}
\end{exercise} 
\newpage

\newpage

\part{Elements of Category Theory}

{\small

\rightline{\em A mathematician is a person who can find analogies between theorems;}

\rightline{\em a better mathematician is one who can see analogies between proofs}

\rightline{\em and the best mathematician can notice analogies between theories.}

\rightline{\em One can imagine that the ultimate mathematician is one}

\rightline{\em who can see analogies between analogies.}
\smallskip

\rightline{Stefan Banach}
\bigskip

\rightline{\em The language of categories is affectionately known as}

\rightline{\em  ``abstract nonsense'', so named by Norman Steenrod.}

\rightline{\em  This term is essentially accurate and not necessarily derogatory:}

\rightline{\em categories refer to ``nonsense" in the sense}

\rightline{\em  that they are all about the ``structure",}

\rightline{\em and not about the ``meaning", of what they represent.}
\smallskip

\rightline{Paolo Aluffi, 2009}
}
 \bigskip
 
 Theory of Categories was founded in 1942--45 by Samuel Eilenberg and Saunders Mac Lane who worked in Algebraic Topology and needed tools for describing common patterns appearing in topology and algebra. Category Theory was created as a science about  structures and patterns appearing in mathematics. Rephrasing Stefan Banach, we could say that Category Theory is a science about analogies between analogies.
 
By some mathematicians, Category Theory is considered as an alternative (to Set Theory) foundation for mathematics.
In this respect, Saunders MacLane, one of founders of Category Theory writes the following  \cite{McLane86}.
\medskip
 




{\small \noindent\em
$\dots$ the membership relation for sets can often be replaced by the composition operation for functions. This leads to an alternative foundation for Mathematics upon categories — specifically, on the category of all functions. Now much of Mathematics is dynamic, in that it deals with morphisms of an object into another object of the same kind. Such morphisms (like functions) form categories, and so the approach via categories fits well with the objective of organizing and understanding Mathematics. That, in truth, should be the goal of a proper philosophy of Mathematics.

The standard ``foundation" for mathematics starts with sets and their elements. It is possible to start differently, by axiomatising not elements of sets but functions between sets. This can be done by using the language of categories and universal constructions.}
\bigskip

In this chapter we discuss some basic concepts of Category Theorem, but define them using the language of the Classical Set Theory. The culmination result of this part are Theorem~\ref{t:EC=Set} and \ref{t:GWO=Set} characterizing categories that are isomorphic to the category of sets.

\section{Categories}

\begin{definition} A {\em category}\index{category} is a $6$-tuple $\C=(\Ob,\Mor,\fdom,\fran,\mathsf 1,\circ)$ consisting of
\begin{itemize}
\item a class $\Ob$ whose elements are called {\em objects}\index{object of category}\index{category!object of} of the category $\C$ (briefly, {\em $\C$-objects});
\item a class $\Mor$ whose elements are called {\em morphisms}\index{morphism of category}\index{category!morphism of} of the category $\C$ (briefly, {\em $\C$-morphisms});
\item two functions $\fdom\colon\Mor\to\Ob$ and $\fran\colon\Mor\to\Ob$ assigning to each morphism $f\in\Mor$ its \index{source $\fdom$}{\em source} $\fdom(f)\in\Ob$ and \index{target $\fran$}{\em target} $\fran(f)\in\Ob$;
\item a function $\mathsf 1:\Ob\to\Mor$ assigning to each object $X\in\Ob$ a morphism $\mathsf 1_X\in\Mor$, called the \index{identity morphism} {\em identity morphism} of $X$, and satisfying the equality $\fdom(\mathsf 1_X)=X=\fran(\mathsf 1_X)$;
\item a function $\circ$ with domain $\dom[\circ]=\{\langle f,g\rangle\in \Mor\times\Mor:\fran(g)=\fdom(f)\}$ and range $\rng[\circ]\subseteq\Mor$ assigning to any $\langle f,g\rangle\in\dom[\circ]$ a morphism $f\circ g\in\Mor$ such that $\fdom(f\circ g)=\fdom(g)$ and $\fran(f\circ g)=\fran(f)$ and the following axioms are satisfied:
\begin{itemize}
\item [(A)] for any morphisms $f,g,h\in\Mor$ with $\fdom(f)=\fran(g)$ and $\fdom(g)=\fran(h)$ we have\\ $(f\circ g)\circ h=f\circ(g\circ h)$;
\item [(U)] for any morphism $f\in\Mor$ we have $\mathsf 1_{\fran(f)}\circ f=f=f\circ\mathsf 1_{\fdom(f)}$.
\end{itemize}
\end{itemize}
\end{definition}
The function $\circ$ is called the \index{composition of morphisms}{\em operation of composition} of morphisms and the axiom $(A)$ is called the {\em associativity} of the composition. 

Discussing several categories simultaneously, it will be convenient to label the classes of objects and and morphisms of a category $\C$ with subscripts writing $\Ob_\C$ and $\Mor_\C$.

For explaining definitions and results of Category Theory it is convenient to use arrow notations. Morphisms between objects are denoted by arrows with subscripts or supersripts, and equalities of compositions of morphisms are expresses by commutative diagrams.

For example, the associativity of the composition can be expresses as the commutativity of the diagram
$$
\xymatrix{
&B\ar^g[r]&C\ar^f[rd]\\
A\ar^h[ru]\ar_h[rd]\ar_{g\circ h}[rru]&&&D\\
&B\ar_g[r]\ar^{f\circ g}[rru]&C\ar_f[ru]
}
$$


\begin{definition} A category $\C=(\Ob,\Mor,\fdom ,\fran,\mathsf 1,\circ)$ is called
\begin{itemize}
\item \index{category!small}\index{small category}{\em small} if its class of morphisms $\Mor$ is a set;
\item \index{category!locally small}\index{locally small category}{\em locally small} if for any objects $X,Y\in\Ob$ the class of morphisms\\ $\Mor(X,Y)=\{f\in\Mor:\fdom(f)=X,\;\fran(f)=Y\}$ is a set;
\item \index{category!discrete}\index{discrete category}{\em discrete} if for any objects $X,Y\in\Ob$
$$\Mor(X,Y)=\begin{cases}\{\mathsf 1_X\}&\mbox{if $X=Y$};\\
\emptyset&\mbox{if $X\ne Y$}.
\end{cases}
$$
\end{itemize}
\end{definition}

Mathematics is literally saturated with categories. We start with the category of sets, one of the most important categories in Mathematics.

\begin{example} The \index{category of sets $\Set$}\index{category!{{\bf Set}}}\index{{{\bf Set}}}{\em category of sets} $\mathbf{Set}$ is the 6-tuple $(\Ob,\Mor,\fdom,\fran,\mathsf 1,\circ)$ consisting of
\begin{itemize}
\item the class $\Ob=\UU$;
\item the class $\Mor=\{\langle X,f,Y\rangle\in \UU\times \Fun\times\UU:X=\dom[f],\;\rng[f]\subseteq Y\}$;
\item the function $\fdom:\Mor\to\Ob$, $\fdom :\langle X,f,Y\rangle\mapsto X$;
\item the function $\fran:\Mor\to\Ob$, $\fran :\langle X,f,Y\rangle\mapsto Y$;
\item the function $\mathsf 1:\Ob\to\Mor$, $\mathsf 1:X\mapsto \langle X,\Id{\restriction}_X,X\rangle$;
\item the function $\circ=\{\langle\langle\langle A,f,B\rangle,\langle C,g,D\rangle\rangle,\langle A,gf,D\rangle\rangle\in(\Mor\times\Mor)\times\Mor:B=C\}$ where $ gf=\{\langle x,z\rangle:\exists y\;(\langle x,y\rangle\in f\;\wedge\;\langle y,z\rangle\in g)\}$.
\end{itemize}
\end{example}

Taking for morphisms the class of functions $\Fun$, we obtain the category of sets and their surjective maps.

\begin{example} The {\em category of sets and their surjective functions} is the 6-tuple\\ $(\Ob,\Mor,\fdom,\fran,\mathsf 1,\circ)$ consisting of
\begin{itemize}
\item the class $\Ob=\UU$;
\item the class $\Mor=\Fun$;
\item the function $\fdom=\dom{\restriction}_{\Fun}$;
\item the function $\fran=\rng{\restriction}_{\Fun}$;
\item the function $\mathsf 1:\Ob\to\Mor$, $\mathsf 1:X\mapsto \Id{\restriction}_X$;
\item the function $\circ=\{\langle\langle f,g\rangle,gf\rangle\in(\Mor\times\Mor)\times\Mor:\rng[f]=\dom[g]\}$ where\\ $ gf=\{\langle x,z\rangle:\exists y\;(\langle x,y\rangle\in f\;\wedge\;\langle y,z\rangle\in g)\}$.
\end{itemize}
\end{example}


\begin{definition} A category $\mathcal C=(\Ob,\Mor,\fdom ,\fran ,\mathsf 1,\circ)$ is a \index{subcategory}{\em subcategory} of a category\\ $\mathcal C'=(\Ob',\Mor',\fdom ',\fran ',\mathsf 1',\circ')$ if $$\Ob\subseteq\Ob',\;\Mor\subseteq\Mor',\;\fdom =\fdom '{\restriction}_{\Mor},\;\fran =\fran '{\restriction}_{\Mor},\;\mathsf 1=\mathsf 1'{\restriction}_{\Ob}, \mbox{ and }\circ=\circ'{\restriction}_{\Mor\times\Mor}.$$
A subcategory $\C$ of $\C'$ is called \index{subcategory!full}\index{full subcategory}{\em full} if $\forall X\in\Ob\;\forall Y\in\Ob\;\;\Mor(X,Y)=\Mor'(X,Y)$.
\end{definition}
A full subcategory is fully determined by its class of objects.
\smallskip

\begin{example} Let $\mathbf{FinSet}$ be the full subcategory of the category $\Set$, whose class of objects coincides with the class of finite sets.
\end{example}

\begin{example} Let $\Crd$ be the full subcategory of the category $\Set$, whose class of objects coincides with the class of cardinals.
\end{example}

An important example of a category is the category of mathematical structures. We recall that a mathematical structure is a pair of classes $(X,S)$ satisfying certain list of axioms. If $X$ and $S$ are sets, then the pair $(X,S)$ can be identified with the ordered pair $\langle X,S\rangle$, which is an element of the class $\ddot\UU=\UU\times\UU$. The underlying set $X$ and the structure $S$ can be recovered from the ordered pair $\langle X,S\rangle$ using the functions $\dom$ and $\rng$ as $X=\dom(\langle X,S\rangle)$ and $S=\rng(\langle X,S\rangle)$.

\begin{example} The \index{category of mathematical structures $\mathbf{MS}$}\index{category!of mathematical structures $\mathbf{MS}$}{\em category of mathematical structures} $\mathbf{MS}$ is the 6-tuple $(\Ob,\Mor,\fdom,\fran,\mathsf 1,\circ)$ consisting of
\begin{itemize}
\item the class $\Ob=\UU\times\UU$;
\item the class $\Mor=\{\langle X,f,Y\rangle\in\Ob\times \Fun\times\Ob:\dom[f]=\dom(X)\;\wedge\;\rng[f]\subseteq \dom(Y)\}$;
\item the function $\fdom:\Mor\to\Ob$, $\fdom :\langle X,f,Y\rangle\mapsto X$;
\item the function $\fran:\Mor\to\Ob$, $\langle X,f,Y\rangle\mapsto Y$;
\item the function $\mathsf 1:\Ob\to\Mor$, $\mathsf 1:X\mapsto\langle X,\Id{\restriction}_{\dom(X)},X\rangle$;
\item the function $\circ=\{\langle\langle A,f,B\rangle,\langle C,g,D\rangle\rangle,\langle A, gf,D\rangle\rangle\in(\Mor\times\Mor)\times\Mor:B=C\}$ where $ gf=\{\langle x,z\rangle:\exists y\;(\langle x,y\rangle\in f\;\wedge\;\langle y,z\rangle\in g)\}$.
\end{itemize}
\end{example}

Many important category arise as subcategories of the category $\mathbf{MS}$.

\begin{example} The {\em category of magmas} $\mathbf{Mag}$ is a subcategory of the category $\mathbf{MS}$. Its objects are magmas and morphisms are triples $\langle X,f,Y\rangle$ where $f$ is a homomorphism of magmas $X,Y$.
\end{example}

\begin{example}\label{ex:sMS} The categories of semigroups, inverse semigroups, Clifford semigroups, monoids, groups, commutative groups are full subcategories of the category of magmas. The objects of these categories are  semigroups,inverse semigroups, Clifford semigroups,  monoids, groups, commutative groups, respectively.
\end{example}

\begin{example} The category of topological spaces $\mathbf{Top}$ is the subcategory of the category $\mathbf{MS}$. The object of the category $\mathbf{Top}$ are topological spaces and morphisms are triples $\langle X,f,Y\rangle$ where $f$ is a continuous function between topological spaces $X$ and $Y$.
\end{example}

\begin{example} The category of directed graphs is the subcategory of the category $\mathbf{MS}$. The object of this category are directed graphs and morphisms are triples $\langle X,f,Y\rangle$ where $f$ is an increasing function between directed graphs $X$ and $Y$. The category of directed graphs contains full subcategories of ordered sets, partially ordered sets, linearly ordered sets, well-ordered sets.
\end{example}

\begin{example}\label{ex:monoid-category} Each monoid $(X,S)$ can be identified with the category $(\Ob,\Mor,\fdom ,\fran ,\mathsf 1,\circ)$   such that
\begin{itemize}
\item $\Ob=\{X\}$;
\item $\Mor=X$;
\item $\fdom =\fran =\Mor\times \Ob$;
\item $\mathsf 1=\Ob\times\{e\}$ where $e\in X$ is the unit of the monoid $(X,S)$;
\item $\circ=S$.
\end{itemize}
On the other hand, for any category $\C=(\Ob,\Mor,\fdom ,\fran ,\mathsf 1,\circ)$ with a single object, the pair $(\Mor,\circ)$ is a monoid.
\end{example}

\begin{example}\label{ex:po-cat} Each partially ordered class $(X,S)$ can be identified with the category $(\Ob,\Mor,\fdom ,\fran ,\mathsf 1,\circ)$   such that
\begin{itemize}
\item $\Ob=X$;
\item $\Mor=S$;
\item $\fdom =\dom{\restriction}_X$;
\item $\fran =\rng{\restriction}_X$;
\item $\mathsf 1=\{\langle x,\langle x,x\rangle\rangle:x\in X\}$; 
\item $\circ$ is the function assigning to any pair of pairs $\langle\langle x,y\rangle,\langle u,v\rangle\rangle\in S\times S$ with $y=u$ the pair $\langle x,v\rangle$ which belongs to the order $S$ by the transitivity of $S$.
\end{itemize}
\end{example}


\begin{definition} For any category $\C=(\Ob,\Mor,\fdom ,\fran ,\mathsf 1,\circ)$ the \index{dual category}\index{opposite category}\index{category!dual}\index{category!opposite} {\em dual} (or else {\em opposite}) category to $\C$ is the category $\C^{\op}=(\Ob^{\op},\Mor^\op,\fdom ^\op,\fran ^\op,\mathsf 1^\op,\circ^\op)$ such that
\begin{itemize}
\item $\Ob^\op=\Ob$, $\Mor^\op=\Mor$, and $\mathsf 1^\op=\mathsf 1$;
\item $\fdom ^\op=\fran $, $\fran ^\op=\fdom $;
\item $\circ^\op=\{\langle \langle g,f\rangle,h\rangle:\langle\langle f,g\rangle,h\rangle\in \circ\}$.
\end{itemize}
\end{definition}

In the dual category all arrows are reverted and the composition of arrows is taken in the reverse order.

The philosophy of Category Theory is to derive some properties of objects from the information about  morphisms related to these objects. A category is a kind of algebraic structure that operates with morphisms, not objects. Without any loss of information, objects can be identified with their unit morphisms. After such reduction a category becames a typical algebraic structure on the class $\Mor$ of morphisms.

\begin{definition} Let   $\C$ be a category. A morphism $f\in\Mor_\C(X,Y)$ between $\C$-objects $X,Y$ is called an \index{isomorphism}\index{category!isomorphism in}{\em isomorphism} (more precisely, a {\em $\C$-isomorphism}) if there exists a morphism $g\in\Mor_\C(Y,X)$  such that $g\circ f=\mathsf 1_{X}$ and $f\circ g=1_Y$.
The morphism $g$ is unique and is denoted by $f^{-1}$. For $\C$-objects $X,Y$ by $\Iso_\C(X,Y)$ we shall denote the subclass of $\Mor_\C(X,Y)$ consisting of isomorphisms.
\end{definition}

\begin{exercise} Prove that for any isomorphism $f\in \Mor_\C(X,Y)$ of a category $\C$, the morphism $f^{-1}$ is unique.
\vskip5pt

\noindent{\em Hint:} If $g\in\Mor_\C(Y,X)$ is a morphism such that  $g\circ f=\mathsf 1_{X}$ and $f\circ g=\mathsf 1_{Y}$, then\\ $g=g\circ 1_{Y}=g\circ (f\circ f^{-1})=(g\circ f)\circ f^{-1}=\mathsf 1_{X}\circ f^{-1}=f^{-1}$.
\end{exercise} 

\begin{definition} Two objects $X,Y\in\Ob_\C$ of a category $\C$ are called \index{isomorphic objects}{\em isomorphic} (more precisely, {\em $\C$-isomorphic}) if there exists an isomorphism $f\in\Mor(X,Y)$. The isomorphness of $X,Y$ will be denoted as $X\cong Y$ or $X\cong_\C Y$.
\end{definition}

From the point of view of Category Theory, isomorphic objects have the same properties (which can be expresses in the language of morphisms).

\begin{exercise} Prove that (i) in the category of sets, isomorphisms are bijective maps; (ii) in the category of magmas, isomorphisms are bijective homomorphisms of magmas.
\end{exercise}

\begin{example}\label{ex:inverse-category} Any inverse semigroup $(X,S)$ can be identified with the category\\ $(\Ob,\Mor,\fdom ,\fran ,\mathsf 1,\circ)$   such that
\begin{itemize}
\item $\Ob=\{S(x,x^{-1}):x\in X\}$;
\item $\Mor=X$;
\item $\fdom =\{\langle x,S(x^{-1},x)\rangle:x\in X\}$;
\item $\fran =\{\langle x,S(x,x^{-1})\rangle:x\in X\}$;
\item $\mathsf 1=\Id{\restriction}_{\Ob}$;
\item $\circ=\{\langle x,y\rangle:\fran(x)=\fdom(y)\}$.
\end{itemize}
Each morphism of this category is an isomorphism.
\end{example}

Now we define category analogs of injective and surjective functions.

\begin{definition} Let $\C=(\Ob,\Mor,\fdom ,\fran ,\mathsf 1,\circ)$ be a category. A morphism $f\in\Mor(X,Y)$ between two $\C$-objects $X,Y$ is called
\begin{itemize}
\item a \index{monomorphism}{\em monomorphism} if $\forall Z\in\Ob\;\forall g,h\in\Mor(Z,X)\;(f\circ g=f\circ h\;\Ra\;g=h)$;
\item a \index{epimorphism}{\em epimorphism} if $\forall Z\in\Ob\;\forall g,h\in\Mor(Y,Z)\;(g\circ f=h\circ f\;\Ra\;g=h)$;
\item a \index{bimorphism}{\em bimorphism} if $f$ is both monomorphism and epimorphism.
\end{itemize}
For two $\C$-objects $X,Y$ by $\Mono_\C(X,Y)$ and $\Epi_\C(X,Y)$ we denote the subclasses of $\Mor_\C(X,Y)$ constisting of monomorphisms and epimorphisms from $X$ to $Y$, respectively.
\end{definition}

\begin{definition} A category is \index{balanced category}\index{category!balanced}{\em balanced} if each bimorphism of this category is an isomorphism.
\end{definition}

\begin{exercise} Prove that a morphism $f$ of a category $\C$ is a monomorphism if and only if $f$ is an epimorphism of the dual category $\C^\op$.
\end{exercise}

\begin{exercise} Prove that that in the category of sets (and in the category of topological spaces or magmas) monomorphisms are injective functions and epimorphisms are surjective functions.
\end{exercise}

\begin{exercise} Prove that the category of sets is balances but the category of topological spaces is not balanced.
\end{exercise}

\begin{exercise} Find a homomorphism $h:X\to Y$ of two monoids, which is not a surjective function but is an epimorphism in the category of monoids.
\smallskip

\noindent{\em Hint}: Consider the identity function $\Id{\restriction}_\IN:\IN\to\IZ$ of the monoids $(\IN,+)$ and $(\IZ,+)$.
\end{exercise}

\begin{exercise} Given a monoid $(X,M)$ characterize monomorphisms and epimorphisms of the category described in Exercise~\ref{ex:monoid-category}.
\end{exercise}

Now using the properties of morphisms we distinguish two special types of objects. 

\begin{definition} Let $\C$ be a category. A $\C$-object $X$ is called
\begin{itemize}
\item \index{initial object}\index{category!initial object of}{\em initial} (more precisely, $\C$-{\em initial\/}) if for any $\C$-object $Y$ there exists a unique $\C$-morphism $X\to Y$;
\item \index{terminal object}\index{category!terminal object of}{\em terminal} (more precisely, {\em $\C$-terminal\/}) if for any $\C$-object $Z$ there exists a unique $\C$-morphism $Z\to X$.
\end{itemize}
\end{definition}

\begin{exercise} Prove that any initial (resp.  terminal) objects of a category are isomorphic.
\end{exercise}

\begin{exercise} Prove that an object  of a category $\C$ is terminal if and only if it is an initial object of the dual category $\C^\op$.
\end{exercise}

\begin{exercise} Prove that a set $X$ is an initial (resp. terminal) object of the category of sets if and only if $X$ is empty (resp. a singleton).
\end{exercise}

\begin{exercise} Prove that a group $G=(X,S)$ is an initial object of the category of groups $\mathbf{Grp}$ if and only if $G$ is a terminal object of the category $\mathbf{Grp}$ if and only if $X$ is a trivial group.
\end{exercise}

\begin{exercise} Describe initial and terminal objects in the category of magmas, semigroups, inverse semigroups, Clifford semigroups, monoids.
\end{exercise} 

\begin{definition} A \index{global element} {\em global element} of an object $X$ of a category $\C$ is any $\C$-morphism $f:\mathtt 1\to X$ form a terminal object $\mathtt 1$ of $\C$ to $X$.
\end{definition}

\begin{exercise} Describe global elements in the categories of sets, topological spaces, magmas, semigroups, inverse semigroups, Clifford semigroups, monoids, groups.
\end{exercise}

Finally, we define two operations on categories: product of categories and taking the category of morphisms.

\begin{definition} For two categories $\C=(\Ob,\Mor,\fdom ,\fran ,\mathsf 1,\circ)$ and $\C'=(\Ob',\Mor',\fdom ',\fran ',\mathsf 1',\circ')$ their \index{product of categories}\index{categories!product of}{\em product} $\C\times \C'$ is the category $(\Ob'',\Mor'',\fdom '',\fran '',\mathsf 1'',\circ'')$ with 
\begin{itemize}
\item $\Ob''=\Ob\times\Ob'$;
\item $\Mor''=\Mor\times\Mor'$;
\item $\fdom ''=\{\langle \langle f,f'\rangle,\langle X,X'\rangle\rangle:\langle f,X\rangle\in\fdom \;\wedge\;\langle f',X'\rangle\in\fdom '\}$;
\item $\fran ''=\{\langle \langle f,f'\rangle,\langle Y,Y'\rangle\rangle:\langle f,Y\rangle\in\fran \;\wedge\;\langle f',Y'\rangle\in\fran '\}$;
\item ${\mathsf 1}''=\{\langle \langle X,X'\rangle,\langle f,f'\rangle\rangle:\langle X,f\rangle\in\mathsf 1\;\wedge\;\langle X',f'\rangle\in\mathsf 1'\}$;
\item $\circ''=\{\langle\langle f,f'\rangle,\langle g,g'\rangle,\langle h,h'\rangle\rangle:(\langle f,g,h\rangle\in \circ)\;\wedge\;(\langle f',g',h'\rangle\in\circ')\}$.
\end{itemize}
\end{definition}

\begin{definition} For a category $\C=(\Ob,\Mor,\fdom ,\fran ,\mathsf 1,\circ)$ the \index{category!of morphisms}{\em category of $\C$-morphisms} is the category $\C^{\to}=(\Ob',\Mor',\fdom ',\fran ',\mathsf 1',\circ')$
where
\begin{itemize}
\item $\Ob'=\Mor$;
\item $\Mor'=\{\langle f,\langle \alpha,\beta\rangle,g\rangle:f,g,\alpha,\beta\in\Mor\;\wedge\;\fdom (g)=\fran (\alpha)\;\wedge\;\fdom (\beta)=\fran (f)\;\wedge\;g\circ\alpha=\beta\circ f\}$;
\item  $\fdom '=\{\langle \langle f,\langle \alpha,\beta\rangle,g\rangle,h\rangle\in\Mor'\times \Mor:h=f\}$, $\fdom': \langle f,\langle \alpha,\beta\rangle,g\rangle\mapsto f$;
\item $\fran '=\{\langle \langle f,\langle \alpha,\beta\rangle,g\rangle,h\rangle\in\Mor'\times \Mor:h=g\}$, $\fran':\langle f,\langle \alpha,\beta\rangle,g\rangle\mapsto g$;
\item $\mathsf 1'=\{\langle f,\langle f,\langle 1_{\fdom (f)},\mathsf 1_{\fran (f)}\rangle,f\rangle:f\in\Ob'\}$;
\item $\circ'=\{\langle\langle f,\langle\alpha,\beta\rangle,g\rangle,\langle g,\langle\alpha',\beta'\rangle,h\rangle,\langle f,\langle \alpha'\circ\alpha,\beta'\circ\beta\rangle,h\rangle\rangle:$

\rightline{$\langle f,\langle\alpha,\beta\rangle,g\rangle\in\Mor'\;\wedge\;\langle g,\langle\alpha',\beta'\rangle,h\rangle\in\Mor'\}$.}
\end{itemize}
\end{definition}

\begin{exercise} Illustrate the composition of morphisms of the category $\C^\to$ by commutative diagrams.
\end{exercise}

Each category can be identified with an objectless category, see Definition~\ref{d:objectless}.

\begin{remark} For each category $(\Ob,\Mor,\fdom,\fran,\mathsf 1,\circ)$, the mathematical structure $(\Mor,(\fdom',\fran',\circ'))$ where
\begin{itemize}
\item $\fdom'=\{\langle x,\mathsf 1_{\fdom(x)}\rangle:x\in\Mor\}$,
\item $\fran'=\{\langle x,\mathsf 1_{\fran(x)}\rangle:x\in\Mor\}$,
\item $\circ'=\{\langle x,y,z\rangle:\langle y,x,z\rangle\in\circ\}$
\end{itemize}
is an objectless category.

Conversely, for each objectless category $(X,(\fdom',\fran',\circ'))$ the $6$-tuple $(\Ob,\Mor,\fdom,\fran,\mathsf 1,\circ)$ consisting of
\begin{itemize}
\item the class $\Ob=\{x\in X:\fdom'(x)=x=\fran'(x)\}$;
\item the class $\Mor=X$;
\item the functions $\fdom=\fdom'$ and $\fran=\fran'$;
\item the function $\mathsf 1=\Id{\restriction}_{\Ob}$;
\item the function $\circ=\{\langle x,y,z\rangle:\langle y,x,z\rangle\in\circ'\}$
\end{itemize}
is a category.
\end{remark}

In fact, without loss of information, the theory of categories can be well developed in its objectless form, but human intuition is better fit to object version of category theory. 

\section{Functors}

Functors are functions between categories. The formal definition follows.

\begin{definition} A \index{functor}{\em functor} $F:\C\to\C'$ between two categories $\C=(\Ob,\Mor,\fdom ,\fran ,\mathsf 1,\circ)$ and $\C'=(\Ob',\Mor',\fdom ',\fran ',\mathsf 1',\circ')$ is a pair $F=(\dot F,\ddot F)$ of two functions $\dot F:\Ob\to\Ob'$ and $\ddot F:\Mor\to\Mor'$ such that
\begin{itemize}
\item $\forall X\in\Ob\;\;\big(\mathsf 1'_{\dot F(X)}=\ddot F(\mathsf 1_X)\big)$;
\item $\forall X,Y\in\Ob\;\;\big(\ddot F(\Mor(X,Y))\subseteq\Mor'(\dot F(X),\dot F(Y))\big)$;
\item $\forall f,g\in\Mor\;\;\big(\fdom (g)=\fran (f)\;\Rightarrow\;\ddot F(g\circ f)=\ddot F(g)\circ' \ddot F(f)\big)$. 
\end{itemize}
In the sequel, we shall write $FX$ and $Ff$ instead of $\dot F(X)$ and $\ddot F(f)$, respectively.
\end{definition}

\begin{definition} A functor $F:\C\to\C'$ is called \index{functor!faithful}\index{functor!full}\index{faithful functor}\index{full functor}{\em faithful} (resp. {\em full\/}) if for any $\C$-objects $X,Y$, the function $\ddot F{\restriction}_{\Mor_\C(X,Y)}:\Mor_\C(X,Y)\to \Mor_{\C'}(\dot FX,\dot FY)$ is injective (resp. surjective).
\end{definition}

\begin{example}[The embedding functor] For any (full) subcategory $\C$ of a category $\C'$ the \index{identity embedding functor}\index{functor!of identity embedding}{\em identity embedding functor} $\mathsf 1_{\C,\C'}:\C\to\C'$ is the pair of  functions $(\Id{\restriction}_{\Ob_\C},\Id{\restriction}_{\Mor_\C})$. This functor is faithful (and full). If $\C=\C'$, then the functor $\mathsf 1_{\C,\C'}$ is denoted by $\mathsf 1_\C$.
\end{example}

\begin{example}[Forgetful functor] Consider the functor $U:\mathbf{MS}\to\Set$ assigning to each mathematical structure $\langle X,S\rangle\in\UU\times\UU$ its underlying set $X$ and to each morphism $\langle X,f,Y\rangle$ of the category $\mathbf{MS}$ the morphism $\langle UX,f,UY\rangle$ of the category $\Set$. The functor $U$ is called the \index{forgetful functor}\index{functor!forgetful}{\em forgetful functor}. It is easy to see that this functor is faithfull. Then the restriction of the functor $U$ to any subcategory of $\mathbf{MS}$ also is a faithful functor.
\end{example}

This example motivates the following definition.

\begin{definition} A category $\C$ is called \index{concrete category}\index{category!concrete}{\em concrete} if it admits a faithful functor $F:\C\to\Set$ to the category of sets $\Set$.
\end{definition}

So, categories of mathematical structures are concrete. 

\begin{Exercise} Give an example of a category which is not concrete.
\smallskip

\noindent{\em Hint}: Such categories exist in Algebraic Topology (for example, the category of topological spaces and classes of homotopic maps).
\end{Exercise}

Next, we consider two important functors reflecting the structure of any category in the categories $\Set$ and $\Set^\op$.

\begin{example} For any locally small category $\C=(\Mor,\Ob,\fdom ,\fran ,\mathsf 1,\circ)$ and any $\C$-object $C$ consider
\begin{enumerate}
\item[\textup{1)}] the functor $\Mor(C,-):\C\to\Set$ assigning to any object $X\in\Ob$ the set $\Mor(C,X)$ and to any morphism $f\in\Mor$  the function $\Mor(C,f):\Mor(C,\fdom (f))\to\Mor(C,\fran (f))$, $\Mor(C,f):g\mapsto f\circ g\in \Mor(C,\fran (f))$. 
\item[\textup{2)}] the functor $\Mor(-,C):\C\to\Set^\op$ assigning to any object $X\in\Ob$ the set $\Mor(X,C)$ and to any morphism $f\in\Mor$  the function $\Mor(f,C):\Mor(\fran (f),C)\to\Mor(\fdom (f),C)$, $\Mor(f,C):g\mapsto g\circ f$. 
\end{enumerate}
\end{example}

\begin{exercise} Study the functors $\Mor(c,-)$ and $\Mor(-,c)$ for categories defined in Example~\ref{ex:sMS}.
\end{exercise}

Let $F:\C\to\C'$ and $G:\C'\to\C''$ be functors between categories $\C,\C',\C''$. We recall that these functors are pairs of functions $(\dot F,\ddot F)$ and $(\dot G,\ddot G)$. Taking compositions of the corresponding components, we obtain a functor $(\dot G\circ \dot F,\ddot G\circ \ddot F):\C\to\C''$ denoted by $GF$ and called the \index{composition of functors}{\em composition} of the functors $F,G$.

\begin{exercise} Show the composition of faithful (resp. full) functors is a faithful (resp. full) functor.
\end{exercise} 

Any functor between small categories is a set. So, it is legal to consider the category $\mathbf{Cat}$ whose objects are small categories and morphisms are functors between small categories. Applying to this category the general notion of an isomorphism, we obtain the notion of isomorphic categories, which can be defined for any (not necessarily small) categories.

\begin{definition} Two categories $\C$ and $\C'$ are called \index{categories!isomorphic}{\em isomorphic} if 
there exist functors $F:\C\to\C'$ and $G:\C'\to\C$ such that $FG=\mathsf 1_{\C'}$ and $GF=\mathsf 1_{\C}$. In this case we write $\C\cong\C'$.
\end{definition}

A weaker notion is that of equivalent categories. To introduce this notion we need the notion of a natural transformation of functors.

\section{Natural transformations}

\begin{definition} Let $\C,\C'$ be two categories and $F,G:\C\to \C'$ be two functors. A \index{natural transformation}{\em natural transformation} $\eta:F\to G$ of the functors $F,G$ is a function $\eta:\Ob_\C\to \Mor_{\C'}$ assigning to each $\C$-object $X$ a $\C'$-morphism $\eta_X\in \Mor_{\C'}(FX,GX)$ so that for any $\C$-objects $X,X$ and $\C$-morphism $f\in\Mor_\C(X,Y)$ the following diagram commutes.
$$\xymatrix{
FX\ar^{\eta_X}[r]\ar_{Ff}[d]&GX\ar^{Gf}[d]\\
FY\ar_{\eta_y}[r]&GY
}
$$
A natural transformation $\eta:F\to G$ is called an \index{isomorphism of functors}{\em isomorphism} of the funtors $F,G$ if for every $\C$-object $X$ the morphism $\eta_X:FX\to GX$ is an isomorphism of the category $\C'$. 

Two functors $F,G:\C\to\C'$ are called {\em isomorphic} if there exists an isomorphism $\eta:F\to G$. In this case we write $F\cong G$.
\end{definition}

\begin{exercise}\label{ex:functor-iso} Prove that for any functors $F,G,H:\C\to\C'$ between categories $\C,\C'$ the following properties hold:
\begin{itemize}
\item $F\cong F$;
\item $F\cong G\;\Ra\;G\cong F$;
\item $(F\cong G\;\wedge\;G\cong H)\;\Ra\;F\cong H$.
\end{itemize}
\end{exercise}

Now we can introduce the notion of equivalence for categories.

\begin{definition} Two categories $\C$ and $\C'$ are defined to be \index{equivalent categories}\index{categories!equivalent}{\em equivalent} (denoted by $\C\simeq\C'$) if 
there are two functors $F:\C\to\C'$ and $G:\C'\to\C$ such that $FG\cong \mathsf 1_{\C'}$ and $GF\cong \mathsf 1_{\C}$.
\end{definition}

\begin{exercise} Prove that for any categories $\C,\C',\C''$ the following properties hold:
\begin{itemize}
\item $\C\simeq \C$;
\item $\C\cong \C'\;\Ra\;\C'\cong \C$;
\item $(\C\cong \C'\;\wedge\;\C'\cong \C'')\;\Ra\;\C\cong \C''$.
\end{itemize}
Here by $\simeq$ we denote the equivalence of categories.
\end{exercise}

Let $\mathcal X,\mathcal Y$ be two categories. If the category $\mathcal X$ is small, then any functor $F:\mathcal X\to\mathcal Y$ is a set, so it is legal to consider the category $\mathcal Y^{\mathcal X}$ whose objects are functors from $\mathcal X$ to $\mathcal Y$ and whose morphisms are natural transformations between functors. If the categories $\mathcal X$ and $\mathcal Y$ are small, then the set $\mathcal Y^{\mathcal X}$ coincides with the set of morphisms $\Mor(\mathcal X,\mathcal Y)$ of the category $\mathbf{Cat}$ of small categories.

\begin{exercise} Let $F:\C\to\C'$ be a functor. Prove that for any $\C$-isomorphism $f$ the morphism $Ff$ is a $\C'$-isomorphism.
\end{exercise}

\begin{exercise} Let $F,G:\C\to\C'$ be isomorphic functors. Prove that for any functor
\begin{itemize}
\item[\textup{1)}]  $H:\C'\to\C''$ the functors $HF$ and $HG$ are isomorphic;
\item[\textup{2)}]  $H:\C''\to\C$ the functors $FH$ and $GH$ are isomorphic.
\end{itemize}
\end{exercise}

\section{Skeleta and equivalence of categories}

A category $\C$ is called \index{category!skeletal}\index{skeletal category}{\em skeletal} if any isomorphic objects in $\C$ coincide.

A category $\mathcal S$ is called a \index{skeleton}\index{category!skeleton of}{\em skeleton} of a category $\C$ if $\mathcal S$ is a full subcategory of $\C$ such that for any $\C$-object $X$ there exists a unique $\mathcal S$-object $Y$, which is $\C$-isomorphic to $X$. This definition implies that each skeleton of a category is a skeletal category.

The existence of skeleta in various categories implies from suitable forms of the Axiom of Choice.

\begin{exercise} Using the Principle of Mathematical Induction, show that every finite category has a skeleton.
\end{exercise}

To prove the existence of skeleta in arbitrary categories we shall apply the choice principle $(\mathsf{EC})$. This principle asserts that for every equivalence relation $R$ there exists a class $C$ such that for every $x\in \dom[R]$ the intersection $R[\{x\}]\cap C$ is a singleton. The principle $(\mathsf{GMP})$ is weaker than the Global Well-Orderability Principle $(\mathsf{GWO})$ but stronger than the Axiom of Global Choice $(\mathsf{AGC})$. On the other hand, $(\mathsf{GWO})\Leftrightarrow(\mathsf{EC})\Leftrightarrow(\mathsf{AGC})$ under the assumption of cumulativity of the universe $(\mathsf{C}\UU)$ that follows from the Axiom of Foundation, see Section~\ref{s:GChoice}.

\begin{theorem}\label{t:skeleton} Under $(\mathsf{EC})$, each category $\C$ has a skeleton $\mathcal S$. Moreover, there exists a full faithful functor $F:\C\to\mathcal S$ such that for the identity embedding functor $J:\mathcal S\to \C$ we have $FJ=\mathsf 1_{\mathcal S}$ and $JF\cong \mathsf 1_\C$.
\end{theorem}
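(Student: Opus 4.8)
The plan is to realize the objects of the skeleton as a system of representatives for the isomorphism relation, and then to fix, by a class-level choice principle, a coherent family of comparison isomorphisms that defines the functor $F$.

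First I would introduce the relation
$$R=\{\langle X,Y\rangle\in\Ob_\C\times\Ob_\C:X\cong_\C Y\},$$
which exists by G\"odel's class existence Theorem~\ref{t:class}, since ``$X\cong_\C Y$'' (there exist $f\in\Mor_\C(X,Y)$ and $g\in\Mor_\C(Y,X)$ with $g\circ f=\mathsf 1_X$ and $f\circ g=\mathsf 1_Y$) is expressible in terms of the structure of $\C$. The identity morphisms make $R$ reflexive, inversion of isomorphisms makes it symmetric, and composition makes it transitive, so $R$ is an equivalence relation with $\dom[R]=\Ob_\C$. Applying $(\mathsf{EC})$ to $R$ yields a class $C$ meeting every isomorphism class $R[\{X\}]$ in exactly one point; after replacing $C$ by $C\cap\Ob_\C$ (which leaves the intersections $R[\{X\}]\cap C\subseteq\Ob_\C$ unchanged) I would set $\Ob_{\mathcal S}=C$ and let $\mathcal S$ be the full subcategory of $\C$ on $\Ob_{\mathcal S}$. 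By construction each $\C$-object is $\C$-isomorphic to exactly one object of $\mathcal S$, so $\mathcal S$ is a skeleton, and the definable function $\dot F\colon\Ob_\C\to\Ob_{\mathcal S}$ sending $X$ to the unique element of $R[\{X\}]\cap C$ fixes $\Ob_{\mathcal S}$ pointwise.

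The heart of the proof is the morphism part of $F$, and this is where genuine choice enters. I would form the class
$$Z=\{\langle Y,\mathsf 1_Y\rangle:Y\in\Ob_{\mathcal S}\}\cup\{\langle X,\theta\rangle:X\in\Ob_\C\setminus\Ob_{\mathcal S},\ \theta\in\Iso_\C(X,\dot F(X))\},$$
all of whose fibres $Z[\{X\}]$ are nonempty because $X\cong_\C\dot F(X)$. Since $(\mathsf{EC})$ implies $(\mathsf{AC^c_c})$ by Lemma~\ref{l:EC=>ACcc}, there is a function $\theta\colon\Ob_\C\to\Mor_\C$ with $\theta_X:=\theta(X)\in\Iso_\C(X,\dot F(X))$ for all $X$ and $\theta_Y=\mathsf 1_Y$ for $Y\in\Ob_{\mathcal S}$. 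Setting $\ddot F(f)=\theta_{\fran(f)}\circ f\circ\theta_{\fdom(f)}^{-1}$, a direct computation gives $\ddot F(\mathsf 1_X)=\mathsf 1_{\dot F(X)}$ and, using $\theta_{\fdom(g)}^{-1}\circ\theta_{\fran(f)}=\mathsf 1$ whenever $\fdom(g)=\fran(f)$, the equality $\ddot F(g\circ f)=\ddot F(g)\circ\ddot F(f)$. Hence $F=(\dot F,\ddot F)$ is a functor $\C\to\mathcal S$.

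It then remains to check the three asserted properties, all routine once $F$ is in place. Faithfulness and fullness on $\Mor_\C(X,Y)$ hold because $f\mapsto\theta_Y\circ f\circ\theta_X^{-1}$ has the two-sided inverse $g\mapsto\theta_Y^{-1}\circ g\circ\theta_X$, and $\Mor_{\mathcal S}(\dot F(X),\dot F(Y))=\Mor_\C(\dot F(X),\dot F(Y))$ by fullness of the subcategory. The equality $FJ=\mathsf 1_{\mathcal S}$ follows since $\dot F$ fixes $\Ob_{\mathcal S}$ and $\theta$ is the identity there, so $\ddot F(f)=f$ for every $\mathcal S$-morphism $f$. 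Finally, $\eta_X:=\theta_X\colon X\to\dot F(X)=JF(X)$ defines a natural transformation $\eta\colon\mathsf 1_\C\to JF$ with isomorphism components, naturality being precisely the identity $\ddot F(f)\circ\theta_X=\theta_Y\circ f$ obtained by rearranging the definition of $\ddot F(f)$; thus $JF\cong\mathsf 1_\C$. The only real obstacle is the simultaneous choice of the comparison isomorphisms $\theta_X$ over the proper class $\Ob_\C$, which is exactly what forces the use of $(\mathsf{EC})$ (via $(\mathsf{AC^c_c})$) rather than a merely set-level choice principle.
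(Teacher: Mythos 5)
Your proposal is correct and follows essentially the same route as the paper: apply $(\mathsf{EC})$ to the isomorphism equivalence relation on $\Ob_\C$ to extract the skeleton, then use $(\mathsf{EC})\Rightarrow(\mathsf{AC^c_c})$ to choose comparison isomorphisms $\theta_X\in\Iso_\C(X,\dot F(X))$ and conjugate morphisms by them. Your one refinement — explicitly forcing $\theta_Y=\mathsf 1_Y$ for $Y\in\Ob_{\mathcal S}$ — is a detail the paper leaves implicit but which is actually needed for the equality $FJ=\mathsf 1_{\mathcal S}$ to hold on the nose rather than merely up to isomorphism.
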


\begin{proof}  Consider the equivalence relation $R=\{\langle x,y\rangle\in\Ob_\C\times\Ob_\C:x\cong_\C y\}$ on the class $\Ob_\C=\dom[R^\pm]$. By $(\mathsf{EC})$, there exists a subclass $S\subseteq\Ob_\C$ such that for every object $x\in\Ob_\C$ the intersection $R[\{x\}]\cap S$ is a singleton. 
Let $\mathcal S$ be the full subcategory of the category $\C$ whose class of objects coincides with $S$. It follows that any $\mathcal S$-isomorphic objects in the class $S=\Ob_{\mathcal S}$ are equal, which means that $\mathcal S$ is a skeleton of the category $\C$.
\smallskip

Consider the function $\dot F:\Ob_\C\to \Ob_{\mathcal S}$ assigning to each $\C$-object $x$ the unique element of the intersection $R[\{x\}]\cap S$. 
For every object $x\in\Ob_\C$, consider the class $\mathsf{Iso}(x,\dot F(x))$ of $\C$-isomorphisms $f:x\to\dot F(x)$. By Lemma~\ref{l:EC=>ACcc}, $(\mathsf{EC})\Ra(\mathsf{AC^c_c})$ and $(\mathsf{AC^c_c})$ implies the existence of a function $i_*:\Ob_\C\to\Mor_\C$ assigning to  every $\C$-object $x$ some isomorphism $i_x\in\mathsf{Iso}_\C(x,\dot F(x))$. Define a function $\ddot F:\Mor_\C\to\Mor_{\mathcal S}$ assigning to any $\C$-objects $a,b$ and $\C$-morphism $f\in\Mor_\C(a,b)$  the morphism $$i_{b}\circ f\circ i_{a}^{-1}\in\Mor_{\mathcal S}(\dot F(a),\dot F(b))=\Mor_\C(\dot F(a),\dot F(b)).$$
It is easy to check that $F=(\dot F,\ddot F):\C\to\mathcal S$ is a full faithful functor such that for the identity embedding functor $J=\mathsf 1_{\mathcal S,\mathcal C}:\mathcal S\to\C$ we have $FJ=\mathsf 1_{\mathcal S}$ and $JF\cong\mathsf 1_{\C}$. The natural transformation $i=(i_x)_{x\in\Ob_\C}:\mathsf 1_{\C}\to JF$ witnesses that  $\mathsf 1_{\C}\cong JF$, which means that the categories $\C$ and $\C'$ are equivalent.
\end{proof}

For small categories, we can replace the principle $(\mathsf{EC})$ in Theorem~\ref{t:skeleton} by the Axiom of Choice and obtain the following ``small'' version of Theorem~\ref{t:skeleton}.

\begin{theorem} Under $(\mathsf{AC})$ each small category has a skeleton.
\end{theorem}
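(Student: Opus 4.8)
The plan is to reproduce the argument of Theorem~\ref{t:skeleton}, but to exploit the fact that for a \emph{small} category every relevant class is in fact a set, so that the global selection principle $(\mathsf{EC})$ used there may be replaced by the ordinary Axiom of Choice $(\mathsf{AC})$. First I would check that the object class of a small category is a set. If $\C=(\Ob_\C,\Mor_\C,\fdom,\fran,\mathsf 1,\circ)$ is small, then $\Mor_\C$ is a set by definition. Since every object $X$ is the source of its identity morphism ($\fdom(\mathsf 1_X)=X$) and every source is an object, we have $\Ob_\C=\fdom[\Mor_\C]$, which is a set by the Axiom of Replacement applied to the function $\fdom$ and the set $\Mor_\C$.

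Next I would form the isomorphism relation $R=\{\langle x,y\rangle\in\Ob_\C\times\Ob_\C:x\cong_\C y\}$. It is an equivalence relation (reflexivity via identity isomorphisms, symmetry via inverse isomorphisms, transitivity via composition of isomorphisms), and it is a set, being a subclass of the set $\Ob_\C\times\Ob_\C$ (see Exercise~\ref{ex:subclass}). Since $R$ is a set, the quotient set $\Ob_\C/R$ exists (by the Axiom of Replacement, as in the discussion preceding Theorem~\ref{t:index}), and every $R$-equivalence class is a nonempty set, since it contains at least the object from which it was formed.

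Then I would apply $(\mathsf{AC})$ directly to the set $\Ob_\C/R$: there is a choice function assigning to each nonempty equivalence class $E\in\Ob_\C/R$ an element $c(E)\in E$. Let $S=\rng[c]$, which is a set (the range of a function on a set) contained in $\Ob_\C$ and meeting each isomorphism class in exactly one point. Let $\mathcal S$ be the full subcategory of $\C$ whose object class is $S$.

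Finally I would verify that $\mathcal S$ is a skeleton. Existence: any $\C$-object $X$ lies in its class $E=R[\{X\}]$, and $c(E)\in S\cap E$ is $\C$-isomorphic to $X$. Uniqueness: if $Y,Y'\in S$ are both $\C$-isomorphic to $X$, then $Y\cong_\C Y'$, so $Y$ and $Y'$ lie in the same isomorphism class, which meets $S$ in a single point, whence $Y=Y'$. I expect no serious obstacle here: the only genuine content is the observation that smallness reduces $\Ob_\C$, the relation $R$, and the quotient $\Ob_\C/R$ to the level of sets, after which $(\mathsf{AC})$ supplies the representative selection and the verification that $\mathcal S$ is a skeleton is purely routine.
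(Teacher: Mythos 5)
Your proposal is correct and follows exactly the route the paper intends: the paper states this result as the ``small'' version of Theorem~\ref{t:skeleton}, obtained by replacing $(\mathsf{EC})$ with $(\mathsf{AC})$ once one observes that $\Ob_\C$, the isomorphism relation, and its quotient are all sets. Your verification that $\Ob_\C=\fdom[\Mor_\C]$ is a set via Replacement, and the subsequent application of a choice function on the quotient set, is precisely the intended adaptation.
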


For locally small categories the second part of Theorem~\ref{t:skeleton} can be proved  using the Axiom of Global Choice instead of   $(\mathsf{EC})$. 

\begin{theorem}\label{t:AGC-skelet} Let $\mathcal S$ be a skeleton of a locally small category $\C$. Under $(\mathsf{AGC})$, the categories $\C$ and $\mathcal S$ are equivalent.
\end{theorem}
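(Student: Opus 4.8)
The plan is to reproduce the construction used in the proof of Theorem~\ref{t:skeleton}, but to observe that the single appeal to the class-choice principle $(\mathsf{AC^c_c})$ there can be replaced by an appeal to the weaker Axiom of Global Choice $(\mathsf{AGC})$, precisely because $\C$ is locally small. Take $J=\mathsf 1_{\mathcal S,\C}:\mathcal S\to\C$ to be the identity embedding functor, and let $\dot F:\Ob_\C\to\Ob_{\mathcal S}$ be the function assigning to each $\C$-object $x$ the unique $\mathcal S$-object that is $\C$-isomorphic to $x$; this function is well-defined precisely because $\mathcal S$ is a skeleton (uniqueness is part of the definition). For an $\mathcal S$-object $s$ we have $\dot F(s)=s$, since $s$ is the unique $\mathcal S$-object isomorphic to itself.

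The heart of the matter is to produce a function $i_\ast:\Ob_\C\to\Mor_\C$ selecting, for every $\C$-object $x$, an isomorphism $i_x\in\Iso_\C(x,\dot F(x))$, with the normalisation $i_s=\mathsf 1_s$ whenever $s\in\Ob_{\mathcal S}$. Here the local smallness of $\C$ is decisive: for every $x$ the class $\Iso_\C(x,\dot F(x))$ is a subclass of the set $\Mor_\C(x,\dot F(x))$, hence is itself a set by Exercise~\ref{ex:subclass}, and it is non-empty because $\dot F(x)$ is $\C$-isomorphic to $x$. Thus the family $\big(\Iso_\C(x,\dot F(x))\big)_{x\in\Ob_\C}$ is a class-indexed family of non-empty sets, and the Axiom of Global Choice supplies a function $C:\UU\setminus\{\emptyset\}\to\UU$ with $C(a)\in a$ for every non-empty set $a$; setting $i_x=C(\Iso_\C(x,\dot F(x)))$ for $x\notin\Ob_{\mathcal S}$ and $i_x=\mathsf 1_x$ otherwise gives the required function $i_\ast$, whose existence as a class follows from the G\"odel class-existence Theorem~\ref{t:class}.

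With $i_\ast$ in hand the rest is routine and parallels Theorem~\ref{t:skeleton}: define $\ddot F:\Mor_\C\to\Mor_{\mathcal S}$ by $\ddot F(f)=i_{\fran(f)}\circ f\circ i_{\fdom(f)}^{-1}$, noting that $\ddot F(f)$ lands in $\Mor_{\mathcal S}(\dot F(\fdom(f)),\dot F(\fran(f)))$ since $\mathcal S$ is full. Then $F=(\dot F,\ddot F)$ is a functor: $\ddot F(\mathsf 1_x)=i_x\circ i_x^{-1}=\mathsf 1_{\dot F(x)}$, and the inner factors $i_b^{-1}\circ i_b$ cancel in $\ddot F(g)\circ\ddot F(f)$, giving $\ddot F(g\circ f)=\ddot F(g)\circ\ddot F(f)$. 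The normalisation $i_s=\mathsf 1_s$ yields $FJ=\mathsf 1_{\mathcal S}$, and the family $i=(i_x)_{x\in\Ob_\C}$ is a natural transformation $\mathsf 1_\C\to JF$ whose naturality square reads $i_y\circ f=\ddot F(f)\circ i_x=(i_y\circ f\circ i_x^{-1})\circ i_x$, which holds identically; since each $i_x$ is an isomorphism, $i$ witnesses $JF\cong\mathsf 1_\C$. Hence $\C$ and $\mathcal S$ are equivalent.

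The main (and essentially the only) obstacle is the choice-theoretic step: one must see that local smallness turns the proper-class-sized selection problem of Theorem~\ref{t:skeleton} into the selection of one element from each member of a class-indexed family of genuine \emph{sets}, which is exactly the content of $(\mathsf{AGC})$. Without local smallness the classes $\Iso_\C(x,\dot F(x))$ could be proper classes, and $(\mathsf{AGC})$ — which only selects elements of non-empty sets — would be insufficient, which is precisely why the general statement in Theorem~\ref{t:skeleton} required the stronger principle $(\mathsf{EC})$.
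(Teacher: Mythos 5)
Your proof is correct and follows exactly the route the paper intends: the text offers no written proof of Theorem~\ref{t:AGC-skelet}, only the preceding remark that the second part of Theorem~\ref{t:skeleton} goes through with $(\mathsf{AGC})$ in place of $(\mathsf{EC})$ when $\C$ is locally small, and your argument is precisely that adaptation. You correctly identify the key point — local smallness makes each $\Iso_\C(x,\dot F(x))$ a non-empty \emph{set}, so a global choice function suffices to select the isomorphisms $i_x$ — and your explicit normalisation $i_s=\mathsf 1_s$ for $s\in\Ob_{\mathcal S}$ is a small improvement on the sketch in Theorem~\ref{t:skeleton}.
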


Let $\Crd$ be the full subcategory of the category $\Set$, whose class of objects coincides with the class of cardinals. It is clear that $\Crd$ is a skeletal category and under $(\mathsf{AC})$, $\Crd$ is a skeleton of the category $\Set$.  
Applying Theorem~\ref{t:AGC-skelet} to this skeleton, we obtain the following theorem.

\begin{theorem}\label{t:Set=Card} Under $(\mathsf{AGC})$, the category $\Set$ is equivalent to its skeleton $\Crd$. 
\end{theorem}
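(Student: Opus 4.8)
The plan is to deduce this theorem as a direct application of Theorem~\ref{t:AGC-skelet} to the specific skeleton $\Crd$ of the category $\Set$. The statement is really a corollary, so the work splits into two independent verifications: first, that $\Crd$ is genuinely a skeleton of $\Set$ (a fact that needs the Axiom of Choice), and second, that $\Set$ is locally small, so that the hypotheses of Theorem~\ref{t:AGC-skelet} are met. Note that $(\mathsf{AGC})$ implies $(\mathsf{AC})$, since the Axiom of Global Choice is stated to imply the ordinary Axiom of Choice in Section~\ref{s:axioms}, so under the standing assumption $(\mathsf{AGC})$ both ingredients are available.

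First I would check that $\Crd$ is a skeleton of $\Set$. By definition, I must show that $\Crd$ is a full subcategory of $\Set$ (this is built into the definition of $\Crd$ in the excerpt) and that for every set $X$ there is a unique cardinal $\kappa$ that is $\Set$-isomorphic to $X$. Since isomorphisms in $\Set$ are exactly the bijective functions, two sets are $\Set$-isomorphic if and only if they are equipotent, i.e.\ $|X|=|Y|$. Under $(\mathsf{AC})$, Zermelo's Theorem~\ref{t:WO} guarantees that every set $X$ can be well-ordered, and then Theorem~\ref{t:wOrd} ensures that the cardinality $|X|$ contains an ordinal and hence a unique cardinal; this unique cardinal $\kappa$ satisfies $|\kappa|=|X|$, so $\kappa\cong_{\Set}X$. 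Uniqueness follows because distinct cardinals are not equipotent (by the definition of a cardinal as the $\E$-least ordinal in its cardinality class, two distinct cardinals cannot satisfy $|\kappa|=|\kappa'|$). This establishes that $\Crd$ is a skeleton of $\Set$.

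Next I would verify that $\Set$ is locally small. Given two sets $X,Y$, the class $\Mor_{\Set}(X,Y)$ consists of triples $\langle X,f,Y\rangle$ where $f$ is a function with $\dom[f]=X$ and $\rng[f]\subseteq Y$; such $f$ range over the set $Y^X$, which is a set whenever $X,Y$ are sets (as shown in the exercise proving $X^A$ is a set for sets $X,A$). Hence $\Mor_{\Set}(X,Y)$ is a set, and $\Set$ is locally small.

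With both facts in hand, Theorem~\ref{t:AGC-skelet} applies directly: under $(\mathsf{AGC})$, the locally small category $\Set$ is equivalent to its skeleton $\Crd$. I anticipate no serious obstacle here, as the statement is essentially a packaging of the two verifications above together with the cited general theorem; the only point requiring genuine care is making explicit that the skeleton hypothesis of Theorem~\ref{t:AGC-skelet} is fulfilled, which rests on invoking $(\mathsf{AC})$ (available from $(\mathsf{AGC})$) to run the Zermelo well-ordering argument identifying $\Crd$ as a skeleton.
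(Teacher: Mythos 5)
Your proposal is correct and follows the same route as the paper: the text introduces $\Crd$ as a full subcategory of $\Set$, notes that under $(\mathsf{AC})$ it is a skeleton of $\Set$, and obtains the theorem by applying Theorem~\ref{t:AGC-skelet}. You have merely made explicit the details (Zermelo's well-ordering argument for the skeleton property and the local smallness of $\Set$) that the paper leaves as ``clear.''
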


Next, we prove some  criteria of equivalence and isomophness of categories. 

We recall that two categories $\C$ and $\C'$ are {\em isomorphic} if there exist functors $F:\C\to\C'$ and $G:\C'\to\C$ such that $GF=\mathsf 1_\C$ and $FG=\mathsf 1_{\C'}$.

\begin{theorem}\label{t:isomorphism} Two categories $\C=(\Ob,\Mor,\fdom ,\fran ,\mathsf 1,\circ)$ and $\C'=(\Ob',\Mor',\fdom ',\fran ',\mathsf 1',\circ')$ are isomorphic if and only if there exists a full faithful functor $F:\C\to\C'$ whose object part $\dot F:\Ob\to\Ob'$ is bijective.
\end{theorem}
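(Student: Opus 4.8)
The plan is to prove both implications, of which the forward one is immediate and the reverse one carries all the content. For the ``only if'' part I would start from functors $F\colon\C\to\C'$ and $G\colon\C'\to\C$ witnessing $\C\cong\C'$, so that $GF=\mathsf 1_\C$ and $FG=\mathsf 1_{\C'}$; unwinding the definition of composition of functors, this means $\dot G\circ\dot F=\Id{\restriction}_{\Ob}$, $\ddot G\circ\ddot F=\Id{\restriction}_{\Mor}$ and symmetrically for $FG$. The equalities on object parts say exactly that $\dot F$ is a bijection with inverse $\dot G$. The equality $\ddot G\circ\ddot F=\Id{\restriction}_{\Mor}$ shows $\ddot F$ is injective on each hom-class, hence $F$ is faithful; and given $h\in\Mor'(\dot F(X),\dot F(Y))$ the morphism $\ddot G(h)$ lies in $\Mor(X,Y)$ (because $\dot G\circ\dot F=\Id{\restriction}_{\Ob}$) and satisfies $\ddot F(\ddot G(h))=h$, so $F$ is full. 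This disposes of the forward direction with no real difficulty.

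The substantive direction is the converse: given a full faithful functor $F=(\dot F,\ddot F)$ with $\dot F$ bijective, I must manufacture an inverse functor $G$. The key step is to upgrade the local bijections $\ddot F{\restriction}_{\Mor(X,Y)}\colon\Mor(X,Y)\to\Mor'(\dot F(X),\dot F(Y))$ (bijective precisely because $F$ is full and faithful) to a \emph{global} bijection $\ddot F\colon\Mor\to\Mor'$. For injectivity I would use that a functor preserves source and target, so $\ddot F(f_1)=\ddot F(f_2)$ forces $\dot F(\fdom(f_1))=\dot F(\fdom(f_2))$ and $\dot F(\fran(f_1))=\dot F(\fran(f_2))$; injectivity of $\dot F$ then places $f_1,f_2$ in a common hom-class, where faithfulness finishes the job. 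For surjectivity, any $h\in\Mor'$ has source and target of the form $\dot F(X)$, $\dot F(Y)$ (here surjectivity of $\dot F$ is used), and fullness produces a preimage in $\Mor(X,Y)$. Thus $\ddot F$ is a bijection, and I would define $G$ by $\dot G=\dot F^{-1}$ and $\ddot G=\ddot F^{-1}$.

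It then remains to check that $G=(\dot G,\ddot G)$ is indeed a functor and that $GF=\mathsf 1_\C$, $FG=\mathsf 1_{\C'}$. The two identity equalities are immediate, since $\dot G\circ\dot F$, $\ddot G\circ\ddot F$ and their mirror images are compositions of mutually inverse bijections. The functor axioms for $G$ all transport across the bijection $\ddot F$: preservation of identities follows from $\mathsf 1'_{\dot F(X)}=\ddot F(\mathsf 1_X)$ by applying $\ddot F^{-1}$; the source/target compatibility $\ddot G(\Mor'(X',Y'))\subseteq\Mor(\dot G(X'),\dot G(Y'))$ holds by the very construction of $\ddot G$; and preservation of composition follows because, writing $f_i=\ddot F^{-1}(h_i)$, the morphisms $f_1,f_2$ are composable exactly when $h_1,h_2$ are, whence $\ddot F(f_2\circ f_1)=\ddot F(f_2)\circ'\ddot F(f_1)=h_2\circ' h_1$, and applying $\ddot F^{-1}$ gives $\ddot G(h_2\circ' h_1)=\ddot G(h_2)\circ\ddot G(h_1)$. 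I expect the only genuinely delicate point to be the passage from local to global bijectivity of $\ddot F$ — specifically the use of the functor's preservation of source and target together with the injectivity of $\dot F$ to rule out collisions between distinct hom-classes; everything after that is bookkeeping carried through the inverse bijection.
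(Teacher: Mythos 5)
Your proof is correct and follows essentially the same route as the paper's: the global bijectivity of $\ddot F$ is established by combining preservation of sources and targets with the injectivity of $\dot F$ to separate distinct hom-classes, faithfulness within a hom-class, and fullness together with surjectivity of $\dot F$ for surjectivity, after which $G$ is defined as the componentwise inverse. You additionally write out the verification that $G$ is a functor, which the paper leaves as an easy check.
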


\begin{proof} The ``only if'' part is trivial. To prove the ``if'' part, assume that $F:\C\to\C'$ is a full faithful functor whose object part $\dot F:\Ob\to\Ob'$ is bijective. Consider the function $\dot G=(\dot F)^{-1}:\Ob'\to\Ob$. We claim that the morphism part $\ddot F:\Mor\to\Mor'$ of the functor $F$ is bijective, too. Given two distinct morphisms $f,g\in\Mor$ consider the following cases.
\smallskip

1. If $\fdom (f)\ne \fdom (g)$, then $\fdom '(\ddot Ff)=\dot F(\fdom (f))\ne\dot F(\fdom (g))=\fdom '(\ddot F(g))$ and hence $\ddot Ff\ne\ddot Fg$.

2. If  $\fran (f)\ne \fran (g)$, then $\fran '(\ddot Ff)=\dot F(\fran (f))\ne\dot F(\fran (g))=\fran '(\ddot F(g))$ and hence $\ddot Ff\ne\ddot Fg$.

3. If $\fdom (f)=\fdom (g)$ and $\fran (f)=\fran (g)$, then $\ddot F(f)\ne\ddot F(g)$ since the functor $F$ is faithful.  
\smallskip

Therefore, the function $\ddot F:\Mor\to\Mor'$ is injective. To see that it is surjective, take any morphism $f'\in\Mor'$. Since the function $\dot F:\Ob\to\Ob'$ is bijective, there are $\C$-objects $X,Y$ such that $\dot F(X)=\fdom '(f')$ and $\dot F(Y)=\fran '(f')$. Since the functor $F$ is full, the function $\ddot F{\restriction}_{\Mor(X,Y)}:\Mor(X,Y)\to\Mor'(\fdom '(f'),\fran '(f'))$ is surjective, so there exists a morphism $f\in\Mor(X,Y)$ such that $\ddot F(f)=f'$. Therefore, the function $\ddot F:\Mor\to\Mor'$ is bijective and we can consider the function $\ddot G=(\ddot F)^{-1}:\Mor'\to\Mor'$. 

It is easy to check that $G=(\dot G,\ddot G):\C'\to \C$ is a functor such that $GF=\mathsf 1_{\C}$ and $FG=\mathsf 1_{\C'}$.
\end{proof}

We recall that two categories $\C,\C'$ are called {\em equivalent} if there exist functors $F:\C\to\C'$ and $G:\C'\to\C$ such that $GF\cong \mathsf 1_{\C}$ and $FG\cong\mathsf 1_{\C'}$ where $\cong$ stands for the isomorphism of functors.

\begin{proposition}\label{p:skelet:i=e} Two skeletal categories $\C,\C'$ are isomorphic if and only if they are equivalent.
\end{proposition}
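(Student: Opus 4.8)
The plan is to prove both implications. The forward direction (isomorphic implies equivalent) is immediate, since any isomorphism of categories is in particular an equivalence: if $GF=\mathsf 1_{\C}$ and $FG=\mathsf 1_{\C'}$, then certainly $GF\cong \mathsf 1_{\C}$ and $FG\cong\mathsf 1_{\C'}$, because equality of functors implies isomorphism of functors (the identity natural transformation is an isomorphism). So the entire content lies in the reverse implication: assuming $\C,\C'$ are skeletal and equivalent, I want to upgrade the equivalence to an isomorphism.

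First I would fix functors $F:\C\to\C'$ and $G:\C'\to\C$ together with natural isomorphisms $\eta:GF\to\mathsf 1_{\C}$ and $\theta:FG\to\mathsf 1_{\C'}$ witnessing the equivalence. The key observation is that skeletality forces these natural isomorphisms to be identities on objects. Indeed, for any $\C$-object $X$, the component $\eta_X:GFX\to X$ is a $\C$-isomorphism, so $GFX$ and $X$ are isomorphic objects in the skeletal category $\C$, whence $GFX=X$. Likewise $FGY=Y$ for every $\C'$-object $Y$. This already shows that the object parts satisfy $\dot G\circ\dot F=\Id{\restriction}_{\Ob_\C}$ and $\dot F\circ\dot G=\Id{\restriction}_{\Ob_{\C'}}$, so in particular $\dot F:\Ob_\C\to\Ob_{\C'}$ is bijective.

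Next I would verify that $F$ is a full faithful functor, so that Theorem~\ref{t:isomorphism} applies and yields the desired isomorphism of categories. Faithfulness: if $Ff=Fg$ for $\C$-morphisms $f,g\in\Mor_\C(X,Y)$, then applying $G$ gives $GFf=GFg$; using the naturality squares of $\eta$ for $f$ and for $g$, namely $\eta_Y\circ GFf=f\circ\eta_X$ and $\eta_Y\circ GFg=g\circ\eta_X$, together with the invertibility of $\eta_X,\eta_Y$, I conclude $f=g$. Fullness: given a $\C'$-morphism $h\in\Mor_{\C'}(FX,FY)$, set $f=\eta_Y\circ Gh\circ\eta_X^{-1}\in\Mor_\C(X,Y)$; then a diagram chase using naturality of $\eta$ and the relation $FGh=h$ (valid because $\theta$ has identity object components, so its naturality square for $h$ collapses) shows $Ff=h$. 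Thus $F$ is full and faithful with bijective object part.

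The main obstacle, and the step requiring the most care, is the fullness computation: one must untangle the interplay between the two natural isomorphisms $\eta$ and $\theta$ and check that the candidate preimage $f$ really maps to $h$ under $F$. The cleanest route is to first establish the lemma that for a natural isomorphism whose every object component is an identity (forced here by skeletality), the naturality square reduces to $Gh=\eta_Y^{-1}\circ f\circ\eta_X$ type identities with the ambient functors behaving coherently; once this bookkeeping is in place, applying Theorem~\ref{t:isomorphism} finishes the proof. I would present the object-level collapse first as the conceptual heart, then dispatch faithfulness and fullness as the two verifications feeding into Theorem~\ref{t:isomorphism}.
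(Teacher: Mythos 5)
Your overall strategy matches the paper's: show that skeletality forces $GFX=X$ and $FGY=Y$ so that $\dot F$ is bijective on objects, check that $F$ is full and faithful, and invoke Theorem~\ref{t:isomorphism}. The paper dispatches fullness and faithfulness in one line, by observing that $GF\cong\mathsf 1_{\C}$ and $FG\cong\mathsf 1_{\C'}$ are full and faithful functors and that this property descends to $F$ and $G$.

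There is, however, a genuine flaw in your fullness verification. You claim that $FGh=h$ ``because $\theta$ has identity object components.'' Skeletality forces the \emph{objects} to coincide, i.e.\ $FG(FX)=FX$, but the component $\theta_{FX}:FG(FX)\to FX$ is then an \emph{automorphism} of $FX$, not necessarily the identity morphism --- objects in a skeletal category can have plenty of nontrivial automorphisms. The naturality square for $\theta$ at $h$ therefore only yields $FGh=\theta_{FY}^{-1}\circ h\circ\theta_{FX}$, a conjugate of $h$, and your computation of $Ff$ does not close up unless the triangle identities $F\eta_X=\theta_{FX}$ happen to hold, which a bare equivalence does not guarantee. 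The repair is standard and short: first deduce from $FG\cong\mathsf 1_{\C'}$ that $G$ is faithful (exactly as you deduce faithfulness of $F$ from $\eta$); then, with your candidate $f=\eta_Y\circ Gh\circ\eta_X^{-1}$, the naturality square of $\eta$ at $f$ gives $\eta_Y\circ GFf=f\circ\eta_X=\eta_Y\circ Gh$, hence $GFf=Gh$, and faithfulness of $G$ yields $Ff=h$. With that substitution your argument is correct and coincides with the paper's.
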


\begin{proof} If categories $\C,\C'$ are equivalent, then there exist functors $F:\C\to\C'$ and $G:\C'\to\C$ such that $GF\cong \mathsf 1_{\C}$ and $FG\cong \mathsf 1_{\C'}$. The latter isomorphisms imply that the functors $GF$ and $FG$ are full and faithful and so are the functors $F$ and $G$. Since the categories $\C,\C'$ are skeletal, for any $\C$-object $X$ and $\C'$-object $X'$, the isomorphisms $GFX\cong X$ and $FGX'\cong X'$ imply $GFX=X$ and $FGX'=X'$. This means that the functors $F$ and $G$ are bijective on objects. By Theorem~\ref{t:isomorphism}, the categories $\C,\C'$ are isomorphic.
\end{proof}

A functor $F:\C\to\C'$ is defined to be \index{functor!surjective on objects}\index{functor!essentially surjective on objects}({\em essentially}) {\em surjective on objects} if for any $\C'$-object $Y$ there exists a $\C$-object $X$ such that $FX=Y$ (resp. $FX\cong Y$).

\begin{theorem}\label{t:equivalent} Under $(\mathsf{EC})$, two categories $\C,\C'$ are equivalent if and only if there exists a full faithful functor $F:\C\to\C'$ which is essentially surjective on objects.
\end{theorem}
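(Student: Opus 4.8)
The plan is to prove both implications, with the forward direction being essentially trivial and the reverse direction constituting the real content. For the ``only if'' part, suppose $\C,\C'$ are equivalent, so there are functors $F:\C\to\C'$ and $G:\C'\to\C$ with $GF\cong \mathsf 1_\C$ and $FG\cong\mathsf 1_{\C'}$. I would first record the standard fact (following the reasoning already used in Proposition~\ref{p:skelet:i=e}) that these isomorphisms of functors force $F$ to be full and faithful: the isomorphisms $GFX\cong X$ natural in $X$ give, for any $\C$-morphism, an inversion formula expressing it through $GF$, which yields faithfulness of $F$ from faithfulness of $GF$, and a symmetric argument gives fullness. Then $F$ is essentially surjective on objects because for every $\C'$-object $Y$ we have $FGY\cong Y$, so $X=GY$ is the required $\C$-object with $FX\cong Y$. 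This direction uses no choice principle.

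For the ``if'' part, assume $F:\C\to\C'$ is full, faithful, and essentially surjective on objects; I must construct a quasi-inverse functor $G$. First I would invoke the choice principle $(\mathsf{EC})$ to pick, for each $\C'$-object $Y$, a $\C$-object $GY$ together with a chosen isomorphism $\eta_Y\in\Iso_{\C'}(FGY,Y)$. Concretely, essential surjectivity means the class $\{\langle Y,\langle X,\theta\rangle\rangle : \theta\in\Iso_{\C'}(FX,Y)\}$ has nonempty fibers over each $Y$, and $(\mathsf{EC})$ (via the implication $(\mathsf{EC})\Ra(\mathsf{AC^c_c})$ established in Lemma~\ref{l:EC=>ACcc}) provides a global selection function $Y\mapsto\langle GY,\eta_Y\rangle$. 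This defines the object part $\dot G$ of the prospective functor and simultaneously the components of a natural isomorphism $\eta:FG\to\mathsf 1_{\C'}$.

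Next I would define the morphism part of $G$. Given a $\C'$-morphism $g\in\Mor_{\C'}(Y,Y')$, consider the composite $\eta_{Y'}^{-1}\circ g\circ\eta_Y\in\Mor_{\C'}(FGY,FGY')$; since $F$ is full and faithful, the function $\ddot F{\restriction}_{\Mor_\C(GY,GY')}:\Mor_\C(GY,GY')\to\Mor_{\C'}(FGY,FGY')$ is bijective, so there is a unique $\C$-morphism $Gg$ with $F(Gg)=\eta_{Y'}^{-1}\circ g\circ\eta_Y$. I would then check that $G=(\dot G,\ddot G)$ is a functor: functoriality ($G(\mathsf 1_Y)=\mathsf 1_{GY}$ and $G(g'\circ g)=Gg'\circ Gg$) follows because $F$ is faithful and $F$ applied to both sides agrees, using the cancellation of the $\eta$'s in the middle. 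By construction $FG\cong\mathsf 1_{\C'}$ witnessed by $\eta$. It remains to produce a natural isomorphism $GF\cong\mathsf 1_\C$: for each $\C$-object $X$, the faithful fullness of $F$ lets me transport the isomorphism $\eta_{FX}:FGFX\to FX$ back to a unique $\C$-morphism $\varepsilon_X:GFX\to X$ with $F\varepsilon_X=\eta_{FX}$, and since $F$ reflects isomorphisms (a full faithful functor does), $\varepsilon_X$ is a $\C$-isomorphism; naturality of $\varepsilon$ again follows by applying $F$ and using faithfulness.

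The main obstacle I anticipate is not any single computation but the careful bookkeeping needed to verify that the morphism assignment $\ddot G$ is well-defined and functorial purely through the ``apply $F$ and cancel'' technique, together with checking that all the verifications (functoriality, naturality of $\eta$ and $\varepsilon$, and the fact that $\varepsilon_X$ is an isomorphism) genuinely reduce to faithfulness of $F$. The one place where a set-theoretic subtlety enters is the initial global choice of the pairs $\langle GY,\eta_Y\rangle$ indexed by the possibly proper class $\Ob_{\C'}$; this is exactly why $(\mathsf{EC})$ (equivalently $(\mathsf{AC^c_c})$ by Lemma~\ref{l:EC=>ACcc}) is the hypothesis rather than the ordinary Axiom of Choice, and I would make explicit that the selection is performed over a class, not a set.
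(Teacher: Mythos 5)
Your proof is correct, but it takes a genuinely different route from the paper's. The paper reduces the statement to previously established machinery: by Theorem~\ref{t:skeleton} (which is where $(\mathsf{EC})$ enters) it replaces $\C$ and $\C'$ by skeleta $\mathcal S,\mathcal S'$ together with retraction functors, shows that the induced functor $\Phi=R'FJ:\mathcal S\to\mathcal S'$ is full, faithful and \emph{bijective} on objects (surjectivity from essential surjectivity of $F$ plus skeletality of $\mathcal S'$, injectivity from full faithfulness plus skeletality of $\mathcal S$), invokes Theorem~\ref{t:isomorphism} to get an actual inverse functor $\Psi$, and then conjugates back to obtain the quasi-inverse $G=J\Psi R'$. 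You instead give the classical direct construction of a quasi-inverse: use $(\mathsf{EC})\Ra(\mathsf{AC^c_c})$ (Lemma~\ref{l:EC=>ACcc}) to choose, for each $\C'$-object $Y$, a pair $\langle GY,\eta_Y\rangle$ with $\eta_Y\in\Iso_{\C'}(FGY,Y)$ — this is legitimate in NBG since objects and morphisms, being elements of classes, are sets, so the fibers form an indexed family of nonempty classes over $\Ob_{\C'}$ — and then define $\ddot G$ by transporting $\eta_{Y'}^{-1}\circ g\circ\eta_Y$ back through the bijection $\Mor_\C(GY,GY')\to\Mor_{\C'}(FGY,FGY')$. Your approach is self-contained and makes the single use of the class-choice principle completely explicit (one simultaneous selection of objects and witnessing isomorphisms), at the cost of more routine ``apply $F$ and cancel'' verifications; the paper's approach buys brevity by leaning on Theorems~\ref{t:skeleton} and \ref{t:isomorphism}, and in exchange makes the structural role of skeleta visible, which it then exploits again in Theorem~\ref{t:GWO-iso-cat}. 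Both arguments consume $(\mathsf{EC})$ in essentially the same place — a global choice of isomorphism-class representatives (with attached isomorphisms) over a proper class — so neither is weaker in hypotheses than the other.
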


\begin{proof} The ``only if'' part is trivial. To prove the ``if'' part, assume that here exists a full faithful functor $F:\C\to\C'$ which is essentially surjective on objects. Write the categories $\C$ and $\C'$ in expanded form as $6$-tuples: $\C=(\Ob,\Mor,\fdom ,\fran ,\mathsf 1,\circ)$ and  $\C'=(\Ob',\Mor',\fdom ',\fran ',\mathsf 1',\circ')$. 

By Theorem~\ref{t:skeleton}, under $(\mathsf{EC})$, the categories $\C,\C'$ have skeleta $\mathcal S\subseteq\C$, $\mathcal S'\subseteq\C$, and there are full faithful functors $R:\C\to\mathcal S$ and $R':\C'\to\mathcal S'$ such that for the indentity embeddings $J:\mathcal S\to\C$ and $J':\mathcal S'\to\mathcal C'$ we have $RJ=\mathsf 1_{\mathcal S}$, $JR\cong 1_{\C}$, $R'J'=\mathsf 1_{\mathcal S'}$, $J'R'\cong \mathsf 1_{\mathcal C'}$.

Since the functors $R',F,J$ are full and faithful, so is their composition $\Phi=R'FJ:\mathcal S\to\mathcal S'$. Since $\mathcal S'$ is a skeleton of $\C'$ and the functor $F$ is essentially surjective on objects, the functor $\Phi$ is surjective on objects. Next, we show that $\Phi$ is injective on objects. Assuming that $\Phi X=\Phi Y$ for some $\mathcal S$-objects $X,Y$ and using the faithful property of $\Phi$, we conclude that the functions $\ddot\Phi{\restriction}_{\Mor(X,Y)}:\Mor(X,Y)\to\Mor'(\Phi X,\Phi Y)$ and 
$\ddot\Phi{\restriction}_{\Mor(Y,X)}:\Mor(Y,X)\to\Mor'(\Phi Y,\Phi X)$
are bijective and hence there exist $\mathcal S$-morphisms $f\in\Mor(X,Y)$ and $g\in\Mor(Y,X)$ such that $\Phi f=\mathsf 1_{\Phi X}$ and $\Phi g=\mathsf 1_{\Phi Y}$. Then $\Phi(f\circ g)=\Phi f\circ'\Phi g=\mathsf 1_{\Phi X}\circ \mathsf 1_{\Phi Y}=\mathsf 1_{\Phi Y}=\ddot \Phi(\mathsf 1_Y)$ and hence $f\circ g=\mathsf 1_Y$ by the injectivity of the restriction $\ddot\Phi{\restriction}_{\Mor(Y,Y)}$. By analogy we can prove that $g\circ f=\mathsf 1_X$. This means that $f:X\to Y$ is a $\C$-isomorphism. Since the category $\mathcal S$ is skeletal, $X=Y$. Therefore, the function $\dot\Phi$ is bijective. By (the proof of) Theorem~\ref{t:isomorphism},  there exists a functor $\Psi:\mathcal S'\to\mathcal S$ such that $\Psi\Phi=\mathsf 1_{\mathcal S}$ and $\Phi\Psi=\mathsf 1_{\mathcal S'}$. Then for the functor $G=J\Psi R'\colon\C'\to\C$ we have $FG=FJ\Psi R'\cong J'R'FJ\Psi R'=J'\Phi\Psi R'=J'R'\cong 1_{\C'}$ and $GF=J\Psi R' F\cong J\Psi R' FJR=J\Psi\Phi R=JR\cong 1_\C$, witnessing that the categories $\C,\C'$ are equivalent.
\end{proof}

In the following theorem we use $(\mathsf{GWO})$, the principle of Global Well-Orderability, which is the strongest among Choice Principles, considered in Section~\ref{s:GChoice}.

\begin{theorem}\label{t:GWO-iso-cat} Under $(\mathsf{GWO})$, two categories $\C$ and $\C'$ are isomorphic if and only if there exists a full faithful functor $F:\C\to\C'$ such that $F$ is essentially surjective on objects and for any object $A\in\Ob_\C$ we have $|\{X\in \Ob_\C:X\cong_\C A\}|=|\{Y\in\Ob_{\C'}:Y\cong_{\C'} FA\}|$.
\end{theorem}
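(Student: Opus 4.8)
The plan is to prove the two implications separately, the forward one being routine and the reverse one carrying essentially all the weight. For the ``only if'' part I would suppose $\C\cong\C'$, witnessed by mutually inverse functors $F\colon\C\to\C'$ and $G\colon\C'\to\C$. Then $F$ is bijective on objects and on morphisms, hence full, faithful, and (being bijective on objects) essentially surjective. Every functor preserves isomorphisms, and since $F$ is invertible it also reflects them, so for each $A\in\Ob_\C$ the object bijection $\dot F$ restricts to a bijection between $\{X\in\Ob_\C:X\cong_\C A\}$ and $\{Y\in\Ob_{\C'}:Y\cong_{\C'}FA\}$; in particular these classes are equipotent, which is exactly the stated cardinality condition.

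For the ``if'' part I would assume $F$ is full, faithful, essentially surjective on objects, and satisfies the cardinality condition. Since $(\mathsf{GWO})\Ra(\mathsf{EC})$ (the implication diagram of Theorem~\ref{t:mainAGC}), Theorem~\ref{t:equivalent} applies; from its proof together with Theorem~\ref{t:skeleton} I obtain skeleta $\mathcal S\subseteq\C$ and $\mathcal S'\subseteq\C'$ with full faithful retractions $R\colon\C\to\mathcal S$, $R'\colon\C'\to\mathcal S'$ satisfying $JR\cong\mathsf 1_\C$ and $J'R'\cong\mathsf 1_{\C'}$ for the embeddings $J,J'$, and an isomorphism $\Phi=R'FJ\colon\mathcal S\to\mathcal S'$ of the two skeleta (these are skeletal categories that are equivalent, hence isomorphic by Proposition~\ref{p:skelet:i=e}). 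The object part $\dot\Phi$ is then a bijection, and $\Phi(s)\cong_{\C'}F(s)$ for every $s\in\Ob_{\mathcal S}$; consequently $FX\cong_{\C'}\Phi(R(X))$ for every $X\in\Ob_\C$. The goal is to replace $F$ by an isomorphic functor $F'$ whose object part is a genuine bijection $\Ob_\C\to\Ob_{\C'}$, after which Theorem~\ref{t:isomorphism} delivers $\C\cong\C'$.

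The main obstacle is building that object bijection. I would partition $\Ob_\C$ and $\Ob_{\C'}$ into isomorphism classes, writing $[s]_\C=\{X:X\cong_\C s\}$ and $[s']_{\C'}=\{Y:Y\cong_{\C'}s'\}$ indexed by $\Ob_{\mathcal S}$ and $\Ob_{\mathcal S'}$; applying the hypothesis to each $s$ gives $|[s]_\C|=|[F(s)]_{\C'}|=|[\Phi(s)]_{\C'}|$ (the last equality because $F(s)\cong_{\C'}\Phi(s)$). The difficulty is to assemble fibrewise bijections $b_s\colon[s]_\C\to[\Phi(s)]_{\C'}$ into one class function, the obstruction being that these isomorphism classes may be proper classes (as in $\Set$), so no set-level choice principle suffices. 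This is precisely where $(\mathsf{GWO})$ is indispensable: I would fix a set-like well-order $W$ on $\UU$, restrict it to $[s]_\C$ and $[\Phi(s)]_{\C'}$, and use Theorem~\ref{t:wOrd} to get rank isomorphisms of each onto an ordinal or onto $\Ord$. When both classes are proper, both ranks equal $\Ord$ and I take $b_s$ to be the composite of the two rank isomorphisms; when they are sets the hypothesis and $(\mathsf{AC})$ (which follows from $(\mathsf{GWO})$) make the set of bijections nonempty, and I take $b_s$ to be its $W$-least element. By G\"odel's class-existence Theorem~\ref{t:class} the assignment $\dot F'=\bigcup_{s}b_s$ is a definable class function; it is a bijection over $\dot\Phi$ with $\dot F'(X)\in[\Phi(R(X))]_{\C'}$, so $\dot F'(X)\cong_{\C'}FX$ for all $X$.

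To finish I would use $W$ once more to choose, for each $X$, the $W$-least isomorphism $\eta_X\colon FX\to\dot F'(X)$ (the class $\Iso_{\C'}(FX,\dot F'(X))$ is nonempty and has a $W$-least element even when proper), and set $\ddot F'(f)=\eta_{X_1}\circ Ff\circ\eta_X^{-1}$ for $f\in\Mor_\C(X,X_1)$. Then $F'=(\dot F',\ddot F')$ is a functor isomorphic to $F$ via the natural isomorphism $\eta$, hence full and faithful because $F$ is, while $\dot F'$ is bijective on objects by construction; Theorem~\ref{t:isomorphism} then gives $\C\cong\C'$. I expect the only delicate verifications to be that $\dot F'$, $\eta$, and $\ddot F'$ are legitimate classes (handled by Theorem~\ref{t:class}) and that $F'$ is genuinely a functor naturally isomorphic to $F$; the cardinality hypothesis is used exactly once, to guarantee existence of the fibrewise bijections $b_s$.
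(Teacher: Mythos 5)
Your proposal is correct and follows essentially the same route as the paper's proof: reduce to skeleta via Theorems~\ref{t:skeleton} and \ref{t:equivalent}, split each isomorphism class according to whether it is a set (handled by global choice of a bijection) or a proper class (handled by the rank isomorphisms onto $\Ord$ coming from the global set-like well-order), assemble the resulting fibrewise bijections into a global object bijection, and transport the morphism part before invoking Theorem~\ref{t:isomorphism}. The only differences are cosmetic — the paper defines the morphism part by uniqueness of lifts through $R'$ and normalizes so that the new functor extends $\Phi$, whereas you conjugate by explicitly chosen isomorphisms $\eta_X$; both amount to the same construction.
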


\begin{proof} The ``only if'' part is trivial. To prove the ``if'' part, assume that  there exists a full faithful functor $F:\C\to\C'$ such that $F$ is essentially surjective on objects, and for any $\C$-object $a$ we have $|\{x\in \Ob_\C:x\cong_\C a\}|=|\{y\in\Ob_{\C'}:y\cong_{\C'} Fa\}|$. 

Since $(\mathsf{GWO})\Ra(\mathsf{EC})$, we can apply Theorem~\ref{t:skeleton} and conclude that the categories $\C,\C'$ have skeleta $\mathcal S\subseteq\C$ and $\mathcal S'\subseteq\C'$, and there exist full faithful functors $R:\C\to\mathcal S$ and $R':\C'\to\mathcal S'$ such that for the identity embeddings $J:\mathcal S\to\C$ and $J':\mathcal S'\to\mathcal C'$ we have $RJ=\mathsf 1_{\mathcal S}$, $JR\cong 1_{\C}$, $R'J'=\mathsf 1_{\mathcal S'}$, $J'R'\cong \mathsf 1_{\mathcal C'}$.

Consider the functor $\Phi=R'FJ:\mathcal S\to\mathcal S'$. 
By (the proof of) Theorem~\ref{t:equivalent}, there exists a functor $\Psi:\mathcal S'\to\mathcal S$ such that $\Psi\Phi=\mathsf 1_{\mathcal S}$ and $\Phi\Psi=\mathsf 1_{\mathcal S'}$.  Observe that for every $a\in\Ob_{\mathcal S}$ we have $\Phi a\cong Fa$ and hence
\begin{equation}\label{eq:card-iso}
|\{x\in\Ob_\C:x\cong_\C a\}|=|\{y\in \Ob_{\C'}:y\cong_{\C'} Fa\}|=|\{y\in \Ob_{\C'}:y\cong_{\C'} \Phi a\}|.
\end{equation}
Consider the classes $\Ob^{\mathsf s}_{\C}=\{x\in\Ob_\C:\{y\in \Ob_\C:y\cong_\C x\}\in\UU\}$ and $\Ob_{\mathcal S}^{\mathsf s}=\Ob_\C^{\mathsf s}\cap\Ob_{\mathcal S}$. The  existence of these classes follows from Theorem~\ref{t:class}. Using the Axiom of Global Choice (which follows from $(\mathsf{GWO}))$ and the equality (\ref{eq:card-iso}), we can find a function $\Theta_*: \Ob^\mathsf{s}_{\mathcal S}\to\UU$ assigning to each object $a\in \Ob^{\mathsf s}_{\mathcal S}$ a bijective function $\Theta_a$ such that $\dom[\Theta_a]=\{x\in\Ob_\C:x\cong_\C a\}$, $\rng[\Theta_a]=\{y\in\Ob_{\C'}:y\cong_{\C'} \Phi a\}$ and $\Theta_a(a)=\Phi a$.  

By $(\mathsf{GWO})$ there exists a set-like well-order $\mathbf W$ with $\dom[\mathbf W^\pm]=\UU$. This well-order  induces the set-like well-founded orders $$W=\{\langle x,y\rangle\in \mathbf W:x,y\in\Ob_{\mathcal C}\setminus\Ob_{\mathcal S},\;x\cong_\C y\}\cup\{\langle x,y\rangle\in \Ob_{\mathcal S}\times\Ob_{\C}:x\cong_\C y\}$$ and
 $$W'=\{\langle x,y\rangle\in \mathbf W:x,y\in\Ob_{\mathcal C'}\setminus\Ob_{\mathcal S'},\;x\cong_{\C'} y\}\cup\{\langle x,y\rangle\in \Ob_{\mathcal S'}\times\Ob_{\C'}:x\cong_{\C'} y\}.$$
For every $\C$-object $a$ the well-order $W$ determines a set-like well-order on the equivalence class $[a]_{\cong}=\{x\in\Ob_\C:a\cong_\C x\}$ such that the unique element of the intersection $\Ob_{\mathcal S}\cap [a]_{\cong}$ is the unique $W$-minimal element of  $[a]_{\cong}$. If $a\notin \Ob_\C^{\mathcal s}$, then $[a]_{\cong}$ is not a set and hence the well-ordered class $([a]_{\cong},W{\restriction}[a]_{\cong})$ is order-isomorphic to $\Ord$ by Theorem~\ref{t:wOrd}.
By (\ref{eq:card-iso}), the same is true for the object $b=\Phi a$ and its equivalnce class  $[b]_{\cong}=\{y\in\Ob_{\C'}:y\cong_{\C'}b\}$.  Since $|[b]_{\cong}|=|[a]_{\cong}|$, $[b]_{\cong}$ is not a set and then the well-ordered class $([b]_{\cong},W'{\restriction}[b]_{\cong})$ is order isomorphic to $\Ord$.

 Let $\rank_{W}:\UU\to\Ord$ and $\rank_{W'}:\UU\to\Ord$ be the rank functions of the well-founded set-like orders $W$ and $W'$. The definition of the orders $W,W'$ implies that for every $a\in\Ob_\C\setminus\Ob_\C^{\mathsf s}$ and $b=\Phi a$ the restrictions $\rank_W{\restriction}_{[a]_{\cong}}:[a]_{\cong}\to\Ord$ and $\rank_{W'}{\restriction}_{[b]_{\cong}}:[b]_{\cong}\to\Ord$ are bijections.

Now consider the function $\dot G:\Ob_\C\to\Ob_{\C'}$ assigning to each object $a\in \Ob^{\mathsf s}_\C$ the object $\Theta_{Ra}(a)$ and to each object $a\in \Ob_\C\setminus\Ob^{\mathsf s}_\C$ the unique object $b\in\Ob_{\C'}\setminus\Ob^{\mathsf s}_{\C'}$ such that $b\cong \Phi Ra$ and $\rank_{W'}(b)=\rank_W(a)$. The choice of the function $\Theta_*$ and the well-orders $W,W'$ ensures that the function $\dot G:\Ob_\C\to\Ob_{\C'}$ is bijective and $\dot\Phi\subseteq\dot G$. Consider the function $\ddot G:\Mor_{\C}\to\Mor_{\C'}$ assigning to any $\C$-objects $a,b$ and morphism $f\in\Mor_\C(a,b)$ a unique morphism $g\in\Mor_{\C'}(\dot G(a),\dot G(b))$ such that $R'g=\Phi Rf$. It can be shown that the function $\ddot G$ is bijective and hence $G:\C\to\C'$ is a full faithful functor whose object part $\dot G$ is bijective. By Theorem~\ref{t:isomorphism}, the categories $\C,\C'$ are isomorphic.
\end{proof}

\section{Limits and colimits}

In this section we discuss the notions of limit and colimit of a diagram in a category.

Limits  and colimits are general categorial notions whose partial cases are products and coproducts, pullbacks and pushouts, equalizers and coequalizes.

\subsection{Products and coproducts} Let $\C$ be a category. 

\begin{definition} For two $\C$-objects $X,Y$, their \index{product of objects}{\em $\C$-product} is a $\C$-object $X\times Y$ endowed with two $\C$-morphisms $\pi_X\in\Mor_\C(X\times Y,X)$ and $\pi_Y\in\Mor_\C(X\times Y, Y)$, called \index{cordinate projection}{\em the coordinate projections}, such that the triple $(X\times Y,\pi_X,\pi_Y)$ has the following universal property: for any $\C$-object $A$ and $\C$-morphisms $f\in\Mor_\C(A,X)$ and $g\in\Mor_\C(A,Y)$ there exists a unique $\C$-morphism $h\in\Mor_\C(A,X\times Y)$  such that $f=\pi_X\circ h$ and $g=\pi_Y\circ h$. 
\end{definition}
This definition is illustrated by the diagram:
$$\xymatrix{
A\ar@/^10pt/^{g}[rrd]\ar@/_10pt/_{f}[rdd]\ar@{..>}|{h}[rd]\\
&X\times Y\ar_(.6){\pi_Y}[r]\ar^{\pi_X}[d]&Y\\
&X\\
}
$$

The uniqueness of the morphism $h$ implies that a $\C$-product $X\times Y$ if exists, then is unique up to a $\C$-isomorphism. 


\begin{exercise} Prove that for any sets $X,Y$, their $\C$-product $X\times Y$ endowed with the projections $\pi_X=\dom{\restriction}_{X\times Y}$ and $\pi_Y=\rng{\restriction}_{X\times Y}$ is a products of $X$ and $Y$ is the category $\Set$.
\end{exercise}

\begin{exercise} Identify products of objects in the categories considered in Examples~\ref{ex:sMS}--\ref{ex:po-cat}.
\end{exercise}

\begin{definition} A category $\C$ is defined to have \index{category!with binary products}{\em binary products} if for any $\C$-objects $X,Y$ there exists a product $X\times Y$ in $\C$.
\end{definition}

The dual notion to a product is that of a coproduct.

\begin{definition} For two $\C$-objects $X,Y$ their \index{coproduct}{\em $\C$-coproduct} is any $\C$-object $X\sqcup Y$ endowed with two $\C$-morphisms  $i_X:X\to X\sqcup Y$ and $i_Y:Y\to X\sqcup Y$, called the \index{coordinate coprojection}{\em coordinate coprojections}, such that the triple $(X\sqcup Y,i_X,i_Y)$ has the following universal property: for any $\C$-object $A$ and $\C$-morphisms $f:X\to A$ and $g:Y\to A$ there exists a unique $\C$-morphism $h:X\sqcup  Y\to A$ such that $f=i_X\circ h$ and $g=i_Y\circ h$. 
\end{definition}
This definition is illustrated by the diagram:
$$\xymatrix{
A\\
&X\sqcup Y\ar@{..>}|{h}[lu]&Y\ar@/_10pt/_{g}[llu]\ar^(.4){i_Y}[l]\\
&X\ar@/^10pt/^{f}[uul]\ar_{i_X}[u]\\
}
$$

The uniqueness of the morphism $h$ implies that a $\C$-coproduct $X\sqcup Y$ if exists, then is unique up to a $\C$-isomorphism. 

\begin{exercise} Prove that for any disjoint sets $X,Y$, their union $X\cup Y$ endowed with the identity embeddings $i_X=\Id{\restriction}_X:X\to X\cup Y$ and $i_Y=\Id_Y{\restriction}_Y:X\to X\cup Y$ is a $\Set$-coproduct of $X$ and $Y$.
\end{exercise}

\begin{exercise} Prove that for any sets $X,Y$ the set $(X,Y)=(\{0\}\times X)\cup(\{1\}\times Y)$ endowed with natural injective functions $i_X:X\to (X,Y)$ and $i_Y:Y\to(X,Y)$ is a $\Set$-coproduct of $X$ and $Y$.
\end{exercise}

\begin{exercise} Identify coproducts of objects in the categories considered in  Examples~\ref{ex:sMS}--\ref{ex:po-cat}.
\end{exercise}







In fact, products and coproducts can be defined for any indexed families of objects.

\begin{definition} Let $\C$ be a category. For an indexed family of $\C$-objects $(X_i)_{i\in I}$ its
\begin{itemize}
\item   \index{product}{\em $\C$-product} $\prod_{i\in I}X_i$ is any $\C$-object $X$ endowed with a family of $\C$-morphisms $(\pi_i)_{i\in I}\in\prod_{i\in I}\Mor_\C(X,X_j)$, which has the following universal property: for  any $\C$-object $Y$ and a family of $\C$-morphisms $(f_i)_{i\in I}\in\prod_{i\in I}\Mor_\C(Y,X_i)$ there exists a unique $\C$-morphism $h\in \Mor_\C(A,X)$ such that $\forall i\in I\;\;f_i=\pi_i\circ h$;
\item \index{coproduct}{\em $\C$-coproduct} $\amalg_{i\in I}X_i$ is any $\C$-object $X$ endowed with a family of $\C$-morphisms $(e_i)_{i\in I}\in\prod_{i\in I}\Mor_\C(X_i,X)$, which has the following universal property: for  any $\C$-object $Y$ and  a family of $\C$-morphisms $(f_i)_{i\in I}\in\prod_{i\in I}\Mor_\C(X_i,Y)$ there exists a unique $\C$-morphism $h\in \Mor_\C(X,Y)$ such that $\forall i\in I\;\;f_i=h\circ e_i$.
\end{itemize}
\end{definition}

\begin{definition} A category $\C$ is defined
\begin{itemize}
\item \index{category!with finite products}{\em to have finite products}  if for any finite set $I$ and indexed family of $\C$-objects $(X_i)_{i\in I}$, the category $\C$ contains a product $\prod_{i\in I}X_i$ of this family;
\item \index{category!with arbitrary products}{\em to have arbitrary products}  if for any set $I$ and indexed family of $\C$-objects $(X_i)_{i\in I}$, the category $\C$ contains a product $\prod_{i\in I}X_i$ of this family;

\item \index{category!with finite coproducts}{\em to have finite coproducts}  if for any finite set $I$ and indexed family of $\C$-objects $(X_i)_{i\in I}$, the category $\C$ contains a coproduct $\amalg_{i\in I}X_i$ of this family;
\item \index{category!with arbitrary coproducts}{\em to have arbitrary coproducts}  if for any set $I$ and indexed family of $\C$-objects $(X_i)_{i\in I}$, the category $\C$ contains a coproduct $\amalg_{i\in I}X_i$ of this family.
\end{itemize}
\end{definition}

\begin{exercise} Prove that a category has finite products if and only if it has binary products.
\end{exercise}

\begin{exercise} Prove that for any indexed family of sets $(X_i)_{i\in I}$ their Cartesian product $\prod_{i\in I}X_i$ endowed with the natural projections is a $\Set$-product of the family $(X_i)_{i\in I}$.
\end{exercise}

\begin{exercise} Prove that for any indexed family of pairwise disjoint sets $(X_i)_{i\in I}$ their union $\bigcup_{i\in I}X_i$ endowed with the identity inclusions of the sets $X_i$ is a $\Set$-coproduct of the family $(X_i)_{i\in I}$.
\end{exercise}

\begin{exercise} Prove that for any indexed family of   sets $(X_i)_{i\in I}$ the set $\bigcup_{i\in I}(\{i\}\times X_i)$ endowed with the natural embeddings of the sets $X_i$ is a $\Set$-coproduct of the family $(X_i)_{i\in I}$.
\end{exercise}

\begin{exercise} Identify products of objects in the categories considered in  Examples~\ref{ex:sMS}--\ref{ex:po-cat}.
\end{exercise}

\begin{exercise} Prove that the category $\mathbf{Cat}$ of small categories has arbitrary products and coproducts.
\end{exercise}

\subsection{Pullbacks and pushouts} In this subsection we consider pullbacks and pushouts, called also fibered products and coproducts.

Given a category $\C$, consider the following diagram consisting of three $\C$-objects $X,Y,Z$ and two $\C$-morphisms $g_X:X\to Z$ and $g_Y:Y\to Z$.
$$
\xymatrix{
&Y\ar^{g_Y}[d]\\
X\ar_{g_X}[r]&Z
}
$$
The \index{pullback}{\em pullback} $X\times_Z Y$ of this diagram is any $\C$-object $P$ endowed with two $\C$-morphisms $\pi_XLP\to X$ and $\pi_Y:P\to Y$ such that $g_X\circ\pi_X=g_Y\circ\pi_Y$ and the triple $(P,\pi_X,\pi_Y)$ has the following universality property: for any $\C$-object $P'$ and $\C$-morphisms $f_X:P'\to X$, $f_Y:P'\to Y$ with $g_X\circ f_X=g_Y\circ f_Y$, there exists a unique $\C$-morphism $h:P'\to P$ such that $f_X=\pi_X\circ h$ and $f_Y=\pi_Y\circ h$.
This definition is better seen at the diagram:
$$
\xymatrix{
P'\ar@/^10pt/^{\pi'_Y}[rrd]\ar@/_10pt/_{\pi'_X}[rdd]\ar@{..>}|{h}[rd]\\
&X\times_Z Y\ar^{\pi_Y}[r]\ar_{\pi_X}[d]&Y\ar^{g_Y}[d]\\
&X\ar_{g_X}[r]&Z
}
$$
If the category $\C$ has a terminal object $\mathtt 1$, then the product $X\times Y$ is a pullback $X\times_{\mathtt 1}Y$ of the diagram
$$
\xymatrix{
&Y\ar[d]\\
X\ar[r]&\mathtt 1.
}
$$

The notion of a pushout is dual to that of pullback.
\smallskip

Given a category $\C$, consider the following diagram consisting of three $\C$-objects $X,Y,Z$ and two $\C$-morphisms $g_X:Z\to X$ and $g_Y:Z\to Y$:
$$
\xymatrix{
&Y\\
X&Z\ar_{g_Y}[u]\ar^{g_X}[l]
}
$$
The \index{pushout}{\em pushout} $X\sqcup_Z Y$ of this diagram is any $\C$-object $P$ endowed with two $\C$-morphisms $i_X:X\to P$ and $i_Y:Y\to P$ such that $i_X\circ g_X=i_Y\circ g_Y$ and the triple $(P,i_X,i_Y)$ has the following universal property: for any $\C$-object $P'$ and $\C$-morphisms $f_X:X\to P'$, $f_Y:Y\to P'$ with $f_X\circ g_X=f_Y\circ g_Y$, there exists a unique $\C$-morphism $h: P'\to P$ such that $f_X=h\circ i_X$ and $f_Y=h\circ i_Y$.
This definition is better seen at the diagram:
$$
\xymatrix{
P'\\
&X\sqcup_Z Y\ar@{..>}|{h}[lu]&Y\ar^(.4){i_Y}[l]\ar@/_10pt/_{f_Y}[llu]\\
&X\ar_{i_X}[u]\ar@/^10pt/^{f_X}[luu]&Z\ar^{g_X}[l]\ar_{g_Y}[u]
}
$$
If the category $\C$ has an initial object $\mathtt 0$, then the coproduct $X\sqcup Y$ is a pullback $X\sqcup_{\mathtt 0}Y$ of the diagram
$$
\xymatrix{
&Y\\
X&\mathtt 0\ar[l]\ar[u]
}
$$

A category $\C$ is defined 
\begin{itemize}
\item \index{category!with pullbacks}{\em to have pullbacks} if any diagram consisting of two  $\C$-morphisms $g_X:X\to Z$ and $g_Y:Y\to Z$  has a pullback. 
\item \index{category!with pushouts}{\em to have pushouts} if any diagram consisting of two  $\C$-morphisms $g_X:Z\to X$ and $g_Y:Z\to X$  has a pushout.
\end{itemize}
It is clear that a category has pullbacks if and only the dual category has pushouts.

\subsection{Equalizers and coequalizers} For two objects $X,Y$ of a category $\C$ and two morphisms $f,g\in\Mor_\C(X,Y)$, an \index{equalizer}{\em equalizer} of the pair of $(f,g)$ is a $\C$-object $E$ endowed with a $\C$-morphism $e:E\to X$ such that $f\circ e=g\circ e$ and for any $\C$-object $E'$ and $\C$-morphism $e':E'\to X$ with $f\circ e'=g\circ e'$ there exists a unique $\C$-morphism $h:E'\to E$ such that $e'=e\circ h$.
This definition is better seen on the commutative diagram:
$$
\xymatrix{
E'\ar@/^10pt/^{e'}[rrd]\ar@/_10pt/_{e'}[rdd]\ar@{..>}|{h}[rd]\\
&E\ar^{e}[r]\ar_{e}[d]&X\ar^{f}[d]\\
&X\ar_{g}[r]&Y
}
$$
\begin{example} In the category $\Set$ an equalizer of two functions $f,g:X\to Y$ is the  set $E=\{x\in X:f(x)=g(x)\}$ endowed with the identity embedding $e:E\to X$.
\end{example}

\begin{exercise}\label{ex:p+e=>pb} Prove that a category $\C$ has pullbacks if it has  equalizers and binary products.
\smallskip

\noindent{\em Hint}: Observe that for morphisms $f:X\to Z$ and $g:Y\to Z$ the pullback $X\times_Z Y$ is isomorphic to the equalizer $E$ of the pair $(f\circ\pr_X,g\circ \pr_Y)$ where $\pr_X,\pr_Y$ are coordinate projections of the product $X\times Y$.
\end{exercise}

Coequalizers are defined dually. 

 For two objects $X,Y$ of a category $\C$ and two morphisms $f,g\in\Mor_\C(X,Y)$, a \index{coequalizer}{\em coequalizer} of the pair $(f,g)$ is a $\C$-object $E$ endowed with a $\C$-morphism $e:Y\to E$ such that $e\circ f=e\circ g$ and for any $\C$-object $E'$ and $\C$-morphism $e':X\to E'$ with $e'\circ f=e'\circ g$ there exists a unique $\C$-morphism $h:E\to E'$ such that $e'=h\circ e$.
On a diagram this definition looks as follows.
$$
\xymatrix{
E'\\
&E\ar@{..>}|{h}[lu]&X\ar^e[l]\ar@/_10pt/_{e'}[llu]\\
&X\ar_{e}[u]\ar@/^10pt/^{e'}[luu]&Y\ar^{f}[l]\ar_{g}[u]
}
$$


\begin{exercise} Find a coequalizer of two functions in the category of sets.
\end{exercise}

A category $\C$ is defined \index{category!with equalizers}\index{category!with coequalizers}{\em to have} ({\em co}){\em equalizers} if any pair of $\C$-morphisms $f,g:X\to Y$ has a (co)equalizer.

\subsection{Limits and colimits of diagrams} Products, pullbacks, and equalizers are particular cases of limits of diagrams in a category. 

By definition, a \index{diagram}{\em diagram} in a category $\C$ is any functor $D:\mathcal D\to\mathcal C$ defined on a small category $\mathcal D$. For a fixed small category $\mathcal D$, a functor $D:\mathcal D\to\C$ is called a {\em $\mathcal D$-diagram} in $\C$.


\begin{definition}\label{d:limit} Let $\mathcal D$ be a small category and $D:\mathcal D\to\C$ be a $\mathcal D$-diagram in  a category $\C$.
\begin{itemize}
\item A \index{cone over a diagram}\index{diagram!cone over}{\em cone} over the $\mathcal D$-diagram $D$ is a pair $(V,f)$ consisting of a $\C$-object $V$ and a function $f:\Ob_{\mathcal D}\to\Mor_\C$ assigning to each $\mathcal D$-object $X\in\Ob_{\mathcal D}$ a $\C$-morphism $f_X\in \Mor_\C(V,FX)$  such that for any $\mathcal D$-morphism $g:X\to Y$ we have $f_Y=Fg\circ f_X$.
\smallskip
\item A \index{limit of a diagram}\index{diagram!limit of}{\em limit}  of the $\mathcal D$-diagram $D$ is any cone $(V,f)$ over $F$ that has the following universal property: for any cone $(V',f')$ over $F$ there exists a unique $\C$-morphism $h:V'\to V$ such that $\forall X\in \Ob_{\mathcal D}\;\;f'_X=f_X\circ h$.
\end{itemize}
$$
\xymatrix{
&V'\ar@/_15pt/_{f'_X}[ldd]\ar@/^15pt/^{f'_Y}[ddr]\ar^h[d]\\
&V\ar@/^5pt/_{f_X}[ld]\ar@/_5pt/^{f_Y}[dr]\\
FX\ar_{Fg}[rr]&&FY
}
$$
\end{definition}

The uniqueness of the morphism $h$ in Definition~\ref{d:limit} implies the following useful  fact.

\begin{proposition} Let $(V,f)$ be a limit of a $\mathcal D$-diagram in a category $\C$. A $\C$-morphism $h:V\to V$ is equal to $\mathsf 1_V$ if and only if $\forall X\in\Ob(\mathcal D)\;\;f_X\circ h=f_X$.
\end{proposition}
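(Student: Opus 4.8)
The plan is to exploit the fact that the limiting cone $(V,f)$ can serve as a test cone for its own universal property, so that the equivalence reduces to the uniqueness clause in Definition~\ref{d:limit}. First I would record the \emph{only if} direction, which is immediate: if $h=\mathsf 1_V$, then for every $X\in\Ob_{\mathcal D}$ the unit axiom (U) gives $f_X\circ h=f_X\circ \mathsf 1_V=f_X$, since $f_X\in\Mor_\C(V,FX)$ and $\mathsf 1_V$ is the identity on $V=\fdom(f_X)$.

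For the \emph{if} direction, suppose $f_X\circ h=f_X$ for all $X\in\Ob_{\mathcal D}$. The crucial remark is that $(V,f)$ is in particular a cone over the diagram, hence admissible as the cone $(V',f')$ appearing in the universal property of a limit. Applying that universal property to $(V',f')=(V,f)$, I obtain a \emph{unique} $\C$-morphism $k\colon V\to V$ satisfying $f_X=f_X\circ k$ for every $X\in\Ob_{\mathcal D}$. Now both $k=\mathsf 1_V$ and $k=h$ meet this defining condition: the former by axiom (U) exactly as in the first paragraph, the latter by the standing hypothesis $f_X\circ h=f_X$. The uniqueness clause then forces $h=\mathsf 1_V$.

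I do not expect any genuine obstacle here; the only points requiring care are bookkeeping ones, namely checking that each composite $f_X\circ h$ is actually defined (which holds because $\fran(h)=V=\fdom(f_X)$) and keeping the direction of composition consistent with the convention that $f_X\circ h$ means ``first $h$, then $f_X$''. The whole argument is a one-line invocation of uniqueness in the defining property of a limit, and it dualizes verbatim to yield the analogous statement for colimits.
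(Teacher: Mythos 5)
Your proof is correct and follows exactly the route the paper intends: the paper introduces this proposition with the remark that it is implied by the uniqueness clause in the definition of a limit, and your argument — feeding the limiting cone $(V,f)$ into its own universal property and observing that both $\mathsf 1_V$ and $h$ satisfy the factorization condition — is precisely that uniqueness argument spelled out. Nothing further is needed.
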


The notion of a colimit is dual to the notion of a limit.

\begin{definition}\label{d:colimit}  Let $\mathcal D$ be a small diagram and $D:\mathcal D\to\C$ be a $\mathcal D$-diagram in a category $\C$.
\begin{itemize}
\item A \index{cocone of a diagram}\index{diagram!cocone of}{\em cocone} over the $\mathcal D$-diagram $D$ is a pair $(V,f)$ consisting of an object $V$ of the category $\C$ and a function $f:\Ob_{\mathcal D}\to\Mor_\C$ assigning to each $\mathcal D$-object $X\in\Ob_{\mathcal D}$ a $\C$-morphism $f_X:FX\to V$ such that for any $\mathcal D$-morphism $g:X\to Y$ we have $f_X=f_Y\circ Fg$.
\item A \index{colimit of a diagram}\index{diagram!colimit of}{\em colimit}  of the $\mathcal D$-diagram $D$ is any cocone $(V,f)$ over $F$ that has the following universal property: for any cocone $(V',f')$ over $F$ there exists a unique $\C$-morphism $h:V\to V'$ such that $\forall X\in\Ob_{\mathcal D}\;\;f'_X=h\circ f_X$.
\end{itemize}
$$
\xymatrix{
&V'\\
&V\ar_h[u]\\
FX\ar@/^15pt/^{f'_X}[ruu]\ar_{Fg}[rr]\ar@/_5pt/^{f_X}[ru]&&FY\ar@/^5pt/_{f_Y}[ul]\ar@/_15pt/_{f'_Y}[uul]
}
$$
\end{definition}

The uniqueness of the morphism $h$ in Definition~\ref{d:colimit} implies the following useful fact that will be used in the proof of Lemma~\ref{l:Equi-Set}.

\begin{proposition}\label{p:colimit} Let $(V,f)$ be a colimit of a $\mathcal D$-diagram in a category $\C$. A $\C$-morphism $h\in\Mor_\C(V,V)$ is equal to $\mathsf 1_V$ if and only if $\forall X\in\Ob_{\mathcal D}\;\;(h\circ f_X=f_X)$.
\end{proposition}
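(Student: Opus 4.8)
The statement to prove is Proposition~\ref{p:colimit}: for a colimit $(V,f)$ of a $\mathcal D$-diagram $D$ in a category $\C$, a $\C$-morphism $h\in\Mor_\C(V,V)$ equals $\mathsf 1_V$ if and only if $h\circ f_X=f_X$ for every $X\in\Ob_{\mathcal D}$.

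The plan is to prove both implications, with the forward direction being essentially immediate and the reverse direction exploiting the uniqueness clause in the universal property of the colimit (Definition~\ref{d:colimit}). First I would handle the ``only if'' part: assuming $h=\mathsf 1_V$, I substitute directly, using the unit axiom $(U)$ of a category, to obtain $h\circ f_X=\mathsf 1_V\circ f_X=f_X$ for every $\mathcal D$-object $X$. This uses only that $\fran(f_X)=V$ so that the composite $\mathsf 1_V\circ f_X$ is defined and equals $f_X$.

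The substantive direction is ``if''. Suppose $h\in\Mor_\C(V,V)$ satisfies $h\circ f_X=f_X$ for all $X\in\Ob_{\mathcal D}$. The key observation is that $(V,f)$ is itself a cocone over $D$, so I may apply the universal property of the colimit taking the test cocone $(V',f')$ to be $(V,f)$ itself. By that universal property there exists a \emph{unique} $\C$-morphism $u\in\Mor_\C(V,V)$ such that $f_X=u\circ f_X$ for every $X\in\Ob_{\mathcal D}$. Now I would exhibit two morphisms satisfying this defining condition: the identity $\mathsf 1_V$ satisfies $f_X=\mathsf 1_V\circ f_X$ by axiom $(U)$, and the given $h$ satisfies $f_X=h\circ f_X$ by hypothesis. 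Since both $\mathsf 1_V$ and $h$ fulfil the condition defining $u$, the uniqueness clause forces $h=u=\mathsf 1_V$, completing the proof.

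The only point requiring care — and the nearest thing to an obstacle — is checking that $(V,f)$ genuinely qualifies as a test cocone, i.e.\ that the compatibility condition $f_X=f_Y\circ Dg$ holds for every $\mathcal D$-morphism $g\colon X\to Y$; but this is exactly the statement that $(V,f)$ is a cocone, which holds by hypothesis since $(V,f)$ is a colimit and hence a cocone. Thus no extra verification beyond unpacking definitions is needed, and the whole argument is a direct application of the uniqueness built into the universal property. The dual statement for limits would follow by the same reasoning applied to $\C^{\op}$.
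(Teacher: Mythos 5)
Your proof is correct and follows exactly the route the paper intends: the paper derives this proposition from the uniqueness clause of the universal property in Definition of a colimit, applied with $(V,f)$ itself as the test cocone, which is precisely your argument. Nothing is missing.
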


Limits and colimits of diagrams are unique up to  (a properly defined notion of) an isomorphism.

\begin{remark} (Co)products are (co)limit of $\mathcal D$-liagram over discrete categories $\mathcal D$.
 \end{remark}
 
 \begin{exercise} Which diagrams $\mathcal D$ correspond to pullbacks and equalizers?
 \end{exercise} 
 
\begin{exercise} Investigate the existence and structure of limits and colimits in your favorable category.
\end{exercise}

\begin{Exercise}\label{ex:f-limits} Prove that a category has limits of finite diagrams if and only if it has binary products and equalizers.
\end{Exercise}


\section{Characterizations of the category $\Set$}
\bigskip

{\small
\em 

\rightline{We adjoin eight first-order axioms to the usual} 

\rightline{first-order theory of an abstract Eilenberg-Mac Lane category}

\rightline{to obtain an elementary theory with the following properties:}

\rightline{(a) There is essentially only one category which satisfies these eight
axioms}

\rightline{together with the additional (nonelementary) axiom of completeness,}

\rightline{namely, the category $\Set$ of sets and mappings.}

\rightline{Thus our theory distinguishes $\Set$ structurally from other complete categories,}

\rightline{such as those of topological spaces, groups, rings, partially ordered sets, etc.}

\rightline{(b) The theory provides a foundation for number theory, analysis, ... algebra and topology}

\rightline{even though no relation $\in$ with the traditional properties can be defined.}

\rightline{Thus we seem to have partially demonstrated that even in foundations}

\rightline{not Substance but invariant Form is the carrier of the relevant mathematical information.}
}
\smallskip

\rightline{\small William Lawvere, 1964}
\vskip30pt

In this section we characterize categories which are equivalent or isomorphic to the category $\Set$. The principal results are Theorems~\ref{t:EC=Set} and \ref{t:GWO=Set} which are simplified versions of the characterization of the category of sets, proved by  Lawvere in 1964 (before he created the theory of elementary topoi). 

A distinguishing property of the category of sets is that its terminal object is a generator for this category.

\begin{definition} Let $\C$ be a category. A $\C$-object $\Gamma$ is called a \index{generator}{\em generator} (more precisely, a {\em $\C$-generator}) if for any  $\C$-objects $X,Y$ and distinct $\C$-morphisms $f,g\in\Mor(X,Y)$ there exists a $\C$-morphism $h\in\Mor(\Gamma,X)$ such that $f\circ h\ne g\circ h$.
\end{definition}

\begin{exercise} Observe that in the categories $\Set$ and $\mathbf{Top}$ any non-initial  object is a generator. 
\end{exercise}

\begin{exercise} Prove that the additive group of integers $(\IZ,+)$ is a generator in the category of (commutative) groups.
\end{exercise}


\begin{definition} A category $\C$ with a terminal object $\mathtt 1$ is defined to be \index{category!element-separating}\index{element-separating category}{\em element-separating} if  for any morphism $x:\mathtt 1\to X$ there exist a $\C$-object $\Omega$ and  $\C$-morphisms $\chi_x:X\to \Omega$ and $\false:\mathtt 1\to \Omega$ such that $\chi_x\circ x\ne\false$ and $\forall x'\in\Mor(\mathtt 1,X)\setminus\{x\}\;\;(\chi_x\circ x'=\false)$.
\end{definition}

We recall that a category $\C$ is \index{category!balanced}{\em balanced} if a $\C$-morphism is an isomorphism if and only if it is a bimorphism (i.e., mono and epi).

\begin{lemma}\label{l:Equi-Set} A skeletal category $\C$ is equivalent to the category $\Set$ if and only if it satisfies the following properties:
\begin{enumerate}
\item[\textup{(1)}] $\C$ is locally small;
\item[\textup{(2)}] $\C$ is balanced;
\item[\textup{(3)}] $\C$ has equalizers;
\item[\textup{(4)}] $\C$ has arbitrary coproducts;
\item[\textup{(5)}] $\C$ has a terminal object $\mathtt 1$;
\item[\textup{(6)}] $\mathtt 1$ is a $\C$-generator;
\item[\textup{(7)}] $\C$ is element-separating.
\end{enumerate}
\end{lemma}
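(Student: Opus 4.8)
The plan is to prove the two implications by quite different means: the forward direction by a direct check that $\Set$ has the seven properties together with the observation that all of them are invariant under equivalence, and the converse by constructing the \emph{global sections} functor $\Gamma=\Mor(\mathtt 1,-)\colon\C\to\Set$ and showing it is full, faithful and essentially surjective. For the \emph{only if} direction I would first verify each property in $\Set$: (1) $\Mor(X,Y)=Y^X$ is a set; (2) a function is bijective iff it is mono and epi; (3) the equalizer of $f,g$ is $\{x:f(x)=g(x)\}$; (4) disjoint unions are coproducts; (5) singletons are terminal; (6) a function is determined by its composites with maps $\mathtt 1\to X$, so $\mathtt 1$ is a generator; (7) for $x\in X$ the map $\chi_x\colon X\to 2$ with $\Omega=2$ and $\false$ the second point witnesses element-separation. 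Since a full faithful essentially surjective functor preserves and reflects monomorphisms, epimorphisms, isomorphisms, terminal objects, equalizers, coproducts, the generator property and the separation property, these transfer from $\Set$ to any skeletal $\C\simeq\Set$.

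For the \emph{if} direction I would set $\Gamma=\Mor(\mathtt 1,-)$, which is well defined by (1), and observe that it is faithful directly from (6): if $\Gamma f=\Gamma g$ then $f\circ h=g\circ h$ for all $h\in\Mor(\mathtt 1,X)$, forcing $f=g$ since $\mathtt 1$ is a generator. The first structural input is a \emph{two-valued object}: applying (7) to the unique global element $\mathsf 1_{\mathtt 1}$ of $\mathtt 1$ produces an object $\Omega$ carrying two distinct global elements, which I rename $\true$ and $\false$. The remaining tasks are essential surjectivity and fullness, after which Theorem~\ref{t:equivalent} (under $(\mathsf{EC})$) delivers $\C\simeq\Set$; this part relies principally on (1),(2),(4)--(7).

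For essential surjectivity I would, given a set $S$, form $\mathbf S=\coprod_{s\in S}\mathtt 1$ with coprojections $e_s$ and study the map $S\to\Gamma\mathbf S$, $s\mapsto e_s$. Injectivity comes from the family $(g_t)_{t\in S}$ with $g_s=\true$ and $g_t=\false$ for $t\ne s$, which by the universal property induces $h\colon\mathbf S\to\Omega$ separating $e_s$ from $e_{s'}$. The crux is surjectivity: if some global element $x\colon\mathtt 1\to\mathbf S$ were distinct from every $e_s$, then (7) applied to $x$ yields $\chi_x\colon\mathbf S\to\Omega$ and $\false$ with $\chi_x\circ x\ne\false$ but $\chi_x\circ e_s=\false$ for all $s$; writing $u\colon\mathbf S\to\mathtt 1$ for the terminal map, the morphism $\false\circ u$ agrees with $\chi_x$ on every coprojection, so the uniqueness clause of the coproduct forces $\chi_x=\false\circ u$, whence $\chi_x\circ x=\false\circ u\circ x=\false$, a contradiction. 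Thus $\Gamma\mathbf S$ is in bijection with $S$.

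For fullness I would attach to each object $X$ the canonical comparison $c_X\colon\coprod_{x\in\Gamma X}\mathtt 1\to X$ determined by $c_X\circ e_x=x$, and prove it is an isomorphism. It is epic because, by (6), two maps agreeing after $c_X$ agree on all global elements and hence coincide; it is monic because for maps $u,v$ into the coproduct with $c_X\circ u=c_X\circ v$, each global element $w$ gives $u\circ w=e_a$, $v\circ w=e_b$ (the points of the coproduct being exactly the coprojections, by the previous paragraph), and then $a=c_X\circ e_a=c_X\circ u\circ w=c_X\circ v\circ w=b$, so $u\circ w=v\circ w$ and (6) gives $u=v$; finally (2) upgrades this bimorphism to an isomorphism. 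Given $\phi\colon\Gamma X\to\Gamma Y$, the universal property supplies $\tilde f$ with $\tilde f\circ e_x=\phi(x)$, and $f:=\tilde f\circ c_X^{-1}$ satisfies $f\circ x=\phi(x)$, i.e. $\Gamma f=\phi$. I expect the main obstacle to be exactly the surjectivity step of the previous paragraph together with the monicity of $c_X$: these encode the ``well-pointedness'' of $\C$, are the only places where element-separation (7) and balancedness (2) are genuinely indispensable, and are precisely what distinguishes $\Set$ from categories such as $\mathbf{Top}$ where $\mathtt 1$ is still a generator.
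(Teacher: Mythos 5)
Your proof is correct, and the converse direction takes a genuinely different (and in one place sharper) route than the paper's. The paper builds the explicit adjunction between $F\colon\Set\to\C$, $X\mapsto\coprod_{x\in X}\mathtt 1$, and $G=\Mor(\mathtt 1,-)$, verifies the triangle identities, and then shows that the unit $\eta_X\colon X\to GFX$ and the counit $\e_Y\colon FGY\to Y$ are isomorphisms; your $c_X$ is exactly that counit and your bijection $S\to\Gamma\mathbf S$ is that unit, so most of the computational content coincides. The genuine divergence is in the proof that every global element of $\coprod_{s\in S}\mathtt 1$ is a coprojection: the paper (Claim~\ref{cl:eta-sur}) forms the equalizer of $\chi$ and $\false\circ\tau$, shows the equalizer morphism splits and is an isomorphism, and derives $\true=\false$; you instead observe that $\chi_x$ and $\false\circ u$ induce the same family on the coprojections and are therefore equal by the uniqueness clause of the coproduct. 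Your argument is shorter, handles the empty coproduct uniformly, and reveals that hypothesis (3) is never used in the converse direction --- equalizers enter only as a necessary condition in the forward implication.

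The one step you should repair is the last one. The lemma carries no choice hypothesis and the paper's proof produces the equivalence explicitly, whereas your appeal to Theorem~\ref{t:equivalent} imports $(\mathsf{EC})$, which is not available here. The fix costs nothing: you have already constructed the quasi-inverse $S\mapsto\coprod_{s\in S}\mathtt 1$ together with the natural bijections $S\to\Gamma\bigl(\coprod_{s\in S}\mathtt 1\bigr)$ and the isomorphisms $c_X$, so assemble these directly into a unit and counit rather than invoking the abstract criterion. (A caveat shared with the paper's own proof: ``has arbitrary coproducts'' must be read as supplying a chosen colimit cocone for each family, since skeletality fixes only the coproduct object, not its coprojections.)
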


\begin{proof}  The ``only if'' part it trivial. To prove the ``if'' part, assume that a skeletal  category $\C$ has the properties (1)--(7). 
Then $\C$ has a  terminal object $\mathtt 1$, which is unique by the skeletality of $\C$. 

Consider the functor $G:\C\to\Set$ assigning to every $\C$-object $X$ the set $\Mor(\mathtt 1,X)$. Elements of the set $\Mor(\mathtt 1,X)$ are called \index{global elements} {\em global elements} of $X$. To every $\C$-morphism $f:X\to Y$ the functor $G$ assigns the function $$Gf:\Mor(\mathtt 1,X)\to\Mor(\mathtt 1,Y),\quad Gf:x\mapsto f\circ x.$$
Therefore, the functor $G$ assigns to each $\C$-object $X$ the set $\Mor(\mathtt 1,X)$ of its global element.

Now we describe a functor $F:\Set\to\C$ acting in the opposite direction. The functor $F$ assigns to every set $X$ the coproduct $\sqcup_{x\in X}\mathtt 1$ of $X$ many copies of the terminal object $\mathtt 1$. Since the category has arbitrary coproducts and is skeletal, the coproduct $\sqcup_{x\in X}\mathtt 1$ exists and is unique. Let $\eta_X:X\to \Mor(\mathtt 1,FX)$ be the function assigning to every element $x\in X$ the coordinate coprojection $\eta_X(x):\mathtt 1\to FX$. By definition of a coproduct, the function $\eta_X$ has the following universal property: for every $\C$-object $Y$ and function $g:X\to\Mor(\mathsf 1,Y)$ there exists a unique $\C$-morphism $h:FX\to Y$ such that $h\circ\eta_X(x)=g(x)$ for every $x\in X$. In particular, for every function $f:X\to X'$ between sets, there exists a unique $\C$-morphism $Ff:FX\to FX'$ such that 
\begin{equation}\label{eq:def-Ff}
Ff\circ \eta_X(x)=\eta_{X'}\circ f(x)\quad\mbox{for every $x\in X$}.
\end{equation}
 This formula defines the action of the functor $F$ on the morphisms of the category $\Set$.

Consider the natural transformation $\eta:\mathsf 1_{\Set}\to GF$ whose components are the $\Set$-morphisms $\eta_X:X\to \Mor(\mathtt 1,FX)=GFX$.

\begin{claim}\label{cl:eta-inj} The function $\eta_X:X\to GFX$ is injective.
\end{claim}

\begin{proof} To prove that the function $\eta_X:X\to GFX=\Mor(\mathtt 1,FX)$ is injective, fix any distinct elements $x,x'\in X$. Since $\C$ is element-separating, there exists a $\C$-object $\Omega$ such that $\Mor(\mathtt 1,\Omega)$ contains at least two distinct morphisms. Then we can choose a function $\chi:X\to\Mor(\mathtt 1,\Omega)$ such that $\chi(x)\ne\chi(x')$. By the definition of the natural transformation $\eta_X$, there exists a unique $\C$-morphism $u:FX\to \Omega$ such that $u\circ \eta_X(z)=\chi(z)$ for every $z\in X$. 
In particular, $$u\circ\eta_X(x)=\chi(x)\ne\chi(x')=u\circ\eta_X(x'),$$
which implies that $\eta_X(x)\ne \eta_X(x')$.
\end{proof}

\begin{claim}\label{cl:eta-sur} The function $\eta_X:X\to GFX$ is surjective.
\end{claim}

\begin{proof} 
Assuming that $\eta_X$ is not surjective, we can find a morphism $\psi\in GFX=\Mor(\mathtt 1,FX)$ such that $\psi\ne\eta_X(x)$ for any $x\in X$. 
Since the category $\C$ is element-separating, for the morphism $\psi:\mathtt 1\to FX$ there exist a $\C$-object $\Omega$ and $\C$-morphisms $\chi:FX\to\Omega$ and $\false,\true:\mathtt 1\to\Omega$ such that  $\chi\circ \psi=\true$ and for any global element $\varphi\in\Mor(\mathtt 1,FX)\setminus\{\psi\}$ we have $\chi\circ\varphi=\false\ne\true$.

Let $\tau:FX\to \mathtt 1$ be the unique $\C$-morphism and $\zeta=\mathsf{false}\circ \tau$. 
Since the category $\C$ has equalizers, there exist a $\C$-object $E$ and a $\C$-morphism $e\in \Mor(E,FX)$ such that $\chi\circ e=\zeta\circ e$ and for any $\C$-object $E'$ and morphism $e'\in\Mor(E',FX)$ with $\chi\circ e'=\zeta\circ e'$ there exists a unique morphism $h\in \Mor(E',E)$ such that $e\circ h=e'$. 
We apply this  universal property of $(E,e)$ to the pair $(E',e')=(\mathtt 1,\eta_{X}(x))$ where $x\in X$ is any element.
$$
\xymatrix{
&\mathtt 1\ar@/^10pt/^{\eta_X(x)}[ddr]\ar^{h_x}[d]\ar@/_10pt/_{\eta_X(x)}[ddl]\\
&E\ar_e[dr]\ar^e[dl]\\
FX\ar^{\zeta}[rd]\ar_{\tau}[d]&&FX\ar_{\chi}[ld]\\
\mathtt 1\ar_{\mathsf{false}}[r]&\Omega&\mathtt 1\ar^{\true}[l]\ar_{\psi}[u]\\
}
$$

By the choice of $\chi$, the inequality $\psi\ne\eta_X(x)$, implies
$$\chi\circ\eta_X(x)=\mathsf{false}=\mathsf{false}\circ\mathsf 1_{\mathtt 1}=\mathsf{false}\circ (\tau\circ \eta_X(x))=\zeta\circ\eta_{X}(f).$$  By the universal property of the equalizer  morphism $e:E\to FX$, there exists a unique morphism $h_f\in \Mor(\mathtt 1,E)$ such that $\eta_{X}(x)=e\circ h_f$. By the universal property of the coproduct $(FX,\eta_{X})$, there exists a unique $\C$-morphism $e':FX\to E$ such that $h_x=e'\circ \eta_X(x)$ for all $f\in\Mor_\C(\mathtt 1,X)$. Then $\eta_X(x)=e\circ h_x=e\circ e'\circ\eta_X(x)$ for all $x\in X$ and hence $e\circ e'=\mathsf 1_{FX}$, see Proposition~\ref{p:colimit}. 

We claim that $e'\circ e=\mathsf 1_{E}$. Assuming that $e'\circ e\ne \mathsf 1_E$, we can  use the generator property of $\mathtt 1$ and find a $\C$-morphism $u:\mathsf 1\to E$ such that $e'\circ e\circ u\ne \mathsf 1_E\circ u=u$. Let  $u'=e'\circ e\circ u$ and observe that $e\circ u'=e\circ e'\circ e\circ u=\mathsf 1_{FX}\circ e\circ u=e\circ u$. Then $u$ and $u'$ are two distinct morphisms  such that $e\circ u=e\circ u'$, which contradicts the uniqueness condition in the definition of an equalizer. This contradiction show that $e'\circ e=\mathsf 1_E$. Together with $e\circ e'=\mathsf 1_{FX}$ this implies that $e$ is an isomorphism and $e'$ is its inverse.

Now the equality $\chi\circ e=\zeta\circ e$ implies 
$$\chi=\chi\circ \mathsf 1_{FX}=\chi\circ (e\circ e')=(\chi\circ e)\circ e'=(\zeta\circ e)\circ e=\zeta\circ(e\circ e')=\zeta\circ \mathsf 1_{FX}=\zeta$$ and $$\true=\chi\circ\psi=\zeta\circ\psi=(\mathsf{false}\circ\tau)\circ\psi=\mathsf{false}\circ(\tau\circ\psi)=\mathsf{false}\circ\mathsf 1_{\mathtt 1}=\mathsf{false},$$
which contradicts the choice of the morphisms $\true$ and $\false$.
\end{proof}

Claims~\ref{cl:eta-inj} and \ref{cl:eta-sur} imply that the function $\eta_X$ is bijective and hence is an isomorphism of the category $\Set$,
 \smallskip

Next, we define a natural transformation $\e:FG\to\mathsf 1_{\C}$.   For every $\C$-object $Y$, consider the set $GY=\Mor(\mathtt 1,Y)$ and the coproduct $FGY=\amalg_{y\in\Mor(\mathtt 1,Y)}\mathtt 1$. By the universal property of the coproduct, there exists a unique $\C$-morphism $\e_Y:FGY\to Y$ such that
\begin{equation}\label{eq:epsilon}
y=\e_Y\circ \eta_{GY}(y)\quad\mbox{for every $y\in \Mor(\mathtt 1,Y)$}.
\end{equation}
The morphism $\e_Y$ is a component of the natural transformation $\e:FG\to\mathsf 1_\C$.

In the following two claims we show that for every $\C$-object $Y$ the morphism $\e_{Y}:FGY\to Y$ is an isomorphism in the category $\C$. 

\begin{claim}\label{cl:e-epi}  $\e_Y$ is an epimorphism. 
\end{claim}

\begin{proof} Assuming that $\e_Y$ is not epi, we can find a $\C$-object $Y'$ and two distinct morphisms $g,g'\in\Mor(Y,Y')$ such that $g\circ \e_Y=g'\circ\e_Y$. Since $\mathtt 1$ is a generator, there exists a morphism $y\in\Mor(\mathtt 1,Y)$ such that $g\circ y\ne g'\circ y$. Consider the morphism $\eta_{GY}(y)\in\Mor(\mathtt 1,FGY)$ and observe that $\e_Y\circ \eta_{GY}(y)=y$, see the equation (\ref{eq:epsilon}). Then $g\circ y=g\circ \e_Y\circ \eta_{GY}(y)=g'\circ \e_Y\circ\eta_{GY}(y)=g'\circ y$, which contradicts the choice of the morphisms $g,g'$. 
\end{proof}

\begin{claim}\label{cl:e-mono} $\e_Y$ is a monomorphism.
\end{claim}

\begin{proof} By Claims~\ref{cl:eta-inj}, \ref{cl:eta-sur}, the function $\eta_{GY}:\Mor(\mathtt 1,Y)\to \Mor(FGY)$ is bijective. Assuming that $\e_Y$ is not a monomorphism and taking into account that $\mathtt 1$ is a generator, we can find two distinct morphisms $\phi,\psi\in\Mor(\mathtt 1,FGY)$ such that $\e_Y\circ \phi=\e_Y\circ \psi$. By the bijectivity of the function $\eta_{GY}$, there are distinct morphisms $\phi',\psi'\in\Mor(\mathtt 1,Y)$ such that $\eta_{GY}(\phi')=\phi$ and $\eta_{GY}(\psi')=\psi$. The definition of the morphism $\e_Y$ guarantees that $$\phi'=\e_Y\circ\eta_{GY}(\phi')=\e_Y\circ\phi=\e_Y\circ\psi=\e_Y\circ\eta_{GY}(\psi')=\psi',$$which is a desired contradiction.
\end{proof}

By Claims~\ref{cl:e-epi}, \ref{cl:e-mono}, the morphism $\e_Y$ is a bimorphism. Since the category $\C$ is balanced, the morphism $\e_Y$ is an isomorphism. 

Therefore we proved that the natural transformations $\eta:\mathsf 1_{\Set}\to GF$ and $\e:FG\to \mathsf 1_\C$ are functor isomorphisms, witnessing that the categories $\C$ and $\Set$ are equivalent. 
\end{proof}

Lemma~\ref{l:Equi-Set} implies the following characterizations of the full subcategory $\Crd\subseteq\Set$ whose objects are cardinals.

\begin{theorem} Under $(\mathsf{AGC})$, a category $\C$ is isomorphic to the category $\Crd$ if and only if it satisfies the following properties:
\begin{enumerate}
\item[\textup{(0)}] $\C$ is skeletal;
\item[\textup{(1)}] $\C$ is locally small;
\item[\textup{(2)}] $\C$ is balanced;
\item[\textup{(3)}] $\C$ has equalizers;
\item[\textup{(4)}] $\C$ has arbitrary coproducts;
\item[\textup{(5)}] $\C$ has a terminal object $\mathtt 1$;
\item[\textup{(6)}] $\mathtt 1$ is a $\C$-generator;
\item[\textup{(7)}]  $\C$ is element-separating.
\end{enumerate}
\end{theorem}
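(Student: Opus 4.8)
The plan is to deduce the theorem by combining the equivalence criterion of Lemma~\ref{l:Equi-Set} with Theorem~\ref{t:Set=Card} and the rigidity of skeletal categories expressed in Proposition~\ref{p:skelet:i=e}. The crucial observation is that for skeletal categories isomorphism is no stronger than equivalence, so it suffices to produce an equivalence $\C\simeq\Crd$ and then invoke skeletality to promote it to an isomorphism $\C\cong\Crd$. Thus the whole argument reduces to assembling three results already available in the text, together with the fact that $(\mathsf{AGC})$ implies $(\mathsf{AC})$.

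First I would treat the ``if'' part. Assume $\C$ satisfies $(0)$--$(7)$. Since $(\mathsf{AGC})$ implies the Axiom of Choice, the full subcategory $\Crd\subseteq\Set$ is a skeleton of $\Set$, and by Theorem~\ref{t:Set=Card} the categories $\Set$ and $\Crd$ are equivalent, $\Crd\simeq\Set$. On the other hand, property $(0)$ says that $\C$ is skeletal, while properties $(1)$--$(7)$ are precisely the hypotheses of Lemma~\ref{l:Equi-Set}; hence $\C\simeq\Set$. Composing these two equivalences (using transitivity of $\simeq$) gives $\C\simeq\Crd$. Now both categories are skeletal: $\C$ by $(0)$, and $\Crd$ because distinct cardinals are never $\Set$-isomorphic, so $\Crd$ is a skeletal category. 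Proposition~\ref{p:skelet:i=e} then upgrades the equivalence $\C\simeq\Crd$ to an isomorphism $\C\cong\Crd$, as desired.

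For the ``only if'' part I would argue that each of the properties $(0)$--$(7)$ transfers to any category isomorphic to $\Crd$. Property $(0)$ holds for $\Crd$ by the definition of a skeleton, and an isomorphism of categories visibly carries a skeletal category to a skeletal category. The remaining properties $(1)$--$(7)$ are formulated purely in terms of objects, morphisms, and universal constructions (terminal objects, equalizers, arbitrary coproducts, generators, element-separation), and are therefore invariant under equivalence of categories. Since $\Crd\simeq\Set$ by Theorem~\ref{t:Set=Card} and $\Set$ satisfies $(1)$--$(7)$, the category $\Crd$ satisfies them as well, and hence so does any category $\C\cong\Crd$.

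The step I expect to require the most care is not the logical skeleton of the proof but the verification that the seven structural properties are genuinely invariant under equivalence of categories, in particular that $\Crd$ inherits equalizers and arbitrary coproducts from $\Set$ even though the representing objects in $\Crd$ must be replaced by the unique cardinal in each equipotence class. This is exactly where $(\mathsf{AGC})$ does real work: it guarantees, via $(\mathsf{AC})$, that every set is equipotent to a cardinal (so that $\Crd$ is a bona fide skeleton of $\Set$) and it powers Theorem~\ref{t:Set=Card}. A secondary point to check is that Proposition~\ref{p:skelet:i=e} is legitimately applicable, i.e.\ that both sides of the equivalence are honestly skeletal; once that is confirmed, the passage from equivalence to isomorphism is immediate and completes the proof.
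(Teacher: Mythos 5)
Your proposal is correct and follows essentially the same route as the paper's proof: the ``if'' direction applies Lemma~\ref{l:Equi-Set} to get $\C\simeq\Set$, combines this with Theorem~\ref{t:Set=Card} to obtain $\C\simeq\Crd$, and then uses Proposition~\ref{p:skelet:i=e} to upgrade the equivalence of skeletal categories to an isomorphism. The paper simply dismisses the ``only if'' direction as trivial, whereas you spell out the invariance of the properties under equivalence, but the substance is the same.
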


\begin{proof} The ``only if'' part is trivial and holds without $(\mathsf{AGC})$. To prove the ``if'' part, assume that the Axiom of Global Choice holds and a category $\C$ has properties (0)--(7).  By Lemma~\ref{l:Equi-Set}, the skeletal category $\C$ is equivalent to the category $\Set$. By Theorem~\ref{t:Set=Card}, under $(\mathsf{AGC})$, the category $\Set$ is equivalent to its skeleton $\Crd$. Consequently, the skeletal categories $\C$ and $\Crd$ are equivalent, and by Proposition~\ref{p:skelet:i=e}, these categories are isomorphic.
\end{proof}

\begin{theorem}\label{t:EC=Set} Under $(\mathsf{EC})$, a category $\C$ is equivalent to the category $\Set$ if and only if it satisfies the following properties:
\begin{enumerate}
\item[\textup{(1)}] $\C$ is locally small;
\item[\textup{(2)}] $\C$ is balanced;
\item[\textup{(3)}] $\C$ has equalizers;
\item[\textup{(4)}] $\C$ has arbitrary coproducts;
\item[\textup{(5)}] $\C$ has a terminal object $\mathtt 1$;
\item[\textup{(6)}] $\mathtt 1$ is a $\C$-generator;
\item[\textup{(7)}] $\C$ is element-separating.
\end{enumerate}
\end{theorem}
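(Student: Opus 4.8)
The proof of Theorem~\ref{t:EC=Set} will proceed by reducing to the skeletal case treated in Lemma~\ref{l:Equi-Set}. The plan is as follows. The ``only if'' part is trivial, since all seven properties are isomorphism-invariant (indeed invariant under equivalence of categories) and are easily verified for $\Set$ itself; moreover equivalence of categories is transitive, so if $\C\simeq\Set$ then $\C$ inherits these properties from $\Set$. For the substantive ``if'' part, assume $(\mathsf{EC})$ holds and that $\C$ satisfies properties (1)--(7). The strategy is to pass to a skeleton $\mathcal S$ of $\C$, verify that $\mathcal S$ also satisfies (1)--(7), apply Lemma~\ref{l:Equi-Set} to conclude $\mathcal S\simeq\Set$, and then chain equivalences $\C\simeq\mathcal S\simeq\Set$.

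First I would invoke Theorem~\ref{t:skeleton}: under $(\mathsf{EC})$, the category $\C$ has a skeleton $\mathcal S$, and there is a full faithful functor $F:\C\to\mathcal S$ together with the identity embedding $J:\mathcal S\to\C$ satisfying $FJ=\mathsf 1_{\mathcal S}$ and $JF\cong\mathsf 1_{\C}$. These relations witness that the categories $\C$ and $\mathcal S$ are equivalent. The next step is the crux: I must check that the seven properties (1)--(7) transfer from $\C$ to the equivalent skeletal category $\mathcal S$. Property (1), local smallness, transfers because the full faithful functor $F$ restricts to bijections $\Mor_{\mathcal S}(X,Y)\cong\Mor_{\C}(JX,JY)$, and the right-hand class is a set. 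Property (5), existence of a terminal object, transfers because equivalences preserve terminal objects; similarly (4), arbitrary coproducts, and (3), equalizers, are limits/colimits that are preserved and reflected by equivalences. Properties (2) balancedness, (6) the terminal object being a generator, and (7) element-separation are all formulated purely in terms of morphisms and compositions, hence are invariant under the equivalence $\C\simeq\mathcal S$; here one uses that $F$ is full and faithful to transport monomorphisms, epimorphisms, generators and the separating objects $\Omega$ with their morphisms $\chi_x,\false$ back and forth.

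Once (1)--(7) are established for the skeletal category $\mathcal S$, I would apply Lemma~\ref{l:Equi-Set} directly: a skeletal category satisfying (1)--(7) is equivalent to $\Set$, so $\mathcal S\simeq\Set$. Combining with $\C\simeq\mathcal S$ and the transitivity of the equivalence relation $\simeq$ on categories (the ``exercise'' stated just after Exercise~\ref{ex:functor-iso} asserting $\C\simeq\C'$, $\C\simeq\C'\Ra\C'\simeq\C$, and $(\C\simeq\C'\wedge\C'\simeq\C'')\Ra\C\simeq\C''$), I conclude $\C\simeq\Set$, which completes the proof.

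The main obstacle will be the verification that the seven categorical properties are genuinely invariant under equivalence of categories, rather than merely under isomorphism. For properties (2), (3), (4), (5) this is a standard but nontrivial fact about equivalences preserving and reflecting limits, colimits, and epi/mono status; for the less familiar conditions (6) generator and (7) element-separating, one must argue carefully at the level of global elements $\Mor(\mathtt 1,X)$, checking that the isomorphism of functors $JF\cong\mathsf 1_\C$ induces bijections on hom-sets compatible with composition, so that separating morphisms and the object $\Omega$ pull back correctly. I would organize this verification as a short lemma stating that properties (1)--(7) are preserved under equivalence of categories, proved once and applied to the pair $\C,\mathcal S$; the rest of the argument is then a mechanical chaining of equivalences.
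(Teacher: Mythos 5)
Your proposal is correct and follows essentially the same route as the paper's proof: pass to a skeleton via Theorem~\ref{t:skeleton}, note that properties (1)--(7) are preserved by equivalence of categories, apply Lemma~\ref{l:Equi-Set} to the skeleton, and chain the equivalences $\C\simeq\mathcal S\simeq\Set$. The only difference is that you spell out the invariance of the seven properties under equivalence in more detail, whereas the paper simply asserts it.
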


\begin{proof}  The ``only if'' part is trivial and holds without $(\mathsf{EC})$. To prove the ``if'' part, assume that the principle $(\mathsf{EC})$ holds and a category $\C$ has properties (1)--(7).

By Theorem~\ref{t:skeleton}, under $(\mathsf{EC})$, the category $\C$  has a skeleton $\mathcal S$, which is equivalent to $\mathcal C$. Since the properties (1)--(7) are preserved by the equivalnce of categories, the category $\mathcal S$ has respective properties (1)--(7) and by Lemma~\ref{l:Equi-Set}, the category $\mathcal S$ is equivalent to the category $\Set$. Then $\C\simeq\mathcal S\simeq\Set$.
\end{proof} 

\begin{theorem}\label{t:GWO=Set} Under $(\mathsf{GWO})$, a category $\C$ is isomorphic to the category $\Set$ if and only if it satisfies the following properties:
\begin{enumerate}
\item[\textup{(1)}] $\C$ is locally small;
\item[\textup{(2)}] $\C$ is balanced;
\item[\textup{(3)}] $\C$ has equalizers;
\item[\textup{(4)}] $\C$ has arbitrary coproducts;
\item[\textup{(5)}] $\C$ has a terminal object $\mathtt 1$;
\item[\textup{(6)}] $\mathtt 1$ is a $\C$-generator;
\item[\textup{(7)}] $\C$ is element-separating;
\item[\textup{(8)}] $\C$ has a unique initial object $\mathtt 0$;
\item[\textup{(9)}] for any non-initial $\C$-object $x$ the class $\{y\in\Ob_\C:y\cong_\C x\}$ is a proper class.
\end{enumerate}
\end{theorem}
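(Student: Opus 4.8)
The plan is to bootstrap from the equivalence criterion of Theorem~\ref{t:EC=Set} to the isomorphism criterion of Theorem~\ref{t:GWO-iso-cat}. The ``only if'' part is immediate and uses no choice: the category $\Set$ has properties (1)--(7) by the ``only if'' direction of Theorem~\ref{t:EC=Set}, its unique initial object is $\emptyset$ (giving (8)), and for any non-empty set $x$ the class $\{y:y\cong_\Set x\}$ of sets equipotent to $x$ is a proper class (giving (9)). For the ``if'' part, I would assume $(\mathsf{GWO})$ and that $\C$ satisfies (1)--(9). Since $(\mathsf{GWO})$ trivially implies $(\mathsf{GwO})$, which implies $(\mathsf{EC})$ by Lemma~\ref{l:GMP=>EC}, properties (1)--(7) together with Theorem~\ref{t:EC=Set} give that $\C$ is equivalent to $\Set$. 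By Theorem~\ref{t:equivalent} this equivalence is witnessed by a full faithful functor $F:\C\to\Set$ which is essentially surjective on objects.

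Next I would verify that $F$ satisfies the extra hypothesis of Theorem~\ref{t:GWO-iso-cat}, namely that $|\{X\in\Ob_\C:X\cong_\C A\}|=|\{Y\in\Ob_\Set:Y\cong_\Set FA\}|$ for every $\C$-object $A$. A preliminary observation is that a full faithful functor that is essentially surjective on objects preserves and reflects initial objects: if $A$ is $\C$-initial then for every $S\in\Ob_\Set$, writing $S\cong FB$ gives $|\Mor_\Set(FA,S)|=|\Mor_\Set(FA,FB)|=|\Mor_\C(A,B)|=1$, and the same computation run backwards shows that $FA$ initial forces $A$ initial. Hence for $A=\mathtt 0$, the unique $\C$-initial object of (8), the isomorphism class $\{X:X\cong_\C\mathtt 0\}$ coincides with the class of $\C$-initial objects, which is the singleton $\{\mathtt 0\}$, while $F\mathtt 0$ is $\Set$-initial, so $\{Y:Y\cong_\Set F\mathtt 0\}=\{\emptyset\}$; both sides then have cardinality $1$. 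For a non-initial $A$ the class $\{X:X\cong_\C A\}$ is proper by (9), and since $F$ reflects initiality $FA$ is a non-empty set, so $\{Y:Y\cong_\Set FA\}$ is likewise a proper class.

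The main obstacle is therefore to show that these two proper classes are equinumerous; once this is done, Theorem~\ref{t:GWO-iso-cat} immediately yields $\C\cong\Set$. Here $(\mathsf{GWO})$ does the work: fix a set-like well-order $\mathbf W$ with $\dom[\mathbf W^\pm]=\UU$. For any proper subclass $P\subseteq\UU$ the restriction $\mathbf W{\restriction}P$ is again a set-like well-order, since each initial interval $\cev{\mathbf W{\restriction}P}(x)\subseteq\cev{\mathbf W}(x)$ is a subclass of a set and hence a set by Exercise~\ref{ex:subclass}. By Theorem~\ref{t:wOrd} the map $\rank_{\mathbf W{\restriction}P}:P\to\rank(\mathbf W{\restriction}P)$ is an order isomorphism, and by Theorem~\ref{t:rank}(3) its range is either an ordinal or $\Ord$; it cannot be an ordinal, for otherwise $P$ would be a set. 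Thus $P$ is in bijection with $\Ord$. Applying this to the two proper classes $\{X:X\cong_\C A\}$ and $\{Y:Y\cong_\Set FA\}$ (both subclasses of $\UU$, as objects of a category are sets) shows that each is equinumerous with $\Ord$, hence with one another. This verifies the cardinality hypothesis for every $\C$-object $A$, and an application of Theorem~\ref{t:GWO-iso-cat} completes the proof.
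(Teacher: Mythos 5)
Your proposal is correct and follows essentially the same route as the paper's proof: reduce to Theorem~\ref{t:EC=Set} via $(\mathsf{GWO})\Rightarrow(\mathsf{EC})$ to get a full faithful essentially surjective functor $F:\C\to\Set$, match the isomorphism class of the unique initial object $\mathtt 0$ with that of $\emptyset$, and use $(\mathsf{GWO})$ together with Theorem~\ref{t:wOrd} to see that the two proper isomorphism classes attached to any non-initial object are each in bijection with $\Ord$, so that Theorem~\ref{t:GWO-iso-cat} applies. Your explicit verification that $F$ preserves and reflects initiality, and your justification that a proper subclass of $\UU$ is equinumerous with $\Ord$ via the rank of the restricted well-order, merely spell out steps the paper leaves implicit.
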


\begin{proof} The  ``only if'' part is trivial and holds without $(\mathsf{EC})$. To prove the ``if'' part, assume that the principle $(\mathsf{GWO})$ holds and a category $\C$ has properties (1)--(9). Since $(\mathsf{GWO})\Ra(\mathsf{EC})$, we can apply Theorem~\ref{t:EC=Set} and conclude that the categories $\C$ and $\Set$ are equivalent. By Theorem~\ref{t:equivalent}, there exists a full faithful functor $F:\C\to\Set$, which is essentially surjective on objects. 

By the condition (8), the category $\C$ contains a unique initial object $\mathtt 0$. Since the functor $F$ is essentially surjective on objects, for the empty set $\emptyset\in\Ob_{\Set}$, there exists a $\C$-object $Z$ such that $FZ\cong \emptyset$. Since $\emptyset$ is an initial object of the category $\Set$, the object $Z$ is initial in the category $\C$ and hence $Z=\mathtt 0$ by the uniqueness of the initial object $\mathtt 0$ in $\C$. Then $$|\{x\in \Ob_\C:x\cong\mathtt 0\}|=|1|=|\{y\in\Ob_{\Set}:y\cong\emptyset\}|.$$
On the other hand, for any $\C$-object $x\ne\mathtt 0$, the uniqueness of an initial object in $\C$ implies that $x$ is not initial in $\C$ and hence $Fx$ is not initial in $\Set$. The latter means that the set $Fx$ is not empty and then $\{z\in\UU:|z|=|Fx|\}$ is a proper class. By the condition (9), the class $\{y\in\Ob_\C:y\cong x\}$ is proper, too. By the principle $(\mathsf{GWO})$ the proper classes $\{y\in \Ob_\C:y\cong x\}$ and $\{z\in \UU:|z|=|Fx|\}$ are well-orderable. By Theorem~\ref{t:wOrd} these classes admit a bijective function onto the class $\Ord$, which implies that $|\{y\in\Ob_\C:y\cong Fx\}|=|\{z\in\UU:|z|=|x|\}|$. Applying Theorem~\ref{t:GWO-iso-cat}, we conclude that the categories $\C$ and $\Set$ are isomorphic.
\end{proof}

\begin{remark} Among conditions characterizing the category $\Set$ there are two conditions that have non-finitary nature, namely, the local smallness and the existence of arbitrary colimits. Attempts to give a finitary definition of a category that resembles the category of sets lead Lawvere and Tierney to discovering the notion of an elementary topos: this is a cartesian closed category with a subobject classifier. We shall briefly discuss these notions in the next three sections.
\end{remark}

\section{Cartesian closed categories}

\begin{definition} A category $\C$ with binary products is called \index{cartesian closed category}\index{category!cartesian closed}{\em cartesian closed} if for any $\C$-objects $X,Y$ there is an \index{exponential object}{\em exponential object} $Y^X\in\Ob_\C$ and an \index{evaluation morphism}{\em evaluation morphism} $\ev_{X,Y}:Y^X\times X\to Y$ with the universal property that for every $\C$-object $Z$ and $\C$-morphism $f:Z\times X\to Y$ there exist unique $\C$-morphisms $[f]:Z\to Y^X$ and $[f]{\times}\mathsf 1_X:Z\times X\to Y^X\times X$ making the following diagram commutative.
$$
\xymatrix{
&Z\ar@{..>}|{[f]}[ld]\\
Y^X&Y&Z\times X\ar_f[l]\ar@{..>}|{[f]{\times}\mathsf 1_X}[ld]\ar[d]\ar@/_10pt/[lu]\\
&Y^X\times X\ar[r]\ar@/^10pt/[lu]\ar^{\ev_{X,Y}}[u]&X
}
$$In this diagram by arrows without labels we denote the coordinate projections.
\end{definition}

\begin{example} The category $\Set$ is cartesian closed: for any sets $X,Y$ the exponential object $Y^X$ is the set of all functions $f:X\to Y$, and the evaluation morphism $\ev_{X,Y}:Y^X\times X\to Y$ assigns to every ordered pair $\langle \varphi,x\rangle\in Y^X\times X$ the value $\varphi(x)$ of $\varphi$ at $x$. For every set $Z$ and function $f:Z\times X\to Y$ the function $[f]:Z\to Y^X$ assigns to every element $z\in Z$ the function $[f]_z:X\to Y$, $[f]_z:x\mapsto f(z,y)$.
\end{example}

\begin{exercise} Let $\C$ be a cartesian closed category. Prove that for any $\C$-objects $X,Y,Z$ we have $\C$-isomorphisms:
\begin{itemize}
\item $(Y^X)^Z\cong Y^{X\times Y}$;
\item $Y^X\times Z^X\cong (Y\times Z)^X$;
\item $X\cong X^{\mathtt 1}$ where $\mathtt 1$ is a terminal object in $\C$.
\end{itemize}
\end{exercise}

\section{Subobject classifiers}

In category theory subobjects correspond to subsets in the category of sets. Since the category theory does not ``see'' the inner structure of objects, subobjects should be defined via morphisms. The idea is to identify subobjects of a given object $A$ with equivalence classes of monomorphisms into $A$. 

We say that two $\C$-morphisms $f:X\to A$ and $g:Y\to A$ of a category $\C$ are {\em isomorphic} if there exists a $\C$-isomorphism $h:X\to Y$ such that $f=g\circ h$. For a $\C$-morphism $f$ by $[f]_{\cong}$ we denote the class of $\C$-morphisms, which are isomorphic to $f$.

By definition, a \index{subobject}{\em subobject} of a $\C$-object $A$ is the equivalence class $[i]_{\cong}$ of some monomoprphism $i:X\to A$.


Such definition of a subobject is not very convenient to work with because very often subobjects are  proper classes. So, it is not even possible to define the class of all subobjects of a given object of a category. In the category of sets subobjects of a given set $A$ can be identified with subsets of $A$. In its turn, using characteristic functions, we can identify each subset $X\subseteq A$ with the characteristic function $\chi_X:A\to 2$. So, function into the doubleton $2=\{0,1\}$ classify subobjects in the category of sets. This property of the doubleton motivates the following definition.

\begin{definition} Let $\C$ be a category that has a terminal object $\mathtt 1$. A \index{subobject classifier}{\em subobject classifier} is a $\C$-object $\Omega$ endowed with a $\C$-morphism $\true:\mathtt 1\to\Omega$  such that the following two properties are satisfied:

\noindent 1) for any $\C$-morphism $\chi:A\to\Omega$ the diagram $\mathtt 1\overset{\true}\longrightarrow \Omega\overset{\chi}\longleftarrow A$ has a pullback, and

\noindent 2) for any monomorphism $i:X\to A$ in the category $\C$ there exists a  unique $\C$-morphism $\chi_i:A\to\Omega$, called {\em the characteristic morphism for the monomorphism $i$}, such that for the unique $\C$-morphism $X\to \mathtt 1$, the square
\begin{equation}\label{eq:subobject}
\xymatrix{
X\ar^i[r]\ar[d]&A\ar^{\chi_i}[d]\\
\mathtt 1\ar_{\true}[r]&\Omega
}
\end{equation}
is a pullback, which means that for any $\C$-object $Y$ and $\C$-morphisms $f:Y\to A$ and $g:Y\to\mathtt 1$ with $\chi_i\circ f=\true\circ g$ there exists a unique $\C$-morphism $h:Y\to X$ such that $i\circ h=f$.
\end{definition}

The uniqueness of the morphism $\chi_i$ and the pullback property of the square (\ref{eq:subobject}) imply that for a $\C$-object $A$, two monomorphisms $i:X\to A$ and $j:y\to A$ are isomorphic if and only if $\chi_i=\chi_j$ if and only if $[i]_{\cong}=[j]_{\cong}$. This means that subobjects of a $\C$-object $A$ are in the bijective correspondence with  $\C$-morphism from $A$ to $\Omega$. The surjectivity of this correspondence follows from the following property of pullbacks.

\begin{exercise} Prove that for any pullback square
$$\xymatrix{
X\ar^i[r]\ar[d]&Y\ar[d]\\
\mathtt 1\ar[r]&Z
}
$$the morphism $i$ is always mono.
\end{exercise}

\begin{proposition} If a category $\C$ has a subobject classifier $\true:\mathtt 1\to\Omega$, then it is unique up to an isomorphism.
\end{proposition}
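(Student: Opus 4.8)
The plan is to prove uniqueness by the standard categorical argument that any two objects satisfying the same universal property must be canonically isomorphic. Suppose $\true:\mathtt 1\to\Omega$ and $\true':\mathtt 1\to\Omega'$ are two subobject classifiers in the category $\C$. The idea is that each of $\true$ and $\true'$ is itself a monomorphism (being a morphism out of the terminal object $\mathtt 1$, which is always mono), so each can be classified by the \emph{other} subobject classifier. Applying the universal property of $\Omega'$ to the monomorphism $\true:\mathtt 1\to\Omega$ produces a unique characteristic morphism $\chi:\Omega\to\Omega'$, and symmetrically, applying the universal property of $\Omega$ to $\true':\mathtt 1\to\Omega'$ produces a unique $\chi':\Omega'\to\Omega$. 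My claim will be that $\chi$ and $\chi'$ are mutually inverse isomorphisms.

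First I would record the preliminary observation that any morphism $f:\mathtt 1\to X$ with source the terminal object is a monomorphism: given $g,h\in\Mor(Z,\mathtt 1)$ with $f\circ g=f\circ h$, the terminality of $\mathtt 1$ forces $g=h$ directly, since there is only one morphism $Z\to\mathtt 1$. Hence both $\true$ and $\true'$ qualify as monomorphisms to which the classifier property applies. Next I would set up the two pullback squares: by definition of $\Omega'$ as a classifier, the square with top edge $\true:\mathtt 1\to\Omega$, right edge $\chi:\Omega\to\Omega'$, left edge the unique $\mathtt 1\to\mathtt 1$ (the identity), and bottom edge $\true':\mathtt 1\to\Omega'$ is a pullback; symmetrically for $\chi'$.

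The key step is to show $\chi'\circ\chi=\mathsf 1_\Omega$ and $\chi\circ\chi'=\mathsf 1_{\Omega'}$. I would argue that $\chi'\circ\chi:\Omega\to\Omega$ is a characteristic morphism for the monomorphism $\true:\mathtt 1\to\Omega$. This follows by pasting the two pullback squares: the pullback pasting lemma (the outer rectangle built from two pullback squares sharing an edge is again a pullback) shows that the composite square with bottom edge $\true$ and right edge $\chi'\circ\chi$ is a pullback over $\true:\mathtt 1\to\Omega$. But $\mathsf 1_\Omega$ is visibly a characteristic morphism for $\true$ as well, since the trivial square with edges $\true$, $\mathsf 1_\Omega$, $\mathsf 1_{\mathtt 1}$, $\true$ is a pullback. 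By the uniqueness clause in the definition of a subobject classifier, the characteristic morphism for a given monomorphism is unique, so $\chi'\circ\chi=\mathsf 1_\Omega$. The symmetric argument with the roles of $\Omega$ and $\Omega'$ exchanged yields $\chi\circ\chi'=\mathsf 1_{\Omega'}$. Therefore $\chi$ is an isomorphism with inverse $\chi'$, and moreover it respects the distinguished morphisms in the sense that $\chi\circ\true=\true'$, so the two classifiers are isomorphic as objects-under-$\mathtt 1$.

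The main obstacle I anticipate is verifying the pullback pasting step cleanly, since the excerpt develops pullbacks only informally and does not explicitly state the pasting lemma; I would therefore either prove the needed instance of pasting directly from the universal property (chasing the unique mediating morphisms through both squares) or phrase the whole argument purely in terms of the defining universal property of the classifier, invoking the uniqueness of characteristic morphisms at each stage rather than a named lemma. The one subtlety to handle carefully is the bookkeeping of the unique morphisms into $\mathtt 1$ appearing as the left edges of the squares, ensuring that composites of these still equal the canonical morphism to $\mathtt 1$ so that the pasted square genuinely has the form required of a classifying pullback.
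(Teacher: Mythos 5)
Your proof is correct and follows essentially the same route as the paper: both classify each classifier's $\true$ morphism by the other, paste the two pullback squares, and invoke the uniqueness of the characteristic morphism to conclude that the composites are identities. Your version is in fact slightly more careful than the paper's, which justifies the final step only with the terse phrase ``by the definition of a pullback,'' whereas you correctly identify the uniqueness clause of the subobject classifier (with $\mathsf 1_\Omega$ classifying $\true$ via the trivial square) as the operative principle.
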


\begin{proof} Assume that $\true:\mathtt 1\to\Omega$ and $\true':\mathtt 1\to\Omega'$ are two subobject classifiers. Since $\mathtt 1$ is a terminal object, the morphisms $\true$ and $\true'$ are monomorphisms. Then there are unique characteristic functions $\chi:\Omega'\to\Omega$ and $\chi':\Omega\to\Omega'$ such that the upper and lower squares of the following diagram are pullbacks:  
$$
\xymatrix{
\mathtt 1\ar[d]\ar^{\true}[r]&\Omega\ar^{\chi'}[d]\\
\mathtt 1\ar[d]\ar^{\true'}[r]&\Omega'\ar^{\chi}[d]\\
\mathtt 1\ar^{\true}[r]&\Omega
}
$$
Then the external square also is a pullback and then $\chi\circ\chi'$ is the identity morphism of $\Omega$ by the definition of a pullback. By analogy we can prove that $\chi'\circ\chi=\mathsf 1_{\Omega'}$. This means that the morphism $\chi:\Omega'\to\Omega$ is an isomorphism.
\end{proof}

\begin{example} In the category of sets, a subobject classifier exists: it is the function $\true=\{\langle 0,1\rangle\}:1\to 2$.
\end{example}

The morphism $\true:1\to 2$ can be defined in any category with a terminal object $\mathtt 1$ and finite coproducts. Namely, let $\mathtt 2=\mathtt 1\sqcup\mathtt 1$ be a coproduct of two copies of $\mathtt1$ and $\mathtt{false}:\mathtt 1\to\mathtt 2$, $\true:\mathtt 1\to\mathtt 2$ be the first and second coordinate coprojections, respectively.

\begin{proposition} The doubleton $\mathtt 2$ endowed with the morphism $\true:\mathtt 1\to\mathtt 2$ is a subobject classifier in the category of sets.
\end{proposition}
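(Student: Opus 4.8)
The plan is to unwind both clauses of the definition of a subobject classifier directly in $\Set$, using the already-established facts that monomorphisms in $\Set$ are exactly the injective functions and that a terminal object is a singleton. Write $\mathtt 1=\{*\}$ for a fixed terminal object, and recall that $\mathtt 2=\mathtt 1\sqcup\mathtt 1$ is a two-element set whose two distinct elements are $\mathtt{false}(*)$ and $\true(*)$; set $t=\true(*)$ for brevity. Thus a morphism $\chi\colon A\to\mathtt 2$ is just a function whose fibre over $t$ is the subset $\chi^{-1}[\{t\}]\subseteq A$. First I would verify clause (1): given any $\chi\colon A\to\mathtt 2$, I claim the pullback of $\true$ along $\chi$ is the set $P=\chi^{-1}[\{t\}]$ equipped with the inclusion $e\colon P\to A$ and the unique map $P\to\mathtt 1$. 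Since $\Set$ has equalizers and binary products, such a pullback exists by the earlier exercise, and a routine check of the universal property (a cone $\langle f\colon Y\to A,\ g\colon Y\to\mathtt 1\rangle$ with $\chi\circ f=\true\circ g$ has $f[Y]\subseteq\chi^{-1}[\{t\}]$, so $f$ factors uniquely through $e$) identifies $P$ as that pullback.

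Next, for clause (2), given an injective function $i\colon X\to A$ I would define its characteristic morphism $\chi_i\colon A\to\mathtt 2$ by $\chi_i(a)=t$ if $a\in i[X]$ and $\chi_i(a)=\mathtt{false}(*)$ otherwise. The square with top edge $i$, left edge the unique map $X\to\mathtt 1$, right edge $\chi_i$, and bottom edge $\true$ commutes because $\chi_i(i(x))=t=\true(*)$ for every $x\in X$. To see it is a pullback, take a test set $Y$ with functions $f\colon Y\to A$ and $g\colon Y\to\mathtt 1$ satisfying $\chi_i\circ f=\true\circ g$; then $\chi_i(f(y))=t$ for all $y$, hence $f(y)\in i[X]$, and injectivity of $i$ yields a unique $h(y)\in X$ with $i(h(y))=f(y)$. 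The resulting function $h\colon Y\to X$ is the unique morphism with $i\circ h=f$, again by injectivity of $i$, which is exactly the pullback property (the second triangle being automatic since $\mathtt 1$ is terminal).

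The step I expect to be the main obstacle, or at least the one demanding the most care, is the \emph{uniqueness} of $\chi_i$ required by the definition: I must show that any $\chi'\colon A\to\mathtt 2$ making the same square a pullback coincides with $\chi_i$. Commutativity of the square forces $\chi'(a)=t$ for every $a\in i[X]$. For the converse inclusion I would use $\mathtt 1$ itself as a test object: if $\chi'(a)=t$ for some $a\in A$, let $f\colon\mathtt 1\to A$ pick out $a$ and let $g\colon\mathtt 1\to\mathtt 1$ be the identity; then $\chi'\circ f=\true\circ g$, so the pullback property of the square for $\chi'$ produces $h\colon\mathtt 1\to X$ with $i\circ h=f$, whence $a=i(h(*))\in i[X]$. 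Therefore $\chi'$ takes the value $t$ exactly on $i[X]$, and since $\mathtt 2$ has only the two values $t$ and $\mathtt{false}(*)$, this forces $\chi'=\chi_i$. Combining the three verifications shows that $\true\colon\mathtt 1\to\mathtt 2$ is a subobject classifier in $\Set$.
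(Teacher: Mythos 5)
Your proof is correct and follows essentially the same route as the paper's: the same characteristic function $\chi_i$ with value $t$ exactly on $i[X]$, the same verification of the pullback property via injectivity of $i$, and the same uniqueness argument using $\mathtt 1$ as a test object to show any competing $\chi'$ takes the value $t$ only on $i[X]$. The only difference is that you also verify clause (1) of the definition explicitly (identifying the pullback of $\true$ along an arbitrary $\chi$ as the fibre $\chi^{-1}[\{t\}]$), which the paper leaves implicit; this is a welcome bit of extra care rather than a different approach.
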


\begin{proof} In the category of sets the terminal object $\mathtt 1$ is isomorphic to the natural number $1=\{0\}$ and the coproduct $\mathtt 2=\mathtt 1\sqcup \mathtt 1$ is isomorphic to the natural number $2=\{0,1\}$. Then the morphism $\true:\mathtt 1\to \mathtt 2$ can be identified with the function $\{\langle 0,1\rangle\}:1\mapsto\{1\}\subseteq 2$. Given any injective function $i:X\to Y$ between sets, consider the characteristic function $\chi:Y\to 2$ of the subset $i[X]$ of $Y$. By definition, $\chi$ is a unique function such that $$\chi(y)=\begin{cases}1&\mbox{if $y\in i[X]$};\\
0&\mbox{if $y\in Y\setminus i[X]$}.
\end{cases}
$$
We should prove that $\chi$ is a unique function making the square
$$\xymatrix{
X\ar^i[r]\ar_{u}[d]&Y\ar^\chi[d]\\
1\ar^{\true}[r]&2
}
$$
a pullback. The definition of the function $\chi$ ensures that this square is commutative. To prove that it is a pullback, take any set $Z$ and functions $f:Z\to Y$ and $g:Z\to 1$ such that $\chi\circ f=\true\circ g$.  The latter equality implies that $f[Z]\subseteq i[X]$. The injectivity of the function $i:X\to Y$ ensures that there exists a unique function $h:Z\to X$ such that $f=i\circ h$. The uniqueness of functions into $1$ guarantees that $g=u\circ h$. This means that the above square is indeed a pullback. 

To prove the uniqueness of the function $\chi$, take any function $\chi':Y\to 2$ for which the square 
$$\xymatrix{
X\ar^i[r]\ar_{u}[d]&Y\ar^{\chi'}[d]\\
1\ar^{\true}[r]&2
}
$$
is a pullback. The commutativity of this square implies that $\chi'[i[X]]\subseteq\{1\}$. Assuming that $\chi'\ne\chi$, we could find an element $y\in Y\setminus i[X]$ such that $\chi(y)=1$. Consider the function $f:1\to Y$ with $f(0)=y$ and observe that $\chi'\circ f=\true$. The pullback property of the square yields a unique function $h:1\to X$ such that $f=i\circ h$. Then $y=f(0)=i(h(0))\in i[X]$, which contradicts the choice of $y$.
\end{proof}

\begin{exercise} Let $\C,\C'$ be  categories possessing subobject classifiers $\true:\mathtt 1\to\Omega$ and $\true':\mathtt 1'\to\Omega$. Prove that $\langle\true,\true'\rangle$ is a subobject classifier of the product category $\C\times\C'$.
\end{exercise} 

\begin{exercise} Prove that the category $\Set\times\Set$ has a subobject classifier $\true:\mathtt 1\to\Omega$ with $|\Mor(\mathtt 1,\Omega)|=4$.
\end{exercise}

\begin{exercise} Prove that the category of functions $\Set^{\to}$ has a subobject classifier $\true:\mathtt 1\to\Omega$ with $|\Mor(\mathtt 1,\Omega)|=3$.
\end{exercise}

The existence of subobject classifiers impose some restrictions on a category. We recall that a category is {\em balanced} if each bimorphism (=mono+epi) is an isomorphism.

\begin{proposition}\label{p:topos-balanced} If a category $\C$ has a subobject classifier $\true:\mathtt 1\to\Omega$, then $\C$ is a balanced category.
\end{proposition}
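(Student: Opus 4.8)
The plan is to show that any bimorphism $f\colon X\to Y$ in a category $\C$ with a subobject classifier $\true\colon\mathtt 1\to\Omega$ is an isomorphism. Since $f$ is already epi by hypothesis, it suffices to produce an inverse, and the key is to exploit that $f$, being mono, is classified by a characteristic morphism. First I would consider the monomorphism $f\colon X\to Y$ and invoke the subobject classifier: there is a unique characteristic morphism $\chi_f\colon Y\to\Omega$ making the square
$$
\xymatrix{
X\ar^f[r]\ar[d]&Y\ar^{\chi_f}[d]\\
\mathtt 1\ar_{\true}[r]&\Omega
}
$$
a pullback, where $X\to\mathtt 1$ is the unique morphism to the terminal object.

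The crucial observation is that there is a competing morphism $Y\to\Omega$, namely $\true\circ t_Y$, where $t_Y\colon Y\to\mathtt 1$ is the unique morphism to $\mathtt 1$. I would show that $\chi_f$ and $\true\circ t_Y$ become equal after precomposition with $f$: indeed $\chi_f\circ f=\true\circ t_X=\true\circ t_Y\circ f$, the first equality holding because the classifying square commutes and the second because $t_X=t_Y\circ f$ by uniqueness of morphisms into $\mathtt 1$. Now the epi property of $f$ enters decisively: from $\chi_f\circ f=(\true\circ t_Y)\circ f$ and $f$ being an epimorphism we conclude $\chi_f=\true\circ t_Y$.

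Having identified $\chi_f$ with $\true\circ t_Y$, I would then feed this back into the pullback property of the classifying square. The square with $\chi_f=\true\circ t_Y$ is a pullback, and I apply its universal property to the cone given by $\mathsf 1_Y\colon Y\to Y$ together with $t_Y\colon Y\to\mathtt 1$; the required compatibility $\chi_f\circ\mathsf 1_Y=\true\circ t_Y$ holds by the previous step. This yields a unique morphism $g\colon Y\to X$ with $f\circ g=\mathsf 1_Y$. So $f$ has a right inverse $g$. It then remains to check that $g$ is also a left inverse: from $f\circ g\circ f=\mathsf 1_Y\circ f=f=f\circ\mathsf 1_X$ and the fact that $f$ is a monomorphism we cancel $f$ on the left to obtain $g\circ f=\mathsf 1_X$. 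Hence $g$ is a two-sided inverse of $f$, so $f$ is an isomorphism and $\C$ is balanced.

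The main obstacle I anticipate is getting the direction of the cancellation arguments exactly right and verifying the compatibility conditions needed to apply the pullback's universal property (in particular confirming that $\mathsf 1_Y$ and $t_Y$ genuinely form a cone over $\true\colon\mathtt 1\to\Omega\leftarrow Y\colon\chi_f$ once $\chi_f=\true\circ t_Y$ is known). Both uses of the defining properties of $f$—epi to collapse the two morphisms into $\Omega$, and mono to cancel on the left at the end—must be invoked in the correct order, and the uniqueness clauses for morphisms into the terminal object $\mathtt 1$ should be stated explicitly to justify the identities $t_X=t_Y\circ f$ and the commutativity of the relevant triangles. These are routine once the structure is laid out, so no lengthy computation is expected.
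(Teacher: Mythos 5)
Your proof is correct and follows essentially the same route as the paper: classify the mono $f$ by $\chi_f$, use the pullback property against the cone $(\mathsf 1_Y,t_Y)$ to get a right inverse, and cancel $f$ on the left (mono) to upgrade it to a two-sided inverse. In fact you are more careful than the paper at the one point where the epimorphism hypothesis actually enters — showing $\chi_f=\true\circ t_Y$ so that $(\mathsf 1_Y,t_Y)$ really is a cone over the classifying square — a step the paper's proof leaves implicit.
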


\begin{proof}  Given an bimorphism $f:X\to Y$ in the category $\C$, find a unique $\C$-morphism $\chi:Y\to\Omega$ into the classifying object $\Omega$ making the square
$$
\xymatrix{
X\ar[d]\ar^f[r]&Y\ar^\chi[d]\\
\mathtt 1\ar_{\true}[r]&\Omega
}
$$a pullback. Then for the identity morphism $\mathsf 1_Y:Y\to Y$ and the unique morphism $Y\to 1$, the pullback property of this square implies the existence of a unique $\C$-morphism $h:Y\to X$ such that $f\circ h=\mathsf 1_Y$. Since $f$ is a monomorphism, the equality $f\circ (h\circ f)=(f\circ h)\circ f=\mathsf 1_Y\circ f=f\circ \mathsf 1_X$ implies $h\circ f=\mathsf 1_X$. Therefore, $f$ is an isomorphism with $f^{-1}=h$.
\end{proof}
 
\begin{proposition}\label{p:topos-equalizers} Assume that a category $\C$ has a subobject classifier $\true:\mathtt 1\to\Omega$. If $\C$ has binary squares, then it has limits of finite diagrams. It particular, it has equalizers and pullbacks.
\end{proposition}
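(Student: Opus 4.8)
The plan is to reduce everything to the construction of equalizers. By Exercise~\ref{ex:f-limits} a category has limits of finite diagrams precisely when it has binary products and equalizers, and by Exercise~\ref{ex:p+e=>pb} pullbacks then come for free. Since $\C$ already has binary products by hypothesis and possesses a terminal object $\mathtt 1$ (whose existence is built into the very notion of a subobject classifier), the whole proposition follows once I produce an equalizer of an arbitrary pair of parallel $\C$-morphisms $f,g\in\Mor_\C(X,Y)$.

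First I would manufacture a monomorphism out of the product structure on $Y$. Using the coordinate projections $\pi_1,\pi_2\colon Y\times Y\to Y$, let $\Delta_Y\colon Y\to Y\times Y$ be the diagonal, i.e.\ the unique morphism with $\pi_1\circ\Delta_Y=\mathsf 1_Y=\pi_2\circ\Delta_Y$. It is immediate that $\Delta_Y$ is a monomorphism: if $\Delta_Y\circ u=\Delta_Y\circ v$, then composing with $\pi_1$ gives $u=v$. Since $\C$ has a subobject classifier $\true\colon\mathtt 1\to\Omega$, the monomorphism $\Delta_Y$ has a characteristic morphism $\delta=\chi_{\Delta_Y}\colon Y\times Y\to\Omega$ making
$$
\xymatrix{
Y\ar^{\Delta_Y}[r]\ar[d]&Y\times Y\ar^{\delta}[d]\\
\mathtt 1\ar_{\true}[r]&\Omega
}
$$
a pullback; in particular $\delta\circ\Delta_Y=\true\circ{!}_Y$, where ${!}_Y\colon Y\to\mathtt 1$ is the unique morphism.

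Next, for the given pair $f,g$ I would form the pairing $\langle f,g\rangle\colon X\to Y\times Y$ (the unique morphism with $\pi_1\circ\langle f,g\rangle=f$ and $\pi_2\circ\langle f,g\rangle=g$) and set $\chi=\delta\circ\langle f,g\rangle\colon X\to\Omega$. By the first defining property of a subobject classifier the cospan $\mathtt 1\xrightarrow{\true}\Omega\xleftarrow{\chi}X$ admits a pullback; let $(E,e)$ be this pullback, with $e\colon E\to X$. I claim $(E,e)$ is an equalizer of $f$ and $g$. To see $f\circ e=g\circ e$, note that the pullback square gives $\delta\circ\langle f,g\rangle\circ e=\true\circ{!}_E$, so by the pullback character of the classifying square for $\Delta_Y$ there is a morphism $k\colon E\to Y$ with $\Delta_Y\circ k=\langle f,g\rangle\circ e$; composing with $\pi_1$ and $\pi_2$ yields $f\circ e=k=g\circ e$. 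Conversely, for any $e'\colon E'\to X$ with $f\circ e'=g\circ e'$ I would set $k'=f\circ e'=g\circ e'$ and observe that $\langle f,g\rangle\circ e'=\langle f\circ e',g\circ e'\rangle=\Delta_Y\circ k'$, whence $\chi\circ e'=\delta\circ\Delta_Y\circ k'=\true\circ{!}_{E'}$; the universal property of the pullback $(E,e)$ then supplies a unique $h\colon E'\to E$ with $e\circ h=e'$. This is exactly the universal property of an equalizer.

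The genuinely routine part is the diagram chasing of the last paragraph, which only invokes the uniqueness clauses in the definitions of product and pullback. The one step deserving care — the main, if modest, obstacle — is the equivalence of ``$\langle f,g\rangle$ factors through the diagonal $\Delta_Y$'' with ``$f\circ e=g\circ e$'': this is precisely where the pullback nature of the classifying square for $\Delta_Y$ is used in both directions, and one must check that the accompanying morphisms into $\mathtt 1$ are forced to be the unique ones, so that the two pullback squares genuinely paste. Having produced equalizers, I would finish by invoking Exercise~\ref{ex:f-limits} to obtain all finite limits, and Exercise~\ref{ex:p+e=>pb} for pullbacks in particular.
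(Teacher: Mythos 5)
Your proof is correct and follows essentially the same route as the paper: classify the diagonal $\Delta_Y\colon Y\to Y\times Y$, pull back $\true$ along $\chi_{\Delta_Y}\circ\langle f,g\rangle$ to get the equalizer, then invoke the two exercises for pullbacks and finite limits. You even supply the verification of the equalizer's universal property, which the paper leaves as ``it can be shown.''
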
 

\begin{proof} To show that the category $\C$ has equalizers, fix any $\C$-morphisms $f,g:X\to Y$. Since $\C$ has binary products, it has a product $Y\times Y$. By definition of the product,  there exists a unique $\C$-morphism $\delta:Y\to Y\times Y$ such that $\pr_1\circ \delta=\mathsf 1_Y=\pr_2\circ\delta$, where  $\pr_1,\pr_2:Y\times Y\to Y$ are the coordinate projections of the product $Y\times Y$.  The latter equalities imply that $\delta$ is a monomorphism. Then there exists a $\C$-morphism $\chi:Y\times Y\to \Omega$ such that the square 
$$\xymatrix{
Y\ar[d]\ar^(.4)\delta[r]&Y\times Y\ar^\chi[d]\\
\mathtt 1\ar_{\true}[r]&\Omega
}
$$is a pullback. 

 By definition of the product $Y\times Y$, there exists a unique $\C$-morphism $(f,g):X\to Y\times Y$ such that $f=\pr_1\circ(f,g)$ and $g=\pr_2\circ(f,g)$. Now consider the morphism $h=\chi\circ(f,g):X\to\Omega$. By definition of a subobject classifier, there exists a pullback
$$\xymatrix{
E\ar[d]\ar^e[r]&X\ar^h[d]\\
\mathtt 1\ar_{\true}[r]&\Omega.
}
$$
It can be shown that the morphism $e:E\to X$ is an equalizer of the pair $(f,g)$.

The existence of binary products and equalizers implies the existence of pullbacks and limits of all finite diagrams, see Exercise~\ref{ex:p+e=>pb} and~\ref{ex:f-limits}.
\end{proof}

 


\section{Elementary Topoi}

{\small\em

\rightline{Lawvere's axioms for elementary topos helped many people}

\rightline{outside the community of specialists to enter into this field}

\rightline{and make a fruitful research in it.}

\rightline{Everyone who learns today the topos theory begins}

\rightline{with Lawvere's axioms for elementary topos.}

\rightline{This makes Lawvere's axiomatization of topos theory}

\rightline{a true success story of Axiomatic Method}

\rightline{in the twentieth century mathematics.}
}
\smallskip

\rightline{\small Andrei Rodin, ``Axiomatic Method and Category Theory'', 2014}
\bigskip

Elementary topoi were introduced by Lawvere and Tierney in 1968-69. Now the theory of elementary topoi is well-developed and is considered as a foundation of mathematics (alternative to Set Theory). The modern definition of an elementary topos  is very short.

\begin{definition} An \index{elementary topos}{\em elementary topos} if $\C$ is a cartesian closed category with a subobject classifier. 
\end{definition}

A standard example of an elementary topos is the category of sets. On the other hand, the categories $\Set\times\Set$ and $\Set^\to$ are elementary topoi, which are not equivalent to the category $\Set$ (because their subobject classifiers have more than two elements). 

For any $\C$-object $X$ of an elementary topos $\C$ with a subject classifier $\Omega$, we can consider the exponential object $\Omega^X$, called the \index{power object}{\em power object} of $X$. The power object $\Omega^X$ indexes all subobjects of $X$. Using the evaluation   morphism $\ev_{X,\Omega}:\Omega^X\times X\to\Omega$, for any global elements $s:\mathtt 1\to \Omega^X$ and $x:\mathtt 1\to X$, we can consider the morphism $\ev_{X,\Omega}\circ(s,x):\mathtt 1\to\Omega$ and compare it with the morphism $\true:\mathtt 1\to\Omega$. The equality $\ev_{X,\Omega}\circ(s,x)=\true$ can be interpreted as the indication that the global element $x$ ``belongs'' to the subobject $s$ of $X$. This allows to apply element-based arguments resembling those practiced in the classical Set Theory and Logics. The subject of Categorial Logic is very extensive and we will not develop it here referring the reader to the monographs \cite{Johnstone}, \cite{McLarty}, \cite{Streicher}.    

In this section we characterize elementary topoi, which are equivalent or isomorphic to the category of sets. 

 
\begin{definition} An elementary topos is called \index{elementary topos!well-pointed}{\em well-pointed} if its terminal object $\mathtt 1$ is a generator and its subobject classifier $\Omega$ is not a terminal object.
\end{definition} 

\begin{exercise} Show that the category of finite sets $\mathbf{FinSet}$ is a well-pointed elementary topos.
\end{exercise}

\begin{proposition}\label{p:wp=>tw} If an elementary topos $\C$ is well-pointed, then its subobject classifier $\Omega$ is two-valued in the sense that $|\Mor(\mathtt 1,\Omega)|=2$.
\end{proposition}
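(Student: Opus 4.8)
The plan is to identify the global elements of $\Omega$ with the subobjects of the terminal object $\mathtt 1$ and to show there are exactly two of them. By the defining property of the subobject classifier (recalled just before this proposition), $\Mor(\mathtt 1,\Omega)$ is in bijection with the subobjects of $\mathtt 1$, each global element $\omega\colon\mathtt 1\to\Omega$ being the characteristic morphism of a mono $i\colon U\to\mathtt 1$. The first observation I would record as a lemma is that $\true$ is the characteristic morphism of the identity $\mathsf 1_{\mathtt 1}\colon\mathtt 1\to\mathtt 1$ (the square with two parallel identities is trivially a pullback), and that a subobject $i\colon U\to\mathtt 1$ satisfies $\chi_i=\true$ if and only if $U$ has a global element. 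Indeed, if $u\colon\mathtt 1\to U$ is a global element then $i\circ u=\mathsf 1_{\mathtt 1}$ by uniqueness of maps into $\mathtt 1$, and the mono $i$ forces $u\circ i=\mathsf 1_U$, so $i$ is an isomorphism and $\chi_i=\true$; conversely $\true$ classifies $\mathsf 1_{\mathtt 1}$, whose domain has the global element $\mathsf 1_{\mathtt 1}$.

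Next I would exploit the generator property of $\mathtt 1$. If a subobject $U\subseteq\mathtt 1$ has no global element, then for any object $W$ and any two morphisms $f,g\colon U\to W$, the generator property cannot produce a global element of $U$ separating them (there are none), so $f=g$; thus $U$ admits at most one morphism into every object. This is the key leverage, since it will collapse the comparison morphisms appearing below. For the lower bound I would use that $\Omega$ is not terminal: there is then an object $X$ with $|\Mor(X,\Omega)|\neq 1$, and since $\true\circ{!_X}$ always lies in $\Mor(X,\Omega)$ this forces $|\Mor(X,\Omega)|\ge 2$, giving $\chi\colon X\to\Omega$ with $\chi\neq\true\circ{!_X}$. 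As $\mathtt 1$ is a generator, some $x\colon\mathtt 1\to X$ has $\chi\circ x\neq\true\circ{!_X}\circ x=\true$ (using ${!_X}\circ x=\mathsf 1_{\mathtt 1}$). Hence $\false:=\chi\circ x$ is a global element distinct from $\true$, so $|\Mor(\mathtt 1,\Omega)|\ge 2$.

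The heart of the proof is the upper bound: any two subobjects $i\colon U\to\mathtt 1$ and $j\colon V\to\mathtt 1$ with no global element are equal as subobjects. Here I would form the pullback $U\times_{\mathtt 1}V$, which exists because a cartesian closed category has finite limits (Proposition~\ref{p:topos-equalizers}). The projection $\pr_U\colon U\times_{\mathtt 1}V\to U$ is a monomorphism, being the pullback of the mono $j$, and it is an epimorphism as well, trivially, since by the previous paragraph any two morphisms out of its codomain $U$ coincide. Since the topos is balanced (Proposition~\ref{p:topos-balanced}), $\pr_U$ is an isomorphism; symmetrically $\pr_V$ is an isomorphism, and $\phi=\pr_V\circ\pr_U^{-1}\colon U\to V$ satisfies $j\circ\phi=j\circ\pr_V\circ\pr_U^{-1}=i\circ\pr_U\circ\pr_U^{-1}=i$, witnessing $\chi_i=\chi_j$.

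Assembling the pieces finishes the argument: every subobject of $\mathtt 1$ without a global element has one and the same characteristic morphism, which must be the $\false$ produced in the lower-bound step, while every subobject with a global element has characteristic morphism $\true$. Thus every global element of $\Omega$ equals $\true$ or $\false$, and these are distinct, so $|\Mor(\mathtt 1,\Omega)|=2$. The main obstacle is exactly this uniqueness of the ``false'' subobject; the decisive trick is that a subobject of $\mathtt 1$ with no global point admits at most one outgoing morphism, which turns the pullback projections into (vacuous) epimorphisms and lets balancedness conclude without any appeal to colimits or to an initial object.
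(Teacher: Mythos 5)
Your proof is correct, and its skeleton coincides with the paper's: the lower bound $|\Mor(\mathtt 1,\Omega)|\ge 2$ is obtained exactly as in the text (non-terminality of $\Omega$ plus the generator property of $\mathtt 1$), and the upper bound is reduced in both cases to showing that any two subobjects of $\mathtt 1$ without global elements represent the same subobject, via their pullback over $\mathtt 1$. Where you diverge is in the finishing move. The paper shows both pullback projections are monomorphisms by a direct generator argument (a violation would manufacture a global element of $U$), and then, supposing one projection fails to be an isomorphism, derives a contradiction by classifying the two resulting non-isomorphic subobjects of $U$ by distinct morphisms $U\to\Omega$, which the generator cannot separate because $\Mor(\mathtt 1,U)=\emptyset$. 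You instead observe that a point-less subobject $U$ of $\mathtt 1$ admits at most one morphism into any object (the generator condition holds vacuously), so the projection $\pr_U$ is automatically an epimorphism; combined with the standard fact that a pullback of a monomorphism is a monomorphism, balancedness (Proposition~\ref{p:topos-balanced}) makes $\pr_U$ an isomorphism at once. Your route buys a shorter and more transparent conclusion that does not re-invoke the subobject classifier; the paper's route avoids citing the stability of monomorphisms under pullback, which is standard but not proved in the text (only a special case appears as an exercise). Both arguments are sound, and your assembly of the two bounds into $|\Mor(\mathtt 1,\Omega)|=2$ is complete.
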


\begin{proof} Assume that an elementary topos $\C$ is well-pointed. Then its subobject classifier $\Omega$ is not a terminal object of the category $\C$. Because of the morphism $\true:\mathtt 1\to\Omega$, every $\C$-object $X$ has a morphism $X\to\Omega$. Since $\Omega$ is not terminal, there exists a  $\C$-object $X$ admitting two distinct morphisms $f,g:X\to\Omega$. Since $\mathtt 1$ is a generator, there exists a morphism $h:\mathtt 1\to X$ such that $f\circ h\ne g\circ h$. Consequently, $|\Mor(\mathtt 1,\Omega)|\ge2$.  Since the set $\Mor(\mathtt 1,\Omega)$ classifies subobjects of $\mathtt1$, the equality $|\Mor(\mathtt 1,\Omega)|=2$ will follow as soon as we show that $\mathtt 1$ has exactly two subobjects. 

Let $i:X\to\mathtt 1$ be any monomorphism. If there exists a morphism $x:1\to\mathtt X$, then $i\circ x=\mathsf 1_{\mathtt 1}$ and hence $i$ is an epimorphism. By Proposition~\ref{p:topos-balanced}, the catgeory $\C$ is balanced, which implies that $i$ is an isomorphism. Assuming that $\mathtt 1$ has more than two subobjects, we can find two nonisomorphic monomophisms $u:U\to\mathtt 1$ and $v:V\to\mathtt 1$ such that $\Mor(\mathtt 1,U)=\emptyset=\Mor(\mathtt 1,V)$. 
Now consider the pullback
$$
\xymatrix{W\ar^{\phi}[r]\ar_\psi[d]&U\ar^u[d]\\
V\ar_v[r]&\mathtt 1
}
$$
which exists as the category $\C$ has binary products and equalizers according to Proposition~\ref{p:topos-equalizers}. We claim that the morphism $\Phi:W\to U$ is a monomorphism. In the opposite case we could find a $\C$-object $Z$ and distinct morphisms $f,g:Z\to W$ such that $\phi\circ f=\phi\circ g$. Since $\mathtt 1$ is a generator, there exists a $\C$-morphism $h:\mathtt 1\to Z$ such that $f\circ h\ne g\circ h$. Then the composition $\phi\circ f\circ h$ belongs to the class $\Mor(\mathtt 1,U)=\emptyset$, which is a desired contradiction showing that $\phi$ is a monomorphism. By analogy we can prove that $\psi$ is a monomorphism. Since the morphisms $u,v$ are not isomorphic, either $\phi$ or $\psi$ is not an isomormphism. We lose no generality assuming that $\phi$ is not an isomorphism. Then $U$ has two non-isomorphic monomorphisms: $\mathsf 1_U:U\to U$ and $\phi:W\to U$, which are classified by two distinct morphisms $\chi,\chi':U\to \Omega$. Since $\mathtt 1$ is a generator, there a morphism $\varphi:\mathtt 1\to U$ such that $\chi\circ\varphi\ne\chi'\circ\varphi$.  But $\varphi$ cannot exist as $\Mor(\mathtt 1,U)=\emptyset$.
This contradiction completes the proof of the equality $|\Mor(\mathtt 1,\Omega)|=2$.
\end{proof}

\begin{proposition}\label{p:topos-1sep} Each well-pointed elementary topos $\C$ is element-separating.
\end{proposition}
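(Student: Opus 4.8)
The plan is to take for $\Omega$ the subobject classifier of the topos and to build $\chi_x$ as the characteristic morphism of the global element $x$, read as a subobject of $X$. First I would observe that every global element $x\colon\mathtt 1\to X$ is a monomorphism: its domain is the terminal object $\mathtt 1$, so any two morphisms into $\mathtt 1$ coincide, and hence $x\circ g=x\circ h$ forces $g=h$. Thus the subobject classifier $\true\colon\mathtt 1\to\Omega$ supplies a unique characteristic morphism $\chi_x\colon X\to\Omega$ making
$$
\xymatrix{
\mathtt 1\ar^x[r]\ar[d]&X\ar^{\chi_x}[d]\\
\mathtt 1\ar_{\true}[r]&\Omega
}
$$
a pullback, where the left vertical arrow is the identity $\mathsf 1_{\mathtt 1}$. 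Commutativity of this square already yields $\chi_x\circ x=\true$.

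Next I would invoke Proposition~\ref{p:wp=>tw}: since $\C$ is well-pointed, its subobject classifier is two-valued, i.e. $\Mor(\mathtt 1,\Omega)=\{\true,\false\}$ consists of exactly two distinct global elements. Declaring $\false$ to be the one different from $\true$, the equality $\chi_x\circ x=\true\ne\false$ gives the first required property in the definition of an element-separating category, with $\Omega$ and $\false$ chosen as above.

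The remaining, and main, step is to show $\chi_x\circ x'=\false$ for every global element $x'\in\Mor(\mathtt 1,X)$ with $x'\ne x$. Because $\Omega$ is two-valued, $\chi_x\circ x'$ is either $\true$ or $\false$, so it suffices to exclude the value $\true$. Here the pullback property does the work: if $\chi_x\circ x'=\true=\true\circ\mathsf 1_{\mathtt 1}$, then the cone $(\,x'\colon\mathtt 1\to X,\ \mathsf 1_{\mathtt 1}\colon\mathtt 1\to\mathtt 1\,)$ over the square factors uniquely through the apex $\mathtt 1$, producing a morphism $h\colon\mathtt 1\to\mathtt 1$ with $x\circ h=x'$. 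But $\mathtt 1$ is terminal, so $h=\mathsf 1_{\mathtt 1}$ and therefore $x'=x\circ\mathsf 1_{\mathtt 1}=x$, contradicting $x'\ne x$. Hence $\chi_x\circ x'=\false$, completing the verification. The one place that genuinely needs the well-pointedness hypothesis is the two-valuedness of $\Omega$ from Proposition~\ref{p:wp=>tw}; without it the dichotomy ``$\true$ or $\false$'' collapses and the values $\chi_x\circ x'$ could not all be identified with a single morphism $\false$. The pullback uniqueness argument is the technical heart, but it is short once the monomorphism property of $x$ and the two-valued $\Omega$ are in place.
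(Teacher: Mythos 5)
Your proposal is correct and follows the same route as the paper's proof: take $\Omega$ to be the subobject classifier, note that the global element $x$ is a monomorphism, let $\chi_x$ be its characteristic morphism, and combine two-valuedness of $\Omega$ (Proposition~\ref{p:wp=>tw}) with the pullback property to force $\chi_x\circ x'=\false$ for $x'\ne x$. The only difference is that you spell out the factorization argument through the pullback apex, which the paper leaves implicit.
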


\begin{proof} Given any $\C$-morphism $x:\mathtt 1\to X$, observe that $x$ is a monomorphism (by the terminal property of $\mathtt 1$).  By definition of the subobject classifier, there exists a unique $\C$-morphism $\chi_x:X\to\Omega$ such that the commutative square 
$$
\xymatrix{
\mathtt 1\ar^x[r]\ar_{\mathsf 1_{\mathtt 1}}[d]&X\ar^{\chi_x}[d]\\
\mathtt 1\ar_{\true}[r]&\Omega
}
$$is a pullback. 

By Proposition~\ref{p:wp=>tw}, $|\Mor(\mathtt 1,\Omega)|=2$. Let $\false$ be the unique element of the set $\Mor(\mathtt 1,\Omega)\setminus\{\true\}$. The pullback property of the above square ensures that for any $y\in\Mor(\mathtt 1,X)\setminus\{x\}$ we have $\chi_x\circ y\ne\true$ and hence $\chi_x\circ y=\false$. Now we see that the morphisms $\chi_x:X\to\Omega$ and $\false:\mathtt 1\to\Omega$ witness that the category $\C$ is element-separating.
\end{proof}

Propositions~\ref{p:topos-balanced}, \ref{p:topos-equalizers}, \ref{p:topos-1sep} and Theorems~\ref{t:EC=Set}, \ref{t:GWO=Set} imply the following characterizations of the category $\Set$ (for the global choice principles $(\mathsf{EC})$ and $(\mathsf{GWO})$, see Section~\ref{s:GChoice}).
 
 \begin{theorem} Under $(\mathsf{EC})$, a category $\C$ is equivalent to the category $\Set$ if and only if $\C$ is a locally small well-pointed elementary topos that has arbitrary coproducts. 
 \end{theorem}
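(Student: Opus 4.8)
The plan is to prove both directions, with the forward direction being essentially routine and the backward direction resting on the machinery assembled in the preceding sections. For the ``only if'' part, I would start from the known fact that $\Set$ itself is a locally small elementary topos with arbitrary coproducts, whose terminal object $\mathtt 1$ is a generator and whose subobject classifier $\true\colon \mathtt 1\to\mathtt 2$ is not terminal (since $|\Mor(\mathtt 1,\mathtt 2)|=2$), hence $\Set$ is well-pointed. Then I would observe that all of these properties---being a well-pointed elementary topos, local smallness, and the existence of arbitrary coproducts---are preserved under equivalence of categories, so any $\C\simeq\Set$ inherits them. This direction needs no choice principle at all.

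For the ``if'' part, assume $(\mathsf{EC})$ and let $\C$ be a locally small well-pointed elementary topos with arbitrary coproducts. The strategy is to verify conditions (1)--(7) of Theorem~\ref{t:EC=Set} and then invoke that theorem. Condition (1) (local smallness) and condition (4) (arbitrary coproducts) are hypotheses. Condition (5) (a terminal object $\mathtt 1$) holds because every elementary topos is cartesian closed and hence has finite products, in particular a terminal object. Condition (6) ($\mathtt 1$ is a generator) is part of the definition of well-pointedness. The remaining three conditions are supplied by the propositions proved just above: condition (2) (balanced) follows from Proposition~\ref{p:topos-balanced}, since an elementary topos has a subobject classifier; condition (3) (equalizers) follows from Proposition~\ref{p:topos-equalizers}, using that $\C$ has binary products as a cartesian closed category; and condition (7) (element-separating) is exactly Proposition~\ref{p:topos-1sep}, which applies since $\C$ is a well-pointed elementary topos.

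Having checked (1)--(7), I would apply Theorem~\ref{t:EC=Set}, which under $(\mathsf{EC})$ concludes that $\C$ is equivalent to $\Set$. This completes the backward direction and hence the whole equivalence. The proof is therefore a matter of correctly matching hypotheses to the already-established propositions, so I do not anticipate a genuine technical obstacle; the only point requiring slight care is confirming that cartesian closedness delivers the finite products (terminal object and binary products) needed to invoke Propositions~\ref{p:topos-balanced} and~\ref{p:topos-equalizers}, and that well-pointedness supplies both the generator hypothesis and, via Proposition~\ref{p:topos-1sep}, the element-separating property. A clean write-up would simply list conditions (1)--(7), cite the source of each in one line, and then quote Theorem~\ref{t:EC=Set}.
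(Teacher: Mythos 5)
Your proposal is correct and follows exactly the route the paper intends: the paper derives this theorem by combining Propositions~\ref{p:topos-balanced}, \ref{p:topos-equalizers}, and \ref{p:topos-1sep} with Theorem~\ref{t:EC=Set}, which is precisely your verification of conditions (1)--(7) followed by an appeal to that theorem. Your added remarks on the trivial ``only if'' direction and on cartesian closedness supplying the terminal object and binary products are accurate and fill in the details the paper leaves implicit.
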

 
 \begin{theorem} Under $(\mathsf{GWO})$, a category $\C$ is isomorphic to the category $\Set$ if and only if 
 \begin{enumerate}
 \item[\textup{1)}] $\C$ is a well-pointed elementary topos;
 \item[\textup{2)}] $\C$ is locally small;
 \item[\textup{3)}] $\C$ has arbitrary coproducts;
 \item[\textup{4)}] $\C$ has a unique initial object;
 \item[\textup{5)}] for any non-initial $\C$-object $X$ the class of $\C$-objects that are isomorphic to $X$ is proper.
 \end{enumerate}
 \end{theorem}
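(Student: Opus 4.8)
The plan is to reduce the final theorem to the already-proved Theorem~\ref{t:GWO=Set}, which characterizes categories isomorphic to $\Set$ under $(\mathsf{GWO})$ via the seven conditions of Lemma~\ref{l:Equi-Set} together with the two extra conditions (8),(9) about the initial object. Since $(\mathsf{GWO})$ is assumed, the whole work consists of verifying that conditions (1)--(9) of Theorem~\ref{t:GWO=Set} are equivalent (given $(\mathsf{GWO})$, or unconditionally where possible) to the five conditions of the present statement. The ``only if'' direction is trivial and holds without any choice principle, since $\Set$ visibly satisfies all the listed topos-theoretic and size conditions; so I would dispose of it in one line and concentrate on the ``if'' direction.

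For the ``if'' direction, I would verify the nine conditions of Theorem~\ref{t:GWO=Set} one at a time, drawing on the propositions already established about elementary topoi. Conditions (1),(3),(4),(8),(9) are granted directly by the hypotheses: local smallness is hypothesis (2), equalizers come from hypothesis (1) (a well-pointed elementary topos has binary products, so Proposition~\ref{p:topos-equalizers} supplies equalizers and indeed all finite limits), arbitrary coproducts are hypothesis (3), and the initial-object conditions (8),(9) are hypotheses (4),(5). The remaining conditions are exactly the content of the structural propositions: a subobject classifier forces the category to be balanced by Proposition~\ref{p:topos-balanced}, giving (2); the terminal object $\mathtt 1$ is a $\C$-generator by the well-pointedness hypothesis, giving (5),(6); and a well-pointed elementary topos is element-separating by Proposition~\ref{p:topos-1sep}, giving (7). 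Once all nine conditions are in hand, Theorem~\ref{t:GWO=Set} yields $\C\cong\Set$.

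The bookkeeping point I would be most careful about is that Theorem~\ref{t:GWO=Set} requires (3) arbitrary coproducts and (1) local smallness as stated, while the topos hypotheses only directly supply \emph{finite} limits via Proposition~\ref{p:topos-equalizers}; I must make sure the transition from ``cartesian closed with subobject classifier'' to ``has equalizers'' is exactly what Proposition~\ref{p:topos-equalizers} provides, noting that a cartesian closed category has binary products by definition, so the hypothesis of that proposition is met. I would also double-check that Proposition~\ref{p:topos-1sep} genuinely delivers the morphisms $\chi_x$ and $\false$ in the precise form demanded by the definition of an element-separating category, and that Proposition~\ref{p:wp=>tw} (two-valuedness of $\Omega$) is invoked inside Proposition~\ref{p:topos-1sep} rather than needing separate mention here. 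The main obstacle, such as it is, is therefore not a deep argument but the careful matching of the present hypotheses (1)--(5) against conditions (1)--(9) of Theorem~\ref{t:GWO=Set}: every one of the nine must be produced, and the reader should be shown which hypothesis or which earlier proposition supplies it, so that the implication $\C\cong\Set$ follows with no gap.

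Concretely, the proof would read: assume the five conditions; invoke Proposition~\ref{p:topos-balanced} for balancedness, Proposition~\ref{p:topos-equalizers} for equalizers, Proposition~\ref{p:topos-1sep} for element-separation, and read off local smallness, arbitrary coproducts, terminal-object-as-generator, and the two initial-object conditions from the hypotheses directly; then apply Theorem~\ref{t:GWO=Set} to conclude $\C\cong\Set$. The ``only if'' part is immediate since $\Set$ is a locally small well-pointed elementary topos with arbitrary coproducts, a unique initial object $\emptyset$, and proper isomorphism classes for every nonempty set.
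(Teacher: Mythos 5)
Your proposal is correct and follows exactly the route the paper intends: the paper states this theorem as an immediate consequence of Propositions~\ref{p:topos-balanced}, \ref{p:topos-equalizers}, \ref{p:topos-1sep} together with Theorem~\ref{t:GWO=Set}, which is precisely the condition-by-condition matching you carry out. Your extra care about Proposition~\ref{p:topos-equalizers} applying because cartesian closedness supplies binary products is a sensible check, but it raises no new issue.
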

 \newpage
 
 \part*{Epilogue}
 
The material presented in this book is a reasonable minimum which a good (post-graduate) student in Mathematics should know about foundations of this science.

There are many nice textbooks that elaborate in details selected topics that were only briefly touched  in this textbook. 

In particular, in Mathematical Logic a classical textbook is that of Mendelson \cite{Mendelson}; there is also a new book of Kunen \cite{KunenFM}.

In Set Theory and Forcing recommended textbooks are those of Jech \cite{Jech} and Kunen \cite{KunenST}.

For surreal numbers we refer the interested reader to the original books of Conway \cite{ONAG} and Knuth \cite{Knuth}, and also to the survey paper \cite{Ehrlich21} of Ehrlich.

Mathematical Structures (of algebraic origin) and their relation to Category Theory are well-elaborated in the lecture notes of Bergman \cite{Bergman};  Model Theory can be further studied via the classical textbook Chang and Keisler \cite{Model}. Liner Geometry is presented systematically in the book \cite{BLGA}; for deeper study of Tarski's axioms of Euclidean and non-Euclidean geometry, we can recommend the book \cite{SST}.

Category Theory can be studied using the classical textbook of Mac Lane \cite{McLane86}, and Topos Theory via the ``Elephant'' of Johnstone \cite{Johnstone}. A short and readable introduction to Category Theory and Categorial Logic is that of Streicher \cite{Streicher}. For a discussion of various foundations of Category Theory, see the expository paper of Shulman \cite{Shulman}.

\newpage

\printindex

\end{document}

This contradiction shows that $\e_a:FGa\to a$ is an epimorphism. By the condition (7), epimorphisms in the category $\C$ split. Consequently, there exists a monomorphism $\mu\in\Mor(a,GFa)$ such that $\e_a\circ\mu=\mathfrak 1_a$. By definition of the coproduct, there exists a unique $\C$-morphism $r\in \Mor(FGa,FGa)$ such that 

\begin{claim} For any morphism $f\in\Mor(\mathtt 1,a)$ we have $\eta_{Ga}(f)=\mu\circ f$

Now we prove that $(a,\Id{\restriction}\Mor(\mathtt 1,a))$ is the coproduct of the indexed family $(\mathtt 1)_{x\in\Mor(\mathtt 1,a)$. Indeed, assume that $(b,\lambda)$ is a pair consisting of an $\C$-object $u$ and a function $\lambda:\Mor(\mathtt 1,a)\to \Mor(1,b)$. By the definition of the coproduct $(FGa,\eta_{Ga}), there exists a unique $\G$-morphism $h:FGa\to b$ such that $\lambda(f)=h\circ\eta_{Ga}(f)$ for any $f\in\Mor(\mathtt 1,a)=Ga$. Then $h'=h\circ \mu\in\Mor(a,b)$ is such that for every $f\in \Mor(\mathtt 1,a)$
$$h'\circ f=h\circ \mu\circ f=h\circ \mu\circ \mathsf 1_a\circ f=h\circ\mu\circ \e_a\circ\mu\circ f

By the Axiom of (Global) Choice, for every set $x$ there exists a unique cardinal $\dot Cx$ such that $|\dot Cx|=|x|$. B the  Axiom of Global Choice, there exists a function $f_*:\Ob_{\Set}\to\Mor_{\Set}$ assigning to every set $a$ a bijective function $f_a:a\to\cdot Ca$. Let $C:\Set\to\Card$ be the functor assigning to every set $x$ the cardinal $\dot Cx$ and to every function $f:a\to b$ between sets $a,b$ the function $\ddot Cf=f_a\circ f\circ f_b^{-1}:\dot Ca\to\dot Cb$. The pair $(\dot C,\ddot  C)$ determines a functor $C:\Set\to\Card$.

Let $F:\C\to\Crd$ be the functor assigning to every $\C$-object $a$ the cardinal $\dot C(\Mor(\mathtt 1,a))$ and to every morphism $f\in\Mor$ the function $\ddot C\Mor(\mathsf 1,f)$. We recall that for any $\C$-objects $a,b$ and a morphism $f\in\Mor(a,b)$, the function $\Mor(\mathtt 1,f):\Mor(\mathtt 1,a)\to\Mor(\mathtt 1,b)$ assigns to every morphism $g\in\Mor(\mathtt 1,a)$ the morphism $f\circ g$.

By the condition (6), for every set $X$, there exists a coproduct $(\sqcup_{x\in X}\mathtt 1,\eta_X)$ of $X$ many copies of the terminal object $\mathtt 1$. Let $G:\Crd\to\C$ be the functor assigning to every cardinal $X$ the object $\sqcup_{x\in X}\mathtt 1$ and to every function $f:X\to Y$ between cardinals $X,Y$ the unique function $\ddot Gf:GX\to GY$ such that $\eta_Y(f(x))=\ddot Gf\circ\eta_X(x)$ for every $x\in X$.

By the definition of coproduct, for every $\C$-object $a$, there exists a unique $\C$-morphism $\e_a:FGa\to a$ such that $\eta_{Ga}(f)=\eta_a\circ f$ for every $f\in\Mor(\mathtt 1,a)=Ga$.

Let $F:\Set\to\C$ and $G:\C\to\Set$ be adjoint functors considered in Theorem~\ref{??}. Let us recall that the functor $F$ assigns to every $\C$-object $a$ the set $\Mor_(\mathtt 1,a)$ and the functor $G$ assigns to each set $X$ the coproduct $GX=\sqcup_{x\in X}\mathtt 1$ of $X$ copies of the terminal object $\mathtt 1$.

By Theorem~\ref{??}, we lose no generality assuming that any $\C$-isomorphic objects are equal.

By (2), the category $\C$ contains a terminal object $\mathtt 1$. For every set $X$ consider the discrete category $\mathcal D$ having $X$ as its set of objects. Next, consider the $\mathcal D$-diagram $\mathcal D\to\C$ assigning to each object $x\in X$ of the diagram $\mathcal D$ the terminal object $\mathtt 1$. Let $(\dot FX,\delta_X)$ be the unique colimit of this diagram in the category $\C$. The uniqueness of $\dot FX$ follows from the skeletality of the category $\C$. The colimit $(\dot FX,\delta^X)$ consists of a $\C$-object $FX$ and a function $f:X\to \Mor(\mathtt 1,\dot FX)$ assigining to each $x\in X$ a $\C$-morphism $\delta^X_x\in\Mor(\mathtt 1,\dot FX)$ such that for any $\C$-object $b$ and an indexed family of morphisms $(f_x)_{x\in X}\in\Mor(\mathtt 1,b)^X$ there exists a unique $\C$-morphism $g\in\Mor(\dot FX,b)$ such that $f_x=g\circ\delta^X_x$ for all $x\in X$. In particular, for any function $f:X\to Y$ between sets there exists a unique $\C$-morphism $\ddot Ff\in \Mor(\dot FX,\dot FY)$ such that $\delta^Y_{f(x)}=\ddot Ff\circ \delta^X_x$ for all $x\in X$. It is easy to see that the pair $F=(\dot F,\ddot F)$ is a functor from $\Set$ to $\C$. We claim that the functor $F$ is isomorphic to the functor $\Mor(\mathtt 1,-)$.

 Let $F:\C\to\Set$ be the functor $\Mor(\mathtt 1,-)$ considered in Example~\ref{??}. We claim that this functor is  faithful and full. Indeed, for any $\C$-objects $a,b$ the function $\ddot F{\restriction}_{\Mor(a,b)}:\Mor(a,b)\mapsto \Mor(\mathtt 1,b)^{\Mor(\mathtt 1,a)}$ is injective since $\C$-elements separate morphisms. To see that this function is surjective, take any function $g:\Mor(\mathtt 1,a)\to\Mor(\mathtt 1,b)$. Let $\mathcal D$ be the discrete category whose set of objects is the set $\Mor(\mathtt 1,a)$. Consider the $\mathcal D$-diagram determined by the functor $\mathcal D\to\C$ that maps each $\mathcal D$-object to the object $a$. By the condition (3), the category $\C$ has a colimit $(u,f)$ consisting of a $\C$-object and a function $f:\Mor(1,a)\to \Mor(v,a)$. By the universal property of the colimit $(u,f)$,  

\end{proof}

Now we will characterize categories which are equivalent to the category of sets $\Set$.

Observe that the category $\Set$ has a unique initial object $\emptyset$ and many terminal objects, which are singletons. So, terminal objects in an arbitrary category correspond to singletons in the category $\Set$. 

For a category $\C$ by a {\em $\C$-element} we understand any $\C$-morphism $f$ whose domain $\fdom (f)$ is a terminal object of the category $\C$.

 We say that $\C$-elements separate $\C$-morphisms if for any $\C$-objects $a,b$ and distinct morphisms $f,g\in\Mor(a,b)$  there are $\C$-elements $u,v$ such that $\fdom (u)=\fdom (v)$, $\fran (u)=a\fran (v)$ and $f\circ u\ne g\circ v$.

\section{Elementary topoi}

In this section we are interested in the problem of distingushing categories that behave like the category of sets. 

Let us list some obvious properties of the category $\Set$:

1. The category has an initial object.

2. The category has a terminal object

3. Terminal objects separate arrows

4. The category has arbitrary sums.


\end{document}